\let\oldproofname=\proofname
\renewcommand{\proofname}{\rm\bf{\oldproofname}}
\renewcommand{\arraystretch}{1.2}
\newcommand{\PT}{\mathcal P\mathcal T}
\newcommand{\PP}{\mathscr{P}\P}
\newcommand{\M}{\mathcal M}
\newcommand{\C}{\mathscr C}
\renewcommand{\O}{\mathcal O}
\newcommand{\I}{\mathcal I}
\renewcommand{\S}{\mathcal S} 
\newcommand{\T}{\mathcal T} 
\newcommand{\A}{\mathcal A} 
\newcommand{\B}{\mathcal B} 
\renewcommand{\P}{\mathcal P} 
\newcommand{\PB}{\mathcal{PB}} 
\newcommand{\J}{\mathcal J}
\renewcommand{\H}{\mathrel{\mathscr H}}
\renewcommand{\L}{\mathrel{\mathscr L}}
\newcommand{\R}{\mathrel{\mathscr R}}
\newcommand{\D}{\mathrel{\mathscr D}}
\newcommand{\gJ}{\mathrel{\mathscr J}}
\newcommand{\K}{\mathrel{\mathscr K}}
\newcommand{\bq}{{\bf q}}
\newcommand{\bn}{\mathbf{n}}
\newcommand{\bm}{\mathbf{m}}
\newcommand{\bP}{{\bf P}}
\newcommand{\bQ}{{\bf Q}}
\newcommand{\om}{\omega}
\newcommand{\al}{\alpha}
\newcommand{\be}{\beta}
\newcommand{\ga}{\gamma}
\newcommand{\de}{\delta}
\newcommand{\ve}{\varepsilon}
\newcommand{\lam}{\lambda}
\newcommand{\si}{\sigma}
\newcommand{\De}{\Delta}
\newcommand{\Ga}{\Gamma}
\newcommand{\Om}{\Omega}
\newcommand{\alh}{\widehat\al}
\newcommand{\beh}{\widehat\be}
\newcommand{\gah}{\widehat\ga}
\newcommand{\deh}{\widehat\de}
\newcommand{\tauh}{\widehat\tau}
\newcommand{\alt}{\widetilde{\al}}
\newcommand{\bet}{\widetilde{\be}}
\newcommand{\xit}{\widetilde{\xi}}
\newcommand{\xib}{\overline{\xi}}
\newcommand{\pib}{\overline{\pi}}
\newcommand{\sib}{\overline{\si}}
\newcommand{\taub}{\overline{\tau}}
\newcommand{\Deb}{\overline{\De}}
\newcommand{\Rb}{\overline{R}}
\newcommand{\idb}{\overline{\id}}
\newcommand{\Proj}{\operatorname{Proj}}
\newcommand{\Cong}{\operatorname{Cong}}
\newcommand{\coker}{\operatorname{coker}}
\newcommand{\dom}{\operatorname{dom}} 
\newcommand{\codom}{\operatorname{codom}}
\newcommand{\rank}{\operatorname{rank}}
\newcommand{\id}{\operatorname{id}}
\newcommand{\set}[2]{\{ {#1} : {#2} \}} 
\newcommand{\bigset}[2]{\big\{ {#1}: {#2} \big\}} 
\newcommand{\normal}[1]{\la\!\la#1\ra\!\ra}
\newcommand{\cg}[2]{(#1,#2)^\sharp}
\newcommand{\cgI}[2]{(#1,#2)^\sharp_{\I_n}}
\newcommand{\cgset}[4]{\big\{(#1,#2),(#3,#4)\big\}^\sharp}
\newcommand{\Rect}{\mathfrak{R}}
\newcommand{\Rtwo}{\mathbb R^2}
\renewcommand{\emptyset}{\varnothing}
\newcommand{\1}{\id_n}
\newcommand{\emptypartperm}{[\emptyset]}
\newcommand{\lar}[1]{ \xrightarrow {\ #1\ }}
\newcommand{\mt}{\mapsto}
\newcommand{\sm}{\setminus}
\newcommand{\sub}{\subseteq}
\newcommand{\suq}{\subsetneq}
\newcommand{\la}{\langle}
\newcommand{\ra}{\rangle}
\newcommand{\OR}{\qquad\text{or}\qquad}
\newcommand{\COMMA}{,\quad}
\newcommand{\AND}{\qquad\text{and}\qquad}
\newcommand{\ANDSIM}{\qquad\text{and similarly}\qquad}
\newcommand{\ANd}{\quad\text{and}\quad}
\renewcommand{\iff}{\ \Leftrightarrow\ }
\renewcommand{\implies}{\ \Rightarrow\ }
\newcommand{\bit}{\begin{itemize}}
\newcommand{\eit}{\end{itemize}}
\newcommand{\ben}{\begin{enumerate}}
\newcommand{\een}{\end{enumerate}}
\newcommand{\itemit}[1]{\item[\emph{(#1)}]}
\newcommand{\pf}{\begin{proof}}
\newcommand{\epf}{\end{proof}}
\newcommand{\epfres}{\hfill\qed}
\newcommand{\epfreseq}{\tag*{\qed}}
\renewcommand{\c}{@{}c@{}}
\newcommand{\cend}{@{}c@{\hspace{1.5truemm}}}
\newcommand{\cstart}{@{\hspace{1.5truemm}}c@{}}
\newcommand{\partI}[8]{
\Big( 
{ \scriptsize \renewcommand*{\arraystretch}{1}
\begin{array} {\c|\c|\c|\c|\c|\cend}
 #1 \:&\: \cdots \:&\: #2 \:&\: #3 \:&\: \cdots \:&\: #4 \\ \cline{4-6}
 #5 \:&\: \cdots \:&\: #6 \:&\: #7 \:&\: \cdots \:&\: #8 
\rule[0mm]{0mm}{2.7mm}
\end{array} 
}
\hspace{-1.5 truemm} \Big)
}
\newcommand{\partABCD}{\partI{A_1}{A_q}{C_1}{C_r}{B_1}{B_q}{D_1}{D_s}}
\newcommand{\partII}[6]{
\Big(  \hspace{-1.5 truemm}
{\scriptsize \renewcommand*{\arraystretch}{1} \begin{array} {\cstart|\c|\cend}
#1 \:&\: #2 \:&\: #3  \\ \cline{1-3}
#4 \:&\: #5 \:&\: #6 
\rule[0mm]{0mm}{2.7mm}
\end{array}  }
\hspace{-1.5 truemm} \Big) 
}
\newcommand{\partIII}[8]{
\Big(  \hspace{-1.5 truemm}
{\scriptsize \renewcommand*{\arraystretch}{1} \begin{array} {\cstart|\c|\c|\cend}
#1 \:&\: #2 \:&\: #3 \:&\: #4 \\ \cline{1-4}
#5 \:&\: #6 \:&\: #7 \:&\: #8
\rule[0mm]{0mm}{2.7mm}
\end{array}  }
\hspace{-1.5 truemm} \Big) 
}
\newcommand{\partIV}[8]{
\Big(  
{\scriptsize \renewcommand*{\arraystretch}{1} \begin{array} {\c|\c|\c|\cend}
#1 \:&\: #2 \:&\: #3 \:&\: #4 \\ \cline{2-4}
#5 \:&\: #6 \:&\: #7 \:&\: #8
\rule[0mm]{0mm}{2.7mm}
\end{array}  }
\hspace{-1.5 truemm} \Big) 
}
\newcommand{\partV}[8]{
\Big(  
{\scriptsize \renewcommand*{\arraystretch}{1} \begin{array} {\c|\c|\c|\c|\cend}
#1 \:&\: #2 \:&\: #3 \:&\: \cdots \:&\: #4 \\ \cline{3-5}
#5 \:&\: #6 \:&\: #7 \:&\: \cdots \:&\: #8
\rule[0mm]{0mm}{2.7mm}
\end{array}  }
\hspace{-1.5 truemm} \Big) 
}
\newcommand{\partVI}[8]{
\Big(  
{\scriptsize \renewcommand*{\arraystretch}{1} \begin{array} {\c|\c|\c|\c|\cend}
#1 \:&\: #2 \:&\: #3 \:&\: \cdots \:&\: #4 \\ \cline{2-5}
#5 \:&\: #6 \:&\: #7 \:&\: \cdots \:&\: #8
\rule[0mm]{0mm}{2.7mm}
\end{array}  }
\hspace{-1.5 truemm} \Big) 
}
\newcommand{\partVII}[8]{
\Big(  \hspace{-1.5 truemm}
{\scriptsize \renewcommand*{\arraystretch}{1} \begin{array} {\cstart|\c|\c|\c|\cend}
#1 \:&\: #2 \:&\: #3 \:&\: \cdots \:&\: #4 \\ \cline{1-5}
#5 \:&\: #6 \:&\: #7 \:&\: \cdots \:&\: #8
\rule[0mm]{0mm}{2.7mm}
\end{array}  }
\hspace{-1.5 truemm} \Big) 
}
\newcommand{\partXVII}[6]{
\Bigl( 
{ \scriptsize\renewcommand*{\arraystretch}{1} \begin{array} {\c|\c|\cend}
#1\: &\: #2\: &\: #3 \\ \cline{2-3}
#4\: &\: #5\: &\: #6
\rule[0mm]{0mm}{2.7mm}
\end{array}  }
\hspace{-1.5 truemm} \Bigr)
}
\newcommand{\partXIX}[8]{
\Big(  \hspace{-1.5 truemm}
{\scriptsize \renewcommand*{\arraystretch}{1} \begin{array} {\cstart|\c|\c|\c|\cend}
#1 \:&\: #2 \:&\: #3 \:&\: \cdots \:&\: #4 \\ \cline{1-5}
#5 \:&\: #6 \:&\: #7 \:&\: \cdots \:&\: #8
\rule[0mm]{0mm}{2.7mm}
\end{array}  }
\hspace{-1.5 truemm} \Big) 
}
\newcommand{\partpermI}[2]{
\Big[
{ \scriptsize \renewcommand*{\arraystretch}{1}
\begin{array} {\c}
 #1  \\ 
 #2
\rule[0mm]{0mm}{2.7mm}
\end{array} 
}
\Big]
}
\newcommand{\partpermII}[4]{
\Big[
{ \scriptsize \renewcommand*{\arraystretch}{1}
\begin{array} {\c|\c}
 #1 \:&\: #2 \\ 
 #3 \:&\: #4
\rule[0mm]{0mm}{2.7mm}
\end{array} 
}
\Big]
}
\newcommand{\partpermIII}[6]{
\Big[
{ \scriptsize \renewcommand*{\arraystretch}{1}
\begin{array} {\c|\c|\c}
 #1 \:&\: #2 \:&\: #3 \\ 
 #4 \:&\: #5 \:&\: #6 
\rule[0mm]{0mm}{2.7mm}
\end{array} 
}
\Big]
}
\newcommand{\uuv}[1]{\fill (#1,4)circle(.17);}
\newcommand{\uv}[1]{\fill (#1,2)circle(.17);}
\newcommand{\lv}[1]{\fill (#1,0)circle(.17);}
\newcommand{\uvs}[1]{{\foreach \x in {#1} { \uv{\x}}}}
\newcommand{\lvs}[1]{{\foreach \x in {#1} { \lv{\x}}}}
\newcommand{\uvw}[1]{\draw[fill=white] (#1,2)circle(.17);}
\newcommand{\lvw}[1]{\draw[fill=white] (#1,0)circle(.17);}
\newcommand{\uvert}[1]{\fill (#1,2)circle(.2);}
\renewcommand{\lvert}[1]{\fill (#1,0)circle(.2);}
\newcommand{\stline}[2]{\draw(#1,2)--(#2,0);}
\newcommand{\stlines}[1]{{\foreach \x/\y in {#1} { \stline{\x}{\y} }}}
\newcommand{\darcx}[3]{\draw(#1,0)arc(180:90:#3) (#1+#3,#3)--(#2-#3,#3) (#2-#3,#3) arc(90:0:#3);}
\newcommand{\darc}[2]{\darcx{#1}{#2}{.4}}
\newcommand{\uarcx}[3]{\draw(#1,2)arc(180:270:#3) (#1+#3,2-#3)--(#2-#3,2-#3) (#2-#3,2-#3) arc(270:360:#3);}
\newcommand{\uarc}[2]{\uarcx{#1}{#2}{.4}}
\newcommand{\darcxx}[4]{\draw[#4](#1,0)arc(180:90:#3) (#1+#3,#3)--(#2-#3,#3) (#2-#3,#3) arc(90:0:#3);}
\newcommand{\uarcxx}[4]{\draw[#4](#1,2)arc(180:270:#3) (#1+#3,2-#3)--(#2-#3,2-#3) (#2-#3,2-#3) arc(270:360:#3);}
\newcommand{\stlinex}[3]{\draw[#3](#1,2)--(#2,0);}
\newcommand{\custpartn}[3]{{\lower1.4 ex\hbox{
\begin{tikzpicture}[scale=.3]
\foreach \x in {#1}
{ \uvert{\x}  }
\foreach \x in {#2}
{ \lvert{\x}  }
#3 \end{tikzpicture}
}}}
\newcommand{\dottedinterval}[4]{\draw(#1,#2)--(#1,#2+#3) (#1,#2+#3+#4)--(#1,#2+#3+#4+#3);\draw[dotted](#1,#2+#3)--(#1,#2+#3+#4);}
\newcommand{\uu}{0}\newcommand{\vv}{0}\newcommand{\xx}{0}\newcommand{\yy}{0} \newcommand{\dd}{11}
\newcommand{\uuvlab}[2]{\uuv{#1}\draw(#1-.1,4.8)node[below right]{{\tiny $#2$}};}
\newcommand{\uvlab}[2]{\uv{#1}\draw(#1-.1,2.8)node[below right]{{\tiny $#2$}};}
\newcommand{\uvlabw}[2]{\uvw{#1}\draw(#1-.1,2.8)node[below right]{{\tiny $#2$}};}
\newcommand{\lvlab}[2]{\lv{#1}\draw(#1+.2,-1)node[above left]{{\tiny $#2$}};}
\newcommand{\lvlabw}[2]{\lvw{#1}\draw(#1+.2,-1)node[above left]{{\tiny $#2$}};}
\newcommand{\shiftvalue}{18}
\newcommand{\defkkkkba}[4]{\newcommand{\kk}{#1}\newcommand{\kkkk}{#2}\newcommand{\bbb}{#3}\newcommand{\aaa}{#4}}
\newcommand{\defbzn}[3]{\newcommand{\bbbb}{#1}\newcommand{\zzzz}{#2}\newcommand{\nnnn}{#3}}
\newcommand{\uvxyorder}[4]{\renewcommand{#1}{1}\renewcommand{#2}{3.5}\renewcommand{#3}{6.5}\renewcommand{#4}{9.25}}
\newcommand{\vertices}{
\uvlab{\vv}v
\uvlabw{\uu}u
\uvlab{\xx}x
\uvlabw{\yy}y
\uvlab{\dd}d
\draw(\dd,2)--(\dd-1,4);
\uuvlab{\dd-1}c
\lvlab{\bbbb}b
\lvlabw{\zzzz}w
\lvlab{\nnnn}n
\uvlab{\vv+\shiftvalue}v
\uvlabw{\uu+\shiftvalue}u
\uvlab{\xx+\shiftvalue}x
\uvlabw{\yy+\shiftvalue}y
\uuvlab{\dd-1+\shiftvalue}c
\draw(\dd+\shiftvalue,2)--(\dd-1+\shiftvalue,4);
\uvlab{\dd+\shiftvalue}d
\lvlab{\bbbb+\shiftvalue}b
\lvlabw{\zzzz+\shiftvalue}w
\lvlab{\nnnn+\shiftvalue}n
}
\newcommand{\udarcx}[3]{\draw(#1,2)--(#1,2.5) arc(180:90:#3) (#1+#3,2.5+#3)--(#2-#3,2.5+#3) (#2-#3,2.5+#3) arc(90:0:#3) --(#2,2);}
\newcommand{\linestau}[4]{
\stlines{#1/\bbbb,#2/\zzzz,#3/\nnnn}
\stlines{#1+\shiftvalue/\bbbb+\shiftvalue,#2+\shiftvalue/\zzzz+\shiftvalue, #3+\shiftvalue/\nnnn+\shiftvalue}
}
\newcommand{\shiftup}{6}
\newcommand{\alphabetataudiagram}[9]{
\begin{scope}[shift={(0,#1)}]
\uvxyorder#2
\newcommand{\sillyone}{#3}
\newcommand{\sillytwo}{#4}
\udarcx{\xx}{\yy}{.5}
\linestau{#5}{#6}{#7}{#8}
#9
\vertices
\draw[|-|](-2,0)--(-2,2);
\draw[|-](-2,4)--(-2,2);
\draw(-1.8,1)node[left]{$\scriptstyle{\tau}$};
\draw(-1.8,3)node[left]{$\scriptstyle{\al}$};
\draw[|-|](-2+\shiftvalue,0)--(-2+\shiftvalue,2);
\draw[|-](-2+\shiftvalue,4)--(-2+\shiftvalue,2);
\draw(-1.8+\shiftvalue,1)node[left]{$\scriptstyle{\tau}$};
\draw(-1.8+\shiftvalue,3)node[left]{$\scriptstyle{\be}$};
\end{scope}
}
\newcommand{\caselabel}[1]{\draw(-5,2)node{(#1)};} 
\newcommand{\arcdl}[1]{arc(360:180:#1/2)}
\newcommand{\arcdr}[1]{arc(180:360:#1/2)}
\newcommand{\arcul}[1]{arc(0:180:#1/2)}
\newcommand{\arcur}[1]{arc(180:0:#1/2)}
\newcommand{\mvw}[1]{\draw[fill=white] (#1,10)circle(.2);}
\newcommand{\mv}[1]{\fill (#1,10)circle(.2);}
\newcommand{\hmv}[1]{\fill (#1,20)circle(.2);}
\newcommand{\lmv}[1]{\fill (#1,0)circle(.2);}
\numberwithin{equation}{section}
\newtheorem{thm}[equation]{Theorem}
\newtheorem{lemma}[equation]{Lemma}
\newtheorem{cor}[equation]{Corollary}
\newtheorem{prop}[equation]{Proposition}
\theoremstyle{definition}
\newtheorem{rem}[equation]{Remark}
\newtheorem{defn}[equation]{Definition}
\begin{document}

\title{Congruence lattices of finite diagram monoids\vspace{-5ex}}
\author{}
\date{}

\renewcommand{\thefootnote}{\fnsymbol{footnote}}

\maketitle
\begin{center}
{\large 
James East,%
\hspace{-.3em}\footnote{Centre for Research in Mathematics, School of Computing, Engineering and Mathematics, Western Sydney University, Locked Bag 1797, Penrith NSW 2751, Australia. {\it Email:} {\tt j.east\,@\,westernsydney.edu.au}}
James D.~Mitchell,%
\hspace{-.2em}\footnote{\label{footnote:JDM}Mathematical Institute, School of Mathematics and Statistics, University of St Andrews, St Andrews, Fife KY16 9SS, UK. {\it Emails:} {\tt jdm3\,@\,st-andrews.ac.uk}, \ {\tt nik.ruskuc\,@\,st-andrews.ac.uk}, \ {\tt mct25\,@\,st-andrews.ac.uk}}
Nik Ru\v skuc,\hspace{-.2em}\textsuperscript{\ref{footnote:JDM}}
Michael Torpey\textsuperscript{\ref{footnote:JDM}}
}
\end{center}

\setcounter{footnote}{0}

\renewcommand{\thefootnote}{\arabic{footnote}}

\vspace{-0.5cm}

\begin{abstract}
We give a complete description of the congruence lattices of the following finite diagram monoids: the partition monoid, the planar partition monoid, the Brauer monoid, the Jones monoid (also known as the Temperley-Lieb monoid), the Motzkin monoid, and the partial Brauer monoid.  All the congruences under discussion arise as special instances of a new construction, involving an ideal $I$, a retraction $I\rightarrow M$ onto the minimal ideal, a congruence on $M$, and a normal subgroup of a maximal subgroup outside $I$.

  \textit{Keywords}: diagram monoids, partition monoids, Brauer monoids, planar
  monoids, Jones monoids, Motzkin monoids, congruences.

  MSC: 20M20, 08A30.
\end{abstract}


\section{Introduction}\label{sect:intro}
A {congruence} on a semigroup $S$ is an equivalence relation that is
compatible with the multiplicative structure of $S$.  The role played by
congruences in semigroup theory (and general algebra) is analogous to that of
normal subgroups in group theory, and ideals in ring theory: they are precisely the kernels of
homomorphisms, and they govern the formation of quotients.  The set $\Cong(S)$
of all congruences of a semigroup $S$ forms an (algebraic) lattice under
inclusion, known as the {congruence lattice} of $S$.  

The study of
congruences has always been one of the corner-stones of semigroup theory, and a
major strand in this direction has been the description of the congruence
lattices of specific semigroups or families of semigroups.
In his influential~1952 article~\cite{Malcev1952}, Mal$'$cev described the
congruences on the full transformation  monoid $\T_n$.  Analogues for other
classical monoids followed: the monoid of $n\times n$ matrices over a
field (Mal$'$cev \cite{Malcev1953}), the symmetric inverse monoid $\I_n$ (Liber
\cite{Liber1953}), the partial transformation semigroup $\PT_n$ (Sutov \cite{Sutov1961}), and many
others subsequently; see for example \cite{Aizenstat1962,Fernandes2001,MSS2000,Sutov1961_2}.
A contemporary account of these results for $\T_n$, $\PT_n$,
and $\I_n$ can be found in \cite[Section 6.3]{GMbook}.  It turns out that
in each of these cases, the congruence lattice is a chain whose length is a
linear function of $n$.  An even more recent work is the paper
\cite{ABG2016} by Ara\'{u}jo, Bentz and Gomes, describing the congruences in
various direct products of transformation monoids.  

In the current article, we undertake a study of congruence lattices of
{diagram monoids}.  These monoids arise naturally in the study of
{diagram algebras}, a class of algebras with origins in theoretical
physics and representation theory. Key examples are the
{Temperley--Lieb algebras} \cite{TL1971,Martin1994,BDP2002}, 
{Brauer algebras}~\cite{Brauer1937,MarMaz2014},
{partition algebras} \cite{Martin1994,Jones1994_2,HR2005}
and {Motzkin algebras} \cite{BH2014}.  
These diagram algebras are defined by means of diagrammatic basis elements, and
are all {twisted semigroup algebras} \cite{Wilcox2007} of a corresponding
diagram monoid, such as the {Jones},
{Brauer}, {partition} or {Motzkin monoid}; see
Section~\ref{sect:prelim} for the definitions of these monoids and others. 

There are many important connections between diagram and transformation
monoids.  For one thing, the partition monoid $\P_n$ contains copies of the
full transformation monoid~$\T_n$ and the symmetric inverse monoid~$\I_n$.  In
addition, many of the semigroup-theoretic properties of transformation monoids
also hold for diagram monoids.  For example, in each of the above-mentioned
diagram monoids, the ideals form a chain with respect to containment.
Furthermore, the idempotent-generated subsemigroup coincides with the singular
part of the Brauer and partition monoids \cite{JEpnsn,MM2007}, and the proper
ideals of the Jones, Brauer and partition monoids are idempotent-generated
\cite{EG2017}.
When it comes to congruences, however, it turns out that the parallels are
simultaneously less tight and more subtle.  

This paper arose as a consequence
of some initial computational experiments with the Semigroups package
for the computer algebra system GAP \cite{GAP}.  Using newly developed
algorithms for working with congruences, we were able to compute the congruence
lattices of several diagram monoids, including the partition, Brauer and Jones
monoids of relatively low degree.  The results of these computations were
surprising at the time.  While the congruence lattices of the transformation
monoids $\T_n$, $\PT_n$ and $\I_n$ are all finite chains, the congruence
lattices of the diagram monoids have a richer structure, and contain a number
of congruences identifying partitions of low {rank} in non-trivial ways.
The computational experiments allowed us to develop a number of conjectures, and we originally proved these with a series of separate arguments for each of the monoids.  Deeper analysis of these arguments led us to develop the theoretical framework presented in Section~\ref{sec-fam}, which underpins all of the congruence lattices studied here, ultimately leading to the present article.

The exposition is organised as follows.  In Section~\ref{sect:prelim}, we give
the definitions and background material we require.  
In Section~\ref{sec-fam}, we present a number of constructions that build congruences on a finite semigroup from ideals, retractions, maximal subgroups and congruences on the minimal ideal.  It turns out that all of the congruences of the diagram and transformation monoids above are special intstances of this construction.  
In
Section~\ref{sect:SnInOn}, we present the known classifications of congruences
on the symmetric inverse monoid $\I_n$ (Liber \cite{Liber1953}) and the monoid
$\O_n$ of order-preserving partial permutations of an $n$-element chain (Fernandes
\cite{Fernandes2001}), but couched in the framework established in
Section~\ref{sec-fam}.  The rest of the paper is devoted to the
presentation of the main results, treating each of the diagram monoids in turn.
Section~\ref{sect:Pn} concerns the partition monoid~$\P_n$, and serves as
a case study in applying our construction.  In Section~\ref{sect:PBn}, we give a brief treatment of the case of the partial
Brauer monoid $\PB_n$, which turns out to be a nearly verbatim re-run of the
argument for $\P_n$.  Section~\ref{sect:PPnMn} covers the planar partition
monoid $\PP_n$ and the Motzkin monoid $\M_n$ in parallel, and then in 
Sections~\ref{sect:Bn} and~\ref{sect:Jn}, we consider the Brauer and Jones
monoids $\B_n$ and~$\J_n$, respectively.  
The main results are stated in Theorems \ref{thm-CongPn}, \ref{thm-CongPBn}, \ref{thm-CongPPn}, \ref{CongBn} and \ref{thm:cong-Jn}.  Depictions of the various congruence lattices may be found in Figures \ref{fig-CongPn}, \ref{fig-CongPPn}, \ref{fig:Hasse_Bn} and \ref{fig:Hasse_Jn}.

\section{Preliminaries}\label{sect:prelim}

In this section we introduce the partition monoids $\P_n$
(Subsection~\ref{sect:prelim_Pn}) and a number of distinguished submonoids
of~$\P_n$ (Subsection~\ref{sect:prelim_other}), and then discuss their Green's
relations and ideals (Subsection~\ref{sect:prelim_Green}).  We conclude the
section with a brief general discussion of congruences and their lattices in
general (Subsection~\ref{sect:prelim_congruences}).

\subsection{Partition monoids}\label{sect:prelim_Pn}

Let $n$ be a positive integer.  Throughout the article, we write
$\bn=\{1,\ldots,n\}$ and $\bn'=\{1',\ldots,n'\}$.  The \emph{partition monoid
of degree $n$}, denoted $\P_n$, is the monoid of all set partitions of
$\bn\cup\bn'$ under a product described below.  That is, an element of $\P_n$
is a set $\al=\{A_1,\ldots,A_k\}$, for some $k$, where the $A_i$ are pairwise
disjoint nonempty subsets of $\bn\cup\bn'$ whose union is all of $\bn\cup\bn'$;
the $A_i$ are called the \emph{blocks} of $\al$.  

A partition $\al\in\P_n$ may be represented as any graph with vertex set
$\bn\cup\bn'$ with edges so that the connected components of the
graph correspond to the blocks of the partition; such a graph is drawn with
vertices $1,\ldots,n$ on an upper row (increasing from left to right), with
vertices $1',\ldots,n'$ directly below, and with all edges within the rectangle
determined by the vertices.   For example, the partitions
\[
\al = \big\{ \{1,4\},\{2,3,4',5'\},\{5,6\},\{1',2',6'\},\{3'\}\big\} \ANd
\be = \big\{ \{1,2\}, \{3,4,1'\}, \{5,4',5',6'\}, \{6\}, \{2'\}, \{3'\} \big\}
\]
from $\P_6$ are pictured in Figure~\ref{fig:P6}.  As usual, we will identify a
partition with any graph representing~it.

The product of two partitions $\al,\be\in\P_n$ is defined as follows.  Write
$\bn''=\{1'',\ldots,n''\}$.  Let $\al^\vee$ be the graph obtained from $\al$ by
changing the label of each lower vertex $i'$ to~$i''$, and let $\be^\wedge$ be
the graph obtained from $\be$ by changing the label of each upper vertex~$i$
to~$i''$.  Consider now the graph $\Pi(\al,\be)$ on the vertex set~$\bn\cup
\bn'\cup \bn''$ obtained by joining $\al^\vee$ and~$\be^\wedge$ together so
that each lower vertex $i''$ of $\al^\vee$ is identified with the corresponding
upper vertex $i''$ of $\be^\wedge$.  We call $\Pi(\al,\be)$ the \emph{product
graph} of $\al$ and $\be$.  We define $\al\be\in\P_n$ to be the partition
satisfying the property that $x,y\in\bn\cup\bn'$ belong to the same block of
$\al\be$ if and only if $x$ and~$y$ are connected by a path in $\Pi(\al,\be)$.
This process is illustrated in Figure~\ref{fig:P6}.  
The operation is associative and the partition
$\1=\big\{\{1,1'\},\ldots,\{n,n'\}\big\}$ is the identity element, so $\P_n$ is
a monoid. 
It is worth noting that even though elements of $\P_n$ naturally correspond to binary relations (indeed, equivalences) on a set of size $2n$, the product on $\P_n$ does not correspond to composition of binary relations.

\begin{figure}[ht]
\begin{center}
\begin{tikzpicture}[scale=.5]
\begin{scope}[shift={(0,0)}]	
\uvs{1,...,6}
\lvs{1,...,6}
\uarcx14{.6}
\uarcx23{.3}
\uarcx56{.3}
\darc12
\darcx26{.6}
\darcx45{.3}
\stline34
\draw(0.6,1)node[left]{$\al=$};
\draw[->](7.5,-1)--(9.5,-1);
\end{scope}
\begin{scope}[shift={(0,-4)}]	
\uvs{1,...,6}
\lvs{1,...,6}
\uarc12
\uarc34
\darc45
\darc56
\stline31
\stline55
\draw(0.6,1)node[left]{$\be=$};
\end{scope}
\begin{scope}[shift={(10,-1)}]	
\uvs{1,...,6}
\lvs{1,...,6}
\uarcx14{.6}
\uarcx23{.3}
\uarcx56{.3}
\darc12
\darcx26{.6}
\darcx45{.3}
\stline34
\draw[->](7.5,0)--(9.5,0);
\end{scope}
\begin{scope}[shift={(10,-3)}]	
\uvs{1,...,6}
\lvs{1,...,6}
\uarc12
\uarc34
\darc45
\darc56
\stline31
\stline55
\end{scope}
\begin{scope}[shift={(20,-2)}]	
\uvs{1,...,6}
\lvs{1,...,6}
\uarcx14{.6}
\uarcx23{.3}
\uarcx56{.3}
\darc14
\darc45
\darc56
\stline21
\draw(6.4,1)node[right]{$=\al\be$};
\end{scope}
\end{tikzpicture}
\end{center}
\vspace{-5mm}
\caption{Two partitions $\al,\be\in\P_6$ (left), the product graph
$\Pi(\al,\be)$ (middle), and their product $\al\be\in\P_6$ (right).}
\label{fig:P6}
\end{figure}
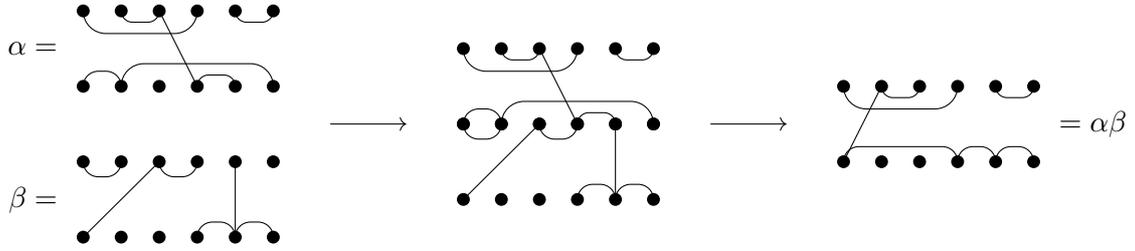

A block $A$ of a partition is referred to as a \emph{transversal} if
$A\cap\bn\not=\emptyset$ and $A\cap\bn'\not=\emptyset$, or a
\emph{non-transversal} otherwise.  
If $\al\in\P_n$, we will write
\[
\al=\partABCD
\]
to indicate that $\al$ has transversals $A_i\cup B_i'$ ($1\leq i\leq q$), upper non-transversals $C_j$ ($1\leq j\leq r$) and lower non-transversals $D_k'$ ($1\leq k\leq s$).  Note here that all of the sets $A_i,B_i,C_j,D_k$ are contained in $\bn$.
For example, $\al\in\P_6$ defined above has $\{2,3,4',5'\}$ as its only
transversal, and has upper non-transversals $\{1,4\}$ and
$\{5,6\}$, and lower non-transversals $\{1',2',6'\}$ and $\{3'\}$,
so $\al = \partXVII{2,3}{1,4}{5,6}{4,5}{1,2,6}{3}$.
Note that in the expression $\al=\partABCD$, the parameters $r$ and $s$ need
not be equal, and that any (but not all) of $q,r,s$ can be $0$.

The (\emph{co})\emph{domain}, (\emph{co})\emph{kernel} and \emph{rank} of a partition $\al=\partABCD\in\P_n$ are defined as follows:~
\bit
\item
$\dom(\al) =A_1\cup\dots\cup A_q= \set{i\in\bn}{\text{$i$ belongs to a transversal of $\al$}}$,
\item
$\codom(\al) =B_1\cup\dots\cup B_q =\set{i\in\bn}{\text{$i'$ belongs to a transversal of $\al$}}$,
\item
$\ker(\al) = \set{(i,j)\in\bn\times\bn}{\text{$i$ and $j$ belong to the same block of $\al$}}$,
the equivalence relation on $\bn$ with equivalence classes $A_1,\dots, A_q,C_1, \dots, C_r$,
\item
$\coker(\al) = \set{(i,j)\in\bn\times\bn}{\text{$i'$ and $j'$ belong to the same block of $\al$}}$,
the equivalence relation on $\bn$ with equivalence classes $B_1,\dots, B_q,D_1, \dots, D_s$,
\item
$\rank(\al)=q$, the number of transversals of $\alpha$.
\eit
For example, when $\al = \partXVII{2,3}{1,4}{5,6}{4,5}{1,2,6}{3}$, we have
$\rank(\al)=1$, $\dom(\al)=\{2,3\}$, $\codom(\al)=\{4,5\}$, the $\ker(\al)$-classes are $\{1,4\},\{2,3\},\{5,6\}$, and the $\coker(\al)$-classes are $\{1,2,6\},\{3\},\{4,5\}$.

One may easily check that for any $\al,\be,\ga\in\P_n$,
\begin{gather*}
\dom(\al\be)\sub\dom(\al) \COMMA \codom(\al\be)\sub\codom(\be) , \\
\ker(\al\be)\supseteq\ker(\al) \COMMA \coker(\al\be)\supseteq\coker(\be) \COMMA
\rank(\al\be\ga)\leq\rank(\be).
\end{gather*}
The group of units of $\P_n$ is the set $\set{\al\in\P_n}{\rank(\al)=n}$; this
group is isomorphic to (and will be identified with) the symmetric group $\S_n$.

The partition monoid
admits a natural anti-involution ${}^*:\P_n\to\P_n$ defined by 
\[
\partI{A_1}{A_q}{C_1}{C_r}{B_1}{B_q}{D_1}{D_s}^*=\partI{B_1}{B_q}{D_1}{D_s}{A_1}{A_q}{C_1}{C_r}.
\]
Roughly speaking, if $\al\in\P_n$, then $\al^*$ is obtained by reflecting (a graph representing)~$\al$ in the horizontal axis midway between the two rows of vertices.  It is easy to see that
\[
\al^{**}=\al \COMMA (\al\be)^*=\be^*\al^* \COMMA \al\al^*\al=\al,
\]
for all $\al,\be\in\P_n$.  It follows that $\P_n$ is a \emph{regular $*$-semigroup}, in the sense of Nordahl and Scheiblich \cite{NS1978}, with respect to this operation.  
We have several obvious identities, such as $\dom(\al^*)=\codom(\al)$ and $\ker(\al^*)=\coker(\al)$. 
This symmetry/duality will allow us to shorten several proofs.

\subsection{Other diagram monoids}\label{sect:prelim_other}

In this subsection, we introduce a number of important submonoids of the partition monoid $\P_n$.
Following~\cite{Maz1998},
the \emph{Brauer} and \emph{partial Brauer monoid} are defined by
\[
\B_n = \set{\al\in\P_n}{\text{all blocks of $\al$ have size $2$}} \ANd
\PB_n = \set{\al\in\P_n}{\text{all blocks of $\al$ have size at most $2$}},
\]
respectively.  Note that $\S_n\suq\B_n\suq\PB_n\suq\P_n$.  Note also that an
element of $\PB_n$ has a \emph{unique} representation as a 
graph with no loops or multiple edges and with vertex set $\bn\cup\bn'$.

Following \cite{Jones1994_2}, we say that a partition $\al\in\P_n$ is
\emph{planar} if it has a graphical representation where 
the edges are drawn within the rectangle spanned by the vertices and do not intersect.
For example, with $\al,\be\in\P_6$ as in
Figure \ref{fig:P6}, $\be$ is planar but $\al$ is not.  The set
$\mathscr{P}\P_n$ of all planar partitions is clearly a submonoid of $\P_n$.
The \emph{Jones} and \emph{Motzkin monoids} are defined by
\[
\J_n = \B_n \cap \PP_n \AND \M_n=\PB_n\cap\PP_n;
\]
see \cite{HLP2013,LF2006}.  It is well known \cite{HR2005,Jones1994_2} that $\PP_n$ is isomorphic to $\J_{2n}$; see for example \cite[p873]{HR2005}.  We will have more to say about this isomorphism in Section \ref{sect:Jn}, where it will play a crucial role in our analysis of the even degree Jones monoids $\J_{2n}$.  

The partition monoid $\P_n$ contains a number of other submonoids arising from the theory of \emph{transformation monoids}.  
Of particular importance to us here are:
\bit
\item
$\I_n = \set{\al\in\PB_n}{\text{every non-transversal of $\al$ is a singleton}}$,
which is isomorphic to (and will be identified with) the \emph{symmetric inverse monoid},
consisting of all partial permutations of $\bn$;
\item
$\O_n=\I_n\cap\PP_n=\I_n\cap\M_n$, the monoid of all order-preserving partial permutations of $\bn$.
\eit
It is clear that $\I_n\cap\B_n=\S_n$.
The submonoids of $\P_n$ defined above---$\P_n$, $\PB_n$, $\PP_n$, $\B_n$,
$\I_n$, $\M_n$, $\S_n$, $\J_n$ and~$\O_n$---are pictured in
Figure~\ref{fig:submonoids}, which displays their inclusions, intersections,
and representative elements.  Each of these monoids is closed under the
operation ${}^*$ defined above.

The full transformation monoid $\T_n$ is also naturally contained as a submonoid of $\P_n$, as
the set of all $\al\in\P_n$ with $\dom(\al)=\bn$ and $\coker(\al)$ equal to the
trivial relation on $\bn$. However $\T_n$ does not play a role in
this paper, and so it is not included in Figure~\ref{fig:submonoids}.  We also note that the partial transformation monoid~$\PT_n$ does not (naturally) embed as a submonoid of $\P_n$; see \cite[Section 3.2]{JEgrpm}.

It will be convenient to have a special notation for the elements of $\I_n$.  The unique element $\al\in\I_n$ with transversals $\{a_1,b_1'\},\ldots,\{a_q,b_q'\}$ will be denoted by $\al=\partpermIII{a_1}\cdots{a_q}{b_1}\cdots{b_q}$.  With this notation, we have $\al^{-1}=\al^*=\partpermIII{b_1}\cdots{b_q}{a_1}\cdots{a_q}$.  As a special case, when $\al=\big\{\{1\},\ldots,\{n\},\{1'\},\ldots,\{n'\}\big\}$ is the unique element of~$\I_n$ with no transversals, we write $\al=\emptypartperm$.

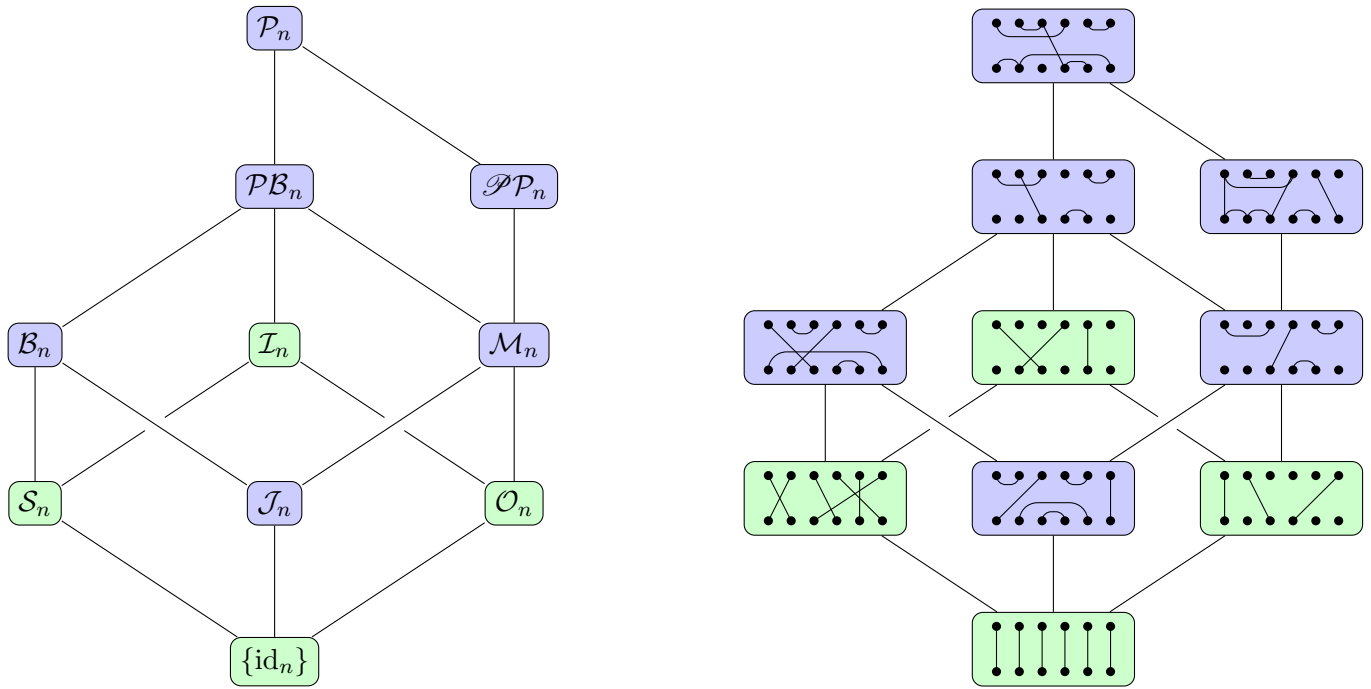
\begin{figure}[ht]
\begin{center}
\begin{tikzpicture}[scale=1.05]
\node[rounded corners,rectangle,draw,fill=blue!20] (Z) at (3,8) {$\P_n$};
\node[rounded corners,rectangle,draw,fill=blue!20] (A) at (3,6) {$\PB_n$};
\node[rounded corners,rectangle,draw,fill=blue!20] (AA) at (6,6) {$\PP_n$};
\node[rounded corners,rectangle,draw,fill=blue!20] (C) at (0,4) {$\B_n$};
\node[rounded corners,rectangle,draw,fill=green!20] (B) at (3,4) {$\I_n$};
\node[rounded corners,rectangle,draw,fill=blue!20] (D) at (6,4) {$\M_n$};
\node[rounded corners,rectangle,draw,fill=green!20] (E) at (0,2) {$\S_n$};
\node[rounded corners,rectangle,draw,fill=blue!20] (G) at (3,2) {$\J_n$};
\node[rounded corners,rectangle,draw,fill=green!20] (F) at (6,2) {$\O_n$};
\node[rounded corners,rectangle,draw,fill=green!20] (H) at (3,0) {$\{\1\}$};
\draw (Z)--(A)--(B)--(E)--(H)--(F)--(D)--(AA)--(Z) (B)--(F);
\fill[white] ($(B)!0.5!(E)$) circle(.15);
\fill[white] ($(B)!0.5!(F)$) circle(.15);
\draw (H)--(G)--(C)--(A)--(D)--(G) (C)--(E);
\end{tikzpicture}
\qquad\qquad\qquad
\begin{tikzpicture}[scale=1]
\node[rounded corners,rectangle,draw,fill=blue!20] (Z) at (3,8) {$\custpartn{1,2,3,4,5,6}{1,2,3,4,5,6}{\uarcx14{.6}\uarcx23{.3}\uarcx56{.3}\darc12\darcx26{.6}\darcx45{.3}\stline34}$};
\node[rounded corners,rectangle,draw,fill=blue!20] (AA) at (6,6) {$\custpartn{1,2,3,4,5,6}{1,2,3,4,5,6}{\uarcx14{.6}\uarcx23{.2}\darc45\darc12\darc23\stline43\stline11\stline56}$};
\node[rounded corners,rectangle,draw,fill=blue!20] (A) at (3,6) {$\custpartn{1,2,3,4,5,6}{1,2,3,4,5,6}{\uarcx13{.5}\uarc56\darc45\stline23}$};
\node[rounded corners,rectangle,draw,fill=blue!20] (C) at (0,4) {$\custpartn{1,2,3,4,5,6}{1,2,3,4,5,6}{\stline13\stline42\uarc23\uarc56\darcx16{.8}\darc45}$};
\node[rounded corners,rectangle,draw,fill=green!20] (B) at (3,4) {$\custpartn{1,2,3,4,5,6}{1,2,3,4,5,6}{\stline13\stline42\stline55}$};
\node[rounded corners,rectangle,draw,fill=blue!20] (D) at (6,4) {$\custpartn{1,2,3,4,5,6}{1,2,3,4,5,6}{\uarcx13{.5}\uarc56\darc45\stline43}$};
\node[rounded corners,rectangle,draw,fill=green!20] (E) at (0,2) {$\custpartn{1,2,3,4,5,6}{1,2,3,4,5,6}{\stlines{1/2,2/1,3/4,4/6,5/5,6/3}}$};
\node[rounded corners,rectangle,draw,fill=blue!20] (G) at (3,2) {$\custpartn{1,2,3,4,5,6}{1,2,3,4,5,6}{\uarcx12{.4}\uarc45\darc34\darcx25{.8}\stlines{3/1,6/6}}$};
\node[rounded corners,rectangle,draw,fill=green!20] (F) at (6,2) {$\custpartn{1,2,3,4,5,6}{1,2,3,4,5,6}{\stlines{1/1,2/3,6/4}}$};
\node[rounded corners,rectangle,draw,fill=green!20] (H) at (3,0) {$\custpartn{1,2,3,4,5,6}{1,2,3,4,5,6}{\stlines{1/1,2/2,3/3,4/4,5/5,6/6}}$};
\draw (Z)--(A)--(B)--(E)--(H)--(F)--(D)--(AA)--(Z) (B)--(F);
\fill[white] ($(B)!0.5!(E)$) circle(.15);
\fill[white] ($(B)!0.5!(F)$) circle(.15);
\draw (H)--(G)--(C)--(A)--(D)--(G) (C)--(E);
\end{tikzpicture}
\end{center}
\vspace{-5mm}
\caption{Important submonoids of $\P_n$ (left) and representative elements from each submonoid (right). The congruence lattices are already known for the monoids shaded green (see Section \ref{sect:SnInOn}); in this article, we describe the congruence lattices for all the other monoids.}
\label{fig:submonoids}
\end{figure}

\subsection{Green's equivalences and ideals of diagram monoids}
\label{sect:prelim_Green}

Green's equivalences $\R$, $\L$, $\gJ$, $\H$ and $\D$ reflect the ideal
structure of a semigroup $S$, and are the fundamental structural tool in
semigroup theory.  They are defined as follows.  We write $S^1=S$ if $S$ is a monoid; otherwise $S^1$ is the monoid
obtained from $S$ by adjoining an identity element to $S$.  Then, for $a,b\in S$,
\[
a\R b \iff aS^1=bS^1 \COMMA a\L b \iff S^1a=S^1b \COMMA a\gJ b \iff S^1aS^1 = S^1bS^1;
\]
further, ${\H}={\R}\cap{\L}$, and $\D$ is the join ${\R}\vee{\L}$: i.e., the least equivalence containing $\R$ and $\L$.
It is well known that ${\D}={\R}\circ{\L}={\L}\circ{\R}$ for any semigroup $S$, and that ${\D}={\gJ}$ when $S$ is finite (as is the case for all semigroups considered in this article).  
If $\K$ is any of Green's relations, and if $a\in S$, we write $K_a=\set{b\in S}{a\K b}$ for the $\K$-class of $a$ in $S$.
The set $S/{\gJ}=\set{J_a}{a\in S}$ of all $\gJ$-classes of $S$ is partially ordered as follows.  
For $a,b\in S$, we say that $J_a\leq J_b$ if $a\in S^1bS^1$.  If $T$ is a subset of $S$ that is a union of $\gJ$-classes, we write $T/{\gJ}$ for the set of all $\gJ$-classes of $S$ contained in~$T$.
The reader is referred to \cite[Chapter 2]{CPbook}, \cite[Chapter 2]{Howie} or \cite[Appendix A]{RSbook} for a more detailed introduction to Green's relations.

Green's equivalences on all diagram monoids considered in this article are
governed by (co)domains, (co)kernels and ranks, as specified in the following
proposition.  For $\P_n$ this  was first proved in \cite{Wilcox2007}, though
the terminology there was different.  For the other monoids see \cite[Theorem
2.4]{DEG2017} and also \cite{Fernandes2001,FL2011,GMbook,Wilcox2007}.  The
proposition will be used frequently throughout the paper without explicit
reference.

\begin{prop}\label{prop:green_all_inclusive}
Let $\mathcal{K}_n$ be any of the monoids $\P_n,\PB_n,\B_n,\PP_n,\M_n,\I_n,\mathcal J_n,\O_n$.  If $\al,\be\in\mathcal K_n$, then
  \bit
\itemit{i} $\al\R\be\iff\dom(\al)=\dom(\be)$ and $\ker(\al)=\ker(\be)$,
\itemit{ii} $\al\L\be\iff\codom(\al)=\codom(\be)$ and $\coker(\al)=\coker(\be)$,
\itemit{iii} $\al\gJ\be\iff\al\D\be\iff\rank(\al)=\rank(\be)$,
\itemit{iv} $J_\al\leq J_\be\iff\rank(\al)\leq\rank(\be)$.
\epfres
\eit
\end{prop}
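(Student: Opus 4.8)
The plan is to verify the four claims directly from the formula for the product of partitions, treating all eight monoids uniformly. The key observation is that each monoid $\K_n$ in the list is a submonoid of $\P_n$ closed under ${}^*$, and Green's relations in a submonoid need not coincide with those of the overmonoid in general; however, one checks easily that if $\al,\be\in\K_n$ and $\al=\ga\be$ in $\P_n$ for some $\ga\in\P_n$, then one can choose the witnessing $\ga$ inside $\K_n$. Thus it suffices to work in $\P_n$ and afterwards note that the witnesses produced lie in the relevant submonoid.

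First I would prove $(iii)$ and $(iv)$ together, starting with $(iv)$. The inequalities $\rank(\al\be\ga)\le\rank(\be)$ recorded in the preliminaries immediately give the forward implication of $(iv)$: if $J_\al\le J_\be$, say $\al=\xi\be\eta$, then $\rank(\al)\le\rank(\be)$. For the converse, given $\rank(\al)\le\rank(\be)$, I would exhibit explicit elements $\xi,\eta\in\K_n$ with $\al=\xi\be\eta$: pick out $\rank(\al)$ of the transversals of $\be$, pre- and post-compose with partial-permutation-like diagrams (available in every $\K_n$ since they are built from through-strands and $\cup$/$\cap$ shapes as needed for the planar or Brauer cases) that route the domain and codomain of $\al$ through those chosen transversals and collapse the rest according to $\ker(\al)$ and $\coker(\al)$. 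This simultaneously yields $(iii)$: $\al\gJ\be$ iff $\rank(\al)=\rank(\be)$, and since $\D\sub\gJ$ always and $\D=\gJ$ for finite semigroups, the $\D$-part follows; alternatively one sees $\al\D\be$ directly by passing through a common element of the same rank.

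Next I would handle $(i)$, and then $(ii)$ follows by applying ${}^*$ and the identities $\dom(\al^*)=\codom(\al)$, $\ker(\al^*)=\coker(\al)$, together with $\al\R\be\iff\al^*\L\be^*$. For $(i)$: if $\al\R\be$ in $\K_n$ then $\al=\be\xi$ and $\be=\al\eta$ for suitable $\xi,\eta$, and the containments $\dom(\al\be)\sub\dom(\al)$ and $\ker(\al\be)\supseteq\ker(\al)$ give $\dom(\al)\sub\dom(\be)$, $\dom(\be)\sub\dom(\al)$ and likewise for kernels, so $\dom(\al)=\dom(\be)$ and $\ker(\al)=\ker(\be)$. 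Conversely, if $\al$ and $\be$ have the same domain and kernel, then they have the same upper pattern (same $\ker$-classes, with the same ones being transversal), so $\be$ is obtained from $\al$ by a bijective relabelling of the transversals together with a rearrangement/merging of the lower structure; I would write down $\xi\in\K_n$ acting on the bottom of $\al$ that realises $\be=\al\xi$, and symmetrically $\al=\be\xi'$, so $\al\R\be$. Care is needed to ensure the constructed $\xi$ lies in the smaller monoids: in the Brauer and Jones cases every block has size exactly $2$, so the relabelling diagram is itself a product of through-strands and is automatically in $\B_n$ or $\J_n$; in the planar cases one must check the relabelling can be chosen planar, which holds because equal kernels on $\bn$ force the cyclic order of the transversals to be respected.

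The main obstacle I anticipate is precisely this last point: constructing the witnessing elements $\xi,\eta$ inside the small monoids $\J_n$, $\M_n$, $\O_n$ rather than merely in $\P_n$, since planarity and the block-size constraints restrict which diagrams are available. The resolution is that the hypotheses (equal domain/kernel, or equal rank with matching (co)kernels in the rank-comparison step) are exactly strong enough to force the needed witness to be planar and of the correct block type; making this precise is a short but slightly fiddly case analysis. Since the proposition is quoted from \cite{Wilcox2007, DEG2017, Fernandes2001, FL2011} and is standard, in the write-up I would either cite those sources or give the uniform argument above in compressed form.
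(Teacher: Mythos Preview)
The paper does not prove this proposition at all: it is stated with a bare \qed\ and attributed to the literature (Wilcox for $\P_n$, and \cite{DEG2017,Fernandes2001,FL2011,GMbook} for the other monoids). So there is nothing to compare your argument against in the paper itself; your final remark that one could simply cite those sources is exactly what the paper does.

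Your sketch of a direct proof is sound in outline. The forward implications in (i), (ii), (iv) follow immediately from the basic containments $\dom(\al\be)\sub\dom(\al)$, $\ker(\al\be)\supseteq\ker(\al)$, $\rank(\al\be\ga)\le\rank(\be)$ recorded in the preliminaries, and the $\D={\gJ}$ part of (iii) is automatic for finite semigroups. The real content, as you correctly identify, is the converse directions: constructing witnesses $\xi,\eta$ \emph{inside} $\K_n$ rather than merely in $\P_n$. Your claim that ``equal domain/kernel forces the cyclic order of the transversals to be respected, so a planar witness exists'' is the right idea for $\PP_n$, $\M_n$, $\J_n$, $\O_n$, and is made precise in the cited references (see also Lemma~\ref{lem:nested_or_separated} later in the paper for the relevant combinatorics). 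One small wording issue: your opening reduction ``it suffices to work in $\P_n$'' is slightly misleading, since the whole point is that the witnesses must be shown to lie in $\K_n$; but you then do exactly that, so the argument is fine.
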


\begin{rem}\label{rem:green_sumbonoids}
A number of consequences and simplifications arise 
from Proposition \ref{prop:green_all_inclusive}.  
We list them here, again to be used throughout, usually without explicit reference.
\bit 
\item[(i)]    \label{rem:gs:item1}
If $\al,\be$ belong to any of the monoids we consider, then $\alpha\R\beta$ (respectively, $\alpha\L\beta$, $\alpha\gJ\beta$, $\alpha\H\beta$)
if and only if $\alpha^\ast\L\beta^\ast$ (respectively, $\alpha^\ast\R\beta^\ast$, $\alpha^\ast\gJ\beta^\ast$, $\alpha^\ast\H\beta^\ast$).
\item[(ii)] \label{rem:gs:item2}
Elements of $\I_n$ have trivial (co)kernels, so
\[
\al\R\be \iff \dom(\al)=\dom(\be)\ \  
\text{and}\ \  
\al\L\be \iff \codom(\al)=\codom(\be) 
\ \ \ \text{for} \  \al,\be\in\I_n \text{ or } \O_n.
\]
\item[(iii)] \label{rem:gs:item3}
The (co)domain of an element of $\B_n$ is completely determined by its (co)kernel, so
\[
\al\R\be \iff \ker(\al)=\ker(\be) \ \ 
\text{and}\ \ 
 \al\L\be \iff \coker(\al)=\coker(\be)\ \ \ 
 \text{for}\  \al,\be\in\B_n \  \text{or}\ \J_n.
\]
\item[(iv)] \label{rem:gs:item4}
$\H$ is the trivial relation on $\PP_n$, $\M_n$, $\J_n$ and $\O_n$.  
\item[(v)] \label{rem:gs:item5}
The $\gJ$-classes in all the monoids under consideration are in one-one
  correspondence with the available ranks. In $\P_n$, $\PB_n$, $\PP_n$, $\M_n$,
  $\I_n$, $\O_n$, there is a $\gJ$-class for every $r\in\{0,\dots,n\}$
  and in $\B_n$, $\J_n$ for every $r\in\{0,\dots,n\}$ with $r\equiv n\pmod{2}$.
\item[(vi)] \label{rem:gs:item6}
All $\gJ$-classes in all these monoids are regular: equivalently, every
  $\gJ$-class contains an idempotent.
\item[(vii)] \label{rem:gs:item7}
The maximal subgroup (i.e., group $\H$-class)  containing an idempotent of rank $r$ is isomorphic to $\S_r$ in
$\P_n,\PB_n,\B_n,\I_n$, and is trivial in $\PP_n,\M_n,\J_n, \O_n$.
\item[(viii)] \label{rem:gs:item8}
All of Green's relations coincide with the universal relation on $\S_n$, or indeed on any group.
\eit
\end{rem}

Let $\mathcal{K}_n\in\{\P_n,\PB_n,\B_n,\PP_n,\M_n,\I_n,\mathcal J_n,\O_n\}$.
We will denote the $\gJ$-class of $\mathcal{K}_n$ of partitions $\alpha\in\mathcal{K}_n$ with $\rank(\alpha)=r$ by $J_r(\mathcal{K}_n)$, or, if there is no danger of confusion, simply by $J_r$.
Corresponding to~$J_r$ is the ideal 
$I_r=I_r(\mathcal{K}_n)=\set{\alpha\in\mathcal{K}_n}{\rank(\alpha)\leq r}$.
Since the $\gJ$-classes of $\mathcal K_n$ form a chain under the ordering discussed above, the ideals of $\mathcal K_n$ are precisely the sets $I_r$, and these form a chain under inclusion \cite[Proposition 2.6]{DEG2017}.

The structure of the minimal ideal will turn out to have a commanding influence on the congruence lattice of individual diagram monoids.  In almost all of the above monoids, the minimal ideal is $I_0=J_0$, the set of all elements of rank $0$; the only two exceptions are $\B_n$ and $\J_n$ for $n$ odd, where the minimal ideal is $I_1=J_1$; see Table \ref{tab:minimal}.  Furthermore, the minimal ideal is $\{[\emptyset]\}$ in $\I_n$ and $\O_n$, but in every other case it contains multiple $\R$- and $\L$-classes, with trivial exceptions for small $n$.  In every case the minimal ideal consists entirely of idempotents: i.e., it is a rectangular band.  It will transpire in the course of the paper that this is precisely the cause behind the non-linear structure of the congruence lattices.

\begin{table}[H]
\begin{center}
\renewcommand{\arraystretch}{1.8}
\begin{tabular}{|c|c|}
\hline
Monoid $\mathcal{K}_n$ & Minimal ideal of $\mathcal{K}_n$ \\
\hline
$\P_n$, $\PP_n$, $\PB_n$, $\M_n$, $\B_n$ ($n$ even), $\J_n$ ($n$ even) & $I_0(\mathcal{K}_n) = J_0(\mathcal{K}_n) = \set{\al\in\mathcal{K}_n}{\rank(\al)=0}$ \\
$\B_n$ ($n$ odd), $\J_n$ ($n$ odd) & $I_1(\mathcal{K}_n) = J_1(\mathcal{K}_n) = \set{\al\in\mathcal{K}_n}{\rank(\al)=1}$ \\
$\I_n$, $\O_n$ & $I_0(\mathcal{K}_n) = J_0(\mathcal{K}_n) = \{[\emptyset]\}$ \\
\hline
\end{tabular}
\end{center}
\vspace{-5mm}
\caption{Minimal ideals of diagram monoids.}
\label{tab:minimal}
\end{table}

Figure \ref{fig:minimal} pictures the elements of the minimal ideals of $\P_3$ and $\PB_3\sub\P_3$ (left), and of $\B_3$ and $\J_3\sub\B_3$ (right).  In both diagrams, each row (respectively, column) consists of an entire $\R$-class (respectively, $\L$-class).  Note that $I_0(\PP_3)=I_0(\P_3)$ and $I_0(\M_3)=I_0(\PB_3)$, so that Figure \ref{fig:minimal} (left) also pictures the minimal ideals of $\PP_3$ and $\M_3$; for larger $n$, we have $I_0(\PP_n)\subsetneq I_0(\P_n)$ and $I_0(\M_n)\subsetneq I_0(\PB_n)$.

\newcommand\nc\newcommand
\nc\Rone{}
\nc\Rtoo{\uarc12}
\nc\Rthree{\uarc23}
\nc\Rfour{\uarc13}
\nc\Rfive{\uarc12\uarc23}
\nc\Lone{}
\nc\Ltwo{\darc12}
\nc\Lthree{\darc23}
\nc\Lfour{\darc13}
\nc\Lfive{\darc12\darc23}

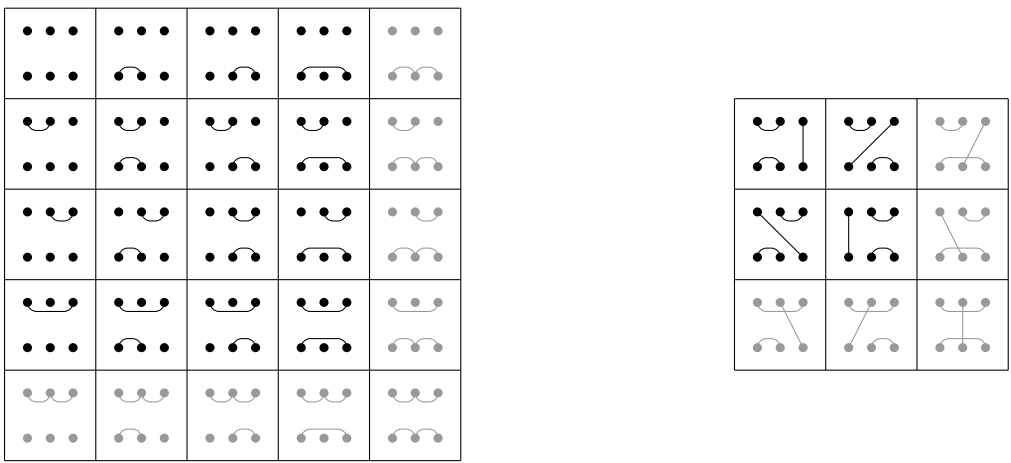
\begin{figure}[ht]
\begin{center}
\begin{tikzpicture}[scale=.6]
%
\node () at (0,8) {$\custpartn{1,2,3}{1,2,3}{\Rone\Lone}$};
\node () at (2,8) {$\custpartn{1,2,3}{1,2,3}{\Rone\Ltwo}$};
\node () at (4,8) {$\custpartn{1,2,3}{1,2,3}{\Rone\Lthree}$};
\node () at (6,8) {$\custpartn{1,2,3}{1,2,3}{\Rone\Lfour}$};
\node[black!40] () at (8,8) {$\custpartn{1,2,3}{1,2,3}{\Rone\Lfive}$};
\node () at (0,6) {$\custpartn{1,2,3}{1,2,3}{\Rtoo\Lone}$};
\node () at (2,6) {$\custpartn{1,2,3}{1,2,3}{\Rtoo\Ltwo}$};
\node () at (4,6) {$\custpartn{1,2,3}{1,2,3}{\Rtoo\Lthree}$};
\node () at (6,6) {$\custpartn{1,2,3}{1,2,3}{\Rtoo\Lfour}$};
\node[black!40] () at (8,6) {$\custpartn{1,2,3}{1,2,3}{\Rtoo\Lfive}$};
\node () at (0,4) {$\custpartn{1,2,3}{1,2,3}{\Rthree\Lone}$};
\node () at (2,4) {$\custpartn{1,2,3}{1,2,3}{\Rthree\Ltwo}$};
\node () at (4,4) {$\custpartn{1,2,3}{1,2,3}{\Rthree\Lthree}$};
\node () at (6,4) {$\custpartn{1,2,3}{1,2,3}{\Rthree\Lfour}$};
\node[black!40] () at (8,4) {$\custpartn{1,2,3}{1,2,3}{\Rthree\Lfive}$};
\node () at (0,2) {$\custpartn{1,2,3}{1,2,3}{\Rfour\Lone}$};
\node () at (2,2) {$\custpartn{1,2,3}{1,2,3}{\Rfour\Ltwo}$};
\node () at (4,2) {$\custpartn{1,2,3}{1,2,3}{\Rfour\Lthree}$};
\node () at (6,2) {$\custpartn{1,2,3}{1,2,3}{\Rfour\Lfour}$};
\node[black!40] () at (8,2) {$\custpartn{1,2,3}{1,2,3}{\Rfour\Lfive}$};
\node[black!40] () at (0,0) {$\custpartn{1,2,3}{1,2,3}{\Rfive\Lone}$};
\node[black!40] () at (2,0) {$\custpartn{1,2,3}{1,2,3}{\Rfive\Ltwo}$};
\node[black!40] () at (4,0) {$\custpartn{1,2,3}{1,2,3}{\Rfive\Lthree}$};
\node[black!40] () at (6,0) {$\custpartn{1,2,3}{1,2,3}{\Rfive\Lfour}$};
\node[black!40] () at (8,0) {$\custpartn{1,2,3}{1,2,3}{\Rfive\Lfive}$};
\foreach \x in {-1,1,3,5,7,9} {\draw (-1,\x)--(9,\x); \draw(\x,-1)--(\x,9);}
\begin{scope}[shift={(14,0)}]
\node () at (2,6) {$\custpartn{1,2,3}{1,2,3}{\Rtoo\Ltwo\stline33}$};
\node () at (4,6) {$\custpartn{1,2,3}{1,2,3}{\Rtoo\Lthree\stline31}$};
\node[black!40] () at (6,6) {$\custpartn{1,2,3}{1,2,3}{\Rtoo\Lfour\stline32}$};
\node () at (2,4) {$\custpartn{1,2,3}{1,2,3}{\Rthree\Ltwo\stline13}$};
\node () at (4,4) {$\custpartn{1,2,3}{1,2,3}{\Rthree\Lthree\stline11}$};
\node[black!40] () at (6,4) {$\custpartn{1,2,3}{1,2,3}{\Rthree\Lfour\stline12}$};
\node[black!40] () at (2,2) {$\custpartn{1,2,3}{1,2,3}{\Rfour\Ltwo\stline23}$};
\node[black!40] () at (4,2) {$\custpartn{1,2,3}{1,2,3}{\Rfour\Lthree\stline21}$};
\node[black!40] () at (6,2) {$\custpartn{1,2,3}{1,2,3}{\Rfour\Lfour\stline22}$};
\foreach \x in {1,3,5,7} {\draw (1,\x)--(7,\x); \draw(\x,1)--(\x,7);}
\end{scope}
\end{tikzpicture}
\end{center}
\vspace{-5mm}
\caption{Left:~the minimal ideal of $\P_3$ (black and gray elements) and of $\PB_3$ (black elements only).  Right:~the minimal ideal of $\B_3$ (black and gray elements) and of $\J_3$ (black elements only).}
\label{fig:minimal}
\end{figure}

\subsection{Congruence lattices}\label{sect:prelim_congruences}

Recall that an equivalence relation $\xi$ on a semigroup $S$ is a \emph{right
congruence} if, for all $a,b,x\in S$, $(a,b)\in\xi$ implies $(ax,bx)\in\xi$.
\emph{Left congruences} are defined analogously.  A relation on $S$ is a
\emph{congruence} if it is a left and right congruence.  The \emph{join} of two
equivalence relations $\xi,\zeta$ on the same set is the smallest equivalence
relation containing $\xi\cup\zeta$; we denote this by $\xi\vee\zeta$.  The join
of two congruences on a semigroup is also a congruence, and so too is the
intersection.  The set $\Cong(S)$ of all congruences on $S$ forms a lattice,
known as the \emph{congruence lattice of $S$}, under the operations of
intersection and join.  The maximum and minimum elements of $\Cong(S)$ are 
the \emph{universal congruence} ${\nabla_S=S\times S}$, and the \emph{trivial congruence} $\De_S=\bigset{(a,a)}{a\in S}$, respectively.

If $\Om\sub S\times S$, we write $\Om^\sharp_S$ (or just $\Om^\sharp$ if $S$ is
understood from context) for the congruence on $S$ generated by~$\Om$: that is,
the smallest congruence on $S$ containing $\Om$.  So $\Om^\sharp_S$ is the
least equivalence on $S$ containing the set $\bigset{(axb,ayb)}{(x,y)\in\Om,\
a,b\in S^1}$; see for example \cite[Section~1.5]{Howie}.  In the special case
that $\Om=\{(x,y)\}$ consists of a single pair, we will write $\cg
xy_S=\Omega^\sharp_S$; such a congruence is called \emph{principal}.
Principal congruences will play a major role in our investigations of finite diagram monoids; in fact, almost all congruences on the diagram monoids we consider are principal, as can be seen in Figures~\ref{fig-CongPn},~\ref{fig-CongPPn},~\ref{fig:Hasse_Bn} and~\ref{fig:Hasse_Jn}.
Clearly, every congruence on a semigroup is a (possibly infinite) join, and indeed a union, of principal congruences.
In particular, if every congruence on $S$ is finitely generated (i.e., of the form $\Om^\sharp$ for some finite subset $\Om$ of $S\times S$), then $\Cong(S)$ is generated as a $\vee$-semilattice by the set of all principal congruences.  The latter observation leads to the following simple lemma, which plays a crucial role in our proofs.

\begin{lemma}\label{lem:principal}
Suppose $S$ is a semigroup for which every congruence is finitely generated.  If $\Sigma$ is a subset of $\Cong(S)$ that is closed under $\vee$, and contains every principal congruence, then $\Sigma=\Cong(S)$.~\epfres
\end{lemma}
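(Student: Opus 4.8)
The plan is to prove $\Cong(S)\sub\Sigma$, the reverse inclusion being part of the hypothesis. So fix an arbitrary congruence $\theta\in\Cong(S)$. Since every congruence on $S$ is finitely generated, we may write $\theta=\Omega^\sharp$ for some finite subset $\Omega=\{(x_1,y_1),\dots,(x_k,y_k)\}$ of $S\times S$.

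The crux is the identity $\Omega^\sharp=\cg{x_1}{y_1}\vee\dots\vee\cg{x_k}{y_k}$. This is immediate from the fact that both sides are characterised as least congruences with a given containment property: a congruence $\rho$ on $S$ contains $\Omega$ if and only if it contains each pair $(x_i,y_i)$, which holds if and only if it contains each principal congruence $\cg{x_i}{y_i}$ (since $\cg{x_i}{y_i}$ is by definition the least congruence containing $(x_i,y_i)$). Hence the least congruence containing $\Omega$ equals the least congruence containing $\bigcup_{i}\cg{x_i}{y_i}$, and the latter is precisely the join $\bigvee_{i=1}^k\cg{x_i}{y_i}$. When $k=0$ this reads $\Omega^\sharp=\De_S$, the empty join being interpreted as the bottom of $\Cong(S)$.

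Now each $\cg{x_i}{y_i}$ is a principal congruence, hence belongs to $\Sigma$ by hypothesis; and $\De_S=\cg aa$ (for any $a\in S$) is principal as well, so $\De_S\in\Sigma$ also. Since $\Sigma$ is closed under $\vee$, a routine induction on $k$ shows that the finite join $\theta=\bigvee_{i=1}^k\cg{x_i}{y_i}$ lies in $\Sigma$. As $\theta\in\Cong(S)$ was arbitrary, this gives $\Cong(S)\sub\Sigma$, and therefore $\Sigma=\Cong(S)$.

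There is essentially no obstacle here: the finiteness of $\Omega$ is exactly what reduces an a priori infinite join of principal congruences to a finite one, so that closure under the binary operation $\vee$ suffices; and the only point requiring any care is the identification of $\Omega^\sharp$ with the join of the principal congruences it determines, which follows directly from the universal property defining each.
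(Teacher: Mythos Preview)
Your proof is correct and follows exactly the approach the paper has in mind: the paper states this lemma without proof (just a \qed), having already noted in the preceding sentence that finite generation means $\Cong(S)$ is generated as a $\vee$-semilattice by the principal congruences. Your argument simply spells out this observation in detail, and there is nothing to add.
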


If $T$ is a subsemigroup of a semigroup $S$, then the congruence lattices $\Cong(S)$ and $\Cong(T)$ may be related in the following way.  Any congruence $\xi$ on $S$ induces a congruence on $T$ defined by $\xi^T=\xi\cap(T\times T)$.  And any congruence $\zeta$ on $T$ induces a congruence on $S$: namely, $\zeta_S^\sharp$, the congruence on $S$ generated by $\zeta$.  So we have two maps
\[
\Phi_{S,T}:\Cong(S)\to\Cong(T):\xi\mt\xi^T \AND \Psi_{S,T}:\Cong(T)\to\Cong(S):\zeta\mt\zeta_S^\sharp.
\]
In general, neither map need be injective or surjective.  Also, although both $\Phi_{S,T}$ and $\Psi_{S,T}$ preserve $\sub$, and although $\Phi_{S,T}$ preserves $\cap$, and $\Psi_{S,T}$ preserves $\vee$, neither $\Phi_{S,T}$ nor $\Psi_{S,T}$ need preserve all three of $\sub$, $\cap$ and~$\vee$.  In other words, neither $\Phi_{S,T}$ nor $\Psi_{S,T}$ need be a lattice homomorphism.  But we note that
\[
\xi(\Phi_{S,T}\circ\Psi_{S,T}) \sub \xi \AND \zeta\sub\zeta(\Psi_{S,T}\circ\Phi_{S,T}) \qquad\text{for all $\xi\in\Cong(S)$ and $\zeta\in\Cong(T)$.}
\]
The maps $\Phi_{S,T}$ and $\Psi_{S,T}$ will play an important role here,
in that they will establish links between the different diagram monoids we consider:
perhaps even more significantly, with the three monoids for which the
congruence lattices are already known: namely, $\S_n$, $\I_n$ and $\O_n$; see
Section \ref{sect:SnInOn}.

Because of its inherent importance, the involution $\alpha\mapsto\alpha^\ast$
is sometimes given the status of an additional fundamental operation on a
diagram monoid $\mathcal{K}_n$, turning the latter into a so-called
$\ast$-semigroup.  In this context, a \emph{$\ast$-congruence} is a semigroup
congruence $\xi$ that additionally satisfies $(\alpha,\beta)\in\xi\implies(\alpha^\ast,\beta^\ast)\in\xi$.  The set $\Cong^*(\mathcal{K}_n)$ of all
$*$-congruences of $\mathcal K_n$ is a sublattice of $\Cong(\mathcal{K}_n)$.

\section{Congruences from ideals, normal subgroups and retractions}\label{sec-fam}

This section is pivotal for the rest of the paper.  In it, we introduce a number of constructions yielding families of congruences on certain classes of semigroups, including all finite semigroups.  
All of the congruences discussed in subsequent sections will be instances of these constructions.  The Rees congruence associated to an ideal (Definition~\ref{defn:Rees}) can be viewed as a starting point for each construction.  The key building blocks are what we shall call \emph{retractable ideals} (Definition~\ref{defn:retractable}) and \emph{liftable congruences} (Definition~\ref{defn:lift}), and a certain relation associated to each normal subgroup of a maximal subgroup in a stable, regular $\gJ$-class (Definitions~\ref{defn:stableJ} and~\ref{Rel-nu}).  
In the current paper, the results of this section will be exclusively applied to finite semigroups.  However, we state and prove them under more general hypotheses so as be applicable to certain classes of infinite semigroups as well; see Remark \ref{rem:infinite} for further comments on the infinite case, and also \cite{ER2018} for an application to infinite diagram monoids.
For completeness, we begin by recalling the definition of a Rees congruence:

\begin{defn}\label{defn:Rees}
A non-empty subset $I$ of a semigroup $S$ is an \emph{ideal} if $ax,xa\in I$ for all $x\in I$ and $a\in S$.  To an ideal $I$, we associate the \emph{Rees congruence} $R_I=\De_S\cup(I\times I)$.
\end{defn}

Recall that a semigroup has at most one minimal ideal, and that if a minimal ideal exists, then it is a single $\gJ$-class and is the intersection of all ideals; see for example \cite[Section 3.1]{Howie}.  Since the intersection of any finite collection of ideals is non-empty, every finite semigroup has a minimal ideal.  

\begin{defn}\label{defn:retractable}
  Let $S$ be a semigroup with a minimal ideal $M$.  An ideal $I$ of $S$ is \textit{retractable} if there exists a
  homomorphism $f: I \to M$ such that $xf = x$ for all $x\in M$; such a
  homomorphism is called a \textit{retraction}. 
\end{defn}

Clearly, if $I,J$ are ideals of $S$ with $I\sub J$ and~$J$ retractable, then $I$ is also retractable.  Also, note that if $S$ has a zero element, then $\{0\}$ is the minimal ideal and all ideals are trivially retractable.

Before we describe the first congruence construction, we prove some preliminary results concerning retractions.  
Recall that an element $x$ of a semigroup $S$ is \emph{regular} if there exists $y\in S$ such that $x=xyx$.  Then with $a=yxy$, note that $x=xax$ and $a=axa$, and that $xa$ and $ax$ are idempotents of $S$ with $x\R xa$ and $x\L ax$.  We say a subset of $S$ is regular if all its elements are regular.  If any element of a $\D$-class of $S$ is regular, then \emph{every} element of the $\D$-class is regular, but this is not true of $\gJ$-classes in general; see \cite[Proposition 2.3.1]{Howie}.

The next sequence of results concern semigroups with regular minimal ideals.  Finite semigroups always have a regular minimal ideal; this is well known, and also follows from Lemma \ref{lem:Mcr} below.  

\begin{lemma}
\label{lemma-RetractAux}
Let $S$ be a semigroup with a regular minimal ideal $M$, and let $I$ be an ideal of $S$. If $f: I\rightarrow M$ is a retraction, then $(sxt)f=s(xf)t$ for all $x\in I$ and $s,t\in S^1$.
\end{lemma}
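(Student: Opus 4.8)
The idea is to peel the two-sided identity down to a single one-sided one, and then prove that by a short computation inside the regular minimal ideal $M$, using idempotents of $M$ as one-sided identities.

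First I would reduce to proving
\[
(sx)f = s(xf)\qquad\text{for all }x\in I,\ s\in S^1;
\]
call this $(\ast)$. Since the hypotheses are left--right symmetric (pass to the opposite semigroup $S^{\mathrm{op}}$, for which $I$ is still an ideal, $M$ still a regular minimal ideal, and $f$ still a retraction), $(\ast)$ also yields the dual identity $(xt)f=(xf)t$ for all $x\in I$, $t\in S^1$. Combining the two, and using that $sx\in I$ because $I$ is an ideal, we get $(sxt)f=((sx)t)f=((sx)f)t=(s(xf))t=s(xf)t$, as required. So everything reduces to $(\ast)$.

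To prove $(\ast)$, fix $x\in I$ and $s\in S^1$, set $m=xf\in M$ and $n=(sx)f\in M$ (note $sx\in I$), and aim to show $n=sm$. Using that $M$ is regular --- and an ideal, so that inverses of its elements may be taken inside $M$ --- choose an inverse $y$ of $n$ in $M$ and an inverse $z$ of $m$ in $M$; as recalled just before the lemma, $e:=ny$ and $e':=zm$ are then idempotents of $M$ with $en=n$ and $me'=m$. The point that makes the bookkeeping painless is that, $M$ being an ideal, all the elements $e$, $es$, $e'$, $xe'$, $esx$, $sxe'$ lie in $M\subseteq I$, so $f$ applies to any product among them, either via the homomorphism property of $f$ on $I$ or via the fact that $f$ fixes $M$ pointwise. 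The crux is then to evaluate $(esx)f$ in two ways, with the bracketings $e(sx)$ and $(es)x$:
\[
n=en=(ef)\big((sx)f\big)=\big(e(sx)\big)f=\big((es)x\big)f=\big((es)f\big)(xf)=(es)m,
\]
so $n=esm$. Similarly $xe'\in M$ forces $xe'=(xe')f=(xf)(e'f)=me'=m$, hence $sxe'=sm$; comparing $(sxe')f=\big((sx)f\big)(e'f)=ne'$ with $(sxe')f=sxe'=sm$ gives $ne'=sm$. Finally, from $n=esm$ and $me'=m$ we obtain $ne'=esme'=esm=n$, whence $n=ne'=sm$, which is $(\ast)$.

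The step I expect to need the most care is the choice of the idempotents $e,e'$: one must resist invoking a Rees matrix / complete-simplicity description of $M$, since a regular minimal ideal need not be completely simple (e.g.\ the bicyclic monoid is a regular simple semigroup equal to its own minimal ideal). Regularity of $M$ alone already provides idempotents with the required one-sided identity behaviour, and everything else is just tracking which elements lie in $I$ (so that $f$ is multiplicative) versus in $M$ (so that $f$ is the identity on them).
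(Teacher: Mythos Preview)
Your proof is correct and follows essentially the same approach as the paper: both reduce to the one-sided identity $(sx)f=s(xf)$ by duality, pick an idempotent left identity for $(sx)f$ and an idempotent right identity for $xf$ inside $M$, and exploit that products with a factor from $M$ land in $M$ and are therefore fixed by $f$. Your presentation computes $(esx)f$ and $(sxe')f$ each in two ways, whereas the paper runs a single longer chain of equalities to first show that the right identity $e'$ for $xf$ is also a right identity for $(sx)f$, but the underlying moves are the same.
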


\pf
We will prove the lemma by showing that $(sx)f=s(xf)$ for any $s\in S ^ 1$ and
$x\in I$.  The equality $(xt)f=(xf)t$ is dual, and then $(sxt)f=s(xt)f=s(xf)t$.
Let $e\in M$ be any right identity for $xf$; such an~$e$ exists because $M$
is regular. Then, since $f$ is a retraction and $e,xe\in M$, we have
$xf=(xf)e=(xf)(ef)=(xe)f=xe$.  Next, let $e_1\in M$ be a left identity for
$(sx)f$.  Then
$
(sx)fe=e_1(sx)fe=(e_1f)(sx)f(ef)=(e_1sxe)f=(e_1s)f(xf)(ef)=(e_1s)f(xf)=(e_1sx)f=(e_1f)(sx)f=e_1(sx)f=(sx)f
$.
So $e$ is a right identity for $(sx)f$ as well, and hence $(sx)f=sxe=s(xf)$, as
required.
\epf

\begin{cor}\label{cor:retract_unique}
Let $S$ be a semigroup with a regular minimal ideal $M$.  If $I$ is a retractable ideal of $S$, then there exists a unique retraction from $I$ to $M$.
\end{cor}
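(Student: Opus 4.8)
The plan is to deduce uniqueness directly from Lemma \ref{lemma-RetractAux}, together with the fact that the minimal ideal $M$ is a single $\gJ$-class consisting of a union of groups once we know it is regular. Suppose $f,g\colon I\to M$ are two retractions. I want to show $xf=xg$ for every $x\in I$. The key point is that $xf$ and $xg$ both lie in $M$, and one can compare them by sandwiching $x$ between suitable elements of $M$: since $M$ is the minimal ideal, for any $x\in I$ we have $MxM\sub M$, and in fact $M\sub S^1xS^1\cap\ldots$ — more precisely, because $M$ is a regular $\gJ$-class and $f$ is a retraction, I expect $xf$ to be recoverable as $(exe')f = e\,(xf)\,e'$ for appropriate idempotents $e,e'\in M$ chosen so that $exe'\in M$ and $exe'$ already determines $xf$.

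Concretely, first I would fix $x\in I$ and use regularity of $M$ to pick, as in the proof of Lemma \ref{lemma-RetractAux}, a right identity $e\in M$ for $xf$ and a left identity $e_1\in M$ for $xf$; the proof of that lemma already shows $xf = e_1\,(xf)\,e = (e_1 x e)f$, and $e_1xe\in M$ since $M$ is an ideal. Now apply the same reasoning to $g$: there are idempotents $e_2,e_3\in M$ with $xg=(e_2xe_3)g$. The crucial observation is that the elements $e_1xe$ and $e_2xe_3$ lie in $M$, so $f$ and $g$ act on them as the identity. Thus $xf=(e_1xe)f=e_1xe$ and $xg=(e_2xe_3)g=e_2xe_3$. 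It remains to show these two elements of $M$ are equal; for this I would observe that both equal $(e_1e_2)x(e e_3)$-type expressions after using Lemma \ref{lemma-RetractAux} once more to move the retraction across, or more directly: since $e_1xe\in M$, we have $(e_1xe)g = e_1xe$ as well because $g$ restricts to the identity on $M$, while Lemma \ref{lemma-RetractAux} applied to $g$ gives $(e_1xe)g = e_1\,(xg)\,e$, so $e_1xe = e_1\,(xg)\,e$; symmetrically $e_2xe_3 = e_2\,(xf)\,e_3$. Comparing via the group structure of the $\H$-classes in $M$ then forces $xf=xg$.

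The main obstacle I anticipate is the last equality $e_1xe = e_2xe_3$: one must be careful that the idempotents chosen for $f$ need not be the same as those chosen for $g$, so a naive substitution does not immediately work. The clean way around this is to use the symmetric roles: apply Lemma \ref{lemma-RetractAux} to $g$ with $s=e_1$, $t=e$ to get $(e_1xe)g = e_1(xg)e$, and then use that $e_1xe\in M$ together with the retraction property of $g$ to get $(e_1xe)g = e_1xe = xf$; hence $xf = e_1(xg)e$. Running the identical argument for the idempotents $e_1,e$ also gives $xf = (e_1xe)f = e_1(xf)e$, so $e_1(xf)e = e_1(xg)e$. Finally, since $e_1$ is a left identity and $e$ a right identity for $xf$ (both in $M$), and analogously one checks they serve the same role for $xg$ once equality of the sandwiched products is known, left- and right-cancelling within the maximal subgroup of $M$ containing these elements yields $xf=xg$. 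This completes the proof of uniqueness; existence is the hypothesis that $I$ is retractable.
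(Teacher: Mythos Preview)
Your overall strategy is the same as the paper's: use Lemma~\ref{lemma-RetractAux} to push the retraction past elements of $M$, exploit that $f$ and $g$ both act identically on $M$, and thereby relate $xf$ to $xg$. You correctly obtain $xf=e_1xe\in M$ and hence $xf=e_1(xg)e$. The problem is your final step. From $e_1(xf)e=e_1(xg)e$ you cannot simply ``cancel'' $e_1$ and $e$: you have no reason to believe $xf$ and $xg$ lie in the same $\H$-class, nor that $e_1,e$ act as identities for $xg$. The phrase ``analogously one checks they serve the same role for $xg$'' is exactly the unproved assertion $xf=xg$ in disguise, so the argument is circular at that point.

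The paper closes the gap with an asymmetric choice of one-sided identities. Take $e\in M$ a \emph{left} identity for $xf$; then Lemma~\ref{lemma-RetractAux} and the retraction property for both $f$ and $g$ give
\[
xf \;=\; e(xf) \;=\; (ex)f \;=\; ex \;=\; (ex)g \;=\; e(xg).
\]
Dually, taking $e'\in M$ a \emph{right} identity for $xg$ gives $xg=(xf)e'$. Substituting the second into the first,
\[
xf \;=\; e(xg) \;=\; e\big((xf)e'\big) \;=\; \big(e(xf)\big)e' \;=\; (xf)e' \;=\; xg.
\]
The point is that $e$ was chosen for $xf$ and $e'$ for $xg$, so each survives one substitution as a genuine one-sided identity; your choice of both identities for $xf$ leaves you with $e_1(xg)e$ and no way to strip them off.
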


\pf
Suppose $f,g:I\to M$ are retractions, and let $x\in I$.  Let $e\in M$ be a
left identity for $xf$.  Using Lemma \ref{lemma-RetractAux}, we see that
$xf = e (xf) = (ex)f = ex = (ex)g = e (xg)$.  A dual argument shows that
$xg=(xf) e'$ for some $e'$.  But then $xf = e (xg) = e (xf)  e' = (xf)e'=xg$.
\epf

Although we will make no subsequent use of the next result, we believe it is of independent interest, and so include it for completeness.

\begin{prop}
Let $S$ be a semigroup with a regular minimal ideal $M$.  Then the union of any non-empty family of retractable ideals of $S$ is a retractable ideal.  Consequently,~$S$ contains a unique maximal retractable ideal.
\end{prop}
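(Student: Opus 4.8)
The statement is: if $S$ has a regular minimal ideal $M$, then the union of any nonempty family of retractable ideals of $S$ is a retractable ideal, and consequently $S$ has a unique maximal retractable ideal. The union $J=\bigcup_{i\in\Lambda} I_i$ of ideals is visibly an ideal, so the only real content is to produce a retraction $f\colon J\to M$. The natural candidate is to glue together the retractions $f_i\colon I_i\to M$, using Corollary~\ref{cor:retract_unique} to guarantee they agree on overlaps. So first I would observe that for each $i$ there is a unique retraction $f_i\colon I_i\to M$, and that whenever $I_i\sub I_j$ the restriction $f_j|_{I_i}$ is a retraction $I_i\to M$, hence equals $f_i$ by uniqueness. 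Since the ideals $I_i$ form a directed family under inclusion? — not necessarily, but any two elements $x\in I_i$, $y\in I_j$ both lie in the ideal $I_i\cup I_j$, which is retractable (it is a union of two retractable ideals, so once the pairwise case is known the general definition of $f$ is unambiguous). I would therefore define $f\colon J\to M$ by $xf=xf_i$ whenever $x\in I_i$; consistency follows because if $x\in I_i\cap I_j$ then $f_i$ and $f_j$ both restrict to the unique retraction on $I_i\cup I_j$ (or more directly: $xf_i = e(xf_i) = (ex)f_i = ex = (ex)f_j = e(xf_j) = xf_j$ for a suitable left identity $e\in M$ of $xf_i$, mimicking the proof of Corollary~\ref{cor:retract_unique}, noting $ex\in M\sub I_i\cap I_j$ and $f_i,f_j$ fix $M$ pointwise).

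\textbf{Checking $f$ is a retraction.} Once $f$ is well defined, it fixes $M$ pointwise since each $f_i$ does and $M\sub I_i$. For the homomorphism property, take $x,y\in J$; then $x\in I_i$ and $y\in I_j$ for some $i,j$, so both $x,y\in I_k$ where $I_k=I_i\cup I_j$ is retractable with retraction $f_k$. Then $xy\in I_k$ and $(xy)f = (xy)f_k = (xf_k)(yf_k) = (xf)(yf)$, using that $f$ agrees with $f_k$ on all of $I_k$. Hence $f$ is a retraction and $J$ is retractable.

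\textbf{The maximal retractable ideal.} For the consequence, apply the first part to the family of \emph{all} retractable ideals of $S$; this family is nonempty because $M$ itself is a retractable ideal (the identity map $M\to M$ is a retraction). Its union $R$ is then a retractable ideal containing every retractable ideal, so it is the unique maximal one.

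\textbf{Main obstacle.} The one genuinely delicate point is the well-definedness of $f$ on overlaps $I_i\cap I_j$ — i.e.\ that the local retractions are mutually compatible. Everything hinges on the uniqueness statement (Corollary~\ref{cor:retract_unique}), which in turn relies on $M$ being regular (so that left/right identities for elements of $M$ exist); that is exactly the hypothesis in force. After that, verifying the ideal and homomorphism properties is routine, since any finite collection of the relevant elements lies in a single retractable ideal from the family, reducing everything to the already-established two-ideal case.
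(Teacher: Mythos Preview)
Your plan is largely sound, and the well-definedness of $f$ via uniqueness of retractions (Corollary~\ref{cor:retract_unique}) matches the paper's approach. Two small slips there: where you wrote ``the unique retraction on $I_i\cup I_j$'' you must mean $I_i\cap I_j$ (the restrictions of $f_i$ and $f_j$ to the nonempty ideal $I_i\cap I_j$ are both retractions, hence equal by uniqueness); and in your one-line direct computation the final step $e(xf_j)=xf_j$ is not justified, since $e$ was only chosen as a left identity for $xf_i$ --- you need the two-sided trick from the proof of Corollary~\ref{cor:retract_unique}, as you yourself note.

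There is, however, a genuine gap in your verification of the homomorphism property. You assert that $I_k=I_i\cup I_j$ ``is retractable with retraction $f_k$'' --- but this is precisely the two-ideal instance of the proposition you are proving, and you never establish it. The circularity is essential: trying to prove the two-ideal case by the same method again requires $I_i\cup I_j$ to be retractable, so no induction gets started. The paper sidesteps this entirely by proving $(xy)f=(xf)(yf)$ directly via Lemma~\ref{lemma-RetractAux}: with $x\in I_k$ and $y\in I_l$,
\[
(xf)(yf)=(xf_k)(yf_l)=\bigl(x(yf_l)\bigr)f_k=\bigl((xy)f_l\bigr)f_k=(xy)f_l=(xy)f,
\]
using in turn that $yf_l\in M\sub S$ so Lemma~\ref{lemma-RetractAux} applies to $f_k$, that Lemma~\ref{lemma-RetractAux} applied to $f_l$ gives $x(yf_l)=(xy)f_l$, that $(xy)f_l\in M$ is fixed by $f_k$, and that $xy\in I_l$. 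This single computation is the missing piece; once you insert it (either to handle the pairwise case first, or to replace your circular step outright), your argument becomes the paper's.
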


\pf The second claim clearly follows from the first.  Suppose $\set{I_k}{k\in K}$ is a non-empty collection of retractable ideals of $S$, and put $I=\bigcup_{k\in K}I_k$; clearly $I$ is an ideal of $S$.  For each $k\in K$, let $f_k:I_k\to M$ be the unique retraction.  For any non-empty subset $L\sub K$, define $I_L=\bigcap_{l\in L}I_l$.  Since $M$ is contained in each~$I_l$, it follows that $I_L$ is non-empty, and so an ideal of $M$.  Furthermore, for any $l\in L$, the restriction of $f_l$ to~$I_L$ is a retraction; thus, by Corollary \ref{cor:retract_unique}, all such restrictions are equal.  It follows that there is a well-defined map $f:I\to M$ defined, for $x\in I$, by $xf=xf_k$ if $x\in I_k$.  Clearly $f$ acts identically on $M$, so to complete the proof, it remains to show that $(xy)f=(xf)(yf)$ for all $x,y\in I$.  With this in mind, suppose $x,y\in I$, and let $k,l\in K$ be such that $x\in I_k$ and $y\in I_l$.  Then, using Lemma~\ref{lemma-RetractAux} and the facts that $xy\in I_l$ and $(xy)f_l\in M$, we have $(xf)(yf) = (xf_k)(yf_l) = (x(yf_l))f_k = ((xy)f_l)f_k = (xy)f_l = (xy)f$, as required.
\epf

Our first construction combines the idea of a retractable ideal with the following.

\begin{defn}\label{defn:lift}
Let $S$ be a semigroup with a minimal ideal $M$.  A congruence $\xi$ on $M$ is \emph{liftable} if any, and hence all, of the following equivalent conditions are satisfied:
\bit
\item[(i)] $\De_S\cup\xi$ is a congruence on $S$, or
\item[(ii)] there exists a congruence $\zeta$ on $S$ such that $\xi=\zeta\cap(M\times M)$, or
\item[(iii)] for all $(x,y)\in\xi$ and $s\in S$, $(xs,ys),(sx,sy)\in\xi$.
\eit
\end{defn}

\begin{defn}\label{defn:lift2}
Let $S$ be a semigroup with a regular minimal ideal $M$.  To any retractable ideal $I$ of $S$ (Definition~\ref{defn:retractable}), and to any liftable congruence $\xi$ on $M$ (Definition~\ref{defn:lift}), we
associate the relation
\[
  \zeta_{I,\xi}=\De_S\cup\set{(x,y)\in I\times I}{(xf,yf) \in \xi},
\]
where $f:I\to M$ is the unique retraction.
\end{defn}

Clearly the trivial and universal congruences on $M$, $\De_M$ and $\nabla_M$, are liftable: in particular, when $I=M$ is the minimal ideal itself, we have $\zeta_{M,\De_M} = \De_S$ and $\zeta_{M,\nabla_M}=R_M$.  Note also that if $\xi$ is a liftable congruence, and if $I_1,I_2$ are retractable ideals with $I_1\sub I_2$, then $\zeta_{I_1,\xi}\sub\zeta_{I_2,\xi}$.

\begin{prop}\label{prop:lift}
Let $S$ be a semigroup with a regular minimal ideal $M$.  If $I$ is a retractable ideal of $S$, and $\xi$ a liftable congruence on $M$, then the relation $\zeta_{I,\xi}$ is a congruence on $S$.
\end{prop}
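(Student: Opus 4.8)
The plan is to verify directly that $\zeta = \zeta_{I,\xi}$ is reflexive, symmetric, transitive, and compatible with multiplication on both sides. Reflexivity is immediate since $\De_S \sub \zeta$, and symmetry is clear since both $\De_S$ and the condition $(xf,yf)\in\xi$ are symmetric. For transitivity, suppose $(x,y),(y,z)\in\zeta$. If any of the three pairs lies in $\De_S$ we reduce to fewer nontrivial pairs; the only substantive case is $x,y,z\in I$ with $(xf,yf),(yf,zf)\in\xi$, and then $(xf,zf)\in\xi$ by transitivity of $\xi$, so $(x,z)\in\zeta$.

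The core of the argument is compatibility. By the left-right symmetry of the defining condition it suffices to show that $(x,y)\in\zeta$ implies $(xs,ys)\in\zeta$ for every $s\in S^1$ (the $s=1$ case is trivial, so take $s\in S$). If $(x,y)\in\De_S$ this is obvious, so assume $x,y\in I$ with $(xf,yf)\in\xi$. Since $I$ is an ideal, $xs,ys\in I$, so I must check that $((xs)f,(ys)f)\in\xi$. Here I invoke Lemma~\ref{lemma-RetractAux}: because $M$ is a regular minimal ideal and $f:I\to M$ is the (unique, by Corollary~\ref{cor:retract_unique}) retraction, we have $(xs)f = (xf)s$ and $(ys)f = (yf)s$. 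Now $xf, yf \in M$, so $(xf)s$ and $(yf)s$ need not lie in $M$ a priori — but $M$ is an ideal, so in fact $(xf)s \in M$ and $(yf)s\in M$. Therefore the pair $((xf)s,(yf)s)$ lives in $M\times M$, and since $(xf,yf)\in\xi$ with $\xi$ liftable, condition~(iii) of Definition~\ref{defn:lift} gives exactly $((xf)s,(yf)s)\in\xi$. Hence $((xs)f,(ys)f)\in\xi$ and $(xs,ys)\in\zeta$, as required.

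The main obstacle — really the only place any care is needed — is making sure the multiplied-out elements stay inside $M$ so that liftability of $\xi$ can be applied: that is handled by the observation that $M$, being the minimal ideal, absorbs multiplication by $S$, together with Lemma~\ref{lemma-RetractAux} to commute $f$ past the multiplication. One should also be slightly careful in the transitivity step when mixing $\De_S$-pairs with $I\times I$-pairs (e.g. $(x,y)\in\De_S$, so $x=y$, and then $(y,z)\in\zeta$ forces $(x,z)\in\zeta$ directly), but these are routine. Everything else is bookkeeping.
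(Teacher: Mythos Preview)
Your proof is correct and follows essentially the same route as the paper's: the compatibility step is exactly the paper's argument (apply Lemma~\ref{lemma-RetractAux} to get $(xs)f=(xf)s$, then invoke condition~(iii) of Definition~\ref{defn:lift}). The only difference is that you spell out reflexivity, symmetry, and transitivity explicitly, whereas the paper silently takes for granted that $\zeta_{I,\xi}$ is an equivalence and checks only compatibility.
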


\pf Write $\zeta=\zeta_{I,\xi}$ for brevity, and let $f:I\to M$ be the retraction.  Let $(x,y)\in\zeta$ and $s\in S$ be
arbitrary.  We must show that $(xs,ys),(sx,sy)\in\zeta$.  This is clear if
$x=y$, so suppose $x,y\in I$ and $xf\mathrel\xi yf$.  Since~$I$ is an
ideal, we have $xs,ys\in I$.  By Lemma \ref{lemma-RetractAux}, and condition (iii) of Definition~\ref{defn:lift}, we have $(xs)f = (xf)s \mathrel\xi
(yf)s = (ys)f$, showing that $(xs,ys)\in\zeta$.  A dual argument shows that
$(sx,sy)\in\zeta$. 
\epf

In general, the minimal ideal may have many congruences, only few of which are liftable congruences. 
In a full transformation
semigroup $\T_X$, for example, the minimal ideal $M$ consists of the
constant transformations and is a right zero semigroup of size $|X|$. Hence, every
equivalence relation on $M$ is a congruence. On the other hand, for any $(f_1,
f_2), (g_1,g_2)\in M \times M$ with $f_1\not = f_2$ and $g_1 \not =g_2$  
there exists $h\in \T_X$ such that $(f_1h, f_2h) = (g_1,g_2)$, and so 
the only liftable congruences on $M$ are $\De_M$ and $\nabla_M$. 

In what follows, we will be mostly concerned with certain liftable congruences
that exist in \emph{every} finite semigroup, beyond the obvious $\De_M$ and
$\nabla_M$.  These liftable congruences are present in many infinite semigroups as well; the key concept is \emph{stability}.

\begin{defn}\label{defn:stableJ}
A $\gJ$-class $J$ of a semigroup $S$ is \emph{stable} if, for all $x\in J$ and $a\in S$,
\[
xa\gJ x \implies xa\R x \AND ax\gJ x \implies ax\L x.
\]
\end{defn}

It is well known that every $\gJ$-class of a finite semigroup is stable; see for example \cite[Theorem A.2.4]{RSbook}.  We require two preliminary results concerning stability; Lemmas \ref{lem:stableJD} and \ref{lem:Mcr} are probably well known, but we include simple proofs for convenience.  The proof of the next result utilises the partial order on $\gJ$-classes discussed in Section \ref{sect:prelim_Green}.

\begin{lemma}\label{lem:stableJD}
If $J$ is a stable $\gJ$-class of a semigroup $S$, then $J$ is a $\D$-class.
\end{lemma}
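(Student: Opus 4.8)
The plan is to prove the inclusion $J\sub D_x$ for every $x\in J$, where $D_x$ is the $\D$-class of $x$; since $\D\sub\gJ$ holds in any semigroup, the reverse inclusion $D_x\sub J$ is automatic, and the two together give that $J=D_x$ is a $\D$-class. So fix $x,y\in J$; using the standard fact ${\D}={\L}\circ{\R}$, it suffices to produce $z\in S$ with $x\L z$ and $z\R y$.

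Since $x\gJ y$, we may write $y=axb$ and $x=cyd$ for some $a,b,c,d\in S^1$. Put $z=ax$. First I claim $z\gJ x$: on one hand $z=ax\in S^1x$ gives $J_z\leq J_x$, while on the other $x=cyd=c(ax)(bd)\in S^1zS^1$ gives $J_x\leq J_z$. Now apply stability at the point $x$ with left-multiplier $a$ (the case $a=1$ being trivial, and $a\in S$ otherwise): from $ax\gJ x$ we conclude $z=ax\L x$, i.e.\ $x\L z$.

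Next I claim $zb\gJ z$: indeed $zb=(ax)b\in zS^1$ gives $J_{zb}\leq J_z$, and $zb=axb=y\gJ x\gJ z$ (using $x,y\in J$ and the claim just proved) gives $J_z\leq J_{zb}$. Since $z\gJ x$ and $x\in J$, we have $z\in J$, so stability applies at $z$ with right-multiplier $b$ (again $b=1$ is trivial): from $zb\gJ z$ we get $zb\R z$, that is $y=zb\R z$. Combining with the previous paragraph, $x\L z\R y$, so $x\D y$. As $x,y\in J$ were arbitrary, $J$ is contained in a single $\D$-class, which completes the proof.

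The whole argument is essentially bookkeeping with the Green preorder on $\gJ$-classes; the only substantive input is invoking stability twice — once on the left to upgrade $\gJ$ to $\L$, and once on the right to upgrade $\gJ$ to $\R$ — so I do not expect any real obstacle beyond carefully tracking the distinction between $S$ and $S^1$ in the trivial-multiplier cases.
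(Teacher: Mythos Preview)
Your proof is correct and follows essentially the same approach as the paper: both write $y=axb$, show $ax\in J$ via a sandwich argument on the $\gJ$-order, then invoke stability once on the left (to get $ax\L x$) and once on the right (to get $(ax)b\R ax$), concluding $x\L ax\R y$. The paper obtains $ax\gJ x$ slightly more economically via the chain $J_y=J_{axb}\leq J_{ax}\leq J_x=J_y$ without introducing $c,d$, but this is a cosmetic difference; you are also a bit more careful than the paper in noting the trivial $a=1$ case for $S^1\setminus S$.
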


\pf
Let $x,y\in J$.  We must show that $x\D y$.  Since $x\gJ y$, we have $y=axb$ for some $a,b\in S^1$.  Then $J_y = J_{axb} \leq J_{ax} \leq J_x = J_y$, so that, in fact, all of these $\gJ$-classes are equal (to $J$).  In particular, $ax\gJ x$ and $(ax)b\gJ ax$.  Since $x,ax\in J$, it follows from stability that $ax\L x$, and $ax \R (ax)b=y$.  Thus, $x\L ax\R y$, which gives $x\D y$.
\epf

As noted above, if a semigroup has a minimal ideal, then this ideal is a single $\gJ$-class.  Recall that a semigroup is \emph{completely regular} if it is a union of groups; in particular, any completely regular semigroup is regular.  Recall that a \emph{rectangular band} is a semigroup consisting of idempotents, all of which are $\D$-related.

\begin{lemma}\label{lem:Mcr}
Let $S$ be a semigroup with a stable minimal ideal $M$.  Then $M$ is completely regular.  In particular, if $M$ is $\H$-trivial, then $M$ is a rectangular band.
\end{lemma}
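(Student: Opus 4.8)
The plan is to show first that $M$ is completely regular (a union of groups), and then deduce the rectangular-band claim as a special case. Since $M$ is a minimal ideal, it is a single $\gJ$-class, and by Lemma~\ref{lem:stableJD} stability makes it a single $\D$-class; in particular all of $M$ is regular, so every $x\in M$ has an associate $a\in M$ with $x=xax$, $a=axa$. To prove complete regularity it suffices to show that the $\H$-class $H_x$ of each $x\in M$ is a group, and for that it is enough to produce an idempotent in $H_x$, i.e.\ to show $x\H x^2$ (equivalently $x\R x^2$ and $x\L x^2$).

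The key point is that, because $M$ is an ideal, $x^2\in M$, so $x^2\gJ x$; stability of $M$ (applied with $a=x$) then gives $x\cdot x\R x$ and $x\cdot x\L x$ directly from Definition~\ref{defn:stableJ}. Hence $x^2\H x$, so $H_x$ contains the idempotent $x^2\cdot(\text{its inverse in }H_x)$ — more cleanly: an $\H$-class containing two elements $x,x^2$ with $x^2\in H_x$ is a group $\H$-class by Green's theorem / the standard fact that an $\H$-class is a group iff it contains an idempotent iff $H^2\cap H\neq\emptyset$. Thus every $\H$-class of $M$ is a group, so $M$ is completely regular.

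For the second statement, suppose in addition $M$ is $\H$-trivial. Then each group $\H$-class of $M$ is a singleton, and a trivial group $\H$-class is exactly $\{e\}$ for an idempotent $e$; so every element of $M$ is idempotent. Combined with the fact (from Lemma~\ref{lem:stableJD}) that $M$ is a single $\D$-class, this says precisely that $M$ is a semigroup of mutually $\D$-related idempotents, i.e.\ a rectangular band, which is the desired conclusion.

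I expect the only mildly delicate step to be the passage ``$x^2\H x \implies H_x$ is a group'': one must invoke the standard structure-theoretic fact that an $\H$-class containing an idempotent is a subgroup (equivalently, Green's lemma gives that $x^2\H x$ forces $H_x H_x = H_x$, so $H_x$ is closed under multiplication and hence a group since $S$ — or at least $M$ — is in the relevant range a completely simple-type piece). Since this is entirely classical (see \cite[Chapter 2]{Howie}), the proof is short; the real content is simply observing that stability of $M$ is exactly what forces $x^2\H x$ for every $x\in M$.
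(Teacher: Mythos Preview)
Your proposal is correct and follows essentially the same route as the paper: show $x^2\H x$ via stability of the single $\gJ$-class $M$, invoke Green's Theorem to conclude each $\H$-class is a group, then deduce the rectangular-band statement from $\H$-triviality plus Lemma~\ref{lem:stableJD}. The only cosmetic wrinkle is the phrase ``$H_x$ contains the idempotent $x^2\cdot(\text{its inverse in }H_x)$'', which is circular as written (you do not yet have the group structure); but you immediately patch this by citing the correct criterion $H_x^2\cap H_x\neq\emptyset$, which is exactly what the paper does via \cite[Theorem 2.2.5]{Howie}.
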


\pf 
Let $x\in M$.  Since $M$ is an ideal, $x^2\in M$.  Since~$M$ is a single $\gJ$-class, it follows that $x^2\gJ x$.  Stability then gives $x^2\R x$ and $x^2\L x$, so that $x^2\H x$.  Green's Theorem (see \cite[Theorem 2.2.5]{Howie}) then says that the $\H$-class of $x$ is a group.  This completes the proof of the first claim.  The second follows from the first, together with the fact that $M$ is a $\D$-class, which itself follows from Lemma \ref{lem:stableJD}.
\epf

As a consequence of Lemma \ref{lem:Mcr}, all the results above concerning semigroups with a \emph{regular} minimal ideal apply to any semigroup with a \emph{stable} minimal ideal; as noted above, this includes all finite semigroups.
If $\K$ is any of Green's relations on a semigroup $S$ with a minimal ideal $M$, we denote by ${\K^M}={\K}\cap(M\times M)$ the restriction of~$\K$ to $M$.

\begin{lemma}\label{lem:lift}
Let $S$ be a semigroup with a stable minimal ideal $M$.  Then $\L^M$, $\R^M$ and $\H^M$ are all liftable congruences on $M$.
\end{lemma}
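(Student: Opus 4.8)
The plan is to verify that each of $\L^M$, $\R^M$, $\H^M$ is both an equivalence relation (clear, since it is the restriction of an equivalence) and satisfies condition (iii) of Definition~\ref{defn:lift}: for all $(x,y)$ in the relation and all $s\in S$, the pairs $(xs,ys)$ and $(sx,sy)$ again lie in the relation. Since $\H^M=\L^M\cap\R^M$, it suffices to handle $\L^M$ and $\R^M$ separately, and by the left-right duality of Green's relations it is enough to treat one of them in full detail; I would write out the argument for $\L^M$ and note that $\R^M$ is dual. (A small remark: one should first observe that $M\times M$ is itself stable under multiplication by $S$, since $M$ is an ideal, so $xs,ys,sx,sy$ all lie in $M$ and the question of membership in $\L^M$ etc.\ makes sense.)

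First I would record the two directions of stability in usable form. For $\L^M$: suppose $x\L y$ in $M$, i.e.\ $S^1x=S^1y$; I must show $xs\L ys$ and $sx\L sy$ for arbitrary $s\in S$. The right-multiplication case $(xs,ys)$ is the easy one: $x\L y$ always implies $xs\L ys$ in any semigroup, since $\L$ is a right congruence (this is standard — see \cite[Chapter~2]{Howie}), and both products lie in $M$, so $(xs,ys)\in\L^M$. The left-multiplication case $(sx,sy)$ is where stability enters. Here I would argue: $sx$ and $sy$ lie in $M$, which is a single $\gJ$-class, so $sx\gJ x$ and $sy\gJ y$; in particular $sx=ux$ with $u=s$, and $x\gJ sx$, so by the second implication of stability applied to the $\gJ$-class $M$ we get $sx\L x$. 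Similarly $sy\L y$. Combined with $x\L y$ this gives $sx\L x\L y\L sy$, hence $(sx,sy)\in\L^M$.

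The argument for $\R^M$ is exactly dual: $\R$ is a left congruence so $(sx,sy)\in\R^M$ is automatic, while for $(xs,ys)$ one uses $xs\gJ x$ (as $xs\in M$) and the first implication of stability, $xa\gJ x\implies xa\R x$ with $a=s$, to get $xs\R x$, and dually $ys\R y$, so $xs\R x\R y\R ys$. Finally $\H^M$: since $\H^M=\L^M\cap\R^M$ and an intersection of liftable congruences is liftable (indeed condition (iii) is manifestly preserved under intersection), $\H^M$ is liftable too; alternatively one can just note that all four requisite pairs lie in both $\L^M$ and $\R^M$ by the two cases just done. I do not anticipate a serious obstacle here — the only subtlety is remembering that stability is stated for a $\gJ$-class and must be invoked with the observation that $M$ being the minimal ideal is a single $\gJ$-class, so that any product $sx$ or $xs$ with $x\in M$ automatically lies in the \emph{same} $\gJ$-class as $x$, which is precisely the hypothesis $xa\gJ x$ (resp.\ $ax\gJ x$) needed to trigger stability.
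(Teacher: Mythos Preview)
Your proposal is correct and follows essentially the same approach as the paper's proof: reduce to $\L^M$ by duality and the identity $\H^M=\L^M\cap\R^M$, handle right multiplication via the fact that $\L$ is a right congruence, and handle left multiplication by noting $sx,sy\in M$ forces $sx\gJ x$ and $sy\gJ y$, whence stability yields $sx\L x\L y\L sy$. The paper's write-up is slightly terser but the logical content is identical.
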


\pf By duality, and since ${\H^M}={\L^M}\cap{\R^M}$, it suffices to prove the
result for $\L^M$.  Let $(x,y)\in{\L^M}$ and $s\in S$.  Since $M$ is an ideal,
$xs,ys,sx,sy\in M$.  Since $\L$ is a right congruence, it follows that
$(xs,ys)\in{\L}$, and hence $(xs,ys)\in{\L^M}$.  Since $M$ is a single
$\gJ$-class, $x\gJ sx$, and stability gives $x\L sx$; similarly, $y\L sy$.  It
follows that $sx\L x\L y\L sy$, and so $(sx,sy)\in{\L^M}$. \epf


As a result of Proposition \ref{prop:lift} and Lemma \ref{lem:lift}, we obtain
the following family of congruences associated to an arbitrary retractable
ideal in a semigroup with a stable minimal ideal.

\begin{defn}
\label{defn:lrmI}
Let $S$ be a semigroup with a stable minimal ideal $M$.  Given a retractable ideal $I$ of $S$ (Definition \ref{defn:retractable}), define the congruences
\begin{align*}
\lambda_I=\zeta_{I,{\L^M}}&=\Delta_S\cup\set{ (x,y)\in I\times I}{ xf\L yf},
  &\mu_I=\zeta_{I,{\H^M}}&=\Delta_S\cup\set{ (x,y)\in I\times I}{ xf\H yf},\\
\rho_I=\zeta_{I,{\R^M}}&=\Delta_S\cup\set{ (x,y)\in I\times I}{ xf\R yf},
  &\eta_I=\zeta_{I,\De_M}&=\Delta_S\cup\set{ (x,y)\in I\times I}{ xf= yf},
\end{align*}
where $f:I\to M$ is the unique retraction, and where the relations $\zeta_{I,\xi}$ are as in Definition~\ref{defn:lift2}.
\end{defn}

It is clear that $\eta_I\sub\mu_I=\lam_I\cap\rho_I$, with $\eta_I=\mu_I$ if and only if $M$ is $\H$-trivial, and that all four relations from Definition~\ref{defn:lrmI} are contained in $R_I=\zeta_{I,\nabla_M}$.  
It is also clear that if $I_1,I_2$ are retractable ideals with $I_1\sub I_2$, then $R_{I_1}\sub R_{I_2}$, $\lam_{I_1}\sub \lam_{I_2}$, $\rho_{I_1}\sub \rho_{I_2}$, $\mu_{I_1}\sub \mu_{I_2}$ and $\eta_{I_1}\sub \eta_{I_2}$.  Since $M$ is a single $\D$-class (by Lemma \ref{lem:stableJD}), and since ${\D}={\L}\vee{\R}$, it is easy to show that $R_I=\lam_I\vee\rho_I$ (see also Propositions~\ref{prop-CR3} and~\ref{prop-CR4}).  If $M$ is $\L$-trivial, then $\lam_I=\mu_I=\eta_I$ and $R_I=\rho_I$; if $M$ is $\L$-universal, then $\lam_I=R_I$ and~$\mu_I=\rho_I$.

The remaining families of congruences involve the interaction between an ideal and a stable, regular $\gJ$-class directly above it.  Each will make use of the following auxiliary relation:

\begin{defn}
\label{Rel-nu}
Suppose $J$ is a stable, regular $\gJ$-class of a semigroup $S$.  For a normal subgroup $N$ of a maximal subgroup contained in $J$, define the relation
\[
\nu_N = S^1(N\times N)S^1\cap(J\times J) = \set{ (sxt,syt)\in J\times J}{ x,y\in N,\ s,t\in S^1}.
\]
\end{defn}

\begin{rem}
An alternative, perhaps more intuitive, way of viewing the relation $\nu_N$ is
as follows.  
Since~$J$ is stable and regular, the principal factor $\overline{J}$ is a regular
completely $0$-simple semigroup and, hence, isomorphic to a Rees $0$-matrix
semigroup $\M^0[G;K,L; P]$, where $G$ is the maximal subgroup of $S$ containing~$N$, and~$P$ is an $L\times K$ matrix over $G\cup\{0\}$; see \cite[Section 3.2]{Howie}.  The natural
epimorphism $G\rightarrow G/N$ induces an epimorphism ${\M^0[G;K,L;P]\rightarrow
\M^0[G/N;K,L;P/N]}$, where $P/N$ is obtained from $P$ by replacing every entry by
the coset it represents, whose kernel corresponds to a congruence
$\overline{\nu}_N$ on $\overline{J}$.  It is possible to show that
$\nu_N=\overline{\nu}_N\cap(J\times J)=\overline{\nu}_N\setminus\{(0,0)\}$.  
Among other things, once this correspondence has been established, it quickly follows that $\nu_N$ is an equivalence relation, but we will prove this directly in Lemma \ref{lem:nu_H_equivalence}(ii) below.
Since establishing this correspondence is not crucial to our purposes, we omit the~details.
\end{rem}

Although the $\nu_N$ relation from Definition \ref{Rel-nu} could be defined for a normal subgroup $N$ of \emph{any} maximal subgroup, stability of the $\gJ$-class containing $N$ is essential in showing that certain relations built from $\nu_N$ are in fact conguences; see Lemma \ref{lem:nu} and Remark \ref{rem:infinite}.

%
If $J$ is a stable, regular $\gJ$-class of a semigroup $S$, then it is a $\D$-class by Lemma \ref{lem:stableJD}, so all maximal subgroups of $S$ contained in $J$ are isomorphic; see \cite[Theorem 2.20]{CPbook}.  The next result shows that all maximal subgroups in such a $\gJ$-class produce the same collection of $\nu_N$ relations; thus, in the applications that follow, we need only consider a single fixed maximal subgroup of each stable, regular $\gJ$-class.

\begin{lemma}\label{lem:G1G2}
Let $G_1,G_2$ be maximal subgroups contained in the same stable, regular $\gJ$-class $J$ of a semigroup $S$.  If $N_1$ is a normal subgroup of $G_1$, then there exists a normal subgroup $N_2$ of $G_2$ such that $\nu_{N_1}=\nu_{N_2}$.
\end{lemma}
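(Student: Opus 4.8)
The plan is to transport the normal subgroup $N_1\le G_1$ to a normal subgroup of $G_2$ using the standard "conjugation" between maximal subgroups in the same $\D$-class, and then verify that the two-sided closures inside $J$ coincide. Since $J$ is stable and regular, Lemma \ref{lem:stableJD} tells us $J$ is a $\D$-class, so $G_1$ and $G_2$ are isomorphic via an explicit map: pick idempotents $e_1\in G_1$, $e_2\in G_2$ (the identities of these groups), and choose an element $a\in J$ with $e_1\R a\L e_2$, together with $a'\in J$ with $e_2\R a'\L e_1$ such that $aa'=e_1$ and $a'a=e_2$ (such $a,a'$ exist in a regular $\D$-class; this is the Green's-lemma machinery from \cite[Section 2]{Howie} or \cite[Chapter 2]{CPbook}). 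Conjugation by $a$ — i.e. $g\mapsto a'ga$ — is an isomorphism $G_1\to G_2$, so $N_2:=a'N_1a$ is a normal subgroup of $G_2$. The claim to prove is then $\nu_{N_1}=\nu_{N_2}$.

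First I would establish $\nu_{N_2}\sub\nu_{N_1}$ (the other inclusion being symmetric, using the inverse conjugation $g\mapsto ag a'$ and the fact that $a(a'N_1a)a'=e_1N_1e_1=N_1$). Take a generating pair: an element of $\nu_{N_2}$ has the form $(sxt,syt)$ with $x,y\in N_2$, $s,t\in S^1$, and both members lying in $J$. Write $x=a'x_1a$, $y=a'y_1a$ with $x_1,y_1\in N_1$. Then $sxt=(sa')x_1(at)$ and $syt=(sa')y_1(at)$, with $sa',at\in S^1$ (here $a,a'\in S\sub S^1$). So $(sxt,syt)\in S^1(N_1\times N_1)S^1$, and since both entries lie in $J$ by hypothesis, we get $(sxt,syt)\in\nu_{N_1}$. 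This shows $\nu_{N_2}\sub\nu_{N_1}$, and by symmetry equality holds.

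The only genuinely delicate point is making sure the elements $a,a'$ realizing the $\D$-class isomorphism can be taken \emph{inside} $J$ and satisfying $aa'=e_1$, $a'a=e_2$ — but this is exactly the content of the classical structure theory of a regular $\D$-class (one uses that $e_1\D e_2$ are idempotents, picks $a\in R_{e_1}\cap L_{e_2}$, and lets $a'$ be an inverse of $a$ in $L_{e_1}\cap R_{e_2}$). Everything else is a routine substitution. Stability is not needed for this argument beyond invoking Lemma \ref{lem:stableJD} to know $J$ is a single $\D$-class so that all maximal subgroups in $J$ are isomorphic; regularity of $J$ supplies the connecting elements $a,a'$. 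One should also note in passing that $\nu_{N_1},\nu_{N_2}$ do not depend on which maximal subgroup inside $G_1$-th and $G_2$-th $\H$-classes we picked, but that is not required here — the statement only asks for existence of \emph{some} normal $N_2\le G_2$ with $\nu_{N_1}=\nu_{N_2}$, and $N_2=a'N_1a$ does the job.
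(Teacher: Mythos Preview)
Your proposal is correct and follows essentially the same route as the paper: invoke Lemma \ref{lem:stableJD} to see $J$ is a $\D$-class, transport $N_1$ to $N_2\le G_2$ via the standard isomorphism between maximal subgroups in a regular $\D$-class (the paper cites \cite[Theorem 2.20]{CPbook} for the map $x\mapsto axb$, you build it explicitly as $g\mapsto a'ga$), and then absorb the conjugating elements into the outer $s,t$ factors to show $S^1(N_2\times N_2)S^1\sub S^1(N_1\times N_1)S^1$, with equality of the $\nu$'s following by symmetry.
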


\pf
By Lemma \ref{lem:stableJD}, $J$ is a $\D$-class.
By \cite[Theorem 2.20]{CPbook}, there exist $a,b\in S$ such that the map ${\phi:G_1\to G_2:x\mt axb}$ is an isomorphism, with inverse $x\mt bxa$.  
Put $N_2=N_1\phi=aN_1b$; normality of $N_2$ follows from that of $N_1$.  We also have
\[
S^1(N_2\times N_2)S^1 = S^1(aN_1b\times aN_1b)S^1 = S^1(a(N_1\times N_1)b)S^1 \sub S^1(N_1\times N_1)S^1,
\]
which gives $\nu_{N_2} = S^1(N_2\times N_2)S^1 \cap (J\times J) \sub S^1(N_1\times N_1)S^1 \cap (J\times J) = \nu_{N_1}$.  Similarly, $\nu_{N_1}\sub\nu_{N_2}$.
\epf

Since we intend to use the $\nu_N$ relations to construct congruences, we need to know that they are equivalences.

\begin{lemma}\label{lem:nu_H_equivalence}
Suppose $J$ is a stable, regular $\gJ$-class of a semigroup $S$, and $N$ a normal subgroup of a maximal subgroup contained in $J$.  Then
\bit\begin{multicols}2
\itemit{i} $\nu_N\sub{\H}$,
\itemit{ii} $\nu_N$ is an equivalence on $J$.
\end{multicols}\eit
\end{lemma}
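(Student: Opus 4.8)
The plan is to prove (i) first, since (ii) will follow fairly cleanly once we know $\nu_N\sub{\H}$. For (i), take a typical pair $(sxt,syt)\in\nu_N$ with $x,y\in N$ and $s,t\in S^1$; here $N$ sits inside a maximal subgroup $G=H_e$ for some idempotent $e\in J$, so $x,y\H e$, and in particular $ex=x=xe$, $ey=y=ye$. The first thing to observe is that, since $sxt\in J$ and $x\in J$ with $sxt = s(exe)t$, we have $J_{sxt}\leq J_{ext}\leq J_{xt}\leq J_x = J_{sxt}$, so all these $\gJ$-classes equal $J$; by stability applied twice, $ext\R ex = e$ is forced via $xt\gJ x\implies xt\R x$ (so $xt\R x\R e$) and then left-multiplying, while $ext\gJ ex$ with $ext = e(xt)$ and… more carefully: from $xt\R x$ we get $xt = xu$ for some $u\in S^1$ with $x = xtv$; since $x\H e$ this transfers to show $et = (xt)\cdot(\text{something})$ lies in $H_e$-adjacent position. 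The cleanest route is: show $sxt\H set$ and $syt\H set$, whence $sxt\H syt$. To see $sxt\H set$: we have $e\L x$ trivially (both in $H_e$), so $se\L sx$ is not immediate, but $se\gJ sx\gJ x\gJ e$ puts $se$ in $J$, and stability ($x\gJ se$, wait — we want a relation between $se$ and $sx$). Instead use that right-multiplication by $t$ and the equalities $x=ex$, $e=ee$: $sxt = (sx)t$ and $set=(se)t$; since $x\R e$ (both in $H_e$), $sx\R$-comparison fails in general, but $sx = sxe$ and $se=see$ shows $sx\cdot e = sx$…

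Let me restructure. The robust argument: fix idempotent $e$ with $N\leq H_e$. Given $(p,q)=(sxt,syt)\in J\times J$ with $x,y\in N$. Write $p = s(xe)t = (sx)(et)$. Since $et\in J$? Not necessarily a priori, but $p\in J$ forces $J_p\le J_{sx}\cdot\{et\}$, i.e. $J_p\leq J_{et}\leq J_t$? This needn't be $J$. Better: $p = (se)(x)(et)$ since $x = exe$. Now $x\in H_e\sub J$, and $p\in J$, so $J = J_p\leq J_{(se)x}\leq J_x = J$, giving $(se)x\gJ x$; stability yields $(se)x\L x$. Similarly, $x(et)\gJ x$ gives, by stability, $x(et)\R x$. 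Since $x\L e$ is false — rather $x\H e$ so $x\L e$ holds as $H$-classes coincide? No: $x\H e$ means $x\L e$ AND $x\R e$ both hold. Good. So $(se)x\L x\L e$ and $x(et)\R x\R e$. Hence $p = (se)x(et)$ with $p\R (se)x$ (right-multiplying a fixed element $x(et)\R x\R e$ into $(se)x\L e = H_e\ni$…). This is where the standard "Green's lemma" bookkeeping comes in: since $(se)x\L e$, write $(se)x = we$ for suitable $w$; and $x(et)\R e$ means $x(et) = ev$; then $p = we\cdot ev = w e v$, and one checks $p\H (se)e(et)$ by the usual translation arguments — crucially $p$ depends on $x$ only through "$e$", so replacing $x$ by $y$ gives $q\H (se)e(et)$ too, hence $p\H q$. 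So $\nu_N\sub\H$.

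For (ii), reflexivity: each $z\in J$ is $\H e$ for some idempotent $e$ and lies in $H_e$ which contains the normal subgroup in its own maximal subgroup isomorphic copy — more directly, by Lemma~\ref{lem:G1G2} we may assume $N\leq G$ is the maximal subgroup we chose, and for arbitrary $z\in J$ we want $(z,z)\in\nu_N$, i.e. $z = sxt$ with $x\in N$; take $x = e$ the identity of $G$ (which is in $N$), and since $z\D e$ pick $s,t$ with $z = set$ (possible as $e\R se\ldots$ standard, using $z\in J = D_e$). Symmetry is built into the definition. Transitivity is the only real content: suppose $(p,q),(q,r)\in\nu_N$; by (i) all of $p,q,r$ are $\H$-related, so they lie in a single $\H$-class, which is a group (since $J$ is regular and contains an idempotent $\H$-related to this class — any $\H$-class $\H$-equivalent to a group is a group, as $H$-classes in a $D$-class are all isomorphic; here $p\H e' $ for the idempotent $e'$ of its $H$-class, and that $H$-class is a group isomorphic to $G$). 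Writing $p = s_1x_1t_1$, $q = s_1y_1t_1 = s_2x_2t_2$, $r = s_2y_2t_2$: the point is to express $r = uxv$ for some $x\in N$, $u,v\in S^1$. Using that $H_p = H_q = H_r =: H$ is a group isomorphic to $G$ via some fixed isomorphism $\theta: H\to G$ of the form $h\mapsto ahb$ (and $g\mapsto bga$ inverse), transport everything: by Lemma~\ref{lem:G1G2} there is a normal subgroup $N' = N\theta^{-1}\trianglelefteq H$ with $\nu_{N'} = \nu_N$, and $\nu_{N'}$ restricted to pairs inside the single group $H$ is just the partition into cosets of $N'$ — because for $h_1,h_2\in H$, $(h_1,h_2)\in\nu_{N'}$ iff $h_2\in N'h_1N' = N'h_1$ (normality) iff $h_1N' = h_2N'$. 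So $(p,q)\in\nu_N = \nu_{N'}$ gives $pN' = qN'$, likewise $qN' = rN'$, hence $pN' = rN'$, i.e. $(p,r)\in\nu_{N'} = \nu_N$.

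I expect the main obstacle to be part (i): carefully justifying that the product $sxt$ "sees" $x$ only through its membership in the group $H_e$ — concretely that $sxt\H syt$ — without invoking the Rees matrix picture the authors deliberately suppressed. This needs two applications of stability (to pull the outer factors into position so that $se$ acts as a "column coordinate" and $et$ as a "row coordinate") followed by Green's-lemma-style translations showing the resulting $\H$-class is independent of the group element chosen. Once (i) is in hand, the key simplification for (ii) is recognizing that $\nu_N$, restricted to any single $\H$-class (now known to be a group), is exactly the coset equivalence of the transported normal subgroup, which makes transitivity immediate; Lemma~\ref{lem:G1G2} is what licenses moving between the various group $\H$-classes inside $J$.
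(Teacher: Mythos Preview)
Your plan for part (i) is heading in the right direction and is essentially the paper's approach, though you take several detours before finding it. The clean version: with $e$ the identity of $G\supseteq N$, from $sxt\in J$ and $x\in J$ one gets (exactly as in the proof of Lemma~\ref{lem:stableJD}) $x\L sx\R sxt=u$; then \cite[Theorem 2.3]{CPbook} says $z\mapsto szt$ is a bijection $H_x\to H_u$, so $v=syt\in H_u$. That is the whole argument---no need to factor through $se$, $et$, or to chase individual Green's relations for $sxt$ versus $set$.

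Part (ii) has two genuine gaps. First, the $\H$-class $H$ containing $p,q,r$ need not be a group: your claim that ``$\H$-classes in a $\D$-class are all isomorphic'' and hence all groups is false. Green's lemma gives bijections between $\H$-classes in a $\D$-class, but only those containing an idempotent are groups, and in a regular $\D$-class there is no reason the particular $\H$-class $H_p$ contains one. So your coset argument does not even get off the ground as stated. Second, and more seriously, even granting that $H$ is a group with $N'\trianglelefteq H$, your assertion that $(h_1,h_2)\in\nu_{N'}$ implies $h_2\in N'h_1N'$ is exactly the substance of transitivity, not a triviality. From $(h_1,h_2)=(sxt,syt)$ with $x,y\in N'$ you know nothing a priori about $s,t$---they are arbitrary elements of $S^1$, not elements of $H$---so there is no direct group-theoretic reason $h_1^{-1}h_2$ should lie in $N'$. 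Proving this forward implication requires precisely the Green's-lemma bookkeeping you hoped to avoid. The paper handles transitivity directly: it first shows one may take one coordinate to be $e$, writing $(u,v)=(axb,aeb)$ and $(v,w)=(ced,cyd)$, then uses stability to produce $a',b',c',d'$ with $e=a'(ae)=(eb)b'=c'(ce)=(ed)d'$, builds the corresponding translation bijections, and checks $(u,w)=(axb,a(a'wb')b)$ with $a'wb'\in N$ via an explicit computation in $H_e$. That computation is unavoidable; your route merely relocates it into the unproved ``because''.
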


\pf
We write $\nu=\nu_N$ for brevity, $e$ for the identity of $N$, and $G=H_e$ for the maximal subgroup of $S$ containing $N$.  If $x\in G$, we will write $x^{-1}$ for the inverse of $x$ in $G$.  

To prove (i), let $(u,v)\in\nu$, so that $u,v\in J$ and $(u,v)=(sxt,syt)$ for some $x,y\in N$ and $s,t\in S^1$.  As in the proof of Lemma \ref{lem:stableJD}, since $x\gJ u=sxt$, it follows that $x\L sx \R sxt=u$.  By \cite[Theorem 2.3]{CPbook},  it follows that the map
\[
\phi:H_x\to H_u:z\mt szt
\]
is a bijection.  Consequently, since $y\in N\sub G=H_x$, we have ${v=syt=y\phi\in H_u}$, so that $(u,v)\in{\H}$, completing the proof of (i).    

For (ii), we first note that $\nu$ is clearly symmetric.  For any $u\in J$, we have $u=set$ for some $s,t\in S^1$ (since $u\gJ e$), and so $(u,u)=(set,set)\in\nu$, demonstrating reflexivity.  Before we establish transitivity, we first claim that
\[
\nu = S^1(N\times\{e\})S^1\cap(J\times J) = S^1(\{e\}\times N)S^1\cap(J\times J).
\]
To prove this, first note that $\nu$ clearly contains the two stated relations.  Conversely,  if $(u,v)\in\nu$, then $u,v\in J$ and $(u,v)=(sxt,syt)$ for some $x,y\in N$ and $s,t\in S^1$; but then also
\[
(u,v)=(s\cdot xy^{-1}\cdot yt,s\cdot e\cdot yt)\in S^1(N\times\{e\})S^1 \ANDSIM (u,v)\in S^1(\{e\}\times N)S^1,
\]
completing the proof of the claim.

To establish transitivity of $\nu$, suppose $(u,v),(v,w)\in\nu$.  We must show that $(u,w)\in\nu$.  Certainly ${u,w\in J}$.  In light of the claim proved in the previous paragraph, we may write 
\[
(u,v)=(axb,aeb) \AND (v,w)=(ced,cyd) \qquad\text{for some $x,y\in N$ and $a,b,c,d\in S^1$.}
\]
Again, as in the proof of Lemma \ref{lem:stableJD}, stability gives
\[
e \L ae \R aeb \L eb \R e \AND
e \L ce \R ced \L ed \R e.
\]
Let $a',b',c',d'\in S$ be such that $e=a'(ae)=(eb)b'=c'(ce)=(ed)d'$.  
Keeping in mind that $aeb=v=ced$, it follows again from \cite[Theorem 2.3]{CPbook} that the maps
\begin{align*}
& \phi_1:H_e\to H_{ae}:z\mt az, &
& \phi_2:H_{eb}\to H_{v}:z\mt az, &
& \phi_3:H_e\to H_{eb}:z\mt zb, &
& \phi_4:H_{ae}\to H_{v}:z\mt zb, &
\\[2truemm]
& \psi_1:H_{ae}\to H_e:z\mt a'z, &
& \psi_2:H_{v}\to H_{eb}:z\mt a'z, &
& \psi_3:H_{eb}\to H_e:z\mt zb', &
& \psi_4:H_{v}\to H_{ae}:z\mt zb' &
\end{align*}
are all bijections, with $\phi_i^{-1}=\psi_i$ for each $i$.  
By (i), we have $(v,w)\in\nu\sub{\H}$, and so $w\in H_v$.  Thus, $w=w\psi_2\phi_2\psi_4\phi_4=aa'wb'b$; consequently, $(u,w)=(axb,a(a'wb')b)$, so the proof will be complete if we can show that $a'wb'\in N$.  Now, $a'wb'=a'(cyd)b'=(a'ce)y(edb')$; thus, since $N$ is a normal subgroup of $G$, it suffices to show that $a'ce,edb'\in G=H_e$ and $edb'=(a'ce)^{-1}$.  
Now, $ce\L e\L ae$ and $ce\R ced=v=aeb\R ae$; together these imply $ce\in H_{ae}$, from which it follows that $a'ce=(ce)\psi_1\in H_e$.
A similar calculation gives $ed\in H_{eb}$, and so $edb'=(ed)\psi_3\in H_e$.  Finally, to show that $edb'=(a'ce)^{-1}$, it suffices to observe that  $(a'ce)(edb') = a'(ced)b' = a'vb' = a'(aeb)b' = e\phi_1\psi_1\phi_3\psi_3 = e$.
\epf

In addition to the relations $\nu_N$, defined above, we need one more key concept in order to describe the remaining families of congruences.

\begin{defn}
\label{defn:RC}
  An \emph{IN-pair} on a semigroup $S$ is a pair $\C=(I,N)$, where
  $I$ is an ideal of~$S$, and $N$ is a normal subgroup of a maximal subgroup of
  $S$ contained in some stable, regular $\gJ$-class $J$ that is minimal in the set
  $(S\setminus I)/{\gJ}$.
  To such an IN-pair, we associate the relation
  \[
  R_\C = R_{I,N} = R_I\cup\nu_N=\Delta_S \cup \nu_N \cup (I\times I),
  \]
  where $\nu_N$ is given in Definition~\ref{Rel-nu}. 
Clearly, when $|N|=1$, $\nu_N=\De_J\sub\De_S$, and so $R_{I,N}=R_I$ is the Rees congruence associated to $I$ (Definition \ref{defn:Rees}); thus, we say an IN-pair $\C=(I,N)$ is \emph{proper} if $|N|\geq2$.
\end{defn}

Our final family of congruences involve a special kind of IN-pair.

\begin{defn}
\label{defn:lrmC0}
Let $S$ be a semigroup with a stable minimal ideal.
  We say that an IN-pair $\C=(I,N)$ on $S$ is
  \emph{retractable} if $I$ is a retractable ideal (Definition \ref{defn:retractable}), and all the elements of $N$ act the same way on~$M$: i.e., $|xN|=|Nx|=1$ for all $x\in M$.
  For such a retractable IN-pair, and for a liftable congruence $\xi$ on $M$ (Definition \ref{defn:lift}), we
  define the relation
\[
\zeta_{I,N,\xi} = \zeta_{I,\xi}\cup\nu_N = \De_S\cup\nu_N\cup\set{(x,y)\in I\times I}{(xf,yf) \in \xi},
\]
where $f:I\to M$ is the unique retraction, and where $\zeta_{I, \xi}$ and $\nu_N$ are given in Definitions~\ref{defn:lift2} and \ref{Rel-nu}, respectively.
\end{defn}

By Lemma \ref{lem:lift}, every retractable IN-pair on a semigroup with a stable minimal ideal yields a natural family of $\zeta_{I,N,\xi}$ relations:

\begin{defn}
\label{defn:lrmC}
Let $S$ be a semigroup with a stable minimal ideal.  To every retractable IN-pair $\C=(I,N)$ on $S$ (Definition \ref{defn:lrmC0}), we associate four relations:
\begin{align*}
\lambda_\C=\lambda_{I,N}=\zeta_{I,N,{\L^M}}=\lam_I\cup\nu_N&=\Delta_S\cup\nu_N\cup\set{
  (x,y)\in I\times I}{ xf\L yf},\\
\rho_\C=\rho_{I,N}=\zeta_{I,N,{\R^M}}=\rho_I\cup\nu_N&=\Delta_S\cup\nu_N\cup\set{
  (x,y)\in I\times I}{ xf\R yf},\\
\mu_\C=\mu_{I,N}=\zeta_{I,N,{\H^M}}=\mu_I\cup\nu_N&=\Delta_S\cup\nu_N\cup\set{ (x,y)\in I\times I}{ xf\H yf},\\
\eta_\C=\eta_{I,N}=\zeta_{I,N,\De_M}=\eta_I\cup\nu_N&=\Delta_S\cup\nu_N\cup\set{ (x,y)\in I\times I}{ xf= yf},
\end{align*}
where $f:I\to M$ is the unique retraction.
\end{defn}

Again, if $|N|=1$, then the relations from Definitions \ref{defn:lrmC0} and
\ref{defn:lrmC} reduce to those from Definitions \ref{defn:lift2} and \ref{defn:lrmI}, respectively.  In what
follows, we will prove that the relations given in Definitions \ref{defn:RC}--\ref{defn:lrmC} are congruences, and discuss their inclusions, meets and
joins.

\begin{lemma}
\label{lem:nu}
Let $S$ be a semigroup with a stable minimal ideal.  If $\C=(I,N)$ is an IN-pair on $S$, and if $(x,y)\in\nu_N$ and~$s\in S$, then $(xs,ys),(sx,sy)\in R_\C$.  Furthermore, if $\C$ is retractable, then ${(xs,ys),(sx,sy)\in\eta_\C}$.
\end{lemma}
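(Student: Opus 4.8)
The plan is to first dispose of the $R_\C$ statement by a rank argument, and then to upgrade it to $\eta_\C$ in the retractable case, where one subcase requires real work. By Lemma~\ref{lem:nu_H_equivalence}(i) we have $(x,y)\in\nu_N\sub\H$, and using the identity $\nu_N=S^1(\{e\}\times N)S^1\cap(J\times J)$ established in the proof of Lemma~\ref{lem:nu_H_equivalence}(ii), where $e$ is the identity of $N$, we may write $x=aeb$ and $y=aub$ with $a,b\in S^1$, $u\in N$, and $x,y\in J$. Put $t=bs$, so that $xs=aet$ and $ys=aut$. Since $x\L y$ and $\L$ is a right congruence, $xs\L ys$, hence $xs\gJ ys$; as $J_{xs}\le J_x=J$ and $J$ is minimal in $(S\sm I)/{\gJ}$, it follows that $xs$ and $ys$ lie either both in $J$ or both in $I$. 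In the first case $(xs,ys)=(aet,aut)\in S^1(N\times N)S^1\cap(J\times J)=\nu_N$; in the second, $(xs,ys)\in I\times I$. Either way $(xs,ys)\in R_\C$. The pair $(sx,sy)$ is handled the same way, using that $\R$ is a left congruence and writing $(sx,sy)=((sa)eb,(sa)ub)$. This proves the first assertion.

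Now suppose $\C=(I,N)$ is retractable, with unique retraction $f\colon I\to M$; note that Lemma~\ref{lemma-RetractAux} applies, since a stable minimal ideal is regular by Lemma~\ref{lem:Mcr}. The task is to strengthen the conclusion to $(xs,ys),(sx,sy)\in\eta_\C$. If $xs\notin I$, the computation above already gives $(xs,ys)\in\nu_N\sub\eta_\C$, so the substantive case is $xs,ys\in I$, where it suffices to prove $(xs)f=(ys)f$ (then $(xs,ys)\in\eta_I\sub\eta_\C$). The crux is the claim that $et\in I$. First, from $x=aeb\in J$ we obtain $J=J_x\le J_{ae}\le J_e=J$, so $ae\in J$ and in particular $ae\notin I$. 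If $et\notin I$, then $et\in J$ (again by minimality of $J$), and since $et\in eS^1$ and $et\gJ e$, stability of $J$ gives $et\R e$; choosing $d\in S^1$ with $e=(et)d$, we would get $ae=a(et)d=(aet)d=(xs)d\in I$, contradicting $ae\in J$. Hence $et\in I$, and therefore also $ut=u(et)\in I$, using $u(et)=(ue)t=ut$ and the ideal property.

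It then remains to compute $f$-values via Lemma~\ref{lemma-RetractAux}. We have $(xs)f=(a(et))f=a\cdot(et)f$; also, since $ut\in I$, $(ys)f=(a(ut))f=a\cdot(ut)f=a\cdot u\cdot(et)f$; and, since $e$ is idempotent, $(et)f=(e(et))f=e\cdot(et)f$. As $(et)f\in M$ and, by retractability of $\C$ (Definition~\ref{defn:lrmC0}), all elements of $N$ act identically on $M$, we conclude $u\cdot(et)f=e\cdot(et)f=(et)f$, whence $(ys)f=a\cdot(et)f=(xs)f$. Thus $(xs,ys)\in\eta_\C$; and $(sx,sy)\in\eta_\C$ follows by the left--right dual argument, or, formally, by applying what has just been shown to the opposite semigroup $S^{\mathrm{op}}$, whose minimal ideal, stable regular $\gJ$-class $J$, normal subgroup $N$, and relations $\nu_N$ and $\eta_\C$ all coincide with those of $S$.

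The step I expect to be the main obstacle is the claim $et\in I$ above: a priori one must control when a product of two elements of the $\gJ$-class $J$ falls into $I$, but the factorization $x=aeb$ rigidly confines $ae$ to $J$, which via stability is incompatible with $et$ remaining in $J$ while $xs=a(et)$ drops into $I$. Once this is in place, the remainder is a routine manipulation with Lemma~\ref{lemma-RetractAux} and the hypothesis that $N$ acts trivially on the minimal ideal $M$.
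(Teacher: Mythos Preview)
Your proof is correct. The treatment of the $R_\C$ statement is essentially the paper's: both use $x\H y\Rightarrow xs\L ys$, split according to whether $xs,ys$ lie in $J$ or in $I$, and in the $J$ case read off membership in $\nu_N$ directly from the factorisation.

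In the retractable case, however, your route is genuinely different. The paper never attempts to show that an intermediate element such as $et$ lies in $I$; instead it stays with $xs,ys\in I$, uses $xs\L ys$ together with Lemma~\ref{lemma-RetractAux} to deduce $(xs)f\L(ys)f$, picks an idempotent right identity $\epsilon\in M$ for both, and then computes $(xs)f=(xs)f\epsilon=(xse)f=(puqs\epsilon)f=(pvqs\epsilon)f=(ys)f$, the key equality $uqs\epsilon=vqs\epsilon$ coming from $qs\epsilon\in M$ and the hypothesis $|Nm|=1$ for $m\in M$. Your argument instead isolates the claim $et\in I$ via stability (if $et\in J$ then $et\R e$, whence $ae=(xs)d\in I$, contradicting $ae\in J$), which then lets you apply Lemma~\ref{lemma-RetractAux} directly to $et$ and $ut=u(et)$ and finish with the identity $u\cdot(et)f=e\cdot(et)f=(et)f$. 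Your approach trades the paper's idempotent-in-$M$ trick for a short stability contradiction; it is arguably cleaner, and it makes transparent exactly where the minimality of $J$ in $(S\sm I)/{\gJ}$ is used a second time. The paper's approach, conversely, avoids ever needing to know that $et$ (or any smaller piece of $xs$) lies in $I$.
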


\begin{proof}
It suffices to prove the statements for $(xs,ys)$, as the proofs for $(sx, sy)$ are dual.  Denote by $J$ the (stable, regular) $\gJ$-class containing $N$.  Since $(x,y)\in\nu_N$, we may write $(x,y)=(puq,pvq)$ where $u,v\in N$ and $p,q\in S^1$.  
%
Since $x\H y$ (by Lemma \ref{lem:nu_H_equivalence}(i)), it follows in particular that $x\L y$, and so $xs\L ys$.  Thus, we either have $xs, ys \in J$ or else $xs, ys\not\in J$.  If $xs,ys\in J$, then
\[
(xs,ys)=(pu(qs),pv(qs))\in\nu_N \subseteq \eta_\C \sub R_{\C},
\]
completing the proof in this case.
For the remainder of the proof, we assume that $xs,ys\not\in J$.  By the minimality of $J$ in $(S\setminus I)/{\gJ}$,
it follows that $xs,ys\in I$ and so $(xs, ys)\in I\times I \sub R_{\C}$, which
concludes the proof of the first statement.

Suppose from now on that $\C$ is retractable. Denote by $M$ the minimal ideal, and by $f:I\to M$ the retraction.  Since $xs\L ys$, we may write $xs=ays$ and $ys=bxs$ for some $a,b\in S^1$.  But then $(xs)f=(ays)f=a(ys)f$, by Lemma~\ref{lemma-RetractAux}, and similarly $(ys)f=b(xs)f$, so that $(xs)f\L (ys)f$.  Since $M$ is regular (by Lemma \ref{lem:Mcr}), there exists an idempotent $e$ in the $\L$-class of $(xs)f$, and we note that $e$ is a right identity for both $(xs)f$ and $(ys)f$.  Since $u,v\in N$ and $qse\in M$, and since every element of $N$ has the same action on $M$, it follows that $uqse=vqse$. Hence, $(xs)f=(xs)fe=(xse)f=(puqse)f=(pvqse)f=(yse)f=(ys)fe=(ys)f$, and so $(xs,ys)\in \eta_{\C}$, completing the proof.
\end{proof}

\begin{rem}
If $S$ has a stable minimal ideal $M$ (which is regular, by Lemma \ref{lem:Mcr}), and if $N$ is a normal subgroup of any maximal subgroup $G$ contained in $M$, then $\nu_N$ is in fact a liftable congruence, as follows from the proof of Lemma \ref{lem:nu} with $J=M$ (the case in which $xs,ys\not\in J$ does not arise).  The induced congruence $\zeta_{M,\nu_N}=\De_S\cup\nu_N$ from Definition \ref{defn:lift2} satisfies $\eta_M=\De_S\sub\zeta_{M,\nu_N}\sub\zeta_{M,\nu_G}=\mu_M$, and the interval in $\Cong(S)$ from $\De_S$ to $\mu_M$ is isomorphic to the lattice of normal subgroups of $G$.  Since the minimal ideal is $\H$-trivial in all the diagram monoids we consider, such liftable congruences will play no part in this paper.  
\end{rem}


\begin{prop}
\label{prop-AreCongs}
Let $S$ be a semigroup with a stable minimal ideal.  If $\C$ is an IN-pair on $S$, then $R_\C$ is a congruence on $S$.  Furthermore, if $\C=(I,N)$ is retractable, and if $\xi$ is a liftable congruence on the minimal ideal of $S$, then $\zeta_{I,N,\xi}$ is also a congruence on $S$.  In particular, $\lambda_\C$, $\rho_\C$, $\mu_\C$ and $\eta_\C$ are all congruences if $\C$ is retractable.
\end{prop}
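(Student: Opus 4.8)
The plan is to build $R_\C$ and $\zeta_{I,N,\xi}$ from pieces that are already known to behave well — namely the Rees congruence $R_I$ (Definition~\ref{defn:Rees}) and the congruence $\zeta_{I,\xi}$ (a congruence by Proposition~\ref{prop:lift}) — together with the auxiliary relation $\nu_N$, and to reduce the verification of the congruence axioms entirely to facts already in hand. Throughout, write $J$ for the stable, regular $\gJ$-class containing $N$. Since $J$ is minimal in $(S\sm I)/{\gJ}$ we have $J\sub S\sm I$, so $I$ and $J$ are disjoint, and $\nu_N\sub J\times J$ directly from Definition~\ref{Rel-nu}; this disjointness is the one small geometric fact that makes everything fit together.

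First I would check that $R_\C$ and $\zeta_{I,N,\xi}$ are equivalence relations. By Lemma~\ref{lem:nu_H_equivalence}(ii), $\nu_N$ is an equivalence on the whole set $J$ (reflexive on all of $J$, not merely a subset). Since $I$, $J$ and $S\sm(I\cup J)$ partition $S$, the relation $R_\C=\De_S\cup\nu_N\cup(I\times I)$ has exactly the following classes: the single class $I$, the $\nu_N$-classes contained in $J$, and singletons drawn from $S\sm(I\cup J)$; hence it is an equivalence. For $\zeta_{I,N,\xi}=\zeta_{I,\xi}\cup\nu_N$ the same reasoning applies: $\zeta_{I,\xi}$ is a congruence by Proposition~\ref{prop:lift}, its restriction to $I$ is an equivalence on $I$ and off $I$ it is the diagonal, so glueing in the $\nu_N$-classes on the disjoint set $J$ again yields an equivalence.

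Next I would verify left and right compatibility. Fix a related pair $(x,y)$ and $s\in S$; by the evident symmetry of the definitions it suffices to treat $(xs,ys)$, the case of $(sx,sy)$ being dual. If $x=y$ there is nothing to prove. If $x,y\in I$, then $xs,ys\in I$ because $I$ is an ideal, so $(xs,ys)\in I\times I\sub R_\C$; and in the $\zeta$-case we instead invoke that $\zeta_{I,\xi}$ is already a congruence, giving $(xs,ys)\in\zeta_{I,\xi}\sub\zeta_{I,N,\xi}$. The remaining case, $(x,y)\in\nu_N$, is precisely what Lemma~\ref{lem:nu} supplies: it yields $(xs,ys)\in R_\C$ in general, and $(xs,ys)\in\eta_\C$ when $\C$ is retractable. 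To finish the retractable case I only need $\eta_\C\sub\zeta_{I,N,\xi}$, which holds because $\eta_I\sub\zeta_{I,\xi}$ (if $xf=yf$ then $(xf,yf)\in\xi$, as $\xi$ is reflexive), whence $\eta_\C=\eta_I\cup\nu_N\sub\zeta_{I,\xi}\cup\nu_N=\zeta_{I,N,\xi}$.

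Finally, the ``in particular'' clause follows at once: $\De_M$ is trivially liftable, and $\L^M$, $\R^M$, $\H^M$ are liftable congruences on $M$ by Lemma~\ref{lem:lift}, so $\eta_\C$, $\lambda_\C$, $\rho_\C$ and $\mu_\C$ are all of the form $\zeta_{I,N,\xi}$ with $\xi$ liftable and hence congruences by the above. I do not expect a genuine obstacle in this argument: the only step needing any attention is the equivalence-relation check, and even there the whole point is the disjointness $I\cap J=\varnothing$, while all of the substantive work — that a $\nu_N$-related pair remains inside $R_\C$ (respectively $\eta_\C$) after multiplication by an arbitrary element — has already been carried out in Lemma~\ref{lem:nu}.
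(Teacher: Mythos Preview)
Your proposal is correct and follows essentially the same approach as the paper: both arguments decompose $R_\C$ (respectively $\zeta_{I,N,\xi}$) as the union of the known congruence $R_I$ (respectively $\zeta_{I,\xi}$) with $\nu_N$, use the disjointness of $I$ and $J$ together with Lemma~\ref{lem:nu_H_equivalence}(ii) to obtain the equivalence-relation property, and then invoke Lemma~\ref{lem:nu} for compatibility on $\nu_N$-pairs (with $\eta_\C\sub\zeta_{I,N,\xi}$ in the retractable case). The only cosmetic difference is that the paper verifies transitivity via an explicit case split on the location of the middle element, whereas you describe the equivalence classes directly; either way works.
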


\pf
Since $R_\C=R_I\cup\nu_N$, and since $R_I$ and $\nu_N$ are equivalences (using Lemma \ref{lem:nu_H_equivalence}(ii) for $\nu_N$), $R_\C$ is symmetric and reflexive.  To show transitivity, suppose $(x,y),(y,z)\in R_\C$.  We must show that $(x,z)\in R_\C$.  As this is obvious if $x=y$ or $y=z$, we assume neither of these hold, so that $(x,y),(y,z)\in (I\times I)\cup\nu_N$.  If $y\in I$, then we must also have $x,z\in I$ (since $\nu_N\sub J\times J$), and so $(x,z)\in I\times I\sub R_\C$.  Otherwise, $y\in J$, and so $(x,y),(y,z)\in\nu_N$; Lemma \ref{lem:nu_H_equivalence}(ii) then gives $(x,z)\in\nu_N\sub R_\C$.  From Lemma \ref{lem:nu} and the fact that $R_I$ is a congruence, it follows immediately that $R_\C=R_I\cup\nu_N$ is a congruence.  

Now suppose $\C$ is retractable, and that $\xi$ is a liftable congruence on the minimal ideal.  We prove that $\zeta_{I,N,\xi}$ is an equivalence in the same way as for $R_\C$.
Now let $(x,y)\in\zeta_{I,N,\xi}=\zeta_{I,\xi}\cup\nu_N$ and $s\in S$.  We must show that $(sx,sy),(xs,ys)\in\zeta_{I,N,\xi}$.  This follows from Proposition \ref{prop:lift} if $(x,y)\in\zeta_{I,\xi}$, or from Lemma \ref{lem:nu} if $(x,y)\in\nu_N$, using the fact that~$\eta_\C\sub\zeta_{I,N,\xi}$.  \epf

\begin{rem}
If $N_1,\ldots,N_k$ are normal subgroups of maximal subgroups of $S$ contained in distinct stable, regular $\gJ$-classes that are each minimal in $S\sm I$, then, by a similar argument to that used in the proof of Proposition~\ref{prop-AreCongs}, $R_I\cup\nu_{N_1}\cup\cdots\cup\nu_{N_k}$ is a congruence; a similar comment may also be made concerning the case in which $I$ is retractable.  However, since the $\gJ$-classes form a chain in every diagram monoid we consider, this situation does not arise in our subsequent investigations.
\end{rem}

We now elucidate the relationships between the congruences defined in this section within the lattice~$\Cong(S)$.

\begin{prop}
\label{prop-CR2}
Let $S$ be a semigroup with a stable minimal ideal.  Suppose $\C_1=(I_1,N_1)$ and ${\C_2=(I_2, N_2)}$ are IN-pairs on~$S$, and that $J_1$ and $J_2$ are the $\gJ$-classes of $N_1$
  and $N_2$, respectively. If $I_1\cup J_1\subseteq I_2$, or $I_1=I_2$ and
  $N_1\subseteq N_2$, then $R_{\C_1}\subseteq R_{\C_2}$.
\end{prop}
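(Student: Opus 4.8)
The plan is to verify the inclusion $R_{\C_1}\subseteq R_{\C_2}$ by unpacking both sides as $R_{\C_i}=\De_S\cup\nu_{N_i}\cup(I_i\times I_i)$ and checking that each of the three pieces of $R_{\C_1}$ lands inside $R_{\C_2}$. The diagonal $\De_S$ is common to both, so nothing is needed there, and it suffices to handle $\nu_{N_1}$ and $I_1\times I_1$ in each of the two cases.

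Consider first the case $I_1=I_2$ and $N_1\subseteq N_2$. Then $I_1\times I_1=I_2\times I_2\subseteq R_{\C_2}$ immediately. For $\nu_{N_1}$, observe from Definition~\ref{Rel-nu} that $\nu_{N_1}=S^1(N_1\times N_1)S^1\cap(J_1\times J_1)$; since $N_1\subseteq N_2$ (and hence $J_1=J_2$, as both $N_i$ are contained in maximal subgroups of the same minimal $\gJ$-class in $(S\sm I)/{\gJ}$), we get $S^1(N_1\times N_1)S^1\subseteq S^1(N_2\times N_2)S^1$ and $J_1\times J_1=J_2\times J_2$, so $\nu_{N_1}\subseteq\nu_{N_2}\subseteq R_{\C_2}$. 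This disposes of the first case.

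For the case $I_1\cup J_1\subseteq I_2$: here $I_1\times I_1\subseteq I_2\times I_2\subseteq R_{\C_2}$ is clear, and the new point is that $\nu_{N_1}$ must now be absorbed into the \emph{Rees} part of $R_{\C_2}$ rather than its $\nu$-part. Indeed $\nu_{N_1}\subseteq J_1\times J_1\subseteq I_2\times I_2\subseteq R_{\C_2}$, using $J_1\subseteq I_2$ from the hypothesis. Combining the two pieces with the diagonal gives $R_{\C_1}=\De_S\cup\nu_{N_1}\cup(I_1\times I_1)\subseteq\De_S\cup(I_2\times I_2)\cup\nu_{N_2}=R_{\C_2}$, completing the proof.

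The argument is essentially bookkeeping, so there is no real obstacle; the only point requiring a moment's care is making sure that in the second case one uses $J_1\subseteq I_2$ (so $\nu_{N_1}$ is swallowed by the Rees congruence $R_{I_2}$) rather than trying to compare $\nu_{N_1}$ with $\nu_{N_2}$ directly — the subgroups $N_1$ and $N_2$ sit in different $\gJ$-classes in this case and need not be related at all.
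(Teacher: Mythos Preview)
Your proof is correct and follows essentially the same approach as the paper: both split into the two hypothesised cases and verify the containment by comparing the pieces $\De_S$, $\nu_{N_1}$, and $I_1\times I_1$ against the corresponding decomposition of $R_{\C_2}$, using $\nu_{N_1}\subseteq J_1\times J_1\subseteq I_2\times I_2$ in the first case and $\nu_{N_1}\subseteq\nu_{N_2}$ in the second.
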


\pf  If $I_1\cup J_1\subseteq I_2$, then $R_{\C_1}=R_{I_1}\cup \nu_{N_1} \sub R_{I_2}\cup(J_1\times J_1)\sub R_{I_2}\cup(I_2\times I_2)=R_{I_2}\sub R_{\C_2}$.  
If $I_1=I_2$ and $N_1\subseteq N_2$, then $R_{\C_1}=R_{I_1}\cup\nu_{N_1}\sub R_{I_1}\cup\nu_{N_2}=R_{\C_2}$.
\epf

\begin{prop}
\label{prop-CR3}
Let $S$ be a semigroup with a stable minimal ideal.  Suppose $\C_1=(I_1,N_1)$ and ${\C_2=(I_2, N_2)}$ are retractable IN-pairs on $S$, and that $J_1$ and $J_2$ are the $\gJ$-classes of $N_1$ and $N_2$, respectively. If $I_1\cup J_1\subseteq I_2$, or $I_1=I_2$ and $N_1\subseteq N_2$, then
\begin{itemize}
\begin{multicols}{2}
\itemit{i} $\lambda_{\C_1}\subseteq \lambda_{\C_2}$,
$\rho_{\C_1}\subseteq \rho_{\C_2}$,
$\mu_{\C_1}\subseteq \mu_{\C_2}$,
$\eta_{\C_1}\subseteq \eta_{\C_2}$,
\itemit{ii} $\lambda_{\C_1}\cap \rho_{\C_2}=\rho_{\C_1}\cap \lambda_{\C_2}=\mu_{\C_1}$, 
\itemit{iii} $\lambda_{\C_1}\vee \rho_{\C_2}=\rho_{\C_1}\vee \lambda_{\C_2}=R_{\C_2}$,
\itemit{iv} $\lam_{\C_1}\cap\mu_{\C_2} = \rho_{\C_1}\cap\mu_{\C_2} = \mu_{\C_1}$,
\itemit{v} $\lam_{\C_1}\vee\mu_{\C_2} = \lam_{\C_2}$ and $\rho_{\C_1}\vee\mu_{\C_2} = \rho_{\C_2}$,
\itemit{vi} $R_{\C_1}\cap\mu_{\C_2}=\mu_{\C_1}$ and $R_{\C_1}\vee\mu_{\C_2}=R_{\C_2}$.
\end{multicols}
\eit
\end{prop}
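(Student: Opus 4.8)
I would first fix notation and record two easy facts. Write $M$ for the minimal ideal and $f$ for the retraction onto $M$; since $M\subseteq I_1\subseteq I_2$ under either hypothesis and the retraction is unique (Corollary~\ref{cor:retract_unique}), $xf$ is well defined for every $x\in I_2$. \emph{Fact (a):} for any retractable ideal $I$ and any $x\in I$, the pair $(x,xf)$ lies in $\mu_I$, because $(xf)f=xf$ makes $xf\H(xf)f$ trivially true; hence $(x,xf)$ lies in each of $\lambda_I,\rho_I,\eta_I,R_I$, so, modulo any of these congruences, every element of $I$ is identified with its image in $M$. \emph{Fact (b):} applying the identity $R_I=\lambda_I\vee\rho_I$ (noted in the text) to $I=M$ gives $R_M=\lambda_M\vee\rho_M$, and since $M\subseteq I_1,I_2$ we have $\lambda_M\subseteq\lambda_{I_1}\subseteq\lambda_{\C_1}$ and $\rho_M\subseteq\rho_{I_2}\subseteq\rho_{\C_2}$ (and symmetrically with $1,2$ interchanged). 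I would also note the disjointness of supports: $J_2\cap I_2=\emptyset$ since $J_2\subseteq S\setminus I_2$, while $J_1\cap J_2=\emptyset$ when $I_1\cup J_1\subseteq I_2$ and $J_1=J_2$ when $I_1=I_2$.

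The crux is part~(i). For the ``set-builder'' summands of $\lambda_{\C_1},\rho_{\C_1},\mu_{\C_1},\eta_{\C_1}$, monotonicity is immediate from $I_1\subseteq I_2$, so everything reduces to placing $\nu_{N_1}$ inside the relevant $\C_2$-congruence. If $I_1=I_2$ and $N_1\subseteq N_2$ this is clear, as $\nu_{N_1}\subseteq\nu_{N_2}$ directly from Definition~\ref{Rel-nu}. If $I_1\cup J_1\subseteq I_2$, I would prove the sharper statement $\nu_{N_1}\subseteq\eta_{I_2}$. Writing a generic element of $\nu_{N_1}$ as $(sut,svt)$ with $u,v\in N_1$ and $s,t\in S^1$, Lemma~\ref{lemma-RetractAux} gives $(sut)f=s(uf)t$ and $(svt)f=s(vf)t$, so it suffices to show $uf=vf$ for all $u,v\in N_1$. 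For this I would note that, for every $m\in M$, $m(uf)=(mf)(uf)=(mu)f=mu$ and likewise $m(vf)=mv$; retractability of $\C_1$ says the elements of $N_1$ act the same on $M$, so $mu=mv$ and hence $m(uf)=m(vf)$ for all $m\in M$. Since $u\H v$, Lemma~\ref{lemma-RetractAux} also gives $uf\H vf$, so $uf$ and $vf$ lie in a common group $\H$-class of $M$ (recall $M$ is completely regular, Lemma~\ref{lem:Mcr}); substituting the identity $\iota$ of that class for $m$ yields $uf=\iota(uf)=\iota(vf)=vf$. This completes~(i) for $\lambda,\rho,\mu,\eta$ (using $\nu_{N_1}\subseteq\eta_{I_2}\subseteq\mu_{I_2}$), and $R_{\C_1}\subseteq R_{\C_2}$ is already Proposition~\ref{prop-CR2}.

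Granting~(i), the meet identities (ii), (iv) and the first half of (vi) reduce to brief case analyses. For (ii), ``$\supseteq$'' holds since $\mu_{\C_1}\subseteq\lambda_{\C_1}$ and $\mu_{\C_1}\subseteq\mu_{\C_2}\subseteq\rho_{\C_2}$; for ``$\subseteq$'', take a pair $(x,y)$ with $x\neq y$ in $\lambda_{\C_1}\cap\rho_{\C_2}$ --- if it lies in $\nu_{N_1}$ then it is in $\mu_{\C_1}$, and otherwise $x,y\in I_1\subseteq I_2$, so, since $J_2\cap I_2=\emptyset$ rules out membership in $\nu_{N_2}$, the pair is simultaneously an $\L$-pair and an $\R$-pair under $f$, hence in $\mu_{I_1}\subseteq\mu_{\C_1}$. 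The dual gives $\rho_{\C_1}\cap\lambda_{\C_2}=\mu_{\C_1}$; then (iv) is formal: $\mu_{\C_1}\subseteq\lambda_{\C_1}\cap\mu_{\C_2}\subseteq\lambda_{\C_1}\cap\rho_{\C_2}=\mu_{\C_1}$, and dually for $\rho_{\C_1}\cap\mu_{\C_2}$. The meet half of (vi) is the same argument with $R_{\C_1}$ replacing $\lambda_{\C_1}$.

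For the joins (iii), (v) and the second half of (vi): each ``$\subseteq$'' follows from~(i) together with $\mu_\C\subseteq\lambda_\C,\rho_\C\subseteq R_\C$ (and Proposition~\ref{prop-CR2} for $R$), since the right-hand $\C_2$-congruence is an equivalence. For ``$\supseteq$'' I would use Fact~(a). In (iii): for $x,y\in I_2$ we have $(x,xf),(y,yf)\in\rho_{I_2}\subseteq\rho_{\C_2}$ and $(xf,yf)\in M\times M\subseteq R_M=\lambda_M\vee\rho_M\subseteq\lambda_{\C_1}\vee\rho_{\C_2}$, so $(x,y)\in\lambda_{\C_1}\vee\rho_{\C_2}$; with $\nu_{N_2}\subseteq\rho_{\C_2}$ this forces $R_{\C_2}\subseteq\lambda_{\C_1}\vee\rho_{\C_2}$. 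In (v): a pair $(x,y)\in I_2\times I_2$ with $xf\L yf$ has $(x,xf),(y,yf)\in\mu_{I_2}\subseteq\mu_{\C_2}$ and $(xf,yf)\in\lambda_{I_1}\subseteq\lambda_{\C_1}$ (as $xf,yf\in M\subseteq I_1$), giving $\lambda_{\C_2}\subseteq\lambda_{\C_1}\vee\mu_{\C_2}$. The join half of (vi) is identical but uses $(xf,yf)\in I_1\times I_1\subseteq R_{\C_1}$, and the dual arguments dispatch $\rho_{\C_1}\vee\lambda_{\C_2}$ and $\rho_{\C_1}\vee\mu_{\C_2}$. The one genuinely non-formal step in the whole proof is the claim $\nu_{N_1}\subseteq\eta_{I_2}$ inside~(i): this is precisely where the hypothesis that the elements of $N_1$ act identically on $M$ is used, and it needs the short argument through the group $\H$-classes of $M$ rather than a purely relational manipulation; everything else is bookkeeping with the definitions and with the disjointness of the supports $I_1$, $J_1$, $J_2$.
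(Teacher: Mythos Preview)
Your proof is correct and follows essentially the same strategy as the paper's, using the retraction to reduce pairs in $I_2$ to pairs in $M$ and then exploiting the $\L$/$\R$/$\H$ structure there. The one notable difference is that you fill in a genuine detail the paper glosses over: for part~(i) in the case $I_1\cup J_1\subseteq I_2$, the paper simply says the inclusions follow ``in a manner similar to Proposition~\ref{prop-CR2}'', whereas you correctly observe that placing $\nu_{N_1}$ inside $\eta_{I_2}$ (rather than merely $R_{I_2}$) actually requires the retractability hypothesis on $\C_1$ and give a clean argument via the group $\H$-classes of $M$; your treatment of~(v) is also slightly more uniform than the paper's, avoiding its case split on whether $I_1=I_2$.
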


\pf
Let $f:I_2\to M$ be the unique retraction.  Since $I_1\sub
  I_2$, the retraction of $I_1$ is the restriction of $f$ to~$I_1$.
  Part (i) follows from the definition of the congruences in a manner
  similar to the proof of Proposition~\ref{prop-CR2}.  For the rest of the proof, we abbreviate $\xi_{\C_i}$ to $\xi_i$, for $i\in\{1,2\}$ and $\xi\in\{R,\lam,\rho,\mu\}$.

For (ii), it is sufficient to prove one of the two assertions: say, $\lambda_1\cap \rho_2=\mu_1$.
Clearly, $\mu_1\subseteq \lambda_1$ and $\mu_1\subseteq \rho_1\subseteq \rho_2$, whence $\mu_1\subseteq \lambda_1\cap\rho_2$.
For the converse inclusion, let $(x,y)\in \lambda_1\cap\rho_2$.
Following the definition of $\lambda_1$, if $x=y$ then clearly $(x,y)\in \mu_1$, while if $(x,y)\in\nu_1$, then $(x,y)\in \mu_1$ since $\nu_1\subseteq\mu_1$.
Finally, suppose $(x,y)\not\in\De_S\cup\nu_1$, so that $x,y\in I_1\sub I_2$ with $xf\L yf$.
Since also $(x,y)\in\rho_2$, it follows that $(x,y)\not\in\De_S\cup\nu_2$, so that $xf\R yf$, and hence $xf\H yf$: i.e., $(x,y)\in\mu_1$.

To prove (iii), it is sufficient to show that $\lambda_1\vee\rho_2=R_2$.
Clearly, $\rho_2\subseteq R_2$ and $\lambda_1\subseteq \lambda_2\subseteq R_2$, whence $\lambda_1\vee\rho_2\subseteq R_2$.
For the converse inclusion, let $(x,y)\in R_2$ be arbitrary.
Following the definition of $R_2$, if $x=y$ then clearly $(x,y)\in \lambda_1\vee\rho_2$, while if $(x,y)\in \nu_2$, then
$(x,y)\in \rho_2\subseteq \lambda_1\vee\rho_2$.
Finally, consider $x,y\in I_2$.
Since $xf,yf\in M$, and since $M$ is a ${\D}$-class (by Lemma \ref{lem:stableJD}), there exists
$z\in M$ such that $xf\L z\R yf$.
From $M\subseteq I_1\subseteq I_2$ and $z=zf$, we have $(x,xf)\in\rho_2$,
$(xf,z)\in \lambda_1$ and $(z,y)\in \rho_2$, implying $(x,y)\in\lambda_1\vee\rho_2$. 

The proof of (iv) is similar to that of (ii), so we omit it.  For (v), it suffices to prove that $\rho_1\vee\mu_2=\rho_2$.  The forwards inclusion is again straightforward.  For the converse, let $(x,y)\in\rho_2$.  Since $\De_S\cup\nu_{N_2}\sub\mu_2\sub\rho_1\vee\mu_2$, it suffices to assume that $x,y\in I_2$ and $xf\R yf$.  If $I_1=I_2$, then it follows that $(x,y)\in\rho_1\sub\rho_1\vee\mu_2$, so suppose instead that $I_1\cup J_1\sub I_2$.  Then, again using $M\sub I_1\sub I_2$, we have $(x,xf)\in\mu_2$, $(xf,yf)\in\rho_1$ and $(yf,y)\in\mu_2$, so that $(x,y)\in\rho_1\vee\mu_2$.

The proof of (vi) is similar to (but easier than) the proofs of (ii) and (v), so we omit it.
\epf

\begin{rem}\label{rem:nonprincipal}
Consider the case that $I_1=I_2$ and $N_1\sub N_2$, in the notation of Proposition \ref{prop-CR3}.  From the proof of part (v), it follows that $\lam_{\C_2}$ and $\rho_{\C_2}$ decompose as \emph{unions}, not just joins:
\[
\lam_{\C_2} = \lam_{\C_1}\cup\mu_{\C_2} \AND \rho_{\C_2} = \rho_{\C_1}\cup\mu_{\C_2}.
\]
It is also easy to see (under the above assumption on $I_1,I_2,N_1,N_2$) that $R_{\C_2}=R_{\C_1}\cup\lam_{\C_2}=R_{\C_1}\cup\rho_{\C_2}$.
\end{rem}

Specialising Proposition \ref{prop-CR3} to the case where $\C_1=\C_2$ we obtain
the following.

\begin{prop}
\label{prop-CR4}
Let $S$ be a semigroup with a stable minimal ideal $M$, and let $\C$ be a retractable IN-pair on~$S$.  Then 
\bit
\itemit{i} $\eta_\C\sub\mu_\C=\lambda_{\C}\cap \rho_{\C}$ and~$R_\C=\lambda_{\C}\vee \rho_{\C}$,
\itemit{ii} $\eta_\C=\mu_\C$ if and only if $M$ is $\H$-trivial,
\itemit{iii} if $M$ has at least two $\R$-classes and at least two $\L$-classes, then the congruences $\lambda_{\C}$ and $\rho_{\C}$ are incomparable in $\Cong(S)$.
\eit
\end{prop}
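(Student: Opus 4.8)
The plan is to obtain (i) almost immediately from Proposition~\ref{prop-CR3}, and to prove (ii) and (iii) by unwinding the definitions of $\eta_\C,\mu_\C,\lambda_\C,\rho_\C$, using that the minimal ideal $M$ is a single $\D$-class (Lemma~\ref{lem:stableJD}). For (i), take $\C_1=\C_2=\C=(I,N)$ in Proposition~\ref{prop-CR3}: the hypothesis that $I_1=I_2$ and $N_1\sub N_2$ is then trivially satisfied, so parts (ii) and (iii) of that proposition immediately give $\mu_\C=\lambda_\C\cap\rho_\C$ and $R_\C=\lambda_\C\vee\rho_\C$. The inclusion $\eta_\C\sub\mu_\C$ is then routine: both relations contain $\De_S\cup\nu_N$, and $xf=yf$ implies $xf\H yf$, so the $I\times I$-part of $\eta_\C$ sits inside that of $\mu_\C$.

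For (ii), the bookkeeping point — used again in (iii) — is that $\nu_N\sub J\times J$ with $J$ disjoint from $I$ (as $J$ is minimal in $(S\sm I)/\gJ$), so $\nu_N$ contributes nothing to pairs both of whose coordinates lie in $I$. Hence $\eta_\C=\mu_\C$ if and only if their $I\times I$-parts agree, i.e. if and only if $xf=yf$ whenever $x,y\in I$ satisfy $xf\H yf$. Since $f$ fixes $M$ pointwise and $M\sub I$, I would argue both directions directly: if $M$ is not $\H$-trivial, pick distinct $\H$-related $u,v\in M$, so that $(u,v)\in\mu_\C\sm\eta_\C$; conversely, if $(x,y)\in\mu_\C\sm\eta_\C$ then $x,y\in I$ with $xf\H yf$ and $xf\neq yf$, and $xf,yf$ are distinct $\H$-related elements of $M$. (This just carries the observation recorded just after Definition~\ref{defn:lrmI} across the extra summand $\nu_N$.)

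For (iii), assume $M$ has at least two $\R$-classes and at least two $\L$-classes. Since $M$ is a single $\D$-class, every $\L$-class of $M$ meets every $\R$-class of $M$; so I can pick $x,y\in M$ in a common $\L$-class but in different $\R$-classes, whence $x\neq y$. As $f|_M=\id$, we get $xf=x\L y=yf$, so $(x,y)\in\lambda_I\sub\lambda_\C$; but $(x,y)\notin\De_S$, $(x,y)\notin\nu_N$ (since $x,y\in I$), and $xf,yf$ lie in different $\R$-classes, so $(x,y)\notin\rho_\C$. Hence $\lambda_\C\not\sub\rho_\C$, and the dual choice of $x,y$ (common $\R$-class, different $\L$-classes) gives $\rho_\C\not\sub\lambda_\C$; so $\lambda_\C$ and $\rho_\C$ are incomparable in $\Cong(S)$.

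There is no genuinely hard step: the content of (i) is already in Proposition~\ref{prop-CR3}, and (ii)--(iii) follow from the structure of $M$ as a single $\D$-class together with the fact that $\nu_N$ lives entirely inside the $\gJ$-class $J$, which is disjoint from $I$. The only thing needing care is consistently accounting for the $\nu_N$ summand when testing membership in $\lambda_\C,\rho_\C,\mu_\C,\eta_\C$, and recalling (Lemma~\ref{lem:nu_H_equivalence}) that $\nu_N\sub\H$, so that the $\lambda_\C$-versus-$\rho_\C$ distinction genuinely reduces to $\L$ versus $\R$ on $M$.
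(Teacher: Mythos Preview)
Your proof is correct and follows essentially the same approach as the paper. The paper's proof is extremely terse: it says (i) follows from Proposition~\ref{prop-CR3}, and that (ii)--(iii) follow from the observation that the restrictions of $\lambda_\C,\rho_\C,\mu_\C$ to $M$ coincide with $\L^M,\R^M,\H^M$ respectively---which is exactly the content you spell out in detail (using $f|_M=\id$ and the disjointness of $\nu_N$ from $I\times I$).
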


\pf Part (i) is a direct consequence of Proposition \ref{prop-CR3}.  Parts (ii) and (iii) follow from the fact that the restrictions of $\lambda_\C,\rho_\C,\mu_\C$ to $M$ are equal to the restrictions of $\L,\R,\H$ to~$M$, respectively. \epf

Proposition \ref{prop-CR4} can be interpreted as saying that the congruences
$R_{\C}$, $\lambda_{\C}$, $\rho_{\C}$, and $\mu_{\C}$ corresponding to a
retractable IN-pair form a sublattice inside $\Cong(S)$, as
depicted in Figure \ref{fig-CR} (left).
This diamond structure forms the basic building block from which the
non-chain parts of the specific congruence lattices considered in this paper are
built; see Figures \ref{fig-CongPn}, \ref{fig-CongPPn}, \ref{fig:Hasse_Bn} and
\ref{fig:Hasse_Jn}. Our next result describes how two such diamonds compare to
each other inside $\Cong(S)$; it follows from Propositions~\ref{prop-CR2}--\ref{prop-CR4}.

\begin{prop}
\label{prop-CR4a}
Let $S$ be a semigroup with a stable minimal ideal $M$.  Suppose $\C_1=(I_1,N_1)$ and $\C_2=(I_2, N_2)$ are retractable IN-pairs on $S$, that $J_1$ and $J_2$ are the $\gJ$-classes of $N_1$ and $N_2$, respectively, and that either $I_1\cup J_1\subseteq I_2$, or $I_1=I_2$ and $N_1\subseteq N_2$. If $M$ has at least two $\R$-classes and at least two $\L$-classes, then the eight $R,\lam,\rho,\mu$ congruences arising from $\C_1$ and $\C_2$ (see Definitions \ref{defn:RC} and \ref{defn:lrmC}) form a sublattice of $\Cong(S)$ whose Hasse diagram is depicted in Figure \ref{fig-CR} (right). \epfres
\end{prop}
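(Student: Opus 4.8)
The plan is to show that the eight-element set $\Sigma=\{R_{\C_1},\lambda_{\C_1},\rho_{\C_1},\mu_{\C_1},R_{\C_2},\lambda_{\C_2},\rho_{\C_2},\mu_{\C_2}\}$ is closed under the meet and join operations of $\Cong(S)$, and then to extract its covering relations from the meets and joins found along the way. Abbreviate $\xi_i=\xi_{\C_i}$ for $\xi\in\{R,\lambda,\rho,\mu\}$. Any pair of members drawn from a single quadruple $\{R_i,\lambda_i,\rho_i,\mu_i\}$ is handled by Proposition \ref{prop-CR4}(i); moreover part (iii) of that proposition — this is where the hypothesis on the numbers of $\R$- and $\L$-classes of $M$ is used — shows $\lambda_i$ and $\rho_i$ to be incomparable, so that each quadruple is a genuine diamond. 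It therefore remains to treat the sixteen \emph{mixed} pairs, each having one member from each quadruple.

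Nine of the mixed pairs consist of comparable elements. Proposition \ref{prop-CR2} gives $R_1\subseteq R_2$, while Proposition \ref{prop-CR3}(i) gives $\lambda_1\subseteq\lambda_2$, $\rho_1\subseteq\rho_2$, $\mu_1\subseteq\mu_2$; combining these with the diamond inclusions $\mu_i\subseteq\lambda_i$, $\mu_i\subseteq\rho_i$, $\lambda_i\subseteq R_i$, $\rho_i\subseteq R_i$ shows the two members to be comparable in each of $(\mu_1,\mu_2)$, $(\mu_1,\lambda_2)$, $(\mu_1,\rho_2)$, $(\mu_1,R_2)$, $(\lambda_1,\lambda_2)$, $(\lambda_1,R_2)$, $(\rho_1,\rho_2)$, $(\rho_1,R_2)$ and $(R_1,R_2)$, so the meet and join of each is again in $\Sigma$. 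A further five mixed pairs are literal instances of Proposition \ref{prop-CR3}: parts (ii) and (iii) give $(\lambda_1,\rho_2)$ and $(\rho_1,\lambda_2)$ meet $\mu_1$ and join $R_2$; parts (iv) and (v) give $(\lambda_1,\mu_2)$ meet $\mu_1$ and join $\lambda_2$, and $(\rho_1,\mu_2)$ meet $\mu_1$ and join $\rho_2$; and part (vi) gives $(R_1,\mu_2)$ meet $\mu_1$ and join $R_2$.

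The only pairs not covered by Propositions \ref{prop-CR2}--\ref{prop-CR4} are the two dual pairs $(R_1,\lambda_2)$ and $(R_1,\rho_2)$, and here lies the one genuine obstacle. The joins are easy: $R_1,\lambda_2\subseteq R_2$, while $R_2=\rho_1\vee\lambda_2\subseteq R_1\vee\lambda_2$ by Proposition \ref{prop-CR3}(iii) and $\rho_1\subseteq R_1$, so $R_1\vee\lambda_2=R_2$, and dually $R_1\vee\rho_2=R_2$. For the meet I would argue directly from the defining formulas of Definitions \ref{defn:RC} and \ref{defn:lrmC}: we have $R_1=\De_S\cup\nu_{N_1}\cup(I_1\times I_1)$, with $\nu_{N_1}\subseteq\mu_1\subseteq\mu_2\subseteq\lambda_2$ by Propositions \ref{prop-CR3}(i) and \ref{prop-CR4}(i); and since $J_2\cap I_1=\emptyset$ (because $J_2$ is minimal in $(S\setminus I_2)/{\gJ}$ and $I_1\subseteq I_2$), a pair from $I_1\times I_1$ lies in $\lambda_2$ exactly when its entries have $\L$-equivalent images under the retraction $f$. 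Intersecting term by term gives $R_1\cap\lambda_2=\De_S\cup\nu_{N_1}\cup\set{(x,y)\in I_1\times I_1}{xf\L yf}=\lambda_1$, and dually $R_1\cap\rho_2=\rho_1$; one must resist rewriting $\lambda_2$ as $\lambda_1\vee\mu_2$ and applying a modular law, since $\Cong(S)$ need not be modular. This establishes that $\Sigma$ is a sublattice of $\Cong(S)$; its order is then recovered from the meets listed above via $x\leq y\iff x\cap y=x$, and the covering relations so obtained — two copies of the diamond of Proposition \ref{prop-CR4}, linked by the inclusions $\mu_1\subseteq\mu_2$, $\lambda_1\subseteq\lambda_2$, $\rho_1\subseteq\rho_2$ and $R_1\subseteq R_2$ — are exactly those of Figure \ref{fig-CR} (right).
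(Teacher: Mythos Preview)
Your proof is correct and follows the same route as the paper, which simply declares that the result ``follows from Propositions \ref{prop-CR2}--\ref{prop-CR4}'' and places a \textsc{qed} symbol. You have carried out that claim in detail, and in doing so you correctly noticed that two of the sixteen mixed meets, namely $R_{\C_1}\cap\lambda_{\C_2}$ and $R_{\C_1}\cap\rho_{\C_2}$, are not literally among the identities listed in Proposition \ref{prop-CR3}; your direct set-theoretic computation of these (using $\nu_{N_1}\subseteq\mu_{\C_1}$, the disjointness $J_2\cap I_1=\emptyset$, and the fact that the retractions on $I_1$ and $I_2$ agree on $I_1$) is exactly the right way to close that gap, and your caution about not invoking modularity is well placed.
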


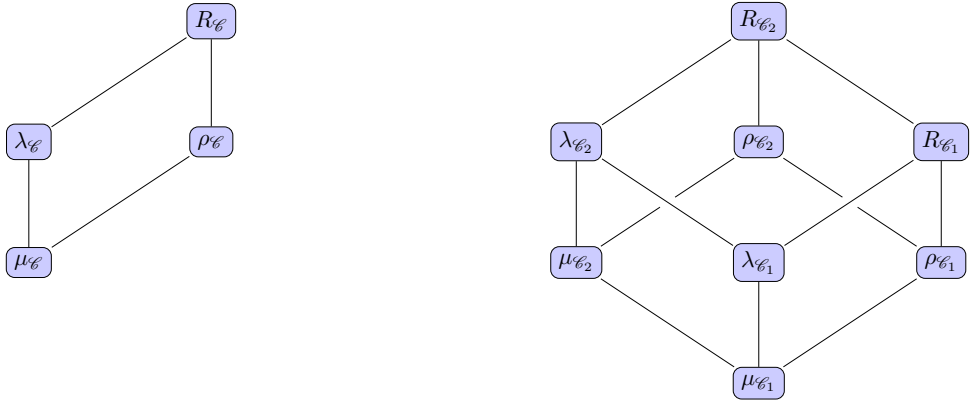
\begin{figure}[ht]
\begin{center}
\scalebox{0.8}{
\begin{tikzpicture}[scale=1]
\node[rounded corners,rectangle,draw,fill=blue!20] (rl) at (6,4) {$R_\C$};
\node[rounded corners,rectangle,draw,fill=blue!20] (r) at (3,2) {$\lambda_\C$};
\node[rounded corners,rectangle,draw,fill=blue!20] (l) at (6,2) {$\rho_{\C}$};
\node[rounded corners,rectangle,draw,fill=blue!20] (d) at (3,0) {$\mu_{\C}$};
\draw
(d)--(l)
(d)--(r)
(l)--(rl)
(r)--(rl)
;
\begin{scope}[shift={(12,-2)}]
\node[rounded corners,rectangle,draw,fill=blue!20] (mrl) at (3,6) {$R_{\C_2}$};
\node[rounded corners,rectangle,draw,fill=blue!20] (mr) at (0,4) {$\lambda_{\C_2}$};
\node[rounded corners,rectangle,draw,fill=blue!20] (ml) at (3,4) {$\rho_{\C_2}$};
\node[rounded corners,rectangle,draw,fill=blue!20] (rl) at (6,4) {$R_{\C_1}$};
\node[rounded corners,rectangle,draw,fill=blue!20] (m) at (0,2) {$\mu_{\C_2}$};
\node[rounded corners,rectangle,draw,fill=blue!20] (r) at (3,2) {$\lambda_{\C_1}$};
\node[rounded corners,rectangle,draw,fill=blue!20] (l) at (6,2) {$\rho_{\C_1}$};
\node[rounded corners,rectangle,draw,fill=blue!20] (d) at (3,0) {$\mu_{\C_1}$};
\draw
(d)--(l) (m)--(ml) 
(d)--(m) (l)--(ml) (rl)--(mrl)
(d)--(r) (m)--(mr)
(l)--(rl) (ml)--(mrl)
;
\fill[white] (1.5,5)circle(.15);
\fill[white] (1.5,3)circle(.15);
\fill[white] (4.5,3)circle(.15);
\draw
(r)--(rl) (mr)--(mrl)  (r)--(mr)
;
\end{scope}
\end{tikzpicture}
}
\caption{Left: the sublattice of $\Cong(S)$ formed by the four congruences
  arising from a retractable IN-pair~$\C$.  Right: the sublattice of
  $\Cong(S)$ consisting of the eight congruences arising from two retractable
  IN-pairs $\C_1,\C_2$ satisfying the conditions of Proposition
  \ref{prop-CR4a}.}
\label{fig-CR}
\end{center}
\end{figure}

Finally, we record the following result, concerning the very least elements of
the lattice; its proof is trivial, and omitted.

\begin{prop}
\label{prop-CR5}
Let $S$ be a semigroup with a stable minimal ideal $M$.  Then the congruences from Definition \ref{defn:lrmI} associated to $M$ are
\[
\lam_M = \De_S\cup {\L^M} \COMMA
\rho_M = \De_S\cup {\R^M} \COMMA
\mu_M = \De_S\cup {\H^M} \COMMA
\eta_M = \De_S.
\]
  In particular, if $M$ is $\H$-trivial, then $\mu_M=\eta_M=\Delta_S$. 
  \epfres
\end{prop}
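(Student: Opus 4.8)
The plan is to apply Definition~\ref{defn:lrmI} directly in the special case $I = M$. The first observation is that $M$ is itself a retractable ideal: the identity map $\id_M \colon M \to M$ is a homomorphism fixing every element of $M$, hence a retraction. Since $M$ is stable, it is regular by Lemma~\ref{lem:Mcr}, so Corollary~\ref{cor:retract_unique} applies and the unique retraction $f \colon M \to M$ is $\id_M$ (alternatively, one can simply note that $\id_M$ works and bypass the uniqueness statement). Thus all four congruences $\lam_M, \rho_M, \mu_M, \eta_M$ of Definition~\ref{defn:lrmI} are defined using $f = \id_M$.

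Next I would substitute $f = \id_M$ into the four defining formulas. For $x, y \in M$ we have $xf = x$ and $yf = y$, so the conditions ``$xf \L yf$'', ``$xf \R yf$'', ``$xf \H yf$'' and ``$xf = yf$'' become ``$x \L y$'', ``$x \R y$'', ``$x \H y$'' and ``$x = y$'', i.e.\ membership in $\L^M$, $\R^M$, $\H^M$ and $\De_M$ respectively (recalling $\K^M = \K \cap (M \times M)$). This yields $\lam_M = \De_S \cup \L^M$, $\rho_M = \De_S \cup \R^M$, $\mu_M = \De_S \cup \H^M$, and $\eta_M = \De_S \cup \De_M = \De_S$, where the last equality uses $\De_M \sub \De_S$ (immediate from $M \sub S$). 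For the final assertion, if $M$ is $\H$-trivial then $\H^M = \De_M$, whence $\mu_M = \De_S \cup \De_M = \De_S = \eta_M$.

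There is essentially no obstacle here: the entire content is the bookkeeping observation that when the retractable ideal is taken to be the minimal ideal itself, the retraction collapses to the identity, so that the relations $\zeta_{I,\xi}$ restrict on $M$ to $\xi$ itself. The only points worth a moment's care are confirming that $M$ qualifies as a retractable ideal (trivial) and that $\De_M$ contributes nothing beyond $\De_S$ (also trivial). This is why the statement can reasonably be recorded without a written-out proof.
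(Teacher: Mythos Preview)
Your proposal is correct and matches the paper's treatment: the paper omits the proof entirely (marking it with \epfres and noting just before the statement that ``its proof is trivial, and omitted''), and your argument spells out precisely the trivial bookkeeping that justifies this omission.
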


\begin{rem}
Let $S$ be a semigroup with a stable minimal ideal $M$, and suppose $I$ is a retractable ideal of~$S$.  By Definitions \ref{defn:Rees} and \ref{defn:lrmI}, the congruences $R_I$, $\lambda_I$, $\rho_I$, $\mu_I$ and $\eta_I$ can be associated to $I$.  As noted above, if $S\setminus I$ has a stable, regular, minimal $\gJ$-class, then these congruences coincide with the corresponding congruences from Definitions~\ref{defn:RC} and~\ref{defn:lrmC}, for any trivial group~$N$ contained in any of these~$\gJ$-classes.  But this condition on $\gJ$-classes might fail to hold: namely, if $I=S$, or if all the minimal $\gJ$-classes in~$S\sm I$ are non-stable or non-regular.  For such a semigroup~$S$, one could prove results along the lines of Proposition~\ref{prop-CR4a} to describe the relationships between different collections of congruences.  However, this does not arise in any of the applications we consider, as all of our diagram monoids are regular and finite (so stable) and their maximum retractable ideals are always proper if $n$ is not trivially small, so we omit these details. 
\end{rem}

\begin{rem}\label{rem:infinite}
We end this section with a number of comments related to infinite semigroups.
First note that an infinite semigroup need not have a minimal ideal; consider the natural numbers under addition, for example.
Even if an infinite semigroup has one, the minimal ideal need not be regular; see \cite[Chapter 2, Exercise 1]{Howie} for an example.
Moreover, even if an infinite semigroup has a regular minimal ideal, it need not be stable.  For example, the \emph{bicyclic semigroup}, given by the presentation $\la a,b:ba=1\ra$, is equal to its own minimal ideal; it is regular but not stable, and neither $\L$ nor $\R$ is a (liftable) congruence.

Recall also that the $\nu_N$ relation (Definition \ref{Rel-nu}) was defined in the case that $N$ is contained in a \emph{stable}, regular $\gJ$-class of a semigroup $S$.  
It turns out that stability of this $\gJ$-class (which is not guaranteed if $S$ is infinite) was a crucial ingredient in the proof of Lemma \ref{lem:nu}.  Indeed, consider the full transformation semigroup $\T_X$ on an infinite set $X$.  Let $X=Y\cup Z$, where $Y\cap Z=\emptyset$ and~${|X|=|Y|=|Z|}$.  Let~${J=\set{f\in\T_X}{\rank(f)=|X|}}$, and let $N=\S_X$ be the symmetric group on $X$.  So $J$ is a non-stable, regular $\gJ$-class of~$\T_X$, and $N$ is a maximal subgroup of $\T_X$ contained in $J$.  We define transformations~$u,v,p,q,s\in\T_X$ as follows.  First, let $u=q=\id_X$ be the identity map on $X$.  Next, let $v$ be any transformation that maps $Y$ bijectively onto $Z$, and $Z$ bijectively onto $Y$.  Fix some element $w\in Y$, and let $p=s$ be the transformation that maps~$Y$ identically, and maps all of $Z$ onto $w$.  Then $u,v\in N$ and $(x,y)=(puq,pvq)\in\nu_N$.  However, $xs=p\in J$ but $ys$ is the constant map with image $\{w\}$, so that~$ys\not\in J$, whence $(xs,ys)$ belongs neither to $\nu_N$ nor to $I\times I$, where $I$ is the ideal $\T_X\sm J$.  Consequently, the relation~$R_I\cup\nu_N$ is not a congruence on $\T_X$.
\end{rem}

\section{Congruence lattices of \boldmath{$\S_n$, $\I_n$ and $\O_n$}}
\label{sect:SnInOn}

In this section we present the results of Liber \cite{Liber1953} and Fernandes
\cite{Fernandes2001}, describing the congruence lattices of the symmetric
inverse monoid $\I_n$ and the monoid $\O_n$ of order-preserving
partial permutations of~$\bn$.  To prepare for our subsequent exposition, we
present these results within the framework of the previous section.

First, however, let us recall the situation for the symmetric group $\S_n$, as this will also play a crucial role in many of our investigations.
Congruences on any group $G$ are uniquely determined by normal subgroups of $G$, 
and hence the congruence lattice of $G$ is isomorphic to the lattice of normal subgroups of $G$.  
The normal subgroups of the symmetric group~$\S_n$ are easily described.  They
always form a chain:
\begin{itemize}\begin{multicols}{2}
\item for $n=1$: $\{\id_1\}=\S_1$,
\item for $n=2$: $\{\id_2\}\suq\S_2$,
\item for $n=4$: $\{\id_4\}\suq K\suq\A_4\suq\S_4$,
\item for $n=3$ or $n\geq5$: $\{\id_n\}\suq\A_n\suq\S_n$.
\end{multicols}\eit
where $\A_n$ denotes the \emph{alternating group}, and
$K=\{\id_4,(1,2)(3,4),(1,3)(2,4),(1,4)(2,3)\}$ is the \emph{Klein 4-group}.

We now turn to the symmetric inverse monoid $\I_n$.
Recall that the $\gJ$-classes and ideals of $\I_n$ are the sets
\[
J_q=J_q(\I_n)=\set{\al\in\I_n}{\rank(\al)=q} \AND I_q=I_q(\I_n)=\set{\alpha\in\I_n}{\rank(\alpha)\leq q}, 
\]
respectively, for $q=0,\ldots,n$.
For each ideal $I_q$, we have the Rees congruence $R_{I_q}$ (Definition~\ref{defn:Rees}), which we will denote by $R_q$ for simplicity.
The remaining congruences on $\I_n$ may all be described in terms of IN-pairs (Definition \ref{defn:RC}).  Recall from Lemma \ref{lem:G1G2} that to describe all such pairs, it suffices to identify a single distinguished maximal subgroup from each $\gJ$-class (all $\gJ$-classes of $\I_n$ are regular and stable).  With this in mind, for $q=0,\dots,n$, we define a mapping
\[
\S_q\to \I_n:\sigma\mapsto\overline{\sigma}=\partpermIII{1}\cdots{q}{{1\si}}\cdots{{q\si}},
\]
using the notation of Subsection \ref{sect:prelim_other}.
It is then easy to see that $\overline{\S}_q=\set{\overline\si}{\si\in\S_q}$ is a maximal subgroup of~$\I_n$ contained in $J_q$.
Furthermore, for $q=1,\ldots,n$ and for any normal subgroup $N$ of $\S_q$, the pair $(I_{q-1},\overline N)$ is an IN-pair, and so yields a congruence $R_{I_{q-1},\overline N}$, as given in Definition \ref{defn:RC}; again, for simplicity, we will abbreviate $R_{I_{q-1},\overline N}$ to $R_N$.  Since the minimal ideal $I_0=J_0$ of $\I_n$ has only one element, all ideals are trivially retractable, but no $\lam,\rho,\mu$ congruences arise from any of these IN-pairs.

\begin{thm}[Liber \cite{Liber1953}; see also {\cite[Section 6.3]{GMbook}}]\label{thm:In}
Let $n\geq0$, and let $\I_n$ be the symmetric inverse monoid of degree $n$.  
\bit
\itemit{i} The ideals of $\I_n$ are the sets $I_q$, for $q=0,\ldots,n$, yielding the Rees congruences $R_q=R_{I_q}$, as in Definition \ref{defn:Rees}.
\itemit{ii} The proper IN-pairs of $\I_n$ are of the form $(I_{q-1},\overline N)$, for $q=2,\ldots,n$ and for any non-trivial normal subgroup $N$ of $\S_q$, yielding the congruences $R_N=R_{I_{q-1},\overline N}$, as in Definition \ref{defn:RC}.
\itemit{iii} The above congruences are distinct, and they exhaust all the congruences of $\I_n$.
\itemit{iv} The congruence lattice $\Cong(\I_n)$ forms the following chain:
\[
\epfreseq
\Delta=R_0 \suq R_1 \suq R_{\S_2} \suq R_2 \suq R_{\A_3} \suq R_{\S_3} \suq R_3 \suq R_K \suq R_{\A_4} \suq R_{\S_4}  \suq \cdots \suq R_{\S_n}\suq R_n = \nabla.
\]
\eit
\end{thm}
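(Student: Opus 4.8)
The plan is the one the surrounding text foreshadows. Parts (i) and (ii) are essentially immediate from the preliminaries: the ideals of $\I_n$ are the sets $I_q$ because the $\gJ$-classes form a chain, and since they form a chain the $\gJ$-class minimal in $(\I_n\setminus I_{q-1})/{\gJ}$ is exactly $J_q$, with distinguished maximal subgroup $\overline{\S}_q\cong\S_q$; hence the proper IN-pairs are precisely the $(I_{q-1},\overline N)$ with $N\trianglelefteq\S_q$ non-trivial, which forces $q\geq2$. For the chain in part (iv) I would assemble the inclusions $R_{q-1}\subseteq R_N\subseteq R_q$ (both direct, since $R_N=R_{q-1}\cup\nu_{\overline N}$ and $\nu_{\overline N}\subseteq J_q\times J_q\subseteq R_q$), $R_{N_1}\subseteq R_{N_2}$ for $N_1\subseteq N_2$ (Proposition~\ref{prop-CR2}), and $R_q\subseteq R_{q+1}$, together with the fact recalled at the start of this section that the normal subgroups of each $\S_q$ form a chain (the endpoints $R_0=\De$ and $R_n=\nabla$ being trivial, as $I_0=\{\emptypartperm\}$ and $I_n=\I_n$). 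Distinctness (the first assertion of (iii)) is routine bookkeeping: the listed congruences are separated by the largest rank on which two distinct elements are identified, by whether \emph{all} elements of that rank are identified, and, among the $R_N$ at a fixed level $q$, by which $\H$-related rank-$q$ pairs are identified.

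The real content is the claim that these congruences exhaust $\Cong(\I_n)$. Since $\I_n$ is finite, every congruence is finitely generated, so by Lemma~\ref{lem:principal}, and because the listed congruences form a chain and are therefore closed under $\vee$, it suffices to show that every principal congruence $\cg{\alpha}{\beta}$ with $\alpha\neq\beta$ appears in the list. I would argue by cases on $\rank\alpha$, $\rank\beta$ and whether $\alpha\H\beta$, writing $e_X$ for the partial identity with domain $X$. \emph{If $\rank\alpha\neq\rank\beta$}, with $s$ the larger rank, the claim is $\cg{\alpha}{\beta}=R_s$; the inclusion $\subseteq$ is clear as $\alpha,\beta\in I_s$, and for $\supseteq$ one first pre-multiplies by $\beta^{-1}$ and sandwiches by $e_{\codom\beta}$ to reduce to a pair one of whose members is an idempotent $e_D$ with $|D|=s$ and the other has rank $<s$ and support inside $D$. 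Since $\cg{\emptypartperm}{\gamma}=R_s$ for any rank-$s$ element $\gamma$ (left/right multiplication sends $(\emptypartperm,\gamma)$ to $(\emptypartperm,\delta)$ for every $\delta\in I_s$), it then suffices to prove $\cg{\alpha}{\beta}\supseteq R_{s-1}$, since the rank-$(<s)$ member then lies in $I_{s-1}$ and is identified with $\emptypartperm$, hence $\emptypartperm$ with $e_D$. I would prove $\cg{\alpha}{\beta}\supseteq R_{s-1}$ by induction on $s$: multiplying the reduced pair by partial identities $e_{D\setminus\{i\}}$ manufactures a pair of elements of ranks $s-2$ and $s-1$, whereupon the inductive hypothesis finishes the job. \emph{If $\rank\alpha=\rank\beta=s$ but $\alpha$ and $\beta$ are not $\H$-related}, say $\dom\alpha\neq\dom\beta$, then pre-multiplying $(\alpha,\beta)$ by $\alpha^{-1}$ gives $(e_{\codom\alpha},\alpha^{-1}\beta)$ with $\rank(\alpha^{-1}\beta)=|\dom\alpha\cap\dom\beta|<s$, so the previous case yields $\cg{\alpha}{\beta}=R_s$ (again $\cg{\alpha}{\beta}\subseteq R_s$ because $\alpha,\beta\in I_s$); the subcase $\codom\alpha\neq\codom\beta$ is symmetric via ${}^\ast$.

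\emph{If $\alpha\H\beta$ and $\alpha\neq\beta$}, then $\rank\alpha=\rank\beta=s\geq2$ and $\alpha^{-1}\beta$ is a non-identity permutation of $\codom\alpha$; pre-multiplying by $\alpha^{-1}$ and conjugating by a standardising partial permutation identifies $\cg{\alpha}{\beta}$ with $\cg{\overline{\id_s}}{\overline\sigma}$ for some non-identity $\sigma\in\S_s$, and I claim this equals $R_N$ with $N=\normal{\sigma}$, the normal closure in $\S_s$. The inclusion $\subseteq$ holds because $(\overline{\id_s},\overline\sigma)\in\nu_{\overline N}\subseteq R_N$ and $R_N$ is a congruence (Proposition~\ref{prop-AreCongs}). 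For $\supseteq$: conjugating $(\overline{\id_s},\overline\sigma)$ by the elements $\overline\tau$ ($\tau\in\S_s$) and using transitivity identifies $\overline{\id_s}$ with $\overline\rho$ for every $\rho\in N$, giving $\nu_{\overline N}\subseteq\cg{\overline{\id_s}}{\overline\sigma}$; and right-multiplying $(\overline{\id_s},\overline\sigma)$ by $e_{\{1,\dots,s\}\setminus\{i\sigma\}}$, for a point $i$ moved by $\sigma$, yields a pair of distinct, non-$\H$-related rank-$(s-1)$ elements, to which the previous case applies and supplies $R_{s-1}\subseteq\cg{\overline{\id_s}}{\overline\sigma}$. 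As $R_N=R_{s-1}\cup\nu_{\overline N}$, this gives $\supseteq$ and completes the case analysis: every principal congruence is some $R_s$ or some $R_N$, so the list exhausts $\Cong(\I_n)$, and the chain in (iv) has already been recorded.

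The main obstacle is the inductive lower-bound step inside the first case — showing $\cg{\alpha}{\beta}\supseteq R_{s-1}$ when $\rank\alpha<\rank\beta=s$. The delicate point is choosing the partial-identity multipliers so that the rank drops by exactly the amount needed and one genuinely reaches a strictly smaller pair; in particular the borderline configuration where the lower-rank member is forced to be a non-identity partial permutation of an $(s-1)$-element set has to be treated with one further multiplication before the induction applies. Everything else — the two equal-rank cases, the chain structure, distinctness, and parts (i)--(ii) — then follows from this computation together with the general results of Section~\ref{sec-fam} and the known normal-subgroup lattices of the symmetric groups.
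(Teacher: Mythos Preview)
The paper does not give its own proof of Theorem~\ref{thm:In}: the result is stated with a citation to Liber~\cite{Liber1953} and \cite[Section~6.3]{GMbook}, and the QED symbol appears immediately after the displayed chain. So there is no proof in the paper to compare your proposal against.

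That said, your argument is the standard one and is essentially correct. Parts (i), (ii), the chain inclusions, and distinctness are indeed routine given the preliminaries. Your case split for principal congruences --- unequal ranks, equal ranks but not $\H$-related, and $\H$-related --- is the right decomposition, and the reductions (pre-multiplying by inverses, restricting by partial identities $e_X$, conjugating into $\overline{\S}_s$) all work as you describe. The point you flag as the ``main obstacle'' is genuine but not serious: when the reduced pair is $(e_D,\alpha)$ with $|D|=s$ and $\rank\alpha<s$, one chooses $i\in D\setminus\dom\alpha$ (or $i\in D\setminus\codom\alpha$) so that multiplying by $e_{D\setminus\{i\}}$ drops the rank of $e_D$ to $s-1$ while leaving $\alpha$ unchanged; the resulting pair has unequal or non-$\H$-related members of rank at most $s-1$, and the induction applies. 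Your worry about a ``borderline configuration'' dissolves once $i$ is chosen this way rather than arbitrarily.
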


For the monoid $\O_n$, the situation is even simpler.  Here, we just have the Rees congruences $R_{I_q}$, abbreviated to $R_q$, associated to the ideal $I_q=I_q(\O_n)=\set{\al\in\O_n}{\rank(\al)\leq q}$.

\begin{thm}[Fernandes \cite{Fernandes2001}]
\label{thm:On}
Let $n\geq0$, and let $\O_n$ be the monoid of order-preserving partial permutations of the set $\bn=\{1,\ldots,n\}$.  
\bit
\itemit{i} The ideals of $\O_n$ are the sets $I_q$, for $q=0,\ldots,n$, yielding the Rees congruences $R_q=R_{I_q}$, as in Definition \ref{defn:Rees}.
\itemit{ii} The above congruences are distinct, and they exhaust all the congruences of $\O_n$.
\itemit{iii} The congruence lattice $\Cong(\O_n)$ forms the following chain:
\[
\epfreseq
\Delta = R_0 \suq R_1 \suq R_2 \suq\cdots\suq R_n = \nabla.
\]
\eit
\end{thm}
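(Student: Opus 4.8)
The plan is to mirror, in simplified form, the treatment given for $\I_n$ in Theorem~\ref{thm:In}, the simplification being that no $\nu_N$-type relations intervene because the maximal subgroups of $\O_n$ are all trivial (Remark~\ref{rem:green_sumbonoids}(vii)) and its minimal ideal is the singleton $\{\emptypartperm\}$, so the only IN-pairs (Definition~\ref{defn:RC}) are the improper ones $(I_{q-1},\{\id_q\})$, which contribute nothing beyond the Rees congruences. Part~(i) is immediate: by \cite[Proposition 2.6]{DEG2017} (together with the fact that the $\gJ$-classes form a chain indexed by rank) the ideals of $\O_n$ are exactly the $I_q=I_q(\O_n)$ for $q=0,\dots,n$, and each yields the Rees congruence $R_q=R_{I_q}$ of Definition~\ref{defn:Rees}, with $R_0=\Delta$ (since $I_0=\{\emptypartperm\}$ is a singleton) and $R_n=\nabla$. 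Since $\O_n$ contains an element of every rank $0,\dots,n$ (e.g.\ the partial identity on $\{1,\dots,q\}$), the ideals form a strictly increasing chain, so the $R_q$ are distinct and satisfy $R_0\suq R_1\suq\cdots\suq R_n$; statement (iii) will then follow the moment (ii) is proved.

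For (ii) I would apply Lemma~\ref{lem:principal}: as $\O_n$ is finite, it suffices to check that $\{R_0,\dots,R_n\}$ is closed under $\vee$, which is clear since $R_i\vee R_j=R_{\max\{i,j\}}$, and that it contains every principal congruence. So the whole content reduces to the following claim: if $\al,\be\in\O_n$ with $\al\neq\be$ and $q=\max\{\rank(\al),\rank(\be)\}$, then $\cg{\al}{\be}=R_q$. The inclusion $\cg{\al}{\be}\sub R_q$ is free, since $\al,\be\in I_q$ and $R_q$ is a congruence containing $(\al,\be)$; the reverse inclusion $R_q\sub\cg{\al}{\be}$, i.e.\ $I_q\times I_q\sub\cg{\al}{\be}$, is the crux.

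I would prove the crux in the slightly more general inductive form: \emph{if $\xi\in\Cong(\O_n)$ contains a pair $(\al,\be)$ with $\al\neq\be$ and $q=\max\{\rank(\al),\rank(\be)\}$, then $I_q\times I_q\sub\xi$}, by induction on $q$. The engine is the rigidity of order-preserving maps: between any two $r$-element subsets of $\bn$ there is a unique order-preserving bijection, so an order-preserving partial permutation is determined by its domain and codomain; hence $\al\neq\be$ forces $\dom(\al)\neq\dom(\be)$ or $\codom(\al)\neq\codom(\be)$. Pre- or post-multiplying $(\al,\be)$ by a suitable order-preserving partial identity restricts the higher-rank element to rank $q-1$ while keeping the two images distinct, producing a non-trivial pair inside $I_{q-1}$; the inductive hypothesis then gives $I_{q-1}\times I_{q-1}\sub\xi$. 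A symmetric restriction shows each rank-$q$ element of $\O_n$ is $\xi$-related to an element of $I_{q-1}$, so $I_q=J_q\cup I_{q-1}$ collapses to a single $\xi$-class, giving $I_q\times I_q\sub\xi$. The base case $q=1$ is the small explicit verification, by the same partial-identity manipulations, that one non-trivial pair of elements of rank $\leq1$ forces all of $I_1$ into one $\xi$-class. Once the claim is established, (iii) is immediate: $\Cong(\O_n)=\{R_0\suq R_1\suq\cdots\suq R_n\}$.

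The main obstacle is this inductive step: one must organise the case analysis — whether $\rank(\al)=\rank(\be)$ or not, and whether the domains or the codomains differ — and choose the restricting partial identities so that the resulting pairs genuinely remain non-trivial and genuinely drop in rank, carefully ruling out the degenerate situations in which a restriction accidentally makes the two elements coincide (using again the domain/codomain determinacy of order-preserving partial permutations). Everything else — the reduction through Lemma~\ref{lem:principal}, the $\vee$-closure, and the identification of the lattice shape — is routine, and no $\lambda,\rho,\mu$ congruences of Definition~\ref{defn:lrmI} arise because the minimal ideal of $\O_n$ has a single element.
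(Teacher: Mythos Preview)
The paper does not actually prove Theorem~\ref{thm:On}: it is stated with attribution to Fernandes~\cite{Fernandes2001} and closed with a \textsc{qed} symbol, so there is no in-paper argument to compare against. Your proposal therefore supplies what the paper deliberately omits.

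Your strategy is correct. The reduction via Lemma~\ref{lem:principal} to principal congruences, together with the key observation that an element of $\O_n$ is determined by its domain and codomain (so $\al\neq\be$ forces $\dom(\al)\neq\dom(\be)$ or $\codom(\al)\neq\codom(\be)$), is exactly the right engine. The inductive scheme you outline does work; in the case analysis you flag as the main obstacle, the cleanest organisation is: if $\rank(\al)=q>\rank(\be)$, multiply on the left by the partial identity on $\dom(\al)\setminus\{a\}$ for some $a\in\dom(\al)$ to obtain a pair of max rank exactly $q-1$ (if this accidentally equalises the pair, a different choice of $a$ works, since then $\dom(\be)=\dom(\al)\setminus\{a\}$); if $\rank(\al)=\rank(\be)=q$, multiply by the partial identity on $\dom(\al)$ (or $\codom(\al)$) to drop $\be$ below rank $q$ and reduce to the previous case. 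Once $I_{q-1}\times I_{q-1}\sub\xi$, writing any $\de\in J_q$ as $\si\al\tau$ and using $(\al,\be)\in\xi$ pushes $\de$ into $I_{q-1}$, completing the step. This is essentially the argument in Fernandes' original paper, recast in the present framework.
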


\begin{rem}
\label{rem:InOnPrinc}
  From the fact that $\Cong(\I_n)$ and $\Cong(\O_n)$ are chains, it follows
  that all congruences on $\I_n$ and $\O_n$ are principal.  Furthermore, a non-trivial
  congruence $\xi$ on $\I_n$ or $\O_n$ is generated by any pair $(\alpha,\beta)$ from the set
  $\xi\setminus\xi^-$, where $\xi^-$ is the maximal congruence properly contained in
  $\xi$.  In actual fact, proving this is a major component in the proofs of
  Theorems~\ref{thm:In} and~\ref{thm:On}.
\end{rem}

\section{The partition monoid \boldmath{$\P_n$}}
\label{sect:Pn}

In this section, we show how the theory developed in Section \ref{sec-fam} can be applied to describe all the congruences of the partition monoid $\P_n$. 
It will serve as a blue-print for all the subsequent sections.  Since $\P_1$ has size $2$, its congruence lattice is easily described, so we assume that $n\geq2$ for the remainder of this section.

The basic strategy is to start with the chain of ideals of $\P_n$,
identify the IN-pairs associated with these ideals, and determine which of them are retractable. 
Propositions \ref{prop-CR2}--\ref{prop-CR5} then enable us to calculate the lattice formed by the congruences arising from these ideals and IN-pairs. Finally, the brunt of the work in this section will be the proof that there are no further congruences on $\P_n$.

From Proposition \ref{prop:green_all_inclusive} we know that the $\gJ$-classes of $\P_n$ are the sets
$J_q=\set{\al\in\P_n}{\rank(\alpha)=q}$, for $q=0,\dots,n$.
The corresponding ideals are $I_q=J_0\cup\dots\cup J_q$,
giving rise to the Rees congruences $R_{I_q}$ (Definition \ref{defn:Rees}), which again will be denoted by $R_q$. 
The minimal ideal is $I_0=J_0$, and it is a rectangular band.

For each proper ideal $I_{q-1}$ ($q=1,\dots,n$) there is a unique minimal $\gJ$-class 
contained in $\P_n\setminus I_{q-1}$: namely,~$J_{q}$. All maximal subgroups contained in $J_q$ are isomorphic to $\S_q$, the symmetric group of rank $q$. 
If for $\sigma\in\S_q$ we let 
$\overline{\sigma}=\partpermIII{1}\cdots{q}{{1\si}}\cdots{{q\si}}$, as in Section \ref{sect:SnInOn}, the mapping $\sigma\mapsto \overline{\sigma}$ is an isomorphism between $\S_q$ and the maximal subgroup $\overline{\S}_q=\set{\overline\si}{\si\in\S_q}$ contained in $J_q$.  

For $q=1,\ldots,n$ and for every normal subgroup $N$ of $\S_q$, the pair $(I_{q-1},\overline{N})$ is an IN-pair. Such a pair yields the congruence~$R_{I_{q-1},\overline N}$, as in Definition \ref{defn:RC}, which throughout this section we will denote simply by~$R_N$; so
\[
R_N=R_{I_{q-1},\overline N} = R_{q-1}\cup\nu_N=\Delta\cup \nu_{N}\cup (I_{q-1}\times I_{q-1}),
\]
where $\De=\De_{\P_n}$ and
\[
\nu_{N}=\nu_{\overline N}=\set{ (\alpha\overline\si\be,\alpha\overline\tau\be)\in J_q\times J_q}{\si,\tau\in N,\ \alpha,\be\in\P_n},
\]
as in Definition \ref{Rel-nu}.
When $N=\{\id_q\}$ is trivial, we obtain the Rees congruence $R_{q-1}$; the \emph{proper} IN-pairs occur when $|N|\geq2$.

The above families of congruences, $R_q$ and $R_N$, closely parallel the situation in  $\Cong(\I_n)$
as described in Section \ref{sect:SnInOn}
(and also that in $\Cong(\T_n)$ originally discovered by Mal$'$cev).
In particular, they form a chain, by Proposition~\ref{prop-CR2}.
However,~$\P_n$ also has some $\lambda,\rho,\mu$ congruences, arising from a retraction that we now describe.

\begin{defn}
\label{defn:hat}
For $\al\in\P_n$, let $\widehat{\alpha}$ be the unique partition of rank $0$ with the same kernel and cokernel as~$\alpha$.~
\end{defn}

\begin{lemma}
\label{HatRetract}
The map $f: I_1\rightarrow I_0:\al\mt\alh$ is a retraction.
\end{lemma}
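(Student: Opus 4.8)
The plan is to verify the two defining properties of a retraction (Definition~\ref{defn:retractable}) for $f:I_1\to I_0:\al\mapsto\alh$. First I would check that $f$ acts identically on the minimal ideal $I_0$: if $\al\in I_0$, then $\rank(\al)=0$, so $\al$ is itself the unique rank-$0$ partition with its own kernel and cokernel, whence $\alh=\al$. This is immediate from Definition~\ref{defn:hat}. The substantive part is showing $f$ is a homomorphism, i.e.\ that $\widehat{\al\be}=\alh\beh$ for all $\al,\be\in I_1$. Note first that both sides lie in $I_0$ (the right-hand side is a product of two rank-$0$ partitions, hence has rank $0$), so by Definition~\ref{defn:hat} it suffices to show that $\alh\beh$ has rank $0$ and that $\ker(\alh\beh)=\ker(\al\be)$ and $\coker(\alh\beh)=\coker(\al\be)$; by the duality ${}^*$ (which swaps $\ker\leftrightarrow\coker$ and reverses products, and clearly commutes with $\widehat{\phantom{x}}$) it is enough to establish the kernel statement.

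The key structural observation is that a partition $\al$ of rank at most $1$ is almost determined by $\ker(\al)$ and $\coker(\al)$: if $\rank(\al)=0$ then $\al$ is completely determined by this data, and if $\rank(\al)=1$ then $\al$ has a single transversal $A\cup B'$, where $A$ is a $\ker(\al)$-class and $B$ a $\coker(\al)$-class, so the only extra piece of information beyond $(\ker(\al),\coker(\al))$ is \emph{which} kernel class is the domain and which cokernel class is the codomain. I would compute $\ker(\al\be)$ directly from the product-graph description. Write $\al=\partABCD$-style: for $\al\in I_1$, either $\al$ has no transversal, or exactly one transversal $A\cup B'$. In forming the product graph $\Pi(\al,\be)$, the upper vertices $\bn$ get connected to one another exactly according to $\ker(\al)$, plus possibly some extra identifications caused by paths that go down through $\al$'s transversal, across $\be$, and back up. Such a "return path" is possible only if $\al$ has a transversal $A\cup B'$ \emph{and} $\be$ has a transversal $C\cup D'$ with $B\cap C\neq\emptyset$; in that case the single block $B'$ of $\al^\vee$ (relabelled) meets $C$ of $\be^\wedge$, creating one merged component, but since $\rank(\al),\rank(\be)\le 1$ there is at most one such transversal on each side, so at most one extra merge of kernel classes occurs in $\al\be$, and it occurs precisely when $\dom$-of-the-transversal-of-$\al$ meets \dots\ — the point is that whether this merge happens depends only on $\ker(\al)$, $\codom(\al)$, $\dom(\be)$, $\coker(\be)$ in a way that is \emph{unchanged} when we replace $\al,\be$ by $\alh,\beh$. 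But $\alh$ has \emph{no} transversal, so no return path exists in $\Pi(\alh,\beh)$ at all, giving $\ker(\alh\beh)=\ker(\alh)=\ker(\al)$, and one must separately check that the return-path merge never actually happens for rank-$\le 1$ partitions either — which it does not, because a return path requires the relevant block of $\al^\vee$ to reach \emph{back} to an upper vertex of $\al$, and the unique transversal block contributes its upper part $A$ as a single kernel class already, so merging $A$ with the cokernel side of $\be$ produces no new pair within $\bn\times\bn$. Hence $\ker(\al\be)=\ker(\al)$ for all $\al\in I_1$ and $\be\in I_1$, and likewise $\ker(\alh\beh)=\ker(\alh)=\ker(\al)$, so the two agree.

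I expect the main obstacle to be the bookkeeping in the product-graph argument: one must carefully enumerate the few cases according to whether $\al$ and $\be$ have a transversal, and in each case track exactly which components of $\Pi(\al,\be)$ restrict to which subsets of $\bn$, being careful that transversal blocks of $\al$ contribute their upper part as a kernel class of $\al\be$ only if that block survives as a transversal or non-transversal of the product. The cleanest route is probably to prove the stronger and simpler statement that \emph{for any $\al,\be\in\P_n$ with $\rank(\al)\le 1$, $\ker(\al\be)=\ker(\al)$} (which follows from $\ker(\al\be)\supseteq\ker(\al)$ together with $\rank(\al\be)\le\rank(\al)\le 1$ forcing $\al\be$ to have the same number of kernel classes as $\al$ when $\rank(\al)=1$, and being trivial when $\rank(\al)=0$ since then $\al\be\in I_0$ has $\ker(\al\be)$ with $q+r$ classes where everything is a non-transversal — here a short rank count closes it), dualise for cokernels, and conclude $\widehat{\al\be}=\alh\beh$.
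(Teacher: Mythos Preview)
Your approach is essentially correct and close in spirit to the paper's proof. The key observation---that for $\al\in I_1$ one has $\ker(\al\be)=\ker(\al)$ (and dually $\coker(\al\be)=\coker(\be)$), because the single transversal of $\al$ (if any) cannot create merges among upper $\ker(\al)$-classes---is precisely what underlies the paper's argument. The paper simply writes $\al$ and $\be$ explicitly in tabular form, observes directly that $\al\be$ has the upper blocks of $\al$ and the lower blocks of $\be$, and concludes $\widehat{\al\be}=\alh\beh$ in one line; your route via $\ker$ and $\coker$ amounts to the same computation expressed more abstractly.

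One caveat: the ``rank count'' shortcut you suggest at the end does \emph{not} work as stated. Knowing $\rank(\al\be)\le\rank(\al)\le 1$ does not by itself force $\al\be$ to have the same number of kernel classes as $\al$: the number of kernel classes equals $\rank+{}$(number of upper non-transversals), and neither summand is determined by the other. A rank-$1$ partition with three kernel classes could, as far as that inequality alone is concerned, multiply down to a rank-$0$ partition with only two kernel classes. The correct argument is the product-graph one you give earlier: any path in $\Pi(\al,\be)$ joining two distinct $\ker(\al)$-classes inside $\bn$ would have to descend and re-ascend through two distinct transversals of $\al$, and there is at most one. Stick with that reasoning (which, incidentally, works for arbitrary $\be\in\P_n$, not just $\be\in I_1$), and your proof goes through.
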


\pf
Clearly $f$ acts identically on $I_0$.  It remains to prove that $\alh\beh=\widehat{\al\be}$ for all ${\al,\be\in I_1=J_0\cup J_1}$.  This is clearly the case if ${\al,\be\in J_0}$, so suppose without loss of generality that $\al\in J_1$ and $\be\in I_1$.  We may write $\al=\partIV{A_0}{A_1}\cdots{A_r}{B_0}{B_1}\cdots{B_s}$ and $\be=\partIV{C_0}{C_1}\cdots{C_t}{D_0}{D_1}\cdots{D_u}$ or $\partIII{C_0}{C_1}\cdots{C_t}{D_0}{D_1}\cdots{D_u}$.  Then $\alh=\partIII{A_0}{A_1}\cdots{A_r}{B_0}{B_1}\cdots{B_s}$ and $\beh = \partIII{C_0}{C_1}\cdots{C_t}{D_0}{D_1}\cdots{D_u}$, while $\al\be$ is equal to either $\partIV{A_0}{A_1}\cdots{A_r}{D_0}{D_1}\cdots{D_u}$ or $\partIII{A_0}{A_1}\cdots{A_r}{D_0}{D_1}\cdots{D_u}$.  The identity $\alh\beh=\widehat{\al\be}$ quickly follows. \epf

The ideal $I_1$ is in fact the \emph{largest} retractable ideal of $\P_n$.  It is not hard to prove this directly, but it also follows from Theorem \ref{thm-CongPn}.  Indeed, if any larger ideal was retractable, then such an ideal would lead to additional $\lam,\rho,\mu$ congruences, as in Definition \ref{defn:lrmI}.

All IN-pairs associated to the retractable ideals $I_0,I_1$ of $\P_n$ are retractable, as we now show.

\begin{lemma}
\label{ThreeTriples}
The IN-pairs $\C_0=(I_0,\{\overline{\id}_1\})$,
$\C_1=(I_1,\{\overline{\id}_2\})$ and
$\C_{\S_2}=(I_1,\overline{\S}_2)$ are all retractable.
\end{lemma}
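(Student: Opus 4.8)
The plan is to verify, for each of the three pairs, the two conditions in Definition~\ref{defn:lrmC0}: that the underlying ideal is retractable, and that all the elements of the normal subgroup act the same way on the minimal ideal $M=I_0$.

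The ideal conditions are immediate: for $\C_0$ the ideal is $I_0=M$ itself, so the identity map on $I_0$ is a retraction onto $M$; for $\C_1$ and $\C_{\S_2}$ the ideal is $I_1$, which is retractable by Lemma~\ref{HatRetract} (via $\al\mapsto\alh$). The normal-subgroup conditions for $\C_0$ and $\C_1$ hold vacuously, since $\{\overline{\id}_1\}$ and $\{\overline{\id}_2\}$ are trivial, so that $|xN|=|Nx|=1$ for every $x\in M$ automatically. Hence the only substantive point is to show that the two elements $\overline{\id}_2$ and $\overline{(1\,2)}$ of $\overline{\S}_2$ act identically on $I_0$, i.e.\ that $x\overline{\id}_2=x\overline{(1\,2)}$ and $\overline{\id}_2 x=\overline{(1\,2)}x$ for all $x\in I_0$.

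I would prove the first identity by computing both products directly from the product-graph definition. Since $x$ has rank $0$, for $\tau\in\{\overline{\id}_2,\overline{(1\,2)}\}$ the connected component in $\Pi(x,\tau)$ of any vertex of $\bn$ stays inside $\bn$ and equals that vertex's $\ker(x)$-class; hence $x\overline{\id}_2$ and $x\overline{(1\,2)}$ both have rank $0$ and kernel $\ker(x)$. For the lower parts, the only vertices of $\bn''$ met by $\overline{\id}_2^{\wedge}$ or $\overline{(1\,2)}^{\wedge}$ are $1''$ and $2''$; a short trace through the two product graphs shows that in each product every lower vertex $j'$ with $j\geq 3$ becomes a singleton block, while $1'$ and $2'$ lie in a common block exactly when $1$ and $2$ lie in a common block of $x$ — a condition symmetric in $1$ and $2$, and therefore the same whether $\tau=\overline{\id}_2$ or $\tau=\overline{(1\,2)}$. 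Thus $x\overline{\id}_2$ and $x\overline{(1\,2)}$ are rank-$0$ partitions with the same kernel and cokernel, hence equal (cf.\ Definition~\ref{defn:hat}). The second identity then follows by applying the anti-involution ${}^{\ast}$: since $(1\,2)$ is an involution, $\overline{\id}_2^{\ast}=\overline{\id}_2$ and $\overline{(1\,2)}^{\ast}=\overline{(1\,2)}$, while $x^{\ast}\in I_0$, so $\overline{\id}_2 x=(x^{\ast}\overline{\id}_2)^{\ast}=(x^{\ast}\overline{(1\,2)})^{\ast}=\overline{(1\,2)}x$. The only part needing care is the product-graph bookkeeping in the lower rows — keeping track of exactly which lower vertices survive as blocks of a product of a rank-$0$ partition with an element of $\overline{\S}_2$; everything else is routine, and once the two identities are established, $\C_{\S_2}$ satisfies Definition~\ref{defn:lrmC0}, completing the argument.
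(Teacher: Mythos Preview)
Your proposal is correct and follows essentially the same approach as the paper: both reduce to checking that $\al\,\overline{\id}_2=\al\,\overline{(1\,2)}$ for $\al\in I_0$ (the paper computes the products explicitly in the two cases $(1,2)\in\coker(\al)$ and $(1,2)\notin\coker(\al)$, while you trace through the product graph), and both obtain the left-action identity by duality. One small wording issue: where you write ``$1$ and $2$ lie in a common block of $x$'', you mean $(1,2)\in\coker(x)$ (i.e.\ $1'$ and $2'$ lie in a common block), since it is the lower row of $x$ that meets the middle vertices $1'',2''$ in $\Pi(x,\tau)$; the argument is unaffected.
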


\pf
By Lemma \ref{HatRetract}, the stated ideals are all retractable, so it remains to prove that for every $\alpha\in I_0$, we have
$|\alpha \overline{\S}_2|=1=|\overline{\S}_2\alpha|$. 
In fact, it suffices to prove the first equality, the second being dual.
Recall that $\overline{\S}_2=\{\beta,\gamma\}$,
where $\beta=\partpermII1212$ and $\gamma=\partpermII1221$.
Suppose $\alpha=\partIII{A_1}{A_2}\cdots{A_r}{B_1}{B_2}\cdots{B_s}$.
If $(1,2)\in\coker(\alpha)$, then
$\alpha\beta=\partIII{A_1}{A_2}\cdots{A_r}{1,2}{3}\cdots{n}=\alpha\gamma$; otherwise,
$\alpha\beta=\partIII{A_1}{A_2}\cdots{A_r}{1}{2}\cdots{n}=\alpha\gamma$.
\epf

To each of these three pairs, we can associate the $\lambda,\rho,\mu$ congruences given in Definition \ref{defn:lrmC} (but note that the $\eta$ and $\mu$ congruences are equal in each case, since $I_0=J_0$ is $\H$-trivial).  
To simplify the notation, we will abbreviate these to $\lam_0=\lam_{\C_0}$, $\lam_1=\lam_{\C_1}$, $\lam_{\S_2}=\lam_{\C_{\S_2}}$, with similar notation also for the $\rho$ and $\mu$ congruences.
We can now state the main result of this section.


\begin{thm}\label{thm-CongPn}
Let $n\geq2$, and let $\P_n$ be the partition monoid of degree $n$.  
\bit
\itemit{i} The ideals of $\P_n$ are the sets $I_q$, for $q=0,\ldots,n$, yielding the Rees congruences $R_q=R_{I_q}$, as in Definition \ref{defn:Rees}.
\itemit{ii} The proper IN-pairs of $\P_n$ are of the form $(I_{q-1},\overline N)$, for $q=2,\ldots,n$ and for any non-trivial normal subgroup $N$ of $\S_q$, yielding the congruences $R_N=R_{I_{q-1},\overline N}$, as in Definition \ref{defn:RC}.
\itemit{iii} The retractable IN-pairs of $\P_n$ are $\C_0=(I_0,\{\overline{\id}_1\})$, $\C_1=(I_1,\{\overline{\id}_2\})$ and $\C_{\S_2}=(I_1,\overline{\S}_2)$, yielding the congruences $\lambda_0$, $\rho_0$, $\mu_0$, $\lambda_1$, $\rho_1$, $\mu_1$, $\lambda_{\S_2}$, $\rho_{\S_2}$, $\mu_{\S_2}$, respectively, as in Definition \ref{defn:lrmC}.
\itemit{iv} The above congruences are distinct, and they exhaust all the congruences of $\P_n$.
\itemit{v} The Hasse diagram of the congruence lattice $\Cong(\P_n) $ is shown in Figure \ref{fig-CongPn}.
\eit
\end{thm}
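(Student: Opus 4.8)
The plan is to settle parts~(i)--(iii) quickly by unwinding the definitions against the machinery of Section~\ref{sec-fam}, and to concentrate the real work on the ``exhaustion'' half of part~(iv), after which part~(v) falls out. For~(i): by Proposition~\ref{prop:green_all_inclusive} the $\gJ$-classes of $\P_n$ are the rank classes $J_0,\dots,J_n$ and they form a chain, so the ideals are exactly the $I_q$, giving the Rees congruences $R_q=R_{I_q}$. For~(ii): an IN-pair $(I,N)$ requires the unique minimal $\gJ$-class $J$ of $\P_n\setminus I$ to be stable and regular (automatic here by finiteness and Remark~\ref{rem:green_sumbonoids}(vi)), so $I=I_{q-1}$ and $J=J_q$; by Lemma~\ref{lem:G1G2} we may use the fixed maximal subgroup $\overline{\S}_q\cong\S_q$, and then $N$ ranges over its normal subgroups, the proper pairs being those with $N$ non-trivial and $q\geq2$. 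For~(iii): Lemma~\ref{HatRetract} makes $I_1$ (hence also $I_0$) retractable, while no $I_q$ with $q\geq2$ is retractable --- I would deduce this from part~(iv) itself (an extra retractable ideal would, via Definition~\ref{defn:lrmI}, produce $\lambda/\rho/\mu$ congruences absent from the list), as the paper's running text flags; combining with~(ii), the only IN-pairs on $I_0,I_1$ are $\C_0,\C_1,\C_{\S_2}$, which Lemma~\ref{ThreeTriples} shows to be retractable, and Definition~\ref{defn:lrmC} together with $\H$-triviality of $M=I_0$ (so $\eta_\C=\mu_\C$ by Proposition~\ref{prop-CR4}(ii) and $\mu_0=\De$ by Proposition~\ref{prop-CR5}) yields exactly the nine listed $\lambda/\rho/\mu$ congruences.

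Distinctness of the listed congruences comes from comparing restrictions: $R_q$ is the only one that is universal on $I_q$ but trivial on $J_{q+1}$; for a fixed rank $q$ the congruences $R_{q-1}\sub R_N\sub R_q$ are separated by $\nu_N$ restricted to $\overline{\S}_q$, which recovers $N$; and the remaining congruences (those at or below $R_2$) are told apart by their restrictions to $I_0$ (where $\lambda_\C,\rho_\C,\mu_\C$ restrict to $\L^M,\R^M,\De_M$, pairwise distinct since $n\geq2$ forces $M=I_0$ to have at least two $\R$-classes and two $\L$-classes), to $J_1$, and to $J_2$ (where $\nu_{\S_2}$ separates the $\S_2$-indexed ones). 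Part~(v), and the fact that the list is closed under $\vee$, then follow from Propositions~\ref{prop-CR2}--\ref{prop-CR5}: above $R_2$ the list is the chain
\[
R_2\sub R_{\A_3}\sub R_{\S_3}\sub R_3\sub R_K\sub R_{\A_4}\sub R_{\S_4}\sub R_4\sub\cdots\sub R_{\S_n}\sub R_n=\nabla
\]
(with the obvious truncation for small $n$), while the part at or below $R_2$ is the finite lattice obtained by stacking the two ``diamonds'' of Figure~\ref{fig-CR} --- one from $\C_0$, one from the pair $(\C_1,\C_{\S_2})$ --- underneath $R_{\S_2}\sub R_2$; one checks each pairwise join stays in the list, noting that any list member below $R_{\S_2}$ joined with one above $R_2$ merely returns the larger.

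The substance of~(iv) is that there is \emph{nothing else}. By Lemma~\ref{lem:principal} ($\P_n$ is finite, so every congruence is finitely generated) and the $\vee$-closure just obtained, it suffices to show each principal congruence $\xi=\cg\al\be$, with $\al\ne\be$, lies on the list. This is a case analysis on $\rank(\al),\rank(\be)$ and the Green's relations between $\al,\be$, resting on technical lemmas to be proved: (a) pairs that are ``far apart'' --- distinct ranks, or equal ranks $q$ but with $\ker(\al)\ne\ker(\be)$ or $\coker(\al)\ne\coker(\be)$ --- generate, in all but the lowest-rank situations, a Rees congruence $R_q$ (or $R_{q-1}$, or an $R_N$ when a normal-subgroup twist survives at level $q$), by multiplying $(\al,\be)$ on both sides to build up enough pairs; (b) if $\al\H\be$ inside $J_q$, then $\xi\supseteq\nu_N$ where $N$ is the least normal subgroup of $\S_q$ with $(\al,\be)\in\nu_N$, and moreover, when $q\geq3$, multiplying by rank-$(q-1)$ idempotents forces $\xi\supseteq R_{q-1}$, hence $\xi\supseteq R_N$; (c) the pairs left untouched by~(a)--(b) all sit in $I_2$ and are handled by explicit computation with the retraction $f:I_1\to I_0$, $\ga\mapsto\widehat\ga$, of Lemma~\ref{HatRetract}: multiplying such a pair by suitable partitions of ranks $0$, $1$ and $2$ shows that $\xi$ equals the unique member of $\{\lambda_0,\rho_0,\lambda_1,\rho_1,\mu_1,\lambda_{\S_2},\rho_{\S_2},\mu_{\S_2},R_{\S_2}\}$ sharing its restrictions to $I_0$, $J_1$ and $J_2$. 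In every case the reverse inclusion (e.g.\ $\xi\sub R_N$, since $R_N$ is a congruence containing $(\al,\be)$) is immediate, so $\xi$ is pinned down exactly, completing~(iv).

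I expect step~(c), and specifically the bookkeeping at ranks $0$, $1$ and $2$, to be the principal obstacle: one has to control precisely how the rectangular-band structure of $I_0$, the kernel/cokernel data recorded by $\ga\mapsto\widehat\ga$, and multiplication by low-rank partitions interact, so as to rule out any congruence intermediate to those listed. A closely related delicate point occurs inside~(b): the dichotomy between $q\geq3$, where every non-trivial $N\trianglelefteq\S_q$ forces $\xi$ to collapse $I_{q-1}$ and hence $\xi=R_N$, and $q=2$, where a pair of $\H$-related rank-$2$ partitions may instead generate one of $\lambda_{\S_2},\rho_{\S_2},\mu_{\S_2}$ without collapsing $I_1$ --- precisely the mechanism responsible for the non-chain shape of $\Cong(\P_n)$.
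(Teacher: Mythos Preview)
Your overall strategy is exactly the paper's: invoke Propositions~\ref{prop-AreCongs}--\ref{prop-CR5} for existence and lattice structure, then use Lemma~\ref{lem:principal} and a case analysis on $(\al,\be)$ to show every principal congruence appears on the list. Two points are worth flagging.

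First, the list you give in step~(c) is off. The congruences $\lambda_{\S_2},\rho_{\S_2},R_{\S_2}$ are \emph{not} principal (see Remark~\ref{NotPrinc}): any pair in $\lambda_{\S_2}$ lies either in $\lambda_1$ or in $\mu_{\S_2}$, so it generates a subcongruence of one of those. Conversely, you have omitted $R_0$ and $R_1$, which \emph{are} principal (Proposition~\ref{prop:joins2}). The correct target set for~(c) is $\{\lambda_0,\rho_0,R_0,\mu_1,\lambda_1,\rho_1,R_1,\mu_{\S_2}\}$. This is bookkeeping rather than a structural error, but it matters for the ``covers all pairs'' check at the end.

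Second, your sketch of~(a)--(b) underplays the paper's main technical lever for the upper congruences: the embedding $\I_n\hookrightarrow\P_n$ together with Liber's Theorem~\ref{thm:In}. The paper does not proceed by ``multiplying by rank-$(q-1)$ idempotents'' directly; rather, it shows (Lemmas~\ref{lemma-aa1}--\ref{lemma-aa2}) that every upper congruence $\xi$ satisfies $\xi=\overline\xi^\sharp$ where $\overline\xi=\xi\cap(\I_n\times\I_n)$, and then transports the generating-pair characterisation from $\I_n$ back to $\P_n$ (Proposition~\ref{prop-aa3}). Your phrasing suggests a self-contained argument inside $\P_n$, which is plausible but would require you to reprove the $\S_q$-normal-closure machinery from scratch; the $\I_n$ route is substantially cleaner and is what makes the ``upper'' part short.
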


\pf
That all the listed relations are congruences follows from Proposition \ref{prop-AreCongs}, and that they form the lattice depicted in Figure \ref{fig-CongPn} follows from Propositions \ref{prop-CR2}--\ref{prop-CR5}.
What therefore remains to be proved is that they exhaust all the congruences of $\P_n$.
By Lemma \ref{lem:principal}, this can be accomplished by showing that our list contains all the principal congruences. 
The principal congruences are shaded blue in Figure \ref{fig-CongPn}, and below we will for each of them describe all generating pairs. 
The proof is then completed by observing that this covers all possible pairs, as summarised in Table \ref{PnCongGens}.
\epf

\begin{figure}[ht]
\begin{center}
\scalebox{0.8}{
\begin{tikzpicture}[scale=.97]
\dottedinterval0{16}11
\node[rounded corners,rectangle,draw,fill=blue!20] (N) at (0,19) {$R_n$};
  \draw(0.87,18.98) node {$=\nabla$};
\node[rounded corners,rectangle,draw,fill=blue!20] (R3) at (0,16) {$R_3$};
\node[rounded corners,rectangle,draw,fill=blue!20] (S3) at (0,14) {$R_{\S_3}$};
\node[rounded corners,rectangle,draw,fill=blue!20] (A3) at (0,12) {$R_{\A_3}$};
\node[rounded corners,rectangle,draw,fill=blue!20] (R2) at (0,10) {$R_2$};
\node[rounded corners,rectangle,draw,fill=green!20] (krl) at (0,8) {$R_{\S_2}$};
\node[rounded corners,rectangle,draw,fill=green!20] (kr) at (-3,6) {$\lambda_{\S_2}$};
\node[rounded corners,rectangle,draw,fill=green!20] (kl) at (0,6) {$\rho_{\S_2}$};
\node[rounded corners,rectangle,draw,fill=blue!20] (mrl) at (3,6) {$R_1$};
\node[rounded corners,rectangle,draw,fill=blue!20] (k) at (-3,4) {$\mu_{\S_2}$};
\node[rounded corners,rectangle,draw,fill=blue!20] (mr) at (0,4) {$\lambda_1$};
\node[rounded corners,rectangle,draw,fill=blue!20] (ml) at (3,4) {$\rho_1$};
\node[rounded corners,rectangle,draw,fill=blue!20] (rl) at (6,4) {$R_0$};
\node[rounded corners,rectangle,draw,fill=blue!20] (m) at (0,2) {$\mu_1$};
\node[rounded corners,rectangle,draw,fill=blue!20] (r) at (3,2) {$\lambda_0$};
\node[rounded corners,rectangle,draw,fill=blue!20] (l) at (6,2) {$\rho_0$};
\node[rounded corners,rectangle,draw,fill=blue!20] (d) at (3,0) {$\mu_0$};
   \draw(3.8,0.05) node {$=\Delta$};
\draw
(d)--(l) (m)--(ml) (k)--(kl)
(d)--(m)--(k) (l)--(ml)--(kl) (rl)--(mrl)--(krl)
(d)--(r) (m)--(mr) (k)--(kr)
(l)--(rl) (ml)--(mrl) (kl)--(krl)
(krl)--(R2)
(R2)--(A3)--(S3)--(R3)
;
\fill[white] (-1.5,5)circle(.15);
\fill[white] (1.5,5)circle(.15);
\fill[white] (1.5,3)circle(.15);
\fill[white] (4.5,3)circle(.15);
\draw
(r)--(rl) (mr)--(mrl) (kr)--(krl) (r)--(mr)--(kr) 
;
\begin{scope}[shift={(12,0)}]
\dottedinterval0{16}11
\node[rounded corners,rectangle,draw,fill=blue!20] (A3) at (0,12) {$R_{\A_3}$};
\node[rounded corners,rectangle,draw,fill=blue!20] (S3) at (0,14) {$R_{\S_3}$};
\node[rounded corners,rectangle,draw,fill=blue!20] (R3) at (0,16) {$R_3$};
\node[rounded corners,rectangle,draw,fill=blue!20] (N) at (0,19) {$R_n$};
  \draw(0.87,18.98) node {$=\nabla$};
\node[rounded corners,rectangle,draw,fill=blue!20] (mrl) at (3,6) {$R_1$};
\node[rounded corners,rectangle,draw,fill=blue!20] (rl) at (6,4) {$R_0$};
\node[rounded corners,rectangle,draw,fill=blue!20] (m) at (0,4) {$\mu_1$};
\node[rounded corners,rectangle,draw,fill=green!20] (krl) at (0,8) {$R_{\S_2}$};
\node[rounded corners,rectangle,draw,fill=blue!20] (d) at (3,2) {$\mu_0$};
  \draw(3.87,1.98) node {$=\Delta$};
\node[rounded corners,rectangle,draw,fill=blue!20] (k) at (-3,6) {$\mu_{\S_2}$};
\node[rounded corners,rectangle,draw,fill=blue!20] (R2) at (0,10) {$R_2$};
\draw
(d)--(m)--(k) 
(k)--(krl) (m)--(mrl) (d)--(rl)
(rl)--(mrl)--(krl)
(krl)--(R2)
(R2)--(A3)--(S3)--(R3)
;
\end{scope}
\end{tikzpicture}
}
\vspace{-3mm}
\caption{
Hasse diagrams for the congruence lattice (left) and the $*$-congruence lattice (right) of the partition monoid $\P_n$.  Congruences shaded blue are principal;  those shaded green are minimally generated by two pairs of partitions.  
The diagrams also serve for the partial Brauer monoid $\PB_n$ (see Section \ref{sect:PBn}).
}
\label{fig-CongPn}
\end{center}
\end{figure}
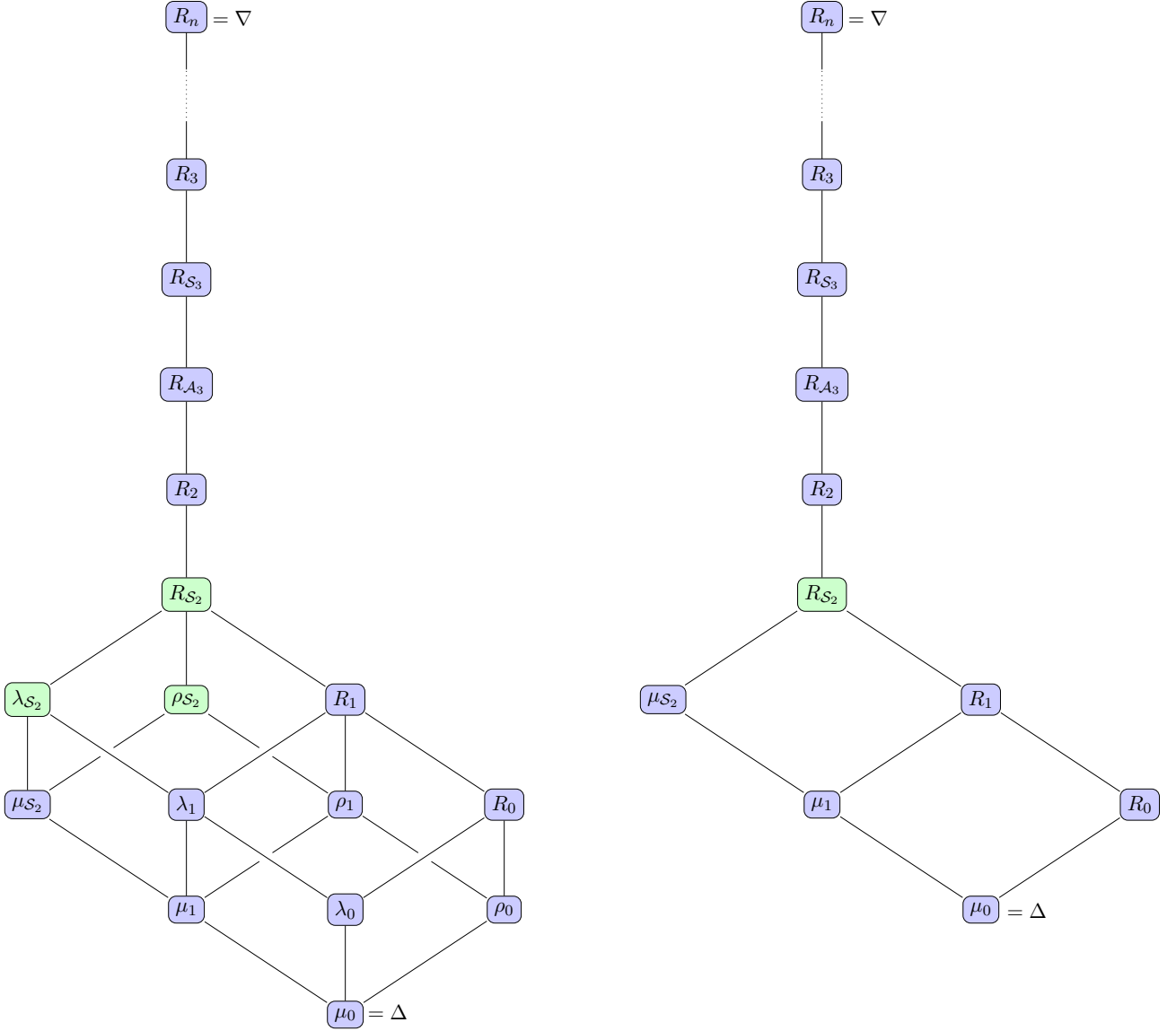

\begin{table}[ht]
\begin{center}
\begin{tabular}{|r|l|c|l|} \hline
\multicolumn{1}{|c|}{\boldmath{$q$}} & \textbf{Additional conditions} & \boldmath{$(\alpha,\beta)^\sharp$} & \textbf{Reference} \\ \hline\hline
&$\alpha=\beta$ & $\Delta$ & \\ \hline
0& $(\alpha,\beta)\in{\R}\sm\Delta$ &
$\rho_0$ & Proposition \ref{prop:small_congruences}(i)\\ \hline
0& $(\alpha,\beta)\in{\L}\sm\Delta$ &
$\lambda_0$ & Proposition \ref{prop:small_congruences}(ii)\\ \hline
0& $(\alpha,\beta)\not\in{\L}\cup{\R}$ &
$R_0$ & Proposition \ref{prop:joins2}(i)\\ \hline
1& $\alh=\beh$, $\al\not=\be$ &
$\mu_1$ & Proposition \ref{prop:small_congruences}(iii)\\ \hline
1& $(\widehat{\alpha},\widehat{\beta})\in{\R}\sm\De$ &
$\rho_1$ & Proposition \ref{prop:joins2}(ii)\\ \hline
1& $(\widehat{\alpha},\widehat{\beta})\in{\L}\sm\De$ &
$\lambda_1$ & Proposition \ref{prop:joins2}(iii)\\ \hline
1& $(\widehat{\alpha},\widehat{\beta})\not\in{\L}\cup{\R}$ &
$R_1$ & Proposition \ref{prop:joins2}(iv)\\ \hline
2& $(\alpha,\beta)\in{\H}\sm\Delta$ &
$\mu_{\S_2}$ & Proposition \ref{prop:small_congruences}(iv)\\ \hline
$\geq2$&  $(\alpha,\beta)\not\in{\H}$ & $R_q$ & Proposition \ref{prop-aa3}\\ \hline
$\geq3$&  $(\alpha,\beta)\in{\H}\setminus \Delta$ & $R_N$ & Proposition \ref{prop-aa3}\\ \hline
\end{tabular}

\caption{The principal congruences $(\alpha,\beta)^\sharp$ on $\P_n$, 
with $q=\rank(\alpha)\geq\rank(\beta)$.
In the last row, $N$ stands for the normal closure in $\S_q$ of $\phi(\alpha,\beta)$.}
\label{PnCongGens}
\end{center}
\end{table}

As noted in the above proof, the rest of this section is devoted to describing the generating pairs for the principal congruences on $\P_n$.
Throughout, we will refer to the congruences forming the ``prism shaped'' part of the congruence lattice (i.e., $R_{\S_2}$ and all the congruences contained in it) as the \emph{lower} congruences, and those forming the ``wick'' (i.e., congruences containing $R_{\S_2}$) as the \emph{upper congruences}.

Before we move on, it will be convenient to give an alternative description of the relations $\nu_N$ on~$\P_n$ from Definition \ref{Rel-nu}.  To do so, we first make the following definition.

\begin{defn}\label{defn:phi}
Let $\al,\be\in J_q$ with $\al\H\be$ and $q\geq1$, and suppose the kernel-classes of $\al$, and hence also of~$\be$, are $A_1,\ldots,A_q,C_1,\ldots,C_r$, where $\dom(\al)=\dom(\be)=A_1\cup\cdots\cup A_q$ and $\min(A_1)<\cdots<\min(A_q)$.  Then
$\al\be^* = \partI{A_1}{A_q}{C_1}{C_r}{A_{1\si}}{A_{q\si}}{C_1}{C_r}$ for some permutation $\si\in\S_q$, and we define $\phi(\al,\be)=\si$.  
\end{defn}

We now establish the connection between the relations $\nu_N$ and the permutations $\phi(\al,\be)$ from Definitions~\ref{Rel-nu} and \ref{defn:phi}.  In what follows, we will generally use the next result without explicit reference.

\begin{lemma}\label{lem:nu_phi}
If $q\geq1$, and if $N$ is a normal subgroup of $\S_q$, then
\[
\nu_N=\set{ (\alpha,\beta)\in J_q\times J_q}{\alpha\H \beta,\ \phi(\alpha,\beta)\in N}.
\]
\end{lemma}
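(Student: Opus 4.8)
The plan is to exploit a normal form for the elements of $J_q$ which separates off the ``group coordinate'', and then to read $\phi$ directly off it. Write $e=\overline{\id}_q$ for the identity of the maximal subgroup $\overline{\S}_q$ of $\P_n$ contained in $J_q$, and recall from Definition~\ref{Rel-nu} that $\nu_N=\set{(\gamma\overline\si\delta,\gamma\overline\tau\delta)\in J_q\times J_q}{\si,\tau\in N,\ \gamma,\delta\in\P_n}$. Fix $\al=\partABCD\in J_q$, so that the transversals of $\al$ are $A_i\cup B_i'$. Let $\lambda\in\P_n$ be the partition with transversals $A_i\cup\{i'\}$ ($1\le i\le q$), upper non-transversals $C_1,\dots,C_r$ and lower non-transversals $\{(q+1)'\},\dots,\{n'\}$; and let $\rho\in\P_n$ be the partition with transversals $\{i\}\cup B_i'$ ($1\le i\le q$), upper non-transversals $\{q+1\},\dots,\{n\}$ and lower non-transversals $D_1',\dots,D_s'$. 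A direct diagrammatic computation gives
\[
\al=\lambda\rho=\lambda e\rho,\qquad \lambda^*\lambda=e=\rho\rho^*.
\]
Moreover, if $\be\in J_q$ with $\be\H\al$, then $\be$ has the same kernel, cokernel, domain and codomain as $\al$, so (indexing the $A_i$ by $\min A_i$ as in Definition~\ref{defn:phi}) its transversals are $A_i\cup B_{i\tau}'$ for a unique $\tau\in\S_q$; one then checks, again diagrammatically, that $\be=\lambda\overline\tau\rho$ and that $\al\be^*=\partI{A_1}{A_q}{C_1}{C_r}{A_{1\tau^{-1}}}{A_{q\tau^{-1}}}{C_1}{C_r}$, so that $\phi(\al,\be)=\tau^{-1}$.

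The inclusion $\supseteq$ is then immediate: if $\al\H\be$ and $\phi(\al,\be)=\si\in N$, then $\tau=\si^{-1}\in N$, and $(\al,\be)=(\lambda\,\overline{\id}_q\,\rho,\ \lambda\overline\tau\rho)$ with $\id_q,\tau\in N$ and $\al,\be\in J_q$, so $(\al,\be)\in\nu_N$.

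For $\subseteq$, take $(\al,\be)\in\nu_N$; then $\al\H\be$ by Lemma~\ref{lem:nu_H_equivalence}(i). Writing $(\al,\be)=(\gamma_0\overline{\mu_1}\delta_0,\gamma_0\overline{\mu_2}\delta_0)$ with $\mu_1,\mu_2\in N$ and absorbing $\overline{\mu_2}$ into $\delta_0$, we may assume $\al=\gamma\overline\mu\delta$ and $\be=\gamma e\delta$ with $\mu\in N$ and $\gamma,\delta\in\P_n$. Let $\lambda,\rho$ be built from $\al$ as above, let $\tau=\phi(\al,\be)^{-1}$, and put $P=\lambda^*\gamma$ and $Q=\delta\rho^*$. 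Using $\lambda^*\lambda=e=\rho\rho^*$ and the fact that $e$ is the identity of $\overline{\S}_q$, we obtain
\[
P\overline\mu Q=\lambda^*\al\rho^*=e\qquad\text{and}\qquad PeQ=\lambda^*\be\rho^*=\overline\tau.
\]
Since $\P_n$ is finite, $J_q$ is stable; combining this with monotonicity of rank (all the relevant partial products $P\overline\mu$, $\overline\mu Q$, $Pe$, $eQ$ must have rank $q$ because $\al,\be$ do), a routine argument shows that $Pe$ and $eQ$ lie in the group $\H$-class $H_e=\overline{\S}_q$; write $Pe=\overline{\pi_1}$ and $eQ=\overline{\pi_2}$ with $\pi_1,\pi_2\in\S_q$. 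Then $e=(Pe)\overline\mu(eQ)=\overline{\pi_1\mu\pi_2}$ forces $\pi_1\mu\pi_2=\id_q$, while $\overline\tau=(Pe)(eQ)=\overline{\pi_1\pi_2}$ gives $\tau=\pi_1\pi_2=\pi_1\mu^{-1}\pi_1^{-1}$, which lies in $N$ by normality of $N$. Hence $\phi(\al,\be)=\tau^{-1}\in N$, as required.

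The step I expect to be the main obstacle is proving that $Pe$ and $eQ$ genuinely land in the group $\H$-class $H_e$, rather than slipping down into a lower $\gJ$-class. This is exactly the point at which stability of $J_q$ is used, and it comes down to verifying that $\gamma\overline\mu$, $P\overline\mu$, $Pe$ (and their left--right duals) all have rank precisely $q$, and then that $Pe$ is simultaneously $\R$-related and $\L$-related to $e$; once that is settled, the remainder is elementary coset arithmetic. The diagrammatic identities used in the first paragraph (that $\al=\lambda\rho$, that $\lambda^*\lambda=e$, that $\be=\lambda\overline\tau\rho$, and the formula for $\al\be^*$) are routine, but one must take care with the bookkeeping that pairs each domain block $A_i$ with the codomain block lying in the same transversal.
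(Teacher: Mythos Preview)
Your proof is correct, and it takes a genuinely different route from the paper's. The paper's argument for the inclusion $\subseteq$ is combinatorial: after reducing to $(\al,\be)=(\ga\overline{\id}_q\de,\ga\overline\tau\de)$, it traces paths in the product graphs $\Pi(\ga,\overline{\id}_q)$ and $\Pi(\overline{\id}_q,\de)$ from the $a_i$ to the $i''$ to the $b_i'$, observes that since $\overline{\id}_q$ has trivial kernel and cokernel these paths use only edges of $\ga$ or $\de$, and hence the same paths appear in the product graphs for $\be=\ga\overline\tau\de$; this directly pins down the permutation relating $\al$ to $\be$ as a conjugate of $\tau$. Your argument is instead purely structural: the normal form $\al=\lambda e\rho$ with $\lambda^*\lambda=e=\rho\rho^*$ lets you sandwich everything by $\lambda^*$ and $\rho^*$ and reduce to coset arithmetic in $\overline{\S}_q$. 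The trade-off is that your approach never touches the graphical definition of the product, so it would transfer verbatim to any regular $*$-monoid with the same Green's structure, whereas the paper's is tied to the diagrammatics but is more immediately transparent for $\P_n$.

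One point worth making explicit when you write it up: the fact that $Pe\R e$ does not follow from stability applied to the product $P\cdot e$ alone (that only gives $Pe\L e$). What you need is the auxiliary observation $eP=P$, which comes from $e\lambda^*=\lambda^*$ (itself a consequence of $\lambda^*\lambda=e$ and stability, giving $\lambda^*\R e$); then $e(Pe)=Pe$ together with $Pe\gJ e$ and stability yields $Pe\R e$. You clearly anticipated this was the delicate step, and the pieces you set up do make it go through, but the sentence ``a routine argument shows'' slightly undersells the role of $\lambda^*\R e$ in closing the loop.
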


\pf Let $\Om=\set{ (\alpha,\beta)\in J_q\times J_q}{\alpha\H \beta,\
\phi(\alpha,\beta)\in N}$ be the set in the statement of the lemma.  First
suppose $(\al,\be)\in\Om$, and write $\al=\partI{A_1}{A_q}{C_1}{C_r}{B_1}{B_q}{D_1}{D_s}$ where $\min(A_1)<\cdots<\min(A_q)$.  Since $\al\H\be$, we have $\be=\partI{A_1}{A_q}{C_1}{C_r}{B_{1\si}}{B_{q\si}}{D_1}{D_s}$ for some $\si\in\S_q$, and it is clear that $\si=\phi(\al,\be)^{-1}\in N$.  But then $(\al,\be)=(\ga\overline{\id}_q\de,\ga\overline{\si}\de)$, where $\ga=\partI{A_1}{A_q}{C_1}{C_r}{1}{q}{q+1}{n}$ and $\de=\partI{1}{q}{q+1}{n}{B_1}{B_q}{D_1}{D_s}$, giving $(\al,\be)\in\nu_N$.

Conversely, suppose $(\al,\be)\in\nu_N$, so that $(\al,\be)=(\ga\overline\si\de,\ga\overline\tau\de)\in J_q\times J_q$ for some $\ga,\de\in\P_n$ and ${\si,\tau\in N}$.  Since $(\al,\be)=(\ga\cdot\overline{\id}_q\cdot\overline\si\de,\ga\cdot\overline\tau\;\!\overline\si^*\cdot\sib\de)$, with $\overline\tau\;\!\overline\si^*\in \overline N$, it suffices to assume that $\si=\id_q$.  So ${(\al,\be)=(\ga\overline\id_q\de,\ga\overline\tau\de)}$.
Since $\al\in J_q$, we may write $\al=\partI{A_1}{A_q}{C_1}{C_r}{B_1}{B_q}{D_1}{D_s}$.  As noted in the proof of Proposition~\ref{prop-AreCongs}, $\al\H\be$, so we may write $\be=\partI{A_1}{A_q}{C_1}{C_r}{B_{1\theta}}{B_{q\theta}}{D_1}{D_s}$ for some $\theta\in\S_q$.
For each $1\leq i\leq q$, choose some $a_i\in A_i$ and $b_i\in B_i$.  
Since $\idb_q$ and $\al=\ga\idb_q\de$ both belong to $J_q$, it follows that $\ga\idb_q$ and $\idb_q\de$ do as well.  Thus, renaming the sets $A_i$ if necessary, we may assume that there is a path $\pi_i$ in the product graph $\Pi(\ga,\overline{\id}_q)$ from $a_i$ to $i''$, and a path $\pi_i'$ in $\Pi(\overline{\id}_q,\de)$ from $i''$ to $b_i'$.  
Since $\ker(\idb_q)=\coker(\idb_q)$ is trivial, these paths involve only edges from $\ga$ or from $\de$, respectively.  In particular, the paths $\pi_i$ and $\pi_i'$ are also in the product graphs $\Pi(\ga,\taub)$ and $\Pi(\taub,\de)$, respectively, for each $i$.  It then follows that $a_i$ and $b_{i\tau}'$ belong to the same block of $\be=\ga\overline\tau\de$: i.e., that $\theta=\tau$.  Now let $\om\in\S_q$ be such that $\min(A_{1\om})<\cdots<\min(A_{q\om})$, and put $E_i=A_{i\om}$ for each $i$.  Then
\[
\al=\partI{E_1}{E_q}{C_1}{C_r}{B_{1\om}}{B_{q\om}}{D_1}{D_s} \COMMA
\be=\partI{E_1}{E_q}{C_1}{C_r}{B_{1\om\tau}}{B_{q\om\tau}}{D_1}{D_s} \COMMA
\al\be^*=\partI{E_1}{E_q}{C_1}{C_r}{E_{1\om\tau^{-1}\om^{-1}}}{E_{q\om\tau^{-1}\om^{-1}}}{C_1}{C_r},
\]
so that $\phi(\al,\be)=\om\tau^{-1}\om^{-1}\in N$. \epf

For ease of reference in the proofs that follow, it will be convenient to list the explicit criteria for membership in each of the lower congruences (these follow from Definitions \ref{defn:RC}, \ref{defn:lrmC}, \ref{defn:hat}, Proposition \ref{prop:green_all_inclusive} and Lemma \ref{lem:nu_phi}):
\newpage
\begin{itemize}\begin{multicols}{2}
\item $\rho_0=\De \cup \set{(\al,\be)\in I_0\times I_0}{\ker(\al)=\ker(\be)}$,

\item $\rho_1=\De \cup \set{(\al,\be)\in I_1\times I_1}{\ker(\al)=\ker(\be)}$,

\item $\rho_{\S_2}=\rho_1\cup\big( {\H}\cap(J_2\times J_2)\big)$,

\item $\mu_1= \De \cup \set{(\al,\be)\in I_1\times I_1}{\alh=\beh}$,

\item $\lam_0=\De \cup \set{(\al,\be)\in I_0\times I_0}{\coker(\al)=\coker(\be)}$,

\item $\lam_1=\De \cup \set{(\al,\be)\in I_1\times I_1}{\coker(\al)=\coker(\be)}$,

\item $\lam_{\S_2}=\lam_1\cup\big( {\H}\cap(J_2\times J_2)\big)$,

\item $\mu_{\S_2}=\mu_1\cup\big( {\H}\cap(J_2\times J_2)\big)$.
\end{multicols}\eit
We will repeatedly refer to Liber's classification of congruences on the symmetric inverse monoid $\I_n$ as presented in Theorem \ref{thm:In}.
Whenever there is notational conflict between congruences on $\P_n$ and on $\I_n$, we will resolve it by placing bars above the latter.
Thus, for example, for a normal subgroup $N$ of $\S_q$, we will write $R_N$ for the congruence it gives rise to on $\P_n$, and $\overline{R}_N=R_N\cap (\I_n\times\I_n)$ for its $\I_n$ analogue.

\begin{lemma}\label{lem:mn}
Let $\xi\in\Cong(\P_n)$.
\begin{itemize}
\begin{multicols}{2}
\itemit{i}  If $\Rb_1\sub\xi$, then $\mu_1\sub\xi$.  Thus, $\mu_1=\Rb_1^\sharp$.
\itemit{ii}  If $\Rb_{\S_2}\sub\xi$, then $\mu_{\S_2}\sub\xi$.  Thus, $\mu_{\S_2}=\Rb_{\S_2}^\sharp$.
\end{multicols}
\end{itemize}
\end{lemma}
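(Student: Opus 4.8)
The plan is to prove each part by finding a single pair, or a small collection of pairs, inside the given Rees congruence that already forces the target $\mu$-congruence, and then to note that the reverse inclusion is immediate from the known lattice structure. For (i): assume $\Rb_1 \sub \xi$, where $\Rb_1 = \De_{\I_n}\cup(I_1(\I_n)\times I_1(\I_n))$. Since $I_1(\I_n)$ contains $[\emptyset]$ and every rank-$1$ partial permutation $\partpermI ab = \{\{a,b'\}\}\cup\{\text{singletons}\}$, we get in particular that $\xi$ identifies $[\emptyset]$ with some rank-$1$ element, hence $\xi$ has non-trivial intersection with $I_1(\P_n)$. The goal is then to show that for any $\al,\be\in I_1(\P_n)$ with $\alh = \beh$, we have $(\al,\be)\in\xi$. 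First I would reduce to the case $\al\in J_1$, $\be = \alh$ (using transitivity, since $\alh=\beh$ implies $(\al,\beh)$ and $(\be,\beh)$ are of this form when $\al,\be\in J_1$, and the case where one of them lies in $I_0$ is even easier). So it suffices to show $(\al,\alh)\in\xi$ for every $\al\in J_1$.

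**Key step.** Write $\al = \partIV{A_0}{A_1}\cdots{A_r}{B_0}{B_1}\cdots{B_s}$, with transversal $A_0\cup B_0'$, so $\alh = \partIII{A_0}{A_1}\cdots{A_r}{B_0}{B_1}\cdots{B_s}$. Pick $a\in A_0$, $b\in B_0$, and set $\pi = \partpermI ab\in I_1(\I_n)$ and $\pi' = [\emptyset]\in I_0(\I_n)$; by hypothesis $(\pi,\pi')\in\Rb_1\sub\xi$. Now I would left-multiply by a suitable $\ga\in\P_n$ and right-multiply by a suitable $\de\in\P_n$ so that $\ga\pi\de = \al$ and $\ga\pi'\de = \alh$: take $\ga$ to be a partition mapping (via a transversal) the point $a$ out and realising the kernel $\{A_0,\dots,A_r\}$ on its upper row, and $\de$ symmetrically for the cokernel, with $\ga$ and $\de$ of rank $1$ so that $\ga\pi\de$ keeps rank $1$ while $\ga\pi'\de$ drops to rank $0$. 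Concretely, $\ga = \partIV{A_0}{A_1}\cdots{A_r}{a}{}{}$ — i.e.\ the rank-$1$ partition with upper non-transversals $A_1,\dots,A_r$, transversal $A_0\cup\{a'\}$, and whatever lower non-transversals are forced; and $\de$ the mirror construction on $b$ and the $B_i$'s. One checks $\ga\pi\de = \al$ and $\ga\,[\emptyset]\,\de = \alh$ directly from the product rule. Since $\xi$ is a congruence, $(\al,\alh) = (\ga\pi\de,\ga\pi'\de)\in\xi$, as required. This gives $\mu_1\sub\xi$; since $\Rb_1 = \mu_1\cap(\I_n\times\I_n)\sub\mu_1$ (as $\mu_1$ identifies any two elements of $I_1(\P_n)$ with the same hat, and elements of $I_1(\I_n)$ all have the empty hat), we conclude $\mu_1 = \Rb_1^\sharp$.

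**Part (ii).** For (ii), assume $\Rb_{\S_2}\sub\xi$. By part (i) and $\Rb_1\sub\Rb_{\S_2}$ we already have $\mu_1\sub\xi$, so by the criterion $\mu_{\S_2} = \mu_1\cup({\H}\cap(J_2\times J_2))$ it remains only to show that $\xi$ contains every $\H$-related pair in $J_2(\P_n)$. Fix $\al\H\be$ in $J_2(\P_n)$, so $\phi(\al,\be)\in\S_2$ (Definition~\ref{defn:phi}); it suffices to handle the generating transposition, i.e.\ $\phi(\al,\be) = (1\,2)$. Pick the rank-$2$ partial permutations $\rho_1 = \partpermII{a_1}{a_2}{b_1}{b_2}$ and $\rho_2 = \partpermII{a_1}{a_2}{b_2}{b_1}$ in $I_2(\I_n)$, which satisfy $(\rho_1,\rho_2)\in\Rb_{\S_2}\sub\xi$; then conjugate-sandwich: choose $\ga,\de\in\P_n$ with $\ga\rho_1\de = \al$ and $\ga\rho_2\de = \be$ — the same ``load the kernel on the left, the cokernel on the right'' recipe as in part (i), now with two transversals carrying the points $a_1,a_2$ and $b_1,b_2$. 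Then $(\al,\be) = (\ga\rho_1\de,\ga\rho_2\de)\in\xi$. Hence $\mu_{\S_2}\sub\xi$; and $\Rb_{\S_2}\sub\mu_{\S_2}$ is immediate since $\Rb_{\S_2} = \mu_{\S_2}\cap(\I_n\times\I_n)$, giving $\mu_{\S_2} = \Rb_{\S_2}^\sharp$.

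**Main obstacle.** The one place requiring genuine care is the explicit construction of the multipliers $\ga,\de$ and the verification that $\ga\rho\de$ and $\ga\rho'\de$ really produce $\al$ and $\alh$ (respectively $\be$) with the correct kernels, cokernels, and ranks — in particular that sandwiching the rank-drop pair $(\partpermI ab,[\emptyset])$ does not accidentally collapse the rank on the ``$\al$'' side, which is exactly where one must keep $\ga,\de$ of rank matching the target. This is a routine but fiddly diagram computation of the kind done in Lemma~\ref{HatRetract} and Lemma~\ref{lem:nu_phi}, and I would present it by exhibiting $\ga,\de$ in the $\partABCD$ notation and invoking the product rule rather than drawing pictures.
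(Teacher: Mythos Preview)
Your proposal is correct and follows essentially the same approach as the paper: reduce to showing $(\al,\alh)\in\xi$ for $\al\in J_1$, then sandwich a pair $(\si,\tau)\in\Rb_1$ (respectively $\Rb_{\S_2}$) between multipliers $\ga,\de$ that carry the kernel and cokernel of $\al$; the reverse inclusion is handled exactly as you say. The only cosmetic difference is that the paper normalises the inner pair to $\si=\partpermI11$, $\tau=[\emptyset]$ (and $\si=\partpermII1212$, $\tau=\partpermII1221$ for part~(ii)) with $\ga,\de$ having singleton lower/upper rows, whereas you use $\partpermI ab$ with $a\in A_0$, $b\in B_0$; both choices work for the same reason.
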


\pf To prove (i), suppose $\Rb_1\sub\xi$.  Let $\al\in\P_n$ with $\rank(\al)\leq1$.  
Consulting the above membership criterion for $\mu_1$, it suffices to show that
$(\al,\alh)\in\xi$.  This is clearly the case if $\rank(\al)=0$, so suppose $\rank(\al)=1$.  Write $\al=\partIV{A_0}{A_1}\cdots{A_r}{B_0}{B_1}\cdots{B_s}$, so that $\alh=\partIII{A_0}{A_1}\cdots{A_r}{B_0}{B_1}\cdots{B_s}$, and define $\ga=\partIV{A_0}{A_1}\cdots{A_r}{1}{2}\cdots{n}$ and $\de=\partIV{1}{2}\cdots{n}{B_0}{B_1}\cdots{B_s}$.
Also define elements $\si,\tau\in\I_n\sub\P_n$ by $\si=\partpermI11$ and $\tau=\emptypartperm$, noting that $(\si,\tau)\in\Rb_1\sub\xi$.  It then follows that $\al=\ga\si\de \mathrel\xi \ga\tau\de = \alh$, completing the proof of the first assertion.  To prove the second assertion, since certainly $\Rb_1\sub\Rb_1^\sharp$, it follows from the first assertion that $\mu_1\sub\Rb_1^\sharp$.  But it is also clear that $\Rb_1\sub\mu_1$, so since $\mu_1$ is a congruence, it follows that $\Rb_1^\sharp\sub\mu_1$.

To prove (ii), suppose $\Rb_{\S_2}\sub\xi$. Note that $\mu_{\S_2}=\mu_1\cup(\nu_{\S_2}\sm\De)$.
 Since $\Rb_1\sub\Rb_{\S_2}\sub\xi$, part (i) gives $\mu_1\sub\xi$.  
Now let $(\al,\be)\in\nu_{\S_2}\sm\De$.  Since $\rank(\al)=\rank(\be)=2$ and $\al\H\be$, we may write $\al=\partV{A_1}{A_2}{C_1}{C_r}{B_1}{B_2}{D_1}{D_s}$ and $\be=\partV{A_1}{A_2}{C_1}{C_r}{B_2}{B_1}{D_1}{D_s}$.  Define $\ga=\partV{A_1}{A_2}{C_1}{C_r}{1}{2}{3}{n}$ and $\de=\partV{1}{2}{3}{n}{B_1}{B_2}{D_1}{D_s}$.
Also put $\si=\partpermII1212$ and $\tau=\partpermII1221$, noting that $(\si,\tau)\in\Rb_{\S_2}\sub\xi$.  It then follows that $\al=\ga\si\de \mathrel\xi \ga\tau\de = \be$.  The second assertion of~(ii) is proved in a similar way to the second assertion of (i).  \epf

We can now describe the generating pairs for the congruences $\lambda_0$, $\rho_0$, $\mu_1$ and $\mu_{\S_2}$.  As can be seen from Figure \ref{fig-CongPn}, these are the only non-trivial lower congruences on $\P_n$ that are not joins of proper sub-congruences.

\begin{prop}\label{prop:small_congruences}
The relations $\rho_0,\lam_0,\mu_1,\mu_{\S_2}$ are all principal congruences on $\P_n$.  Moreover, if $\al,\be\in\P_n$, then
\begin{itemize}\begin{multicols}{2}
\itemit{i} $\rho_0=\cg{\al}{\be}\iff (\al,\be)\in\rho_0\sm\De$,
\itemit{ii} $\lam_0=\cg{\al}{\be}\iff (\al,\be)\in\lam_0\sm\De$,
\itemit{iii} $\mu_1=\cg{\al}{\be}\iff (\al,\be)\in\mu_1\sm\De$, 
\itemit{iv} $\mu_{\S_2}=\cg{\al}{\be}\iff (\al,\be)\in\mu_{\S_2}\sm\mu_1$.
\end{multicols}\eit
\end{prop}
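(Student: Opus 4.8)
Each of the four biconditionals splits into an easy forward implication and a substantive backward one. For the forward direction, if $\cg\al\be$ equals the stated congruence $\xi$ then certainly $(\al,\be)\in\xi$; moreover $(\al,\be)$ cannot lie in the unique largest congruence strictly below $\xi$ in $\Cong(\P_n)$ (namely $\De$ for $\rho_0,\lam_0,\mu_1$, and $\mu_1$ for $\mu_{\S_2}$, see Figure~\ref{fig-CongPn}), since otherwise $\cg\al\be$ would be contained in that smaller congruence. For the backward direction, since $\xi$ is a congruence containing $(\al,\be)$ we always have $\cg\al\be\sub\xi$, so the remaining task in each case is to prove $\xi\sub\cg\al\be$.

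For (iii) and (iv) I would route through $\I_n$ using Lemma~\ref{lem:mn}, which gives $\mu_1=\Rb_1^\sharp$ and $\mu_{\S_2}=\Rb_{\S_2}^\sharp$, where $\Rb_1,\Rb_{\S_2}$ are the relevant congruences of $\I_n$. Since $\Cong(\I_n)$ is a chain (Theorem~\ref{thm:In}), each is principal \emph{as a congruence on $\I_n$} by Remark~\ref{rem:InOnPrinc}: $\Rb_1$ is generated in $\I_n$ by $\big(\partpermI11,\emptypartperm\big)$ and $\Rb_{\S_2}$ by $\big(\partpermII1212,\partpermII1221\big)$. As $\cg\al\be$ is a congruence on $\P_n\supseteq\I_n$, it will follow that $\mu_1\sub\cg\al\be$ (resp. $\mu_{\S_2}\sub\cg\al\be$) once we exhibit the corresponding $\I_n$-pair inside $\cg\al\be$. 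For (iii), given $(\al,\be)\in\mu_1\sm\De$ — so $\al,\be$ have rank $1$, $\alh=\beh$ (hence equal kernel and cokernel) and $\al\ne\be$ (hence $\dom(\al)\ne\dom(\be)$ or $\codom(\al)\ne\codom(\be)$) — assume first $\dom(\al)\ne\dom(\be)$ and pick $a\in\dom(\al)$. Left-multiplying $(\al,\be)$ by the rank-$1$ partial identity $\partpermI aa$ sends $\al$ to a rank-$1$ element with domain $\{a\}$ but collapses $\be$ to rank $0$, since $a$ lies in a non-transversal block of $\be$; a dual right-multiplication by a partial identity on a point $b$ of $\codom(\al)$ then yields $\big(\partpermI ab,\emptypartperm\big)\in\cg\al\be$, and right- and left-multiplying by permutations gives $\big(\partpermI11,\emptypartperm\big)\in\cg\al\be$. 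The case $\codom(\al)\ne\codom(\be)$ follows by applying the anti-involution~${}^*$. For (iv), given $(\al,\be)\in\mu_{\S_2}\sm\mu_1$ — so $\al,\be\in J_2$, $\al\H\be$, $\al\ne\be$, and $\phi(\al,\be)$ is the transposition in $\S_2$ — I would left-multiply $(\al,\be)$ by the rank-$2$ partial permutation picking one point from each $\ker(\al)$-class of $\dom(\al)$ and sending them to $1,2$, and right-multiply by the one picking one point from each $\coker(\al)$-class of $\codom(\al)$; the two products are then exactly $\partpermII1212$ and $\partpermII1221$.

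For (i), and its ${}^*$-dual (ii), the monoid $\I_n$ is of no help (its rank-$0$ part is a single point), so I would argue directly that $\rho_0\sub\cg\al\be$, i.e. that $(\ga,\de)\in\cg\al\be$ whenever $\ga,\de$ have rank $0$ and $\ker(\ga)=\ker(\de)$. Write $\theta_{\kappa,\lam}$ for the rank-$0$ partition with kernel $\kappa$ and cokernel $\lam$, let $\iota$ be the trivial equivalence on $\bn$, and let $\eta_{ij}$ be the equivalence whose only non-singleton class is $\{i,j\}$. The argument rests on three elementary facts about products, read off from the definition of multiplication in $\P_n$: left-multiplying a rank-$0$ partition by any partition preserves its cokernel; right-multiplying preserves its kernel; and left-multiplying by a rank-$0$ partition with kernel $\kappa$ yields a rank-$0$ partition with kernel $\kappa$ and unchanged cokernel. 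Starting from $(\al,\be)\in\rho_0\sm\De$ — so $\ker(\al)=\ker(\be)$ but $\coker(\al)\ne\coker(\be)$ — choose (after possibly interchanging $\al$ and $\be$) points $p,q$ with $(p,q)\in\coker(\be)\sm\coker(\al)$; right-multiplying by $\partpermII pq12$ produces a pair $\big(\theta_{\kappa,\iota},\theta_{\kappa,\eta_{12}}\big)\in\cg\al\be$. Left-multiplying by rank-$0$ partitions and right-multiplying by permutations then places $\big(\theta_{\kappa,\iota},\theta_{\kappa,\eta_{ij}}\big)$ in $\cg\al\be$ for every kernel $\kappa$ and every pair $\{i,j\}\sub\bn$. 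Finally, building an arbitrary cokernel $\lam$ out of $\iota$ by merging two classes at a time, and realising each merge step $\lam'\to\lam'\vee\eta_{ij}$ inside $\cg\al\be$ by left- and right-multiplying one of the pairs $\big(\theta_{\kappa',\iota},\theta_{\kappa',\eta_{ab}}\big)$ by carefully chosen partitions — the right multiplier being a partition whose transversals encode $\lam'$, so that the ``trivial'' and ``$\eta_{ab}$'' behaviour on the top row becomes $\lam'$ and $\lam'\vee\eta_{ij}$ on the bottom — shows $\big(\theta_{\kappa,\iota},\theta_{\kappa,\lam}\big)\in\cg\al\be$ for all $\kappa,\lam$. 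Transitivity then yields $\big(\theta_{\kappa,\lam},\theta_{\kappa,\lam'}\big)\in\cg\al\be$ for all $\kappa,\lam,\lam'$, i.e. $\rho_0\sub\cg\al\be$.

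The computations in (iii) and (iv) are routine once the multiplier partitions are written down. The genuine obstacle is the bookkeeping in (i): one must track precisely how left- and right-multiplication act on the kernels, cokernels and ranks of rank-$0$ partitions, and must use right-multipliers of rank $\ge 1$ at exactly the right points — a rank-$0$ right multiplier forces both coordinates of a pair to have the same cokernel, producing only trivial pairs — designing their transversal structure so that two prescribed consecutive cokernels are produced simultaneously from a single generating pair.
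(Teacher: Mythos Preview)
Your approach is sound and, for parts (iii) and (iv), essentially matches the paper's: both route through $\I_n$ via Lemma~\ref{lem:mn} and Remark~\ref{rem:InOnPrinc}, producing a pair in $\Rb_1\sm\Deb$ (respectively $\Rb_{\S_2}\sm\Rb_1$) by sandwiching $(\al,\be)$ between suitable partial permutations. Two small points of sloppiness: in (iii) you assert both $\al,\be$ have rank $1$, but one of them may have rank $0$ (namely $\be=\alh$); your argument survives this, since then $\dom(\be)=\emptyset$ and left-multiplication by $\partpermI aa$ still collapses $\be$. In (iv), the left multiplier as you describe it has the wrong orientation: for $\si\al$ to retain rank $2$ you need $\codom(\si)=\{a_1,a_2\}$ (one point from each transversal of $\al$), not $\dom(\si)$; the paper simply takes the partial identities $\si=\partpermII{a_1}{a_2}{a_1}{a_2}$ and $\tau=\partpermII{b_1}{b_2}{b_1}{b_2}$ and lands on $\big(\partpermII{a_1}{a_2}{b_1}{b_2},\partpermII{a_1}{a_2}{b_2}{b_1}\big)$.

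For (i) your route genuinely differs from the paper's. You first extract the ``atomic'' pairs $(\theta_{\kappa,\iota},\theta_{\kappa,\eta_{ij}})$ by right-multiplying the original pair by $\partpermII{p}{q}{1}{2}$, and then build an arbitrary cokernel $\lam$ by successive merges, each merge $\lam'\to\lam'\vee\eta_{ij}$ realised by right-multiplying an atomic pair by a rank-$2$ partition whose two transversals hit the two $\lam'$-classes to be merged. This is correct. The paper instead runs a single descending induction on the number $r$ of cokernel-classes of $\si$: given $(i,j)\in\coker(\al)\sm\coker(\be)$ and a non-singleton cokernel-class $B_1$ of $\si$ with $m\in B_1$, it exhibits one element $\tau$ such that $\si\al\tau=\si$ while $\si\be\tau$ splits $B_1$ into $\{m\}$ and $B_1\sm\{m\}$, so $\si\mathrel\xi\si\be\tau\mathrel\xi\si'$ by induction. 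Your approach is more modular---it explicitly isolates a generating set for $\rho_0$ in terms of elementary merges---but requires the preliminary normalisation step; the paper's is more direct, using the original pair $(\al,\be)$ at every stage of the induction rather than first passing to a canonical generator.
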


\pf The ``only if'' parts are clear for each statement.
Before we prove (i), we note that (ii) will then follow by duality.  Indeed, given (i),
\[
\lam_0=\cg\al\be \iff \rho_0=\cg{\al^*}{\be^*} \iff (\al^*,\be^*)\in\rho_0\sm\De \iff (\al,\be)\in\lam_0\sm\De.
\]
To prove (i), 
fix some $(\al,\be)\in\rho_0\sm\De$ and, for simplicity, write ${\mathrel\xi}=\cg{\al}{\be}$.  For $\si\in\P_n$ with $\rank(\si)=0$, write $\si'$ for the unique partition of rank~$0$ with $\ker(\si')=\ker(\si)$ and such that $\coker(\si')$ is the trivial relation on $\bn$.  
We first claim that $\si\mathrel\xi\si'$ for all $\si\in\P_n$ with $\rank(\si)=0$.  We prove the claim by descending induction on $r$, the number of cokernel-classes of~$\si$.  If $r=n$, then $\si=\si'$, so there is nothing to prove.  So suppose
$r\leq n-1$, and write $\si=\partII{A_1}\cdots{A_q}{B_1}\cdots{B_r}$.
Since $\rank(\al)=\rank(\be)=0$ and $\ker(\al)=\ker(\be)$, yet $\al\not=\be$, it follows that $\coker(\al)\not=\coker(\be)$.  Reversing the roles of $\al$ and $\be$, if necessary, we may assume that there exists $(i,j)\in\coker(\al)\sm\coker(\be)$.  
Write $\bn\sm\{i,j\}=\{k_1,\ldots,k_{n-2}\}$.
Since $r\leq n-1$, we may assume that $|B_1|\geq2$.  Fix some $m\in B_1$, put $C=B_1\sm\{m\}$, and define $\tau=\partV ij{k_1}{k_{n-2}}{m}{C}{B_2}{B_r}$.
Then one easily checks that $\si\al\tau = \si$ and $\si\be\tau = \partXIX{A_1}{A_2}{A_3}{A_q}{m}{C}{B_2}{B_r}$.  It follows that $\si=\si\al\tau\mathrel\xi\si\be\tau$.  But $\rank(\si\be\tau)=0$, $\ker(\si\be\tau)=\ker(\si)$, and $\si\be\tau$ has $r+1$ cokernel-classes so, by induction, $\si'\mathrel\xi\si\be\tau\mathrel\xi\si$.  This completes the proof of the claim.  
Returning to the main proof, suppose $(\ga,\de)\in\rho_0$ is arbitrary.  If $\ga=\de$, then clearly $\ga\mathrel\xi\de$.  If $\ga\not=\de$, then $\rank(\ga)=\rank(\de)=0$ and $\ker(\ga)=\ker(\de)$, so the claim gives $\ga\mathrel\xi\ga'=\de'\mathrel\xi\de$. 

To prove (iii), let $(\al,\be)\in\mu_1\sm\De$.
Renaming $\al,\be$ if necessary, we may assume that $\rank(\al)=1$, and we also have $\rank(\be)\leq1$, $\ker(\al)=\ker(\be)$, $\coker(\al)=\coker(\be)$ but $\al\not=\be$.  Write $\al=\partIV{A_0}{A_1}\cdots{A_r}{B_0}{B_1}\cdots{B_s}$.  Then, renaming $\ker(\al)$-classes if necessary, one of the following holds:
\begin{itemize}\begin{multicols}{2}
\item[(a)] $\be=\partIII{A_0}{A_1}\cdots{A_r}{B_0}{B_1}\cdots{B_s}$,
\item[(b)] $\be = \partIV{A_r}{A_0}{\cdots}{A_{r-1}}{B_0}{B_1}{\cdots}{B_s}$ with $r\not=0$, 
\item[(c)] $\be = \partIV{A_0}{A_1}{\cdots}{A_r}{B_s}{B_0}{\cdots}{B_{s-1}}$ with $s\not=0$, or
\item[(d)] $\be = \partIV{A_r}{A_0}{\cdots}{A_{r-1}}{B_s}{B_0}{\cdots}{B_{s-1}}$ with $r,s\not=0$.
\end{multicols}\eit
Choose some $i\in A_0$ and $j\in B_0$, and put $\si=\partpermI ii$ and $\tau=\partpermI jj$.  In all cases, we have $\si\al\tau=\partpermI ij$ and $\si\be\tau = \emptypartperm$.  In particular, $(\si\al\tau,\si\be\tau)\in\Rb_1\sm\Deb$, so Theorem \ref{thm:In} (and Remark \ref{rem:InOnPrinc}) gives $\Rb_1=\cgI{\si\al\tau}{\si\be\tau}\sub\cg{\si\al\tau}{\si\be\tau}\sub\cg\al\be$.  Lemma~\ref{lem:mn}(i) then gives $\mu_1\sub\cg\al\be$, and the reverse containment is clear.

To prove (iv), let $(\al,\be)\in\mu_{\S_2}\sm\mu_1=\nu_{\S_2}\sm\Delta$, and write $\al=\partV{A_1}{A_2}{C_1}{C_r}{B_1}{B_2}{D_1}{D_s}$ and $\be=\partV{A_1}{A_2}{C_1}{C_r}{B_2}{B_1}{D_1}{D_s}$.  Choose any $a_i\in A_i$ and $b_i\in B_i$ ($i=1,2$), and put $\si=\partpermII{a_1}{a_2}{a_1}{a_2}$ and $\tau=\partpermII{b_1}{b_2}{b_1}{b_2}$.  Then $\si\al\tau=\partpermII{a_1}{a_2}{b_1}{b_2}$ and $\si\be\tau=\partpermII{a_1}{a_2}{b_2}{b_1}$.  In particular, $(\si\al\tau,\si\be\tau)\in\Rb_{\S_2}\sm\Rb_1$, so Theorem \ref{thm:In} (and Remark \ref{rem:InOnPrinc}) gives $\Rb_{\S_2}=\cgI{\si\al\tau}{\si\be\tau}\sub\cg\al\be$.  Lemma~\ref{lem:mn}(ii) then gives $\mu_{\S_2}\sub\cg\al\be$, and again this is enough.  \epf

Next we move onto the remaining four lower principal congruences.

\begin{prop}\label{prop:joins2}
The congruences $R_0$, $\rho_1$, $\lam_1$ and $R_1$ are all principal.  Moreover, if $\al,\be\in\P_n$, then
\begin{itemize}\begin{multicols}2
\itemit{i} $R_0=\cg\al\be\iff(\al,\be)\in R_0 \sm(\rho_0\cup\lam_0)$,
\itemit{ii} $\rho_1=\cg\al\be\iff(\al,\be)\in\rho_1\sm(\mu_1\cup\rho_0)$,
\itemit{iii} $\lam_1=\cg\al\be\iff(\al,\be)\in\lam_1\sm(\mu_1\cup\lam_0)$, 
\itemit{iv} $R_1=\cg\al\be\iff(\al,\be)\in R_1\sm ( R_0 \cup \lam_1 \cup \rho_1 )$.
\end{multicols}\eit
\end{prop}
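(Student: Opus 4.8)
For all four parts the ``only if'' direction is immediate from the inclusions already established in Propositions~\ref{prop-CR2}--\ref{prop-CR5}. Since $\rho_0,\lam_0$ are proper sub-congruences of $R_0$, if $R_0=\cg\al\be$ then $(\al,\be)$ can lie in neither $\rho_0$ nor $\lam_0$, for otherwise $\cg\al\be$ would be contained in one of them; hence $(\al,\be)\in R_0\sm(\rho_0\cup\lam_0)$. The arguments for $\rho_1$, $\lam_1$ and $R_1$ are identical, using that $\mu_1,\rho_0\suq\rho_1$, that $\mu_1,\lam_0\suq\lam_1$, and that $R_0,\lam_1,\rho_1\suq R_1$. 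So it remains to treat the ``if'' directions, and for these the plan is to use the join-decompositions $R_0=\rho_0\vee\lam_0$, $\rho_1=\rho_0\vee\mu_1$, $\lam_1=\lam_0\vee\mu_1$ and $R_1=R_0\vee\mu_1$ coming from Propositions~\ref{prop-CR3} and~\ref{prop-CR4}: it then suffices, in each case, to locate inside $\cg\al\be$ a pair generating each relevant building block among $\rho_0,\lam_0,\mu_1$, these being principal and generated by \emph{any} pair in $\rho_0\sm\De$, $\lam_0\sm\De$ or $\mu_1\sm\De$ respectively, by Proposition~\ref{prop:small_congruences}(i)--(iii).

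The building blocks $\rho_0$ and $\lam_0$ are easy to reach. If $\ga$ is any rank-$0$ partition then $(\ga\al,\ga\be)\in\cg\al\be$, and left-multiplication by a rank-$0$ element forces the kernel to equal $\ker(\ga)$ while leaving the cokernel intact (the lower non-transversals, and the codomain block, pass through unchanged), so $\ker(\ga\al)=\ker(\ga\be)$ while $\coker(\ga\al)=\coker(\al)$ and $\coker(\ga\be)=\coker(\be)$. Hence whenever $\coker(\al)\ne\coker(\be)$ we obtain $(\ga\al,\ga\be)\in\rho_0\sm\De$, so $\rho_0\sub\cg\al\be$; dually, multiplying on the right by a rank-$0$ element, whenever $\ker(\al)\ne\ker(\be)$ we get $\lam_0\sub\cg\al\be$. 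This settles part~(i) entirely, since then $\cg\al\be\supseteq\rho_0\vee\lam_0=R_0\supseteq\cg\al\be$; it gives $R_0\sub\cg\al\be$ in part~(iv); and it gives $\rho_0\sub\cg\al\be$ in part~(ii) (dually $\lam_0\sub\cg\al\be$ in part~(iii)).

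What remains is to show $\mu_1\sub\cg\al\be$ in parts~(ii)--(iv). The standard route is through $\I_n$: if one can produce $\si,\tau\in\I_n$ with $(\si\al\tau,\si\be\tau)\in\Rb_1\sm\Deb$, then $\Rb_1=\cgI{\si\al\tau}{\si\be\tau}\sub\cg\al\be$ by Theorem~\ref{thm:In} together with Remark~\ref{rem:InOnPrinc}, whereupon Lemma~\ref{lem:mn}(i) upgrades this to $\mu_1\sub\cg\al\be$. When one of $\al,\be$ --- say $\al$ --- has rank $1$, and $\al,\be$ differ in domain or in codomain (in particular when $\be$ has rank $0$), this is routine: choosing $a\in\dom(\al)$ and $b\in\codom(\al)$ appropriately and setting $\si=\partpermI aa$, $\tau=\partpermI bb$ makes $\si\al\tau=\partpermI ab$ of rank $1$ while $\si\be\tau$ collapses to rank $0$, so the pair lies in $\Rb_1\sm\Deb$. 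Once $\mu_1\sub\cg\al\be$ is known, parts~(ii)--(iv) close up: $\cg\al\be\supseteq\rho_0\vee\mu_1=\rho_1\supseteq\cg\al\be$, and similarly for $\lam_1$ and $R_1$.

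\textbf{The main obstacle.} The residual configuration --- $\al,\be$ both of rank $1$ with the same kernel, the same domain and the same codomain but different cokernels (and dually in~(iii)), or differing only in their upper and lower non-transversal structure in~(iv)) --- is invisible to restriction to $\I_n$, and this is where the real work lies. Here one must argue directly: fixing a pair $(i,j)$ in the symmetric difference of $\coker(\al)$ and $\coker(\be)$ (so that $i',j'$ lie in a common non-transversal of one but not the other) and multiplying $\al,\be$ on the right by a partition designed to ``promote'' or ``split'' the relevant lower non-transversal, one wants either to produce a pair of $\mu_1\sm\De$ directly or to reduce to a pair already covered by the easy subcase above. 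I expect this non-transversal bookkeeping --- possibly preceded by a normalisation step replacing $(\al,\be)$ by a suitable related pair in $\cg\al\be$ --- to be the principal technical difficulty of the proof; everything else is assembled from Propositions~\ref{prop-CR2}--\ref{prop-CR5}, Proposition~\ref{prop:small_congruences}, Theorem~\ref{thm:In} and Lemma~\ref{lem:mn}.
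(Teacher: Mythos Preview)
Your plan is correct and follows the paper's approach closely: the ``only if'' directions are trivial, and the ``if'' directions proceed by exhibiting generators for the join factors $\rho_0,\lam_0,\mu_1$ inside $\cg\al\be$. Two remarks.

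\emph{The hard subcase in (ii).} You have correctly isolated the only genuinely delicate configuration: $\rank(\al)=\rank(\be)=1$, $\ker(\al)=\ker(\be)$, $\dom(\al)=\dom(\be)=A_0$ and $\codom(\al)=\codom(\be)=B_0$, but $\coker(\al)\ne\coker(\be)$. The paper's resolution is exactly along the lines you predict. Pick $(j,k)\in\coker(\al)\sm\coker(\be)$, pick $i\in B_0$, and set
\[
\de=\partVI{k}{i,j}{j_1}{j_{n-3}}{k}{i,j}{j_1}{j_{n-3}}
\]
(rank~$1$, domain $\{k\}$, the block $\{i,j\}$ appearing as both an upper and a lower non-transversal). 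Then $\al\de$ retains rank~$1$ because its unique lower transversal-point $i'$ is routed through $\de$'s kernel-class $\{i,j\}$, which meets the $\coker(\al)$-class $\{j,k\}$, and thence to $k'$ via $\de$'s transversal; by contrast $\be\de$ drops to rank~$0$ since $(j,k)\notin\coker(\be)$. Hence $(\al\de,\be\de)\in\mu_1\sm\De$ and Proposition~\ref{prop:small_congruences}(iii) gives $\mu_1\sub\cg\al\be$. So your intuition (``promote or split the relevant lower non-transversal'') is right, and the $\I_n$ route is not needed here: one lands directly in $\mu_1\sm\De$.

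\emph{Part (iv).} Your decomposition $R_1=R_0\vee\mu_1$ is valid, but pursuing it forces you to redo a $\mu_1$ case analysis from scratch, including a new hard subcase (both rank~$1$, same domain, same codomain, but now $\ker(\al)\ne\ker(\be)$ and $\coker(\al)\ne\coker(\be)$). The paper instead uses $R_1=\rho_1\vee\lam_1$ (Proposition~\ref{prop-CR4}(i)) and reduces to the already-proved parts (ii) and (iii): left-multiplying by $\ga=\al\al^*$ (or a close variant, depending on whether $\dom(\al)=\dom(\be)$) produces a pair in $\rho_1\sm(\mu_1\cup\rho_0)$, so (ii) yields $\rho_1\sub\cg\al\be$; the dual gives $\lam_1$. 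This is cleaner and avoids duplicated work.
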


\pf Again, the ``only if'' parts are obvious.  Beginning with (i), suppose $(\al,\be)\in R_0\sm(\rho_0\cup\lam_0)$, and write $\al=\partII{A_1}\cdots{A_r}{B_1}\cdots{B_s}$ and $\be=\partII{C_1}\cdots{C_t}{D_1}\cdots{D_u}$,
noting that $\ker(\al)\not=\ker(\be)$ and $\coker(\al)\not=\coker(\be)$.  By Proposition \ref{prop-CR4}(i), $R_0=\rho_0\vee\lam_0$, so it is enough to prove that $\cg\al\be$ contains both $\rho_0$ and $\lam_0$.  
Put $\ga = \partII{A_1}\cdots{A_r}{D_1}\cdots{D_u}$.
Then $(\al,\ga)=(\ga\al,\ga\be)\in\rho_0\sm\De$, so Proposition \ref{prop:small_congruences}(i) gives $\rho_0=\cg{\al}{\ga}\sub\cg\al\be$.  Similarly, $\lam_0=\cg\ga\be=\cg{\al\ga}{\be\ga}\sub\cg\al\be$.

Next we prove (ii), and we note that (iii) will then follow by duality.  
Suppose $(\al,\be)\in\rho_1\sm(\mu_1\cup\rho_0)$.  Renaming $\al,\be$ if necessary, we may assume that
$\rank(\al)=1$, and we write $\al=\partIV{A_0}{A_1}\cdots{A_r}{B_0}{B_1}\cdots{B_s}$.
Since $\rank(\be)\leq1$ and $\ker(\al)=\ker(\be)$, we may write
\[
\text{(a)}\ \be=\partIV{A_0}{A_1}{\cdots}{A_r}{C_0}{C_1}{\cdots}{C_t},\ \text{or}\ \ \ \ \ \ 
\text{(b)}\ \be=\partIV{A_r}{A_0}{\cdots}{A_{r-1}}{C_0}{C_1}{\cdots}{C_t},\ \text{where}\ r\not=0,\ \text{or}\ \ \ \ \ \ 
\text{(c)}\ \be=\partIII{A_0}{A_1}{\cdots}{A_r}{C_0}{C_1}{\cdots}{C_t}.
\]
Since $(\al,\be)\not\in\mu_1$, we have $\coker(\al)\not=\coker(\be)$.  In any of cases (a)--(c), put $\ga=\partIII{A_0}{A_1}{\cdots}{A_r}{A_0}{A_1}{\cdots}{A_r}$.
Then $(\ga\al,\ga\be)\in\cg\al\be$, with $\ga\al=\partIII{A_0}{A_1}{\cdots}{A_r}{B_0}{B_1}\cdots{B_s}$ and $\ga\be=\partIII{A_0}{A_1}{\cdots}{A_r}{C_0}{C_1}{\cdots}{C_t}$.
In particular, $(\ga\al,\ga\be)\in\rho_0\sm\De$, so Proposition \ref{prop:small_congruences}(i) gives $\rho_0=\cg{\ga\al}{\ga\be}\sub\cg\al\be$. 
To complete the proof of~(ii), since $\rho_1=\mu_1\vee\rho_0$ (by Proposition \ref{prop-CR4a}), it suffices to show that $\mu_1\sub\cg\al\be$.  To do this, we consider cases (a)--(c) separately.

First, suppose (a) holds.  We consider two separate subcases.  If $B_0\not=C_0$, then (reversing the roles of $\al$ and $\be$, if necessary) choose some $i\in B_0\sm C_0$, write $\bn\sm\{i\}=\{i_1,\ldots,i_{n-1}\}$, let $\de=\partpermI ii$,
and note that $\al\de=\partIV{A_0}{A_1}\cdots{A_r}{i}{i_1}\cdots{i_{n-1}}$ and $\be\de=\partIII{A_0}{A_1}\cdots{A_r}{i}{i_1}\cdots{i_{n-1}}$.
On the other hand, if $B_0=C_0$, then (reversing the roles of $\al$ and $\be$, if necessary) choose some $i\in B_0=C_0$, some $j,k\in\bn$ with $(j,k)\in\coker(\al)\sm\coker(\be)$, write $\bn\sm\{i,j,k\}=\{j_1,\ldots,j_{n-3}\}$, let $\de=\partVI{k}{i,j}{j_1}{j_{n-3}}{k}{i,j}{j_1}{j_{n-3}}$,
and note that $\al\de=\partVI{A_0}{A_1}{A_2}{A_r}{k}{i,j}{j_1}{j_{n-3}}$ and $\be\de=\partVII{A_0}{A_1}{A_2}{A_r}{k}{i,j}{j_1}{j_{n-3}}$.
In either subcase, Proposition \ref{prop:small_congruences}(iii) gives $\mu_1=\cg{\al\de}{\be\de}\sub\cg\al\be$.

Next, suppose (b) holds.  Again, we consider two separate subcases.  If $B_0\cap C_0\not=\emptyset$, then choose some ${i\in B_0\cap C_0}$, write $\bn\sm\{i\}=\{i_1,\ldots,i_{n-1}\}$, let $\de=\partIV{i}{i_1}\cdots{i_{n-1}}{B_0}{B_1}\cdots{B_s}$,
and note that $\al\de=\al $ and $\be\de=\partIV{A_r}{A_0}\cdots{A_{r-1}}{B_0}{B_1}\cdots{B_s}$.
On the other hand, if 
$B_0\cap C_0=\emptyset$, choose some $i\in B_0$ and $j\in C_0$, write $\bn\sm\{i,j\}=\{j_1,\ldots,j_{n-2}\}$, let 
$\de=\partpermII ijij$, 
and note that $\al\de=\partVI{A_0}{A_1}{A_2}{A_r}{i}{j}{j_1}{j_{n-2}}$ and $\be\de=\partVI{A_r}{A_0}{A_1}{A_{r-1}}{j}{i}{j_1}{j_{n-2}}$.
In either case, $(\al\de,\be\de)\in\mu_1\sm\De$, so Proposition \ref{prop:small_congruences}(iii) gives $\mu_1=\cg{\al\de}{\be\de}\sub\cg\al\be$.

Finally, if (c) holds, then $\be(\al^*\al)=\alh$, and Proposition \ref{prop:small_congruences}(iii) gives $\mu_1=\cg\al{\alh}=\cg{\al\al^*\al}{\be\al^*\al}\sub\cg\al\be$.
This completes the proof of (ii).

For (iv), suppose $(\al,\be)\in R_1\sm (R_0 \cup \rho_1 \cup \lam_1 )$.  
Renaming $\al,\be$ if necessary, we may assume that
\[
\rank(\al)=1\COMMA
\rank(\be)\leq1 \COMMA
\ker(\al)\not=\ker(\be)\COMMA
\coker(\al)\not=\coker(\be).
\]
Write $\al=\partIV{A_0}{A_1}\cdots{A_r}{B_0}{B_1}\cdots{B_s}$ and $\be=\partIV{C_0}{C_1}\cdots{C_t}{D_0}{D_1}\cdots{D_u}$ or $\partIII{C_0}{C_1}\cdots{C_t}{D_0}{D_1}\cdots{D_u}$.
Since $\rho_1\vee\lam_1=R_1$, by Proposition~\ref{prop-CR4}(i), to prove that $\cg\al\be=R_1$, it suffices to show that $\cg\al\be$ contains $\rho_1$ and $\lam_1$.  We just show the first of these, as the second is dual.  One of the following three statements must hold: 
\[
\text{(d)}\ \rank(\be)=0,\ \text{or}\ \ \ \ \ \ 
\text{(e)}\ \rank(\be)=1\  \text{and}\  A_0=C_0,\ \text{or}\ \ \  \ \ \ 
\text{(f)}\ \rank(\be)=1\ \text{and}\ A_0\not=C_0.
\]
Suppose first that (d) or (e) holds.  We then define $\ga=\partIV{A_0}{A_1}\cdots{A_r}{A_0}{A_1}\cdots{A_r}$,
and note that $\ga\al=\al$ and $\ga\be=\partIII{A_0}{A_1}\cdots{A_r}{D_0}{D_1}\cdots{D_u}$ or $\partIV{A_0}{A_1}\cdots{A_r}{D_0}{D_1}\cdots{D_u}$.
In either case, we see that $\cg{\al}{\ga\be}=\cg{\ga\al}{\ga\be}\sub\cg\al\be$, while ${\cg{\al}{\ga\be}=\rho_1}$ follows from case (ii), proved above.  

Now suppose (f) holds.  Reversing the roles of $\al$ and $\be$, if necessary, we may choose some $i\in A_0\sm C_0$.  Write $\bn\sm\{i\}=\{i_1,\ldots,i_{n-1}\}$, and let $\ga=\partpermI ii$.
Then $\ga\al=\partIV i{i_1}\cdots{i_{n-1}}{B_0}{B_1}\cdots{B_s}$ and $\ga\be=\partIII i{i_1}\cdots{i_{n-1}}{D_0}{D_1}\cdots{D_u}$.
In particular, $\rho_1=\cg{\ga\al}{\ga\be}\sub\cg\al\be$, again using part (ii).  \epf

\begin{rem}
\label{NotPrinc}
The congruences $\lambda_{\S_2}$, $\rho_{\S_2}$ and $R_{\S_2}$ are not principal (cf.~Remark \ref{rem:nonprincipal}). 
However, each of them is generated by two pairs of partitions, and these generating sets (up to reordering) are as follows:
\bit
\item[(i)] $\rho_{\S_2}=\cgset\al\be\ga\de\iff (\al,\be)\in\rho_{\S_2}\sm \rho_1$ and $(\ga,\de)\in \rho_{\S_2}\sm\mu_{\S_2}$,
\item[(ii)] $\lam_{\S_2}=\cgset\al\be\ga\de\iff (\al,\be)\in\lam_{\S_2}\sm \lam_1$ and $(\ga,\de)\in \lam_{\S_2}\sm\mu_{\S_2}$,
\item[(iii)] $R_{\S_2}=\cgset\al\be\ga\de\iff (\al,\be)\in R_{\S_2}\sm R_1$ and $(\ga,\de)\in R_{\S_2}\sm (\lambda_{\S_2}\cup\rho_{\S_2})$.
\eit
The forwards implications of these assertions are easily proved, using $\rho_{\S_2}=\mu_{\S_2}\cup\rho_1$,
$\lambda_{\S_2}=\mu_{\S_2}\cup\lambda_1$ and $R_{\S_2}=R_1\cup \lambda_{\S_2}=R_1\cup\rho_{\S_2}$.  
The converses follow by combinations of facts proved in Propositions \ref{prop:small_congruences} and~\ref{prop:joins2}, but we leave it to the reader to verify this, as it does not play an important role in what follows.
\end{rem}

Finally, we turn our attention to the upper congruences. Specifically, we consider the chain of relations
\begin{equation}
\label{eqaa1}
R_{\S_2}\suq R_2 \suq R_{\A_3}\suq R_{\S_3}\suq
R_3\suq R_K \suq R_{\A_4}\suq R_{\S_4}\suq
R_4\suq R_{\A_5}\suq R_{\S_5}\suq R_5\suq\cdots
\suq R_{\S_n}\suq R_n=\nabla.
\end{equation}
For an arbitrary member $\xi$ of this chain other than $R_{\S_2}$, let
$\xi^-$ be its immediate predecessor in the chain. 
We aim to show that every such $\xi$ is generated by any pair from $\xi\setminus \xi^-$.
We proceed via  a sequence of lemmas.

\begin{lemma}
\label{lemma-aa1}
For every $\alpha\in\P_n$, there exists $\beta\in\I_n$ with $\rank(\beta)=\rank(\alpha)$, $\al=\alpha\beta\alpha$ and $\be=\be\al\be$.
\end{lemma}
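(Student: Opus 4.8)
The plan is to build $\be$ explicitly as a ``thinned-out'' version of the natural inverse $\al^\ast$. Write $\al=\partABCD$, so that the transversals of $\al$ are the blocks $A_i\cup B_i'$ for $1\le i\le q=\rank(\al)$; here $A_1,\dots,A_q,C_1,\dots,C_r$ are the $\ker(\al)$-classes (hence pairwise disjoint) and $B_1,\dots,B_q,D_1,\dots,D_s$ are the $\coker(\al)$-classes (hence pairwise disjoint). For each $i$ pick representatives $a_i\in A_i$ and $b_i\in B_i$, and set
\[
\be=\partpermIII{b_1}\cdots{b_q}{a_1}\cdots{a_q}\in\I_n,
\]
i.e.\ the partial permutation whose transversals are $\{b_i,a_i'\}$ for $1\le i\le q$. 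Then $\be$ has exactly $q$ transversals, so $\rank(\be)=q=\rank(\al)$, and it remains only to verify $\al=\al\be\al$ and $\be=\be\al\be$.

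For $\al=\al\be\al$ I would first compute $\al\be$ by tracing connected components in the product graph $\Pi(\al,\be)$. The key point is that, by disjointness of the $B_i$, each $B_i$ meets $\{b_1,\dots,b_q\}$ in the single point $b_i$, while each $\coker(\al)$-class $D_k$ is disjoint from every $B_i$ and so misses $\{b_1,\dots,b_q\}$ entirely; reading off the paths accordingly shows that $\al\be$ is the partition whose transversals are $A_i\cup\{a_i'\}$ ($1\le i\le q$) and whose non-transversals are the $C_j$ together with the singletons $\{m'\}$ for $m\in\bn\setminus\{a_1,\dots,a_q\}$ --- equivalently, $\dom(\al\be)=\dom(\al)$, $\ker(\al\be)=\ker(\al)$, $\codom(\al\be)=\{a_1,\dots,a_q\}$ and $\coker(\al\be)$ is trivial. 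A second, entirely analogous computation of $(\al\be)\al$ --- now using disjointness of the $A_i$, so that each $A_i$ meets $\{a_1,\dots,a_q\}$ only in $a_i$, together with disjointness of each $C_j$ from every $A_i$ --- recovers precisely the transversals $A_i\cup B_i'$ and the non-transversals $C_j$, $D_k'$ of $\al$; hence $\al\be\al=\al$. The identity $\be=\be\al\be$ follows from the same bookkeeping with the two sides interchanged: one finds that $\be\al$ has transversals $\{b_i\}\cup B_i'$, trivial kernel, and $\coker(\be\al)=\coker(\al)$, and then that $(\be\al)\be$ collapses back to the transversals $\{b_i,a_i'\}$ with all remaining points as singletons, i.e.\ to $\be$ (the case $q=0$, with $\be=\emptypartperm$, is subsumed).

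The only real content is the product-graph bookkeeping in these verifications, and there is no genuine obstacle: at every step the chosen representatives $a_i,b_i$ interact with the blocks of $\al$ in the simplest possible way, precisely because of the pairwise disjointness of the $A_i$ (resp.\ of the $B_i$) and their disjointness from the non-transversal blocks. Conceptually, $\be$ is a maximal-rank element of $\I_n$ lying below $\al^\ast$ in the natural partial order on the regular $\ast$-semigroup $\P_n$, but since that order is not set up in the text, the direct computation above is the cleanest route.
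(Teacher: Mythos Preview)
Your construction of $\be$ is exactly the one the paper uses: pick representatives $a_i\in A_i$, $b_i\in B_i$ and set $\be=\partpermIII{b_1}\cdots{b_q}{a_1}\cdots{a_q}$. The only difference is that the paper dismisses the verification of $\al=\al\be\al$ and $\be=\be\al\be$ as ``immediate'', whereas you spell out the product-graph bookkeeping; your argument is correct and essentially identical to the paper's.
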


\pf
If $\alpha=\partI{A_1}{A_q}{C_1}{C_r}{B_1}{B_q}{D_1}{D_s}$, pick $a_i\in A_i$, $b_i\in B_i$ ($i=1,\ldots,q$), and define $\beta=\partpermIII{b_1}\cdots{b_q}{a_1}\cdots{a_q}$. Verification of the stated properties is immediate.
\epf

\begin{lemma}
\label{lemma-aa1b}
Let $\al,\be\in J_q$ with $q\geq1$.  
\bit
\itemit{i} If $(\al,\be)\not\in{\R}$, then there exists $\gamma\in J_q$ such that precisely one of $\gamma\alpha,\gamma\beta$ belongs to $J_q$.
\itemit{ii} If $(\al,\be)\not\in{\L}$, then there exists $\gamma\in J_q$  such that precisely one of $\alpha\gamma,\beta\gamma$ belongs to $J_q$.
\end{itemize}
\end{lemma}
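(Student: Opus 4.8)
The plan is to deduce (ii) from (i) via the anti-involution $*$, and to prove (i) by an explicit construction that splits according to whether $\al$ and $\be$ differ only in their ``non-transversal kernel data''. For the reduction: if $(\al,\be)\not\in{\L}$ then $(\al^*,\be^*)\not\in{\R}$ (Remark~\ref{rem:green_sumbonoids}(i)), so (i) yields $\ga\in J_q$ with precisely one of $\ga\al^*,\ga\be^*$ in $J_q$; applying $*$, using $(\delta\varepsilon)^*=\varepsilon^*\delta^*$, $\rank(\delta^*)=\rank(\delta)$ and $\ga^*\in J_q$, precisely one of $\al\ga^*,\be\ga^*$ lies in $J_q$, which is (ii) with witness $\ga^*$.

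The computational fact underlying (i) is this: if $\ga\in J_q$ is the partial identity on a $q$-element set $C\sub\bn$ (an element of $\I_n\sub\P_n$), then for any $\delta\in J_q$, tracing paths in $\Pi(\ga,\delta)$ shows that $\rank(\ga\delta)$ equals the number of transversal blocks of $\delta$ whose upper part meets $C$. Recalling that the upper parts $A_1,\dots,A_q$ of the transversal blocks of $\delta$ partition $\dom(\delta)$, with this partition equal to $\ker(\delta)|_{\dom(\delta)}$, we get $\ga\delta\in J_q$ if and only if $C$ is a ``section'' of that partition, i.e.\ $C\sub\dom(\delta)$ and $C$ meets each $A_i$ exactly once.

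\textbf{Case A: $\al$ and $\be$ do not have both the same domain and the same restricted kernel $\ker(\cdot)|_{\dom(\cdot)}$.} Then there is a $q$-element $C$ that is a section for exactly one of $\al,\be$; interchanging $\al$ and $\be$ if needed, say it is a section for $\al$ but not $\be$. Indeed, if $\dom(\al)\ne\dom(\be)$, pick $a\in\dom(\al)\sm\dom(\be)$ and take one point from each transversal block of $\al$, choosing $a$ for its block; then $a\in C\sm\dom(\be)$. If $\dom(\al)=\dom(\be)=D$ but the restricted kernels differ, pick $x\ne y$ in $D$ lying in the same transversal block of one of $\al,\be$ but in different blocks of the other, and let $C$ be a section of the latter's transversal blocks containing both $x$ and $y$. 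Taking $\ga$ to be the partial identity on $C$, the fact above gives $\ga\al\in J_q$ and $\ga\be\notin J_q$.

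\textbf{Case B: $\dom(\al)=\dom(\be)=D$ and $\ker(\al)|_D=\ker(\be)|_D$.} Then $\al,\be$ have the same transversal upper parts $A_1,\dots,A_q$, and since $(\al,\be)\not\in{\R}$ we have $\ker(\al)\ne\ker(\be)$; as any $\ker(\al)$- or $\ker(\be)$-block meeting $D$ is contained in $D$, these kernels must differ on a pair $x\ne y$ of elements of $\bn\sm D$. Interchanging $\al,\be$ if necessary, assume $x,y$ lie in the same block of $\al$ but in different blocks of $\be$. No partial identity separates $\al$ from $\be$ here, so I would choose $a_i\in A_i$ and let $\ga\in J_q$ have transversal blocks $\{a_1,y'\},\{a_2,a_2'\},\dots,\{a_q,a_q'\}$, one further block $\{a_1',x'\}$, and all other points of $\bn\cup\bn'$ as singletons. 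In $\Pi(\ga,\al)$, the block $\{a_1,y'\}$ links the top vertex $a_1$ down to $y''$, then on to $x''$ (as $x,y$ share a block of $\al$), then on to $a_1''$ (as $x',a_1'$ share a block of $\ga$), then down to the lower part of the first transversal of $\al$; so $a_1$ survives into a transversal, as do $a_2,\dots,a_q$ via their partial-identity blocks, giving $\rank(\ga\al)=q$. In $\Pi(\ga,\be)$ the vertices $x''$ and $y''$ are not joined, so the component of $a_1$ has no lower vertex, while $a_2,\dots,a_q$ still give transversals, whence $\rank(\ga\be)=q-1$. Thus precisely one of $\ga\al,\ga\be$ lies in $J_q$.

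The main obstacle is the path-tracing bookkeeping in Case~B: one must check that no unintended components merge, so that $\rank(\ga\al)$ is exactly $q$ and $\rank(\ga\be)$ exactly $q-1$ (here it matters that $x,y\notin D$ and the $A_i$ are pairwise disjoint). A secondary point needing care is the Case~A/Case~B dichotomy and the argument that in Case~B the two kernels necessarily differ at a pair outside $D$.
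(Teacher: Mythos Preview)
Your proof is correct and follows essentially the same route as the paper: deduce (ii) from (i) via the $*$-duality, and in (i) split according to whether a partial-identity section separates $\al$ from $\be$ (your Case~A bundles the paper's Case~1 and Subcase~2.1) or a more elaborate $\ga$ is required (your Case~B is exactly the paper's Subcase~2.2, where the disagreement of kernels lies entirely outside $D=\dom(\al)=\dom(\be)$). Your observation that any kernel class meeting $D$ lies inside $D$ is what makes the Case~A/Case~B dichotomy exhaustive, and your verification in Case~B is sound: the component of $a_1$ in $\Pi(\ga,\al)$ reaches $A_1''$ only via the chain $a_1\to y''\to x''\to a_1''$, which breaks in $\Pi(\ga,\be)$, while the components of $a_2,\dots,a_q$ are unaffected.

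The only substantive difference from the paper is the specific $\ga$ chosen in Case~B. Writing your $x,y$ as $i,j$, the paper instead takes the projection with transversals $\{a_1,a_1'\},\dots,\{a_{q-1},a_{q-1}'\},\{i,i'\}$ and a single extra non-transversal $\{a_q,j\}$ on each row; then $\rank(\ga\al)=q$ because the path $i\to i''\to j''\to a_q''$ reaches $A_q$, while $\rank(\ga\be)<q$ because $i\notin\dom(\ga\be)$. Both constructions exploit the same mechanism---forcing one transversal of $\ga$ to reach $\dom(\al)$ only through the $\ker(\al)$-relation between $x$ and $y$---so this is a minor variation rather than a genuinely different argument.
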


\pf  By duality, it suffices to prove (i), so suppose $(\al,\be)\not\in{\R}$, and write $\alpha=\partI{A_1}{A_q}{C_1}{C_r}{B_1}{B_q}{D_1}{D_s}$ and $\beta=\partI{E_1}{E_q}{G_1}{G_t}{F_1}{F_q}{H_1}{H_u}$.
Since $(\alpha,\beta)\not\in{\R}$, either $\dom(\alpha)\neq\dom(\beta)$ or $\ker(\alpha)\neq\ker(\beta)$. 

\bigskip\noindent {\bf Case 1.}  Suppose first that $\dom(\alpha)\neq\dom(\beta)$.  Without loss of generality, we may assume that $A_1\sm\dom(\beta)$ is nonempty.  
Let $a_1\in A_1\sm\dom(\beta),a_2\in A_2,\ldots,a_q\in A_q$, and let $\ga=\partpermIII{a_1}\cdots{a_q}{a_1}\cdots{a_q}$.
Then $\rank(\ga\al)=q$ and $\rank(\ga\be)<q$; the latter is the case because $\dom(\ga\be)\sub\dom(\ga)=\{a_1,\ldots,a_q\}$, yet $a_1\not\in\dom(\ga\be)$.  

\bigskip\noindent {\bf Case 2.} Next suppose $\dom(\al)=\dom(\be)$, but $\ker(\al)\neq\ker(\be)$.  Without loss of generality, we may assume that there exists some $(i,j)\in\ker(\al)\sm\ker(\be)$, and that either $i,j\in A_1$ or $i,j\in C_1$.  

\bigskip\noindent {\bf Subcase 2.1.}  Suppose $i,j\in A_1$.  Since $i,j\in\dom(\al)=\dom(\be)$ and $(i,j)\not\in\ker(\be)$, we may assume without loss of generality that $i\in E_1$ and $j\in E_2$.  For each $3\leq p\leq q$, choose some $e_p\in E_p$, and put $e_1=i$ and $e_2=j$.  Define $\ga=\partpermIII{e_1}\cdots{e_q}{e_1}\cdots{e_q}$.
Then $\rank(\ga\be)=q$ and $\rank(\ga\al)<q$; the latter is the case because $(i,j)\in\ker(\ga\al)$ and $i,j\in\dom(\ga\al)=\{e_1,\ldots,e_q\}$.  

\bigskip\noindent {\bf Subcase 2.2.}  Finally, suppose $i,j\in C_1$.  For each $1\leq p\leq q$, choose some $a_p\in A_p$.  Write $\bn\sm\{a_1,\ldots,a_q\}=\{x_1,\ldots,x_{n-q}\}$, assuming that $x_1=i$ and $x_2=j$, and put 
$\ga =
\Big( 
{ \scriptsize \renewcommand*{\arraystretch}{1}
\begin{array} {\c|\c|\c|\c|\c|\c|\c|\cend}
a_1 \:&\: \cdots \:&\: a_{q-1} \:&\: i \:&\: a_q,j \:&\: x_3 \:&\: \cdots \:&\: x_{n-q} \\ \cline{5-8}
a_1 \:&\: \cdots \:&\: a_{q-1} \:&\: i \:&\: a_q,j \:&\: x_3 \:&\: \cdots \:&\: x_{n-q}
\rule[0mm]{0mm}{2.7mm}
\end{array} 
}
\hspace{-1.5 truemm} \Big)
$.
Then $\rank(\ga\al)=q$ and $\rank(\ga\be)<q$; the latter is the case because $\{i\}$ is a singleton block of $\ga\be$, and $\dom(\ga\be)\sub\dom(\ga)=\{a_1,\ldots,a_{q-1},i\}$. 
\epf

The next result concerns the permutations $\phi(\al,\be)$ from Definition \ref{defn:phi}, and the symmetric inverse monoid~$\I_n\sub\P_n$.

\begin{lemma}
\label{lemma-aa1a}
For any $(\al,\be)\in {\H}\cap (J_q\times J_q)$ with $q\geq 1$, there exist $\al_1,\be_1,\ga_1,\de_1\in J_q(\I_n)$ and $\ga_2,\de_2\in J_q$ such that~
\begin{itemize}
\begin{multicols}2
\itemit{i} $(\alpha_1,\beta_1)=(\gamma_1\alpha\delta_1,\gamma_1\beta\delta_1) \in{\H}$,
\itemit{ii} $\phi(\alpha_1,\beta_1)=\phi(\alpha,\beta)$, 
\itemit{iii} $(\al,\be)=(\ga_2\al_1\de_2,\ga_2\be_1\de_2)$.
\end{multicols}\eit
\end{lemma}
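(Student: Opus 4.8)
The plan is to produce the four partial permutations $\al_1,\be_1,\ga_1,\de_1$ and the two rank-$q$ partitions $\ga_2,\de_2$ more or less explicitly, using a ``sandwich'' construction built from a single bijection between the domains/codomains of $\al$ and a standardised partial permutation. Concretely, write $\al=\partI{A_1}{A_q}{C_1}{C_r}{B_1}{B_q}{D_1}{D_s}$ with $\min(A_1)<\cdots<\min(A_q)$, and since $\al\H\be$ write $\be=\partI{A_1}{A_q}{C_1}{C_r}{B_{1\si}}{B_{q\si}}{D_1}{D_s}$ where, by Definition~\ref{defn:phi}, $\si=\phi(\al,\be)^{-1}$. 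Pick $a_i\in A_i$ and $b_i\in B_i$ for each $i$, and set
\[
\ga_1=\partpermIII{a_1}\cdots{a_q}{a_1}\cdots{a_q} \COMMA
\de_1=\partpermIII{b_1}\cdots{b_q}{b_1}\cdots{b_q}.
\]
Then $\al_1:=\ga_1\al\de_1=\partpermIII{a_1}\cdots{a_q}{b_1}\cdots{b_q}$ and $\be_1:=\ga_1\be\de_1=\partpermIII{a_1}\cdots{a_q}{b_{1\si}}\cdots{b_{q\si}}$, which manifestly lie in $J_q(\I_n)$; since these have the same (trivial) kernels and equal domains they are $\H$-related, giving (i). For (ii), one computes $\al_1\be_1^*=\partpermIII{a_1}\cdots{a_q}{a_{1\si^{-1}}}\cdots{a_{q\si^{-1}}}$ (extended by singletons outside), so reading off $\phi$ via Definition~\ref{defn:phi} gives $\phi(\al_1,\be_1)=\si^{-1}=\phi(\al,\be)$, as required. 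This is the ``compress to a partial permutation keeping the permutation part'' step, and it is essentially Lemma~\ref{lemma-aa1}-style bookkeeping.

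The remaining task is (iii): recover $(\al,\be)$ from $(\al_1,\be_1)$ by a two-sided sandwich by rank-$q$ elements. Here I would take
\[
\ga_2=\partI{A_1}{A_q}{C_1}{C_r}{a_1}{a_q}{a_{q+1}}{a_n} \COMMA
\de_2=\partI{b_1}{b_q}{b_{q+1}}{b_n}{B_1}{B_q}{D_1}{D_s},
\]
where $a_{q+1},\dots,a_n$ and $b_{q+1},\dots,b_n$ are chosen so that $\{a_1,\dots,a_n\}=\{b_1,\dots,b_n\}=\bn$ and the non-transversal structure is arranged to make the products come out right — more precisely $\ga_2$ should have $a_i'$ attached to the transversal block whose upper part is $A_i$ and the remaining primed points arranged to die, and dually for $\de_2$. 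The point is that $\ga_2$ sends the block $A_i$ (as part of a transversal) through to the point $a_i$ on its lower row, $\al_1$ carries $a_i$ to $b_i$, and $\de_2$ carries $b_i$ back up to the lower block $B_i$, reconstituting $\al=\ga_2\al_1\de_2$; since $\be_1$ only permutes the $b_i$ among themselves by $\si$, the same sandwich yields $\ga_2\be_1\de_2=\be$. One must check that the non-transversal blocks $C_j$ of $\al$ and $\be$ (which coincide) are reproduced by $\ga_2$ alone, and the $D_k$ by $\de_2$ alone, so that the ``padding'' points $a_{q+1},\dots$ and $b_{q+1},\dots$ don't interfere — this is a routine product-graph computation.

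The main obstacle, such as it is, is purely notational rather than conceptual: one has to choose the representative elements $a_i,b_i$ and the enumerations of $\bn$ carefully and verify the two product-graph computations $\ga_2\al_1\de_2=\al$ and $\ga_2\be_1\de_2=\be$ block by block (transversals, upper non-transversals, lower non-transversals). There is a mild subtlety in that $\ga_2,\de_2$ are genuinely rank-$q$ partition elements, not partial permutations, precisely because they must regenerate the non-transversal blocks $C_j,D_k$; this is why the statement only asks for $\ga_2,\de_2\in J_q$ rather than in $J_q(\I_n)$. I would present the construction, state the three claimed identities, and dispatch each by the kind of path-chasing in $\Pi(-,-)$ already used in the proof of Lemma~\ref{HatRetract} and Lemma~\ref{lem:nu_phi}, leaving the most mechanical verifications to the reader.
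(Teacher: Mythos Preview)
Your approach is essentially the paper's: the same idempotent partial permutations $\ga_1,\de_1$ to compress to $\I_n$, and rank-$q$ partitions $\ga_2,\de_2$ to decompress. There is one small gap, however. In (ii) you compute $\al_1\be_1^*=\partpermIII{a_1}\cdots{a_q}{a_{1\si^{-1}}}\cdots{a_{q\si^{-1}}}$ and then ``read off'' $\phi(\al_1,\be_1)=\si^{-1}$. But Definition~\ref{defn:phi} requires the domain blocks to be listed in increasing order of minima before the permutation can be read off, and your choice of $a_i\in A_i$ is arbitrary: nothing forces $a_1<\cdots<a_q$. With a generic choice you would only get $\phi(\al_1,\be_1)$ conjugate to $\si^{-1}$, not equal. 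The fix is trivial---take $a_i=\min(A_i)$, which are automatically ordered since $\min(A_1)<\cdots<\min(A_q)$---and this is exactly what the paper does.

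For (iii), your construction works but is slightly over-engineered: there is no need to enumerate $\bn$ and pad with singletons. The paper simply takes the projections $\ga_2=\partI{A_1}{A_q}{C_1}{C_r}{A_1}{A_q}{C_1}{C_r}=\al\al^*$ and $\de_2=\partI{B_1}{B_q}{D_1}{D_s}{B_1}{B_q}{D_1}{D_s}=\al^*\al$, for which the verifications $\ga_2\al_1\de_2=\al$ and $\ga_2\be_1\de_2=\be$ are immediate with no bookkeeping of extra points.
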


\pf
Write
$\alpha=\partI{A_1}{A_q}{C_1}{C_r}{B_{1\si}}{B_{q\si}}{D_1}{D_s}$
and
$\beta=\partI{A_1}{A_q}{C_1}{C_r}{B_{1\tau}}{B_{q\tau}}{D_1}{D_s}$, where $\min(A_1)<\cdots<\min(A_q)$, $\min(B_1)<\cdots<\min(B_q)$ and $\sigma,\tau\in\S_k$.  For each $1\leq i\leq q$, put $a_i=\min(A_i)$ and $b_i=\min(B_i)$.  Then $\ga_1=\partpermIII{a_1}\cdots{a_q}{a_1}\cdots{a_q}$, $\de_1=\partpermIII{b_1}\cdots{b_q}{b_1}\cdots{b_q}$, $\ga_2=\partI{A_1}{A_q}{C_1}{C_r}{A_1}{A_q}{C_1}{C_r}$ and $\de_2=\partI{B_1}{B_q}{D_1}{D_s}{B_1}{B_q}{D_1}{D_s}$
have the desired properties. \epf

Recall that for any congruence $\xi$ on $\P_n$, we write $\xib=\xi\cap (\I_n\times\I_n)$ for the induced congruence on $\I_n$.

\begin{lemma}
\label{lemma-aa2}
Let $\xi$ be any of the congruences from the chain \eqref{eqaa1} other than $R_{\S_2}$.  Then $\xi=\xib^\sharp$.

\end{lemma}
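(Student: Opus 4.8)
The plan is to show that each $\xi$ in the chain \eqref{eqaa1} beyond $R_{\S_2}$ is recovered from its restriction $\xib = \xi\cap(\I_n\times\I_n)$ via the generated congruence $\xib^\sharp$. Since $\xib^\sharp\sub\xi$ is automatic (being $\xi$ a congruence containing $\xib$), the content is the reverse inclusion $\xi\sub\xib^\sharp$. First I would record that each $\xi$ in \eqref{eqaa1} is of the form $R_q$ or $R_N$ for a normal subgroup $N$ of $\S_q$ with $q\geq 2$; in particular $\xi = R_{q-1}\cup\nu_N$ (with $N=\S_q$, i.e.\ $\xi=R_q$, as the degenerate maximal case). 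So the task splits into: (a) every pair $(\al,\be)\in R_{q-1}$ (i.e.\ with $\rank(\al),\rank(\be)\leq q-1$, or $\al=\be$) lies in $\xib^\sharp$, and (b) every pair $(\al,\be)\in\nu_N$ lies in $\xib^\sharp$.

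For part (b): take $(\al,\be)\in\nu_N$, so $\al\H\be$ in $J_q$ and $\phi(\al,\be)\in N$. Apply Lemma~\ref{lemma-aa1a} to produce $\ga_1,\de_1,\al_1,\be_1\in\I_n$ and $\ga_2,\de_2\in\P_n$ with $(\al,\be)=(\ga_2\al_1\de_2,\ga_2\be_1\de_2)$, where $(\al_1,\be_1)=(\ga_1\al\de_1,\ga_1\be\de_1)\in\H$ and $\phi(\al_1,\be_1)=\phi(\al,\be)\in N$. Then $(\al_1,\be_1)$ is a pair of elements of $J_q(\I_n)$ that are $\H$-related with $\phi$-invariant in $N$; by Lemma~\ref{lem:nu_phi} applied inside $\I_n$ (or directly by Theorem~\ref{thm:In}), $(\al_1,\be_1)\in\overline{R}_N=\xib$, hence $(\al_1,\be_1)\in\xib^\sharp$. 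Since $\xib^\sharp$ is a congruence on $\P_n$, multiplying by $\ga_2$ on the left and $\de_2$ on the right gives $(\al,\be)=(\ga_2\al_1\de_2,\ga_2\be_1\de_2)\in\xib^\sharp$. (When $\xi=R_q$, the analogous statement uses that $\overline{R}_q=R_q\cap(\I_n\times\I_n)$ together with Lemma~\ref{lemma-aa1} to push a rank-$q$ pair down into $\I_n$ and back.)

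For part (a): given $(\al,\be)\in R_{q-1}$ with $\al\neq\be$, both have rank $\leq q-1$. By Lemma~\ref{lemma-aa1}, pick $\al_1,\be_1\in\I_n$ with $\rank(\al_1)=\rank(\al)$, $\rank(\be_1)=\rank(\be)$, $\al=\al\al_1\al$, $\be=\be\be_1\be$ (and the corresponding idempotency relations). Then $\al_1$ and $\be_1$ lie in $I_{q-1}(\I_n)$, so $(\al_1,\be_1)\in\overline{R}_{q-1}\sub\xib$, giving $(\al_1,\be_1)\in\xib^\sharp$. Writing $\al$ and $\be$ each as a product with $\al_1$, resp.\ $\be_1$, sandwiched in the middle, and using that $\xib^\sharp$ already identifies all elements of $I_{q-1}$ (which follows by applying the previous sentence's argument to every pair of rank-$\leq q-1$ partitions, via the Rees-type collapse $R_{q-1}\sub\xib^\sharp$ once we know $\overline{R}_{q-1}\sub\xib$ and that multiplying $\I_n$-elements of rank $\leq q-1$ by arbitrary partitions stays inside $I_{q-1}$), we conclude $(\al,\be)\in\xib^\sharp$.

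I expect the main obstacle to be part (a): one must be careful that the Rees ideal $I_{q-1}$ of $\P_n$ really does get fully collapsed by $\xib^\sharp$, i.e.\ that the single ``row'' of $\I_n$-identifications of rank $\leq q-1$ propagates, under multiplication by all of $\P_n$, to all of $I_{q-1}\times I_{q-1}$. The clean way to see this is: $\xib$ contains $\overline{R}_{q-1}=\De_{\I_n}\cup(I_{q-1}(\I_n)\times I_{q-1}(\I_n))$, and for any $\al\in I_{q-1}(\P_n)$ Lemma~\ref{lemma-aa1} gives $\al_1\in I_{q-1}(\I_n)$ with $\al=\al\al_1\al$, whence $\al \mathrel{\xib^\sharp} \al\,\emptypartperm\,\al$ (using $(\al_1,\emptypartperm)\in\overline{R}_{q-1}\sub\xib^\sharp$), and the right-hand side has rank $0$; so $\xib^\sharp$ already identifies every element of $I_{q-1}$ with a fixed rank-$0$ partition obtained this way — but to collapse all of $I_{q-1}$ to a single class one still needs the rank-$0$ elements to be mutually $\xib^\sharp$-related, which again follows since $I_0(\I_n)=\{\emptypartperm\}$ forces all the relevant rank-$0$ partitions into one $\xib^\sharp$-class through the same sandwiching trick applied to rank-$0$ elements. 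Once $R_{q-1}\sub\xib^\sharp$ is secured, part (b) is the genuinely new ingredient but is handled cleanly by Lemma~\ref{lemma-aa1a} as above, and combining (a) and (b) gives $\xi=R_{q-1}\cup\nu_N\sub\xib^\sharp\sub\xi$, as required.
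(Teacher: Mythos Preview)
Your part (b) is correct and essentially identical to the paper's Case 2: use Lemma~\ref{lemma-aa1a} to transfer the pair into $\I_n$, observe it lies in $\overline{R}_N=\xib$, then multiply back.

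Part (a), however, has a genuine gap. Your ``clean way'' correctly shows that every $\al\in I_{q-1}$ satisfies $\al\mathrel{\xib^\sharp}\al\,\emptypartperm\,\al$, a rank-$0$ element. But your final sentence --- that all rank-$0$ elements are then forced into one $\xib^\sharp$-class ``through the same sandwiching trick applied to rank-$0$ elements'' --- does not work. For a rank-$0$ partition $\ga$, Lemma~\ref{lemma-aa1} produces $\de=\emptypartperm$, and the sandwiching yields only the trivial relation $(\ga,\ga)$. The fact that $I_0(\I_n)=\{\emptypartperm\}$ gives you nothing here: $\xib$ contains no nontrivial pair of rank-$0$ elements, so there is nothing to propagate by multiplication. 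You have shown $I_{q-1}$ maps under $\xib^\sharp$ into $J_0$, but not that $J_0$ collapses.

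The paper closes this gap by a different, concrete move (its Case 1): take $\al=\partpermII1212$, $\be=\emptypartperm$ (so $(\al,\be)\in\overline{R}_2\sub\xib$, using $q\geq2$), and the non-$\I_n$ element $\si=\partIII{1,2}{3}\cdots{n}{1,2}{3}\cdots{n}$. Then $(\si,\emptypartperm)=(\al\si\al,\be\si\be)\in\xib^\sharp$, and since $(\si,\emptypartperm)\in R_0\sm(\rho_0\cup\lam_0)$, Proposition~\ref{prop:joins2}(i) gives $R_0=\cg{\si}{\emptypartperm}\sub\xib^\sharp$. This is the missing idea: you must exhibit, inside $\xib^\sharp$, at least one pair of rank-$0$ partitions with differing kernels \emph{and} differing cokernels, and then invoke the classification of principal congruences to get all of $R_0$. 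Once $R_0\sub\xib^\sharp$, your sandwiching argument finishes the job for $I_q$ (or $I_{q-1}$) exactly as you wrote.
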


\pf
Since $\xib\sub\xi$, the inclusion $\xib^\sharp\sub\xi$ is clear, so only the converse needs to be proved. 

\bigskip\noindent {\bf Case 1.} Suppose first that $\xi=R_q$ for some $q\geq 2$.  We begin by showing that $R_0\subseteq \xib^\sharp$.  To do so, let $\alpha=\partpermII1212$, $\be=\emptypartperm$ and $\si=\partIII{12}{3}\cdots{n}{12}{3}\cdots{n}$.  Now $(\al,\be)\in\Rb_2\sub\xib$, so it follows that $(\si,\be)=(\al\si\al,\be\si\be)\in\xib^\sharp$.  But $(\si,\be)\in R_0\sm(\rho_0\cup\lam_0)$, so Proposition \ref{prop:joins2}(i) gives $R_0=\cg\si\be\sub\xib^\sharp$.  To complete the proof that $\xi=R_q\sub\xib^\sharp$, it suffices to show that every partition of rank at most $q$ is $\xib^\sharp$-related to a partition of rank~$0$.  So let $\ga\in\P_n$ with $\rank(\ga)=p\leq q$.  By Lemma \ref{lemma-aa1}, there exists $\de\in\I_n$ with $\rank(\de)=p$ such that $\ga=\ga\de\ga$.  But then with $\be=\emptypartperm$ as above, we have $(\de,\be)\in\Rb_q\sub\xib$, so that $(\ga,\ga\be\ga)=(\ga\de\ga,\ga\be\ga)\in\xib^\sharp$.  Since $\rank(\ga\be\ga)=0$, the proof is complete in this case.

\bigskip\noindent {\bf Case 2.}  Now suppose $\xi=R_N$ for some non-trivial normal subgroup $N$ of $\S_q$, where $q\geq3$.  Now, $R_{q-1}=\Rb_{q-1}^\sharp\sub\xib^\sharp$ by Case 1, so it remains to show that $R_N\sm R_{q-1}\sub\xib^\sharp$.  
Let $(\al,\be)\in R_N\sm R_{q-1}=\nu_N\sm \Delta$ be arbitrary.  So $\rank(\al)=\rank(\be)=q$, $\al\H\be$ and $\phi(\al,\be)\in N$.  Let $\al_1,\be_1,\ga_1,\de_1\in J_q(\I_n)$ and $\ga_2,\de_2\in J_q$ be as in Lemma \ref{lemma-aa1a}.  Then $(\al_1,\be_1)\in \overline{R}_N=\xib$, and it follows that $(\al,\be)=(\ga_2\al_1\de_2,\ga_2\be_1\de_2)\in\xib^\sharp$. \epf

The next result is the final step required to complete the proof of Theorem \ref{thm-CongPn}.

\begin{prop}
\label{prop-aa3}
Consider the chain \eqref{eqaa1} of congruences on $\P_n$, and let $\xi$ be any of these congruences other than $R_{\S_2}$. Then $\xi$ is a principal congruence on $\P_n$.  Moreover, if $\al,\be\in\P_n$, then $\xi=\cg\al\be$ if and only if $(\al,\be)\in\xi\sm\xi^-$, where $\xi^-$ denotes the congruence immediately preceding $\xi$ in the chain \eqref{eqaa1}.  
\end{prop}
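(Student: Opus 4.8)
The plan is to prove the two implications of the biconditional separately. The forward direction is immediate: if $\xi=\cg\al\be$ then $(\al,\be)\in\xi$, and $(\al,\be)\not\in\xi^-$, since otherwise $\cg\al\be\sub\xi^-\subsetneq\xi$. For the converse, fix $(\al,\be)\in\xi\sm\xi^-$ and set $\zeta=\cg\al\be$; then $\zeta\sub\xi$, and the task is $\xi\sub\zeta$. By Lemma~\ref{lemma-aa2} we have $\xi=\xib^\sharp$, so it suffices to prove $\xib\sub\zeta$. Here I would first observe that restriction to $\I_n\times\I_n$ is an order-isomorphism from the chain \eqref{eqaa1} onto the terminal segment $\Rb_{\S_2}\suq\Rb_2\suq\Rb_{\A_3}\suq\cdots\suq\Rb_n$ of the chain $\Cong(\I_n)$ of Theorem~\ref{thm:In}(iv) (a direct comparison of the two explicit lists); in particular $\overline{\xi^-}:=\xi^-\cap(\I_n\times\I_n)$ is the largest congruence of $\I_n$ strictly below $\xib$. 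Since $\Cong(\I_n)$ is a chain, Remark~\ref{rem:InOnPrinc} then says $\xib$ is generated on $\I_n$ by \emph{any} pair in $\xib\sm\overline{\xi^-}$. So the whole proof reduces to exhibiting a pair $(\al',\be')\in\I_n\times\I_n$ with $(\al',\be')\in\zeta$ and $(\al',\be')\in\xib\sm\overline{\xi^-}$: such a pair yields $\xib=\cg{\al'}{\be'}_{\I_n}\sub\cg{\al'}{\be'}_{\P_n}\sub\zeta$, the first inclusion holding because $\cg{\al'}{\be'}_{\P_n}\cap(\I_n\times\I_n)$ is a congruence on $\I_n$ containing $(\al',\be')$.

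\textbf{Case $\xi=R_q$ ($q\geq2$).} Here $\xi^-=R_{\S_q}$, so $\overline{\xi^-}=\Rb_{\S_q}$, and $R_{\S_q}=R_{q-1}\cup\nu_{\S_q}$ with $\nu_{\S_q}={\H}\cap(J_q\times J_q)$. Since $(\al,\be)\not\in R_{q-1}$, at least one of $\al,\be$ has rank $q$; renaming, say $\rank(\al)=q\geq\rank(\be)$. If $\rank(\be)=q$ then $(\al,\be)\not\in\nu_{\S_q}$ forces $(\al,\be)\not\in{\H}$, so, using the anti-involution ${}^*$ to interchange $\R$ and $\L$ (both $R_q$ and $R_{\S_q}$ are $*$-congruences), I may assume $(\al,\be)\not\in{\R}$ and invoke Lemma~\ref{lemma-aa1b}(i) to get $\ga\in J_q$ with precisely one of $\ga\al,\ga\be$ of rank $q$; replacing $(\al,\be)$ by $(\ga\al,\ga\be)\in\zeta$, reordered so the first entry has rank $q$, reduces us to $\rank(\al)=q>\rank(\be)$. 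Now Lemma~\ref{lemma-aa1} gives $\be_0\in\I_n$ with $\rank(\be_0)=q$, $\al=\al\be_0\al$ and $\be_0=\be_0\al\be_0$, whence $(\be_0,\be_0\be\be_0)=(\be_0\al\be_0,\be_0\be\be_0)\in\zeta$, both entries lie in $\I_n$, $\rank(\be_0)=q$, and $\rank(\be_0\be\be_0)\leq\rank(\be)<q$. Thus $(\be_0,\be_0\be\be_0)$ belongs to $R_q$ but to neither $R_{q-1}$ nor $\nu_{\S_q}\sub J_q\times J_q$, so it lies in $\Rb_q\sm\Rb_{\S_q}=\xib\sm\overline{\xi^-}$, as required.

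\textbf{Case $\xi=R_N$ ($N$ a non-trivial normal subgroup of $\S_q$, $q\geq3$).} Here $\xi^-\supseteq R_{q-1}$ and $\xi=R_{q-1}\cup\nu_N$, so $(\al,\be)\in\xi\sm\xi^-$ forces $\rank(\al)=\rank(\be)=q$, $\al\H\be$, $\phi(\al,\be)\in N$ and $(\al,\be)\not\in\xi^-$. Applying Lemma~\ref{lemma-aa1a} produces $\al_1,\be_1\in J_q(\I_n)$ with $(\al_1,\be_1)=(\ga_1\al\de_1,\ga_1\be\de_1)\in\zeta$, $\al_1\H\be_1$ and $\phi(\al_1,\be_1)=\phi(\al,\be)\in N$, so by Lemma~\ref{lem:nu_phi} we get $(\al_1,\be_1)\in\nu_N\cap(\I_n\times\I_n)\sub\Rb_N=\xib$. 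Finally $(\al_1,\be_1)\not\in\overline{\xi^-}$: if $\xi^-=R_{q-1}$ (the case where $N$ is minimal non-trivial) this holds because $\al_1\in J_q(\I_n)$, while if $\xi^-=R_{N'}$ with $N'$ the predecessor of $N$ among the normal subgroups of $\S_q$, it holds because $\phi(\al_1,\be_1)=\phi(\al,\be)\not\in N'$, as $(\al,\be)\not\in R_{N'}$. Either way $(\al_1,\be_1)\in\xib\sm\overline{\xi^-}$, and the proof concludes as above.

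I expect the main obstacle to be not any single computation — Lemmas~\ref{lemma-aa1}, \ref{lemma-aa1b}, \ref{lemma-aa1a} and~\ref{lemma-aa2} carry the real weight — but the bookkeeping: correctly reading off $\xi^-$ for each shape of $\xi$, matching it via the restriction map with the maximal proper subcongruence of $\xib$, and handling the small-$q$ boundary and the $R_{q-1}$-versus-$R_{N'}$ alternative in the case $\xi=R_N$. The other delicate point is the reduction in the case $\xi=R_q$, where one must pass from an arbitrary non-$\H$-related pair to a genuinely rank-dropping pair, using Lemma~\ref{lemma-aa1b} together with the ${}^*$-symmetry.
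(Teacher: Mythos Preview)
Your overall architecture is the same as the paper's, and your handling of the case $\xi=R_N$ is correct and essentially identical to the paper's Case~2. However, there is a genuine gap in your case $\xi=R_q$.

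After reducing to $\rank(\al)=q>\rank(\be)$ you apply Lemma~\ref{lemma-aa1} to $\al$ to obtain $\be_0\in\I_n$ with $\be_0=\be_0\al\be_0$, and then assert that ``both entries'' of $(\be_0,\be_0\be\be_0)$ lie in $\I_n$. This is false: $\I_n$ is a submonoid of $\P_n$ but not an ideal, and sandwiching an arbitrary $\be\in\P_n$ between elements of $\I_n$ can create non-trivial (co)kernel. A small counterexample: with $n=3$, take $\al=\be_0=\partpermII1212$ and $\be$ the rank-$0$ partition with blocks $\{1,2\},\{3\},\{1',2'\},\{3'\}$; then $(\al,\be)\in R_2\sm R_{\S_2}$, yet $\be_0\be\be_0=\be\notin\I_n$. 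So at this point you do not have a pair in $\xib\sm\overline{\xi^-}$, and your appeal to Remark~\ref{rem:InOnPrinc} does not apply.

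The paper fixes this with a second application of Lemma~\ref{lemma-aa1}, now to the low-rank entry. In your notation: having obtained $(\be_0,\be_0\be\be_0)\in\zeta$ with $\be_0\in\I_n$ of rank $q$ and $\rank(\be_0\be\be_0)<q$, apply Lemma~\ref{lemma-aa1} to $\be_0\be\be_0$ to get $\de\in\I_n$ with $\rank(\de)=\rank(\be_0\be\be_0)<q$ and $\be_0\be\be_0=(\be_0\be\be_0)\de(\be_0\be\be_0)$. Then
\[
\be_0 \mathrel\zeta \be_0\be\be_0 = (\be_0\be\be_0)\,\de\,(\be_0\be\be_0) \mathrel\zeta \be_0\,\de\,\be_0,
\]
and now $\be_0\de\be_0\in\I_n$ (since $\I_n$ is closed under multiplication) with $\rank(\be_0\de\be_0)\leq\rank(\de)<q$. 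The pair $(\be_0,\be_0\de\be_0)$ is the one you need in $\xib\sm\overline{\xi^-}$. With this extra step your argument goes through; it is precisely the paper's Subcase~1.1.
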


\pf The ``only if'' part being clear, suppose $(\al,\be)\in\xi\sm\xi^-$.  We must show that $\xi\sub\cg\al\be$.  In fact, by Lemma \ref{lemma-aa2}, it is enough to show that $\xib\sub\cg\al\be$.

\bigskip\noindent {\bf Case 1.}  Suppose first that $\xi=R_q$ for some $q\geq2$, so that $\xi^-=R_{\S_q}$.  Renaming if necessary, we may assume that $\rank(\al)=q$.  We also have either $\rank(\be)<q$ or else $\rank(\be)=q$ but $(\al,\be)\not\in{\H}$, since the condition ``$\phi(\al,\be)\in N$'' is trivially satisfied when $N=\S_q$ (cf.~Definition \ref{Rel-nu} and Lemma \ref{lem:nu_phi}).  

\bigskip\noindent {\bf Subcase 1.1.}  Now suppose $\rank(\be)=p<q$.  By Lemma \ref{lemma-aa1}, there exists $\ga\in\I_n$ with $\rank(\ga)=q$ such that $\ga\al\ga=\ga$.  Since $\rank(\ga\be\ga)\leq\rank(\be)=p<q$, and since $(\ga\al\ga,\ga\be\ga)\in\cg\al\be$, we may assume without loss of generality that $\al\in\I_n$.  By Lemma \ref{lemma-aa1} again, there exists $\de\in\I_n$ with $\rank(\de)=p$ and $\be=\be\de\be$.  Writing ${\zeta}=\cg\al\be$, we then have
$\al \mathrel\zeta \be = \be\de\be \mathrel\zeta \al\de\al$.
But $\al\de\al\in\I_n$ and $\rank(\al\de\al)\leq\rank(\de)<q$, so $(\al,\al\de\al)\in\Rb_q\sm\overline{R}_{\S_q}$.  It then follows from Theorem \ref{thm:In} (and Remark \ref{rem:InOnPrinc}) that $\Rb_q=\cgI{\al}{\al\de\al}\sub{\zeta}=\cg\al\be$, as required.

\bigskip\noindent {\bf Subcase 1.2.}  Suppose $\rank(\be)=q$ and $(\al,\be)\not\in{\H}$.  Without loss of generality, we may assume that $(\al,\be)\not\in{\R}$.  By Lemma \ref{lemma-aa1b}(i), and renaming $\al,\be$ if necessary, there exists $\ga\in\P_n$ with $\rank(\ga\al)<q=\rank(\ga\be)$.  But then by Subcase 1.1, $\Rb_q=\cg{\ga\al}{\ga\be}\sub\cg\al\be$.

\bigskip\noindent {\bf Case 2.}  Suppose $\xi=R_N$ for some non-trivial normal subgroup $N$ of $\S_q$, where $q\geq3$.  Let $H$ be the largest normal subgroup of $\S_q$ properly contained in $N$, and note that $\xi^-=R_H$.  (For the case where $N$ is the smallest non-trivial normal subgroup of $\S_q$, recall that $R_{\{\id_q\}}=R_{q-1}$.)  Because $(\al,\be)\in R_N\sm R_H$, we have $\rank(\al)=\rank(\be)=q$, $\al\H\be$ and $\phi(\al,\be)\in N\sm H$.  Let $\al_1,\be_1,\ga_1,\de_1$ be as in Lemma \ref{lemma-aa1a}.  Then, from the conclusions of that lemma, it follows that $(\al_1,\be_1)\in(\overline{R}_N\sm\overline{R}_H)\cap\cg\al\be$.  It follows from Theorem~\ref{thm:In} (and Remark \ref{rem:InOnPrinc}) that $\Rb_N=\cgI{\al_1}{\be_1}\sub\cg\al\be$, completing the proof. \epf

Now that the proof of Theorem \ref{thm-CongPn} is complete, we conclude this section by noting that it is easy to derive a description of the lattice $\Cong^*(\P_n)$ of $\ast$-congruences on $\P_n$.  Indeed, to do this, we just need to check which of the congruences of $\P_n$ are also compatible with the unary operation $\alpha\mapsto\alpha^\ast$.  It turns out that most of them do, with the exception of the ``left/right'' congruences $\lambda_0,\lambda_1,\lambda_{\S_2},\rho_1,\rho_2,\rho_{\S_2}$.

\begin{cor}\label{cor:main_Pn}
For $n\geq 2$, $\Cong^\ast(\P_n)=\Cong(\P_n)\setminus\{\lambda_0,\lambda_1,\lambda_{\S_2},\rho_1,\rho_2,\rho_{\S_2}\}$.  Theorem \ref{thm-CongPn} describes the congruence lattice $\Cong(\P_n)$.  The Hasse diagram of $\Cong^*(\P_n)$ is shown in Figure \ref{fig-CongPn}.
\end{cor}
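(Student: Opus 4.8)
The plan is to lean entirely on Theorem~\ref{thm-CongPn}, which lists every congruence on $\P_n$, and then to test each one for compatibility with the anti-involution $*$, i.e.\ whether $(\al,\be)\in\xi\implies(\al^*,\be^*)\in\xi$. Since $\Cong^*(\P_n)$ is a sublattice of $\Cong(\P_n)$, once the non-$*$-invariant congruences are identified, the Hasse diagram of $\Cong^*(\P_n)$ is obtained by deleting the corresponding nodes from Figure~\ref{fig-CongPn} (left), which produces the right-hand diagram; so the whole corollary reduces to a finite, family-by-family check.

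First the Rees congruences: for every $q$ the ideal $I_q$ is $*$-closed because $\rank(\al^*)=\rank(\al)$, hence $R_q=\De\cup(I_q\times I_q)$ is a $*$-congruence. Next the relations $\nu_N$ (and thus $R_N=R_{q-1}\cup\nu_N$): a typical pair of $\nu_N$ has the form $(\al\,\overline\si\,\be,\al\,\overline\tau\,\be)$ with $\si,\tau\in N$ (Definition~\ref{Rel-nu}), and applying $*$ gives $(\be^*\,\overline\si^{\,*}\,\al^*,\be^*\,\overline\tau^{\,*}\,\al^*)$; since $\overline\si^{\,*}=\overline{\si^{-1}}$ and $N$ is closed under inverses, this pair again lies in $\nu_N$. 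So every $R_N$, and in particular $R_{\S_2}$, is a $*$-congruence. For the $\mu$-type congruences: $\mu_0=\De$ is trivially $*$-invariant; from Definition~\ref{defn:hat} one has $\widehat{\al^{\,*}}=(\widehat\al)^*$, so $\widehat\al=\widehat\be$ forces $\widehat{\al^{\,*}}=\widehat{\be^{\,*}}$ and $\mu_1$ is a $*$-congruence; and $\mu_{\S_2}=\mu_1\cup\big({\H}\cap(J_2\times J_2)\big)$ is then $*$-invariant as well, since $\H$ is preserved by $*$ (Remark~\ref{rem:green_sumbonoids}(i)).

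It remains to treat the ``left/right'' congruences $\lam_0,\lam_1,\lam_{\S_2}$ and their duals. Using $\ker(\al^*)=\coker(\al)$ and $\coker(\al^*)=\ker(\al)$ (equivalently $\gamma\L\delta\iff\gamma^*\R\delta^*$) together with $\widehat{\gamma^*}=(\widehat\gamma)^*$, one reads off directly from the membership criteria that $(\al,\be)\in\lam_I\iff(\al^*,\be^*)\in\rho_I$ for $I\in\{I_0,I_1\}$, and likewise $\lam_{\S_2}$ and $\rho_{\S_2}$ are swapped pairwise by $*$. Since $n\geq2$, the minimal ideal $I_0=J_0$ of $\P_n$ has at least two $\R$-classes and at least two $\L$-classes, so by Proposition~\ref{prop-CR4}(iii) (and monotonicity for the index-$1$ pairs) $\lam_0\neq\rho_0$, $\lam_1\neq\rho_1$ and $\lam_{\S_2}\neq\rho_{\S_2}$; hence none of these six relations is $*$-invariant, and they are precisely the congruences that must be removed. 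Assembling the three families gives the claimed equality $\Cong^*(\P_n)=\Cong(\P_n)\setminus\{\lam_0,\lam_1,\lam_{\S_2},\rho_0,\rho_1,\rho_{\S_2}\}$, and restricting Figure~\ref{fig-CongPn} gives the Hasse diagram of $\Cong^*(\P_n)$.

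There is no genuinely hard step: given Theorem~\ref{thm-CongPn} the argument is bookkeeping, and the only point needing care is confirming that within each dual pair the $\lam$ and $\rho$ congruences are actually distinct (so that failure of $*$-invariance is real rather than vacuous), which is exactly where the standing assumption $n\geq2$ is used.
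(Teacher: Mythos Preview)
Your proof is correct and follows essentially the same approach as the paper: check each family of congruences from Theorem~\ref{thm-CongPn} for $*$-compatibility, using $\rank(\al^*)=\rank(\al)$ for the Rees congruences, closure of $\nu_N$ under $*$ for the $R_N$, the identity $\widehat{\al^*}=(\widehat\al)^*$ for the $\mu$-congruences, and the $\L$/$\R$ swap under $*$ for the $\lambda$/$\rho$ congruences. You are slightly more careful than the paper in explicitly noting that $\lam_I\neq\rho_I$ is needed (and where $n\geq2$ enters), though this distinctness is already part of Theorem~\ref{thm-CongPn}(iv), so the paper simply takes it for granted.
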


\pf
By Remark \ref{rem:green_sumbonoids}(i), $\alpha\gJ\alpha^\ast$ for any $\al\in\P_n$, and hence the Rees congruences $R_q$ are $\ast$-compatible.
Furthermore, from Definition \ref{Rel-nu}, it easily follows that $\alpha\mapsto\alpha^\ast$ preserves the relation $\nu_N$ (for any normal subgroup $N$ of~$ \S_q$), whence the congruences $R_N$ are also $\ast$-compatible.
Next, note that the retraction $f$ from Definition~\ref{defn:hat} agrees with the involution, in the sense that $\alpha^\ast f=(\alpha f)^\ast$; therefore the relations $\mu_0,\mu_1,\mu_{\S_2}$ are $\ast$-compatible.
Finally, by  Remark \ref{rem:green_sumbonoids}(i), we have that $\alpha\mapsto\alpha^\ast$ maps $\R$-classes of $\P_n$ to $\L$-classes, and vice versa, and hence the congruences $\lambda_0,\lambda_1,\lambda_{\S_2},\rho_1,\rho_2,\rho_{\S_2}$ are not $\ast$-compatible.
\epf

\section{The partial Brauer monoid \boldmath{$\PB_n$}}\label{sect:PBn}

The goal of this section is to give a brief treatment of the congruence lattice of the partial Brauer monoid~$\PB_n$, which, recall, consists of all partitions with blocks of sizes $\leq 2$.  In a nutshell, the lattice is isomorphic to that of $\P_n$, and the proof is almost identical  to the one for $\P_n$ given in the previous section.

To verify this, we must first check that all the ingredients needed to define various congruences forming the lattice shown in Figure \ref{fig-CongPn} are present in the context of the $\PB_n$. Indeed, $\PB_n$ has $n+1$ ideals and $\gJ$-classes, $I_q$ and $J_q$ respectively, indexed by the available ranks $q=0,\dots, n$.  They give rise to the Rees congruences $R_q$ (Definition \ref{defn:Rees}).  Next, the map $\S_q\to\P_n:\si\mt\sib=\partpermIII1\cdots q{1\si}\cdots{q\si}$, discussed in Section \ref{sect:SnInOn}, maps into $\PB_n$, so for every normal subgroup~$N$ of $ \S_q$ for $q\geq1$, we have an IN-pair $(I_{q-1},\overline{N})$, giving rise to the congruence $R_N=R_{I_{q-1},\overline N}$ (Definition \ref{defn:RC}).

The next step is to note that the retraction $f:\P_n\to\P_n:\al\mt\alh$ of $\P_n$, given in Definition \ref{defn:hat}, maps $\PB_n$ into $\PB_n$.  Indeed, $\alh$ is obtained by ``breaking'' the transversals of $\al$ into two, thereby reducing their size, and leaving the non-transversals alone.  Thus, $I_1$ is retractable (and, as with $\P_n$, no larger ideal of $\PB_n$ is retractable).  Finally, we have three retractable IN-pairs $\C_0=(I_0,\{\overline{\id}_1\})$, $\C_1=(I_1,\{\overline{\id}_2\})$, $\C_{\S_2}=(I_1,\overline{\S}_2)$, giving rise to nine $\lambda,\rho,\mu$ congruences.

\begin{thm}\label{thm-CongPBn}
Let $n\geq2$, and let $\PB_n$ be the partial Brauer monoid of degree $n$.  
\bit
\itemit{i} The ideals of $\PB_n$ are the sets $I_q$, for $q=0,\ldots,n$, yielding the Rees congruences $R_q=R_{I_q}$, as in Definition \ref{defn:Rees}.
\itemit{ii} The proper IN-pairs of $\PB_n$ are of the form $(I_{q-1},\overline N)$, for $q=2,\ldots,n$ and for any non-trivial normal subgroup $N$ of $\S_q$, yielding the congruences $R_N=R_{I_{q-1},\overline N}$, as in Definition \ref{defn:RC}.
\itemit{iii} The retractable IN-pairs of $\PB_n$ are $\C_0=(I_0,\{\overline{\id}_1\})$, $\C_1=(I_1,\{\overline{\id}_2\})$ and $\C_{\S_2}=(I_1,\overline{\S}_2)$, yielding the congruences $\lambda_0$, $\rho_0$, $\mu_0$, $\lambda_1$, $\rho_1$, $\mu_1$, $\lambda_{\S_2}$, $\rho_{\S_2}$, $\mu_{\S_2}$, respectively, as in Definition \ref{defn:lrmC}.
\itemit{iv} The above congruences are distinct, and they exhaust all the congruences of $\PB_n$.
\itemit{v} The $\ast$-congruences are the same, but with $\lambda_0,\lambda_1,\lambda_{\S_2},\rho_0,\rho_1,\rho_{\S_2}$ excluded.
\itemit{vi} The Hasse diagrams of the congruence and $*$-congruence lattices $\Cong(\PB_n)$ and $\Cong^*(\PB_n)$ are shown in Figure \ref{fig-CongPn}.
\eit
\end{thm}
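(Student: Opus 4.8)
The plan is to re-run the argument of Section~\ref{sect:Pn} almost verbatim, with $\PB_n$ replacing $\P_n$ throughout, checking at each step that the auxiliary partitions used as left and right multipliers can be taken inside $\PB_n$. First I would assemble the infrastructure. By Proposition~\ref{prop:green_all_inclusive} and Remark~\ref{rem:green_sumbonoids}, $\PB_n$ has ideals and $\gJ$-classes $I_q,J_q$ for $q=0,\ldots,n$, with maximal subgroups $\overline{\S}_q$ in $J_q$ (the map $\si\mapsto\sib$ lands in $\PB_n$), giving the Rees congruences $R_q$ and, for every non-trivial normal subgroup $N$ of $\S_q$, the congruences $R_N=R_{I_{q-1},\overline N}$ of Definition~\ref{defn:RC}. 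The retraction $f\colon\al\mapsto\alh$ of Definition~\ref{defn:hat} maps $\PB_n$ into itself, since passing to $\alh$ only breaks the (size-$2$) transversal blocks of $\al$ into singletons and leaves the non-transversals alone, so all block sizes stay $\leq2$; hence $I_1$ is retractable. The proof of Lemma~\ref{ThreeTriples} then applies unchanged (all computations take place among partitions with blocks of size $\leq2$), so $\C_0=(I_0,\{\overline{\id}_1\})$, $\C_1=(I_1,\{\overline{\id}_2\})$ and $\C_{\S_2}=(I_1,\overline{\S}_2)$ are retractable IN-pairs, yielding the nine $\lambda,\rho,\mu$ congruences. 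That all these relations are congruences forming the lattice of Figure~\ref{fig-CongPn} follows from Proposition~\ref{prop-AreCongs} and Propositions~\ref{prop-CR2}--\ref{prop-CR5}, exactly as for $\P_n$; and, as there, the facts that $I_1$ is the largest retractable ideal and that there are no further retractable IN-pairs follow a posteriori from~(iv).

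By Lemma~\ref{lem:principal}, item~(iv) reduces to showing that every principal congruence on $\PB_n$ occurs in the list above, and this is done by reproducing, in order, the $\PB_n$-analogues of Lemmas~\ref{lem:mn} and~\ref{lem:nu_phi}, Propositions~\ref{prop:small_congruences} and~\ref{prop:joins2}, Lemmas~\ref{lemma-aa1}--\ref{lemma-aa1a} and~\ref{lemma-aa2}, and Proposition~\ref{prop-aa3}, so as to build the $\PB_n$ version of Table~\ref{PnCongGens}. These proofs work by multiplying a pair $(\al,\be)$ on the left and right by carefully chosen partitions; since a principal congruence on $\PB_n$ is generated using multipliers from $\PB_n$, each such multiplier must be checked to lie in $\PB_n$. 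Most do so automatically: every multiplier expressed in partial-permutation notation lies in $\I_n\sub\PB_n$, Lemma~\ref{lemma-aa1} already produces an element of $\I_n$, and the conjugators appearing in Lemma~\ref{lemma-aa1b} have all blocks of size $\leq2$. Liber's classification (Theorem~\ref{thm:In} and Remark~\ref{rem:InOnPrinc}) is invoked exactly as before, since $\I_n\sub\PB_n$.

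The one point requiring genuine care is the handful of places where the $\P_n$ argument (in Proposition~\ref{prop:joins2} and Lemma~\ref{lemma-aa1a}) multiplies by the idempotent ``projection'' $\ga=\partIII{A_0}{A_1}{\cdots}{A_r}{A_0}{A_1}{\cdots}{A_r}$, whose blocks $A_i\cup A_i'$ may have size up to $4$ and so need not belong to $\PB_n$. The remedy is to replace $\ga$ by the rank-$0$ partition with trivial kernel and cokernel equal to $\ker(\al)$ (and, where $\ga$ is applied on the right, the dual rank-$0$ partition with trivial cokernel and kernel $\coker(\al)$); these lie in $\PB_n$ precisely because every kernel and cokernel class of a partition in $\PB_n$ has size at most $2$. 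One then checks that this substitute does the same job: multiplying $\al$ and $\be$ on the left by it produces rank-$0$ partitions with trivial kernel and cokernels $\coker(\al)$ and $\coker(\be)$ respectively, which is exactly what the remainder of each argument needs (for instance, to conclude that the relevant pair lies in $\rho_0\sm\De$ and then to apply Proposition~\ref{prop:small_congruences}(i)). With this replacement made wherever it is needed, every step of Section~\ref{sect:Pn} goes through, establishing (i)--(iv), and hence~(vi) by reading off Figure~\ref{fig-CongPn}.

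Finally, for~(v) I would run the proof of Corollary~\ref{cor:main_Pn}: the involution $\al\mapsto\al^\ast$ preserves $\PB_n$; by Remark~\ref{rem:green_sumbonoids}(i) it preserves $\gJ$ and the relations $\nu_N$, so each $R_q$ and $R_N$ is a $\ast$-congruence; it commutes with the retraction $f$ (splitting a transversal and reflecting in the horizontal axis commute), so $\mu_0,\mu_1,\mu_{\S_2}$ are $\ast$-congruences; and it interchanges $\R$ with $\L$, hence $\lambda_i$ with $\rho_i$, so none of $\lambda_0,\lambda_1,\lambda_{\S_2},\rho_0,\rho_1,\rho_{\S_2}$ is $\ast$-compatible. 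This yields~(v), and the $\ast$-congruence lattice is the right-hand diagram of Figure~\ref{fig-CongPn}.
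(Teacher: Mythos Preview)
Your proposal follows the paper's approach essentially verbatim: rerun Section~\ref{sect:Pn} for $\PB_n$, checking at each step that the auxiliary multipliers stay in $\PB_n$. That is exactly what the paper does.

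However, your one ``genuine care'' point rests on a misreading of the tabular notation. The partition $\ga=\partIII{A_0}{A_1}{\cdots}{A_r}{A_0}{A_1}{\cdots}{A_r}$ used in Proposition~\ref{prop:joins2}(ii) has a horizontal line across \emph{all} columns: it is a rank-$0$ partition whose blocks are the individual sets $A_0,\ldots,A_r$ (upper) and $A_0',\ldots,A_r'$ (lower), not the unions $A_i\cup A_i'$. Since each $A_i$ is a kernel class of $\al\in\PB_n$, it has size $\leq2$, so $\ga\in\PB_n$ already. Likewise, in Proposition~\ref{prop:joins2}(iv) the rank-$1$ element $\ga=\partIV{A_0}{A_1}\cdots{A_r}{A_0}{A_1}\cdots{A_r}$ has transversal $A_0\cup A_0'$, but $|A_0|=1$ because $A_0$ is the upper half of a size-$\leq2$ transversal of $\al$; and in Lemma~\ref{lemma-aa1a} the transversals $A_i\cup A_i'$ of $\ga_2$ and $B_i\cup B_i'$ of $\de_2$ have size $2$ for the same reason. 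So none of these multipliers needs replacing, and your substitute (which would drop the rank to $0$) would in fact complicate the argument for~(iv), where one needs $\ga\al=\al$ to land in $\rho_1$ rather than merely $\rho_0$.

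The paper's only additional remark, which you do not make explicitly, is that Subcase~2.1 of Lemma~\ref{lemma-aa1b} simply cannot arise for $\PB_n$: there one assumes distinct $i,j\in A_1$ with $A_1$ the upper part of a transversal, but in $\PB_n$ such an $A_1$ is a singleton. Apart from this, the checks are indeed routine and your outline is correct.
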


The proof now proceeds in exactly the same way as the Proof of Theorem \ref{thm-CongPn}.  The reader only needs to be alert that every time a partition is constructed, it is necessary to verify that it belongs to $\PB_n$ (on the assumption, of course, that all the previous partitions came from this monoid too).

Thus, for example, in the proof of Lemma \ref{lem:mn}(i), given a partition $\al=\partIV{A_0}{A_1}\cdots{A_r}{B_0}{B_1}\cdots{B_s}$, four new partitions are constructed:
\[
\ga=\partIV{A_0}{A_1}\cdots{A_r}{1}{2}\cdots{n} \COMMA \de=\partIV{1}{2}\cdots{n}{B_0}{B_1}\cdots{B_s} \COMMA \si=\partpermI11 \COMMA \tau=\emptypartperm .
\]
Now, $\gamma$ has the same upper blocks as $\alpha$, and lower blocks of size $1$; similarly, $\delta$ has lower blocks equal to those of $\alpha$ and upper blocks of size $1$. Hence, if $\alpha\in\PB_n$, then $\gamma,\delta\in\PB_n$, and also $\sigma,\tau\in\I_n\subseteq \PB_n$.  Checks like this need to be performed throughout the proof, but they are no harder than the above example.
One last small thing perhaps worthy of mention is that in the proof of the analogue of Lemma \ref{lemma-aa1b}, Subcase 2.1 does not arise.  Indeed the situation there is that we are considering a partition $\alpha$ of the form $\partI{A_1}{A_q}{C_1}{C_r}{B_1}{B_q}{D_1}{D_s}$.  The assumption in Subcase 2.1 is that $i,j\in A_1$ and $i\neq j$. However, if $\alpha\in\PB_n$, then from $|A_1\cup B_1^\prime|\leq 2$ it follows that $|A_1|=|B_1|=1$.

\section{The planar partition monoid \boldmath{$\PP_n$} and the Motzkin monoid \boldmath{$\M_n$}}
\label{sect:PPnMn}

Recall that the planar partition monoid $\PP_n$ consists of all planar partitions,
and that the Motzkin monoid $\M_n=\PB_n\cap \PP_n$ consists of all planar partitions with blocks of sizes $\leq 2$.
In this section we describe the congruence lattices of these two monoids. The proof will follow
the model set up in Section \ref{sect:Pn}, but the following points should be noted before we begin.

The two congruence lattices $\Cong(\PP_n)$ and $\Cong(\M_n)$ turn out to be isomorphic, in a manner analogous to $\Cong(\P_n)$ and $\Cong(\PB_n)$. We will this time run the two proofs in parallel, concentrating on $\PP_n$, and making relevant remarks about $\M_n$ when necessary.

Unlike the situation with $\P_n$ and $\PB_n$, Green's $\H$ relation on $\PP_n$ is trivial, and hence all subgroups of $\PP_n$ are trivial. This has two consequences. Firstly, the role that was played by the symmetric inverse monoid $\I_n$
will now be played by $\I_n\cap \PP_n=\O_n$, the monoid of order-preserving partial permutations of~$\bn$. 
Thus, in this section, for a congruence $\xi\in\Cong(\PP_n)$ we will write $\overline{\xi}=\xi\cap(\O_n\times\O_n)$ for its restriction to $\O_n$.
And, secondly, there will be no congruences of the type $R_N,\lambda_N,\rho_N,\mu_N$, for non-trivial normal subgroups $N$ of~$\S_q$.

Yet again, for $q=0,\dots,n$, we have ideals $I_q$, $\gJ$-classes $J_q$, and Rees congruences $R_q$ (Definition \ref{defn:Rees}).
The first main step is to verify that our familiar retraction $f:\alpha\mapsto\widehat{\alpha}$ remains valid in the context of $\PP_n$ (see Corollary~\ref{cor:retract_planar}).  In order to do that, and for later use, it will be convenient to discuss a particular canonical graphical representation of a planar partition $\al\in\PP_n$.

Let $A=\{a_1,\ldots,a_k\}$ and $B=\{b_1,\ldots,b_l\}$ be (possibly equal) nonempty subsets of $\bn$ with $a_1<\cdots<a_k$ and $b_1<\cdots<b_l$.  We define three graphs $\Ga_A$, $\Ga_{B'}$ and $\Ga_{A\cup B'}$ as follows.  All three graphs have vertex set~$\bn\cup\bn'$, and the edge sets are given by
\begin{gather*}
E(\Ga_A)=\big\{\{a_1,a_2\},\ldots,\{a_{k-1},a_k\}\big\} \COMMA
E(\Ga_{B'})=\big\{\{b_1',b_2'\},\ldots,\{b_{l-1}',b_l'\}\big\} ,\\
E(\Ga_{A\cup B'}) = E(\Ga_A)\cup E(\Ga_{B'})\cup\big\{\{a_1,b_1'\},\{a_k,b_k'\}\big\}.
\end{gather*}
If $\al\in\P_n$ is an arbitrary partition, we define the \emph{canonical graph of $\al$} to be the graph $\Ga_\al$ with vertex set~$\bn\cup\bn'$ and edge set $E(\Ga_\al)=\bigcup_{X\in\al}E(\Ga_X)$, where the union is over all blocks $X$ of $\al$.  As an example, the canonical graph of 
$
\al=
\Big( 
{ \scriptsize \renewcommand*{\arraystretch}{1}
\begin{array} {\c|\c|\c|\c|\c|\cend}
1,4 \:&\: 7 \:&\: 2,3 \:&\: 5 \:&\: 6 \:&\: 8  \\ \cline{3-6}
1,2,4 \:&\: 8 \:&\: 3 \:&  \multicolumn{3}{c}{5,6,7}
\rule[0mm]{0mm}{2.7mm}
\end{array} 
}
\hspace{-1.5 truemm} \Big)
\in\PP_8
$
is given in Figure \ref{fig:PmJ2m} below (in black).

Let $A,B$ be nonempty subsets of $\bn$ as in the previous paragraph.  We say that $A$ and $B$ are \emph{separated} if $a_k<b_1$ or $b_l<a_1$; in these cases, we write $A<B$ or $B<A$, respectively.  We say that $A$ is \emph{nested} by $B$ if there exists some $1\leq i<l$ such that $b_i<a_1$ and $a_k<b_{i+1}$; we say that $A$ and $B$ are \emph{nested} if $A$ is nested by $B$ or vice versa.

\begin{lemma}\label{lem:nested_or_separated}
Let $\al=\partABCD\in\PP_n$, with $\min(A_1)<\cdots<\min(A_q)$.  Then
\bit
\itemit{i} $A_1<\cdots<A_q$ and $B_1<\cdots<B_q$,
\itemit{ii} for all $1\leq i<j\leq r$, $C_i$ and $C_j$ are either nested or separated,
\itemit{iii} for all $1\leq i<j\leq s$, $D_i$ and $D_j$ are either nested or separated,
\itemit{iv} for all $1\leq i\leq q$ and $1\leq j\leq r$, either $A_i$ and $C_j$ are separated or else $C_j$ is nested by $A_i$, 
\itemit{v} for all $1\leq i\leq q$ and $1\leq j\leq s$, either $B_i$ and $D_j$ are separated or else $D_j$ is nested by $B_i$.
\eit
Consequently, the canonical graph $\Ga_\al$ may be drawn in planar fashion (within the rectangle determined by the vertices).
\end{lemma}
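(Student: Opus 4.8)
The plan is to fix once and for all a drawing $D$ of $\al$ inside the rectangle with no edge crossings --- one exists by the definition of $\PP_n$ --- and to reduce everything to a single combinatorial fact about it. List the vertices of $\bn\cup\bn'$ in the cyclic order $1,2,\dots,n,n',(n-1)',\dots,1'$ in which they occur around the boundary of the rectangle. The key claim is that \emph{no two distinct blocks of $\al$ interleave}: there are no four vertices occurring in this cyclic order as $w\prec x\prec y\prec z$ with $w,y$ in one block and $x,z$ in another. This is the standard description of planar partitions as the non-crossing partitions of the boundary cycle; it may be quoted from the literature, or obtained from the Jordan curve theorem, since a path of $D$ joining $w$ and $y$ within their block divides the rectangle into two regions, one having $x$ on its boundary and the other containing $z$, so a path of $D$ joining $x$ and $z$ within the other block would have to cross it, contradicting planarity of $D$.

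Granting this, each of (i)--(v) follows by supposing the asserted dichotomy fails and producing an interleaving quadruple. For $A_1<\dots<A_q$: if $A_i\not<A_j$ for some $i<j$ then, since $\min(A_i)<\min(A_j)$, the top-row vertices $\min(A_i),\min(A_j),\max(A_i)$ of blocks $I,J,I$ occur in that cyclic order, and a fourth vertex $\beta'$ with $\beta\in B_j$ (non-empty, as block $J$ is a transversal) completes an interleaving pattern between the two transversal blocks, whichever way the bottom row is traversed. Then $B_1<\dots<B_q$ follows the same way, now using that $A_i<A_j$ to order $\min(A_i),\min(A_j)$ on the top and reading $\beta_i',\beta_j'$ ($\beta_i\in B_i$, $\beta_j\in B_j$) along the reversed bottom. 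For (ii): if $C_i,C_j$ are neither separated nor nested then, assuming $\min(C_i)<\min(C_j)$, failure of separation forces an element of $C_i$ to the right of $\min(C_j)$, and failure of nesting then forces $\max(C_j)$ to lie to the right of the first such element, giving four top-row vertices in the pattern $C_i,C_j,C_i,C_j$. Statement (iv) is the same argument with $A_i$ in place of $C_i$, except one extra possibility must be ruled out: $A_i$ cannot be \emph{nested by} $C_j$, for if $A_i$ lay between consecutive elements $c<c'$ of $C_j$ then $c$, any $a\in A_i$, $c'$, and any $\beta'$ with $\beta\in B_i$ occur in that cyclic order with blocks $C_j$, transversal, $C_j$, transversal --- an interleaving, precisely because the transversal block reaches the bottom row. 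Hence the only surviving options in (iv) are ``$A_i$ separated from $C_j$'' and ``$C_j$ nested by $A_i$''. Finally (iii) and (v) are (ii) and (iv) applied to $\al^\ast\in\PP_n$, the involution interchanging upper and lower non-transversals.

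For the final clause I would build a planar drawing of $\Ga_\al$ inside the rectangle directly from (i)--(v). By (i), (ii) and (iv) the restriction of $\al$ to $\bn$ is a non-crossing set partition with parts $A_1,\dots,A_q,C_1,\dots,C_r$, so the arcs of its standard arc diagram --- which are exactly the edges of the graphs $\Ga_{A_i}$ and $\Ga_{C_j}$ --- may be drawn pairwise disjoint in the top half of the rectangle, each $\Ga_{C_j}$ with $C_j$ nested by $A_i$ sitting inside the relevant arc of $\Ga_{A_i}$. Dually, using (i), (iii), (v), the edges of the $\Ga_{B_i}$ and $\Ga_{D_j}$ fit disjointly into the bottom half. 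The remaining $2q$ edges --- joining the first and last vertices of each $A_i$ to the first and last of the corresponding $B_i$ --- can then be drawn without crossings in the strip between, because by (i) the vertices $\min(A_i),\max(A_i)$ and $\min(B_i),\max(B_i)$ occur in the same left-to-right order as $i$ increases.

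I expect the real work to lie in the middle step: no idea is deep, but turning each failed dichotomy into an explicit alternating quadruple is delicate, one must stay consistently aware that the bottom row is read in reverse along the boundary cycle, and one must not overlook the ``$A_i$ nested by $C_j$'' case in (iv) (and its dual in (v)). Establishing the interleaving principle rigorously also rests on a topological fact that the write-up may prefer simply to cite, and the closing arc-diagram construction, while routine, still needs a careful picture of where the transversal edges sit relative to the nested non-transversal arcs.
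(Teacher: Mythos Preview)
Your argument is correct and follows essentially the same route as the paper: both fix a planar drawing and derive (i)--(v) by exhibiting a crossing whenever the stated dichotomy fails. The paper phrases this as ``identify paths in $\Gamma$ that would cross'', proves only (i) in detail, and leaves (ii)--(v) to the reader; you instead abstract the crossing obstruction into a single non-crossing principle on the boundary cycle (no block-interleaving quadruple) and then specialise it five times. This is the same idea, just packaged more uniformly, and your explicit treatment of the extra case in (iv) --- that $A_i$ cannot be nested by $C_j$ because the transversal reaches the bottom row --- is exactly the sort of detail the paper suppresses. Your construction of the planar drawing of $\Gamma_\al$ is also more explicit than the paper's one-line ``clearly follows from (i)--(v)''.
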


\pf Clearly the last assertion follows from items (i)--(v).  Since $\al\in\PP_n$, it is represented by some planar graph, say $\Ga$.
The proofs of (i)--(v) are similar; each is proved by identifying paths in
$\Ga$ that would cross if the desired conclusion did not hold.  We just prove
(i), and leave the rest to the reader.  Suppose $1\leq i<q$, and write $a=\min(A_i)$, $b=\max(B_i)$, $c=\min(A_{i+1})$ and $d=\min(B_{i+1})$.  In $\Ga$, there is a path~$\pi_{ab}$ from $a$ to $b'$, and a path $\pi_{cd}$ from $c$ to $d'$.  By assumption, $a<c$.  If $d<b$, then the paths $\pi_{ab}$ and $\pi_{cd}$ would have to intersect.  Thus, $b<d$: i.e., $B_i<B_{i+1}$, giving the second claim in (i).  Since this clearly implies $\min(B_1)<\cdots<\min(B_q)$, the rest of the first claim follows upon applying the above argument to $\al^*=\partI{B_1}{B_q}{D_1}{D_s}{A_1}{A_q}{C_1}{C_r}$. \epf

For any $\al\in\PP_n$, the canonical graph of $\alh$ is obtained from that of $\al$ by deleting the transverse edges, so the final assertion of Lemma \ref{lem:nested_or_separated} immediately gives the following.

\begin{cor}
\label{cor:retract_planar}
The map $\al\mt\alh$ from Definition \ref{defn:hat} maps $\PP_n$ into $\PP_n$. \epfres
\end{cor}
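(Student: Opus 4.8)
The plan is to produce a planar graphical representation of $\alh$ directly from one of $\al$, by erasing edges. Write $\al=\partABCD$ as usual. By Definition \ref{defn:hat}, $\alh$ is the rank-$0$ partition whose blocks are $A_1,\dots,A_q,C_1,\dots,C_r$ together with $B_1',\dots,B_q',D_1',\dots,D_s'$; in other words, $\alh$ is obtained from $\al$ by breaking each transversal $A_i\cup B_i'$ into its upper part $A_i$ and its lower part $B_i'$, and leaving each non-transversal untouched.

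First I would compare the canonical graphs $\Ga_\al$ and $\Ga_{\alh}$ (the canonical graphs being those defined just before Lemma \ref{lem:nested_or_separated}). Straight from the definition of the canonical graph, $E(\Ga_{\alh})$ is the union of the path-edge sets $E(\Ga_{A_i})$, $E(\Ga_{C_j})$, $E(\Ga_{B_i'})$, $E(\Ga_{D_k'})$, whereas $E(\Ga_\al)$ is the union of the $E(\Ga_{C_j})$, the $E(\Ga_{D_k'})$, and the sets $E(\Ga_{A_i\cup B_i'})=E(\Ga_{A_i})\cup E(\Ga_{B_i'})\cup\{\text{the two transverse edges of }\Ga_{A_i\cup B_i'}\}$. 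Hence $\Ga_{\alh}$ is exactly the subgraph of $\Ga_\al$ obtained by deleting every edge that joins a vertex of $\bn$ to a vertex of $\bn'$. I would then record the routine bookkeeping point that deleting these edges splits each component $A_i\cup B_i'$ of $\Ga_\al$ into the two separate components $A_i$ and $B_i'$ and changes nothing else, so that the connected components of $\Ga_{\alh}$ are precisely the blocks of $\alh$; that is, $\Ga_{\alh}$ genuinely represents $\alh$ as a partition.

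Finally I would invoke the last assertion of Lemma \ref{lem:nested_or_separated}, which says $\Ga_\al$ admits a crossing-free drawing within the rectangle spanned by the vertices. Erasing edges from such a drawing leaves a crossing-free drawing of the remaining graph, so $\Ga_{\alh}$ has a planar drawing; therefore $\alh$ has a planar graphical representation, i.e.\ $\alh\in\PP_n$, as claimed. I do not anticipate any real obstacle: the only point needing care is the verification in the previous paragraph that erasing exactly the transverse edges recovers $\alh$ (and not some coarser or finer partition), and this is immediate once one unwinds the definitions of $\alh$ and of the canonical graph.
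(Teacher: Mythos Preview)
Your proposal is correct and follows essentially the same approach as the paper: the paper's one-line proof observes that the canonical graph of $\alh$ is obtained from that of $\al$ by deleting the transverse edges, and then appeals to the final assertion of Lemma~\ref{lem:nested_or_separated}. Your argument is simply a more explicit unpacking of that same idea, spelling out the edge-set comparison and the bookkeeping that the paper leaves implicit.
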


So, as in the case of $\P_n$, the map $I_1\to I_0:\al\mt\alh$ is a retraction (and, as with $\P_n$, no larger ideal of $\PP_n$ is retractable).
This time, however, due to the fact that the maximal subgroup corresponding to $J_2$ is trivial, rather than $\S_2$, we only have two retractable IN-pairs: namely,
$\C_0=(I_0,\{\overline{\id}_1\})$,
$\C_1=(I_1,\{\overline{\id}_2\})$.
They give rise to six congruences (Definition \ref{defn:lrmC}), which we abbreviate to $\lambda_0,\rho_0,\mu_0(=\Delta),\lambda_1,\rho_1,\mu_1$.

\begin{thm}\label{thm-CongPPn}
Let $n\geq2$, and let $\PP_n$ (resp., $\M_n$) be the planar partition monoid (resp., Motzkin monoid) of degree~$n$, respectively.  
\bit
\itemit{i} The ideals of $\PP_n$ (resp., $\M_n$) are the sets $I_q$, for $q=0,\ldots,n$, yielding the Rees congruences $R_q=R_{I_q}$, as in Definition \ref{defn:Rees}.
\itemit{ii} The retractable IN-pairs of $\PP_n$ (resp., $\M_n$) are $\C_0=(I_0,\{\overline{\id}_1\})$ and $\C_1=(I_1,\{\overline{\id}_2\})$, yielding the congruences $\lambda_0$, $\rho_0$, $\mu_0$, $\lambda_1$, $\rho_1$, $\mu_1$, respectively, as in Definition \ref{defn:lrmC}.
\itemit{iii} The above congruences are distinct, and they exhaust all the congruences of $\PP_n$ (resp., $\M_n$).
\itemit{iv} The $\ast$-congruences of $\PP_n$ (resp., $\M_n$) are the same, but with $\lambda_0,\lambda_1,\rho_0,\rho_1$ excluded.
\itemit{v} The Hasse diagrams of the congruence and $*$-congruence lattices of $\PP_n$ (resp., $\M_n$) are shown in Figure~\ref{fig-CongPPn}.
\eit
\end{thm}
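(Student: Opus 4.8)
The plan is to follow the blueprint set out in Section~\ref{sect:Pn} for $\P_n$, with the simplifications afforded by the $\H$-triviality of $\PP_n$ and $\M_n$. Most of the structural scaffolding is already in place: in both monoids we have the chain of ideals $I_q$ ($q=0,\dots,n$) giving Rees congruences $R_q$, the retraction $f:I_1\to I_0:\al\mapsto\alh$ (valid by Corollary~\ref{cor:retract_planar}, and its evident analogue for $\M_n$ since breaking transversals cannot increase block sizes), and the two retractable IN-pairs $\C_0=(I_0,\{\idb_1\})$ and $\C_1=(I_1,\{\idb_2\})$. Proposition~\ref{prop-AreCongs} guarantees that the listed relations are congruences, and Propositions~\ref{prop-CR2}--\ref{prop-CR5} show they form the lattice in Figure~\ref{fig-CongPPn} (the minimal ideal $I_0$ of $\PP_n$ is a rectangular band with at least two $\R$- and two $\L$-classes for $n\geq2$, so the ``diamond'' $\De=\mu_0\subsetneq\lam_0,\rho_0\subsetneq R_0$ is genuine; and the $\H$-triviality collapses $\mu_i=\eta_i$ and removes the $R_N,\lambda_N$, etc.). So the entire content of part~(iii) is the assertion that there are \emph{no further} congruences, which by Lemma~\ref{lem:principal} reduces to identifying the generating pairs of every principal congruence.

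The principal-congruence analysis should mirror Propositions~\ref{prop:small_congruences}, \ref{prop:joins2}, \ref{lemma-aa1}--\ref{lemma-aa2} and~\ref{prop-aa3}, but with $\I_n$ replaced throughout by $\O_n=\I_n\cap\PP_n$ and with the $\I_n$-input Theorem~\ref{thm:In} replaced by Fernandes' Theorem~\ref{thm:On}. Concretely: first prove the analogue of Lemma~\ref{lem:mn}, namely that $\Rb_1\sub\xi$ forces $\mu_1\sub\xi$, where $\Rb_1$ is now the $q=1$ Rees congruence on $\O_n$; here one must be careful that the elements $\ga,\de,\si,\tau$ constructed in that proof can be chosen order-preserving. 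In $\O_n$ this is automatic once one notes that the domain/codomain singletons and the ``collapse'' maps involved are order-preserving (the key point being Remark~\ref{rem:gs:item4}: $\H$ is trivial, so there is no permutation freedom to worry about). Then redo Proposition~\ref{prop:small_congruences}(i)--(iii) — the descending induction on the number of cokernel-classes for $\rho_0$, its dual $\lam_0$, and the $\mu_1$ argument — checking at each step that the auxiliary partitions ($\tau$, $\si$, $\de$, etc.) lie in $\PP_n$ (resp.\ $\M_n$). Part~(iv) of Proposition~\ref{prop:small_congruences} disappears entirely since there is no $\mu_{\S_2}$. Next, Proposition~\ref{prop:joins2}(i)--(iv) carries over verbatim modulo planarity checks on the idempotents $\ga$ used to reduce kernels/cokernels. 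Finally, for the ``upper'' congruences — which here are just the chain $R_1\subsetneq R_2\subsetneq\cdots\subsetneq R_n=\nabla$, with no normal-subgroup interpolants — one proves the analogue of Lemma~\ref{lemma-aa2} ($\xi=\xib^\sharp$ using $\xib=\xi\cap(\O_n\times\O_n)$ and Theorem~\ref{thm:On}) via the analogue of Lemma~\ref{lemma-aa1}: for every $\al\in\PP_n$ there is an order-preserving partial permutation $\be\in\O_n$ with $\rank(\be)=\rank(\al)$, $\al=\al\be\al$ and $\be=\be\al\be$ — take $b_i=\min(B_i)$, $a_i=\min(A_i)$ in each transversal, which by Lemma~\ref{lem:nested_or_separated}(i) makes $\be$ order-preserving. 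Then Proposition~\ref{prop-aa3} goes through with Subcase~1.2 replaced by a single argument using Lemma~\ref{lemma-aa1b}(i),(ii) (which hold for planar partitions; Subcase~2.1 of its proof may be vacuous for $\M_n$ just as for $\PB_n$). For $\M_n$ one runs the same arguments but inspects each constructed partition for block sizes $\leq2$ as well as planarity, exactly as in the transition from $\P_n$ to $\PB_n$ in Section~\ref{sect:PBn}.

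Part~(iv), the $\ast$-congruence statement, follows from the fact that $\al\mapsto\al^\ast$ interchanges $\R$ and $\L$ (Remark~\ref{rem:green_sumbonoids}(i)), fixes $\gJ$-classes, and commutes with the retraction $f$ (since $\alh^\ast=\widehat{\al^\ast}$): hence $R_q$ and $\mu_i$ are $\ast$-compatible but $\lam_0,\lam_1,\rho_0,\rho_1$ are not, precisely as in Corollary~\ref{cor:main_Pn}. Part~(v) is just the observation that the resulting Hasse diagram is the one drawn in Figure~\ref{fig-CongPPn}.

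The main obstacle is not conceptual but bookkeeping: in every place where the $\P_n$-proof writes down an auxiliary partition (the $\ga,\de,\si,\tau$'s, and especially the $\de$'s with multiple nontrivial blocks in the cases (a)--(f) of Proposition~\ref{prop:joins2}), one must verify that the planar (and, for $\M_n$, block-size-$\leq2$) version of that partition still does the required job. The real danger is that a construction in the $\P_n$-proof exploits a non-planar rearrangement of blocks; the remedy, as Lemma~\ref{lem:nested_or_separated} makes possible, is to always choose the order-preserving representative — ordering the blocks by their minima and matching $\min(A_i)$ with $\min(B_i)$ — so that all idempotents used to ``collapse'' kernels or cokernels are themselves planar. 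I expect this to cost a few extra sentences per lemma but no new ideas. The one genuinely new verification, worth stating as a lemma, is the $\O_n$-version of Lemma~\ref{lemma-aa1} together with the remark that $\O_n$ meets every $\gJ$-class of $\PP_n$ in a way that makes Theorem~\ref{thm:On} applicable as the base case — and this is immediate from Lemma~\ref{lem:nested_or_separated}(i).
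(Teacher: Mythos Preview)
Your proposal is correct and follows essentially the same approach as the paper: rerun the $\P_n$ argument from Section~\ref{sect:Pn} with $\I_n$ replaced by $\O_n$ and Theorem~\ref{thm:In} by Theorem~\ref{thm:On}, checking at each step that the auxiliary partitions constructed are planar (and, for $\M_n$, have blocks of size $\leq2$). The paper organises this exactly as you outline, stating the $\PP_n$-analogues of Lemmas~\ref{lem:mn}, \ref{lemma-aa1}, \ref{lemma-aa1b}, \ref{lemma-aa2} and Propositions~\ref{prop:small_congruences}, \ref{prop:joins2}, \ref{prop-aa3}, and indicating only the places where a planarity fix is needed --- precisely the bookkeeping obstacle you identified.
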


\pf
As in the proof of Theorem \ref{thm-CongPn}, the
listed relations are congruences by Proposition \ref{prop-AreCongs}, and they form the lattice depicted in Figure \ref{fig-CongPn} by Propositions \ref{prop-CR2}--\ref{prop-CR5}.
The proof that there are no further congruences is obtained by describing all the principal congruences, and verifying that they are precisely those listed (see Table \ref{PPnCongGens}); 
this is the content of the remainder of this section. 
The assertions concerning the $\ast$-congruences are immediate consequences, exactly as in the proof of Corollary \ref{cor:main_Pn}.
\epf

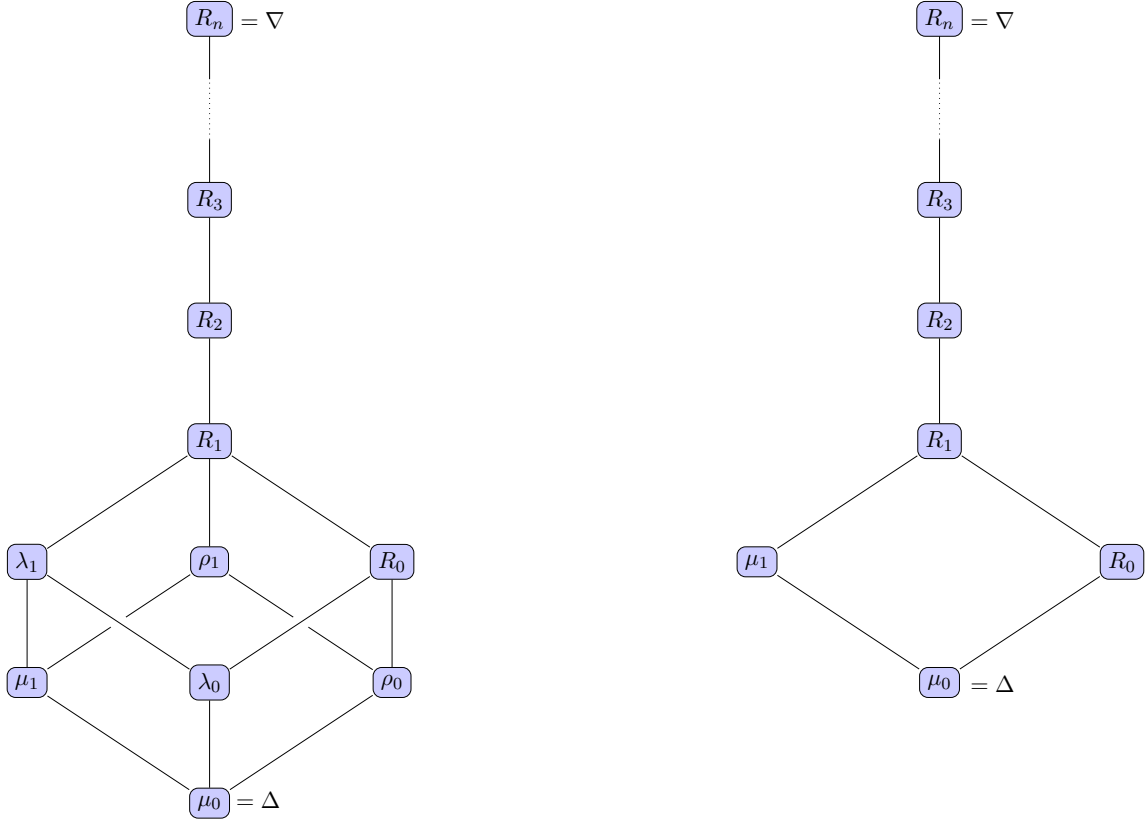
\begin{figure}[ht]
\begin{center}
\scalebox{0.8}{
\begin{tikzpicture}[scale=1]
\dottedinterval0{12}11
\node[rounded corners,rectangle,draw,fill=blue!20] (N) at (0,15) {$R_n$};
  \draw(0.87,14.98) node {$=\nabla$};
\node[rounded corners,rectangle,draw,fill=blue!20] (R3) at (0,12) {$R_3$};
\node[rounded corners,rectangle,draw,fill=blue!20] (R2) at (0,10) {$R_2$};
\node[rounded corners,rectangle,draw,fill=blue!20] (R1) at (0,8) {$R_{1}$};
\node[rounded corners,rectangle,draw,fill=blue!20] (l1) at (-3,6) {$\lambda_{1}$};
\node[rounded corners,rectangle,draw,fill=blue!20] (r1) at (0,6) {$\rho_1$};
\node[rounded corners,rectangle,draw,fill=blue!20] (R0) at (3,6) {$R_0$};
\node[rounded corners,rectangle,draw,fill=blue!20] (m1) at (-3,4) {$\mu_1$};
\node[rounded corners,rectangle,draw,fill=blue!20] (l0) at (0,4) {$\lambda_0$};
\node[rounded corners,rectangle,draw,fill=blue!20] (r0) at (3,4) {$\rho_0$};
\node[rounded corners,rectangle,draw,fill=blue!20] (m0) at (0,2) {$\mu_0$};
   \draw(0.8,2.05) node {$=\Delta$};
\draw
(m0)--(r0) (m1)--(r1)
(m0)--(m1) (r0)--(r1) (R0)--(R1)
(m0)--(l0) (m1)--(l1)
(r0)--(R0) (r1)--(R1)
(R1)--(R2)
(R2)--(R3)
;
\fill[white] (-1.5,5)circle(.15);
\fill[white] (1.5,5)circle(.15);
\draw
(l0)--(R0) (l1)--(R1) (l0)--(l1) 
;
\begin{scope}[shift={(12,0)}]
\dottedinterval0{12}11
\node[rounded corners,rectangle,draw,fill=blue!20] (A3) at (0,12) {$R_3$};
\node[rounded corners,rectangle,draw,fill=blue!20] (N) at (0,15) {$R_n$};
  \draw(0.87,14.98) node {$=\nabla$};
\node[rounded corners,rectangle,draw,fill=blue!20] (mrl) at (3,6) {$R_0$};
\node[rounded corners,rectangle,draw,fill=blue!20] (m) at (0,4) {$\mu_0$};
 \draw(0.87,3.98) node {$=\Delta$};
\node[rounded corners,rectangle,draw,fill=blue!20] (krl) at (0,8) {$R_1$};
\node[rounded corners,rectangle,draw,fill=blue!20] (k) at (-3,6) {$\mu_1$};
\node[rounded corners,rectangle,draw,fill=blue!20] (R2) at (0,10) {$R_2$};
\draw
(m)--(k) 
(k)--(krl) (m)--(mrl)
(mrl)--(krl)
(krl)--(R2)
(R2)--(A3)
;
\end{scope}
\end{tikzpicture}
}
\vspace{-3mm}
\caption{
Hasse diagrams of $\Cong(\PP_n)$ and $\Cong(\M_n)$ (left), and of
$\Cong^\ast(\PP_n)$ and $\Cong^\ast(\M_n)$ (right).
All congruences are principal.  
}
\label{fig-CongPPn}
\end{center}
\end{figure}

\begin{table}[ht]
\begin{center}
\begin{tabular}{|r|l|c|l|} \hline
\multicolumn{1}{|c|}{\boldmath{$q$}} & \textbf{Additional conditions} & \boldmath{$(\alpha,\beta)^\sharp$} & \textbf{Reference} \\ \hline\hline
&$\alpha=\beta$ & $\Delta$ & \\ \hline
0& $(\alpha,\beta)\in{\R}\sm\Delta$ &
$\rho_0$ & Proposition \ref{prop:small_congruences:PPn}(i)\\ \hline
0& $(\alpha,\beta)\in{\L}\sm\Delta$ &
$\lambda_0$ & Proposition \ref{prop:small_congruences:PPn}(ii)\\ \hline
0& $(\alpha,\beta)\not\in{\L}\cup{\R}$ &
$R_0$ & Proposition \ref{prop:joins2:PPn}(i)\\ \hline
1& $\alh=\beh$, $\al\not=\be$ &
$\mu_1$ & Proposition \ref{prop:small_congruences:PPn}(iii)\\ \hline
1& $(\widehat{\alpha},\widehat{\beta})\in{\R}\sm\De$ &
$\rho_1$ & Proposition \ref{prop:joins2:PPn}(ii)\\ \hline
1& $(\widehat{\alpha},\widehat{\beta})\in{\L}\sm\De$ &
$\lambda_1$ & Proposition \ref{prop:joins2:PPn}(iii)\\ \hline
1& $(\widehat{\alpha},\widehat{\beta})\not\in{\L}\cup{\R}$ &
$R_1$ & Proposition \ref{prop:joins2:PPn}(iv)\\ \hline
$\geq2$&  $\al\not=\be$ & $R_q$ & Proposition \ref{prop-aa3:PPn}\\ \hline
\end{tabular}
\caption{The principal congruences $(\alpha,\beta)^\sharp$ on $\PP_n$, 
with $q=\rank(\alpha)\geq\rank(\beta)$.}
\label{PPnCongGens}
\end{center}
\end{table}

We now embark on the task of describing all the principal congruences of $\PP_n$ and $\M_n$.
The proof will be couched in terms of $\PP_n$; to obtain a proof for $\M_n$ the reader should check in relevant places that various partitions belong to $\M_n$ (provided that those on which they depend do as well). These checks are trivial, and will not be explicitly indicated.
The sequence of lemmas follows a similar pattern as we had in Section \ref{sect:Pn} (minus the lemmas dealing with the congruences $R_N,\lambda_N,\rho_N$, $\mu_N$ that no longer exist in our present set-up).
We will state all the lemmas, together with the references to their counterparts from Section~\ref{sect:Pn}, but we will not rehearse all the proof details; instead we will point at the instances where planarity of partitions plays a role.
The references to Liber's Theorem \ref{thm:In} should be replaced by those to Fernandes' Theorem \ref{thm:On}.

\begin{lemma}[cf.~Lemma \ref{lem:mn}(i)]
\label{lem:mn:PPn}
Let $\xi\in\Cong(\PP_n)$.
If $\Rb_1\sub\xi$, then $\mu_1\sub\xi$.  Thus, $\mu_1=\Rb_1^\sharp$.
\end{lemma}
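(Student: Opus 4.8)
The plan is to follow the blueprint of the proof of Lemma~\ref{lem:mn}(i). Writing $f\colon I_1\to I_0$ for the retraction $\al\mapsto\alh$ (which maps $\PP_n$ into itself by Corollary~\ref{cor:retract_planar}), the inclusion $\mu_1\sub\xi$ will follow once we show that $(\al,\alh)\in\xi$ for every $\al\in\PP_n$ with $\rank(\al)\le 1$: indeed, if $(\al,\be)\in\mu_1\sm\De$ then $\al,\be\in I_1$ and $\alh=\beh$, so $\al\mathrel\xi\alh=\beh\mathrel\xi\be$. The case $\rank(\al)=0$ is immediate since then $\al=\alh$, so it remains to treat $\rank(\al)=1$. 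Having done this, the displayed identity $\mu_1=\Rb_1^\sharp$ drops out: applying the above with $\xi=\Rb_1^\sharp$ gives $\mu_1\sub\Rb_1^\sharp$, and conversely $\Rb_1\sub\mu_1$ (every element of $\O_n$ has trivial kernel and cokernel, hence lies in a single $\mu_1$-class with $\emptypartperm$), which together with the fact that $\mu_1$ is a congruence gives $\Rb_1^\sharp\sub\mu_1$.

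The only point at which the $\P_n$ argument does not transfer verbatim is the explicit factorisation used there: the auxiliary partitions written down in the proof of Lemma~\ref{lem:mn}(i) need not be planar. I would get around this by producing a factorisation built entirely from partitions that are manifestly in $\PP_n$. Fix $\al\in\PP_n$ with $\rank(\al)=1$ and unique transversal block $A\cup B'$, pick $a\in A$ and $b\in B$, and let $\be=\partpermI{b}{a}\in\O_n$ (a rank-$1$ partial permutation, hence automatically order-preserving and planar); one checks readily that $\al=\al\be\al$. Now put $\ga=\al\be$, $\de=\be\al$, $\si=\be^\ast$ and $\tau=\emptypartperm$. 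All four of $\ga,\de,\si,\tau$ lie in $\PP_n$ (products of planar partitions, and elements of $\O_n$, respectively), and $(\si,\tau)\in\Rb_1\sub\xi$. Using associativity and the inverse-semigroup identity $\be\be^\ast\be=\be$ in $\I_n$,
\[
\ga\si\de=(\al\be)\be^\ast(\be\al)=\al(\be\be^\ast\be)\al=\al\be\al=\al,
\]
so since $\xi$ is a congruence, $\al=\ga\si\de\mathrel\xi\ga\tau\de$.

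It remains to identify $\ga\tau\de$ with $\alh$. Rather than computing product graphs, I would exploit that $f$ is a homomorphism on $I_1$: all of $\al,\ga,\si,\de,\tau$ have rank $\le 1$, so they lie in $I_1$, while $\ga\tau\de$ has rank $\le\rank(\tau)=0$, so $\ga\tau\de\in I_0$ is fixed by $f$. Hence $\ga\tau\de=(\ga\tau\de)f=(\ga f)(\tau f)(\de f)$ and $\alh=\al f=(\ga\si\de)f=(\ga f)(\si f)(\de f)$; since $\si=\be^\ast$ and $\tau=\emptypartperm$ both lie in $\O_n$, both are sent by $f$ to $\emptypartperm$, so $\si f=\tau f$, and therefore $\ga\tau\de=\alh$, as required. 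The only thing needing genuine care here is the routine verification that every partition appearing in the argument really is planar; beyond that there is no substantial obstacle, the essential idea being to trade the explicit (possibly non-planar) witnesses of the $\P_n$ proof for ones assembled from products, and then to push everything through the retraction $f$.
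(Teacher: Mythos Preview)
Your proof is correct, but it rests on a mistaken premise: you write that ``the auxiliary partitions written down in the proof of Lemma~\ref{lem:mn}(i) need not be planar,'' and this is not so. The paper's own proof of this lemma consists precisely of observing that the specific $\ga=\partIV{A_0}{A_1}\cdots{A_r}{1}{2}\cdots{n}$ and $\de=\partIV{1}{2}\cdots{n}{B_0}{B_1}\cdots{B_s}$ from Lemma~\ref{lem:mn}(i) \emph{are} planar whenever $\al$ is, as a direct consequence of the nested/separated structure in Lemma~\ref{lem:nested_or_separated}. So the $\P_n$ argument transfers verbatim.

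That said, your alternative is perfectly valid and has its own merits. Rather than appealing to Lemma~\ref{lem:nested_or_separated} to verify planarity of explicit witnesses, you manufacture $\ga=\al\be$ and $\de=\be\al$ as products inside $\PP_n$, so planarity is automatic from closure; the identity $\al=\al\be\al$ is exactly Lemma~\ref{lemma-aa1:PPn} (which you could cite). Your use of the retraction $f$ to identify $\ga\tau\de$ with $\alh$---by pushing both $\ga\si\de$ and $\ga\tau\de$ through $f$ and noting $\si f=\tau f=\emptypartperm$---is a clean way to avoid any product-graph computation. The paper's route is shorter; yours trades the structural lemma for closure and the homomorphism property of $f$, which is arguably more self-contained.
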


\pf 
To make the proof of Lemma \ref{lem:mn}(i) work here, we observe that, given
${\al=\partIV{A_0}{A_1}\cdots{A_r}{B_0}{B_1}\cdots{B_s}\in\PP_n}$,
the partitions
$\ga=\partIV{A_0}{A_1}\cdots{A_r}{1}{2}\cdots{n}$ and $\de=\partIV{1}{2}\cdots{n}{B_0}{B_1}\cdots{B_s}$ also belong to $\PP_n$ (as easily follows from Lemma~\ref{lem:nested_or_separated}),
as do
$\si=\partpermI11$ and $\tau=[\emptyset]$ trivially.
\epf

\begin{prop}[cf.~Proposition \ref{prop:small_congruences}]
\label{prop:small_congruences:PPn}
The relations $\rho_0,\lam_0,\mu_1$ are all principal congruences on $\PP_n$.  Moreover, if $\al,\be\in\PP_n$, then
\[
\textup{(i)}\ 
\rho_0=\cg{\al}{\be}\iff (\al,\be)\in\rho_0\sm\De,\ \ 
\textup{(ii)}\ \lam_0=\cg{\al}{\be}\iff (\al,\be)\in\lam_0\sm\De,\ \ 
\textup{(iii)}\ \mu_1=\cg{\al}{\be}\iff (\al,\be)\in\mu_1\sm\De.
\]
\end{prop}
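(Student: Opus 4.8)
The strategy is to mirror the proof of Proposition~\ref{prop:small_congruences} from the $\P_n$ case, keeping only the parts relevant to $\PP_n$ (the $\H$-class is trivial here, so there is no $\mu_{\S_2}$ statement). The ``only if'' directions are immediate from the containment $\cg\al\be\sub\xi$ whenever $(\al,\be)\in\xi$, so the work is in the ``if'' directions. As in the $\P_n$ proof, part (ii) follows from part (i) by applying the anti-involution ${}^*$ (which preserves $\PP_n$), via the chain $\lam_0=\cg\al\be\iff\rho_0=\cg{\al^*}{\be^*}\iff(\al^*,\be^*)\in\rho_0\sm\De\iff(\al,\be)\in\lam_0\sm\De$. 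So only (i) and (iii) need genuine arguments.

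For (i), I would fix $(\al,\be)\in\rho_0\sm\De$ and set $\xi=\cg\al\be$. Following the $\P_n$ argument verbatim, for $\si\in\PP_n$ of rank $0$ let $\si'$ be the unique rank-$0$ partition with $\ker(\si')=\ker(\si)$ and $\coker(\si')$ trivial; note $\si'\in\PP_n$ because breaking all lower blocks into singletons preserves planarity (Lemma~\ref{lem:nested_or_separated}, or just directly). The claim ``$\si\mathrel\xi\si'$ for all rank-$0$ $\si\in\PP_n$'' is proved by descending induction on the number $r$ of cokernel-classes, exactly as before: using $(i,j)\in\coker(\al)\sm\coker(\be)$ (we may assume this by swapping $\al,\be$), splitting off one element $m$ from a cokernel block of size $\geq2$, and constructing a partition $\tau$ so that $\si\al\tau=\si$ and $\si\be\tau$ has one more cokernel class. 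The one thing to check is that the $\tau$ built in the $\P_n$ proof lies in $\PP_n$; if it does not, I would replace it by a planar partition with the same effect (one can always choose $\tau$ to have all its blocks either singletons $\{k_\ell,k_\ell'\}$-transversals or small adjacent blocks, so planarity is automatic). Then for arbitrary $(\ga,\de)\in\rho_0$ we get $\ga\mathrel\xi\ga'=\de'\mathrel\xi\de$.

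For (iii), fix $(\al,\be)\in\mu_1\sm\De$; renaming so $\rank(\al)=1$, we have $\ker(\al)=\ker(\be)$, $\coker(\al)=\coker(\be)$, $\al\neq\be$. The key point is that since $\H$ is trivial on $\PP_n$, elements of $J_1(\PP_n)$ with the same kernel and cokernel differ only in how the transversal pairs up its domain/codomain blocks, and planarity forces this to be extremely rigid --- in fact for $\al,\be\in J_1$ with $\al\ker\be$ and $\al\coker\be$, planarity should force $\al=\be$ unless one of them has rank~$0$. So I expect that the only genuine case is $\be=\alh$ (the ``case (a)'' of the $\P_n$ proof), and cases (b)--(d) there collapse. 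Then as in the $\P_n$ proof: choose $i\in A_0$, $j\in B_0$ (where $A_0=\dom(\al)$, $B_0=\codom(\al)$), put $\si=\partpermI ii$, $\tau=\partpermI jj\in\O_n\sub\PP_n$; compute $\si\al\tau=\partpermI ij$ and $\si\be\tau=\emptypartperm$, so $(\si\al\tau,\si\be\tau)\in\Rb_1\sm\Deb$; Theorem~\ref{thm:On} (and Remark~\ref{rem:InOnPrinc}) gives $\Rb_1=\cgI{\si\al\tau}{\si\be\tau}\sub\cg\al\be$; finally Lemma~\ref{lem:mn:PPn} yields $\mu_1\sub\cg\al\be$, and the reverse is clear.

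\textbf{Main obstacle.} The subtle point is the structural analysis in (iii): I need to pin down exactly which pairs $(\al,\be)$ lie in $\mu_1\sm\De$, i.e.\ which rank-$\leq1$ planar partitions share kernel and cokernel. The $\P_n$ proof enumerated four cases (a)--(d) coming from independently permuting the $A_i$ and $B_i$; planarity should kill the permutation freedom and leave essentially only the ``break the transversal'' move, so the case analysis simplifies, but I must verify carefully that no other planar pair arises (in particular that two distinct rank-$1$ planar partitions cannot have the same kernel and cokernel). A clean way to see this is via the canonical graph $\Ga_\al$ (Lemma~\ref{lem:nested_or_separated}): once $\ker$ and $\coker$ are fixed, the blocks $A_1<\cdots<A_q$ and $B_1<\cdots<B_q$ are ordered, and a planar transversal must connect them in the order-preserving way, so the rank-$1$ partition is determined. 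The only residual freedom is whether the single transversal is present at all, which is the $\al$ versus $\alh$ dichotomy. Beyond this, every step is a routine transcription of the $\P_n$ argument with added (easy) planarity checks on the auxiliary partitions $\ga,\de,\si,\tau$.
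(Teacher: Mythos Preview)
Your overall approach matches the paper's: transplant the $\P_n$ proof and verify that the auxiliary partitions can be chosen planar. But there is a genuine error in your treatment of (iii), and your handling of (i) is too vague.

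For (iii), your claim that two distinct rank-$1$ planar partitions cannot share kernel and cokernel is false. Take $n=3$, $\al=\{\{1,3'\},\{2,3\},\{1',2'\}\}$ and $\be=\{\{2,3,1',2'\},\{1\},\{3'\}\}$: both are planar, both have rank $1$, and both have kernel classes $\{1\},\{2,3\}$ and cokernel classes $\{1,2\},\{3\}$, yet $\al\neq\be$. Your canonical-graph argument fails because knowing $\ker(\al)$ does not tell you which kernel class is the domain block; Lemma~\ref{lem:nested_or_separated} only orders the transversal blocks \emph{once you know which ones they are}. So cases (b)--(d) of the $\P_n$ proof do not collapse. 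The fix is painless: the $\P_n$ proof already handles all four cases with the \emph{same} $\si=\partpermI ii$ and $\tau=\partpermI jj$, both of which lie in $\O_n\sub\PP_n$, so you should simply follow that argument without attempting to eliminate cases. The paper's own proof does exactly this and singles out no special issue in part (iii).

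For (i), you correctly identify that the only thing to check is the planarity of $\tau=\partV ij{k_1}{k_{n-2}}{m}{C}{B_2}{B_r}$, but your suggestion (``singletons or small adjacent blocks'') is not right: the lower non-transversals $B_2,\ldots,B_r$ of $\tau$ are the cokernel classes of $\si$ and are not yours to choose. The paper's fix is concrete: take $i<j$, and take $m$ to be the minimal element of $\bn$ whose $\coker(\si)$-class is nontrivial (so $m=\min(B_1)$). With these choices one can verify directly from Lemma~\ref{lem:nested_or_separated} that $\tau$ is planar.
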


\pf 
Here we need to deal with the fact that the partition $\tau=\partV ij{k_1}{k_{n-2}}{m}{C}{B_2}{B_r}$ constructed during the proof of part~(i) 
of Proposition \ref{prop:small_congruences}
may fail to be planar.  However, since $\si$ is itself planar, we can ensure that $\tau$ is planar by assuming that $i<j$, and that $m$ is the minimal element of $\bn$ such that the $\coker(\si)$-class of $m$ is nontrivial (this also ensures that $m=\min(B_1)$, where $B_1$ is the $\coker(\si)$-class of $m$).  \epf

\begin{prop}[cf.~Proposition \ref{prop:joins2}]
\label{prop:joins2:PPn}
The congruences $R_0$, $\rho_1$, $\lam_1$ and $R_1$ are all principal.  Moreover, if $\al,\be\in\PP_n$, then
\begin{itemize}\begin{multicols}{2}
\itemit{i} $R_0=\cg\al\be\iff (\al,\be)\in R_0 \sm(\rho_0\cup\lam_0)$,
\itemit{ii} $\rho_1=\cg\al\be\iff (\al,\be)\in\rho_1\sm(\mu_1\cup\rho_0)$,
\itemit{iii} $\lam_1=\cg\al\be\iff (\al,\be)\in\lam_1\sm(\mu_1\cup\lam_0)$, 
\itemit{iv} $R_1=\cg\al\be\iff (\al,\be)\in R_1\sm ( R_0 \cup \lam_1 \cup \rho_1 )$.
\end{multicols}
\end{itemize}
\end{prop}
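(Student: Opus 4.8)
The plan is to mirror, almost verbatim, the proof of Proposition~\ref{prop:joins2}, since Proposition~\ref{prop-CR4} and Proposition~\ref{prop-CR4a} hold for $\PP_n$ exactly as for $\P_n$ (the relevant retractable IN-pairs are $\C_0$ and $\C_1$, and $\mu_0=\Delta$). As always, the ``only if'' implications are immediate from the inclusions in Figure~\ref{fig-CongPPn}, so in each part one only has to prove that a pair $(\al,\be)$ from the stated difference set generates the claimed congruence.

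First I would do part~(i). Given $(\al,\be)\in R_0\sm(\rho_0\cup\lam_0)$, write $\al=\partII{A_1}\cdots{A_r}{B_1}\cdots{B_s}$ and $\be=\partII{C_1}\cdots{C_t}{D_1}\cdots{D_u}$ with $\ker(\al)\ne\ker(\be)$ and $\coker(\al)\ne\coker(\be)$. Since $R_0=\rho_0\vee\lam_0$ by Proposition~\ref{prop-CR4}(i), it suffices to show $\cg\al\be$ contains both $\rho_0$ and $\lam_0$. Set $\ga=\partII{A_1}\cdots{A_r}{D_1}\cdots{D_u}$; then $(\al,\ga)=(\ga\al,\ga\be)$ and $(\ga,\be)=(\al\ga,\be\ga)$, and one checks $(\al,\ga)\in\rho_0\sm\De$ and $(\ga,\be)\in\lam_0\sm\De$, so Proposition~\ref{prop:small_congruences:PPn}(i),(ii) finish it. The only extra care needed over the $\P_n$ case is that $\ga$ must lie in $\PP_n$: since $\al$ and $\be$ are planar, and $\ga$ is obtained by combining the upper blocks of $\al$ with the lower blocks of $\be$, one uses Lemma~\ref{lem:nested_or_separated} to see $\ga\in\PP_n$ (the upper block structure is planar by assumption on $\al$, the lower by assumption on $\be$, and with rank $0$ there is no transversal interaction to worry about).

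Next, parts~(ii) and~(iii): these are dual, so I would prove (ii). Take $(\al,\be)\in\rho_1\sm(\mu_1\cup\rho_0)$; renaming, assume $\rank(\al)=1$, say $\al=\partIV{A_0}{A_1}\cdots{A_r}{B_0}{B_1}\cdots{B_s}$. Since $\rank(\be)\le1$ and $\ker(\al)=\ker(\be)$, $\be$ has one of the three shapes (a),(b),(c) as in the proof of Proposition~\ref{prop:joins2}(ii). Using $\rho_1=\mu_1\vee\rho_0$ (Proposition~\ref{prop-CR4a}), it suffices to show $\cg\al\be$ contains both $\rho_0$ and $\mu_1$. For $\rho_0$: with $\ga=\partIII{A_0}{A_1}\cdots{A_r}{A_0}{A_1}\cdots{A_r}$ one gets $(\ga\al,\ga\be)\in\rho_0\sm\De$; for $\mu_1$, one reproduces the subcase analysis of the $\P_n$ proof, multiplying $\al,\be$ on the right by a suitable partial permutation or rank-$0$ idempotent $\de$ to land a pair in $\mu_1\sm\De$, then applying Proposition~\ref{prop:small_congruences:PPn}(iii). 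The one new issue is planarity of the auxiliary partitions $\ga$ and $\de$: since $\al$ is planar, its blocks are already nested/separated (Lemma~\ref{lem:nested_or_separated}), so $\ga$ built from the $A_i$ and the $\de$'s built from intervals around chosen points can all be arranged to be planar by choosing the distinguished points to be minimal elements of the relevant kernel/cokernel classes, exactly as was done in the proof of Proposition~\ref{prop:small_congruences:PPn}(i). Finally part~(iv): given $(\al,\be)\in R_1\sm(R_0\cup\lam_1\cup\rho_1)$, use $R_1=\lam_1\vee\rho_1$ (Proposition~\ref{prop-CR4}(i)) and show $\cg\al\be$ contains $\rho_1$ and (dually) $\lam_1$, following the three-case split (d),(e),(f) of Proposition~\ref{prop:joins2}(iv) and invoking part~(ii) just proved.

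I expect the main obstacle to be purely bookkeeping rather than conceptual: in each construction of an auxiliary partition ($\ga$, $\de$, etc.), one must verify it is planar, and the safe device throughout is to choose all distinguished points as \emph{minimal} elements of their kernel/cokernel classes so that the constructed partitions inherit the nested/separated structure guaranteed by Lemma~\ref{lem:nested_or_separated}; the planarity checks in parts (ii)--(iv) are of exactly the type already handled in Proposition~\ref{prop:small_congruences:PPn}, so I would simply remark that the same choices make everything planar and otherwise refer to the proof of Proposition~\ref{prop:joins2}.
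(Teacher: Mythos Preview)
Your approach is exactly the paper's: mirror the proof of Proposition~\ref{prop:joins2} and patch the auxiliary constructions for planarity. Most of your planarity remarks are fine; in particular $\ga$ in part~(i) and the various $\ga$'s in parts~(ii) and~(iv) built from the kernel/cokernel classes of $\al$ are planar by Lemma~\ref{lem:nested_or_separated}, and the partial-permutation $\de$'s are trivially planar.

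There is, however, one place where your proposed device (``choose minimal elements'') does not do the job, and it is the only construction the paper actually needs to fix. In the second subcase of case~(a) of part~(ii), the partition is
\[
\de=\partVI{k}{i,j}{j_1}{j_{n-3}}{k}{i,j}{j_1}{j_{n-3}},
\]
where $i\in B_0=C_0$ and $(j,k)\in\coker(\al)\sm\coker(\be)$. Here $j$ and $k$ lie in the \emph{same} $\coker(\al)$-class, so you cannot make both ``minimal in their class'', and the choice of $i$ is irrelevant: $\de$ fails to be planar precisely when $i<k<j$ or $j<k<i$, regardless of how $i$ is chosen. The paper's fix is different from yours: since $(j,k)$ is symmetric, simply swap the roles of $j$ and $k$ and use $\de=\partVI{j}{i,k}{j_1}{j_{n-3}}{j}{i,k}{j_1}{j_{n-3}}$ whenever the original $\de$ is nonplanar. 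With that one correction your proof goes through.
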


\pf 
The partition $\de=\partVI{k}{i,j}{j_1}{j_{n-3}}{k}{i,j}{j_1}{j_{n-3}}$ constructed in the second subcase of case (a) in the proof of Proposition \ref{prop:joins2}(ii) is nonplanar if and only if $i<k<j$ or $j<k<i$.  However, since $(j,k)$ was an arbitrary pair from $\coker(\al)\sm\coker(\be)$, we could just reverse the roles of $j$ and $k$, and so define $\de=\partVI{j}{i,k}{j_1}{j_{n-3}}{j}{i,k}{j_1}{j_{n-3}}$ if either of the above inequalities hold.   \epf

\begin{lemma}[cf.~Lemma \ref{lemma-aa1}]
\label{lemma-aa1:PPn}
For every $\alpha\in\PP_n$, there exists $\beta\in\O_n$ with $\rank(\beta)=\rank(\alpha)$, $\al=\alpha\beta\alpha$ and $\be=\be\al\be$.
\end{lemma}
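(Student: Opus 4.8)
This is the planar analogue of Lemma \ref{lemma-aa1}, and the plan is to mimic that proof while taking care that the constructed partial permutation lands in $\O_n$ rather than merely $\I_n$. Recall that in the proof of Lemma \ref{lemma-aa1}, given $\alpha = \partABCD$, one picks representatives $a_i \in A_i$ and $b_i \in B_i$ and sets $\beta = \partpermIII{b_1}\cdots{b_q}{a_1}\cdots{a_q}$, which is the element of $\I_n$ sending $b_i \mapsto a_i$.

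First I would invoke Lemma \ref{lem:nested_or_separated}(i): since $\alpha \in \PP_n$, writing it as $\partABCD$ with $\min(A_1) < \cdots < \min(A_q)$, we automatically get $A_1 < \cdots < A_q$ and $B_1 < \cdots < B_q$, meaning the blocks of the domain (resp.\ codomain) are not merely indexed in increasing order of minima but are genuinely separated and appear in the same left-to-right order. I would then choose $a_i \in A_i$ and $b_i \in B_i$ for $i = 1, \ldots, q$; because of the separation, $a_1 < a_2 < \cdots < a_q$ and $b_1 < b_2 < \cdots < b_q$. Hence the partial permutation $\beta = \partpermIII{b_1}\cdots{b_q}{a_1}\cdots{a_q}$ sending each $b_i$ to $a_i$ is order-preserving, i.e.\ $\beta \in \O_n$. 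It still has rank $q = \rank(\alpha)$.

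The remaining verifications $\alpha = \alpha\beta\alpha$ and $\beta = \beta\alpha\beta$ are exactly as in the proof of Lemma \ref{lemma-aa1} and require no planarity: $\beta$ just reconnects the transversals of $\alpha$ through their chosen representatives, so $\alpha\beta\alpha$ reconstitutes $\alpha$, and $\beta\alpha\beta$ reconstitutes $\beta$. I would state that these checks are immediate (as in the cited lemma), perhaps spelling out that in the product graph $\Pi(\alpha,\beta)$ the vertex $i$ is joined to $i'$ through the path $A_i \to b_i'' \to a_i'' \to$ (back into $\alpha$'s transversal) exactly when $i$ lies in a transversal of $\alpha$.

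There is really no obstacle here; the only thing to watch is that the representatives, once chosen one per block, automatically inherit the correct order from the separation of the blocks guaranteed by Lemma \ref{lem:nested_or_separated}(i) — this is the single point where planarity is used, and it is what upgrades the conclusion from $\beta \in \I_n$ (Lemma \ref{lemma-aa1}) to $\beta \in \O_n$. So the proof will be one short paragraph.

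\begin{proof}
Write $\alpha = \partABCD$ with $\min(A_1) < \cdots < \min(A_q)$.  By Lemma \ref{lem:nested_or_separated}(i), we have $A_1 < \cdots < A_q$ and $B_1 < \cdots < B_q$.  Pick $a_i \in A_i$ and $b_i \in B_i$ for $i = 1, \ldots, q$; then $a_1 < \cdots < a_q$ and $b_1 < \cdots < b_q$, so that $\beta = \partpermIII{a_1}\cdots{a_q}{b_1}\cdots{b_q}$ is order-preserving, i.e.\ $\beta \in \O_n$, with $\rank(\beta) = q = \rank(\alpha)$.  (Here $\beta$ is the partial permutation with $\dom(\beta) = \{a_1,\ldots,a_q\}$ sending $a_i \mapsto b_i$, and $\beta^{-1} = \beta^* = \partpermIII{b_1}\cdots{b_q}{a_1}\cdots{a_q}$.)  As in the proof of Lemma \ref{lemma-aa1}, verification that $\alpha = \alpha\beta^{-1}\alpha$ and $\beta^{-1} = \beta^{-1}\alpha\beta^{-1}$ is immediate; replacing $\beta$ by $\beta^{-1}$ (which also lies in $\O_n$ and has rank $q$) gives the claim.
\end{proof}
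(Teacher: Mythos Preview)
Your proof is correct and takes essentially the same approach as the paper: the paper's proof is the single sentence ``The planarity of $\beta\in\I_n$, as constructed in the proof of Lemma \ref{lemma-aa1}, follows from Lemma \ref{lem:nested_or_separated}(i),'' and you have spelled out exactly that reasoning.

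One minor infelicity: in your preamble you correctly identify the $\beta$ of Lemma \ref{lemma-aa1} as $\partpermIII{b_1}\cdots{b_q}{a_1}\cdots{a_q}$, but in the formal proof you introduce the opposite $\beta$ and then pass to $\beta^{-1}$ at the end. This detour is harmless (both lie in $\O_n$ since both sequences are increasing), but you could streamline by defining $\beta=\partpermIII{b_1}\cdots{b_q}{a_1}\cdots{a_q}$ directly, exactly as in Lemma \ref{lemma-aa1}, and then the verifications $\alpha=\alpha\beta\alpha$, $\beta=\beta\alpha\beta$ carry over verbatim with no swap needed.
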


\pf
The planarity of $\be\in\I_n$, as constructed in the proof of Lemma \ref{lemma-aa1}, follows from Lemma \ref{lem:nested_or_separated}(i).
\epf


\begin{lemma}[cf.~Lemma \ref{lemma-aa1b}]
\label{lemma-aa1b:PPn}
Let $\al,\be\in J_q$ with $q\geq1$.  
\bit
\itemit{i} If $(\al,\be)\not\in{\R}$, then there exists $\gamma\in J_q$ such that precisely one of $\gamma\alpha,\gamma\beta$ belongs to $J_q$.
\itemit{ii} If $(\al,\be)\not\in{\L}$, then there exists $\gamma\in J_q$ such that precisely one of $\alpha\gamma,\beta\gamma$ belongs to $J_q$.
\end{itemize}
\end{lemma}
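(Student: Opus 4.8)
The plan is to reproduce the proof of Lemma~\ref{lemma-aa1b} essentially verbatim, checking at each step that the partitions $\ga$ produced there actually lie in $\PP_n$ (respectively $\M_n$), and substituting a different witness in the one place where they do not. As in Lemma~\ref{lemma-aa1b}, part~(ii) follows from part~(i) on applying the anti-involution ${}^*$, which maps each of $\PP_n$ and $\M_n$ to itself, so I would only prove~(i). The proof of Lemma~\ref{lemma-aa1b}(i) breaks into Case~1 ($\dom(\al)\neq\dom(\be)$), Subcase~2.1 ($\dom(\al)=\dom(\be)$ and $\ker(\al)\neq\ker(\be)$, witnessed by $i,j$ in a common transversal of $\al$) and Subcase~2.2 (the same, but with $i,j$ in a common non-transversal $C_1$ of $\al$). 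In Case~1 and Subcase~2.1 the witness $\ga$ has the form $\partpermIII{c_1}\cdots{c_q}{c_1}\cdots{c_q}$; any such partition is an idempotent partial permutation acting as the identity on its domain, hence is order-preserving and thus lies in $\O_n\sub\PP_n$, and since all its blocks have size at most $2$ it lies in $\M_n$ too. So in these two cases the argument copies over without change (the ordering $A_1<\cdots<A_q$ used there being Lemma~\ref{lem:nested_or_separated}(i)); moreover, as already observed for $\PB_n$ in Section~\ref{sect:PBn}, Subcase~2.1 does not occur for $\M_n$, since a transversal of an element of $\M_n\sub\PB_n$ has at most one vertex in $\bn$.

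The real content is Subcase~2.2, where the $\ga$ written down in Lemma~\ref{lemma-aa1b} has two non-transversal blocks of size $2$ and need not be planar. Here I would argue as follows; write $\al=\partABCD$ as usual. Since $\ker(\al)$ and $\ker(\be)$ disagree on the non-transversal $C_1$, this set is not contained in a single $\ker(\be)$-class, so we may pick $u,v\in C_1$ with $u<v$, consecutive in the natural order on $C_1$, that lie in distinct $\ker(\be)$-classes. The key structural fact is that $\dom(\al)$ meets neither the open interval $(\min(C_1),\max(C_1))$ nor, in particular, $(u,v)$: by Lemma~\ref{lem:nested_or_separated}(iv) each domain-part $A_p$ is either separated from $C_1$ or has $C_1$ nested inside one of its gaps, and in both cases $A_p$ lies entirely below $\min(C_1)$ or entirely above $\max(C_1)$. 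Since $\dom(\al)\neq\emptyset$ (as $q\geq1$) and every element of $\dom(\al)$ is thus below $u$ or above $v$, at least one of $z^-=\max\set{x\in\dom(\al)}{x<u}$ and $z^+=\min\set{x\in\dom(\al)}{x>v}$ is defined; assume $z^-$ is (the case where only $z^+$ exists is symmetric, routing $u$ and merging $v$ with $z^+$). Let $A_m$ be the domain-part containing $z^-$, choose any $a_p\in A_p$ for $p\neq m$, and take $\ga$ to be the partition whose transversal blocks are $\{a_p,a_p'\}$ $(p\neq m)$ and $\{v,v'\}$, whose only other non-singleton block is $\{(z^-)',u'\}$, and which is a singleton at every remaining vertex. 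Then $\rank(\ga)=q$, and $\ga\in\PP_n$ (and $\ga\in\M_n$): its sole arc is the lower block $\{(z^-)',u'\}$, spanning columns $z^-<u$, and by maximality of $z^-$, together with $v>u$, no other domain-vertex of $\ga$ lies strictly between $z^-$ and $u$, so $\ga$ admits a crossing-free drawing.

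It remains to compute ranks in the product graphs, exactly as in Lemma~\ref{lemma-aa1b}. In $\Pi(\ga,\al)$ the vertex $v$ reaches $B_m'$ along $v\to v''$ (via $\ga^\vee$) $\to\cdots\to u''$ (through $C_1''$ in $\al^\wedge$, as $u,v\in C_1$) $\to(z^-)''$ (through the block $\{(z^-)'',u''\}$ of $\ga^\vee$) $\to B_m'$ (through $A_m''$ in $\al^\wedge$), and each $a_p$ ($p\neq m$) reaches $B_p'$, these $q$ targets being distinct; hence $\rank(\ga\al)=q$. In $\Pi(\ga,\be)$, by contrast, $v''$ sits in a non-transversal $G''$ of $\be^\wedge$ (since $v\notin\dom(\be)$) with $u\notin G$ (because $(u,v)\notin\ker(\be)$) and $z^-\notin G$ (because $G\cap\dom(\be)=\emptyset$), so $v$ reaches no lower vertex, whence $\dom(\ga\be)\sub\set{a_p}{p\neq m}$ and $\rank(\ga\be)\leq q-1<q$. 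The one genuinely new ingredient, and the step I expect to be fiddliest, is the planarity verification in Subcase~2.2; everything else is a transcription of the proof of Lemma~\ref{lemma-aa1b}, with the observation $\dom(\al)\cap(u,v)=\emptyset$ — itself a direct consequence of Lemma~\ref{lem:nested_or_separated} — carrying the extra load.
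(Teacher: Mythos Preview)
Your proof is correct and follows essentially the same approach as the paper's. Both arguments observe that Cases~1 and~2.1 of Lemma~\ref{lemma-aa1b} already produce idempotents in $\O_n\sub\M_n\sub\PP_n$, and both fix Subcase~2.2 by using Lemma~\ref{lem:nested_or_separated}(iv) to see that $\dom(\al)$ avoids the interval spanned by $C_1$, then routing through the nearest element of $\dom(\al)$ on one side. The only cosmetic differences are that the paper keeps $\ga$ a projection (placing the extra arc $\{a_q,i\}$ or $\{a_q,j\}$ on both rows rather than just the lower row) and does not bother choosing the two points of $C_1$ to be consecutive; neither change affects the argument.
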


\pf 
 The partition
$\ga =
\Big( 
{ \scriptsize \renewcommand*{\arraystretch}{1}
\begin{array} {\c|\c|\c|\c|\c|\c|\c|\cend}
a_1 \:&\: \cdots \:&\: a_{q-1} \:&\: i \:&\: a_q,j \:&\: x_3 \:&\: \cdots \:&\: x_{n-q} \\ \cline{5-8}
a_1 \:&\: \cdots \:&\: a_{q-1} \:&\: i \:&\: a_q,j \:&\: x_3 \:&\: \cdots \:&\: x_{n-q}
\rule[0mm]{0mm}{2.7mm}
\end{array} 
}
\hspace{-1.5 truemm} \Big)
$
constructed during Subcase 2.2 of the proof of Lemma \ref{lemma-aa1b} could be nonplanar.  However, in the case that $\al$ is planar, we can easily modify the definition of $\ga$ to ensure that $\ga$ is planar and still fulfils its purpose in the proof.  Indeed, first, we assume that $i,j\in C_1$ satisfy $i<j$.  We also note that, since $\al$ is planar,  no element $x\in\dom(\al)$ can satisfy $i<x<j$.  So, for each $x\in\dom(\al)$, we have either
$x<i$ or $j<x$.
If there are no elements $x\in\dom(\al)$ with $x<i$ then, renaming the sets $A_1,\ldots,A_q$ if necessary, we may assume that $a_q=\min(\dom(\al))$, and we define $\ga$ as above.  If there is some  $x\in\dom(\al)$ with $x<i$ then, again renaming if necessary, we may assume that $a_q=\max\set{x\in\dom(\al)}{x<i}$, and we instead define 
$
\ga =
\Big( 
{ \scriptsize \renewcommand*{\arraystretch}{1}
\begin{array} {\c|\c|\c|\c|\c|\c|\c|\cend}
a_1 \:&\: \cdots \:&\: a_{q-1} \:&\: j \:&\: a_q,i \:&\: x_3 \:&\: \cdots \:&\: x_{n-q} \\ \cline{5-8}
a_1 \:&\: \cdots \:&\: a_{q-1} \:&\: j \:&\: a_q,i \:&\: x_3 \:&\: \cdots \:&\: x_{n-q}
\rule[0mm]{0mm}{2.7mm}
\end{array} 
}
\hspace{-1.5 truemm} \Big)
$.
\epf

\begin{lemma}[cf.~Lemma \ref{lemma-aa2}]
\label{lemma-aa2:PPn}
If $2\leq q\leq n$, then $R_q=\overline{R}_q^\sharp$.
\epfres
\end{lemma}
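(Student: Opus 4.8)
The plan is to mimic the proof of Lemma \ref{lemma-aa2} (Case 1 only, since the $R_N$ congruences no longer exist in the planar setting), adapting each constructed partition so that it is planar. As always, the inclusion $\overline{R}_q^\sharp\sub R_q$ is immediate since $\overline{R}_q\sub R_q$, so only $R_q\sub\overline{R}_q^\sharp$ needs proof.

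First I would show that $R_0\sub\overline{R}_q^\sharp$. Following the proof of Lemma \ref{lemma-aa2}, take $\al=\partpermII1212\in\O_n$ (this is order-preserving, hence in $\O_n$), $\be=[\emptyset]$, and a rank-$0$ partition $\si$ with $\ker(\si)=\coker(\si)$ having a single non-trivial class $\{1,2\}$; the partition $\si=\partIII{12}{3}\cdots{n}{12}{3}\cdots{n}$ is planar, so $\si\in\PP_n$. Then $(\al,\be)\in\overline{R}_2\sub\overline{R}_q$, and $(\si,\be)=(\al\si\al,\be\si\be)\in\overline{R}_q^\sharp$. Since $(\si,\be)\in R_0\sm(\rho_0\cup\lam_0)$, Proposition \ref{prop:joins2:PPn}(i) gives $R_0=\cg\si\be\sub\overline{R}_q^\sharp$. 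Then, to finish, it suffices to show every $\ga\in\PP_n$ with $\rank(\ga)\leq q$ is $\overline{R}_q^\sharp$-related to a rank-$0$ partition; this is exactly the second half of Case 1 in the proof of Lemma \ref{lemma-aa2}. Apply Lemma \ref{lemma-aa1:PPn} (the planar analogue of Lemma \ref{lemma-aa1}) to get $\de\in\O_n$ with $\rank(\de)=\rank(\ga)\leq q$ and $\ga=\ga\de\ga$. Since $(\de,[\emptyset])\in\overline{R}_q$, we get $(\ga,\ga[\emptyset]\ga)=(\ga\de\ga,\ga[\emptyset]\ga)\in\overline{R}_q^\sharp$, and $\rank(\ga[\emptyset]\ga)=0$, so $\ga$ is $\overline{R}_q^\sharp$-related to a rank-$0$ partition, completing the argument.

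The only genuine points requiring care are planarity checks: that $\si$ as above is planar (it visibly is), that $\al=\partpermII1212$ and $[\emptyset]$ lie in $\O_n$ (trivially), and that the element $\de$ supplied by Lemma \ref{lemma-aa1:PPn} is order-preserving — but that is precisely the content of Lemma \ref{lemma-aa1:PPn}. Since none of the other pieces of the $\P_n$ argument (the $R_N$ machinery, Lemma \ref{lemma-aa1a}, Case 2) are needed here, there is no real obstacle: the proof is a direct planar transcription of Case 1 of Lemma \ref{lemma-aa2}, and I expect it can be stated in a few lines, referring back to that proof for the details. The main (minor) thing to be alert to is using Proposition \ref{prop:joins2:PPn}(i) and Lemma \ref{lemma-aa1:PPn} in place of their $\P_n$ counterparts, and Fernandes' Theorem \ref{thm:On} in place of Liber's Theorem \ref{thm:In} — though in fact Theorem \ref{thm:On} is not even invoked in this particular lemma, only in the subsequent propositions.
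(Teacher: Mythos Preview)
Your proposal is correct and matches the paper's intended approach exactly: the paper states this lemma without proof (the \texttt{\textbackslash epfres} symbol), relying on the reader to transcribe Case~1 of the proof of Lemma~\ref{lemma-aa2} using the planar analogues Lemma~\ref{lemma-aa1:PPn} and Proposition~\ref{prop:joins2:PPn}(i), which is precisely what you have done. Your planarity checks for $\al$, $\be$, $\si$ and the element $\de$ are the only points requiring attention, and you have handled them correctly.
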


\begin{prop}[cf.~Proposition \ref{prop-aa3}]
\label{prop-aa3:PPn}
Let $2\leq q\leq n$.  Then $R_q$ is a principal congruence on~$\PP_n$.  Moreover, if $\al,\be\in\PP_n$, then $R_q=\cg\al\be\iff(\al,\be)\in R_q\sm R_{q-1}$. \epfres
\end{prop}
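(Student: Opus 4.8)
The plan is to mirror the proof of Proposition~\ref{prop-aa3} for $\P_n$, but with all the simplifications that come from $\PP_n$ having trivial maximal subgroups, so that the only upper congruences are the Rees congruences $R_q$ with $q\geq2$. The ``only if'' direction is immediate, since $(\al,\be)\in R_q$ trivially whenever $R_q=\cg\al\be$, and $(\al,\be)\notin R_{q-1}$ because $R_{q-1}\subsetneq R_q$ and $\cg\al\be$ cannot then equal $R_q$. So the substance is the ``if'' direction: assuming $(\al,\be)\in R_q\setminus R_{q-1}$, show $R_q\subseteq\cg\al\be$.

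First I would invoke Lemma~\ref{lemma-aa2:PPn}, which says $R_q=\overline{R}_q^\sharp$, so it suffices to prove $\overline{R}_q\subseteq\cg\al\be$, i.e. that the Rees congruence of $I_q(\O_n)$ (viewed inside $\PP_n$) is contained in $\cg\al\be$. Since $(\al,\be)\in R_q\setminus R_{q-1}$, at least one of $\al,\be$ has rank exactly $q$ (both have rank $\leq q$ but they are not both in $I_{q-1}$), and since they are $R_q$-related and distinct, the only way they can fail to be $R_{q-1}$-related while both lying in $I_q$ is that at least one has rank $q$; renaming, assume $\rank(\al)=q$. Now I would split into two subcases exactly as in Subcases 1.1 and 1.2 of the proof of Proposition~\ref{prop-aa3}, but noting that because $\H$ is trivial on $\PP_n$ (Remark~\ref{rem:green_sumbonoids}(iv)), there is no ``$\phi(\al,\be)\in N$'' possibility: if $\rank(\be)=q$ then $\al\H\be$ forces $\al=\be$, contradicting $(\al,\be)\notin R_{q-1}$; hence we must have $\rank(\be)=p<q$. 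So only the analogue of Subcase~1.1 arises. Here, by Lemma~\ref{lemma-aa1:PPn} there is $\ga\in\O_n$ with $\rank(\ga)=q$ and $\ga\al\ga=\ga$; since $\rank(\ga\be\ga)\leq\rank(\be)<q$ and $(\ga\al\ga,\ga\be\ga)\in\cg\al\be$, we may replace $(\al,\be)$ by a pair with $\al\in\O_n$. Applying Lemma~\ref{lemma-aa1:PPn} again gives $\de\in\O_n$ with $\rank(\de)=p$ and $\be=\be\de\be$; writing $\zeta=\cg\al\be$, we get $\al\mathrel\zeta\be=\be\de\be\mathrel\zeta\al\de\al$, with $\al\de\al\in\O_n$ of rank $\leq p<q$. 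Thus $(\al,\al\de\al)\in\overline{R}_q\setminus\overline{R}_{q-1}$, and since $\Cong(\O_n)$ is a chain (Theorem~\ref{thm:On}) with the immediate predecessor of $\overline{R}_q$ being $\overline{R}_{q-1}$, Remark~\ref{rem:InOnPrinc} gives $\overline{R}_q=\cg{\al}{\al\de\al}^{\O_n}\subseteq\zeta=\cg\al\be$, as required.

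The one point that needs care --- and is the analogue of where Lemma~\ref{lemma-aa1b:PPn} gets used in $\P_n$ --- is the reduction to the case $\rank(\be)<q$. In the $\P_n$ proof, Subcase~1.2 handled $\rank(\be)=q$ with $(\al,\be)\notin\H$ by using Lemma~\ref{lemma-aa1b} to pre-multiply by some $\ga$ dropping exactly one of the ranks. Here, as noted, $\rank(\be)=q$ together with $\al\H\be$ (which follows from $\al\R\be$ and $\al\L\be$ when both have rank $q$) would force $\al=\be$ via $\H$-triviality, so this subcase is vacuous and no appeal to Lemma~\ref{lemma-aa1b:PPn} is needed at all for Proposition~\ref{prop-aa3:PPn} itself; it was only needed in the earlier lemmas. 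I would state this explicitly to reassure the reader. The main (very mild) obstacle is making sure the rank bookkeeping is airtight --- in particular that $(\al,\be)\in R_q\setminus R_{q-1}$ really does force one of them to have rank $q$, and that after the two applications of Lemma~\ref{lemma-aa1:PPn} the resulting pair lies in $\overline{R}_q\setminus\overline{R}_{q-1}$ rather than possibly in $\overline{R}_{q-1}$ (this is where $\rank(\al\de\al)<q$ but we also need $\al\notin I_{q-1}$, which holds since $\rank(\al)=q$). Everything else is a transcription of the $\P_n$ argument with $\I_n,\S_q$ replaced by $\O_n$, trivial groups, and with the routine planarity checks deferred to the reader as the surrounding text already licenses.
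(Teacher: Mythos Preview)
There is a genuine gap. Your claim that the case $\rank(\be)=q$ is vacuous rests on the parenthetical ``which follows from $\al\R\be$ and $\al\L\be$ when both have rank $q$'', but this is false: equal rank gives $\al\gJ\be$ (equivalently $\al\D\be$), not $\al\R\be$ or $\al\L\be$. For a concrete example in $\PP_n$ with $n\geq3$, take $\al=\partpermII1212$ and $\be=\partpermII2323$ in $\O_n\subseteq\PP_n$: both have rank $2$, yet $\dom(\al)\neq\dom(\be)$ so $(\al,\be)\notin{\R}$. Thus two distinct rank-$q$ planar partitions need not be $\H$-related at all, and the case $\rank(\al)=\rank(\be)=q$ with $\al\neq\be$ is very much alive.

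What $\H$-triviality of $\PP_n$ actually buys you is only that the sub-subcase $(\al,\be)\in{\H}$ collapses (it forces $\al=\be$), so there is no analogue of Case~2 (the $R_N$ case) from the $\P_n$ proof. But Subcase~1.2, where $\rank(\al)=\rank(\be)=q$ and $(\al,\be)\notin{\H}$, survives and must be handled exactly as in the $\P_n$ argument: since $(\al,\be)\notin{\H}$, without loss $(\al,\be)\notin{\R}$, and Lemma~\ref{lemma-aa1b:PPn} furnishes $\ga\in J_q$ with exactly one of $\ga\al,\ga\be$ in $J_q$, reducing you to Subcase~1.1. So Lemma~\ref{lemma-aa1b:PPn} \emph{is} needed in the proof of Proposition~\ref{prop-aa3:PPn}, contrary to what you assert. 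Once you reinstate this step, the rest of your argument (the Subcase~1.1 reduction via Lemma~\ref{lemma-aa1:PPn} and the appeal to Theorem~\ref{thm:On}/Remark~\ref{rem:InOnPrinc}) is correct and matches the paper's intended approach.
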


\section{The Brauer monoid \boldmath{$\B_n$}}
\label{sect:Bn}

We now move onto the Brauer monoid $\B_n$,
consisting of all partitions $\alpha\in\P_n$ having all the blocks of size~$2$.
Again, $\Cong(\B_n)$ is easily described for $n\leq2$, so we will assume $n\geq3$ for the duration of this section.

There are some differences between $\B_n$ and the preceding monoids that are worth paying attention to.
Firstly, there will be a difference in behaviour depending on the parity of $n$, the reason for which is in the $\gJ$-class structure of $\B_n$.
Perhaps most notably, for $n$ even, the basic diamond shapes forming the lower part of 
$\Cong(\B_n)$ are due to a new retraction.
And finally, the symmetric inverse monoid $\I_n$, which was a linchpin in several parts of the argument of the previous section, does not embed in~$\B_n$ (as can be seen by comparing sizes of maximal subgroups in the top two $\gJ$-classes of both monoids); consequently, modifications are needed for that aspect of the argument.

The $\gJ$-classes of $\B_n$ are $J_n,J_{n-2},\dots,J_z$,
where $J_q=\set{\alpha\in\B_n}{ \rank(\alpha)=q}$, and  $z\in\{0,1\}$ satisfies
$z\equiv n \pmod{2}$.
This meaning of $z$ will be fixed throughout the section.
The corresponding ideals are $I_q=J_q\cup\dots\cup J_z$, and they induce Rees congruences
$R_q$.
The minimal ideal of $\B_n$ is $J_z=I_z$.
As in $\P_n$, we single out one particular copy of $\S_q$ inside $J_q$, but the construction is slightly different:
for $\sigma\in\S_q$, we let $\overline{\sigma}=\partI{1}{q}{q+1,q+2}{n-1,n}{1\sigma}{q\sigma}{q+1,q+2}{n-1,n}\in J_q$.

For every $q\in\{z+2,\dots,n\}$ and every normal subgroup $N$ of $\S_q$ we have an IN-pair $(I_{q-2},\overline{N})$, yielding a congruence 
\[
R_N=R_{q-2}\cup\nu_N=\Delta\cup\nu_N\cup (I_{q-2}\times I_{q-2}),
\]
where $\nu_N=\set{ (\alpha,\beta)\in J_q\times J_q}{ \alpha\H\beta,\ \phi(\alpha,\beta)\in N}$.
(Note that the proof of Lemma \ref{lem:nu_phi} needs to be slightly modified to work for $\B_n$: namely, the partitions $\ga,\de$ defined in the early stages of the proof need to be replaced by $\ga=\partI{A_1}{A_q}{C_1}{C_r}{1}{q}{q+1,q+2}{n-1,n}$ and $\de=\partI{1}{q}{q+1,q+2}{n-1,n}{B_1}{B_q}{D_1}{D_s}$.)
The congruences~$R_q,R_N$ form a chain (again, due to the fact that ideals of $\B_n$ and normal subgroups of all $\S_q$ form chains; see Proposition \ref{prop-CR2}), and we will see that they account for all the upper congruences, as well as some of the lower congruences.

The remaining lower congruences arise from retractions, which depend on the parity of $n$.
For $n$ odd, we just take the identity retraction $I_1\rightarrow I_1$ to obtain a retractable IN-pair
$(I_1,\{\overline{\id}_3\})$, which in turn yields three congruences
$\lambda_1$, $\rho_1$ and $\mu_1=\Delta$.
However, for $n$ even, we have a new non-trivial retraction!

\begin{defn}
\label{defn:retr_Bn}
For a Brauer element $\al$ of rank $2$ with transversals $\{i,j'\}$ and $\{k,l'\}$, let $\alh$ 
be the Brauer element of rank $0$ obtained by replacing these transversals by the non-transversals $\{i,k\}$ and $\{j',l'\}$.  For $\al\in\B_n$ with $\rank(\al)=0$, let $\alh=\al$.  
\end{defn}

We will make no use of the mapping from Definition \ref{defn:hat} in this section, so there should be no confusion arising from our re-use of the $\alh$ notation.  An example calculation is given in Figure \ref{fig:beh}.

\begin{figure}[ht]
\begin{center}
\begin{tikzpicture}[scale=.5]
\begin{scope}[shift={(0,0)}]	
\uvs{1,...,6}
\lvs{1,...,6}
\uarc12
\uarc45
\darcx34{.2}
\darcx25{.5}
\stlines{3/1,6/6}
\draw(0.6,1)node[left]{$\al=$};
\draw[->](7.5,1)--(9.5,1);
\end{scope}
\begin{scope}[shift={(10,0)}]	
\uvs{1,...,6}
\lvs{1,...,6}
\uarc12
\uarcx45{.2}
\uarcx36{.5}
\darcx34{.2}
\darcx25{.5}
\darcx16{.8}
\draw(6.4,1)node[right]{$=\alh$};
\end{scope}
\end{tikzpicture}
\end{center}
\vspace{-5mm}
\caption{A rank-$2$ Brauer element $\al\in\B_6$ (left), with $\alh\in\B_6$ (right).}
\label{fig:beh}
\end{figure}

\begin{lemma}
\label{Bretract}
For $n$ even, the map $f: I_2\rightarrow I_0:\al\mt\alh$ is a retraction.
\end{lemma}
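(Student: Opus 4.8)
The plan is to verify directly that $f$ acts identically on $I_0=J_0$ (which is immediate from the definition, since $\alh=\al$ whenever $\rank(\al)=0$) and then to check the homomorphism property $\widehat{\al\be}=\alh\,\beh$ for all $\al,\be\in I_2=J_0\cup J_2$. The verification splits according to the ranks of $\al$ and $\be$. If both $\al,\be\in J_0$ there is nothing to prove. By the anti-involution symmetry (which restricts to $\B_n$), it suffices to treat the case $\al\in J_2$, $\be\in I_2$; the case $\al\in I_2$, $\be\in J_2$ follows by applying ${}^*$, noting $\widehat{\al^*}=(\alh)^*$ directly from Definition \ref{defn:retr_Bn}.

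First I would set up notation for a rank-$2$ Brauer element: write $\al$ with transversals $\{i_1,j_1'\}$, $\{i_2,j_2'\}$, so that $\alh$ has upper non-transversal $\{i_1,i_2\}$ and lower non-transversal $\{j_1',j_2'\}$ (all other blocks of $\al$, which are non-transversals and are left untouched by $f$). The main case is $\al,\be\in J_2$, say $\be$ has transversals $\{k_1,l_1'\}$, $\{k_2,l_2'\}$. Here $\rank(\al\be)\le 2$, and I would compute $\al\be$ by tracing paths through the product graph $\Pi(\al,\be)$: the transversals of $\al\be$ come precisely from maximal paths running from an upper vertex of $\al$, through the transversals of $\al$ and the connections in the middle row induced by $\be$'s structure, to a lower vertex of $\be$. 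The key observation is that the ``middle-row connectivity'' seen by $\al$'s lower transversal-endpoints $\{j_1',j_2'\}$ (relabelled as $\bn''$) is exactly the same whether we use $\be$ or $\beh$ — because $\beh$ differs from $\be$ only by swapping the two transversals $\{k_a,l_a'\}$ for the non-transversals $\{k_1,k_2\}$ and $\{l_1',l_2'\}$, and in forming $\al\be$ versus $\alh\beh$ the relevant data is which of $j_1'',j_2''$ are joined to each other (via $\be$'s upper non-transversals together with $\be$'s transversals paired through $\be$'s lower part) and which reach lower vertices. A short case analysis on how $\{j_1'',j_2''\}$ sit relative to the transversal-sources $\{k_1'',k_2''\}$ of $\be^\wedge$ — whether $j_1''$, $j_2''$ each lie in a $\be$-block containing some $k_a''$ or in a pure upper non-transversal of $\be$ — yields finitely many subcases, and in each one both $\widehat{\al\be}$ and $\alh\beh$ are seen to have the same blocks.

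The remaining case, $\al\in J_2$ and $\be\in J_0$, is similar but easier: now $\rank(\al\be)=0$, so $\widehat{\al\be}=\al\be$, while $\alh\beh=\alh\be$; one checks that $\al\be=\alh\be$ because replacing $\al$'s two transversals $\{i_1,j_1'\},\{i_2,j_2'\}$ by $\{i_1,i_2\},\{j_1',j_2'\}$ does not change the connected components of $\Pi(\cdot,\be)$ on $\bn\cup\bn'$ — the two ``dangling'' middle vertices $j_1'',j_2''$ get joined together inside $\beh$-processing exactly as the transversal pair would transmit connectivity, since $\be$ has rank $0$ and hence $\bn''$-vertices only connect to other $\bn''$-vertices or to lower vertices of $\be$. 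The main obstacle is simply organizing the case analysis on middle-row connectivity cleanly enough that ``the identity quickly follows'' is genuinely convincing; I would aim to phrase it in terms of an auxiliary equivalence on the two-element set $\{j_1'',j_2''\}$ (namely: are they in the same $\Pi$-component, and does that component meet the lower vertices) so that the bookkeeping collapses to three or four essentially trivial subcases, mirroring the brevity of Lemma \ref{HatRetract}'s proof.
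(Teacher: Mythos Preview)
Your approach is correct in outline and shares the same skeleton as the paper's proof: verify $f$ fixes $I_0$, reduce by symmetry to $\al\in J_2$, and split according to $\rank(\be)$. The difference is entirely in execution. You propose to trace paths through the product graph $\Pi(\al,\be)$ and organise a case analysis on how the middle-row vertices $j_1'',j_2''$ connect to $k_1'',k_2''$. The paper bypasses all of that by exploiting the rigidity of Brauer blocks directly: writing $\al=\partV{a}{b}{A_1}{A_{m-1}}{x}{y}{B_1}{B_{m-1}}$ and $\be$ similarly, the upper non-transversals $A_i$ of $\al$ are automatically upper non-transversals of $\al\be$, the lower non-transversals $D_i$ of $\be$ are automatically lower non-transversals of $\al\be$, and since $\al\be\in\B_n$ has all blocks of size exactly~$2$, the four remaining vertices $a,b,u',v'$ can only be partitioned in three ways---two rank-$2$ configurations or one rank-$0$ configuration. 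In all three, $\widehat{\al\be}$ visibly equals $\partIII{a,b}{A_1}\cdots{A_{m-1}}{u,v}{D_1}\cdots{D_{m-1}}=\alh\beh$. No connectivity analysis is needed.

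A small wobble in your writeup: the sentence claiming ``the middle-row connectivity\dots is exactly the same whether we use $\be$ or $\beh$'' is not literally true (passing to $\beh$ adds the edge $\{k_1'',k_2''\}$), and in any case you are comparing $\widehat{\al\be}$ with $\alh\beh$, not $\al\be$ with $\al\beh$. Your case analysis would still close, but the cleaner route---and the one the paper takes---is to forget the product graph entirely and argue from the block-size constraint.
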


\pf
We know that $\al f=\al$ for $\al\in I_0$, so we just need to show that $\widehat{\al\be}=\alh\beh$ for all $\alpha,\beta\in I_2$.
If $\alpha,\beta\in I_0$ the assertion is obvious, so assume without loss of generality that $\alpha\in J_2$,
and write $\alpha = \partV{a}{b}{A_1}{A_{m-1}}{x}{y}{B_1}{B_{m-1}}$.  We may also write $\be=\partV{c}{d}{C_1}{C_{m-1}}{u}{v}{D_1}{D_{m-1}}$ or $\partIII{c,d}{C_1}\cdots{C_{m-1}}{u,v}{D_1}\cdots{D_{m-1}}$.  In either case, we clearly have $\alh\beh=\partIII{a,b}{A_1}\cdots{A_{m-1}}{u,v}{D_1}\cdots{D_{m-1}}$.  It is also clear that
$\al\be = \partV{a}{b}{A_1}{A_{m-1}}{u}{v}{D_1}{D_{m-1}}$
or $\partV{a}{b}{A_1}{A_{m-1}}{v}{u}{D_1}{D_{m-1}}$ or~$\partIII{a,b}{A_1}\cdots{A_{m-1}}{u,v}{D_1}\cdots{D_{m-1}}$, from which $\widehat{\al\be}=\alh\beh$ follows. \epf

As with $\P_n$, one may show that no ideals larger than $I_1$ ($n$ odd) or $I_2$ ($n$ even) are retractable.
When~$n$ is odd, we have the obvious retractable IN-pair $\C_1=(I_1,\{\overline\id_3\})$, but there are several retractable pairs when $n$ is even.

\begin{lemma}
\label{BCtriples}
For $n$ even, each of $\C_0=(I_0,\{\overline{\id}_2\})$,
$\C_{\S_2}=(I_0,\overline{\S}_2)$,
$\C_2=(I_2,\{\overline{\id}_4\})$,
$\C_K=(I_2,\overline{K})$
is a retractable IN-pair, where $K$ denotes the Klein 4-group.
\end{lemma}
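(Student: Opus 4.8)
The statement asserts that four specific IN-pairs on $\B_n$ (for $n$ even) are retractable. By Definition~\ref{defn:lrmC0}, an IN-pair $\C=(I,N)$ is retractable if (a) $I$ is a retractable ideal (Definition~\ref{defn:retractable}), and (b) every element of $N$ acts the same way on the minimal ideal $M=I_0$: i.e., $|xN|=|Nx|=1$ for all $x\in M$. We already have all the pieces for (a): Lemma~\ref{Bretract} shows $f:I_2\to I_0:\al\mt\alh$ is a retraction, so $I_2$ is retractable, and hence so is the smaller ideal $I_0$ (by the remark after Definition~\ref{defn:retractable} that sub-ideals of retractable ideals are retractable). For (b), when $N=\{\overline\id_2\}$ or $N=\{\overline\id_4\}$ is trivial the condition holds vacuously, so the real content is the two proper pairs $\C_{\S_2}=(I_0,\overline\S_2)$ and $\C_K=(I_2,\overline K)$.

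First I would dispose of the two trivial-$N$ cases in a single sentence, citing Lemma~\ref{Bretract} and the monotonicity of retractability. Then I would handle $\C_{\S_2}$: here $M=I_0$ and $\overline\S_2=\{\be,\ga\}$ where, in the $\B_n$-normalisation, $\be=\partV{1}{2}{3,4}{n-1,n}{1}{2}{3,4}{n-1,n}$ is the relevant idempotent and $\ga$ is obtained by swapping the two lower vertices $1',2'$. Given $\al\in I_0=J_0$ (a rank-$0$ Brauer element), I would compute $\al\be$ and $\al\ga$ directly: the product only depends on which pair of upper vertices of $\be$ (respectively $\ga$) gets linked into the lower non-transversals of $\al$, and swapping $1'\leftrightarrow2'$ in $\be$ only matters if $\{1,2\}$ is \emph{not} already a block of $\al$ — but the vertices $1,2$ of $\be$ are joined (via the transversals $\{1,1'\},\{2,2'\}$ in $\be$ versus $\{1,2'\},\{2,1'\}$ in $\ga$) to whichever blocks of $\al$ contain $1',2'$, and since in a Brauer element every lower vertex lies in a \emph{size-$2$} block, the relevant case analysis is short. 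The upshot (mirroring Lemma~\ref{ThreeTriples}) is $\al\be=\al\ga$ in all cases, giving $|\al\,\overline\S_2|=1$; the equality $|\overline\S_2\,\al|=1$ is dual. The same argument works for $\C_K$: here $M=I_0$ still, $\overline K\subseteq\overline\S_4\subseteq J_4$, and the four elements of $\overline K$ are the identity and the three double transpositions on $\{1,2,3,4\}$ (fixed on $\{5,\dots,n\}$); acting on a rank-$0$ Brauer element $\al$, each of them produces the same product because the double transpositions only permute the four "feet" $1,2,3,4$ among themselves, and these are attached to $\al$ through the lower non-transversals of $\al$ in a way that is insensitive to that permutation — again a finite case check over how $\{1,2,3,4\}$ is partitioned by $\coker(\al)$.

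The main obstacle, such as it is, is purely bookkeeping: verifying $|Nx|=|xN|=1$ requires being careful that the $\B_n$-normalisation of $\overline\S_q$ (Section~\ref{sect:Bn}) puts the "moving" points at positions $1,\dots,q$ and the "frozen" cups at positions $q+1,\dots,n$, and then tracking how composing with a rank-$0$ element on the right resolves all the $q$ moving feet into the non-transversal structure. For $\overline K$ there is the extra wrinkle that $q=4$, so one must check that \emph{all} of $(1,2)(3,4)$, $(1,3)(2,4)$, $(1,4)(2,3)$ — not just generators — act identically; but since the condition $|xN|=1$ is about the \emph{set} $xN$, it suffices to note that $x\si=x\tau$ for any \emph{single} pair and extend, or equivalently observe that the product $\al\cdot\overline\si$ for $\al\in J_0$ and $\si\in\S_4$ depends only on the coset structure induced by $\coker(\al)$ restricted to $\{1,2,3,4\}$, which is $K$-invariant because $K$ acts on $\{1,2,3,4\}$ as a group of even permutations preserving every partition into blocks of size $\le2$ that can arise. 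I would present this as two short displayed case analyses (one for $\overline\S_2$, one for $\overline K$), exactly parallel to the proof of Lemma~\ref{ThreeTriples}, and no genuinely new idea is needed beyond Lemma~\ref{Bretract}.
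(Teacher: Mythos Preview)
Your overall plan is correct and matches the paper's approach: retractability of $I_0$ and $I_2$ follows from Lemma~\ref{Bretract} (and monotonicity), the trivial-$N$ pairs are immediate, and the substance is checking $|x\overline{\S}_2|=|\overline{\S}_2 x|=1$ and $|x\overline{K}|=|\overline{K}x|=1$ for $x\in I_0$. The paper does exactly this, with a product-graph argument in place of your proposed case analysis.

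There is one imprecision worth flagging. For $\C_K$, you say the verification is ``a finite case check over how $\{1,2,3,4\}$ is partitioned by $\coker(\al)$''. This is not the right object: $\coker(\al)$ need not partition $\{1,2,3,4\}$ at all (for instance $\{1,5\}$ and $\{2,6\}$ could be $\coker(\al)$-classes). What actually matters is the $2$-partition of $\{1'',2'',3'',4''\}$ induced by connectivity in the middle layer of the product graph $\Pi(\al,\overline{\id}_4)$ --- i.e., which pairs among $1'',2'',3'',4''$ are joined by paths alternating between $\coker(\al)$-edges and the upper non-transversals $\{5,6\},\dots,\{n-1,n\}$ of $\overline{\id}_4$. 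That connectivity depends on all of $\coker(\al)$, not just its restriction to $\{1,2,3,4\}$. The paper makes this explicit: it argues that there are paths $\pi_{12}$ (from $1''$ to $2''$) and $\pi_{34}$ (from $3''$ to $4''$), say, living entirely in $\bn''$, and then checks for each $\gamma\in K\setminus\{\id_4\}$ that attaching the transversal edges of $\overline{\gamma}$ to these paths reproduces the same lower blocks. Your later remark that $K$ preserves every $2$-partition of $\{1,2,3,4\}$ is the correct underlying reason, and once you replace ``partitioned by $\coker(\al)$'' with ``partitioned by middle-layer connectivity'' your argument is sound and essentially identical to the paper's.
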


\pf
Having already verified that $f$ is a retraction, we only need to check that
$|\alpha\overline{\S}_2|=|\overline{\S}_2\alpha|=|\alpha \overline{K}|=|\overline{K}\alpha|=1$ for any
$\alpha=\partIII{A_1}{A_2}\cdots{A_{m}}{B_1}{B_2}\cdots{B_{m}}\in I_0$.

For the $\S_2$ part of the assertion, it is clearly sufficient to show
that $\alpha\;\!\overline{\id}_2=\alpha\overline{(1, 2)}$.
This 
follows immediately by noting that both these Brauer elements have upper blocks $A_1,\dots,A_m$ and the lower blocks $\{1,2\},\{3,4\},\dots,\{n-1,n\}$.

Let us now prove the assertion for $K$.
Clearly, it is sufficient to prove that
$\alpha \;\!\overline{\id}_4=\alpha\overline{\gamma}$
for all $\gamma\in K\setminus\{\id_4\}$.
The upper blocks of $\alpha\;\!\overline{\id}_4$ are $A_1,\dots,A_m$.
The lower blocks are $\{5,6\},\dots,\{n-1,n\}$, and two blocks
partitioning $\{1,2,3,4\}$.
Without loss we may assume that these last two blocks are in fact $\{1,2\}$ and $\{3,4\}$.
This means that in the product graph $\Pi(\alpha,\overline{\id}_4)$
there is a path from $1'$ to $2'$ and a path from $3'$ to $4'$.
But this is equivalent to saying that there is a path $\pi_{12}$ from $1''$ to $2''$, and a path $\pi_{34}$ from $3''$ to $4''$.
Now consider the Brauer element $\alpha\overline{\gamma}$.
Its upper blocks are also $A_1,\dots,A_m$, and the lower blocks include $\{5,6\},\dots,\{n-1,n\}$.
Notice that the paths $\pi_{12}$ and $\pi_{34}$ are also present in the product graph $\Pi(\alpha,\overline{\gamma})$, as they involve only vertices from $\bn''$.

Consider the case in which $\gamma=(1,2)(3,4)$. Then concatenating the edge $2'\to1''$ with the path $\pi_{12}$ and the edge $2''\to1'$ yields a path from $2'$ to $1'$ in $\Pi(\alpha,\overline{\gamma})$.
It follows that $\{1,2\}$ and $\{3,4\}$ are lower blocks of~$\alpha\overline{\gamma}$, and hence $\alpha\overline{\gamma}=\alpha\;\!\overline{\id}_4$.
The cases in which $\gamma=(1,3)(2,4)$ or $(1,4)(2,3)$ are analogous. \epf

We denote the congruences arising from these pairs
$\lambda_0$, $\rho_0$, $\mu_0$, $\lambda_{\S_2}$, $\rho_{\S_2}$, $\mu_{\S_2}$,
$\lambda_2$, $\rho_2$, $\mu_2$, $\lambda_K$, $\rho_K$, $\mu_K$, respectively.

\begin{thm}\label{CongBn}
Let $n\geq3$, and let $\B_n$ be the Brauer monoid of degree $n$.    Also let $z\in\{0,1\}$ be such that $n\equiv z\pmod{2}$.
\bit
\itemit{i} The ideals of $\B_n$ are the sets $I_q$, for $q=z,z+2,\ldots,n$, yielding the Rees congruences $R_q=R_{I_q}$, as in Definition \ref{defn:Rees}.
\itemit{ii} The proper IN-pairs of $\B_n$ are of the form $(I_{q-2},\overline N)$, for $q=z+2,z+4,\ldots,n$ and for any non-trivial normal subgroup $N$ of $\S_q$, yielding the congruences $R_N=R_{I_{q-2},\overline N}$, as in Definition \ref{defn:RC}.
\itemit{iii} For $n$ odd, the only retractable IN-pair of $\B_n$ is $\C_1=(I_1,\{\overline\id_3\})$, yielding the congruences $\lambda_1$, $\rho_1$, $\mu_1$, as in Definition \ref{defn:lrmC}.
\itemit{iv} For $n$ even, the retractable IN-pairs of $\B_n$ are $\C_0=(I_0,\{\overline{\id}_2\})$, $\C_{\S_2}=(I_0,\overline{\S}_2)$, $\C_2=(I_2,\{\overline{\id}_4\})$ and $\C_K=(I_2,\overline{K})$, yielding the congruences $\lambda_0$, $\rho_0$, $\mu_0$, $\lambda_{\S_2}$, $\rho_{\S_2}$, $\mu_{\S_2}$,
$\lambda_2$, $\rho_2$, $\mu_2$, $\lambda_K$, $\rho_K$, $\mu_K$, respectively, as in Definition \ref{defn:lrmC}.  Here, $K$ denotes the Klein 4-group.
\itemit{v} The above congruences are distinct, and they exhaust all the congruences of $\B_n$.
\itemit{vi} The Hasse diagram of the congruence lattice $\Cong(\B_n) $ is shown in Figure \ref{fig:Hasse_Bn}.
\itemit{vii} The $\ast$-congruences of $\B_n$ are the same, but with $\lam_1$, $\rho_1$ ($n$ odd) or $\lambda_0$, $\lambda_{\S_2}$, $\lambda_2$, $\lambda_K$, $\rho_0$, $\rho_{\S_2}$, $\rho_2$, $\rho_K$ ($n$~even) excluded.
\eit
\end{thm}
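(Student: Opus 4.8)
## Proof proposal for Theorem \ref{CongBn}

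The plan is to follow the blueprint established for $\P_n$ in Section \ref{sect:Pn} as faithfully as possible, splitting the argument according to the parity of $n$. Parts (i)--(iv) are structural: they recall the ideal and IN-pair data assembled above. The content lies in (v)--(vii). For (v), by Lemma \ref{lem:principal} it suffices to show that every principal congruence on $\B_n$ appears in the list; then Propositions \ref{prop-CR2}--\ref{prop-CR5} pin down the lattice, giving (vi), and (vii) follows by tracking $\ast$-compatibility exactly as in Corollary \ref{cor:main_Pn} (the $\mu$- and $R$-type congruences are $\ast$-stable because the retractions of Definitions \ref{defn:hat} and \ref{defn:retr_Bn} commute with $\ast$, and $\nu_N$ is $\ast$-invariant by Definition \ref{Rel-nu}, whereas the $\lambda,\rho$ congruences are swapped by $\ast$ and hence excluded).

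First I would handle the \textbf{upper congruences} --- the chain $R_z \subseteq R_{z+2} \subseteq R_N \subseteq \cdots \subseteq R_n = \nabla$ above the topmost ``diamond''. Since $\I_n$ does not embed in $\B_n$, the role previously played by $\I_n$ and Liber's theorem must be replaced. The natural substitute is the minimal ideal $I_z$ together with a surrogate ``small'' subsemigroup: for each rank $q$ I would adapt Lemmas \ref{lemma-aa1}, \ref{lemma-aa1b}, \ref{lemma-aa1a} and \ref{lemma-aa2} to $\B_n$ using the distinguished copies $\overline{\S}_q$ and the modified $\ga,\de$ noted in the statement (with blocks $\{q{+}1,q{+}2\},\ldots,\{n{-}1,n\}$ padding out the non-transversals). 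The key point is that given $(\al,\be)$ in $R_q\setminus R_{\S_q}$ (resp.\ $R_N\setminus R_H$ with $H$ the largest normal subgroup of $\S_q$ properly inside $N$), one produces via idempotent ``sandwiching'' $\al=\al\gamma\al$ with $\gamma$ of the same rank, reduces to elements of $\overline{\S}_q\cdot(\text{idempotents})$, and invokes the \emph{inductive} hypothesis that $R_{q-2}$ and the lower congruences are already principal --- so that the recursion bottoms out at the minimal ideal rather than at $\I_n$. Concretely I would prove an analogue of Lemma \ref{lemma-aa2} asserting $R_q = (\xi\cap(\langle\overline{\S}_q,\ \mathrm{idempotents\ of}\ J_q\cup I_{q-2}\rangle)^2)^\sharp$, and then mirror Proposition \ref{prop-aa3} case by case using Lemma \ref{lemma-aa1b} to force a rank drop when $(\al,\be)\notin\H$.

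Next I would treat the \textbf{lower congruences}, and here the two parities genuinely diverge. For $n$ \emph{odd}, $I_1=J_1$ is already the minimal ideal and $\H$-trivial, so the only retraction is the identity on $I_1$; the lattice below $R_3$ is just the single diamond $\{\Delta=\mu_1=\eta_1,\ \lambda_1,\ \rho_1,\ R_1\}$, and the arguments for $\lambda_1,\rho_1,R_1$ being principal are verbatim copies of the relevant parts of Propositions \ref{prop:small_congruences} and \ref{prop:joins2}, reading off $\L^M,\R^M$ on the rectangular band $J_1$ (the ``$\mu_1$'' steps simply collapse since $\mu_1=\Delta$). For $n$ \emph{even}, the new retraction $f:I_2\to I_0$ of Definition \ref{defn:retr_Bn} and Lemma \ref{Bretract} gives two stacked diamonds (from $\C_0,\C_{\S_2}$ at the bottom and $\C_2,\C_K$ above), exactly the configuration of Proposition \ref{prop-CR4a} and Figure \ref{fig-CR} (right), adorned with the $\mu_{\S_2},\mu_K$ congruences coming from the non-trivial subgroups $\overline{\S}_2\le\S_2$ and $\overline{K}\le\S_4$. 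The generating-pair analysis then proceeds as in Section \ref{sect:Pn}: principality of $\rho_0,\lambda_0,\mu_2,\mu_K$ via the direct sandwiching argument of Proposition \ref{prop:small_congruences} (using that elements of $N$ act identically on $M$, Lemma \ref{BCtriples}), and principality of $R_0,\rho_2,\lambda_2,R_2$ as joins, via Propositions \ref{prop-CR4}--\ref{prop-CR4a} and the explicit reductions of Proposition \ref{prop:joins2}. Throughout, every constructed partition must be checked to lie in $\B_n$ --- i.e.\ all blocks have size exactly $2$ --- which forces minor reshufflings (e.g.\ singleton ``padding'' blocks must be paired up), but no conceptual difficulty.

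The \textbf{main obstacle} I anticipate is the replacement of the $\I_n$-based machinery in the upper part of the lattice. In Section \ref{sect:Pn}, Liber's classification of $\Cong(\I_n)$ is quoted repeatedly to conclude that a congruence on $\P_n$ containing a pair of partial permutations of a given rank must contain the whole Rees or $R_N$ congruence; without $\I_n\hookrightarrow\B_n$ this shortcut is unavailable. I would instead argue \emph{internally to $\B_n$}: set up the recursion on rank so that when analysing $R_q\setminus R_{\S_q}$ or $R_N\setminus R_H$ one is entitled to assume the statement for all strictly smaller ranks (the base of the recursion being the minimal ideal $J_z$, handled by Proposition \ref{prop-CR5} directly). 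The delicate bookkeeping is in Lemma \ref{lemma-aa1a}'s analogue --- producing ``$\overline{\S}_q$-like'' witnesses $\al_1,\be_1$ with $\phi(\al_1,\be_1)=\phi(\al,\be)$ purely inside $\B_n$ --- and in verifying that the sandwich elements realising a rank drop (Lemma \ref{lemma-aa1b}, Subcase 2.2) can be taken in $\B_n$; here one pairs up the ``extra'' points $i,j$ into a single block $\{a_q,j\}$ as in the displayed $\gamma$ of Lemma \ref{lemma-aa1b}, which already has the right form. Once these Brauer-specific versions of Lemmas \ref{lemma-aa1}--\ref{lemma-aa2} and Propositions \ref{prop:small_congruences}--\ref{prop-aa3} are in place, assembling them into the complete list and reading off Figure \ref{fig:Hasse_Bn} is routine.
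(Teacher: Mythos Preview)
Your overall strategy --- exhaust all principal congruences and then invoke Lemma \ref{lem:principal} and Propositions \ref{prop-CR2}--\ref{prop-CR5} --- matches the paper's, and you correctly identify the main obstacle as the absence of an $\I_n$-based shortcut. However, your proposed replacements differ from what the paper actually does, and in two places you underestimate the work involved.

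For the upper chain, rather than building an $\I_n$-surrogate or a subsemigroup $\langle\overline{\S}_q,\text{idempotents}\rangle$, the paper exploits the \emph{local submonoids} $\ve_q\B_n\ve_q\cong\B_q$. The key recursion is: if $(\1,\al)\in\xi$ with $\rank(\al)<n$, then one chooses $\si\in\S_n$ so that $\al\si\al\si^*$ has strictly smaller rank, eventually reaching $J_z$ and forcing $\xi=R_n$ (Lemmas \ref{lem:1z_Bn}--\ref{lem:1q_Bn}). Then for a pair $(\al,\be)$ with $\rank(\al)=q>\rank(\be)$, sandwiching by the $\ga_1,\ga_2$ you mention lands the pair inside $\ve_q\B_n\ve_q$, where the \emph{already-proved} statement for $\B_q$ applies (Lemma \ref{lem:qp_Bn}). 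For the $R_N$ case, instead of an analogue of Lemma \ref{lemma-aa1a}, the paper argues directly with the normal closure: writing $\pi=(1\,2\,3)$ as a product of conjugates of $\si$ in $\S_q$ yields $(\ve_q,\overline\pi)\in\xi$, and then conjugating a carefully chosen rank-$(q{-}2)$ idempotent by $\overline\pi$ produces a non-$\H$-related pair in $J_{q-2}$ (Lemma \ref{lem:thN_Bn}). This is both cleaner and more direct than porting the $\I_n$-lemmas wholesale.

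For the lower congruences, your claim that the arguments are ``verbatim copies'' of Propositions \ref{prop:small_congruences}--\ref{prop:joins2} is too optimistic. The proof that $\rho_z$ is principal (Proposition \ref{prop:rl_Bn}) is substantially harder in $\B_n$, especially for $n$ odd, where one must first show every rank-$1$ element is $\xi$-related to one with $\codom=\{n\}$, then bootstrap via an embedded copy of $\B_{n-1}$ (even degree, already handled). The block-size constraint forces all the ``padding'' manipulations to be done with $2$-blocks, and this is where the real case analysis lies. Similarly, the $\mu_{\S_2},\mu_2,\mu_K$ arguments (Proposition \ref{prop:nek}) are new: for $\mu_2$ one must handle both the case $\rank(\be)=0$ and the case $\rank(\be)=2$ with $(\al,\be)\notin\H$, and for $\mu_K$ one first drops rank to reduce to $\mu_2$. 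You also omit $\mu_{\S_2}$ from your list of principal congruences, and should note explicitly that $\lambda_{\S_2},\rho_{\S_2},R_{\S_2},\lambda_K,\rho_K,R_K$ are \emph{not} principal (Remark \ref{rem:nonprincipal}), so that Lemma \ref{lem:principal} really is needed.
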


\pf
The overall structure of the proof is the same as that of Theorem \ref{thm-CongPn}.
That all the listed relations are congruences follows from Proposition \ref{prop-AreCongs}, and
that they form sublattices of $\Cong(\B_n)$ as shown in Figure~\ref{fig:Hasse_Bn}
follows from Propositions \ref{prop-CR2}--\ref{prop-CR5}.
It remains to be proved that there are no other congruences, which can be done by showing that all the principal congruences are among the listed ones (cf.~Lemma~\ref{lem:principal}).
The principal congruences are shaded blue in Figure \ref{fig:Hasse_Bn}, 
and we prove that they are principal in the remainder of the section, characterising their minimal generating sets.
Again, the proof is completed by observing that this covers all possible pairs; see Table \ref{BnCongGens}.
\epf

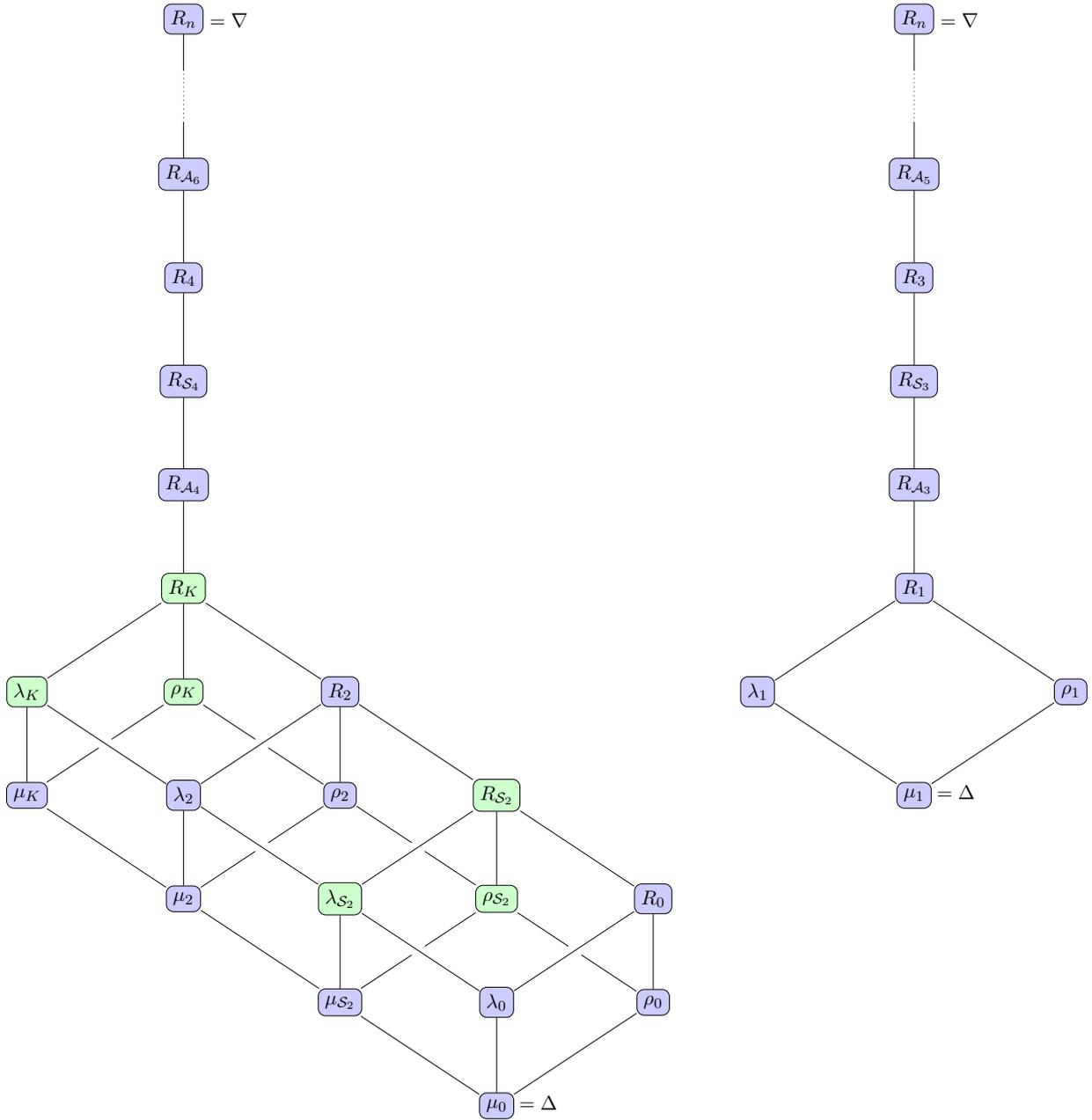
\begin{figure}[ht]
\begin{center}
\scalebox{0.78}{
\begin{tikzpicture}[scale=1]
\dottedinterval0{16}11
\node[rounded corners,rectangle,draw,fill=blue!20] (N) at (0,19) {$R_n$};
  \draw(0.87,18.98) node {$=\nabla$};
\node[rounded corners,rectangle,draw,fill=blue!20] (A6) at (0,16) {$R_{\A_6}$};
\node[rounded corners,rectangle,draw,fill=blue!20] (R4) at (0,14) {$R_4$};
\node[rounded corners,rectangle,draw,fill=blue!20] (S4) at (0,12) {$R_{\S_4}$};
\node[rounded corners,rectangle,draw,fill=blue!20] (A4) at (0,10) {$R_{\A_4}$};
\node[rounded corners,rectangle,draw,fill=green!20] (krl) at (0,8) {$R_K$};
\node[rounded corners,rectangle,draw,fill=green!20] (kr) at (-3,6) {$\lambda_K$};
\node[rounded corners,rectangle,draw,fill=green!20] (kl) at (0,6) {$\rho_K$};
\node[rounded corners,rectangle,draw,fill=blue!20] (k) at (-3,4) {$\mu_K$};
\node[rounded corners,rectangle,draw,fill=blue!20] (erl) at (3,6) {$R_2$};
\node[rounded corners,rectangle,draw,fill=blue!20] (er) at (0,4) {$\lambda_2$};
\node[rounded corners,rectangle,draw,fill=blue!20] (el) at (3,4) {$\rho_2$};
\node[rounded corners,rectangle,draw,fill=blue!20] (e) at (0,2) {$\mu_2$};
\node[rounded corners,rectangle,draw,fill=green!20] (nrl) at (6,4) {$R_{\S_2}$};
\node[rounded corners,rectangle,draw,fill=green!20] (nr) at (3,2) {$\lambda_{\S_2}$};
\node[rounded corners,rectangle,draw,fill=green!20] (nl) at (6,2) {$\rho_{\S_2}$};
\node[rounded corners,rectangle,draw,fill=blue!20] (n) at (3,0) {$\mu_{\S_2}$};
\node[rounded corners,rectangle,draw,fill=blue!20] (rl) at (9,2) {$R_0$};
\node[rounded corners,rectangle,draw,fill=blue!20] (r) at (6,0) {$\lambda_0$};
\node[rounded corners,rectangle,draw,fill=blue!20] (l) at (9,0) {$\rho_0$};
\node[rounded corners,rectangle,draw,fill=blue!20] (D) at (6,-2) {$\mu_0$};
\draw(6.8,-1.95) node {$=\Delta$};
\draw
(D)--(n)--(e)--(k)
(l)--(nl)--(el)--(kl)
(k)--(kl) (e)--(el) (n)--(nl) (D)--(l) 
(k)--(kr) (e)--(er) (n)--(nr) (D)--(r) 
(l)--(rl) (nl)--(nrl) (el)--(erl) (kl)--(krl)
(rl)--(nrl)--(erl)--(krl)--(A4)--(S4)--(R4)--(A6)
;
\fill[white] (-1.5,5)circle(.15);
\fill[white] (1.5,3)circle(.15);
\fill[white] (4.5,1)circle(.15);
\fill[white] (1.5,5)circle(.15);
\fill[white] (4.5,3)circle(.15);
\fill[white] (7.5,1)circle(.15);
\draw
(r)--(nr)--(er)--(kr)
(r)--(rl) (nr)--(nrl) (er)--(erl) (kr)--(krl) 
;
\begin{scope}[shift={(14,0)}]
\dottedinterval0{16}11
\node[rounded corners,rectangle,draw,fill=blue!20] (N) at (0,19) {$R_n$};
  \draw(0.87,18.98) node {$=\nabla$};
\node[rounded corners,rectangle,draw,fill=blue!20] (A6) at (0,16) {$R_{\A_5}$};
\node[rounded corners,rectangle,draw,fill=blue!20] (R4) at (0,14) {$R_3$};
\node[rounded corners,rectangle,draw,fill=blue!20] (S4) at (0,12) {$R_{\S_3}$};
\node[rounded corners,rectangle,draw,fill=blue!20] (A4) at (0,10) {$R_{\A_3}$};
\node[rounded corners,rectangle,draw,fill=blue!20] (krl) at (0,8) {$R_1$};
\node[rounded corners,rectangle,draw,fill=blue!20] (kr) at (-3,6) {$\lambda_1$};
\node[rounded corners,rectangle,draw,fill=blue!20] (erl) at (3,6) {$\rho_1$};
\node[rounded corners,rectangle,draw,fill=blue!20] (er) at (0,4) {$\mu_1$};
\draw(0.8,4.05) node {$=\Delta$};
\draw
(krl)--(kr)--(er)--(erl)--(krl)--(A4)--(S4)--(R4)--(A6)
;
\end{scope}
\end{tikzpicture}
}
\end{center}
\vspace{-7mm}
\caption{Hasse diagram for the congruence lattice $\Cong(\B_n)$ in the case of $n$ even (left) and $n$ odd (right).  Congruences shaded blue are principal; those shaded green are minimally generated by two pairs of Brauer elements.}
\label{fig:Hasse_Bn}
\end{figure}

\begin{table}
\begin{center}
\begin{tabular}{|r|l|c|l|} \hline
\multicolumn{1}{|c|}{\boldmath{$q$}} & \textbf{Additional conditions} & \boldmath{$(\alpha,\beta)^\sharp$} & \textbf{Reference} \\ \hline\hline
&$\alpha=\beta$ & $\Delta$ & \\ \hline
$z$& $(\alpha,\beta)\in{\R}\sm\Delta$ &
$\rho_z$ & Proposition \ref{prop:rl_Bn}(i)\\ \hline
$z$& $(\alpha,\beta)\in{\L}\sm\Delta$ &
$\lambda_z$ & Proposition \ref{prop:rl_Bn}(ii)\\ \hline
$z$& $(\alpha,\beta)\not\in{\L}\cup{\R}$ &
$R_z$ & Proposition \ref{prop:R01_Bn}\\ \hline
2& $(\alpha,\beta)\in{\H}\sm\Delta$ &
$\mu_{\S_2}$ & Proposition \ref{prop:nek}(i)\\ \hline
2& $\alh=\beh$, $(\al,\be)\not\in{\H}$ &
$\mu_2$ & Proposition \ref{prop:nek}(ii)\\ \hline
2& $(\widehat{\alpha},\widehat{\beta})\in{\R}\sm\De$ &
$\rho_2$ & Proposition \ref{prop:joins2_Bn}(i)\\ \hline
2& $(\widehat{\alpha},\widehat{\beta})\in{\L}\sm\De$ &
$\lambda_2$ & Proposition \ref{prop:joins2_Bn}(ii)\\ \hline
2& $(\widehat{\alpha},\widehat{\beta})\not\in{\L}\cup{\R}$ &
$R_2$ & Proposition \ref{prop:joins2_Bn}(iii)\\ \hline
4& $(\alpha,\beta)\in{\H}\setminus \Delta$, \ $\phi(\al,\be)\in K$ &
$\mu_K$ & Proposition \ref{prop:nek}(iii)\\ \hline
$\geq3$&  $(\alpha,\beta)\not\in{\H}$ & $R_q$ & Proposition \ref{prop:chain_Bn}\\ \hline
$\geq3$&  $(\alpha,\beta)\in{\H}\setminus \Delta$, \ $\phi(\al,\be)\not\in K$ & $R_N$ & Proposition \ref{prop:chain_Bn}\\ \hline
\end{tabular}
\caption{The principal congruences $(\alpha,\beta)^\sharp$ on $\B_n$, 
with $q=\rank(\alpha)\geq\rank(\beta)$.
Here, $K$ is the Klein 4-group.  In the last row, $N$ stands for the normal closure in $\S_q$ of $\phi(\alpha,\beta)$.}
\label{BnCongGens}
\end{center}
\end{table}

As was the case with $\P_n$, it will be convenient to give explicit criteria for membership in each of the lower congruences on $\B_n$.  For arbitrary $n$:
\begin{itemize}\begin{multicols}{2}
\item $\rho_z=\De \cup \set{(\al,\be)\in I_z\times I_z}{\ker(\al)=\ker(\be)}$,

\item $\lam_z=\De \cup \set{(\al,\be)\in I_z\times I_z}{\coker(\al)=\coker(\be)}$,
\end{multicols}\eit
and for even $n$:
\begin{itemize}
\begin{multicols}{2}
\item $\rho_{\S_2}=\rho_0\cup\big( {\H}\cap(J_2\times J_2)\big)$,
\item $\mu_{\S_2}=\De\cup\big( {\H}\cap(J_2\times J_2)\big)$,
\item $\lam_{\S_2}=\lam_0\cup\big( {\H}\cap(J_2\times J_2)\big)$,
\item[] ~
\end{multicols}
\begin{multicols}{2}
\item $\rho_2=\De\cup\set{(\al,\be)\in I_2\times I_2}{\ker(\alh)=\ker(\beh)}$,
\item $\mu_2=\De\cup\set{(\al,\be)\in I_2\times I_2}{\alh=\beh}$,
\item $\lam_2=\De\cup\set{(\al,\be)\in I_2\times I_2}{\coker(\alh)=\coker(\beh)}$,
\item[] ~
\end{multicols}
\begin{multicols}{3}
\item $\rho_K=\rho_2\cup\nu_K$,
\item $\lam_K=\lam_2\cup\nu_K$,
\item $\mu_K=\mu_2\cup\nu_K$,
\end{multicols}
\eit
where $\nu_K=\set{(\al,\be)\in J_4\times J_4}{\al\H\be,\ \phi(\al,\be)\in K}$.

We now embark on the task of describing the generating pairs for the principal congruences indicated in Figure \ref{fig:Hasse_Bn}.
We begin with the lower congruences, where the arguments, of necessity, will depend on the parity of $n$. First we offer a unified treatment of the congruences $\lambda_z$, $\rho_z$, $R_z$.  A \emph{2-partition} of a set is a partition of the set in which each block has size 2.

\newpage

\begin{prop}\label{prop:rl_Bn}
The relations $\rho_z$ and $\lam_z$ are principal congruences on $\B_n$.  Moreover, if $\al,\be\in\B_n$, then
\begin{itemize}\begin{multicols}{2}
\itemit{i} $\rho_z=\cg\al\be\iff (\al,\be)\in\rho_z\sm\De$, 
\itemit{ii} $\lam_z=\cg\al\be\iff (\al,\be)\in\lam_z\sm\De$.
\end{multicols}\end{itemize}
\end{prop}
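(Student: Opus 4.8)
The plan is to prove statement (i), since (ii) then follows by duality via the anti-involution $\ast$: indeed $\rho_z=\cg{\al}{\be}$ iff $\lam_z=\cg{\al^\ast}{\be^\ast}$, using Remark~\ref{rem:green_sumbonoids}(i) (which tells us $\ast$ swaps $\R$ and $\L$, hence $\rho_z$ and $\lam_z$ restricted to $I_z$), exactly as in the proof of Proposition~\ref{prop:small_congruences}. The ``only if'' direction of (i) is immediate: $\rho_z$ is a congruence, so $\cg{\al}{\be}\sub\rho_z$ for any $(\al,\be)\in\rho_z$, and if $(\al,\be)\in\De$ then $\cg{\al}{\be}=\De\ne\rho_z$ (the latter being nontrivial since $z\in\{0,1\}$ and $n\geq3$, so $I_z$ contains distinct $\R$-related elements).

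For the ``if'' direction, fix $(\al,\be)\in\rho_z\sm\De$ and write $\xi=\cg{\al}{\be}$; we must show $\rho_z\sub\xi$. So $\al,\be\in I_z=J_z$ is the minimal ideal, $\al\ne\be$, and $\ker(\al)=\ker(\be)$, hence (by Remark~\ref{rem:green_sumbonoids}(iii), since the domain of a Brauer element is determined by its kernel) $\al\R\be$ but $\al\not\L\be$, i.e. $\coker(\al)\ne\coker(\be)$. The strategy mirrors Proposition~\ref{prop:small_congruences}(i): introduce a canonical form, and prove by induction that every element of $J_z$ is $\xi$-related to its canonical form. For $z=0$: a rank-$0$ Brauer element is just a $2$-partition of $\bn$ into an upper $2$-partition (its kernel) and a lower $2$-partition (its cokernel); for each kernel there is a distinguished cokernel --- say the ``standard'' $2$-partition $\{1,2\},\{3,4\},\ldots$ --- and write $\si'$ for the rank-$0$ Brauer element with $\ker(\si')=\ker(\si)$ and this standard cokernel. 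For $z=1$ one does the analogous thing with the single transversal. I would prove: for all $\si\in J_z$ with $\ker(\si)=\ker(\al)$ (equivalently $\si\R\al$), we have $\si\mathrel\xi\si'$. Since $\al'=\be'$ (same kernel, same standard cokernel), this gives $\al\mathrel\xi\al'=\be'\mathrel\xi\be$ --- wait, that's backwards; rather, once the claim is proved, for arbitrary $(\ga,\de)\in\rho_z$ with $\ga\ne\de$ we get $\ker(\ga)=\ker(\de)$ so $\ga\mathrel\xi\ga'=\de'\mathrel\xi\de$, hence $(\ga,\de)\in\xi$, which is what we want. To bootstrap, first use $(\al,\be)$ itself: since $\coker(\al)\ne\coker(\be)$, there is a lower pair $(i,j)$ distinguishing them, and multiplying $\al,\be$ on left and right by suitable Brauer elements produces, from $(\al,\be)$, a new pair of rank-$z$ elements with the same kernel as any prescribed $\si$ but cokernels differing in a controlled way, driving the induction.

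The main technical work --- and the main obstacle --- is the inductive step: given $\si\in J_z$ whose cokernel is not yet standard, one must exhibit Brauer elements $\ga,\de\in\B_n$ with $(\si\ga\de\ldots)$ --- more precisely, starting from the generating pair $(\al,\be)$, build $(\si_1,\si_2)=(\mu\al\nu,\mu\be\nu)\in\xi$ with $\ker(\si_1)=\ker(\si_2)=\ker(\si)$, $\coker(\si_1)=\coker(\si)$, and $\coker(\si_2)$ ``closer'' to standard (in some well-founded measure, e.g. lexicographic on the sorted block-minima). The constraint that \emph{all blocks have size exactly $2$} makes the bookkeeping fussier than in $\P_n$: one cannot freely split or merge blocks, so the pre- and post-multipliers must be genuine $2$-partitions, and one has to check that the products land in the right $\R$- and $\L$-classes and actually change the cokernel in the intended way without disturbing the kernel. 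I expect this to be a careful but elementary case analysis on how $(i,j)$ sits relative to the blocks of $\si$, handled separately for $n$ even ($z=0$) and $n$ odd ($z=1$); in the odd case the transversal is simply carried along passively. A sanity check at the end: the argument shows $\rho_z$ is generated by \emph{any} pair in $\rho_z\sm\De$, consistent with $\rho_z$ being an atom-like principal congruence sitting just above $\mu_0=\De$ (for $n$ even) or just above $\mu_1=\De$ (for $n$ odd) in Figure~\ref{fig:Hasse_Bn}.
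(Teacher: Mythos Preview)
Your strategy---duality for (ii), a canonical cokernel form $\si'$ for each $\si\in J_z$, and proving $\si\mathrel\xi\si'$ by induction on a distance-to-standard measure---matches the paper's proof. The even case is as you describe: the paper sets $H_i=\{2i-1,2i\}$, defines $h(\si)$ as the first index where $\coker(\si)$ deviates from $H_1,\ldots,H_m$, and from a distinguishing block $\{x,y\}\in\coker(\al)\sm\coker(\be)$ builds a post-multiplier $\tau$ with $\si=\si\be\tau$ and $h(\si\al\tau)>h(\si)$. One minor slip: your claim must be for \emph{all} $\si\in J_z$, not just those with $\ker(\si)=\ker(\al)$, or the conclusion for arbitrary $(\ga,\de)\in\rho_z$ fails; but since left-multiplication by $\si$ already overwrites the kernel, the paper's post-multiplication argument covers all kernels and no separate pre-multiplier $\mu$ is needed.

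The genuine gap is the odd case: the transversal \emph{cannot} be carried along passively. Since $\si'$ must depend only on $\ker(\si)$ (otherwise $\ga'=\de'$ fails for $(\ga,\de)\in\rho_1$ with different codomains), one must fix $\codom(\si')$---the paper takes $\codom(\si')=\{n\}$---and moving the transversal foot from an arbitrary $b$ to $n$ is a separate, nontrivial step. The paper first proves a sub-claim that any $\si\in J_1$ is $\xi$-related to some $\pi$ with $\codom(\pi)=\{n\}$ (via distinct constructions of $\tau$ according to whether $\codom(\al)=\codom(\be)$ or not), and then normalizes the remaining cokernel $2$-blocks by passing to the submonoid of Brauer elements containing the block $\{n,n'\}$, which is isomorphic to $\B_{n-1}$, and invoking the already-established even case there. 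Your plan should budget for this reduction; the odd case is the harder of the two.
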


\pf 
By duality, it suffices to prove (i).  
Note that $\cg\al\be\not=\rho_z$ if $(\al,\be)\not\in\rho_z$ or if $(\al,\be)\in\De$.  Conversely, fix some $(\al,\be)\in\rho_z\sm\De$ and, for simplicity, write ${\mathrel\xi}=\cg\al\be$.  For an integer $i$, write $H_i=\{2i-1,2i\}$.  For $\si\in J_z$  we write~$\si'$ for the unique element of~$\B_n$ with $\ker(\si')=\ker(\si)$ and $H_1,H_2,\ldots,H_{\lfloor n/2\rfloor}$ all being $\coker(\si')$-classes, noting that this forces $\codom(\si')=\{n\}$ in the case that $n$ is odd.  We claim that $\si\mathrel\xi\si'$ for all $\si\in J_z$.  
Once we prove the claim, part~(i) of the lemma will follow:
indeed, for any two $(\ga,\de)\in\rho_z$ we will have $\gamma\mathrel\xi\gamma'=\delta'\mathrel\xi\delta$.
To prove the claim we consider the even and odd cases separately.

\bigskip\noindent {\bf Case 1.}  Suppose first that $n=2m$ is even, and write $\si=\partII{A_1}\cdots{A_m}{B_1}\cdots{B_m}$,
where $\min(B_1)<\cdots<\min(B_m)$.  Define $h(\si)=m$ if $\si=\si'$ or else
$h(\si)=\min\set{i\in\bm}{B_i\not=H_i}$.  We proceed by descending induction
on~$h(\si)$.  If $h(\si)=m$, then there is nothing to show, since $\si=\si'$,
so suppose $h(\si)<m$.  Clearly it suffices to show that $\si\mathrel\xi\pi$ for some $\pi\in\B_n$ with $\ker(\pi)=\ker(\si)$ and $h(\pi)>h(\si)$.  Write $k=h(\si)$.
Then $B_i=H_i$ for each $1\leq i\leq k-1$, 
but $B_k\not=H_k$.  
By the assumption on the values of $\min(B_i)$, it follows that $B_k=\{2k-1,a\}$ and $B_{k+1}=\{2k,b\}$ for some $a,b\geq 2k+1$.  Since $\coker(\al)\not=\coker(\be)$, we may fix some $\coker(\al)$-class $\{x,y\}$ that is not a $\coker(\be)$-class.  Since $\rank(\be)=0$, it follows that $\be$ has cokernel-classes $\{x,u\}$ and $\{y,v\}$ for some $u,v$.  Fix an arbitrary 2-partition $\{C_1,\ldots,C_{m-2}\}$ of $\bn\sm\{u,v,x,y\}$, and define
\[
\tau = 
\Big( 
{ \scriptsize \renewcommand*{\arraystretch}{1}
\begin{array} {\c|\c|\c|\c|\c|\c|\c|\c|\c|\cend}
x \:&\: y \:&\: u \:&\: v \:&\: C_1 \:& \multicolumn{4}{\c|}{\cdots} &\: C_{m-2}  \\ \cline{5-10}
2k-1 \:&\: 2k \:&\: a \:&\: b \:&\:  B_1 \:&\: \cdots \:&\: B_{k-1} \:&\: B_{k+2} \:&\: \cdots & B_m
\rule[0mm]{0mm}{2.7mm}
\end{array} 
}
\hspace{-1.5 truemm} \Big).
\]
Then $\si=\si\be\tau \mathrel\xi \si\al\tau$, and $h(\si\al\tau)>k$, so the proof of the claim is complete in this case.  

\bigskip\noindent {\bf Case 2.}  Now suppose $n=2m+1$.  
Write $\si=\partIV a{A_1}\cdots{A_m}b{B_1}\cdots{B_m}$.
We first prove the sub-claim that $\si\mathrel\xi\pi$ for some $\pi\in\B_n$ with
$\ker(\pi)=\ker(\si)$ and $\codom(\pi)=n$.  Indeed, if $b=n$, then there is
nothing to prove, so suppose $b\not=n$.  Without loss of generality, we may assume that $n\in B_1$, and we may write $B_1=\{w,n\}$.  
We will prove that $\si\mathrel\xi\pi$, where $\pi=\partVI a{A_1}{A_2}{A_m}n{b,w}{B_2}{B_m}$.
To do this, we will define a Brauer element $\tau\in\B_n$ such that $\si\al\tau=\si$ and $\si\be\tau=\pi$.  Let the transversals of $\al$ and $\be$ be $\{c,d'\}$ and $\{c,e'\}$, respectively; note that $\dom(\al)=\dom(\be)$ because $\ker(\al)=\ker(\be)$.  If $d\not=e$, then we fix any $w\in\bn\sm\{d,e\}$, and any 2-partition $\{D_1,\ldots,D_{m-1}\}$ of $\bn\sm\{d,e,w\}$, and define
$
\tau = 
\Big( 
{ \scriptsize \renewcommand*{\arraystretch}{1}
\begin{array} {\c|\c|\c|\c|\c|\cend}
d \:&\: w \:&\: e \:&\: D_1 \:&\: \cdots \:&\: D_{m-1}  \\ \cline{4-6}
b \:&\: w \:&\: n \:&\: B_2 \:&\: \cdots \:&\: B_m
\rule[0mm]{0mm}{2.7mm}
\end{array} 
}
\hspace{-1.5 truemm} \Big)
$.
The reader may check that $\tau$ has the desired properties.  Now suppose $d=e$.  Because $(\al,\be)\in\rho_1\sm\De$, we may choose some $\coker(\al)$-class $\{x,y\}$ that is not a $\coker(\be)$-class.  Then $\be$ has cokernel-classes $\{x,u\}$ and $\{y,v\}$ for some $u,v$.  Fix an arbitrary 2-partition $\{E_1,\ldots,E_{m-1}\}$ of $\bn\sm\{u,x,y\}$, and define
$
\tau = 
\Big( 
{ \scriptsize \renewcommand*{\arraystretch}{1}
\begin{array} {\c|\c|\c|\c|\c|\cend}
u \:&\: x \:&\: y \:&\: E_1 \:&\: \cdots \:&\: E_{m-1}  \\ \cline{4-6}
b \:&\: w \:&\: n \:&\: B_2 \:&\: \cdots \:&\: B_m
\rule[0mm]{0mm}{2.7mm}
\end{array} 
}
\hspace{-1.5 truemm} \Big)
$.
Again, the reader may check that $\tau$ has the desired properties.  This completes the proof of the sub-claim.  If $n=3$, then $\pi=\si'$, so the claim is established in this case.  For the rest of the proof, we assume that~$n\geq5$.

In view of the sub-claim, we may suppose without loss of generality that $b=n$, so that $\si=\partIV a{A_1}\cdots{A_m}n{B_1}\cdots{B_m}$.
Write $\tau_1=\partIV n{H_1}\cdots{H_m}n{B_1}\cdots{B_m}$ and $\tau_2=\partIV n{H_1}\cdots{H_m}n{H_1}\cdots{H_m}$.
Since $\si=\si\tau_1$ and $\si'=\si\tau_2$, the proof will be complete if we can show that $\tau_1\mathrel\xi\tau_2$.  To do this, we first define
\[
\si_1 = 
\Big( 
{ \scriptsize \renewcommand*{\arraystretch}{1}
\begin{array} {\c|\c|\c|\c|\cend}
n \:&\: H_1 \:&\: \cdots \:&\: H_{m-1} \:&\: H_m  \\ \cline{2-5}
n-2 \:&\: H_1 \:&\: \cdots \:&\: H_{m-1} \:&\: n-1,n
\rule[0mm]{0mm}{2.7mm}
\end{array} 
}
\hspace{-1.5 truemm} \Big)
\AND
\si_2 =
\Big( 
{ \scriptsize \renewcommand*{\arraystretch}{1}
\begin{array} {\c|\c|\c|\c|\c|\cend}
n \:&\: H_1 \:&\: \cdots \:&\: H_{m-2} \:&\: H_{m-1} \:&\: H_m  \\ \cline{2-6}
n \:&\: H_1 \:&\: \cdots \:&\: H_{m-2} \:&\: n-4,n-1 \:&\: n-3,n-2
\rule[0mm]{0mm}{2.7mm}
\end{array} 
}
\hspace{-1.5 truemm} \Big).
\]
Note that the proof of the sub-claim 
above gives $\si_1\mathrel\xi\tau_2$.  Now define
\[
\ga_1 = 
\Big( 
{ \scriptsize \renewcommand*{\arraystretch}{1}
\begin{array} {\c|\c|\c|\c|\c|\c|\cend}
n-2 \:&\: n-1 \:&\: n \:&\: H_1 \:&\: \cdots \:&\: H_{m-2} \:&\: H_{m-1}  \\ \cline{4-7}
n-4 \:&\: n-3 \:&\: n-2 \:&\: H_1 \:&\: \cdots \:&\: H_{m-2} \:&\: n-1,n
\rule[0mm]{0mm}{2.7mm}
\end{array} 
}
\hspace{-1.5 truemm} \Big)
\AND
\ga_2 =
\Big( 
{ \scriptsize \renewcommand*{\arraystretch}{1}
\begin{array} {\c|\c|\c|\c|\c|\c|\cend}
n-2 \:&\: n-1 \:&\: n \:&\: H_1 \:&\: \cdots \:&\: H_{m-2} \:&\: H_{m-1}  \\ \cline{4-7}
n-4 \:&\: n-1 \:&\: n \:&\: H_1 \:&\: \cdots \:&\: H_{m-2} \:&\: n-3,n-2
\rule[0mm]{0mm}{2.7mm}
\end{array} 
}
\hspace{-1.5 truemm} \Big).
\]
One may then check that $\tau_2 \mathrel\xi \si_1=\tau_2\ga_1 \mathrel\xi \si_1\ga_1 = \si_1\ga_2 \mathrel\xi \tau_2\ga_2 =\si_2$.  In particular, $\cg{\si_2}{\tau_2}\sub{\mathrel\xi}=\cg\al\be$.  Now let $\si_3,\tau_3$ be the elements of $\B_{n-1}$ obtained by removing the block $\{n,n'\}$ from $\si_2,\tau_2\in\B_n$, respectively.  Write ${\mathrel\zeta}=\cg{\si_3}{\tau_3}_{\B_{n-1}}$ for the congruence on $\B_{n-1}$ generated by $(\si_3,\tau_3)$.  Since $\si_3\not=\tau_3$, yet $\si_3$ and $\tau_3$ are related under the $\rho_0$-congruence on $\B_{n-1}$, and since $n-1\geq4$ is even, it follows from Case 1 that ${\mathrel\zeta} = \rho_0$ (on $\B_{n-1}$).  Consequently, we have
$
\partII{H_1}\cdots{H_m}{B_1}\cdots{B_m} \mathrel\zeta \partII{H_1}\cdots{H_m}{H_1}\cdots{H_m}
$,
and it follows that $\tau_1\mathrel\xi\tau_2$, as required.~\epf

Now that we have described the generating pairs for $\rho_z$ and $\lam_z$, we may easily do the same for their join~$R_z$.

\begin{prop}\label{prop:R01_Bn}
The congruence $R_z$ is principal.  Moreover, if $\al,\be\in\B_n$, then
\[
R_z=\cg\al\be\iff(\al,\be)\in R_z\sm(\rho_z\cup\lam_z).
\]
\end{prop}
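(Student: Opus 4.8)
The ``only if'' direction is immediate: if $R_z=\cg\al\be$ then certainly $(\al,\be)\in R_z$, and since both $\rho_z$ and $\lam_z$ are proper sub-congruences of $R_z$, the pair $(\al,\be)$ can lie in neither of them (otherwise $\cg\al\be$ would be contained in $\rho_z$ or $\lam_z$, hence strictly smaller than $R_z$). So the plan is to establish the ``if'' direction. Suppose $(\al,\be)\in R_z\sm(\rho_z\cup\lam_z)$; the inclusion $\cg\al\be\sub R_z$ being obvious, it suffices to prove $R_z\sub\cg\al\be$. Since $R_z=R_{I_z}$ and the minimal ideal $M$ of $\B_n$ is $I_z=J_z$, we have $R_z\sm\De\sub M\times M$, so $\al,\be\in M$; moreover, consulting the membership criteria for $\rho_z$ and $\lam_z$ and recalling that in $\B_n$ the relations $\R$ and $\L$ are governed by kernels and cokernels, the hypothesis says precisely that $\al$ and $\be$ lie in different $\R$-classes and in different $\L$-classes of $M$.

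The heart of the matter is that $M$ is a rectangular band: it is a single $\D$-class consisting of idempotents. Let $\ga$ be the unique element of $M$ that is $\R$-related to $\al$ and $\L$-related to $\be$ (it exists since $M$ is a single $\D$-class); concretely, $\ga$ is the Brauer element of rank $z$ whose kernel is $\ker(\al)$ and whose cokernel is $\coker(\be)$. The rectangular-band identities then give $\ga\al=\al$, $\ga\be=\ga$, $\al\ga=\ga$ and $\be\ga=\be$, so that
\[
(\al,\ga)=(\ga\al,\ga\be)\in\cg\al\be \AND (\ga,\be)=(\al\ga,\be\ga)\in\cg\al\be .
\]
Since $\ga$ and $\al$ are $\R$-related we have $\ker(\ga)=\ker(\al)$, and $\ga\ne\al$ (otherwise $\al$ would be $\L$-related to $\be$); hence $(\al,\ga)\in\rho_z\sm\De$. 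Dually, $(\ga,\be)\in\lam_z\sm\De$. Applying Proposition \ref{prop:rl_Bn}(i) to the pair $(\al,\ga)$ and Proposition \ref{prop:rl_Bn}(ii) to the pair $(\ga,\be)$ yields $\rho_z=\cg\al\ga\sub\cg\al\be$ and $\lam_z=\cg\ga\be\sub\cg\al\be$. Finally, $R_z=\rho_z\vee\lam_z$ — this is Proposition \ref{prop-CR4}(i) applied to the retractable IN-pair underlying the lower ``diamond'' of $\Cong(\B_n)$, and also follows directly from $M$ being a single $\D$-class together with $\D=\R\vee\L$ — so $R_z\sub\cg\al\be$, as required.

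I expect no genuine obstacle here. All of the delicate constructions — in particular the explicit Brauer elements realising the generating pairs for $\rho_z$ and $\lam_z$, including the case split according to the parity of $n$ — were already carried out in Proposition \ref{prop:rl_Bn}. The only things requiring attention in the present step are the routine verifications that the two pairs $(\al,\ga)$ and $(\ga,\be)$ are non-trivial and lie in $\rho_z\sm\De$ and $\lam_z\sm\De$ respectively, and the (standard) join decomposition $R_z=\rho_z\vee\lam_z$; both are immediate from the structural results of Section \ref{sec-fam}.
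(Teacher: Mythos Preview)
Your proof is correct and takes essentially the same approach as the paper: your element $\ga$ is precisely the product $\al\be$ (in a rectangular band, $\al\be$ is the unique element $\R$-related to $\al$ and $\L$-related to $\be$), and the paper likewise shows $(\al,\al\be)=(\al\al,\al\be)\in\rho_z\sm\De$ and dually for $\lam_z$, then invokes $R_z=\rho_z\vee\lam_z$. The only cosmetic difference is that you frame the computation abstractly via the rectangular-band identities, while the paper works directly with $\al\be$.
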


\pf
For simplicity, we omit the subscript $z$ throughout.
The ``only if'' part is immediate from $R=\rho\vee\lambda$.
For the ``if'' part, let  $(\al,\be)\in R\sm(\rho\cup\lam)$, noting that clearly $\cg\al\be\sub R$.  
Then $\ker(\al)=\ker(\al\be)$ and $\coker(\al)\not=\coker(\be)=\coker(\al\be)$, since $\rank(\al)=\rank(\be)=z$ and $(\al,\be)\not\in \lam$.  So $(\al,\al\be)\in\rho\sm\De$.  It follows from Proposition~\ref{prop:rl_Bn}(i) that $\rho=\cg{\al}{\al\be}=\cg{\al\al}{\al\be}\sub\cg\al\be$.  A symmetrical argument shows that $\lam\sub\cg\al\be$.  It follows that $R=\rho\vee\lam\sub\cg\al\be$. \epf

The remaining considerations of the lower congruences apply only to the case when $n$ is even.  Next we consider the three non-trivial $\mu$ congruences coming from the pairs $\C_{\S_2}$, $\C_2$ and $\C_K$.

\begin{prop}\label{prop:nek}
Let $n=2m\geq4$ be even.  The relations $\mu_{\S_2},\mu_2,\mu_K$ are all principal congruences on $\B_n$.  Moreover, if $\al,\be\in\B_n$, then
\begin{itemize}\begin{multicols}{2}
\itemit{i} $\mu_{\S_2}=\cg\al\be\iff (\al,\be)\in\mu_{\S_2}\sm\De$,
\itemit{ii} $\mu_2=\cg\al\be\iff (\al,\be)\in\mu_2\sm\mu_{\S_2}$, 
\itemit{iii} $\mu_K=\cg\al\be\iff (\al,\be)\in\mu_K\sm\mu_2$.
\end{multicols}\eit
\end{prop}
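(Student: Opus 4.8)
The plan is to mirror the proof of Proposition~\ref{prop:small_congruences} for $\P_n$. In each of (i)--(iii) the forward implication is free: the pair $(\al,\be)$ lies in the indicated relation, which is a congruence by Proposition~\ref{prop-AreCongs}, and $\mu_{\S_2}\subseteq\mu_2\subseteq\mu_K$ by Figure~\ref{fig:Hasse_Bn}, so $\cg\al\be$ is always contained in the relation in question; the work is the reverse inclusion for a fixed such pair. For (i), $(\al,\be)\in\mu_{\S_2}\setminus\De$ means $\al,\be\in J_2$, $\al\H\be$, $\al\neq\be$. Since $J_2$ is regular and $\B_n$ is finite (so $J_2$ is stable), $\al\D\overline{\id}_2$, and Green's Lemma (see \cite[Theorem 2.3]{CPbook}) yields $p,q\in\B_n$ with $p\al q=\overline{\id}_2$ and $z\mapsto pzq$ a bijection $H_\al\to H_{\overline{\id}_2}$; as $\be\in H_\al\setminus\{\al\}$ and $H_{\overline{\id}_2}=\{\overline{\id}_2,\overline{(1,2)}\}$, this forces $p\be q=\overline{(1,2)}$, so $(\overline{\id}_2,\overline{(1,2)})\in\cg\al\be$. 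By the $\B_n$-version of Lemma~\ref{lem:nu_phi} recorded in this section, every pair of $\nu_{\S_2}$ has the form $(\ga\overline{\id}_2\de,\ga\overline{(1,2)}\de)$, so $\cg\al\be\supseteq\nu_{\S_2}$ and hence $\mu_{\S_2}=\De\cup\nu_{\S_2}\subseteq\cg\al\be$.

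For (ii), I would use that $\mu_2=\eta_{I_2}$ (the minimal ideal $I_0$ is $\H$-trivial and $N$ is trivial), so it is enough to show $(x,\widehat x)\in\cg\al\be$ for every $x\in J_2$. First treat the case where one of $\al,\be$ has rank $0$, say $\be=\alh\in J_0$: for $x\in J_2$ write $x=p\al q$ with $p,q\in\B_n$ (possible as $x\gJ\al$); then by Lemma~\ref{lemma-RetractAux}, $\widehat x=f(p\al q)=p(\al f)q=p\be q$, so $(x,\widehat x)\in\cg\al\be$, and since $\widehat y=y$ on $J_0$ this gives $\eta_{I_2}\subseteq\cg\al\be$. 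Otherwise $\al,\be\in J_2$ with $\alh=\beh$ but $\al$ not $\H$-related to $\be$; then $\dom(\al)\neq\dom(\be)$ or $\codom(\al)\neq\codom(\be)$, since $\alh=\beh$ together with equal (co)domains would determine $\ker$ and $\coker$ and force $\al\H\be$. If $\dom(\al)\neq\dom(\be)$ then, because $\alh=\beh$, the set $\dom(\al)$ must be a non-transversal block of $\be$; consequently, in the product graph $\Pi(\al\al^*,\be)$ the two transversal endpoints of the idempotent $\al\al^*$ get joined into a single non-transversal block, so $(\al\al^*)\be\in J_0$ while $(\al\al^*)\al=\al\in J_2$. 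Thus $(\al,(\al\al^*)\be)\in\cg\al\be$ is a pair of the type already handled, giving $\mu_2\subseteq\cg\al\be$; the sub-case $\codom(\al)\neq\codom(\be)$ is dual (multiply on the right by $\al^*\al$).

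For (iii), $(\al,\be)\in\mu_K\setminus\mu_2=\nu_K\setminus\De$, so $\al,\be\in J_4$, $\al\H\be$, $\al\neq\be$, $\phi(\al,\be)\in K\setminus\{\id_4\}$. As in (i), Green's Lemma gives $p,q\in\B_n$ with $p\al q=\overline{\id}_4$ and $p\be q\in H_{\overline{\id}_4}$; since $\nu_K$ is stable under multiplication by $\B_n$ on both sides (provided the result stays in $J_4$), $(\overline{\id}_4,p\be q)\in\nu_K$, so $p\be q=\overline\de$ with $\de\in K\setminus\{\id_4\}$, and $(\overline{\id}_4,\overline\de)\in\cg\al\be$. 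Conjugating by units of $\B_n$ supported on $\{1,2,3,4\}$ sends this to $(\overline{\id}_4,\overline{\pi\de\pi^{-1}})$ for $\pi\in\S_4$; since the double transpositions form one $\S_4$-class, and using $\nu_K=\B_n(\overline K\times\{\overline{\id}_4\})\B_n\cap(J_4\times J_4)$ (the claim in the proof of Lemma~\ref{lem:nu_H_equivalence}(ii)), one obtains $\nu_K\subseteq\cg\al\be$. It remains to get $\mu_2\subseteq\cg\al\be$: let $q_0\in\B_n$ be the rank-$2$ element with transversals $\{1,1'\},\{3,2'\}$, upper non-transversals $\{2,4\},\{5,6\},\dots$ and lower non-transversals $\{3',4'\},\{5',6'\},\dots$; a direct computation gives $\overline{\id}_4\,q_0=q_0$, whereas $\overline{(1,2)(3,4)}\,q_0$ has transversals $\{2,1'\},\{4,2'\}$ and non-transversal blocks $\{1,3\},\{5,6\},\dots$ on top (and $\{3',4'\},\dots$ below). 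These two elements are distinct, share a common retraction image, but have different kernels, so $(q_0,\overline{(1,2)(3,4)}\,q_0)\in\mu_2\setminus\mu_{\S_2}$; since $(\overline{\id}_4,\overline{(1,2)(3,4)})\in\cg\al\be$, this pair lies in $\cg\al\be$, and part (ii) then gives $\mu_2\subseteq\cg\al\be$. Hence $\mu_K=\mu_2\cup\nu_K\subseteq\cg\al\be$.

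I expect the main obstacle to be the last step of (iii). Naively projecting a rank-$4$ pair of $\nu_K$ down to rank $2$ by multiplying by idempotents only ever produces pairs lying in $\mu_{\S_2}$, which is strictly smaller than $\mu_2$, and it is not obvious how to reach a pair of $\mu_2\setminus\mu_{\S_2}$. The resolution is to multiply by the element $q_0$, whose domain meets $\codom(\overline{\id}_4)=\{1,2,3,4\}$ in the set $\{1,3\}$ that the Klein-group element $(1,2)(3,4)$ does \emph{not} fix; this desynchronises the transversals of $\overline{\id}_4\,q_0$ and $\overline{(1,2)(3,4)}\,q_0$ just enough to escape $\mu_{\S_2}$ while, because of the rigidity imposed by $\alh=\beh$ (the same rigidity that forces the rank collapses in (ii)), staying inside $\mu_2$. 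The remaining difficulty is purely computational: verifying these products and the structural claims about non-transversal blocks of Brauer elements, which requires some care but no new ideas.
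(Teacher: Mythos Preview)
Your proof is correct and follows the same overall architecture as the paper's: for each part, fix a pair in the stated set, and show it generates the target congruence. The differences are stylistic rather than structural.

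For (i), the paper writes out $\al,\be$ explicitly and, for an arbitrary $(\ga,\de)\in\nu_{\S_2}$, constructs concrete $\si,\tau\in\B_n$ with $(\ga,\de)=(\si\al\tau,\si\be\tau)$; your Green's-Lemma reduction to $(\overline{\id}_2,\overline{(1,2)})$ followed by the definition of $\nu_{\S_2}$ does the same job more abstractly.

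For (ii), the paper likewise splits into the cases $\rank(\be)=0$ versus $\rank(\al)=\rank(\be)=2$. In the first case it builds explicit transport elements, whereas you short-circuit this by writing $x=p\al q$ and invoking Lemma~\ref{lemma-RetractAux} to get $\widehat x=p\,\alh\,q=p\be q$; this is a genuinely cleaner argument. In the second case the paper multiplies by an explicit idempotent $\si_3$ (or $\si_4$) to drop one of the elements to rank~$0$; your use of the projection $\al\al^*$ (or $\al^*\al$) is the natural abstract version of the same move, and your observation that $(\al\al^*)\be=\alh$ follows automatically from $\cg\al\be\subseteq\mu_2$.

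For (iii), the paper works in the opposite order: it first multiplies $(\al,\be)$ by a single rank-$2$ projection $\si_1$ (joining two of the four domain points) to land directly in $\mu_2\setminus\mu_{\S_2}$, and only afterwards handles $\nu_K$ by explicit transport. Your route---first reducing to $(\overline{\id}_4,\overline\de)$ via Green's Lemma, then conjugating by $\S_4$-units to reach all of $\overline K$ and hence $\nu_K$, and finally multiplying by your $q_0$ to reach $\mu_2\setminus\mu_{\S_2}$---is slightly longer but has the advantage of making the choice of the auxiliary rank-$2$ element independent of which double transposition $\phi(\al,\be)$ happens to be.
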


\pf 
The only non-obvious part is the direct inclusion ($\subseteq$) in the ``if'' statement in each part.
We begin with (i).
Let $(\al,\be)\in\mu_{\S_2}\sm\De$ be arbitrary.
Since $\rank(\al)=\rank(\be)=2$ and $\al\H\be$, yet $\al\not=\be$, we may write
$\al = \partV {a_1}{a_2}{A_1}{A_{m-1}}{b_1}{b_2}{B_1}{B_{m-1}}$ and $\be = \partV {a_1}{a_2}{A_1}{A_{m-1}}{b_2}{b_1}{B_1}{B_{m-1}}$.
Now let $(\ga,\de)\in\mu_{\S_2}$.  We must show that $(\ga,\de)\in\cg\al\be$.
If $\ga=\de$ there is nothing to prove, so suppose otherwise, and write
$\ga = \partV {c_1}{c_2}{C_1}{C_{m-1}}{d_1}{d_2}{D_1}{D_{m-1}}$ and $\de = \partV {c_1}{c_2}{C_1}{C_{m-1}}{d_2}{d_1}{D_1}{D_{m-1}}$.
Put
$\si = \partV{c_1}{c_2}{C_1}{C_{m-1}}{a_1}{a_2}{A_1}{A_{m-1}}$ and $\tau = \partV{b_1}{b_2}{B_1}{B_{m-1}}{d_1}{d_2}{D_1}{D_{m-1}}$.
Then $(\ga,\de)=(\si\al\tau,\si\be\tau)\in\cg\al\be$.

For (ii), let $(\al,\be)\in\mu_{2}\sm\mu_{\S_2}$  be arbitrary.
To simplify the notation, write $\xi=\cg\al\be$.  There are two possibilities: consulting the definitions of $\mu_2$ and $\mu_{\S_2}$, and renaming $\al,\be$ if necessary, either
\begin{itemize}\begin{multicols}{2}
\item[(a)] $\rank(\al)=2$, $\rank(\be)=0$ and $\alh=\be$, or
\item[(b)] $\rank(\al)=\rank(\be)=2$, $\alh=\beh$ and $(\al,\be)\not\in{\H}$.
\end{multicols}
\eit
Suppose first that we are in case (a), and write $\al=\partV ab{A_1}{A_{m-1}}xy{B_1}{B_{m-1}}$ and $\be=\partIII{a,b}{A_1}\cdots{A_{m-1}}{x,y}{B_1}\cdots{B_{m-1}}$.  
We first claim that if $\tau\in\B_n$ is such that $\rank(\tau)\leq2$, then
$(\tau,\tauh)\in\xi$.  Indeed, this is trivial if $\rank(\tau)=0$, since then
$\tauh=\tau$.  So suppose $\rank(\tau)=2$, and write $\tau=\partV cd{C_1}{C_{m-1}}uv{D_1}{D_{m-1}}$, so that $\tauh=\partIII{c,d}{C_1}\cdots{C_{m-1}}{u,v}{D_1}\cdots{D_{m-1}}$.  We then define $\si_1=\partV cd{C_1}{C_{m-1}}ab{A_1}{A_{m-1}}$ and $\si_2=\partV xy{B_1}{B_{m-1}}uv{D_1}{D_{m-1}}$, and note that $\tau = \si_1\al\si_2 \mathrel\xi \si_1\be\si_2 =\tauh$, completing the proof of the claim.  
Now, for arbitrary $(\gamma,\delta)\in\mu_2$, if $\gamma=\delta$ then clearly $(\gamma,\delta)\in\xi$, while if $\gamma\neq\delta$, then $\rank(\ga),\rank(\de)\leq2$ and $\gah=\deh$, so that $\ga \mathrel\xi \gah = \deh \mathrel\xi \de$, as required.

Now suppose we are in case (b).  Write $\alh=\beh=\partII{A_1}\cdots{A_m}{B_1}\cdots{B_m}$, and let $A_1=\{a,b\}$, $A_m=\{c,d\}$, $B_1=\{x,y\}$ and $B_m=\{u,v\}$.  Without loss of generality, we may assume that $\al=\partV ab{A_2}{A_m}xy{B_2}{B_m}$.  Since $\rank(\be)=2$, $\alh=\beh$, yet $(\al,\be)\not\in{\H}$, there are three possibilities: renaming $c,d,u,v$ if necessary, either
\begin{itemize}\begin{multicols}{3}
\item[(c)] $\be=\partV ab{A_2}{A_m}uv{B_1}{B_{m-1}}$, 
\item[(d)] $\be=\partV cd{A_1}{A_{m-1}}xy{B_2}{B_m}$, \ or
\item[(e)] $\be=\partV cd{A_1}{A_{m-1}}uv{B_1}{B_{m-1}}$.
\end{multicols}\eit
In all cases, put $\si_3=\partV ab{A_2}{A_m}ab{A_2}{A_m}$ and $\si_4=\partV xy{B_2}{B_m}xy{B_2}{B_m}$.  Then 
\[
\al=\si_3\al=\al\si_4
\AND
\alh=\begin{cases}
\si_3\be &\text{in cases (d) and (e)}\\
\be\si_4 &\text{in cases (c) and (e).}
\end{cases}
\]
In all cases, it follows that $(\al,\alh)\in \xi$.
Case~(a) then gives $\mu_2\sub\cg{\al}{\alh}\sub\xi$.

Finally, for (iii), let $(\alpha,\beta)\in \mu_K\setminus\mu_2$ and write $\xi=\cg\al\be$.  
Since $(\al,\be)\not\in\mu_2$, 
it follows that $\rank(\al)=\rank(\be)=4$, $\al\H\be$ and $\phi(\al,\be)\in K\sm\{\id_4\}$.  
We may therefore write
$
\al=
\Big( 
{ \scriptsize \renewcommand*{\arraystretch}{1}
\begin{array} {\c|\c|\c|\c|\c|\c|\cend}
a_1 \:&\: a_2 \:&\: a_3 \:&\: a_4 \:&\: A_1 \:&\: \cdots \:&\: A_{m-2}  \\ \cline{5-7}
b_1 \:&\: b_2 \:&\: b_3 \:&\: b_4 \:&\: B_1 \:&\: \cdots \:&\: B_{m-2}
\rule[0mm]{0mm}{2.7mm}
\end{array} 
}
\hspace{-1.5 truemm} \Big)
$ and $
\be=
\Big( 
{ \scriptsize \renewcommand*{\arraystretch}{1}
\begin{array} {\c|\c|\c|\c|\c|\c|\cend}
a_1 \:&\: a_2 \:&\: a_3 \:&\: a_4 \:&\: A_1 \:&\: \cdots \:&\: A_{m-2}  \\ \cline{5-7}
b_2 \:&\: b_1 \:&\: b_4 \:&\: b_3 \:&\: B_1 \:&\: \cdots \:&\: B_{m-2}
\rule[0mm]{0mm}{2.7mm}
\end{array} 
}
\hspace{-1.5 truemm} \Big)
$.
Define $\si_1=\Big( 
{ \scriptsize \renewcommand*{\arraystretch}{1}
\begin{array} {\c|\c|\c|\c|\c|\cend}
a_1 \:&\: a_4 \:&\: a_2,a_3 \:&\: A_1 \:&\: \cdots \:&\: A_{m-2}  \\ \cline{3-6}
a_1 \:&\: a_4 \:&\: a_2,a_3 \:&\: A_1 \:&\: \cdots \:&\: A_{m-2}
\rule[0mm]{0mm}{2.7mm}
\end{array} 
}
\hspace{-1.5 truemm} \Big)$.  Then 
$(\si_1\al,\si_1\be)\in\mu_2\sm\mu_{\S_2}$, so part (ii) gives $\mu_2=\cg{\si_1\al}{\si_1\be}\sub\xi$.  
It remains to show that $\mu_K\sm\mu_2\sub\xi$, so let $(\ga,\de)\in\mu_K\sm\mu_2$ be arbitrary.  As above, we may write
$
\ga = \Big( 
{ \scriptsize \renewcommand*{\arraystretch}{1}
\begin{array} {\c|\c|\c|\c|\c|\c|\cend}
c_1 \:&\: c_2 \:&\: c_3 \:&\: c_4 \:&\: C_1 \:&\: \cdots \:&\: C_{m-2}  \\ \cline{5-7}
d_1 \:&\: d_2 \:&\: d_3 \:&\: d_4 \:&\: D_1 \:&\: \cdots \:&\: D_{m-2}
\rule[0mm]{0mm}{2.7mm}
\end{array} 
}
\hspace{-1.5 truemm} \Big)
$ and $
\de = \Big( 
{ \scriptsize \renewcommand*{\arraystretch}{1}
\begin{array} {\c|\c|\c|\c|\c|\c|\cend}
c_1 \:&\: c_2 \:&\: c_3 \:&\: c_4 \:&\: C_1 \:&\: \cdots \:&\: C_{m-2}  \\ \cline{5-7}
d_2 \:&\: d_1 \:&\: d_4 \:&\: d_3 \:&\: D_1 \:&\: \cdots \:&\: D_{m-2}
\rule[0mm]{0mm}{2.7mm}
\end{array} 
}
\hspace{-1.5 truemm} \Big)
$.
Define $\si_2=\Big( 
{ \scriptsize \renewcommand*{\arraystretch}{1}
\begin{array} {\c|\c|\c|\c|\c|\c|\cend}
c_1 \:&\: c_2 \:&\: c_3 \:&\: c_4 \:&\: C_1 \:&\: \cdots \:&\: C_{m-2}  \\ \cline{5-7}
a_1 \:&\: a_2 \:&\: a_3 \:&\: a_4 \:&\: A_1 \:&\: \cdots \:&\: A_{m-2}
\rule[0mm]{0mm}{2.7mm}
\end{array} 
}
\hspace{-1.5 truemm} \Big)$ and $\si_3=\Big( 
{ \scriptsize \renewcommand*{\arraystretch}{1}
\begin{array} {\c|\c|\c|\c|\c|\c|\cend}
b_1 \:&\: b_2 \:&\: b_3 \:&\: b_4 \:&\: B_1 \:&\: \cdots \:&\: B_{m-2}  \\ \cline{5-7}
d_1 \:&\: d_2 \:&\: d_3 \:&\: d_4 \:&\: D_1 \:&\: \cdots \:&\: D_{m-2}
\rule[0mm]{0mm}{2.7mm}
\end{array} 
}
\hspace{-1.5 truemm} \Big)$.  Then $(\ga,\de)=(\si_2\al\si_3,\si_2\be\si_3)\in\xi$, as required.~\epf

The remaining principal lower congruences are the $\lam,\rho,R$ congruences arising from the pair $\C_2$.

\begin{prop}\label{prop:joins2_Bn}
Let $n=2m\geq4$ be even.  The congruences 
$\rho_2$, $\lambda_2$ and $R_2$ are all principal.  Moreover, if $\al,\be\in\B_n$, then
\begin{itemize}\begin{multicols}{2}
\itemit{i} $\rho_2=\cg\al\be\iff (\al,\be)\in\rho_2\sm(\mu_2\cup\rho_{\S_2})$,
\itemit{ii} $\lam_2=\cg\al\be\iff (\al,\be)\in\lambda_2\sm(\mu_2\cup\lam_{\S_2})$,
\itemit{iii} $R_2=\cg\al\be\iff (\al,\be)\in R_2 \sm
(\lambda_2\cup\rho_2\cup R_{\S_2})$.
\end{multicols}
\end{itemize}
\end{prop}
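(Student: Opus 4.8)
The plan is to follow the template of Proposition~\ref{prop:joins2} for $\P_n$, using the lattice identities $\rho_2=\rho_0\vee\mu_2$, $\lam_2=\lam_0\vee\mu_2$ and $R_2=\lam_2\vee\rho_2$ (consequences of Propositions~\ref{prop-CR3} and~\ref{prop-CR4} applied to the retractable IN-pairs $\C_0,\C_{\S_2},\C_2$), together with the explicit membership criteria for the lower congruences listed above. In each part the ``only if'' direction is immediate: $\mu_2$ and $\rho_{\S_2}$ (resp.\ $\mu_2$ and $\lam_{\S_2}$; resp.\ $\rho_2$, $\lam_2$, $R_{\S_2}$) are exactly the maximal proper subcongruences of $\rho_2$ (resp.\ $\lam_2$; resp.\ $R_2$) in the lattice of Figure~\ref{fig:Hasse_Bn}, so if $\cg\al\be$ equals the larger congruence then $(\al,\be)$ can lie in none of them; and conversely, avoiding those maximal subcongruences forces $\cg\al\be$ to be the larger one.

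For the ``if'' direction of~(i), fix $(\al,\be)\in\rho_2\sm(\mu_2\cup\rho_{\S_2})$ and put $\xi=\cg\al\be$; since $\xi\sub\rho_2$ it suffices to prove $\rho_0\sub\xi$ and $\mu_2\sub\xi$, for then $\rho_2=\rho_0\vee\mu_2\sub\xi$. At least one of $\al,\be$ has rank~$2$: otherwise $(\al,\be)\in I_0\times I_0$, and $(\al,\be)\in\rho_2$ forces $\ker(\al)=\ker(\be)$, i.e.\ $(\al,\be)\in\rho_0\sub\rho_{\S_2}$, which is excluded. Say $\rank(\al)=2$. Let $\ga\in J_0$ be the idempotent with $\ker(\ga)=\coker(\ga)=\ker(\alh)$. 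Using that $(\al,\be)\in\rho_2$ gives $\ker(\alh)=\ker(\beh)$, that a Brauer (co)kernel has all classes of size~$2$ (so that two Brauer (co)kernels, one coarsening the other, must be equal), and that a rank-$0$ Brauer element is determined by its kernel and cokernel, one checks $\ga\al=\alh$ and $\ga\be=\beh$; since $\alh\ne\beh$ (from $(\al,\be)\notin\mu_2$) while $\ker(\alh)=\ker(\beh)$, Proposition~\ref{prop:rl_Bn}(i) gives $\rho_0=\cg{\alh}{\beh}\sub\xi$. For $\mu_2\sub\xi$ it suffices, by Proposition~\ref{prop:nek}(ii), to exhibit a pair from $\mu_2\sm\mu_{\S_2}$ inside $\xi$. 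If $\rank(\be)=0$, then right-multiplying $(\al,\be)$ by $\al^\ast\al$ yields $(\al\al^\ast\al,\be\al^\ast\al)=(\al,\alh)\in\xi$, since $\al\al^\ast\al=\al$ and $\be\al^\ast\al=\alh$ by a computation of the same kind, and $(\al,\alh)\in\mu_2\sm\mu_{\S_2}$ because $\widehat\al=\alh=\widehat{\alh}$ while the ranks differ. If instead $\rank(\al)=\rank(\be)=2$, then $\ker(\al)=\ker(\be)$, so $\al\R\be$, and $(\al,\be)\notin\rho_{\S_2}$ forces $(\al,\be)\notin{\L}$; choosing $\ga\in\B_n$ with exactly one of $\al\ga,\be\ga$ of rank~$2$ (the $\B_n$-analogue of Lemma~\ref{lemma-aa1b}) and renaming so that $\rank(\al\ga)=2$, $\rank(\be\ga)=0$, one verifies $(\al\ga,\be\ga)\in\rho_2\sm\rho_{\S_2}$ and then argues by cases: if $(\al\ga,\be\ga)\in\mu_2$ it is itself the required witness, and otherwise it lies in $\rho_2\sm(\mu_2\cup\rho_{\S_2})$ with $\rank(\be\ga)=0$, so the previous case (applied to $(\al\ga,\be\ga)$) gives $\mu_2\sub\cg{\al\ga}{\be\ga}\sub\xi$.

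Part~(ii) follows from~(i) by applying the anti-involution $\al\mt\al^\ast$, exactly as $\lam_0$ was deduced from $\rho_0$ in Proposition~\ref{prop:small_congruences}(ii): the retraction respects $\ast$ in the sense $\al^\ast f=(\al f)^\ast$ (Definition~\ref{defn:retr_Bn}), and $\ast$ interchanges $\R$ with $\L$, hence swaps $\rho_2\leftrightarrow\lam_2$ and $\rho_{\S_2}\leftrightarrow\lam_{\S_2}$ while fixing $\mu_2$. For part~(iii), fix $(\al,\be)\in R_2\sm(\lam_2\cup\rho_2\cup R_{\S_2})$; since $R_2=\lam_2\vee\rho_2$, and by the $\ast$-duality, it suffices to show $\rho_2\sub\cg\al\be$. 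As before, at least one of $\al,\be$ has rank~$2$ (otherwise $(\al,\be)\in I_0\times I_0\sub R_{\S_2}$); say $\rank(\al)=2$, and let $e$ be the rank-$2$ idempotent with $\ker(e)=\coker(e)=\ker(\al)$, so $e\al=\al$. A short calculation shows $\widehat{e\be}$ has kernel $\ker(\alh)$ and cokernel $\coker(\beh)$; since $(\al,\be)\notin\lam_2$ we have $\coker(\alh)\ne\coker(\beh)$, whence $(\al,e\be)\in\rho_2\sm(\mu_2\cup\rho_{\S_2})$, and part~(i) gives $\rho_2=\cg\al{e\be}=\cg{e\al}{e\be}\sub\cg\al\be$.

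The main obstacle is the final subcase of~(i): when $\al$ and $\be$ are $\R$-related rank-$2$ Brauer elements with distinct cokernels, one must produce a multiplier $\ga\in\B_n$ collapsing exactly one of $\al\ga,\be\ga$ below rank~$2$. This is the one place where the rigidity of the Brauer setting (every block of size~$2$, ranks of fixed parity) genuinely interferes with the $\P_n$ argument, so the bulk of the technical care goes into establishing that $\B_n$-analogue of Lemma~\ref{lemma-aa1b} (or into writing down the required multiplier explicitly).
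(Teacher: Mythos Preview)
Your overall strategy is sound and in places neater than the paper's: in (iii) you use the single projection $e=\al\al^*$ together with the retraction homomorphism identity $\widehat{e\be}=\hat e\,\beh$ to produce $(\al,e\be)\in\rho_2\setminus(\mu_2\cup\rho_{\S_2})$ in one stroke, whereas the paper splits into three cases (on $\rank(\be)$ and whether $\dom(\al)\cap\dom(\be)\neq\emptyset$) and builds an explicit left-multiplier in each. Likewise, your direct left-multiplication by the rank-$0$ idempotent with kernel $\ker(\alh)$ to obtain $(\alh,\beh)\in\xi$, and your use of $\al^*\al$ in the $\rank(\be)=0$ subcase of (i), are clean short-cuts.

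There is, however, a genuine slip in the rank-$2$/rank-$2$ subcase of~(i). You write ``then $\ker(\al)=\ker(\be)$, so $\al\R\be$'', but this does not follow: from $(\al,\be)\in\rho_2$ you only know $\ker(\alh)=\ker(\beh)$, and the $2$-partition $\ker(\alh)$ does not record which of its blocks is $\dom(\al)$. For instance in $\B_4$, take $\al$ with blocks $\{1,1'\},\{2,2'\},\{3,4\},\{3',4'\}$ and $\be$ with blocks $\{3,1'\},\{4,4'\},\{1,2\},\{2',3'\}$: here $\ker(\alh)=\ker(\beh)=\big\{\{1,2\},\{3,4\}\big\}$, $(\al,\be)\in\rho_2\setminus(\mu_2\cup\rho_{\S_2})$, yet $\dom(\al)=\{1,2\}\neq\{3,4\}=\dom(\be)$, so $\al\not\R\be$. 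Consequently you cannot conclude $(\al,\be)\notin{\L}$, and the \emph{right}-multiplier you seek need not exist.

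The fix is immediate and keeps your argument intact. From $\rank(\al)=\rank(\be)=2$ and $(\al,\be)\notin\rho_{\S_2}=\rho_0\cup\big({\H}\cap(J_2\times J_2)\big)$ you get $(\al,\be)\notin{\H}$ directly. Now invoke Lemma~\ref{lem:notH_prelim_Bn} (the $\B_n$-analogue of Lemma~\ref{lemma-aa1b} you allude to, which needs only $(\al,\be)\notin{\H}$ and may multiply on either side) to obtain $(\ga,\de)\in\xi$ with $\rank(\ga)=2$ and $\rank(\de)=0$. Since $\xi\subseteq\rho_2$ and the ranks differ, $(\ga,\de)\in\rho_2\setminus\rho_{\S_2}$; if $(\ga,\de)\in\mu_2$ then $(\ga,\de)\in\mu_2\setminus\mu_{\S_2}$ and Proposition~\ref{prop:nek}(ii) finishes, otherwise $(\ga,\de)\in\rho_2\setminus(\mu_2\cup\rho_{\S_2})$ with one coordinate of rank~$0$ and your first subcase applies. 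The paper handles this subcase instead by an explicit three-way split (on $\rank(\be)$ and on whether $\codom(\al)=\codom(\be)$), writing down the multiplier by hand each time.
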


\pf Again, it suffices prove to the direct inclusion ($\subseteq$) in the ``if'' statement in each case.
For (i), fix some $(\al,\be)\in \rho_2\sm(\mu_2\cup\rho_{\S_2})$, and write $\xi=\cg\al\be$.  
Recalling the definitions of $\rho_2$, $\mu_2$ and $\rho_{\S_2}$, we may assume (renaming $\al,\be$ if necessary) that $\rank(\al)=2$, $\rank(\be)\leq2$, $\ker(\alh)=\ker(\beh)$ and $\coker(\alh)\not=\coker(\beh)$.
Since $\rho_2=\mu_2\vee\rho_0$ by Proposition \ref{prop-CR3}(v), it is sufficient to show that $\xi$ contains both $\mu_2$ and $\rho_0$.  In fact, it is enough to show that $\mu_2\sub\xi$;
indeed, this implies  $\alh \mathrel\xi \al \mathrel\xi \be \mathrel\xi \beh$,
where $(\alh,\beh)\in\rho_0\sm\De$, and hence Proposition~\ref{prop:rl_Bn}(i) gives $\rho_0=\cg{\alh}{\beh}\sub\xi$.  Now, one of the following must hold:
\begin{itemize}\begin{multicols}{2}
\item[(a)] $\rank(\be)=0$,
\item[(b)] $\rank(\be)=2$ and $\codom(\al)=\codom(\be)$, or
\item[(c)] $\rank(\be)=2$ and $\codom(\al)\not=\codom(\be)$.
\end{multicols}
\eit
In each case, we define a Brauer element $\ga\in\B_n$, and let the reader check that $(\al\ga,\be\ga)\in\mu_2\sm\mu_{\S_2}$.  It will then follow from Proposition \ref{prop:nek}(ii) that $\mu_2\sub\xi$, as desired.  
Write $\al=\partV ab{A_1}{A_{m-1}}xy{B_1}{B_{m-1}}$.
In case~(a), we put $\ga=\partV xy{B_1}{B_{m-1}}xy{B_1}{B_{m-1}}$.  
Next, suppose (b) holds.  Choose some $(c,d)\in\coker(\be)\sm\coker(\al)$.  Without loss of generality, we may assume that $B_{m-2}=\{c,e\}$ and $B_{m-1}=\{d,f\}$ for some $e,f$, and we define $\ga = \Big( 
{ \scriptsize \renewcommand*{\arraystretch}{1}
\begin{array} {\c|\c|\c|\c|\c|\c|\cend}
e \:&\: f \:&\: c,x \:&\: d,y \:&\: B_1 \:&\: \cdots \:&\: B_{m-3}  \\ \cline{3-7}
e \:&\: f \:&\: c,x \:&\: d,y \:&\: B_1 \:&\: \cdots \:&\: B_{m-3}
\rule[0mm]{0mm}{2.7mm}
\end{array} 
}
\hspace{-1.5 truemm} \Big)$.  
Finally, suppose~(c) holds.  Without loss of generality, we may assume that $y\not\in\codom(\be)$, so that $\{y',w'\}$ is a block of $\be$ for some $w$.  Write $\codom(\be)=\{u,v\}$, fix a 2-partition $\{C_1,\ldots,C_{m-2}\}$ of $\bn\sm\{u,v,y,w\}$, and define $\ga=\Big( 
{ \scriptsize \renewcommand*{\arraystretch}{1}
\begin{array} {\c|\c|\c|\c|\c|\cend}
y \:&\: w \:&\: u,v \:&\: C_1 \:&\: \cdots \:&\: C_{m-2}  \\ \cline{3-6}
y \:&\: w \:&\: u,v \:&\: C_1 \:&\: \cdots \:&\: C_{m-2}
\rule[0mm]{0mm}{2.7mm}
\end{array} 
}
\hspace{-1.5 truemm} \Big)$.

Next note that (ii) follows from (i) by duality.  Finally, for (iii), fix some $(\al,\be)\in R_2\sm (\lambda_2\cup\rho_2\cup R_{\S_2})$, and write $\xi=\cg\al\be$.  Renaming $\al,\be$ if necessary, we may assume that $\rank(\al)=2$, $\rank(\be)\leq2$, $\ker(\alh)\not=\ker(\beh)$ and $\coker(\alh)\not=\coker(\beh)$.  
As above, write $\al=\partV ab{A_1}{A_{m-1}}xy{B_1}{B_{m-1}}$.  Similarly to part (i), since $R_2=\mu_2\vee R_0$ by Proposition \ref{prop-CR3}(vi), it suffices to show that $\mu_2\sub\xi$.  
One of the following must be the case:
\begin{itemize}\begin{multicols}{2}
\item[(d)] $\rank(\be)=0$,
\item[(e)] $\rank(\be)=2$ and $\dom(\al)\cap\dom(\be)\not=\emptyset$, or
\item[(f)] $\rank(\be)=2$ and $\dom(\al)\cap\dom(\be)=\emptyset$.
\end{multicols}
\eit
In each case, we define a Brauer element $\ga\in\B_n$ such that $(\ga\al,\ga\be)\in\rho_2\sm(\mu_2\cup\rho_{\S_2})$.  It will then follow from part (i)
that $\mu_2\subseteq \rho_2\sub\xi$.  
In cases (d) and (e), we define $\ga=\partV ab{A_1}{A_{m-1}}ab{A_1}{A_{m-1}}$.  In case (f), we write $\dom(\be)=\{c,d\}$, fix a 2-partition $\{C_1,\ldots,C_{m-2}\}$ of $\bn\sm\{a,b,c,d\}$, and define $\ga=\Big( 
{ \scriptsize \renewcommand*{\arraystretch}{1}
\begin{array} {\c|\c|\c|\c|\c|\cend}
a \:&\: b \:&\: A_1 \:&\: A_2 \:&\: \cdots \:&\: A_{m-1}  \\ \cline{3-6}
a \:&\: b \:&\: c,d \:&\: C_1 \:&\: \cdots \:&\: C_{m-2}
\rule[0mm]{0mm}{2.7mm}
\end{array} 
}
\hspace{-1.5 truemm} \Big)$. \epf

\begin{rem}
The congruences $\rho_{\S_2}$, $\lam_{\S_2}$, $R_{\S_2}$, $\rho_K$, $\lambda_K$ and $R_K$  are not principal (cf.~Remark \ref{rem:nonprincipal}), but each is generated by two pairs of Brauer elements.  The pairs of generating pairs for each may be deduced in the manner of Remark \ref{NotPrinc}. 
\end{rem}

We now turn to the upper congruences, specifically:
\begin{equation}\label{eq:Bn_chain}
R_1 \subsetneq R_{\A_3} \subsetneq R_{\S_3} \subsetneq R_3 \subsetneq  \cdots \subsetneq R_n =\nabla  \text{ ($n$ odd),}\ \ \ \ 
R_K \subsetneq R_{\A_4} \subsetneq R_{\S_4} \subsetneq R_4 \subsetneq 
 \cdots \subsetneq R_n =\nabla \text{ ($n$ even).}
\end{equation}
We begin with a series of results giving sufficient conditions for a congruence on~$\B_n$ to contain a Rees congruence $R_q$.

\begin{lemma}\label{lem:1z_Bn}
Suppose $\xi\in\Cong(\B_n)$ and that $(\1,\al)\in\xi$ for some $\al\in J_z$.  Then $\xi=R_n$.
\end{lemma}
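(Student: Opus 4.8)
The plan is to prove that $\xi$ is the universal congruence, by showing that every element of $\B_n$ is $\xi$-related to $\al$. Write $M=J_z=I_z$ for the minimal ideal of $\B_n$. Since $\B_n$ is finite (so that $M$ is stable) and the maximal subgroup of $\B_n$ of rank $z$ is $\S_z$, which is trivial because $z\in\{0,1\}$, Lemma \ref{lem:Mcr} gives that $M$ is a rectangular band; in particular $\al\be\al=\al$ for every $\be\in M$. I will also use the elementary observation that for any $\ga\in\B_n$ we have $\rank(\ga\al)\le\rank(\al)=z$, and since $z$ is the least rank occurring in $\B_n$ this forces $\ga\al\in M$.

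The first step is to show that $(\al,\be)\in\xi$ for every $\be\in M$. Fixing such a $\be$, multiplying the pair $(\1,\al)\in\xi$ on the left by $\be$ yields $(\be,\be\al)\in\xi$. Since $\be\al\in M$, multiplying the pair $(\al,\1)\in\xi$ on the right by $\be\al$ yields $(\al\be\al,\be\al)\in\xi$, i.e.\ $(\al,\be\al)\in\xi$ using $\al\be\al=\al$; transitivity then gives $(\be,\al)\in\xi$. The second step is immediate: for arbitrary $\ga\in\B_n$, multiplying $(\1,\al)$ on the left by $\ga$ gives $(\ga,\ga\al)\in\xi$, and since $\ga\al\in M$ the first step gives $(\al,\ga\al)\in\xi$, whence $(\ga,\al)\in\xi$. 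Thus $\al$ is $\xi$-related to every element of $\B_n$, so $\xi=\B_n\times\B_n=\nabla=R_n$, as required.

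I do not expect any genuine obstacle here: the argument is just a short sequence of applications of left/right compatibility of $\xi$ combined with transitivity, and the only point needing care is checking that the relevant products ($\be\al$ and $\ga\al$) lie in the minimal ideal $M$, so that the rectangular band identity $\al\be\al=\al$ may be invoked — but this is immediate from the rank inequality $\rank(\al\be\ga)\le\rank(\be)$ together with the minimality of $z$.
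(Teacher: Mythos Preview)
Your proof is correct and is in fact more elementary than the paper's. The paper first shows $R_z\subseteq\xi$ by choosing a specific $\be\in J_z$ with $\ker(\be)\neq\ker(\al)$ and $\coker(\be)\neq\coker(\al)$, observing that $\be=\1\be\1\mathrel\xi\al\be\al=\al$, and then invoking Proposition~\ref{prop:R01_Bn} to conclude $R_z=\cg\al\be\subseteq\xi$; it then argues, as you do, that every $\ga\in\B_n$ satisfies $\ga\mathrel\xi\ga\al$ with $\ga\al\in J_z$. So the paper's argument leans on the earlier classification of generating pairs for $R_z$, which in turn rests on Proposition~\ref{prop:rl_Bn}. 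By contrast, you bypass all of that machinery and work directly with the rectangular band identity $\al\be\al=\al$ in $M$, obtaining $(\al,\be)\in\xi$ for every $\be\in M$ by two applications of compatibility plus transitivity. Your route is shorter and self-contained; the paper's route has the minor narrative benefit of tying the lemma back into the principal-congruence framework already established, but yours is the cleaner proof of the lemma as stated.
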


\pf 
We first show that $R_z\sub\xi$.  
Choose any $\be\in J_z$ with $\ker(\be)\not=\ker(\al)$ and $\coker(\be)\not=\coker(\al)$.  
Then $\be=\1\;\!\be\;\!\1\mathrel\xi\al\be\al=\al$,
and clearly $(\al,\be)\in R_z\sm(\rho_z\cup\lam_z)$, so Proposition \ref{prop:R01_Bn} gives $R_z=\cg\al\be\sub\xi$.  
Now 
the proof will be complete if we can show that every Brauer element is $\xi$-related to a Brauer element of rank $z$.  So let $\ga\in\B_n$.  Then $\ga=\ga\;\!\1\mathrel\xi\ga\al$, and since $\rank(\ga\al)\leq\rank(\al)=z$, we are done. \epf

Recall that group of units of $\B_n$ is the symmetric group $\S_n=\set{\al\in\B_n}{\rank(\al)=n}$, and recall  that, for any $\al\in\S_n$, $\al^*$ is the (group theoretic) inverse of $\al$ in $\S_n$.

\begin{lemma}\label{lem:1q_Bn}
Suppose $\xi\in\Cong(\B_n)$ and that $(\1,\al)\in\xi$ for some $\al\in\B_n$ with $\rank(\al)<n$.  Then $\xi=R_n$.
\end{lemma}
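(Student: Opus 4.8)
The plan is to reduce to Lemma~\ref{lem:1z_Bn} by a descending induction on rank: I will show that if $\1$ is $\xi$-related to an element of rank $r$ with $z<r<n$, then it is $\xi$-related to an element of rank $r-2$, so that iterating brings us down to an element of $J_z$ and Lemma~\ref{lem:1z_Bn} applies. So suppose $(\1,\al)\in\xi$ with $z<\rank(\al)=r<n$ (the case $\rank(\al)=z$ being immediate from Lemma~\ref{lem:1z_Bn}). Since $\xi$ is a congruence, $(\1,\al^k)\in\xi$ for all $k\geq1$; if $\rank(\al^k)<r$ for some $k$ we are done, so I may assume $\rank(\al^k)=r$ for all $k$ (one always has $\rank(\al^k)\leq\rank(\al)$). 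Then $\al^2\gJ\al$, stability of $\B_n$ forces $\al^2\H\al$, and Green's Theorem makes the $\H$-class of $\al$ a group; letting $e$ be its identity gives an idempotent $e=\al^m$ ($m\geq1$) of rank $r<n$ with $(\1,e)\in\xi$.

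The heart of the argument is then to produce a permutation $\pi\in\S_n$ with $\rank(e\cdot\pi e\pi^{-1})=r-2$. Write $D=\dom(e)=\codom(e)$, so $|D|=r\geq2$ (as $r>z$), and the non-transversals of $e$ pair up $\bn\setminus D$ on the top row; since $r<n$ this row is nonempty, so I can fix a top non-transversal $\{a,b\}$ of $e$ and two distinct points $c,d\in D$, and set $\pi=(a\,c)(b\,d)$. Conjugating the pair $(\1,e)\in\xi$ by $\pi$ (multiply on the left by $\pi$, on the right by $\pi^{-1}$, and use $\pi\,\1\,\pi^{-1}=\1$) gives $(\1,\pi e\pi^{-1})\in\xi$ and hence $(\1,e\cdot\pi e\pi^{-1})\in\xi$.

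To compute the rank of $e\cdot\pi e\pi^{-1}$, I would argue in the product graph $\Pi(e,\pi e\pi^{-1})$: a top vertex can only pass into the middle row at a vertex $i''$ with $i\in D$ (every other top vertex sits in a non-transversal of $e$), and from $i''$ with $i\in D$ one reaches the bottom row exactly when $i\in\dom(\pi e\pi^{-1})=\pi(D)=(D\setminus\{c,d\})\cup\{a,b\}$, i.e.\ exactly when $i\in D\setminus\{c,d\}$; each such $i$ contributes the transversal $\{i,i'\}$. For $i\in\{c,d\}$, the vertex $i''$ instead meets the non-transversal $\{c'',d''\}$ of $\pi e\pi^{-1}$ (the image of the non-transversal $\{a,b\}$ of $e$) and returns to the top, so $\{c,d\}$ becomes a non-transversal, not a transversal, of the product. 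Hence the transversals of $e\cdot\pi e\pi^{-1}$ are precisely $\{i,i'\}$ for $i\in D\setminus\{c,d\}$, so its rank is $r-2$, completing the inductive step.

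The conceptual content is just this reduction; the one genuinely fiddly step is the rank count for $e\cdot\pi e\pi^{-1}$, where I need to be sure no further transversals sneak in through the non-transversal blocks lying in the middle row — but as indicated above, from the top one only ever reaches middle vertices indexed by $D$, so there is nothing extra to worry about. (An alternative phrasing avoids the induction and instead takes $r_0$ to be the least rank of any element $\xi$-related to $\1$, then derives a contradiction with minimality by the same conjugation trick; this is essentially identical.)
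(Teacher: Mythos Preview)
Your overall strategy --- descend in rank and then invoke Lemma~\ref{lem:1z_Bn} --- is exactly the paper's strategy, and the reduction to an idempotent $e$ with $(\1,e)\in\xi$ is valid (indeed simpler than you wrote: some power of $\al$ is idempotent in the finite monoid $\B_n$). The gap is in the sentence ``Write $D=\dom(e)=\codom(e)$'': this is false for a general idempotent of $\B_n$. For instance, in $\B_5$ the element $e$ with transversals $\{1,1'\},\{2,2'\},\{3,5'\}$, upper non-transversal $\{4,5\}$ and lower non-transversal $\{3',4'\}$ is idempotent with $\dom(e)=\{1,2,3\}\neq\{1,2,5\}=\codom(e)$. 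Your rank argument (``from the top one only ever reaches middle vertices indexed by $D$'') implicitly uses $\codom(e)=D$, since transversals of $e$ land in $\codom(e)''$, not $\dom(e)''$. With this $e$ and $c=2$, $d=3\in\dom(e)$, $\{a,b\}=\{4,5\}$, $\pi=(2\,4)(3\,5)$, a direct computation gives $\rank(e\cdot\pi e\pi^{-1})=3=\rank(e)$, so the inductive step fails as written.

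The paper sidesteps both the reduction to an idempotent and this domain/codomain issue: it takes $i,j\in\dom(\al)$ distinct and a \emph{lower} non-transversal $\{k',l'\}$ of $\al$, chooses any $\si\in\S_n$ with $k\si=i$, $l\si=j$, and observes $\1=\1\si\1\si^*\mathrel\xi\al\si\al\si^*$ with $\rank(\al\si\al\si^*)\leq\rank(\al)-2$. The point is that the lower non-transversal $\{k',l'\}$ becomes $\{i',j'\}$ in $\al\si$, and then the transversals of the second $\al$ through $i,j\in\dom(\al)$ get merged. Your argument becomes correct if you take $c,d\in\codom(e)$ rather than $\dom(e)$ (so that the new upper non-transversal $\{c,d\}$ of $\pi e\pi^{-1}$ collides with transversals of $e$ arriving in the middle row), which is essentially the paper's idea transposed; but then you must also arrange $\{a,b\}\cap\{c,d\}=\emptyset$, and it is cleaner just to follow the paper and not pass to an idempotent at all.
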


\pf We prove the lemma by induction on $\rank(\al)$.  If $\rank(\al)=z$, then
we are done, by Lemma~\ref{lem:1z_Bn}, so suppose $\rank(\al)\geq z+2$.  Inductively, it is enough to show that $(\1,\be)\in\xi$ for some $\be\in\B_n$ with ${\rank(\be)<\rank(\al)}$.  Since $2\leq\rank(\al)\leq n-2$, we may choose some $i,j\in\dom(\al)$ with $i\not=j$, and some $(k,l)\in\coker(\al)$ with $k\not=l$.  Choose any $\si\in\S_n$ with $k\si=i$ and $l\si=j$.  Then $\1 = \1\;\!\si\;\!\1\;\!\si^* \mathrel\xi \al\si\al\si^*$, and $\rank(\al\si\al\si^*)\leq\rank(\al)-2$, as required. \epf

For $q\in\{z,z+2,\dots,n\}$, let $\ve_q=\overline\id_q=\partI1q{q+1,q+2}{n-1,n}1q{q+1,q+2}{n-1,n}$
denote the identity of the maximal subgroup~$\overline{\S}_q$.  
Note that the subsemigroup $\ve_q\B_n\ve_q$ of $\B_n$ is isomorphic to $\B_q$; it consists of all Brauer elements that have the blocks $\{q+1,q+2\},\ldots,\{n-1,n\}$ and $\{(q+1)',(q+2)'\},\ldots,\{(n-1)',n'\}$.

\begin{lemma}\label{lem:qp_Bn}
Suppose $\xi\in\Cong(\B_n)$ and that there exists $(\al,\be)\in\xi$ with $q=\rank(\al)\geq3$ and $\rank(\be)<q$.  Then $R_q\sub\xi$.
\end{lemma}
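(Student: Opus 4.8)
The goal is to show that if a congruence $\xi$ on $\B_n$ identifies two Brauer elements $\al,\be$ with $q=\rank(\al)\geq3$ and $\rank(\be)<q$, then $R_q\sub\xi$. The natural strategy is to reduce to the subsemigroup $\ve_q\B_n\ve_q\cong\B_q$, exploit the induction already set up in Lemmas \ref{lem:1z_Bn}--\ref{lem:1q_Bn}, and then ``spread out'' the resulting identification across all of $J_q$ and below using Green's-relation arguments. First I would multiply $(\al,\be)$ on both sides by suitable idempotents/elements to replace $\al$ by something $\H$-related to $\ve_q$, i.e.\ to assume without loss of generality that $\al$ lies in the maximal subgroup $\overline{\S}_q$ — more precisely, using Lemma \ref{lemma-aa1} (adapted to $\B_n$ as in the Brauer-analogue of that lemma) there is $\ga\in\B_n$ with $\rank(\ga)=q$, $\ga\al\ga$ of rank $q$, and $\ga\al\ga\H\ve_q$ after a further conjugation; since $\rank(\ga\be\ga)\leq\rank(\be)<q$, the pair $(\ga\al\ga,\ga\be\ga)$ still has the required rank drop, so we may as well assume $\al\in\overline{\S}_q$ and even $\al=\ve_q$ after composing with the inverse of $\al$ inside the group $\overline{\S}_q$. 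Thus we reduce to the case $(\ve_q,\be)\in\xi$ with $\rank(\be)<q$.

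Now restrict attention to the subsemigroup $eBe$ where $e=\ve_q$, which is isomorphic to $\B_q$; under this isomorphism $\ve_q$ corresponds to $\id_q$ (the identity of $\B_q$) and $e\be e$ corresponds to some element of $\B_q$ of rank $<q$. The restricted congruence $\xi\cap(eBe\times eBe)$ is a congruence on $\B_q$ containing the pair $(\id_q, e\be e)$ with the second coordinate of strictly smaller rank, so Lemma \ref{lem:1q_Bn} (applied inside $\B_q$, which is legitimate since $q\geq3$) forces that restricted congruence to be the universal congruence $\nabla_{\B_q}$. In particular, $\xi$ relates all elements of $eBe$, hence relates $e$ to every Brauer element of rank $\le q$ lying in $eBe$ — in particular to elements of every $\H$-class inside $eBe\cap J_q$.

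Finally I would promote this to $R_q\sub\xi$ by showing every pair in $R_q$ is generated. Given any $(\ga,\de)\in R_q$, i.e.\ $\rank(\ga),\rank(\de)\le q$: first note that since $\xi$ identifies $e$ with some element $\delta_0\in eBe$ of rank $<q$ (say $\rank(\delta_0)=q-2$, obtainable from the universality on $eBe$), one can multiply to conclude that for \emph{any} $\omega\in\B_n$ with $\rank(\omega)\le q$ one has $\omega\mathrel\xi\omega'$ for some $\omega'$ of strictly smaller rank: indeed $\omega=\omega e'$ for a suitable idempotent $e'$ of rank $q$ conjugate to $e$ (using regularity of $J_q$, Remark \ref{rem:green_sumbonoids}(vi), and the fact that $\xi$ identifies $e$ with a lower-rank element, transported by the bijections of \cite[Theorem 2.3]{CPbook}), and $\omega e'\mathrel\xi\omega\delta_0'$ which has rank $<q$. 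Iterating, every element of rank $\le q$ is $\xi$-related to an element of the minimal ideal $J_z$, and then one uses that $\xi$ identifies at least two distinct elements of $J_z$ (again transporting the identification $e\mathrel\xi\delta_0$ down through $\gJ$-classes), so that by Propositions \ref{prop:rl_Bn} and \ref{prop:R01_Bn} the restriction of $\xi$ to $I_z$ contains $R_z$; combining, $R_q\sub\xi$.

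\textbf{Main obstacle.} The delicate point is the very first reduction: turning an arbitrary pair $(\al,\be)$ with a rank drop into a pair of the form $(\ve_q,\be')$ with $\rank(\be')<q$, while staying inside $\B_n$ (note $\I_n\not\hookrightarrow\B_n$, so the sandwiching elements used for $\P_n$ in Lemma \ref{lemma-aa1} must be replaced by genuine Brauer elements, and one must be careful that multiplying on the left and right by rank-$q$ Brauer elements does not accidentally raise the rank of the second coordinate or collapse the first). Once $(\ve_q,\be')\in\xi$ with $\rank(\be')<q$ is secured, the restriction-to-$\B_q$ step and the invocation of Lemma \ref{lem:1q_Bn} are routine, and the ``spreading out'' step is a standard Green's-relations argument of the kind already used repeatedly in this section.
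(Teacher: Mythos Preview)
Your overall strategy is exactly the paper's: sandwich $(\al,\be)$ to obtain $(\ve_q,\be')\in\xi$ with $\rank(\be')<q$, restrict to $\ve_q\B_n\ve_q\cong\B_q$, invoke Lemma~\ref{lem:1q_Bn} to conclude that all of $\ve_q\B_n\ve_q$ is collapsed by $\xi$, and then spread this out to $R_q$. Two points deserve correction, though.

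First, you misidentify the ``main obstacle''. The reduction to $(\ve_q,\be')$ is in fact a one-liner: writing $\al=\partI{a_1}{a_q}{A_1}{A_r}{b_1}{b_q}{B_1}{B_r}$, the Brauer elements $\ga=\partI1q{q+1,q+2}{n-1,n}{a_1}{a_q}{A_1}{A_r}$ and $\de=\partI{b_1}{b_q}{B_1}{B_r}1q{q+1,q+2}{n-1,n}$ satisfy $\ga\al\de=\ve_q$ exactly, while $\rank(\ga\be\de)\le\rank(\be)<q$ (rank cannot increase). No appeal to Lemma~\ref{lemma-aa1} or to inverses in $\overline{\S}_q$ is needed, and the absence of $\I_n$ inside $\B_n$ is irrelevant here.

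Second, your ``spreading out'' step has a genuine gap. You propose, for $\omega$ of rank $p\le q$, to write $\omega=\omega e'$ with $e'$ an idempotent of rank $q$, transport $e\mathrel\xi\de_0$ (with $\rank(\de_0)=q-2$) to $e'\mathrel\xi\de_0'$, and conclude $\omega\mathrel\xi\omega\de_0'$ with strictly smaller rank, then iterate. But $\rank(\omega\de_0')\le\min(p,q-2)$, which is not strictly less than $p$ when $p\le q-2$, so the iteration stalls. The fix is immediate and is what the paper does: since \emph{all} of $\ve_q\B_n\ve_q$ is collapsed, in particular $\ve_p\mathrel\xi\ve_z$ for every $p\le q$. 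Then for any $\si$ of rank $p\le q$, write $\si=\tau_3\ve_p\tau_4$ with explicit $\tau_3,\tau_4$ (analogous to $\ga,\de$ above), and conclude $\si\mathrel\xi\tau_3\ve_z\tau_4\in J_z$ in one step---no iteration needed. Similarly, to obtain $R_z\sub\xi$ you need a pair in $J_z$ with \emph{both} distinct kernels and distinct cokernels (not merely ``two distinct elements''), but such pairs abound in $\ve_q\B_n\ve_q\cap J_z$, so Proposition~\ref{prop:R01_Bn} applies directly.
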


\pf 
We first claim that $R_z\sub\xi$.  Write $\al=\partI{a_1}{a_q}{A_1}{A_r}{b_1}{b_q}{B_1}{B_r}$, and put
$$\ga=\partI1q{q+1,q+2}{n-1,n}{a_1}{a_q}{A_1}{A_r} \AND \de=\partI{b_1}{b_q}{B_1}{B_r}1q{q+1,q+2}{n-1,n}.$$
Then $\ve_q=\ga\al\de\mathrel\xi\ga\be\de$.  But $\ga\be\de\in\ve_q\B_n\ve_q$ and ${\rank(\ga\be\de)\leq\rank(\be)<q}$.  By Lemma~\ref{lem:1q_Bn}, and the fact that $\ve_q\B_n\ve_q$ is isomorphic to $\B_q$, it follows that all elements of $\ve_q\B_n\ve_q$ are $\cg{\ve_q}{\ga\be\de}$-related and, hence, $\xi$-related.  In particular, if we let $\tau_1,\tau_2\in\ve_q\B_n\ve_q$ be such that $\rank(\tau_1)=\rank(\tau_2)=z$, $\ker(\tau_1)\not=\ker(\tau_2)$ and $\coker(\tau_1)\not=\coker(\tau_2)$, then $(\tau_1,\tau_2)\in\xi$.  But $(\tau_1,\tau_2)\in R_z\sm(\rho_z\cup\lam_z)$, so it follows from Proposition \ref{prop:R01_Bn} that $R_z=\cg{\tau_1}{\tau_2}\sub\xi$.

Now that we know all Brauer elements of rank $z$ are $\xi$-related, the proof of the lemma will be complete if we can show any Brauer element of rank at most $q$ is $\xi$-related to a Brauer element of rank~$z$.  With this in mind, let $\si\in\B_n$ with $p=\rank(\si)\leq q$, and write $\si=\partI{c_1}{c_p}{C_1}{C_s}{d_1}{d_p}{D_1}{D_s}$.  Define ${\tau_3=\partI{c_1}{c_p}{C_1}{C_s}{1}{p}{p+1,p+2}{n-1,n}}$ and $\tau_4=\partI{1}{p}{p+1,p+2}{n-1,n}{d_1}{d_p}{D_1}{D_s}$.  Since $\ve_p,\ve_z\in\ve_q\B_n\ve_q$, we have $\ve_p \mathrel\xi \ve_z$, by the previous paragraph.  It follows that $\si = \tau_3\ve_p\tau_4 \mathrel\xi \tau_3\ve_z\tau_4$, with $\rank(\tau_3\ve_z\tau_4)=z$.  As noted above, this completes the proof. \epf

The previous result shows that a congruence identifying a Brauer element of rank $q\geq3$ with a Brauer element of lower rank must contain the Rees congruence $R_q$.
Our next goal is to give a condition under which a congruence that identifies two Brauer elements of equal rank $q\geq3$ must contain $R_q$; see Lemma~\ref{lem:notH_Bn}.
One of the key steps in the proof of Lemma \ref{lem:notH_Bn} is true also for $q=2$, and it will be convenient for later use to prove this intermediate result under this slightly more general hypothesis:

\begin{lemma}\label{lem:notH_prelim_Bn}
  Suppose $\xi\in\Cong(\B_n)$ and that $(\al,\be)\in (\xi\setminus{\H})\cap
  (J_q\times J_q)$ with $q\geq2$.  Then there exists $(\ga,\de)\in\xi$ with
  $\rank(\ga)=q$ and $\rank(\de)<q$.
\end{lemma}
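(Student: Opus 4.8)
Suppose $(\al,\be)\in(\xi\setminus{\H})\cap(J_q\times J_q)$ with $q\geq2$. Since $\B_n$ and $\J_n$ have $\al\R\be\iff\ker(\al)=\ker(\be)$ and $\al\L\be\iff\coker(\al)=\coker(\be)$ (Remark~\ref{rem:green_sumbonoids}(iii)), the hypothesis $(\al,\be)\not\in{\H}$ means $\ker(\al)\not=\ker(\be)$ or $\coker(\al)\not=\coker(\be)$; by the symmetry $\gamma\mapsto\gamma^*$ (which preserves $\xi$-membership only if $\xi$ is a $*$-congruence, so instead) we may, after possibly replacing $(\al,\be)$ by $(\al^*,\be^*)$ and noting $(\al^*,\be^*)\in\xi$ need not hold --- so more carefully, we simply assume without loss of generality that $(\al,\be)\not\in{\R}$ (the $(\al,\be)\not\in{\L}$ case being dual). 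The plan is then to invoke Lemma~\ref{lemma-aa1b}(i): since $(\al,\be)\not\in{\R}$ and $q\geq1$, there exists $\ga\in J_q$ such that precisely one of $\ga\al,\ga\be$ belongs to $J_q$. Multiplying $(\al,\be)\in\xi$ on the left by $\ga$ gives $(\ga\al,\ga\be)\in\xi$, and since exactly one of these has rank $q$ while the other has rank $<q$ (rank cannot exceed $q$ since $\ga,\al,\be\in J_q$, and $\ga\al\gJ$ or $\ga\be$ fails to stay in $J_q$), we obtain the desired pair.

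The main obstacle --- really the only point needing care --- is that Lemma~\ref{lemma-aa1b} as stated in the excerpt is proved for $\P_n$, and its proof constructs partial-permutation-type elements $\ga$ that may not lie in $\B_n$. So the first step is to establish the $\B_n$-analogue of Lemma~\ref{lemma-aa1b}(i): given $\al,\be\in J_q(\B_n)$ with $(\al,\be)\not\in{\R}$, produce $\ga\in J_q(\B_n)$ with precisely one of $\ga\al,\ga\be$ of rank $q$. Following the $\P_n$ proof, since $\ker(\al)\not=\ker(\be)$ (domains are determined by kernels in $\B_n$), pick $(i,j)$ in the symmetric difference of $\ker(\al)$ and $\ker(\be)$; say $(i,j)\in\ker(\al)\setminus\ker(\be)$, and note $i,j$ lie in a common kernel-class of $\al$, which in $\B_n$ has size exactly $2$, so that class is $\{i,j\}$ itself. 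One then builds $\ga\in\B_n$ of rank $q$ whose domain is a transversal of $\al$ (so $\rank(\ga\al)=q$) but is chosen so that $i,j$ are "captured" into the domain in a way forcing $i$ and $j$ into the same block of $\ga\be$, dropping its rank --- this is the Brauer-monoid bookkeeping: pad out the remaining points of $\bn$ with an arbitrary $2$-partition to keep everything in $\B_n$. This is exactly the style of modification already performed for $\B_n$ elsewhere in the section (cf.\ the remarks accompanying Lemma~\ref{lem:nu_phi} and the proofs of Propositions~\ref{prop:rl_Bn}--\ref{prop:joins2_Bn}), and requires only routine case analysis on whether $i,j$ lie in a transversal kernel-class or a non-transversal kernel-class of $\be$.

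To summarize the order of steps: (1) reduce to the case $(\al,\be)\not\in{\R}$ by duality; (2) prove the $\B_n$-version of Lemma~\ref{lemma-aa1b}(i), supplying an explicit $\ga\in J_q(\B_n)$ with exactly one of $\ga\al,\ga\be$ in $J_q$ (routine, with $2$-partition padding to stay inside $\B_n$); (3) set $(\ga,\de):=(\ga\al,\ga\be)$, observe $(\ga,\de)\in\xi$ since $\xi$ is a congruence, and conclude $\rank(\ga)=q$ while $\rank(\de)<q$ (or vice versa, swapping names). Since $q\geq2$ throughout, no degeneracy arises, and the case $q=2$ --- which is the reason the lemma is stated at this generality rather than only for $q\geq3$ --- goes through identically. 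The bulk of the genuine content is step (2), but it is a direct transcription of arguments already appearing in the paper.
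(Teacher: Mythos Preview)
Your approach is essentially the same as the paper's: reduce by duality to the case $(\al,\be)\not\in{\R}$, then construct a left multiplier $\si\in J_q(\B_n)$ so that exactly one of $\si\al,\si\be$ stays in $J_q$, and take that pair. The paper carries out step~(2) directly rather than by citing Lemma~\ref{lemma-aa1b}, splitting into the cases $\dom(\al)=\dom(\be)$ and $\dom(\al)\neq\dom(\be)$ and writing down an explicit $\si\in\B_n$ in each; your sketch of the construction is a bit garbled (it is $\codom(\ga)$, not $\dom(\ga)$, that must be arranged relative to the upper structure of $\al$, and there is a naming clash in your step~(3)), but the plan is correct.
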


\pf Since $(\al,\be)\not\in{\H}$, it follows that $(\al,\be)\not\in{\R}$ or $(\al,\be)\not\in{\L}$.  By symmetry, we may assume that $(\al,\be)\not\in{\R}$, so $\ker(\al)\not=\ker(\be)$.  Write $\al=\partI{a_1}{a_q}{A_1}{A_r}{b_1}{b_q}{B_1}{B_r}$ and $\be=\partI{c_1}{c_q}{C_1}{C_r}{d_1}{d_q}{D_1}{D_r}$.

\bigskip\noindent {\bf Case 1.}  Suppose first that $\dom(\al)=\dom(\be)$.  Renaming if necessary, we may assume that $A_1$ is not a block of $\be$.  Write $A_1=\{i,j\}$.  Since $i,j\in\bn\sm\dom(\al)=\bn\sm\dom(\be)$, we may assume that $C_1=\{i,k\}$ and $C_2=\{j,l\}$ for some $k,l$.  Define $\si=\Big( 
{ \scriptsize \renewcommand*{\arraystretch}{1}
\begin{array} {\c|\c|\c|\c|\c|\c|\c|\c|\c|\cend}
i \:&\: j \:&\: a_3 \:&\: \cdots \:&\: a_q \:&\: a_1,a_2 \:&\: A_2 \:&\: A_3 \:&\: \cdots \:&\: A_r  \\ \cline{6-10}
i \:&\: j \:&\: a_3 \:&\: \cdots \:&\: a_q \:&\: a_1,k \:&\: a_2,l \:&\: C_3 \:&\: \cdots \:&\: C_r
\rule[0mm]{0mm}{2.7mm}
\end{array} 
}
\hspace{-1.5 truemm} \Big)$.  Then $\rank(\si\al)=q-2$ and $\rank(\si\be)=q$, so we put $\ga=\si\be$ and $\de=\si\al$.  

\bigskip\noindent {\bf Case 2.}  Now suppose $\dom(\al)\not=\dom(\be)$.  Without loss of generality, we may assume that $a_q\not\in\dom(\be)$.  Denote the $\ker(\be)$-class of $a_q$ by $\{a_q,x\}$, and let $\si\in\B_n$ be an arbitrary Brauer element of rank $q$ such that $\codom(\si)$ contains $a_q$, $x$, and $q-2$ additional elements of $\{a_1,\ldots,a_{q-1}\}$.  Then $\rank(\si\al)=q$ and $\rank(\si\be)\leq q-2$, so we put $\ga=\si\al$ and $\de=\si\be$.    \epf

\begin{lemma}\label{lem:notH_Bn}
Suppose $\xi\in\Cong(\B_n)$ and that $(\al,\be)\in (\xi\setminus{\H})\cap (J_q\times J_q)$ with $q\geq3$.  Then $R_q\sub\xi$.
\end{lemma}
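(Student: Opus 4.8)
The plan is to reduce the statement to the already-established Lemma~\ref{lem:qp_Bn}. Given $(\al,\be)\in(\xi\setminus{\H})\cap(J_q\times J_q)$ with $q\geq 3$, by Lemma~\ref{lem:notH_prelim_Bn} there exists a pair $(\ga,\de)\in\xi$ with $\rank(\ga)=q$ and $\rank(\de)<q$. Since $q\geq 3$, Lemma~\ref{lem:qp_Bn} applies directly to this pair and gives $R_q\sub\xi$, which is exactly what we want.

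So the proof is essentially a two-line invocation: first cite Lemma~\ref{lem:notH_prelim_Bn} to produce a pair dropping rank, then cite Lemma~\ref{lem:qp_Bn} to conclude. The only point requiring a moment's care is that the hypotheses of both lemmas are met: Lemma~\ref{lem:notH_prelim_Bn} needs $q\geq 2$, which holds since $q\geq 3$; and Lemma~\ref{lem:qp_Bn} needs the rank of the higher-rank element to be $\geq 3$, which is again guaranteed by $q\geq 3$ (this is precisely why the intermediate lemma was stated for $q\geq 2$ but this one only for $q\geq 3$ --- the case $q=2$ is handled separately later, e.g.\ in Proposition~\ref{prop:joins2_Bn}). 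There is no real obstacle here; the substantive work was already done in Lemmas~\ref{lem:notH_prelim_Bn} and~\ref{lem:qp_Bn}, and the present lemma is just the clean packaging of those two facts into the form needed for the upper-congruence analysis that follows.

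\begin{proof}
By Lemma~\ref{lem:notH_prelim_Bn} (applicable since $q\geq2$), there exists $(\ga,\de)\in\xi$ with $\rank(\ga)=q$ and $\rank(\de)<q$.  Since $q\geq3$, Lemma~\ref{lem:qp_Bn} then gives $R_q\sub\xi$.
\end{proof}
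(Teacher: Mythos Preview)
Your proof is correct and essentially identical to the paper's own proof, which also just invokes Lemma~\ref{lem:notH_prelim_Bn} to obtain a pair $(\ga,\de)\in\xi$ with $\rank(\ga)=q>\rank(\de)$, and then applies Lemma~\ref{lem:qp_Bn}. Your added remarks on why the hypotheses are met and why the lemma requires $q\geq3$ are accurate and match the paper's intent.
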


\pf By Lemma \ref{lem:notH_prelim_Bn}, there exists $(\ga,\de)\in\xi$ with $\rank(\ga)=q$ and $\rank(\de)<q$.  It then follows from Lemma \ref{lem:qp_Bn} that $R_q\sub\xi$. \epf

For $1\leq q\leq n$, and for a permutation $\pi\in\S_q$, we will write $\normal{\pi}$ for the normal closure of $\pi$ in $\S_q$.  The next result concerns the permutations $\phi(\al,\be)$ from Definition \ref{defn:phi}.

\begin{lemma}\label{lem:thN_Bn}
Let $3\leq q\leq n$ with $q\equiv n\pmod 2$, and let $N$ be one of $\A_q$ or
  $\S_q$.  Suppose $\xi\in\Cong(\B_n)$, and that $(\al,\be)\in \xi\cap{\H}\cap (J_q\times J_q)$ is such that $N=\normal{\phi(\al,\be)}$.  Then $R_{q-2}\sub\xi$.
\end{lemma}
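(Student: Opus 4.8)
The goal is to show that a congruence $\xi$ on $\B_n$ which identifies an $\H$-related pair $(\al,\be)$ in the rank-$q$ $\gJ$-class (with $q\geq 3$) whose associated permutation $\phi(\al,\be)$ has normal closure equal to all of $\A_q$ or $\S_q$ must in fact contain the Rees congruence $R_{q-2}$. The strategy is to reduce to the already-known classification of congruences on the symmetric group $\S_q$ (Section~\ref{sect:SnInOn}), which is available once we have located a copy of $\S_q$ inside $\B_n$ on which $\xi$ restricts to something non-trivial. The natural copy to use is the maximal subgroup $\overline{\S}_q$ sitting in $J_q$, or rather the group of units of the local monoid $\ve_q\B_n\ve_q\cong\B_q$, whose group of units is $\S_q$ (identified with $\overline{\S}_q$).

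First I would use Lemma~\ref{lemma-aa1a} (the analogue for $\B_n$: note that it is phrased for $\P_n$ but the proof carries over with the modified $\ga,\de$ noted for $\B_n$, or one argues directly) to ``straighten'' the pair $(\al,\be)$: there exist $\ga_1,\de_1$ of rank $q$, lying in the appropriate copies, and $\ga_2,\de_2\in J_q$, so that $(\al_1,\be_1)=(\ga_1\al\de_1,\ga_1\be\de_1)\in\xi\cap{\H}\cap(J_q\times J_q)$ with $\al_1,\be_1\in\ve_q\B_n\ve_q$ and $\phi(\al_1,\be_1)=\phi(\al,\be)$, and moreover $(\al,\be)=(\ga_2\al_1\de_2,\ga_2\be_1\de_2)$. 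Inside $\ve_q\B_n\ve_q\cong\B_q$, the elements $\al_1,\be_1$ have rank $q$, hence lie in the group of units, which is $\S_q$; and the permutation $\al_1\be_1^{-1}$ (computed in this $\S_q$) is precisely $\phi(\al_1,\be_1)=\phi(\al,\be)$, up to a harmless conjugation/inversion. Thus $\xi$ restricted to this copy of $\S_q$ identifies a pair whose ratio generates a subgroup with normal closure $N\in\{\A_q,\S_q\}$; since congruences on $\S_q$ correspond to normal subgroups, the restriction of $\xi$ to $\S_q$ is the congruence corresponding to a normal subgroup containing $N$, in particular it identifies \emph{every} pair of elements of $\S_q$ that are congruent mod $N$. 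Concretely: since $N=\normal{\phi(\al_1,\be_1)}$ is $\A_q$ or $\S_q$, and $q\ge 3$ so $\S_q$ is non-abelian (for $q\ge 3$), the restriction of $\xi$ to this $\S_q$ contains at least the congruence with classes the cosets of $\A_q$ — in any case it is strictly larger than the identity congruence on $\S_q$.

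Next, having a non-trivial identification inside the group of units of $\ve_q\B_n\ve_q$, I would invoke Lemma~\ref{lem:1q_Bn} applied \emph{inside} $\ve_q\B_n\ve_q\cong\B_q$: since $\xi$ (restricted to $\ve_q\B_n\ve_q$) identifies two distinct elements of $\S_q$, conjugating and multiplying we can arrange to identify the identity $\ve_q$ with some permutation $\pi\ne\ve_q$, and then — because $\normal{\phi(\al_1,\be_1)}$ is $\A_q$ or $\S_q$, which is \emph{not} contained in any proper normal subgroup lattice step that stops short of generating everything — one obtains that $\xi$ identifies $\ve_q$ with elements of arbitrarily small rank in $\ve_q\B_n\ve_q$; alternatively, one directly identifies $\ve_q$ with an element of rank $q-2$ in $\ve_q\B_n\ve_q$ by a single well-chosen product (multiply the straightened pair by suitable rank-$q$ Brauer elements to collapse two transversals, using that the permutation is a non-trivial even permutation and $q\ge 3$ so $\A_q$ contains $3$-cycles, or $\S_q$ contains transpositions). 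Once we have $(\ga,\de)\in\xi$ with $\ga,\de\in\ve_q\B_n\ve_q$, $\rank(\ga)=q$ and $\rank(\de)<q$, we may also pull this back to $\B_n$: $(\ga,\de)\in\xi$ with $\rank(\ga)=q\geq 3$ and $\rank(\de)<q$. Then Lemma~\ref{lem:qp_Bn} immediately gives $R_q\sub\xi$, hence certainly $R_{q-2}\sub\xi$. Actually the statement only asks for $R_{q-2}\subseteq\xi$, which is weaker, so this route more than suffices; if one wants exactly $R_{q-2}$ rather than $R_q$, one instead stops the collapsing earlier — but the clean path is to get $R_q\subseteq\xi$ and note $R_{q-2}\subseteq R_q$.

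The main obstacle I anticipate is the bookkeeping in the first reduction step: verifying that the $\B_n$-analogue of Lemma~\ref{lemma-aa1a} holds (the lemma as stated lives in $\P_n$ and its proof uses partial permutations $\ga_1,\de_1\in\I_n$, which do \emph{not} embed in $\B_n$), so one must re-prove it using Brauer elements — replacing the idempotent partial permutations by the idempotent Brauer elements $\partI{A_1}{A_q}{C_1}{C_r}{A_1}{A_q}{C_1}{C_r}$ and $\partI{1}{q}{q+1,q+2}{n-1,n}{1}{q}{q+1,q+2}{n-1,n}$ on the domain side and codomain side, exactly as was done for the modified $\ga,\de$ in the remark preceding Lemma~\ref{lem:nu_phi}'s $\B_n$ version. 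The rest is a matter of tracking that $\phi$ is preserved (which it is, since multiplying an $\H$-related pair on the left and right by idempotents that act as partial identities on the relevant domains and codomains does not change the induced permutation). A secondary point to handle carefully: the claim ``$N=\normal{\phi(\al,\be)}$ with $N\in\{\A_q,\S_q\}$ forces the restriction of $\xi$ to $\S_q$ to be non-trivial'' uses the chain structure of normal subgroups of $\S_q$ together with the fact that $\phi(\al,\be)\ne\id_q$ whenever $N$ is a non-trivial normal subgroup — true here since $\A_q$ and $\S_q$ are both non-trivial for $q\ge 3$ — and the correspondence between congruences on $\S_q$ and its normal subgroups (Section~\ref{sect:SnInOn}).
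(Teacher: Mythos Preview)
Your approach has a genuine gap, and in fact the conclusion you reach is too strong to be true. You claim to obtain $R_q\subseteq\xi$ (and then note $R_{q-2}\subseteq R_q$), but this cannot hold in general: take $\xi=R_N$ itself. This is a congruence on $\B_n$, it contains the pair $(\al,\be)$ with exactly the hypotheses of the lemma, yet $R_N\subsetneq R_q$ in the chain \eqref{eq:Bn_chain}. So any argument concluding $R_q\subseteq\xi$ must be wrong somewhere.

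The error is in your invocation of Lemma~\ref{lem:1q_Bn} inside $\ve_q\B_n\ve_q\cong\B_q$. That lemma requires the identity to be $\xi$-related to an element of \emph{strictly smaller} rank. What you actually have, after straightening, is $\ve_q\mathrel\xi\overline{\pi}$ for a non-trivial permutation $\pi\in N$; but $\overline{\pi}$ is a unit of rank $q$, so the lemma does not apply. Nor can you manufacture a rank drop by multiplying: for any $\gamma$, the elements $\ve_q\gamma$ and $\overline{\pi}\gamma$ have the same rank (since $\overline{\pi}$ is invertible in $\ve_q\B_n\ve_q$), and likewise on the left. So from a pair of $\xi$-related units you can never produce $(\gamma,\delta)\in\xi$ with $\rank(\gamma)=q>\rank(\delta)$.

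The paper's proof avoids this by never trying to drop rank from $q$. Instead, having shown $\ve_q\mathrel\xi\overline{\sigma}$ (and hence $\ve_q\mathrel\xi\overline{\pi}$ for every $\pi\in N$, by writing $\pi$ as a product of conjugates of $\sigma$), it works directly at rank $q-2$: it picks a specific $3$-cycle $\pi\in\A_q\subseteq N$ and two explicit rank-$(q-2)$ projections $\de_1,\de_2\in\ve_q\B_n\ve_q$ satisfying $\overline{\pi}^*\de_1\overline{\pi}=\de_2$. Then $\de_1=\ve_q\de_1\ve_q\mathrel\xi\overline{\pi}^*\de_1\overline{\pi}=\de_2$, and $(\de_1,\de_2)$ is a pair in $J_{q-2}\times J_{q-2}$ lying outside ${\L}\cup{\R}$. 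Now the previously established Proposition~\ref{prop:R01_Bn} (for $q=3$), Proposition~\ref{prop:joins2_Bn}(iii) (for $q=4$), or Lemma~\ref{lem:notH_Bn} (for $q\geq5$) gives $R_{q-2}\subseteq\xi$. The key idea you are missing is that the $\xi$-identification of permutations in $\overline{\S}_q$ should be used to \emph{conjugate} rank-$(q-2)$ elements into one another, not to reduce rank from $q$.
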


\pf Write $\al=\partI{a_1}{a_q}{A_1}{A_r}{b_1}{b_q}{B_1}{B_r}$ and $\be=\partI{a_1}{a_q}{A_1}{A_r}{b_{1\si}}{b_{q\si}}{B_1}{B_r}$, where $a_1<\cdots<a_q$, noting that $\phi(\al,\be)=\si^{-1}$, so $N=\normal\si$.  Put $\ga_1=\partI1q{q+1,q+2}{n-1,n}{a_1}{a_q}{A_1}{A_r}$ and $\ga_2=\partI{b_1}{b_q}{B_1}{B_r}1q{q+1,q+2}{n-1,n}$, noting that $\ve_q=\ga_1\al\ga_2\mathrel\xi\ga_1\be\ga_2=\sib$.  
Define
\[
\de_1=\Big( 
{ \scriptsize \renewcommand*{\arraystretch}{1}
\begin{array} {\c|\c|\c|\c|\c|\c|\c|\c|\cend}
3 \:&\: 4 \:&\: 5 \:&\: \cdots \:&\: q \:&\: 1,2 \:&\: q+1,q+2 \:&\: \cdots \:&\: n-1,n  \\ \cline{6-9}
3 \:&\: 4 \:&\: 5 \:&\: \cdots \:&\: q \:&\: 1,2 \:&\: q+1,q+2 \:&\: \cdots \:&\: n-1,n
\rule[0mm]{0mm}{2.7mm}
\end{array} 
}
\hspace{-1.5 truemm} \Big)
\AND
\de_2=\Big( 
{ \scriptsize \renewcommand*{\arraystretch}{1}
\begin{array} {\c|\c|\c|\c|\c|\c|\c|\c|\cend}
1 \:&\: 4 \:&\: 5 \:&\: \cdots \:&\: q \:&\: 2,3 \:&\: q+1,q+2 \:&\: \cdots \:&\: n-1,n  \\ \cline{6-9}
1 \:&\: 4 \:&\: 5 \:&\: \cdots \:&\: q \:&\: 2,3 \:&\: q+1,q+2 \:&\: \cdots \:&\: n-1,n
\rule[0mm]{0mm}{2.7mm}
\end{array} 
}
\hspace{-1.5 truemm} \Big).
\]
We claim that $(\de_1,\de_2)\in\xi$.  In order to do this, let $\pi\in\A_q$ be the $3$-cycle mapping $1\mt2\mt3\mt1$ and fixing each of $4,\ldots,q$.  Since $\A_q\sub N=\normal\si$, we have $\pi=(\tau_1^{-1}\si\tau_1)\cdots(\tau_k^{-1}\si\tau_k)$ for some $k\geq1$ and some $\tau_1,\ldots,\tau_k\in\S_q$.  It follows that
$\pib = (\taub_1^*\ \sib\ \taub_1)\cdots(\taub_k^*\ \sib\ \taub_k) \mathrel\xi (\taub_1^*\ \ve_q\ \taub_1)\cdots(\taub_k^*\ \ve_q\ \taub_k) = \ve_q$, and we also have $\pib^*=\pib\ \pib \mathrel\xi \ve_q\ve_q=\ve_q$.  But then $\de_1 = \ve_q\de_1\ve_q \mathrel\xi \pib^*\de_1\pib = \de_2$, completing the proof of the claim.  Note that $\rank(\de_1)=\rank(\de_2)=q-2\geq1$, and that $(\de_1,\de_2)\not\in{\R}$ and $(\de_1,\de_2)\not\in{\L}$.  It follows from Proposition \ref{prop:R01_Bn} (if~$q=3$) or Proposition \ref{prop:joins2_Bn}(iii) (if $q=4$) or Lemma \ref{lem:notH_Bn} (if $q\geq5$) that $R_{q-2}\sub\cg{\de_1}{\de_2}\sub\xi$.  \epf

The next result completes the proof of Theorem \ref{CongBn}. 

\begin{prop}\label{prop:chain_Bn}
Consider the chains \eqref{eq:Bn_chain} of congruences on $\B_n$.
Apart from the first congruence on the second list, all of the listed congruences are principal.  Moreover, if $\xi$ is any of the listed congruences other than the first of either list, and if $\al,\be\in\B_n$, then $\xi=\cg\al\be\iff(\al,\be)\in\xi\sm\xi^-$, where $\xi^-$ denotes the previous congruence in the same list.
\end{prop}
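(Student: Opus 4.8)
The plan is to mirror the structure of the corresponding argument for $\P_n$ (Proposition~\ref{prop-aa3}), making the modifications forced by the absence of an embedded copy of $\I_n$ inside $\B_n$. The ``only if'' direction is immediate in both cases: if $\xi=\cg\al\be$ then $(\al,\be)\in\xi$, and $(\al,\be)\notin\xi^-$ since otherwise $\cg\al\be\sub\xi^-\suq\xi$. So the whole content is the ``if'' direction: assuming $(\al,\be)\in\xi\sm\xi^-$ for one of the congruences $\xi$ in \eqref{eq:Bn_chain} (other than $R_1$, resp.\ $R_K$), we must show $\xi\sub\cg\al\be$. Since the congruences in each chain are linearly ordered and each $\xi$ is the join (indeed union, by Remark~\ref{rem:nonprincipal} applied along the chain) of its predecessor $\xi^-$ with a suitable ``new'' relation, it suffices to show that $\cg\al\be$ contains $\xi^-$ and contains the new pairs; the former will be handled by induction along the chain, and the latter is the real work.

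The argument splits according to whether $\xi=R_q$ (so $\xi^-=R_{\S_q}$, or $\xi^-=R_{q-2}$-adjacent when $q$ is the smallest rank above $1$, resp.\ $\xi^-=R_K$ when $q=4$) or $\xi=R_N$ for a proper non-trivial normal subgroup $N$ of $\S_q$ (so $\xi^-=R_H$ where $H$ is the largest normal subgroup of $\S_q$ properly contained in $N$, with the convention $R_{\{\id_q\}}=R_{q-2}$). In the case $\xi=R_q$, the pair $(\al,\be)\in R_q\sm R_{\S_q}$ has $\rank(\al)=q$ (renaming if needed) and either $\rank(\be)<q$ or $\rank(\be)=q$ with $(\al,\be)\notin{\H}$. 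If $(\al,\be)\notin{\H}$ then Lemma~\ref{lem:notH_prelim_Bn} produces $(\ga,\de)\in\cg\al\be$ with $\rank(\ga)=q>\rank(\de)$, reducing to the unequal-rank subcase; in that subcase Lemma~\ref{lem:qp_Bn} (for $q\geq3$) immediately gives $R_q\sub\cg\al\be$, and then one must still check $\cg\al\be\supseteq\xi^-$, i.e.\ that $R_q$ is generated — but $R_q=\cg\al\be$ is exactly what Lemma~\ref{lem:qp_Bn} plus the chain structure yield, since $R_q$ is the union of $R_{\S_q}$ with $(R_q\sm R_{\S_q})$ and $R_{q-2}\sub R_q=\cg\al\be$ follows a fortiori. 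In the case $\xi=R_N$ with $q\geq3$: the pair $(\al,\be)\in R_N\sm R_H$ lies in ${\H}\cap(J_q\times J_q)$ with $\phi(\al,\be)\in N\sm H$, so $\normal{\phi(\al,\be)}$ is a normal subgroup of $\S_q$ contained in $N$ but not in $H$; by the structure of the normal-subgroup chain of $\S_q$ (recalled in Section~\ref{sect:SnInOn}) this forces $\normal{\phi(\al,\be)}=N$. One then wants to conclude $R_N\sub\cg\al\be$ directly. For $N\in\{\A_q,\S_q\}$, Lemma~\ref{lem:thN_Bn} gives $R_{q-2}\sub\cg\al\be$, and an analogue of Lemma~\ref{lem:nu_phi}/Lemma~\ref{lemma-aa1a} (transporting $(\al,\be)$ by left/right multiplication to recover all of $\nu_N$ from a single generator with the right $\phi$-value) upgrades this to $R_N=R_{q-2}\cup\nu_N\sub\cg\al\be$; the exceptional $N=K$ inside $\S_4$ is already treated by Proposition~\ref{prop:nek}(iii), so it does not recur here.

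The step I expect to be the main obstacle is producing, for $\xi=R_N$ with $N$ a proper non-trivial normal subgroup, the containment $\nu_N\sm\De\sub\cg\al\be$ from the single pair $(\al,\be)$. In the $\P_n$ case this was done via Lemma~\ref{lemma-aa1a}, which pushed $(\al,\be)$ into $J_q(\I_n)$ and then quoted Liber's theorem for $\I_n$; here there is no $\I_n$ inside $\B_n$, so we must argue intrinsically. The substitute is: (a) use left/right multiplication by the idempotent $\ve_q$ and by elements of $\overline{\S}_q$ to reduce to the case $\al=\ve_q$, $\be=\overline\si$ with $\normal\si=N$ (this is essentially the opening move of Lemma~\ref{lem:thN_Bn}); (b) conjugating $\overline\si$ by $\overline\tau$ for $\tau\in\S_q$ shows $(\ve_q,\overline\pi)\in\cg\al\be$ for every $\pi\in N$, using that every element of $N$ is a product of conjugates of $\si^{\pm1}$; (c) then for an arbitrary $(\mu,\lam)\in\nu_N$, write $(\mu,\lam)=(p\,\overline\pi_1 q,\,p\,\overline\pi_2 q)$ with $\pi_1\pi_2^{-1}\in N$ (by the $\B_n$-analogue of Lemma~\ref{lem:nu_phi}) and deduce $(\mu,\lam)\in\cg\al\be$ by multiplying $(\ve_q,\overline{\pi_1\pi_2^{-1}})$ on appropriate sides — the only subtlety being to make sure ranks are preserved, which follows because $\mu,\lam\in J_q$ by hypothesis. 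Getting the bookkeeping of these conjugation and multiplication steps exactly right, and handling the boundary between ``$N$ minimal non-trivial, so $\xi^-=R_{q-2}$'' and the generic case uniformly, is where the care is needed; everything else is a direct transcription of the $\P_n$ argument with $\I_n$ replaced by $\O_n$-free intrinsic reasoning and $q-1$ replaced by $q-2$.
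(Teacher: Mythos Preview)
Your proposal is correct and follows essentially the same approach as the paper: for $\xi=R_q$ you invoke Lemmas~\ref{lem:qp_Bn} and~\ref{lem:notH_Bn} (via \ref{lem:notH_prelim_Bn}) exactly as the paper does, and for $\xi=R_N$ with $N\in\{\A_q,\S_q\}$ your steps (a)--(c) are precisely the argument the paper gives, quoting Lemma~\ref{lem:thN_Bn} for $R_{q-2}\sub\cg\al\be$ and then conjugating inside $\overline{\S}_q$ to obtain $(\ve_q,\overline\pi)\in\cg\al\be$ for all $\pi\in N$ before transporting to arbitrary $\nu_N$-pairs via $\tau_1,\tau_2$. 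Your framing via ``induction along the chain'' and the parenthetical about $\xi^-$ being ``$R_{q-2}$-adjacent'' or $R_K$ are unnecessary (once Lemma~\ref{lem:qp_Bn} gives $R_q\sub\cg\al\be$ there is nothing further to check, and $R_q^-=R_{\S_q}$ always), but these are expository redundancies, not gaps.
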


\pf Let $\xi$ be any of the listed congruences other than the first of either
list.  If $(\al,\be)\not\in\xi$ or if $(\al,\be)\in\xi^-$, then clearly
$\cg\al\be\not=\xi$.  Conversely, suppose $(\al,\be)\in\xi\sm\xi^-$, and write $\zeta=\cg\al\be$.  The proof will be complete if we can show that $\xi\sub\zeta$.  

\bigskip\noindent {\bf Case 1.}  Suppose first that $\xi=R_q$ for some $q\geq3$.  So $(\al,\be)\in R_q\sm R_{\S_q}$.  Thus, renaming $\al,\be$ if necessary, we have $\rank(\al)=q$, and either $\rank(\be)<q$ or else $\rank(\be)=q$ and $(\al,\be)\not\in{\H}$.  It then follows from Lemma \ref{lem:qp_Bn} or \ref{lem:notH_Bn}, respectively, that $\xi=R_q\sub\zeta$.

\bigskip\noindent {\bf Case 2.}  Now suppose $\xi=R_{N}$, where $N$ is one of $\A_q$ or $\S_q$ for some $q\geq3$.  Let $H$ be the largest normal subgroup of $\S_q$ strictly contained in $N$,
noting that $\xi^-=R_H$.  (Recall that $R_{q-2}=R_{\{\id_q\}}$.)  Because
$(\al,\be)\in\xi\sm\xi^-=\nu_N\sm\nu_H$, it follows that $\rank(\al)=\rank(\be)=q$,
$\al\H\be$ and $\phi(\al,\be)\in N\sm H$.  Consequently, we have
$N=\normal{\phi(\al,\be)}$, so Lemma~\ref{lem:thN_Bn} gives $R_{q-2}\sub\zeta$.
Now suppose $(\ga,\de)\in R_N\sm R_{q-2}$.  The proof will be complete if we can show that $(\ga,\de)\in\zeta$.  Write $\ga=
\partI{a_1}{a_q}{A_1}{A_r}{b_1}{b_q}{B_1}{B_r}$ and $\de=\partI{a_1}{a_q}{A_1}{A_r}{b_{1\pi}}{b_{q\pi}}{B_1}{B_r}$, where $a_1<\cdots<a_q$, noting that $\pi=\phi(\ga,\de)^{-1}\in N$.  As in the proof of Lemma \ref{lem:thN_Bn}, we may use the fact that $\pi\in N=\normal{\phi(\al,\be)}$ to show that $(\ve_q,\pib)\in\zeta$.  But then $(\ga,\de)=(\tau_1\ve_q\tau_2,\tau_1\pib\tau_2)\in\zeta$, where $\tau_1=\partI{a_1}{a_q}{A_1}{A_r}1q{q+1,q+2}{n-1,n}$ and $\tau_2=\partI1q{q+1,q+2}{n-1,n}{b_1}{b_q}{B_1}{B_r}$. \epf

We conclude this section with a number of observations.
Recall that the symmetric inverse monoid $\I_n$ is (isomorphic to) a submonoid of $\P_n$, and that we utilised a surjective map $\Phi_{\P_n,\I_n}:\Cong(\P_n)\to\Cong(\I_n)$ in the proof of Theorem \ref{thm-CongPn} (see especially Lemmas \ref{lem:mn}, \ref{lemma-aa2} and Proposition \ref{prop-aa3}).  
At the beginning of this section, we noted that the symmetric inverse monoid $\I_n$ does not embed into the Brauer monoid $\B_{n}$.  
However, for $n=2m$ even, 
$\I_m$ \emph{does} embed into the even degree Brauer monoid $\B_{2m}$.
Such an embedding $\I_m\to\B_{2m}:\al\mt\alt$ is illustrated by example in Figure~\ref{fig:ImB2m}.  Write $S\sub\B_{2m}$ for the image of this embedding.  As in Subsection \ref{sect:prelim_congruences}, we then obtain mappings
\begin{align*}
\Phi&=\Phi_{\B_{2m},S}:\Cong(\B_{2m})\to\Cong(S):\xi\mt\xi^S=\xi\cap(S\times S),\\
\Psi&=\Psi_{\B_{2m},S}:\Cong(S)\to\Cong(\B_{2m}):\zeta\mt\zeta_{\B_{2m}}^\sharp.
\end{align*}
But we note that neither $\Phi$ nor $\Psi$ is injective, and neither is surjective.  For example, one may check that
\begin{equation}\label{eq:InB2n}
R_{\S_{2q}}\Phi=R_{\A_{2q}}\Phi=R_{\S_q}^{\I_m} \AND
R^{\I_m}_{\S_{q}}\Psi=R^{\I_m}_{\A_{q}}\Psi=R_{\A_{2q}} \qquad\text{for any $3\leq q\leq n$.}
\end{equation}
The reason for \eqref{eq:InB2n} is that, by the nature of the embedding $\I_m\to\B_{2m}$, $\phi(\alt,\bet)$ is always an even permutation for any $\al,\be\in\I_n$ with $\al\H\be$, regardless of the parity of $\phi(\al,\be)$.
Curiously, though, we note that $R_{\S_2}^{\I_m}\Psi=\mu_K$.

\begin{figure}[ht]
\begin{center}
\begin{tikzpicture}[scale=1.2]
\uarcxx{1.75}{2.25}{.2}{lightgray!80}
\uarcxx{5.75}{6.25}{.2}{lightgray!80}
\darcxx{0.75}{1.25}{.2}{lightgray!80}
\darcxx{3.75}{4.25}{.2}{lightgray!80}
\stlinex{.75}{2.75}{lightgray!80}
\stlinex{1.25}{3.25}{lightgray!80}
\stlinex{2.75}{5.75}{lightgray!80}
\stlinex{3.25}{6.25}{lightgray!80}
\stlinex{3.75}{1.75}{lightgray!80}
\stlinex{4.25}{2.25}{lightgray!80}
\stlinex{4.75}{4.75}{lightgray!80}
\stlinex{5.25}{5.25}{lightgray!80}
\stlines{1/3,3/6,4/2,5/5}
\foreach \x in {1,...,6} {
\fill (\x,2)circle(.1); \fill (\x,0)circle(.1);
\fill[lightgray!80] (\x+.25,2)circle(.1);
\fill[lightgray!80] (\x-.25,2)circle(.1);
\fill[lightgray!80] (\x+.25,0)circle(.1);
\fill[lightgray!80] (\x-.25,0)circle(.1);
}
\end{tikzpicture}
\end{center}
\vspace{-5mm}
\caption{A partial permutation $\al\in\I_6$ (black), with its corresponding Brauer element $\alt\in\B_{12}$ (gray).}
\label{fig:ImB2m}
\end{figure}
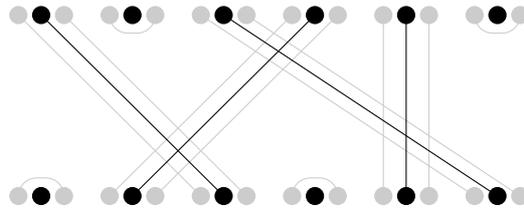

\section{The Jones monoid \boldmath{$\J_n$}}\label{sect:Jn}

Recall that the Jones monoid $\J_n=\B_n\cap\PP_n$ consists of all \emph{planar} Brauer elements.
In this final section, we describe the congruence lattice of $\J_n$.  Recall that in Section~\ref{sect:PPnMn} we were able to deduce descriptions of the planar congruence lattices $\Cong(\PP_n)$ and $\Cong(\M_n)$ by easily modifying the arguments used to treat their nonplanar counterparts $\Cong(\P_n)$ and $\Cong(\PB_n)$.  Unfortunately, and perhaps intriguingly, there does not appear to be a simple method for deriving a description of $\Cong(\J_n)$ from $\Cong(\B_n)$.  Roughly speaking, the reason for this is largely to do with the freedom (or lack thereof) in defining Motzkin or Jones elements.  For example, if we wish to define a Motzkin element $\al\in\M_n$ to have $k$ specified non-singleton blocks, we must only ensure that those blocks of $\al$ may be drawn in planar fashion, as the remaining blocks may be left as singletons.  However, if we wish to define a \emph{Jones} element $\al\in\J_n$ to have $k$ specified non-singleton blocks, we must ensure that it is possible to define the remaining $n-k$ (also non-singleton) blocks, without interfering with the planarity of $\al$.  We will discuss this in more detail later.

As in the previous section, we fix $n\geq3$ and $z\in\{0,1\}$ with $z\equiv n\pmod{2}$.
The $\gJ$-classes of $\J_n$ are $J_q=\set{\alpha\in\J_n}{\rank(\alpha)=q}$, for
$q=z,z+2,\dots,n$, the ideals are $I_q=J_z\cup\dots\cup J_q$, and the corresponding Rees congruences are denoted by $R_q=R_{I_q}$ (Definition \ref{defn:Rees}).
When $n$ is odd, we take the identity retraction on $I_1$ to get a retractable IN-pair $\C_1=(I_1,\{\overline{\id}_3\})$ and three congruences $\lambda_1$, $\rho_1$ and $\mu_1=\Delta$ (Definition~\ref{defn:lrmC}).
For $n$ even, it is easy to see that the retraction $f:\alpha\mapsto\widehat{\alpha}$ on $\B_n$ (Definition \ref{defn:retr_Bn} and Lemma \ref{Bretract}) preserves planarity, so it follows that $I_2$ is a retractable ideal of $\J_n$.
We then have retractable IN-pairs $\C_0=(I_0,\{\overline{\id}_2\})$
and $\C_2(I_2,\{\overline{\id}_4\})$, yielding the congruences $\lambda_0,\rho_0,\mu_0(=\Delta),\lambda_2,\rho_2,\mu_2$, respectively.  Again, no ideal $I_q$ of $\J_n$ with $q\geq3$ is retractable.  Since $\J_n$ is $\H$-trivial, there are no proper IN-pairs, and so no $R_N$ congruences (Definition \ref{defn:RC}).

\begin{thm}\label{thm:cong-Jn}
Let $n\geq3$, and let $\J_n$ be the Jones monoid of degree $n$.  Also let $z\in\{0,1\}$ be such that $n\equiv z\pmod{2}$.
\bit
\itemit{i} The ideals of $\J_n$ are the sets $I_q$, for $q=z,z+2,\ldots,n$, yielding the Rees congruences $R_q=R_{I_q}$, as in Definition \ref{defn:Rees}.
\itemit{ii} For $n$ odd, the only retractable IN-pair of $\J_n$ is $\C_1=(I_1,\{\overline\id_3\})$, yielding the congruences $\lambda_1$, $\rho_1$, $\mu_1$, as in Definition \ref{defn:lrmC}.
\itemit{iii} For $n$ even, the retractable IN-pairs of $\J_n$ are $\C_0=(I_0,\{\overline{\id}_2\})$ and $\C_2=(I_2,\{\overline{\id}_4\})$, yielding the congruences $\lambda_0$, $\rho_0$, $\mu_0$, $\lambda_2$, $\rho_2$, $\mu_2$, respectively, as in Definition \ref{defn:lrmC}.  
\itemit{iv} The above congruences are distinct, and they exhaust all the congruences of $\J_n$.
\itemit{v} The Hasse diagram of the congruence lattice $\Cong(\J_n) $ is shown in Figure \ref{fig:Hasse_Jn}.
\itemit{vi} The $\ast$-congruences of $\J_n$ are the same, but with $\lam_1$, $\rho_1$ ($n$ odd) or $\lam_0$, $\lam_2$, $\rho_0$, $\rho_2$ ($n$ even) excluded.
\eit
\end{thm}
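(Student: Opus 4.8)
The plan is to follow the template established in Sections~\ref{sect:Pn}--\ref{sect:Bn}, while exploiting a shortcut for even $n$. That all the listed relations are congruences, and that they form the lattice depicted in Figure~\ref{fig:Hasse_Jn}, is immediate from Propositions~\ref{prop-AreCongs}--\ref{prop-CR5}; so the substance of the theorem is that there are no further congruences, together with the routine identification of the $\ast$-congruences. For $n=2m$ even, I would simply invoke the classical isomorphism $\PP_m\cong\J_{2m}$ (noting $m\geq2$ since $n\geq4$) together with Theorem~\ref{thm-CongPPn}. Any isomorphism carries ideals to ideals, the minimal ideal to the minimal ideal, the (unique, by Corollary~\ref{cor:retract_unique}) retraction to the retraction, and Green's relations to Green's relations, hence carries $\Cong(\PP_m)$ bijectively onto $\Cong(\J_{2m})$; under the rank-doubling correspondence $J_q(\PP_m)\leftrightarrow J_{2q}(\J_{2m})$ this sends $R_q(\PP_m)$ to $R_{2q}(\J_{2m})$, the rank-$0$ diamond of $\PP_m$ to the rank-$0$ diamond of $\J_{2m}$, and the rank-$1$ diamond of $\PP_m$ to the rank-$2$ diamond of $\J_{2m}$ --- exactly the list in parts~(i) and~(iii). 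Since the isomorphism respects the involution, part~(vi) for even $n$ likewise follows from Theorem~\ref{thm-CongPPn}.

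It remains to treat $n$ odd, where $z=1$, the minimal ideal is the rectangular band $J_1$, the retraction $I_1\to I_1$ is the identity, $\mu_1=\Delta$, and the candidate lattice is a single diamond (with vertices $\mu_1=\Delta$, $\lam_1$, $\rho_1$, $R_1$) capped by the chain $R_1\subsetneq R_3\subsetneq\cdots\subsetneq R_n=\nabla$. By Lemma~\ref{lem:principal} it suffices to show every principal congruence $\cg\al\be$ lies in this list, which I would organise by $q=\max(\rank\al,\rank\be)$, taking $\rank\al=q\geq\rank\be$. The case $q=1$ yields $\lam_1,\rho_1,R_1$: mimicking Propositions~\ref{prop:rl_Bn} (odd case) and~\ref{prop:R01_Bn}, one shows directly that $\cg\al\be=\lam_1$ for $(\al,\be)\in({\L}\cap(J_1\times J_1))\setminus\Delta$, dually $\cg\al\be=\rho_1$, and $\cg\al\be=R_1=\lam_1\vee\rho_1$ for $(\al,\be)\in R_1\setminus(\lam_1\cup\rho_1)$. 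Each step constructs rank-$1$ planar Brauer multipliers, and planarity is arranged using the nested/separated description of planar block structure (Lemma~\ref{lem:nested_or_separated}) when choosing how to pair the remaining points.

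For $q\geq3$ I would establish, by arguments parallel to Lemmas~\ref{lem:1z_Bn}--\ref{lem:qp_Bn} carried out inside $\J_n$ --- using that $\ve_q\J_n\ve_q\cong\J_q$ and downward induction on rank --- that (a) if $(\1,\al)\in\xi$ for some $\al\neq\1$ then $\xi=\nabla$, and (b) if a congruence $\xi$ identifies a rank-$q$ element with one of smaller rank and $q\geq3$, then $R_q\subseteq\xi$. Then, as in Lemma~\ref{lem:notH_prelim_Bn} but with ``$\al\neq\be$'' in place of ``$(\al,\be)\notin\H$'' (legitimate since $\J_n$ is $\H$-trivial), (c) if $\xi$ identifies two distinct rank-$q$ elements with $q\geq3$, then it identifies some rank-$q$ element with one of smaller rank, so $R_q\subseteq\xi$ again. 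Combining (b) and (c) gives: for $q\geq3$, $\cg\al\be=R_q$ precisely when $(\al,\be)\in R_q\setminus R_{q-2}$ (reading $R_1$ for $R_{q-2}$ when $q=3$). Collecting the cases into a table of all pairs $(\al,\be)$, in the style of Table~\ref{BnCongGens}, shows the list is exhaustive; and part~(vi) for odd $n$ then follows exactly as in Corollary~\ref{cor:main_Pn}, since $R_q$ and $\mu_1=\Delta$ are $\ast$-compatible while $\ast$ interchanges $\lam_1$ and $\rho_1$.

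The main obstacle is step~(c), together with the mobility arguments feeding into (a) and (b). In the Brauer setting (Lemma~\ref{lem:notH_prelim_Bn}) one builds a probing element with great freedom in how the remaining points are matched, but in $\J_n$ every such element must itself be planar; so one must select a block of $\ker\al$ that is not a block of $\ker\be$ \emph{and} check that it can be probed by a \emph{planar} Brauer element. This forces a delicate case analysis governed by the nested/separated structure of $\al$ and $\be$ --- reminiscent of the modifications made in Lemma~\ref{lemma-aa1b:PPn} for $\PP_n$, but now with the extra constraint that all blocks have size $2$, so no point may simply be discarded as a singleton. This is precisely the absence of ``freedom in defining Jones elements'' flagged at the start of the section, and it is where essentially all of the technical effort resides.
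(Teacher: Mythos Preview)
Your outline matches the paper's strategy: the even case is dispatched exactly as you say, via the isomorphism $\PP_m\cong\J_{2m}$ and Theorem~\ref{thm-CongPPn}; the odd case is organised by $q=\max(\rank\al,\rank\be)$, treating the rank-$1$ diamond first (indeed reusing the even theorem inside the copy of $\J_{n-1}$, just as Case~2 of Proposition~\ref{prop:rl_Bn} does for $\B_n$) and then the Rees chain. You also correctly locate the hard work in constructing planar probing elements.

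The paper's execution of steps~(a)--(c), however, rests on a device your sketch does not mention: a reduction to \emph{projections}. For step~(c), rather than separate arbitrary $\al,\be$, the paper passes to $\al^*\al$ and $\be^*\be$ --- distinct projections, with $\rank(\al\ga)=\rank(\al^*\al\ga)$ for every $\ga$ --- and proves the separation lemma only for projections (Lemma~\ref{lem:proj2_Jn}); the up/down symmetry of a projection then allows a decomposition $\al=\al_1\oplus\id_1\oplus\al_2$ that reduces most subcases of the odd lemma to the already-established even theorem, leaving one residual subcase handled by a planar-curve argument in the product graph $\Pi(\al,\be)$. For steps~(a)--(b), the replacement for the $\S_n$-trick in Lemma~\ref{lem:1q_Bn} (which genuinely fails, since $\S_n\not\subseteq\J_n$) is Lemma~\ref{lem:proj1_Jn}: every projection $\al$ of rank $\geq3$ admits a same-rank projection $\be$ with $\rank(\al\be)$ strictly smaller, after which one works inside $\be\J_n\be\cong\J_q$ and inducts on $n$. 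Your phrase ``$\ve_q\J_n\ve_q\cong\J_q$ and downward induction on rank'' does not by itself supply that rank-drop; the local monoid one must pass to is $\be\J_n\be$ for the specific $\be$ furnished by the projection lemma, not $\ve_q\J_n\ve_q$. This projection reduction is the idea your proposal is missing, and without it the direct case analysis you anticipate --- while plausibly workable --- would be substantially heavier than what the paper actually does.
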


\begin{figure}[ht]
\begin{center}
\scalebox{0.8}{
\begin{tikzpicture}[scale=1]
\dottedinterval0{12}11
\node[rounded corners,rectangle,draw,fill=blue!20] (N) at (0,15) {$R_n$};
  \draw(0.87,14.98) node {$=\nabla$};
\node[rounded corners,rectangle,draw,fill=blue!20] (S4) at (0,12) {$R_6$};
\node[rounded corners,rectangle,draw,fill=blue!20] (A4) at (0,10) {$R_4$};
\node[rounded corners,rectangle,draw,fill=blue!20] (krl) at (0,8) {$R_2$};
\node[rounded corners,rectangle,draw,fill=blue!20] (kr) at (-3,6) {$\lambda_2$};
\node[rounded corners,rectangle,draw,fill=blue!20] (kl) at (0,6) {$\rho_2$};
\node[rounded corners,rectangle,draw,fill=blue!20] (k) at (-3,4) {$\mu_2$};
\node[rounded corners,rectangle,draw,fill=blue!20] (erl) at (3,6) {$R_0$};
\node[rounded corners,rectangle,draw,fill=blue!20] (er) at (0,4) {$\lambda_0$};
\node[rounded corners,rectangle,draw,fill=blue!20] (el) at (3,4) {$\rho_0$};
\node[rounded corners,rectangle,draw,fill=blue!20] (e) at (0,2) {$\mu_0$};
  \draw(0.87,1.98) node {$=\Delta$};
\draw
(e)--(k)
(el)--(kl)
(k)--(kl) (e)--(el) 
(k)--(kr) (e)--(er) 
(el)--(erl) (kl)--(krl)
(erl)--(krl)--(A4)--(S4)
;
\fill[white] (-1.5,5)circle(.15);
\fill[white] (1.5,5)circle(.15);
\draw
(er)--(kr)
(er)--(erl) (kr)--(krl) 
;
\begin{scope}[shift={(12,0)}]
\dottedinterval0{12}11
\node[rounded corners,rectangle,draw,fill=blue!20] (N) at (0,15) {$R_n$};
  \draw(0.87,14.98) node {$=\nabla$};
\node[rounded corners,rectangle,draw,fill=blue!20] (S4) at (0,12) {$R_5$};
\node[rounded corners,rectangle,draw,fill=blue!20] (A4) at (0,10) {$R_3$};
\node[rounded corners,rectangle,draw,fill=blue!20] (krl) at (0,8) {$R_1$};
\node[rounded corners,rectangle,draw,fill=blue!20] (kr) at (-3,6) {$\lambda_1$};
\node[rounded corners,rectangle,draw,fill=blue!20] (erl) at (3,6) {$\rho_1$};
\node[rounded corners,rectangle,draw,fill=blue!20] (er) at (0,4) {$\mu_1$};
  \draw(0.87,3.98) node {$=\Delta$};
\draw
(krl)--(kr)--(er)--(erl)--(krl)--(A4)--(S4)
;
\end{scope}
\end{tikzpicture}
}
\end{center}
\vspace{-7mm}
\caption{Hasse diagrams for the congruence lattice $\Cong(\J_n)$ in the case of $n$ even (left) and $n$ odd (right).  All congruences are principal.  }
\label{fig:Hasse_Jn}
\end{figure}
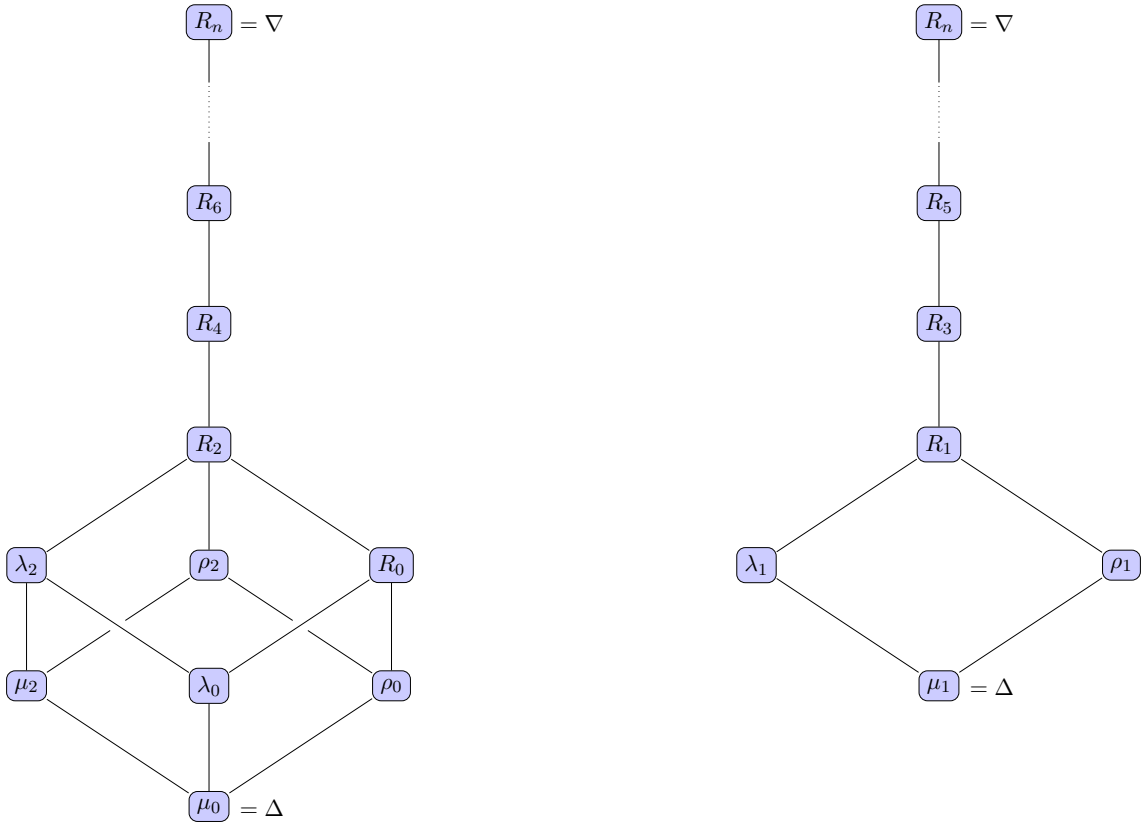

We will prove the theorem separately for $n$ even and for $n$ odd, beginning now with the even case.

\begin{proof}[{\bf Proof of Theorem \ref{thm:cong-Jn} for \boldmath{$n$} even.}]
Suppose $n=2m$ is even.  We have already mentioned that the Jones monoid $\J_n=\J_{2m}$ is isomorphic to the planar partition monoid $\PP_m$ \cite{HR2005,Jones1994_2}.  
Since we have already described the congruence lattice of $\PP_m$ (see Theorem \ref{thm-CongPPn}), we automatically obtain a description of $\Cong(\J_{2m})$.  To see that this agrees with the description given in Theorem \ref{thm:cong-Jn}, we need to recall the exact definition of the isomorphism between $\PP_m$ and $\J_{2m}$.  
We follow \cite[p873]{HR2005} and describe such an isomorphism $\PP_m\to\J_{2m}:\al\mt\alt$ by example in Figure \ref{fig:PmJ2m}; the fact that this map is well defined follows from the planarity of the \emph{canonical graphs} of planar partitions, as defined in Section \ref{sect:PPnMn} (see Lemma~\ref{lem:nested_or_separated}).  Of importance is the fact that $\rank(\alt)=2\rank(\al)$ for any $\al\in\PP_m$.
 It follows that $\xi\mt\xit=\bigset{(\alt,\bet)}{(\al,\be)\in\xi}$ defines an isomorphism $\Cong(\PP_m)\to\Cong(\J_{2m})$.  
It is now a routine matter to verify that under this isomorphism the congruences $R_q,\lambda_0,\lambda_1,\rho_0,\rho_1,\mu_0,\mu_1$ of $\PP_n$, as established by Theorem \ref{thm-CongPPn}, respectively correspond to the congruences
$R_{2q},\lambda_0,\lambda_2,\rho_0,\rho_2,\mu_0,\mu_2$ of $\J_n$.
\end{proof}

\begin{figure}[ht]
\begin{center}
\begin{tikzpicture}[scale=1.2]
\uarcxx{1.25}{3.75}{.65}{lightgray!80}
\uarcxx{1.75}{3.25}{.5}{lightgray!80}
\uarcxx{2.25}{2.75}{.2}{lightgray!80}
\uarcxx{4.75}{5.25}{.2}{lightgray!80}
\uarcxx{5.75}{6.25}{.2}{lightgray!80}
\uarcxx{7.75}{8.25}{.2}{lightgray!80}
\uarcx14{.8}
\uarcx23{.35}
\darcxx{2.75}{3.25}{.2}{lightgray!80}
\darcxx{1.25}{1.75}{.2}{lightgray!80}
\darcxx{5.25}{5.75}{.2}{lightgray!80}
\darcxx{6.25}{6.75}{.2}{lightgray!80}
\darcxx{2.25}{3.75}{.4}{lightgray!80}
\darcxx{4.75}{7.25}{.55}{lightgray!80}
\stlinex{.75}{.75}{lightgray!80}
\stlinex{4.25}{4.25}{lightgray!80}
\stlinex{6.75}{7.75}{lightgray!80}
\stlinex{7.25}{8.25}{lightgray!80}
\darcx12{.35}
\darcx24{.6}
\darcx56{.35}
\darcx67{.35}
\stlines{1/1,4/4,7/8}
\foreach \x in {1,...,8} {
\fill (\x,2)circle(.1); \fill (\x,0)circle(.1);
\fill[lightgray!80] (\x+.25,2)circle(.1);
\fill[lightgray!80] (\x-.25,2)circle(.1);
\fill[lightgray!80] (\x+.25,0)circle(.1);
\fill[lightgray!80] (\x-.25,0)circle(.1);
}
\end{tikzpicture}
\end{center}
\vspace{-5mm}
\caption{A planar partition $\al\in\PP_8$ (black), with its corresponding Jones element $\alt\in\J_{16}$ (gray).}
\label{fig:PmJ2m}
\end{figure}
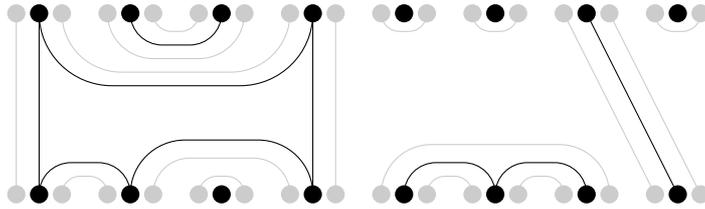

The proof of Theorem \ref{thm:cong-Jn} in the odd case follows the same pattern as the proofs of our previous main theorems.  At several points, we will appeal back to the fact that the even case is true.  In particular, we require the next result, which follows immediately from parts (i) and (ii) of Proposition \ref{prop:small_congruences:PPn}, together with the isomorphism $\PP_m\to\J_{2m}:\al\mt\alt$ described in the above proof.

\newpage

\begin{prop}[cf.~Proposition \ref{prop:small_congruences:PPn}]
\label{prop:rlJeven}
For $n\geq4$ even, the relations $\rho_0$ and $\lambda_0$ are principal congruences on $\J_n$.  Moreover, if $\al,\be\in\J_n$, then
\begin{itemize}\begin{multicols}{2}
\itemit{i} $\rho_0=\cg\al\be\iff (\al,\be)\in\rho_0\sm\Delta$, 
\itemit{ii} $\lambda_0=\cg\al\be\iff (\al,\be)\in\lambda_0\sm\Delta$.
\epfres
\end{multicols}\end{itemize}
\end{prop}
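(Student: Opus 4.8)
The plan is to deduce this directly from Proposition~\ref{prop:small_congruences:PPn}(i),(ii) by transporting it along the isomorphism $\theta\colon\PP_m\to\J_{2m}$, $\al\mt\alt$, recalled in the proof of the even case of Theorem~\ref{thm:cong-Jn}, where $n=2m$ (so $m\geq2$). The two facts about $\theta$ that I would invoke are: (a) being a semigroup isomorphism, $\theta$ induces a lattice isomorphism $\Cong(\PP_m)\to\Cong(\J_{2m})$, $\xi\mt\xit$, which carries principal congruences to principal congruences, and more precisely sends $\cg\al\be$ (computed in $\PP_m$) to $\cg{\alt}{\bet}$ (computed in $\J_{2m}$) for all $\al,\be\in\PP_m$, since an isomorphism maps the congruence generated by a set $\Om$ to the one generated by the image of $\Om$; and (b) as was already recorded when the two lists of congruences were matched up in that proof, $\xi\mt\xit$ sends the congruences $\rho_0$ and $\lambda_0$ of $\PP_m$ to the congruences $\rho_0$ and $\lambda_0$ of $\J_n$, respectively.

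Granting (a) and (b), here is how I would finish. Fix $\ga,\de\in\J_n$; since $\theta$ is a bijection, we may write $\ga=\alt$ and $\de=\bet$ with $\al,\be\in\PP_m$. Applying $\theta$ to the equivalence of Proposition~\ref{prop:small_congruences:PPn}(i) — that $\rho_0=\cg\al\be\iff(\al,\be)\in\rho_0\sm\De$ in $\PP_m$ — and then using (a) on the left-hand side together with (b) to identify the $\PP_m$-congruence $\rho_0$ with the $\J_n$-congruence $\rho_0$, yields $\rho_0=\cg\ga\de\iff(\ga,\de)\in\rho_0\sm\De$ in $\J_n$. In particular $\rho_0$ is principal on $\J_n$, because $\rho_0\sm\De$ is nonempty for $n\geq4$ (the minimal ideal of $\J_n$ has more than one $\R$-class; this corresponds, under $\theta$, to $J_0(\PP_m)$ having more than one $\R$-class for $m\geq2$). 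The argument for $\lambda_0$ is identical, using part~(ii) of Proposition~\ref{prop:small_congruences:PPn} in place of part~(i).

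The only points needing care are the routine verifications behind (a) and (b), and neither is a genuine obstacle. Fact~(a) is the standard behaviour of congruences under an isomorphism. Fact~(b) was essentially settled in the even-case proof of Theorem~\ref{thm:cong-Jn}; at the level of the present proposition it amounts to observing that, on rank-$0$ elements, $\theta$ turns the condition $\ker(\al)=\ker(\be)$ into $\ker(\alt)=\ket$-type equalities, i.e.\ $\ker(\alt)=\ker(\bet)$, and dually for cokernels, which is transparent from the description of $\theta$ illustrated in Figure~\ref{fig:PmJ2m}. Thus the whole argument reduces to a short appeal to the planar partition case once (a) and (b) are in hand, and I do not anticipate any hard step.
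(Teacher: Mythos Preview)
Your proposal is correct and takes essentially the same approach as the paper: the paper states that the result ``follows immediately from parts (i) and (ii) of Proposition~\ref{prop:small_congruences:PPn}, together with the isomorphism $\PP_m\to\J_{2m}:\al\mt\alt$,'' and your write-up simply unpacks this transport along the isomorphism in a bit more detail.
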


Next, we introduce some notation, and discuss some concepts relating to planarity.  The following definition makes no assumption on the parity of $n$.

\begin{defn}
For $A\sub\bn$, write $A=\{a_1<\cdots<a_k\}$ to indicate that $A=\{a_1,\ldots,a_k\}$ and $a_1<\cdots<a_k$.  For $i,j\in\bn$, denote the interval $\set{k\in\bn}{i\leq k\leq j}$ by~$[i,j]$.  
Let $A=\{a<b\}$ and $B=\{c<d\}$ be two disjoint subsets of $\bn$ of size $2$.  As in Section \ref{sect:PPnMn}, we say $A$ and $B$ are \emph{nested} if $a<c<d<b$ or $c<a<b<d$; we say $A$ and $B$ are \emph{separated} if $a<b<c<d$ or $c<d<a<b$.  As in Section \ref{sect:Bn}, we say a partition of a set is a 2-partition if each block has size $2$.  We say a 2-partition of an interval $I\sub\bn$ is \emph{planar} if any pair of blocks is either nested or separated.  
\end{defn}

It is clear that a planar 2-partition exists on an interval $I\sub\bn$ if and only if $|I|$ is even, in which case the number of distinct planar 2-partitions is given by a suitable Catalan number \cite[Sequence A000108]{OEIS}.  
It is also clear that if $A=\{a<b\}$ is a block of a planar 2-partition of an interval, then one of $a,b$ is even and the other odd;
we call $A$ \emph{even} or \emph{odd} according to whether $a=\min(A)$ is even or odd, respectively.
It is easy to check that a planar 2-partition on an interval is uniquely determined by its odd (respectively, even) blocks. 
From this we immediately deduce the following.

\begin{lemma}\label{lem:odd}
Suppose $\bP,\bQ$ are two planar 2-partitions of an interval $I\sub\bn$ with $\bP\not=\bQ$.  Then, renaming $\bP,\bQ$ if necessary, there exists an even block and an odd block of $\bP$, neither of which is a block of $\bQ$. \epfres
\end{lemma}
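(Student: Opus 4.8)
\textbf{Proof plan for Lemma \ref{lem:odd}.}
The plan is to exploit the two structural facts about planar 2-partitions of an interval that were just recorded: first, that every block $\{a<b\}$ of such a partition has exactly one of $a,b$ even and the other odd, so that every block is either even or odd in the sense defined; and second, that a planar 2-partition of an interval is uniquely determined by its set of odd blocks (and, dually, by its set of even blocks). Let $\bP$ and $\bQ$ be the two distinct planar 2-partitions of $I$. Since $|I|$ must be even (else no planar 2-partition exists), write $|I|=2t$; then $\bP$ and $\bQ$ each consist of exactly $t$ blocks, of which say $t_{\mathrm o}$ are odd and $t-t_{\mathrm o}$ are even for $\bP$, and correspondingly for $\bQ$ — but in fact a quick parity count (each odd block contains exactly one of the $t$ odd elements of $I$ as its minimum, and similarly each even block) shows that the number of odd blocks is the same for both, namely the number of odd elements of $I$ that are minima; more simply, one just observes directly that the odd blocks of any planar 2-partition of $I$ are in bijection with a fixed subset of $I$, so $\bP$ and $\bQ$ have the same number of odd blocks and the same number of even blocks.

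Now suppose, for contradiction, that the conclusion fails: that is, for \emph{each} of the two orderings of the pair, it is not the case that there is both an even block and an odd block lying outside the other partition. Concretely, suppose $\bP$ has no odd block outside $\bQ$ — i.e.\ every odd block of $\bP$ is also a block of $\bQ$. Since $\bP$ and $\bQ$ have equally many odd blocks, and every block of $\bQ$ that is odd as a subset of $\bn$ is an odd block, it follows that $\bP$ and $\bQ$ have exactly the same set of odd blocks. By the uniqueness statement (a planar 2-partition on an interval is determined by its odd blocks), this forces $\bP=\bQ$, contradicting $\bP\neq\bQ$. The same argument with ``even'' in place of ``odd'', using the dual uniqueness statement, shows that $\bP$ must also have an even block outside $\bQ$, unless $\bP=\bQ$. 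Thus, as long as $\bP\ne\bQ$, one of $\bP,\bQ$ — the one, say $\bP$, that has an odd block outside the other — simultaneously has an even block outside the other as well: if $\bP$ had all its even blocks inside $\bQ$ we would again get $\bP=\bQ$. Naming this partition $\bP$ (renaming if necessary), we obtain an even block and an odd block of $\bP$, neither of which is a block of $\bQ$, which is exactly the assertion.

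The only slightly delicate point — and the step I would write out most carefully — is the bookkeeping that guarantees the \emph{same} partition $\bP$ supplies both a rogue even block and a rogue odd block, rather than one partition supplying the rogue odd block and the other the rogue even block. This is handled by the observation above: if $\bP$ has \emph{no} odd block outside $\bQ$ then $\bP=\bQ$, and if $\bP$ has \emph{no} even block outside $\bQ$ then $\bP=\bQ$; so for $\bP\neq\bQ$, whichever partition fails to be a sub-collection of the other in odd blocks automatically also fails in even blocks, and we choose that one. I do not anticipate any real obstacle here: the whole lemma is an immediate corollary of the two facts stated in the paragraph preceding it, and the proof is essentially a counting-plus-uniqueness argument; the write-up should be only a few lines.
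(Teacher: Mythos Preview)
Your argument has a genuine gap: the claim that $\bP$ and $\bQ$ have the same number of odd blocks (and the same number of even blocks) is false. For instance, on $I=[1,4]$ the partition $\{\{1,2\},\{3,4\}\}$ has two odd blocks and no even blocks, while $\{\{1,4\},\{2,3\}\}$ has one of each. So the step ``every odd block of $\bP$ lies in $\bQ$, and they have equally many, hence the odd blocks coincide, hence $\bP=\bQ$'' does not go through. Concretely, take $I=[1,6]$, $\bP=\{\{1,6\},\{2,3\},\{4,5\}\}$ and $\bQ=\{\{1,6\},\{2,5\},\{3,4\}\}$: the sole odd block $\{1,6\}$ of $\bP$ \emph{is} a block of $\bQ$, yet $\bP\neq\bQ$. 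This also shows that the renaming in the lemma is genuinely needed --- here the conclusion holds for $\bQ$ (with odd block $\{3,4\}$ and even block $\{2,5\}$) but not for $\bP$.

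Your case analysis is the right shape; what is missing is the ``mixed'' case: every odd block of $\bP$ lies in $\bQ$ \emph{and} every even block of $\bQ$ lies in $\bP$ (and its mirror image). To dispose of this, let $S\sub I$ be the set of elements covered by blocks in $\bP\setminus\bQ$; this equals the set covered by $\bQ\setminus\bP$. In the mixed case every block of $\bP\setminus\bQ$ is even and every block of $\bQ\setminus\bP$ is odd. If $S\neq\emptyset$, its minimum $s$ is the minimum of its $\bP$-block (an even block, so $s$ is even) and also the minimum of its $\bQ$-block (an odd block, so $s$ is odd), a contradiction; hence $S=\emptyset$ and $\bP=\bQ$. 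With this in hand, your four-case analysis (odd--odd, even--even, and the two mixed cases) completes the proof. The paper omits the argument entirely, so once repaired your write-up is a perfectly good realisation of what ``immediately deduce'' is hiding.
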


To see the relevance of planar 2-partitions, consider a Jones element ${\al=\partI{a_1}{a_q}{A_1}{A_r}{b_1}{b_q}{B_1}{B_r}\in\J_n}$, where $n$ is arbitrary, and where~$a_1<\cdots<a_q$
(and, of course,  $q\equiv n\pmod2$).  By planarity (cf.~Lemma~\ref{lem:nested_or_separated}), we also have~${b_1<\cdots<b_q}$, and the non-transversals of $\al$ induce planar 2-partitions of the intervals
\begin{gather*}
[1,a_1-1]\ , \ [a_1+1,a_2-1] \ ,\ \ldots\ ,\ [a_{q-1}+1,a_q-1]\ ,\  [a_q+1,n] , \\
[1,b_1-1]\  , \ [b_1+1,b_2-1] \ ,\ \ldots\ ,\ [b_{q-1}+1,b_q-1]\ ,\  [b_q+1,n].
\end{gather*}
Note that some of these intervals may be empty.  Since all the above intervals must have even size, it follows that $a_i,b_i$ are both odd when $i$ is odd, or both even when $i$ is even: that is, $a_i\equiv b_i\equiv i\pmod2$.  Note also that for any upper non-transversal $A_k=\{x<y\}$, and for any $1\leq i\leq q$, we have either $y<a_i$ or $a_i<x$.  

\begin{rem}\label{rem:Jdefine}
At several stages in the subsequent argument, we will need to define a Jones element $\al\in\J_n$ with specified $\dom(\al)=\{a_1<\cdots<a_q\}$, $\codom(\al)=\{b_1<\cdots<b_q\}$, and a single extra non-transversal $\{x,y\}$ or $\{x',y'\}$ for some $x,y\in\bn$.  
Keeping the previous paragraph in mind, we must be careful to ensure that ${q\equiv n\pmod2}$, that $a_i\equiv b_i\equiv i\pmod2$ for all $i$, that one of $x,y$ is even and the other odd, and that the specified non-transversal does not intersect any of the transversals $\{a_i,b_i'\}$.  Conversely, it is easy to see that if these conditions are satisfied, then such a required Jones element $\al\in\J_n$ exists.  
\end{rem}

With the above concepts in place, we are now ready to investigate the congruences on odd-degree $\J_n$.
For the rest of this section, unless otherwise specified, we assume $n=2m+1\geq3$ is odd.  
We begin with the lower congruences, $\rho_1$, $\lam_1$ and $R_1$.  The first main step is to prove that $\rho_1$ and $\lam_1$ are principal (Proposition~\ref{prop:rlJodd}); the pattern of proof is similar to that of Proposition \ref{prop:rl_Bn}, but the planarity restriction on Jones elements means that we have to work quite a bit harder.  For convenience, we will divide the argument into a number of technical lemmas.

\begin{lemma}\label{lem:bzn}
Let $n\geq3$ be odd, and let $(\al,\be)\in\rho_1\sm\Delta$.  For any $\si\in\J_n$ with $\rank(\si)=1$, there exists some $\pi\in\J_n$ with $(\si,\pi)\in\cg\al\be$ and $\codom(\pi)=\{n\}$.
\end{lemma}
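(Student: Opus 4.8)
The plan is to mimic the structure of the proof of Proposition~\ref{prop:rl_Bn} (Case~2, the odd case), but working inside $\J_n$ rather than $\B_n$, so that every auxiliary Jones element we build must be checked for planarity via the criteria collected in Remark~\ref{rem:Jdefine} (namely $q\equiv n\pmod2$, $a_i\equiv b_i\equiv i\pmod 2$, the extra non-transversal has one even and one odd endpoint, and it is disjoint from the transversals). Write $\xi=\cg\al\be$ for brevity, and fix $\si\in\J_n$ with $\rank(\si)=1$, say $\si=\partIV a{A_1}\cdots{A_r}b{B_1}\cdots{B_s}$ where, by planarity, the $A_k$ (resp.\ $B_k$) form a planar $2$-partition of the intervals cut out by $a$ (resp.\ $b$), and $a\equiv b\equiv 1\pmod 2$. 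If $b=n$ there is nothing to prove (take $\pi=\si$), so assume $b<n$. Since $\codom(\si)=\{b\}$ with $b<n$ and $n$ is odd, and since the lower non-transversals partition $[1,b-1]$ and $[b+1,n]$ into planar $2$-blocks, the element $n$ lies in some lower non-transversal $B_1=\{w,n\}$ with $w$ even (as $n$ is odd and $B_1$ is an even block). The target element is $\pi$, the Jones element with the same kernel as $\si$ but with $\codom(\pi)=\{n\}$ and with $b,w$ now joined in a lower non-transversal; concretely $\pi=\partVI a{A_1}{A_2}{A_m}n{b,w}{B_2}{B_m}$ — one must check this is planar, which follows because we have only replaced the transversal $\{b\}$ and the block $\{w,n\}$ by the transversal $\{n\}$ and the block $\{b,w\}$, and $b,w$ have opposite parities while $w<b<n$ so $\{b,w\}$ is a legitimate block nested appropriately.

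The heart of the argument is to produce a Jones element $\tau\in\J_n$ with $\si\al\tau=\si$ and $\si\be\tau=\pi$; then $\pi=\si\be\tau\mathrel\xi\si\al\tau=\si$. As in Proposition~\ref{prop:rl_Bn}, since $(\al,\be)\in\rho_1\sm\De$ we have $\rank(\al)=\rank(\be)=1$, $\ker(\al)=\ker(\be)$ (so $\dom(\al)=\dom(\be)$), but $\coker(\al)\ne\coker(\be)$. Let the transversals of $\al,\be$ be $\{c,d'\}$ and $\{c,e'\}$ respectively. I would split into the two cases $d\ne e$ and $d=e$, exactly paralleling the Brauer proof, but in each case building $\tau$ as a Jones element. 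In the case $d\ne e$: choose the auxiliary point carefully so that parities work out — we need $\tau$ to have $\dom(\tau)$ realising the map that sends the transversal class of $\be$ onto $\{b,\dots\}$ and (via the non-transversals of $\tau$) forces the identification $\{b,w\}$ downstairs while leaving $\al$ acting as the identity; the planarity bookkeeping is where Remark~\ref{rem:Jdefine} must be invoked, in particular to pick the "spare" point $w$ and a planar $2$-partition $\{D_1,\dots\}$ of the complementary interval. In the case $d=e$: pick a cokernel class $\{x,y\}$ of $\al$ that is not a cokernel class of $\be$ (so $\be$ has classes $\{x,u\},\{y,v\}$), and build $\tau$ using $u,x,y$ and a planar $2$-partition of the rest, again with parities forced.

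The main obstacle will be precisely the planarity constraints on $\tau$: in $\B_n$ one is free to choose any $2$-partition of the leftover points, but in $\J_n$ the $2$-partition must be planar \emph{and} compatible with the prescribed domain/codomain points, which in turn forces congruence conditions $a_i\equiv b_i\equiv i\pmod 2$ on the endpoints. I expect this forces a case analysis on the parities/relative order of $b,w,d,e$ (or $u,x,y$), resolved — as in Propositions~\ref{prop:small_congruences:PPn} and \ref{prop:joins2:PPn} and Lemma~\ref{lemma-aa1b:PPn} — by observing that $(j,k)$ or $(x,y)$ etc.\ were arbitrary pairs from a cokernel class, so one may swap their roles, and by choosing the "spare" point $w$ to be, say, the minimal even point in $[b+1,n]\sm\{n\}$ available, thereby guaranteeing a planar completion exists. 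Once $\tau$ is exhibited with the two required equalities $\si\al\tau=\si$ and $\si\be\tau=\pi$ (which are routine diagram computations of the product graph, just as in Proposition~\ref{prop:rl_Bn}), the lemma follows immediately.
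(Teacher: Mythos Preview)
Your overall strategy matches the paper's exactly: build a rank-$3$ Jones element $\tau$ with $\codom(\tau)=\{b,w,n\}$ and lower non-transversals $B_2,\ldots,B_m$, so that $\{\si\al\tau,\si\be\tau\}=\{\si,\pi\}$. (One small slip: planarity forces $b<w<n$, not $w<b$, since $B_1=\{w,n\}$ must lie in the interval $[b+1,n]$.) The case $d\ne e$ goes through essentially as you sketch; the paper simply takes $\dom(\tau)=\{d,d+1,e\}$ (assuming $d<e$), which satisfies the parity constraints since $d,e$ are odd.

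The genuine gap is in the case $d=e$, which is much more delicate than you anticipate. First, $w$ is \emph{not} a free parameter you can choose minimally --- it is fixed as the partner of $n$ in $\coker(\si)$. Second, and more seriously, ``swap roles and pick a convenient spare point'' does not suffice. The paper first splits according to whether the cokernel difference between $\al$ and $\be$ occurs in $[1,d-1]$ or in $[d+1,n]$, and then invokes a preparatory lemma (Lemma~\ref{lem:odd}) asserting that two distinct planar $2$-partitions of an interval always have a non-shared block of \emph{each} parity --- this is what lets you choose the witness block $\{x<y\}$ with $\min\{x,y\}$ of the parity you need. Even with that freedom in hand, the paper then performs an eight-fold case analysis (in each of the two subcases) on the relative order of $u,v,x,y$, with a bespoke $\dom(\tau)$ in each case (sometimes involving auxiliary points such as $1$, $n$, $u+1$, $v+1$, or $y+1$) together with one specifically chosen upper non-transversal of $\tau$ to make the product graph work. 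The verification that $\{\si\al\tau,\si\be\tau\}=\{\si,\pi\}$ is then done diagrammatically (Figure~\ref{fig:bzn}). Your proposal does not yet contain the parity lemma or the mechanism that would replace this case analysis, so as written the $d=e$ case does not close.
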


\pf Write ${\mathrel\xi}=\cg\al\be$ and $\si=\partIV
a{A_1}\cdots{A_m}b{B_1}\cdots{B_m}$, noting that $a,b$ are both odd.  If $b=n$, then there is nothing to prove,
so suppose $b\not=n$.  Without loss of generality, we may assume that $n\in B_1$, and we may write $B_1=\{w,n\}$.  By planarity, we have $b<w<n$, and $w$ is even.  
We will prove that $\si\mathrel\xi\pi$, where $\pi=\partVI a{A_1}{A_2}{A_m}n{b,w}{B_2}{B_m}$.
To do this, we will define a Jones element $\tau\in\J_n$ such that 
\begin{equation}\label{eq:bzn}
\si = \si\al\tau \ \ \text{and} \ \ \pi = \si\be\tau
\OR
\si = \si\be\tau \ \ \text{and} \ \  \pi = \si\al\tau.
\end{equation}
The exact definition of $\tau$ varies according to several possible cases we will enumerate below.  But in every case, we will have $\codom(\tau)=\{b,w,n\}$, and the non-trivial $\coker(\tau)$-classes will be $B_2,\ldots,B_m$.  
Let the transversals of $\al$ and $\be$ be $\{c,d'\}$ and $\{c,e'\}$, respectively, noting that $c,d,e$ are all odd.  

\bigskip\noindent {\bf Case 1.}  Suppose first that $d\not=e$.  Without loss of generality, we may assume that $d<e$.  Since $d,e$ are odd, we may define $\dom(\tau)=\{d,d+1,e\}$, and we choose the non-trivial $\ker(\tau)$-classes arbitrarily.  
Then one may easily check that $\si=\si\al\tau$, while $\pi=\si\be\tau$, as required.  

\bigskip\noindent {\bf Case 2.}  Now suppose $d=e$, noting that this forces $n\geq5$.  
Recall that $\codom(\al)=\codom(\be)=\{d\}$.  So $\coker(\al)$ induces two planar 2-partitions $\bP_1$ and $\bP_2$ of the intervals $[1,d-1]$ and $[d+1,n]$, respectively.  Similarly, $\coker(\be)$ induces two planar 2-partitions $\bQ_1$ and $\bQ_2$ of the intervals $[1,d-1]$ and $[d+1,n]$, respectively.  Since $\coker(\al)\not=\coker(\be)$, we must have either $\bP_1\not=\bQ_1$ or $\bP_2\not=\bQ_2$.  We consider these possibilities separately.

\bigskip\noindent {\bf Subcase 2.1.}  Suppose first that $\bP_1\not=\bQ_1$.  By Lemma \ref{lem:odd}, renaming $\al,\be$ if necessary, we may choose some block $\{x<y\}$ of $\bP_1$ that is not a block of $\bQ_1$, with $x$ odd.  Let the blocks of $\bQ_1$ containing $x$ and $y$ be $\{x,u\}$ and $\{y,v\}$.  Note that $y,u$ are even, and~$v$ is odd.  
Also, considering that $\{x',u'\}$ and $\{y',v'\}$ are both blocks of~$\be$, planarity ensures that we must be in one of the following eight cases:
\begin{itemize}
\begin{multicols}{4}
\item[(a)] $v<u<x<y<d$,
\item[(b)] $u<x<v<y<d$,
\item[(c)] $u<x<y<v<d$,
\item[(d)] $v<x<u<y<d$,
\item[(e)] $x<u<v<y<d$,
\item[(f)] $x<u<y<v<d$, 
\item[(g)] $x<v<y<u<d$, or
\item[(h)] $x<y<v<u<d$.
\end{multicols}
\eit
Depending on the case we are in, we then define $\dom(\tau)$ to be
\begin{itemize}
\begin{multicols}{4}
\item[(a)] $\{v,u,x\}$,
\item[(b)] $\{1,u,x\}$,
\item[(c)] $\{1,u,x\}$,
\item[(d)] $\{v,v+1,x\}$, 
\item[(e)] $\{x,u,v\}$,
\item[(f)] $\{x,u,u+1\}$, 
\item[(g)] $\{x,y,y+1\}$, or
\item[(h)] $\{x,y,v\}$,
\end{multicols}
\eit
respectively.
In each of cases (a--f), we specify that $\{y,d\}$ is a $\ker(\tau)$-class.  In cases (g) and (h), we specify that $\{u,d\}$ is a $\ker(\tau)$-class.  In all cases, all other non-trivial $\ker(\tau)$-classes may be chosen arbitrarily.  
Figure~\ref{fig:bzn} gives a diagrammatic verification that equation \eqref{eq:bzn} holds in each of cases (a)--(h).  In Figure~\ref{fig:bzn}, we have only pictured: the single transversal $\{c,d'\}$ of $\al$ and $\be$; the non-transversal $\{x',y'\}$ of $\al$; the non-transversals $\{x',u'\}$ and $\{y',v'\}$ of $\be$; the three transversals of $\tau$; and the single specified upper non-transversal of $\tau$.  In light of Remark \ref{rem:Jdefine}, odd-labelled vertices are drawn black, and even-labelled vertices white, so that the reader may easily check that $\tau$ is well defined in all cases.  For reasons of space, we have omitted (double) dashes on vertices, and we have written $i^+=i+1$ for any~$i\in\bn$.  No suggestion of the actual values of $b,c,d,u,v,w,x,y$ are intended, but the ordering of the points $b,w,n$ and of $u,v,x,y,d$ are accurate in each diagram; note, however, that no suggestion of the relative ordering of a point from the first list and a point from the second list is intended in the diagram (so, for example, in any diagram, we could have $b<v$, $b=v$ or $b>v$).

\bigskip\noindent {\bf Subcase 2.2.}  Now suppose $\bP_2\not=\bQ_2$.  By Lemma \ref{lem:odd}, renaming $\al,\be$ if necessary, we may choose some block $\{x<y\}$ of $\bP_2$ that is not a block of $\bQ_2$, with $x$ even.  Let the blocks of $\bQ_2$ containing $x$ and $y$ be $\{x,u\}$ and $\{y,v\}$.  This time, note that $y,u$ are odd, and $v$ is even.  Again, planarity ensures that we are in one of the following eight cases:
\begin{itemize}
\begin{multicols}{4}
\item[(a)] $d<v<u<x<y$,
\item[(b)] $d<u<x<v<y$,
\item[(c)] $d<u<x<y<v$,
\item[(d)] $d<v<x<u<y$,
\item[(e)] $d<x<u<v<y$,
\item[(f)] $d<x<u<y<v$, 
\item[(g)] $d<x<v<y<u$, or
\item[(h)] $d<x<y<v<u$.
\end{multicols}
\eit
Depending on the case we are in, we then define $\dom(\tau)$ to be
\begin{itemize}
\begin{multicols}{4}
\item[(a)] $\{u,x,y\}$,
\item[(b)] $\{x+1,v,y\}$,
\item[(c)] $\{y,v,n\}$,
\item[(d)] $\{v+1,x,y\}$, 
\item[(e)] $\{u,v,y\}$,
\item[(f)] $\{y,v,n\}$, 
\item[(g)] $\{y,y+1,u\}$, or
\item[(h)] $\{y,v,u\}$,
\end{multicols}
\eit
respectively.
In cases (a) and (d), we specify that $\{d,v\}$ is a $\ker(\tau)$-class.  In all other cases, we specify that $\{d,x\}$ is a $\ker(\tau)$-class.  In all cases, all other non-trivial $\ker(\tau)$-classes may be chosen arbitrarily.  The reader may draw diagrams such as those in Figure~\ref{fig:bzn} to verify that $\tau$ has the desired properties. \epf

\begin{figure}[ht]
\begin{center}
\scalebox{.82}{ 
\begin{tikzpicture}[scale=.45]
\alphabetataudiagram{7*\shiftup}{\vv\uu\xx\yy}{.3}{.8}{\vv}{\uu}{\xx}{\yy}{\caselabel{a} \uarc\yy\dd \uarc{\yy+\shiftvalue}{\dd+\shiftvalue}
\udarcx{\uu+\shiftvalue}{\xx+\shiftvalue}{\sillyone} 
\udarcx{\vv+\shiftvalue}{\yy+\shiftvalue}{\sillytwo}
}
\alphabetataudiagram{6*\shiftup}{\uu\xx\vv\yy}{.5}{.5}{\uu-2}{\uu}{\xx}{\yy}{\caselabel{b}  \uarc\yy\dd \uarc{\yy+\shiftvalue}{\dd+\shiftvalue}
\uvlab{\uu-2}{1}\uvlab{\uu-2+\shiftvalue}{1}
\udarcx{\uu+\shiftvalue}{\xx+\shiftvalue}{\sillyone} 
\udarcx{\vv+\shiftvalue}{\yy+\shiftvalue}{\sillytwo}}
\alphabetataudiagram{5*\shiftup}{\uu\xx\yy\vv}{.5}{.5}{\uu-2}{\uu}{\xx}{\vv+2}{\caselabel{c} \uarc\yy\dd \uarc{\yy+\shiftvalue}{\dd+\shiftvalue}\uvlab{\uu-2}{1}\uvlab{\uu-2+\shiftvalue}{1}
\udarcx{\uu+\shiftvalue}{\xx+\shiftvalue}{\sillyone} 
\udarcx{\yy+\shiftvalue}{\vv+\shiftvalue}{\sillytwo}}
\alphabetataudiagram{4*\shiftup}{\vv\xx\uu\yy}{.3}{.8}{\vv}{\vv+1}{\xx}{\yy}{\caselabel{d} \uarc\yy\dd \uarc{\yy+\shiftvalue}{\dd+\shiftvalue}
\uvlabw{\vv+1}{v^+}\uvlabw{\vv+1+\shiftvalue}{v^+}
\udarcx{\xx+\shiftvalue}{\uu+\shiftvalue}{\sillyone} 
\udarcx{\vv+\shiftvalue}{\yy+\shiftvalue}{\sillytwo}}
\alphabetataudiagram{3*\shiftup}{\xx\uu\vv\yy}{.5}{.5}{\xx}{\uu}{\vv}{\yy}{\caselabel{e} \uarc\yy\dd \uarc{\yy+\shiftvalue}{\dd+\shiftvalue}
\udarcx{\xx+\shiftvalue}{\uu+\shiftvalue}{\sillyone} 
\udarcx{\vv+\shiftvalue}{\yy+\shiftvalue}{\sillytwo}}
\alphabetataudiagram{2*\shiftup}{\xx\uu\yy\vv}{.5}{.5}{\xx}{\uu}{\uu+1}{\yy}{\caselabel{f} \uarc\yy\dd \uarc{\yy+\shiftvalue}{\dd+\shiftvalue}
\uvlab{\uu+1}{u^+}\uvlab{\uu+1+\shiftvalue}{u^+}
\udarcx{\xx+\shiftvalue}{\uu+\shiftvalue}{\sillyone} 
\udarcx{\yy+\shiftvalue}{\vv+\shiftvalue}{\sillytwo}}
\alphabetataudiagram{1*\shiftup}{\xx\vv\yy\uu}{.8}{.3}{\xx}{\yy}{\yy+1}{\yy}{\caselabel{g} \uarc\uu\dd \uarc{\uu+\shiftvalue}{\dd+\shiftvalue}\uvlab{\yy+1}{y^+}\uvlab{\yy+1+\shiftvalue}{y^+}
\udarcx{\xx+\shiftvalue}{\uu+\shiftvalue}{\sillyone} 
\udarcx{\vv+\shiftvalue}{\yy+\shiftvalue}{\sillytwo}}
\alphabetataudiagram{0*\shiftup}{\xx\yy\vv\uu}{.8}{.3}{\xx}{\yy}{\vv}{\uu}{\caselabel{h} \uarc\uu\dd \uarc{\uu+\shiftvalue}{\dd+\shiftvalue}
\udarcx{\xx+\shiftvalue}{\uu+\shiftvalue}{\sillyone} 
\udarcx{\yy+\shiftvalue}{\vv+\shiftvalue}{\sillytwo}}
\foreach \u in {0,1,...,8} {\draw(-6,-1+\shiftup*\u)--(\aaa+\shiftvalue+2,-1+\shiftup*\u);}
\foreach \u in {-6,-4,14,32} {\draw(\u,-1)--(\u,-1+8*\shiftup);}
\end{tikzpicture}
}
\end{center}
\vspace{-5mm}
\caption{Verification of equation \eqref{eq:bzn}; see the proof of Lemma \ref{lem:bzn} for more details.}
\label{fig:bzn}
\end{figure}
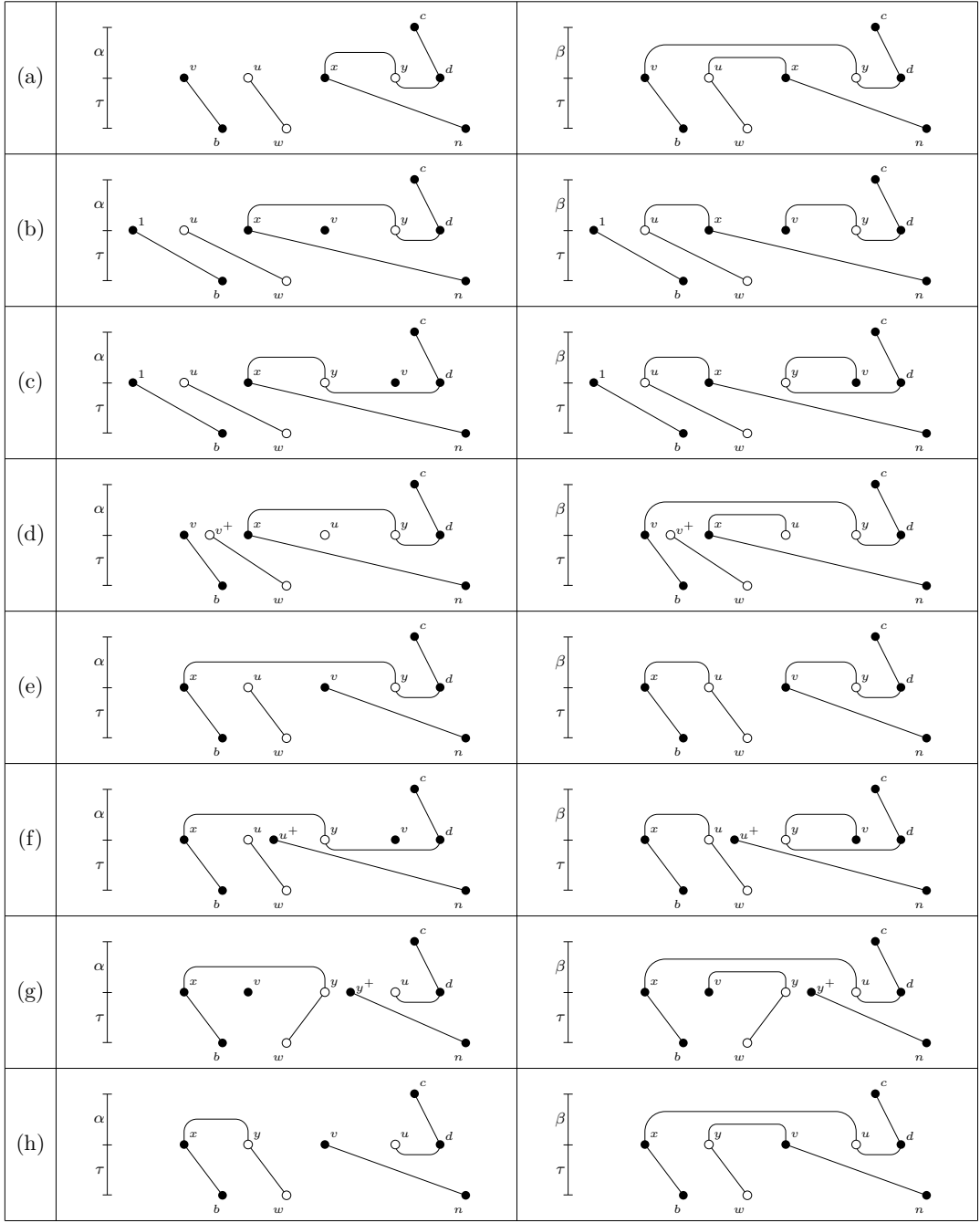

As in the proof of Proposition \ref{prop:rl_Bn}, for an integer $i$, we write $H_i=\{2i-1,2i\}$.  The proof of the next result could be extracted from a section of the proof of Proposition \ref{prop:rl_Bn}, but we provide the details here for convenience.

\begin{lemma}\label{lem:Jtech1}
Let $n\geq5$ be odd, and let $(\al,\be)\in\rho_1\sm\Delta$.  Then $(\ga,\de)\in\cg\al\be$, where
\[
\ga=\partIV n{H_1}\cdots{H_m}n{H_1}\cdots{H_m} \AND \de=\Big( 
{ \scriptsize \renewcommand*{\arraystretch}{1}
\begin{array} {\c|\c|\c|\c|\c|\cend}
n \:&\: H_1 \:&\: \cdots \:&\: H_{m-2} \:&\: H_{m-1} \:&\: H_m  \\ \cline{2-6}
n \:&\: H_1 \:&\: \cdots \:&\: H_{m-2} \:&\: n-4,n-1 \:&\: n-3,n-2
\rule[0mm]{0mm}{2.7mm}
\end{array} 
}
\hspace{-1.5 truemm} \Big).
\]
\end{lemma}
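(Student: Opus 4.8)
\textbf{Proof plan for Lemma \ref{lem:Jtech1}.}
The plan is to mimic the corresponding step inside the proof of Proposition \ref{prop:rl_Bn} (Case 2, the portion dealing with the elements $\si_1,\si_2,\ga_1,\ga_2$), but verifying at each stage that all partitions constructed are planar. Write ${\mathrel\xi}=\cg\al\be$. The strategy is to produce a chain of $\xi$-related Jones elements
\[
\ga = \tau_2 \ \ \mathrel\xi\ \ \si_1\ \ \mathrel\xi\ \ \si_2 = \de,
\]
where $\tau_2=\ga$ is the ``all $H_i$'' element above, $\de=\si_2$ is the stated target, and $\si_1$ is a suitable intermediate Jones element. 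First I would introduce
\[
\si_1 = \Big(
{ \scriptsize \renewcommand*{\arraystretch}{1}
\begin{array} {\c|\c|\c|\c|\cend}
n \:&\: H_1 \:&\: \cdots \:&\: H_{m-1} \:&\: H_m  \\ \cline{2-5}
n-2 \:&\: H_1 \:&\: \cdots \:&\: H_{m-1} \:&\: n-1,n
\rule[0mm]{0mm}{2.7mm}
\end{array}
}
\hspace{-1.5 truemm} \Big),
\]
which is planar since its only transversal $\{n,(n-2)'\}$ is compatible with the nested/separated structure of the blocks $H_i$ and $\{(n-1)',n'\}$. The first link $\ga\mathrel\xi\si_1$ follows directly from Lemma \ref{lem:bzn}: indeed $\si_1$ has $\rank=1$ with $\codom(\si_1)=\{n-2\}\ne\{n\}$, while Lemma \ref{lem:bzn} lets us move any rank-$1$ Jones element to one with codomain $\{n\}$, and tracing through that argument (applied to $\si_1$, whose domain is $\{n\}$, hence already of the canonical form after applying the lemma once) shows $\si_1$ is $\xi$-related to $\ga=\tau_2$. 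Alternatively, and more cleanly, I would note that $\ga$ and $\si_1$ are both rank-$1$ Jones elements with the same kernel, differing only in cokernel, so $(\ga,\si_1)\in\rho_1\sm\De$; then the first link is immediate once we know $\mu_1=\De$ and the reduction of Lemma \ref{lem:bzn} is available — but to keep things self-contained I would simply invoke Lemma \ref{lem:bzn} directly, noting it already gives $(\si_1,\ga)\in\xi$ since applying the lemma to $\si_1$ produces an element with codomain $\{n\}$, kernel and cokernel forced to be those of $\ga$.

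For the second link $\si_1\mathrel\xi\si_2$, the plan is to post-multiply by two carefully chosen Jones elements $\ga_1,\ga_2$ of rank $m-1$ (or rather of the appropriate rank) so that $\si_1\ga_1=\si_2'$ and $\si_1\ga_2=\si_2''$ agree except that composing the sandwich identities yields $\si_2$. Concretely I would set
\[
\ga_1 = \Big(
{ \scriptsize \renewcommand*{\arraystretch}{1}
\begin{array} {\c|\c|\c|\c|\c|\c|\cend}
n-2 \:&\: n-1 \:&\: n \:&\: H_1 \:&\: \cdots \:&\: H_{m-2} \:&\: H_{m-1}  \\ \cline{4-7}
n-4 \:&\: n-3 \:&\: n-2 \:&\: H_1 \:&\: \cdots \:&\: H_{m-2} \:&\: n-1,n
\rule[0mm]{0mm}{2.7mm}
\end{array}
}
\hspace{-1.5 truemm} \Big)
\]
and
\[
\ga_2 = \Big(
{ \scriptsize \renewcommand*{\arraystretch}{1}
\begin{array} {\c|\c|\c|\c|\c|\c|\cend}
n-2 \:&\: n-1 \:&\: n \:&\: H_1 \:&\: \cdots \:&\: H_{m-2} \:&\: H_{m-1}  \\ \cline{4-7}
n-4 \:&\: n-1 \:&\: n \:&\: H_1 \:&\: \cdots \:&\: H_{m-2} \:&\: n-3,n-2
\rule[0mm]{0mm}{2.7mm}
\end{array}
}
\hspace{-1.5 truemm} \Big),
\]
exactly as in the proof of Proposition \ref{prop:rl_Bn}; one checks these are planar Brauer elements (here I would draw/examine the canonical graphs, or simply verify the nested-or-separated condition of Lemma \ref{lem:nested_or_separated}), that $\tau_2\ga_1=\si_1$, that $\si_1\ga_1=\si_1\ga_2$ (a direct product-graph computation), and that $\tau_2\ga_2=\si_2$. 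Then
\[
\ga = \tau_2 \mathrel\xi \si_1 = \tau_2\ga_1 \mathrel\xi \si_1\ga_1 = \si_1\ga_2 \mathrel\xi \tau_2\ga_2 = \si_2 = \de,
\]
using compatibility of $\xi$ with right multiplication. This gives $(\ga,\de)\in\xi$ as required.

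The main obstacle — and the only place where the odd-degree Jones case genuinely differs from the Brauer case — is \textbf{checking planarity of $\si_1$, $\ga_1$, $\ga_2$ (and of the intermediate products), and checking that the product-graph identities $\tau_2\ga_1=\si_1$, $\si_1\ga_1=\si_1\ga_2$, $\tau_2\ga_2=\si_2$ actually hold as claimed}. The element $\ga_2$ in particular introduces the transversals $\{(n-1),(n-1)'\}$-ish and the crossing-looking block structure $\{n-4,(n-1)'\},\{(n-2),(n-2)'\}$ (up to relabelling), so I would verify by inspecting the canonical graph that all these blocks are pairwise nested or separated within the intervals they span — since $H_1,\ldots,H_{m-2}$ occupy $[1,2m-4]$ and everything non-trivial happens among $\{n-4,n-3,n-2,n-1,n\}=\{2m-3,\dots,2m+1\}$, the only checks are local to that five-element tail, which is a finite verification. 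The parity bookkeeping from Remark \ref{rem:Jdefine} ($a_i\equiv b_i\equiv i\pmod 2$, one endpoint of each non-transversal even and the other odd) is automatically satisfied here because every block listed is an $H_i$ or is an interval of the form $\{2k-1,2k\}$ or spans an even-length gap; I would remark on this briefly rather than belabour it. Once planarity is confirmed, the product computations are identical to those already carried out in the proof of Proposition \ref{prop:rl_Bn}, so I would state them and refer the reader there.
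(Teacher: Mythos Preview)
Your approach is essentially identical to the paper's: the intermediate element $\si_1$ and the multipliers $\ga_1,\ga_2$ are exactly the paper's $\si,\tau_1,\tau_2$, and the chain $\ga\mathrel\xi\si_1=\ga\cdot\ga_1\mathrel\xi\si_1\ga_1=\si_1\ga_2\mathrel\xi\ga\cdot\ga_2=\de$ is precisely what the paper writes. One small tightening: for the first link you should say that \emph{the proof} of Lemma~\ref{lem:bzn} (with $a=n$, $b=n-2$, $w=n-1$) yields $\pi=\ga$ explicitly, rather than that the cokernel is ``forced'' --- the lemma statement alone only gives $\codom(\pi)=\{n\}$, and while $\ker(\pi)=\ker(\si_1)$ follows from $\xi\sub\rho_1$, identifying the cokernel requires tracing the construction.
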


\pf Write ${\mathrel\xi}=\cg\al\be$.  First, define $\si=\Big( 
{ \scriptsize \renewcommand*{\arraystretch}{1}
\begin{array} {\c|\c|\c|\c|\cend}
n \:&\: H_1 \:&\: \cdots \:&\: H_{m-1} \:&\: H_m  \\ \cline{2-5}
n-2 \:&\: H_1 \:&\: \cdots \:&\: H_{m-1} \:&\: n-1,n
\rule[0mm]{0mm}{2.7mm}
\end{array} 
}
\hspace{-1.5 truemm} \Big)$.
The proof of Lemma \ref{lem:bzn} gives $\si\mathrel\xi\ga$; in the notation of that proof, $a=n$, $b=n-2$ and $w=n-1$.  Define
$
\tau_1 = 
\Big( 
{ \scriptsize \renewcommand*{\arraystretch}{1}
\begin{array} {\c|\c|\c|\c|\c|\c|\cend}
n-2 \:&\: n-1 \:&\: n \:&\: H_1 \:&\: \cdots \:&\: H_{m-2} \:&\: H_{m-1}  \\ \cline{4-7}
n-4 \:&\: n-3 \:&\: n-2 \:&\: H_1 \:&\: \cdots \:&\: H_{m-2} \:&\: n-1,n
\rule[0mm]{0mm}{2.7mm}
\end{array} 
}
\hspace{-1.5 truemm} \Big)
$ and $
\tau_2 =
\Big( 
{ \scriptsize \renewcommand*{\arraystretch}{1}
\begin{array} {\c|\c|\c|\c|\c|\c|\cend}
n-2 \:&\: n-1 \:&\: n \:&\: H_1 \:&\: \cdots \:&\: H_{m-2} \:&\: H_{m-1}  \\ \cline{4-7}
n-4 \:&\: n-1 \:&\: n \:&\: H_1 \:&\: \cdots \:&\: H_{m-2} \:&\: n-3,n-2
\rule[0mm]{0mm}{2.7mm}
\end{array} 
}
\hspace{-1.5 truemm} \Big)$.  
Then $\ga \mathrel\xi \si = \ga\tau_1 \mathrel\xi \si\tau_1 = \si\tau_2 \mathrel\xi \ga\tau_2 = \de$. \epf

\begin{lemma}\label{lem:Jtech2}
Let $n\geq3$ be odd, and let $(\al,\be)\in\rho_1\sm\Delta$.  Let $\tau_1,\tau_2\in\J_n$ be such that $\dom(\tau_1)=\codom(\tau_1)=\dom(\tau_2)=\codom(\tau_2)=\{n\}$ and $\ker(\tau_1)=\ker(\tau_2)$.
Then $(\tau_1,\tau_2)\in\cg\al\be$.
\end{lemma}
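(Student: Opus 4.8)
The plan is to reduce this to the ``codomain $=\{n\}$'' normal form that Lemma~\ref{lem:bzn} already produces, and then show that among Jones elements whose domain and codomain both equal $\{n\}$, any two with the same kernel are $\cg\al\be$-related. Write ${\mathrel\xi}=\cg\al\be$. Since $\dom(\tau_1)=\codom(\tau_1)=\{n\}$, the single transversal of $\tau_1$ is $\{n,n'\}$, and the non-transversals of $\tau_1$ induce a planar $2$-partition $\bP$ of the interval $[1,n-1]$; similarly the non-transversals of $\tau_2$ induce a planar $2$-partition $\bQ$ of $[1,n-1]$, and the upper non-transversals of $\tau_1$ and $\tau_2$ agree because $\ker(\tau_1)=\ker(\tau_2)$. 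If $\tau_1=\tau_2$ there is nothing to prove, so assume $\bP\ne\bQ$, meaning the \emph{lower} non-transversals differ. The idea is then to argue by induction on the number of lower blocks on which $\tau_1$ and $\tau_2$ disagree: it suffices to produce $\pi\in\J_n$ with $\ker(\pi)=\ker(\tau_1)$, $\dom(\pi)=\codom(\pi)=\{n\}$, $(\tau_1,\pi)\in\xi$, and $\pi$ agreeing with $\tau_2$ on strictly more lower blocks than $\tau_1$ does.

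First I would invoke Lemma~\ref{lem:odd} applied to $\bP,\bQ$ (as $2$-partitions of the interval $[1,n-1]$, which has even size $n-1$): renaming $\tau_1,\tau_2$ if necessary, there is an odd block and an even block of the lower structure of $\tau_1$ that are not lower blocks of $\tau_2$. The heart of the argument is to build a Jones element $\gamma\in\J_n$ with $\gamma\tau_1=\pi$ and a ``swap'' available so that $(\tau_1,\pi)\in\xi$. Concretely, the mechanism should mirror Lemma~\ref{lem:bzn}: we want to exhibit a Jones element $\ga$ and use $(\al,\be)\in\rho_1\sm\De$ to slide a chosen $\coker(\al)$-vs-$\coker(\be)$ discrepancy through $\ga$, realizing the transition $\tau_1\mathrel\xi\pi$ as $\tau_1=\pi_0\cdot\al\cdot\ga\mathrel\xi\pi_0\cdot\be\cdot\ga=\pi$ for a suitable $\pi_0$. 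Equivalently — and I suspect this is the cleanest route — one first shows via Lemmas~\ref{lem:bzn}, \ref{lem:Jtech1} that $\tau_1$ is $\xi$-related to the canonical element $\ga=\partIV n{H_1}\cdots{H_m}n{H_1}\cdots{H_m}$ (whose lower blocks are the ``all consecutive pairs'' $2$-partition), and symmetrically $\tau_2\mathrel\xi\ga$; then $\tau_1\mathrel\xi\tau_2$ follows. To reduce an arbitrary $\tau_1$ with codomain $\{n\}$ to $\ga$, one rewrites the lower $2$-partition $\bP$ to the standard consecutive-pairs $2$-partition one block at a time, each rewrite being effected by right-multiplication by an appropriate Jones element together with one application of the $(\al,\be)$-swap, exactly as in the $\ga\mathrel\xi\si\mathrel\xi\de$ chains of Lemma~\ref{lem:Jtech1}.

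The main obstacle will be \emph{planarity bookkeeping} in constructing the intermediate Jones elements: whenever we right-multiply $\tau_1$ by some $\ga$ to alter one lower block, we must verify that $\ga\in\J_n$ (correct parities of $\dom(\ga),\codom(\ga)$ per Remark~\ref{rem:Jdefine}, planar $2$-partitions on all the induced intervals, no clash with transversals) and that the resulting product still has kernel $\ker(\tau_1)$, codomain $\{n\}$, and is closer to $\tau_2$. This is the same kind of careful case analysis that fills Figure~\ref{fig:bzn} and the cases (a)--(h) there; I would handle it by choosing, at each step, the block of $\bP\sm\bQ$ that is innermost (most deeply nested) or leftmost, so that the swap never has to cross an already-corrected block, keeping the number of cases small. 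A secondary point to watch is the base case $n=3$, where $m=1$ and the only planar $2$-partition of $[1,2]$ is $\{H_1\}$, so $\tau_1=\tau_2$ automatically and the claim is vacuous — consistent with Lemmas~\ref{lem:Jtech1}, \ref{lem:bzn} requiring $n\ge5$ for their non-trivial content. Once the reduction $\tau_1\mathrel\xi\ga\mathrel\xi\tau_2$ is established, the lemma is proved.
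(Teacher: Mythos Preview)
Your plan is in the right spirit but misses the key structural shortcut that makes this lemma a two-line proof rather than a case analysis. You correctly identify that $\tau_1,\tau_2$ both have the block $\{n,n'\}$, and you correctly handle $n=3$. But then you propose to rewrite the lower $2$-partition of $\tau_1$ into that of $\tau_2$ (or into the canonical one) block by block, using $(\al,\be)$-swaps and planarity bookkeeping you yourself flag as the ``main obstacle''. That programme could presumably be pushed through, but you never actually carry it out, and it would essentially amount to reproving the even case of Proposition~\ref{prop:rlJeven} by hand inside $\J_n$.

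The paper avoids all of this. The observation you are missing is that the set $S$ of all Jones elements of $\J_n$ containing the block $\{n,n'\}$ is a submonoid isomorphic to $\J_{n-1}$, and $n-1$ is \emph{even}. The elements $\ga,\de$ produced by Lemma~\ref{lem:Jtech1} lie in $S$, as do $\tau_1,\tau_2$. Stripping the block $\{n,n'\}$ gives elements $\ga^\vee,\de^\vee,\tau_1^\vee,\tau_2^\vee\in\J_{n-1}$ with $(\ga^\vee,\de^\vee)\in\rho_0\sm\De$ and $(\tau_1^\vee,\tau_2^\vee)\in\rho_0$. Since Theorem~\ref{thm:cong-Jn} is already established for even degree (via the isomorphism $\PP_m\cong\J_{2m}$ and Proposition~\ref{prop:rlJeven}), the pair $(\ga^\vee,\de^\vee)$ generates $\rho_0$ on $\J_{n-1}$, so $(\tau_1^\vee,\tau_2^\vee)\in\cg{\ga^\vee}{\de^\vee}_{\J_{n-1}}$. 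Lifting back to $S\sub\J_n$ gives $(\tau_1,\tau_2)\in\cg{\ga}{\de}\sub\cg\al\be$. No induction on block count, no planarity cases: the even theorem does all the work. Your sketch would buy self-containedness at the cost of repeating that entire argument; the paper's route is both shorter and explains \emph{why} Lemma~\ref{lem:Jtech1} was set up to land inside $S$.
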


\pf If $n=3$, then the assumptions force $\tau_1=\tau_2$, so we assume $n\geq5$ for the rest of the proof.  Let~$\ga,\de$ be as in Lemma \ref{lem:Jtech1}.  Write $S$ for the submonoid of $\J_n$ consisting of all Jones elements containing the block $\{n,n'\}$.  Note that $S$ is isomorphic to $\J_{n-1}$, and that $\ga,\de,\tau_1,\tau_2\in S$.  For $\si\in S$, write $\si^\vee$ for the element of $\J_{n-1}$ obtained from $\si$ by removing the block $\{n,n'\}$.  Write $\mathrel\xi$ for the $\rho_0$-congruence on $\J_{n-1}$.  By Proposition~\ref{prop:rlJeven}, ${\mathrel\xi}=\cg{\ga^\vee}{\de^\vee}$.  So $(\tau_1^\vee,\tau_2^\vee)\in\cg{\ga^\vee}{\de^\vee}$, and it follows that $(\tau_1,\tau_2)\in\cg\ga\de\sub\cg\al\be$, as required. \epf

\begin{prop}\label{prop:rlJodd}
Let $n\geq3$ be odd.  The relations $\rho_1$ and $\lambda_1$ are principal congruences on $\J_n$.  Moreover, if $\al,\be\in\J_n$, then
\begin{itemize}\begin{multicols}{2}
\itemit{i} $\rho_1=\cg\al\be\iff (\al,\be)\in\rho_1\sm\Delta$,
\itemit{ii} $\lambda_1=\cg\al\be\iff (\al,\be)\in\lambda_1\sm\Delta$.
\end{multicols}\end{itemize}
\end{prop}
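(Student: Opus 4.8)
The plan is to follow the pattern of Proposition \ref{prop:rl_Bn}, using the technical lemmas \ref{lem:bzn}, \ref{lem:Jtech1} and \ref{lem:Jtech2} that have just been proved. By duality it suffices to prove (i), since applying ${}^\ast$ interchanges $\rho_1$ with $\lambda_1$ and $\cg{\al}{\be}$ with $\cg{\al^\ast}{\be^\ast}$. The ``only if'' direction is immediate: if $\cg{\al}{\be}=\rho_1$ then certainly $(\al,\be)\in\rho_1$, and $(\al,\be)\notin\Delta$ since $\rho_1\neq\Delta$ (for $n$ odd, $\mu_1=\Delta\subsetneq\rho_1$). For the ``if'' direction, fix $(\al,\be)\in\rho_1\sm\Delta$ and write ${\mathrel\xi}=\cg{\al}{\be}$; we must show $\rho_1\subseteq\xi$. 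Since it is clear that $\xi\subseteq\rho_1$, this will finish the proof.

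For a rank-$1$ element $\si\in\J_n$, let $\si'$ denote the unique element of $\J_n$ with $\ker(\si')=\ker(\si)$ and with $H_1,H_2,\dots,H_m$ as its non-trivial cokernel classes (so $\codom(\si')=\{n\}$). The central claim is that $\si\mathrel\xi\si'$ for every $\si\in J_1$. Granting the claim, part (i) follows: for any $(\ga,\de)\in\rho_1$ we have either $\ga=\de$ or else $\rank(\ga)=\rank(\de)=1$ with $\ker(\ga)=\ker(\de)$, hence $\ga'=\de'$, so $\ga\mathrel\xi\ga'=\de'\mathrel\xi\de$, giving $(\ga,\de)\in\xi$. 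To establish the claim, first apply Lemma \ref{lem:bzn}: there is some $\pi\in\J_n$ with $(\si,\pi)\in\xi$ and $\codom(\pi)=\{n\}$, and inspecting the proof of that lemma we may take $\pi$ with $\ker(\pi)=\ker(\si)$. Thus, replacing $\si$ by $\pi$, we may assume $\codom(\si)=\{n\}$ from the outset. If $n=3$ then $\si$ with $\codom(\si)=\{3\}$ is already equal to $\si'$ (there is only one non-transversal interval, $[1,2]=H_1$), so the claim is trivial; hence assume $n\geq5$.

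With $\codom(\si)=\{n\}$, write $\si=\partIV{a}{A_1}\cdots{A_m}{n}{H_1}\cdots{H_m}$ where the $A_j$ form a planar $2$-partition witnessing $\ker(\si)$ on the appropriate intervals, and $a$ is odd. Now I would factor $\si$ as $\si=\tau_1\cdot\upsilon_1$ and $\si'=\tau_1\cdot\upsilon_2$, where $\tau_1$ is a fixed rank-$1$ Jones element carrying $\ker(\si)$ on the domain side and $H_1,\dots,H_m$ on the codomain side, and $\upsilon_1,\upsilon_2$ are the two rank-$1$ Jones elements with $\dom(\upsilon_i)=\codom(\upsilon_i)=\{n\}$ and $\ker(\upsilon_1)=\ker(\upsilon_2)$ equal to the trivial relation with classes $H_1,\dots,H_m$ (more precisely, $\upsilon_1$ should have its non-trivial cokernel classes equal to those of $\si$ and $\upsilon_2$ equal to $H_1,\dots,H_m$). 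By Lemma \ref{lem:Jtech2}, $(\upsilon_1,\upsilon_2)\in\xi$, and composing on the left with $\tau_1$ gives $\si=\tau_1\upsilon_1\mathrel\xi\tau_1\upsilon_2=\si'$, as desired. The main obstacle is bookkeeping: verifying that all of $\tau_1,\upsilon_1,\upsilon_2$ are genuinely Jones elements (respecting the parity constraints of Remark \ref{rem:Jdefine}) and that the products $\tau_1\upsilon_i$ compute to exactly $\si$ and $\si'$; this is where planarity forces the careful choices, but it is entirely analogous to the end of the proof of Proposition \ref{prop:rl_Bn} (the part deducing $\tau_1\mathrel\xi\tau_2$ from the even-degree case), with Lemma \ref{lem:Jtech2} playing the role there played by the induction into $\B_{n-1}$.
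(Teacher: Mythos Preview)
Your plan is correct and follows essentially the same route as the paper: reduce to (i) by duality, apply Lemma~\ref{lem:bzn} to bring codomains to $\{n\}$, and then invoke Lemma~\ref{lem:Jtech2}. The paper organizes the endgame slightly differently---rather than introducing a canonical form $\si'$ and factoring $\si=\tau_1\upsilon_1$, it takes an arbitrary pair $(\si_1,\si_2)\in\rho_1\setminus\Delta$, applies Lemma~\ref{lem:bzn} to obtain $\pi_1,\pi_2$ with codomain $\{n\}$, left-multiplies both by the element $\ga$ of Lemma~\ref{lem:Jtech1} to land in the hypotheses of Lemma~\ref{lem:Jtech2}, and finishes via $\pi_1\ga\pi_1=\pi_1$ and $\pi_1\ga\pi_2=\pi_2$---but the content is the same. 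Two small points to clean up: first, after reducing to $\codom(\si)=\{n\}$ you wrote $\si=\partIV{a}{A_1}\cdots{A_m}{n}{H_1}\cdots{H_m}$, but the lower blocks of $\si$ are an arbitrary planar $2$-partition of $[1,n-1]$, not necessarily the $H_i$ (you clearly know this, given your later description of $\upsilon_1$); second, $\ker(\pi)=\ker(\si)$ follows immediately from $(\si,\pi)\in\xi\subseteq\rho_1$, so no inspection of the proof of Lemma~\ref{lem:bzn} is needed.
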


\pf As usual, it suffices to prove the converse implication in (i), so fix some $(\al,\be)\in\rho_1\sm\Delta$, and
write ${\mathrel\xi}=\cg\al\be$.  
Let $(\si_1,\si_2)\in\rho_1\sm\Delta$ be arbitrary.  
By Lemma \ref{lem:bzn}, $\si_1\mathrel\xi\pi_1$ and $\si_2\mathrel\xi\pi_2$, for some $\pi_1,\pi_2\in\J_n$ with $\codom(\pi_1)=\codom(\pi_2)=\{n\}$, and we note that $\ker(\pi_1)=\ker(\si_1)=\ker(\si_2)=\ker(\pi_2)$, since ${\mathrel\xi}\sub\rho_1$.  
Let $\ga$ be as in Lemma \ref{lem:Jtech1}, and put $\tau_1=\ga\pi_1$ and $\tau_2=\ga\pi_2$.  Then $\tau_1,\tau_2$ satisfy the conditions of Lemma \ref{lem:Jtech2}, so it follows by that lemma that $\tau_1\mathrel\xi\tau_2$.  But then $\si_1\mathrel\xi\pi_1=\pi_1\ga\pi_1=\pi_1\tau_1\mathrel\xi\pi_1\tau_2=\pi_1\ga\pi_2=\pi_2\mathrel\xi\si_2$; note that $\pi_1\ga\pi_2=\pi_2$ follows from the fact that $\ker(\pi_1)=\ker(\pi_2)$. \epf

The proof of the next result is virtually identical to that of Proposition \ref{prop:R01_Bn}.

\begin{prop}\label{prop:R01_Jn}
Let $n\geq3$ be odd.  Then $R_1$ is a principal congruence on $\J_n$.  Moreover, if $\al,\be\in\J_n$, then $R_1=\cg\al\be\iff(\al,\be)\in R_1\sm(\rho_1\cup\lambda_1)$.  \epfres
\end{prop}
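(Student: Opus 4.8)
The plan is to mirror the proof of Proposition~\ref{prop:R01_Bn} almost verbatim, since the structure is identical: $R_1$ sits at the top of a ``diamond'' whose two lower legs are the already-established principal congruences $\rho_1$ and $\lambda_1$, and by Proposition~\ref{prop-CR4}(i) we have $R_1=\rho_1\vee\lambda_1$. The only subtlety relative to $\B_n$ is the planarity constraint, which needs a quick check but causes no real difficulty here because the products $\al\be$ and $\be\al$ of planar elements are automatically planar.

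First I would dispose of the ``only if'' direction: if $\cg\al\be=R_1$ then $(\al,\be)\in R_1$, and since $\rho_1,\lambda_1\subsetneq R_1$ (see Figure~\ref{fig:Hasse_Jn} and Propositions~\ref{prop-CR2}--\ref{prop-CR5}) we cannot have $(\al,\be)\in\rho_1\cup\lambda_1$, as then $\cg\al\be$ would be contained in one of the proper subcongruences $\rho_1$ or $\lambda_1$. For the ``if'' direction, fix $(\al,\be)\in R_1\sm(\rho_1\cup\lambda_1)$ and write $\xi=\cg\al\be$; clearly $\xi\sub R_1$ since $R_1$ is a congruence containing $(\al,\be)$. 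Since $(\al,\be)\in R_1$ with $(\al,\be)\notin\Delta$, both $\al$ and $\be$ have rank $1$ (here $z=1$ as $n$ is odd, so $I_1$ is the minimal ideal and $R_1=\De\cup(I_1\times I_1)$, and all rank-$1$ elements are in $I_1$). Then $\al\be$ also has rank $1$ (the product of two rank-$1$ Jones elements cannot drop to rank $z=1$ below, since $z=1$ is already minimal, but it also cannot stay unless domains/codomains line up — in fact $\rank(\al\be)\leq\min(\rank\al,\rank\be)=1$ and $\rank(\al\be)\geq z=1$ forces $\rank(\al\be)=1$), and $\al\be\in\J_n$ by closure under multiplication. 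Since $(\al,\be)\notin\lambda_1$ we have $\coker(\al)\neq\coker(\be)=\coker(\al\be)$, while $\ker(\al)=\ker(\al\al)\supseteq\ker(\al)$ gives, by rank considerations, $\ker(\al)=\ker(\al\be)$; hence $(\al,\al\be)\in\rho_1\sm\De$. By Proposition~\ref{prop:rlJodd}(i), $\rho_1=\cg{\al}{\al\be}=\cg{\al\al}{\al\be}\sub\xi$. The symmetric argument, using $(\al,\be)\notin\rho_1$ and Proposition~\ref{prop:rlJodd}(ii), gives $\lambda_1=\cg{\al\be}{\be}\sub\xi$. Therefore $R_1=\rho_1\vee\lambda_1\sub\xi$, and combined with $\xi\sub R_1$ we get $\xi=R_1$.

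The one place needing care — though it is routine — is verifying the rank and (co)kernel bookkeeping: that $\rank(\al\be)=1$, that $\ker(\al)=\ker(\al\be)$ and $\coker(\al\be)=\coker(\be)$, and that $\al\be,\be\al\in\J_n$ (the latter is immediate since $\J_n$ is a monoid). These all follow from the standard identities $\dom(\al\be)\sub\dom(\al)$, $\codom(\al\be)\sub\codom(\be)$, $\ker(\al\be)\supseteq\ker(\al)$, $\coker(\al\be)\supseteq\coker(\be)$ from Subsection~\ref{sect:prelim_Pn}, together with the fact that in $\J_n$ the minimal rank is $z=1$ so $\rank(\al\be)$ cannot be smaller than $1$. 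I do not anticipate any genuine obstacle; the proof of Proposition~\ref{prop:R01_Bn} transfers directly, with ``$\B_n$'' replaced by ``$\J_n$'', ``$R_z,\rho_z,\lambda_z$'' replaced by ``$R_1,\rho_1,\lambda_1$'', and references to Proposition~\ref{prop:rl_Bn} replaced by references to Proposition~\ref{prop:rlJodd}. Indeed, since the excerpt explicitly states ``The proof of the next result is virtually identical to that of Proposition~\ref{prop:R01_Bn}'', the cleanest write-up is simply to indicate this and sketch the two-line core argument above.
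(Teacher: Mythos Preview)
Your proof is correct and follows exactly the approach the paper intends: it is the proof of Proposition~\ref{prop:R01_Bn} transported to $\J_n$, replacing $\rho_z,\lambda_z,R_z$ by $\rho_1,\lambda_1,R_1$ and Proposition~\ref{prop:rl_Bn} by Proposition~\ref{prop:rlJodd}. The only cosmetic point is that the kernel/cokernel bookkeeping is most cleanly justified by stability of the minimal ideal (so $\al\be\R\al$ and $\al\be\L\be$), rather than by the slightly awkward ``$\ker(\al)=\ker(\al\al)\supseteq\ker(\al)$'' phrasing.
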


Having now described the generating pairs for the lower congruences $\rho_1,\lam_1,R_1$, we turn our attention to the chain of Rees congruences:
\[
R_1\suq R_3\suq R_5\suq\cdots\suq R_n=\nabla.
\]
Specifically, we wish to show that each congruence in this list is principal, and that if $q\geq3$ is odd, then~$R_q$ is generated by any pair from $R_q\sm R_{q-2}$; see Proposition \ref{prop:Rq_Jn}.
As in Section \ref{sect:Bn}, we first obtain sufficient conditions for a congruence $\xi$ on $\J_n$ to contain a Rees congruence~$R_q$; see Lemmas \ref{lem:pq2_Jn} and \ref{lem:qq_Jn}.
The proof of the analogous result concerning the Brauer monoid~$\B_n$, Lemma~\ref{lem:qp_Bn}, relied on the fact that $\B_n$ contains the symmetric group~$\S_n$; since $\J_n$ does not contain $\S_n$, we will have to work harder to prove Lemmas \ref{lem:pq2_Jn} and \ref{lem:qq_Jn}.
We begin with a simple result that will be of use on several occasions; its statement and proof do not assume $n$ is odd.

\begin{lemma}\label{lem:pq_Jn}
Let $n\geq3$ be arbitrary.  Let $\al,\be\in\J_n$ with $p=\rank(\al)<q=\rank(\be)$.  Suppose ${\xi\in\Cong(\J_n)}$ is such that $R_p\sub\xi$ and $(\al,\be)\in\xi$.  Then $R_q\sub\xi$.
\end{lemma}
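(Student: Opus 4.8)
The goal is to show that if $R_p \subseteq \xi$ and $(\al,\be) \in \xi$ with $p = \rank(\al) < q = \rank(\be)$, then in fact $R_q \subseteq \xi$. The plan is to first use $(\al,\be)$ to produce, inside $\xi$, a pair consisting of a rank-$q$ Jones element and a rank-$p$ Jones element; then apply the hypothesis $R_p \subseteq \xi$ together with a suitable multiplier argument to ``amplify'' this to an identification of the rank-$q$ identity idempotent $\ve_q$ with something of strictly smaller rank; and finally propagate this to the whole ideal $I_q$.

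First I would reduce to a convenient form. Since $\rank(\be) = q > p = \rank(\al)$, I would sandwich $\be$ between two Jones elements $\ga, \de$ to turn it into the standard idempotent $\ve_q$ of the maximal subgroup $\overline{\S}_q$ (which, $\J_n$ being $\H$-trivial, is just the ``identity-like'' rank-$q$ Jones element with blocks $\{q{+}1,q{+}2\},\dots$ on both sides); concretely, if $\be = \partI{a_1}{a_q}{A_1}{A_r}{b_1}{b_q}{B_1}{B_r}$ then take $\ga$ carrying $1,\dots,q$ to $a_1,\dots,a_q$ and $\de$ carrying $b_1,\dots,b_q$ back to $1,\dots,q$, with non-transversals chosen planarly (this is possible by the parity/planarity discussion preceding Remark \ref{rem:Jdefine}). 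Then $\ve_q = \ga\be\de \mathrel{\xi} \ga\al\de =: \al'$, and $\rank(\al') \le \rank(\al) = p < q$. So without loss of generality we have $(\ve_q, \al') \in \xi$ with $\rank(\al') \le p$. Now observe that $\ve_q \J_n \ve_q$ is isomorphic to $\J_q$ (it consists of the Jones elements fixing the blocks $\{q{+}1,q{+}2\},\dots,\{n{-}1,n\}$ top and bottom), and $\al' \in \ve_q \J_n \ve_q$. Applying the congruence generated inside $\ve_q \J_n \ve_q$ — here I would invoke the analogue of Lemma \ref{lem:1q_Bn} / Lemma \ref{lem:1z_Bn} for $\J_q$, namely that identifying the identity of a Jones monoid with an element of lower rank forces the universal congruence — we conclude that all elements of $\ve_q \J_n \ve_q$ are pairwise $\xi$-related; in particular $\ve_p \mathrel{\xi} \ve_z$ (using $p \equiv q \equiv n \pmod 2$), and more usefully two rank-$z$ elements of $\ve_q\J_n\ve_q$ with distinct kernels and distinct cokernels are $\xi$-related, which via Proposition \ref{prop:R01_Jn} (or Proposition \ref{prop:rlJeven}/the even case) already gives $R_z \subseteq \xi$.

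With $R_z \subseteq \xi$ in hand and $\ve_p \mathrel{\xi} \ve_q$ established, the final step is to show every Jones element of rank $\le q$ is $\xi$-related to one of rank $z$. Given $\si \in \J_n$ with $\rank(\si) = r \le q$, write $\si = \tau_3 \ve_r \tau_4$ where $\tau_3, \tau_4$ are Jones elements realizing $\dom(\si), \codom(\si)$ against $1,\dots,r$ (again choosing non-transversals planarly, which is where one must be a little careful — but the relevant intervals all have even length since $r \equiv n \pmod 2$, so planar $2$-partitions exist). Since $\ve_r, \ve_z \in \ve_q\J_n\ve_q$ are $\xi$-related by the previous paragraph, $\si = \tau_3 \ve_r \tau_4 \mathrel{\xi} \tau_3 \ve_z \tau_4$, which has rank $z$. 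Combined with $R_z \subseteq \xi$ this yields $I_q \times I_q \subseteq \xi$, i.e.\ $R_q \subseteq \xi$.

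\textbf{Main obstacle.} The routine parts are the sandwiching lemmas, which mirror Section \ref{sect:Bn} almost verbatim. The genuine difficulty is ensuring at every stage that the auxiliary Jones elements $\ga,\de,\tau_3,\tau_4$ actually exist as \emph{planar} Brauer elements with the prescribed domains, codomains and the block $\{q{+}1,q{+}2\},\dots$ structure — i.e.\ that the leftover non-transversals can be filled in planarly without colliding with the transversals. This is exactly the phenomenon flagged in the section's introduction (the lack of freedom in defining Jones elements compared to Brauer elements). I expect this to be manageable because the standard idempotents $\ve_q$ have all their ``leftover'' points packed into a contiguous block $\{q{+}1,\dots,n\}$ of even length, so the parity bookkeeping from the paragraph before Remark \ref{rem:Jdefine} applies cleanly; but it is the step that requires the most care to state correctly.
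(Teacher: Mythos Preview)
Your proposal is far more elaborate than needed, and it has a genuine circularity problem. You never use the hypothesis $R_p\subseteq\xi$; instead you try to re-derive $R_z\subseteq\xi$ from scratch by invoking ``the analogue of Lemma~\ref{lem:1q_Bn}/\ref{lem:1z_Bn} for $\J_q$''. But in the paper's logical order, that Jones-monoid analogue (Lemma~\ref{lem:1q_Jn}, via Lemmas~\ref{lem:1z_Jn} and~\ref{lem:qz_Jn}) is proved \emph{after} the present lemma and \emph{uses} it. So your argument, as written, is circular: you are essentially trying to prove the stronger Lemma~\ref{lem:pq2_Jn} (which has no $R_p\subseteq\xi$ hypothesis) directly, by mimicking the Brauer proof of Lemma~\ref{lem:qp_Bn}, but in the Jones setting that route passes through the very lemma you are proving.

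The paper's proof is three lines and uses only the hypothesis $R_p\subseteq\xi$ plus the ideal order. Since $R_p\subseteq\xi$, all elements of $I_p$ lie in a single $\xi$-class, so it is enough to show that every $\ga\in I_q$ is $\xi$-related to something in $I_p$. As $\rank(\ga)\leq q=\rank(\be)$, Proposition~\ref{prop:green_all_inclusive}(iv) gives $\ga=\de_1\be\de_2$ for some $\de_1,\de_2\in\J_n$; then $\ga=\de_1\be\de_2\mathrel\xi\de_1\al\de_2$, and $\rank(\de_1\al\de_2)\leq\rank(\al)=p$. No planarity bookkeeping, no auxiliary idempotents, no appeal to later lemmas. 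The hypothesis $R_p\subseteq\xi$ --- which you discarded --- is exactly what makes the lemma elementary; the hard work you sketched is deferred to the subsequent lemmas, which then get to \emph{use} this one.
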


\pf Since $R_p\sub\xi$, it suffices to show that any Jones element $\ga\in\J_n$ of rank at most $q$ is $\xi$-related to an element of rank at most $p$.  Since $\rank(\ga)\leq\rank(\be)$, it follows from Proposition \ref{prop:green_all_inclusive}(iv) that $\ga=\de_1\be\de_2$ for some $\de_1,\de_2\in\J_n$.  But then $\ga=\de_1\be\de_2 \mathrel\xi \de_1\al\de_2$, and we are done, since $\rank(\de_1\al\de_2)\leq\rank(\al)=p$. \epf

\begin{lemma}[cf.~Lemma \ref{lem:1z_Bn}]
\label{lem:1z_Jn}
Let $n\geq3$ be odd.  Suppose $\xi\in\Cong(\J_n)$ and that $(\1,\al)\in\xi$ for some $\al\in J_1$.  Then $\xi=R_n$.
\end{lemma}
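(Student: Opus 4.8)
The plan is to follow the proof of Lemma~\ref{lem:1z_Bn} essentially verbatim, taking $z=1$ throughout (recall that for $n$ odd the minimal ideal of $\J_n$ is $I_1=J_1$). The argument splits into two halves: first show $R_1\subseteq\xi$, and then show that every Jones element is $\xi$-related to an element of rank~$1$, which forces $\xi=\nabla=R_n$.

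For the first half, I would choose some $\be\in J_1$ with $\ker(\be)\neq\ker(\al)$ and $\coker(\be)\neq\coker(\al)$; such a $\be$ exists, since even in the smallest case $n=3$ there are at least two distinct kernels and two distinct cokernels available to rank-$1$ Jones elements, and the kernel and cokernel may be prescribed independently (a short check on the structure of $J_1(\J_3)$). Since $\J_n$ is finite and $\H$-trivial (Remark~\ref{rem:green_sumbonoids}(iv)), its minimal ideal $J_1$ is a rectangular band by Lemma~\ref{lem:Mcr}, so $\al\be\al=\al$; hence $\be=\1\be\1\mathrel\xi\al\be\al=\al$, using $(\1,\al)\in\xi$. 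Now $(\al,\be)\in R_1$ since $\al,\be\in I_1$, and $(\al,\be)\notin\rho_1\cup\lam_1$ precisely because of the differing kernels and cokernels (consulting the membership criteria for $\rho_1$ and $\lam_1$, which are exactly as in the $\B_n$ case). Proposition~\ref{prop:R01_Jn} then gives $R_1=\cg\al\be\subseteq\xi$.

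For the second half, let $\ga\in\J_n$ be arbitrary. Then $\ga=\ga\1\mathrel\xi\ga\al$, and $\rank(\ga\al)\leq\rank(\al)=1$, so $\ga\al\in I_1$. As $R_1\subseteq\xi$, all elements of $I_1$ lie in a single $\xi$-class, so $\ga$ is $\xi$-related to a fixed rank-$1$ element; since $\ga$ was arbitrary, $\xi=\nabla=R_n$.

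I do not expect a genuine obstacle: the only point needing a moment's care is confirming the existence of the auxiliary $\be$ in the boundary case $n=3$, together with remembering to invoke the Jones-specific Propositions~\ref{prop:rlJodd} and~\ref{prop:R01_Jn} in place of their Brauer analogues. Everything else transfers directly from the proof of Lemma~\ref{lem:1z_Bn}.
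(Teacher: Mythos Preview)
Your proof is correct and follows essentially the same strategy as the paper: show $R_1\subseteq\xi$, then observe that every element is $\xi$-related to something in $I_1$. The only minor difference is in how $R_1\subseteq\xi$ is obtained: you mirror the Brauer argument by producing a single pair $(\al,\be)\in R_1\setminus(\rho_1\cup\lam_1)$ and invoking Proposition~\ref{prop:R01_Jn}, whereas the paper instead produces pairs in $\rho_1\setminus\Delta$ and $\lam_1\setminus\Delta$ separately (via $(\be,\be\al)$ and its dual), applies Proposition~\ref{prop:rlJodd}, and then takes the join $R_1=\rho_1\vee\lam_1$. Both routes are equally short and rely on the same underlying machinery.
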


\pf By Lemma \ref{lem:pq_Jn}, it suffices to show that $R_1\sub\xi$.  Let $A$ be a nontrivial $\coker(\al)$-class, and let $\be\in J_1$ be such that $A$ is not a $\coker(\be)$-class.  Then $(\be,\be\al)=(\be\;\!\1,\be\al)\in\xi$.  But $\rank(\be\al)=\rank(\be)=1$, $\ker(\be\al)=\ker(\be)$, yet $\coker(\be\al)\not=\coker(\be)$, since $A$ is a $\coker(\be\al)$-class but not a $\coker(\be)$-class.  In particular, $(\be,\be\al)\in\rho_1\sm\Delta$, so Proposition \ref{prop:rlJodd} gives $\rho_1=\cg{\be}{\be\al}\sub\xi$.  Similarly, $\lambda_1\sub\xi$, and it follows from Proposition \ref{prop-CR4} that $R_1=\rho_1\vee\lam_1\sub\xi$, as required. \epf

In what follows,
we will make extensive use of the idempotents $\ve_q=\partI1q{q+1,q+2}{n-1,n}1q{q+1,q+2}{n-1,n}$, for $q=1,3,\ldots,n$, already used in Section \ref{sect:Bn}.  We note that $\ve_q\in\J_n$, and that $\ve_q\J_n\ve_q$ is a monoid with identity~$\ve_q$, isomorphic to $\J_q$.

\begin{lemma}\label{lem:qz_Jn}
Let $n\geq3$ be odd.  Suppose $\xi\in\Cong(\J_n)$ and $(\al,\be)\in\xi\cap (J_q\times J_1)$ with $q\geq3$.  Then $R_q\sub\xi$.
\end{lemma}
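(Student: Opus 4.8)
The statement generalises Lemma~\ref{lem:1z_Jn} (which is the case $\rank(\be)=1$), and the strategy is to bootstrap up to that case. We are given $(\al,\be)\in\xi$ with $\rank(\al)=q\ge3$ and $\rank(\be)=1$. The aim is to produce, inside $\xi$, a pair of the form $(\1,\ga)$ with $\rank(\ga)<n$, since then Lemma~\ref{lem:1z_Jn} (after reducing $\ga$ to rank $1$ via Lemma~\ref{lem:pq_Jn}-type arguments, or more directly by an induction on $\rank(\ga)$ mimicking Lemma~\ref{lem:1q_Bn}) finishes the job. Actually a cleaner route is: first show $R_1\sub\xi$, then invoke Lemma~\ref{lem:pq_Jn} with $p=1$ and the given pair $(\al,\be)$ — but note the ranks are the wrong way round there, so instead I would use the dual bootstrapping. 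Let me restructure.

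\textbf{Step 1: conjugate into the $\ve_q$-corner.} Write $\al=\partI{a_1}{a_q}{A_1}{A_r}{b_1}{b_q}{B_1}{B_r}$ with $a_1<\cdots<a_q$. Using the "collapsing" elements already employed in Section~\ref{sect:Bn} (e.g.\ in the proof of Lemma~\ref{lem:qp_Bn}), set $\ga_1 = \partI1q{q+1,q+2}{n-1,n}{a_1}{a_q}{A_1}{A_r}$ and $\ga_2 = \partI{b_1}{b_q}{B_1}{B_r}1q{q+1,q+2}{n-1,n}$, with the proviso that these belong to $\J_n$: since $\al\in\J_n$ is planar, its transversals and non-transversals are ordered compatibly (Lemma~\ref{lem:nested_or_separated}), so $\ga_1,\ga_2$ can indeed be chosen planar. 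Then $\ve_q = \ga_1\al\ga_2 \mathrel\xi \ga_1\be\ga_2$, and $\rank(\ga_1\be\ga_2)\le\rank(\be)=1<q$, with $\ga_1\be\ga_2\in\ve_q\J_n\ve_q$. So inside the submonoid $\ve_q\J_n\ve_q\cong\J_q$ we have identified the identity $\ve_q$ with an element of rank $\le1$.

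\textbf{Step 2: deduce $R_1\sub\xi$, hence all rank-$1$ elements collapse.} Apply the odd-degree version of Lemma~\ref{lem:1z_Jn}/\ref{lem:1q_Bn} \emph{inside} $\ve_q\J_n\ve_q\cong\J_q$: since $\ve_q$ is identified with a lower-rank element there, all elements of $\ve_q\J_n\ve_q$ of rank $z_q$ (where $z_q\in\{0,1\}$, $z_q\equiv q$) are $\xi$-related — one picks rank-$z_q$ elements $\tau_1,\tau_2\in\ve_q\J_n\ve_q$ with different kernels and cokernels, gets $(\tau_1,\tau_2)\in\xi$ with $(\tau_1,\tau_2)\in R_{z_q}\sm(\rho_{z_q}\cup\lam_{z_q})$ inside $\J_q$, and lifts via the isomorphism. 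Since $q$ is odd we have $z_q=1$, so $R_1$ (on $\J_q$, hence a chunk of $\xi$) gives us: all rank-$1$ Jones elements of $\ve_q\J_n\ve_q$ are $\xi$-related. Now push this out to all of $\J_n$: given any $\si\in\J_n$ with $\rank(\si)=1$, write $\si=\partIV c{C_1}\cdots C_s d{D_1}\cdots D_t$ and sandwich, $\si = \tau_3\,\ve_1\,\tau_4$ where $\tau_3,\tau_4\in\J_n$ move the single transversal of $\ve_1$ to position $\{c,d'\}$ and carry the non-transversals accordingly; since $\ve_1,\ve_q'\in\ve_q\J_n\ve_q$ for the appropriate rank-$1$ element $\ve_q'$ with the right kernel/cokernel, and these are $\xi$-related by the previous sentence, $\si$ is $\xi$-related to another rank-$1$ element. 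Care is needed to choose $\tau_3,\tau_4$ planar — here is where parity of endpoints (Remark~\ref{rem:Jdefine}) must be respected, exactly as in the proofs of the previous lemmas; this is the fiddly bookkeeping but not conceptually hard. The upshot is $R_1\sub\xi$ (all rank-$\le1$ elements of $\J_n$ are now $\xi$-related, together with $\rho_1,\lam_1$ being subsumed, so $R_1=\rho_1\vee\lam_1\sub\xi$ via Proposition~\ref{prop-CR4}).

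\textbf{Step 3: climb to $R_q$.} Having $R_1\sub\xi$ and the pair $(\al,\be)$ with $\rank(\al)=q>1=\rank(\be)$, we would like Lemma~\ref{lem:pq_Jn}, but its hypothesis has the lower-rank element occurring \emph{first}; so instead run the argument of Lemma~\ref{lem:pq_Jn} directly: any Jones element $\de$ of rank $\le q$ satisfies $\de=\eta_1\al\eta_2$ for some $\eta_1,\eta_2\in\J_n$ by Proposition~\ref{prop:green_all_inclusive}(iv), whence $\de=\eta_1\al\eta_2\mathrel\xi\eta_1\be\eta_2$ with $\rank(\eta_1\be\eta_2)\le\rank(\be)=1$; combined with $R_1\sub\xi$ this shows every rank-$\le q$ element is $\xi$-related to every other rank-$\le q$ element, i.e.\ $R_q\sub\xi$.

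\textbf{Main obstacle.} The conceptual skeleton (conjugate into a corner isomorphic to $\J_q$, apply the rank-$1$ result there, spread it back out, then use Green's-class surjectivity to climb) is routine given everything proved earlier. The genuinely delicate point is \emph{planarity of the conjugating elements} in Steps~1 and~2: unlike in $\B_n$, where $\S_n\le\B_n$ gave complete freedom, in $\J_n$ one must verify at each sandwiching that the chosen $\ga_1,\ga_2,\tau_3,\tau_4$ can be realised as planar Jones elements with the required domains, codomains, kernels and cokernels — respecting the parity constraints $a_i\equiv b_i\equiv i\pmod2$ and the even/odd structure of planar $2$-partitions (Lemma~\ref{lem:odd}, Remark~\ref{rem:Jdefine}). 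I expect this to be the part requiring the most care, though each individual check is short, in the style of the preceding lemmas of this section.
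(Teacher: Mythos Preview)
Your approach is essentially the paper's, but you have introduced two unnecessary complications that obscure it.

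First, in Step~3 you worry that Lemma~\ref{lem:pq_Jn} has the ranks ``the wrong way round''. But congruences are symmetric: since $(\al,\be)\in\xi$ with $\rank(\al)=q$ and $\rank(\be)=1$, also $(\be,\al)\in\xi$, so Lemma~\ref{lem:pq_Jn} applies verbatim with $p=1$. The paper opens with exactly this observation (``by Lemma~\ref{lem:pq_Jn}, it suffices to show that $R_1\sub\xi$''), and your re-derivation of the lemma's argument is redundant.

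Second, and more importantly, your Step~2 takes a detour. Once you know (via Lemma~\ref{lem:1z_Jn} applied to $\J_q\cong\ve_q\J_n\ve_q$) that \emph{all} elements of $\ve_q\J_n\ve_q$ are $\xi$-related --- not just the rank-$1$ ones --- you are done: simply pick any $\de_3,\de_4\in\ve_q\J_n\ve_q$ of rank $1$ with $\ker(\de_3)\ne\ker(\de_4)$ and $\coker(\de_3)\ne\coker(\de_4)$. These are elements of $\J_n$ itself, so $(\de_3,\de_4)\in R_1\sm(\rho_1\cup\lam_1)$ holds \emph{in $\J_n$}, and Proposition~\ref{prop:R01_Jn} gives $R_1=\cg{\de_3}{\de_4}\sub\xi$ directly. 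There is no need to ``push out'' by sandwiching arbitrary rank-$1$ elements of $\J_n$ against elements of the corner; that argument as you have sketched it is incomplete anyway (showing each $\si$ is $\xi$-related to \emph{some} other rank-$1$ element does not by itself yield $R_1\sub\xi$).

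Your concern about planarity of $\ga_1,\ga_2$ is legitimate but easily dispatched: since $a_1<\cdots<a_q$ and the $A_i$ are nested between consecutive $a_j$'s (Lemma~\ref{lem:nested_or_separated}), the element $\ga_1$ connecting $i$ to $a_i'$ with lower non-transversals $A_1,\ldots,A_r$ is planar, and dually for $\ga_2$. The paper does not pause on this.
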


\pf Again, by Lemma \ref{lem:pq_Jn}, it suffices to show that $R_1\sub\xi$.  Write $\al=\partI{a_1}{a_q}{A_1}{A_r}{b_1}{b_q}{B_1}{B_r}$, with $a_1<\cdots<a_q$, and define
\[
\ga_1=\partI1q{q+1,q+2}{n-1,n}{a_1}{a_q}{A_1}{A_r}
\AND
\ga_2=\partI{b_1}{b_q}{B_1}{B_r}1q{q+1,q+2}{n-1,n}.
\]
For simplicity, write $\ga=\ga_1\be\ga_2$.  
Then $\ve_q=\ga_1\al\ga_2\mathrel\xi\ga_1\be\ga_2=\ga$.  But $\ga\in\ve_q\J_n\ve_q$ and ${\rank(\ga)=1}$.  
For $\si\in\ve_q\J_n\ve_q$, let $\si^\vee\in\J_q$ be the Jones element of degree $q$ obtained from $\si$ by removing the blocks ${\{q+1,q+2\},\ldots,\{n-1,n\}}$ and $\{(q+1)',(q+2)'\},\ldots,\{(n-1)',n'\}$.  
In $\J_q$, note that $(\ve_q^\vee,\ga^\vee)=(\id_q,\ga^\vee)$, with ${\rank(\ga^\vee)=1}$, so by Lemma \ref{lem:1z_Jn}, this pair generates the universal congruence on $\J_q$.  Since $(\ve_q,\ga)\in\xi$, it follows that all elements of $\ve_q\J_n\ve_q$ are $\xi$-related.  In particular, choosing any pair of elements $\de_3,\de_4\in\ve_q\J_n\ve_q$ such that $\rank(\de_3)=\rank(\de_4)=1$, $\ker(\de_3)\not=\ker(\de_4)$ and $\coker(\de_3)\not=\coker(\de_4)$, Proposition \ref{prop:R01_Jn} gives $R_1=\cg{\de_3}{\de_4}\sub\xi$.  \epf

Following \cite{NS1978}, we say that an element $a$ of a regular $*$-semigroup $S$ is a \emph{projection} if $a^2=a=a^*$.  We denote the set of all projections of $S$ by $\Proj(S)$.  It is well known that ${\Proj(S)=\set{aa^*}{a\in S}=\set{a^*a}{a\in S}}$, and that for any $a\in \Proj(S)$ and $x\in S$, $x^*ax\in \Proj(S)$.
By \cite[Lemma 4]{EF2012}, $\al\in\J_n$ is a projection if and only if it has the form $\al=\partI{a_1}{a_q}{A_1}{A_r}{a_1}{a_q}{A_1}{A_r}$.  
We do not assume $n$ is odd for the statement or proof of the next result.

\begin{lemma}\label{lem:proj1_Jn}
Let $n\geq3$ be arbitrary.  Suppose $\al\in \Proj(\J_n)$ and $3\leq \rank(\al)\leq n-2$.  Then there exists $\be\in \Proj(\J_n)$ with  $\rank(\al\be)<\rank(\al)=\rank(\be)$.
\end{lemma}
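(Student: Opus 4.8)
The goal is to show that any projection $\al\in\Proj(\J_n)$ of rank $q$ with $3\leq q\leq n-2$ can be ``collapsed'' to a projection $\be$ of the same rank $q$ whose product $\al\be$ has strictly smaller rank. Since $\al$ is a projection, by \cite[Lemma 4]{EF2012} we may write $\al=\partI{a_1}{a_q}{A_1}{A_r}{a_1}{a_q}{A_1}{A_r}$, with $a_1<\cdots<a_q$. The idea is to build $\be$ as another projection which shares enough of the combinatorial structure of $\al$ that many of the transversal endpoints $a_i$ get ``caught'' by non-transversal blocks of $\be$ in the product graph $\Pi(\al,\be)$, thereby reducing the rank. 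The natural choice is to take $\be$ to be a projection whose non-transversal blocks include a pair like $\{a_i,a_{i+1}\}$ (or a suitable planar variant) for some adjacent pair of transversal points of $\al$, while keeping the remaining $a_j$ as singleton-free transversal endpoints of $\be$; then in $\al\be$ the transversals through $a_i$ and $a_{i+1}$ get joined into a single non-transversal, dropping the rank by at least $1$.

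First I would record the planarity bookkeeping: as noted in the paragraph preceding Remark~\ref{rem:Jdefine}, the transversal points of a Jones element satisfy $a_i\equiv i\pmod 2$, and the non-transversals partition each ``gap'' interval $[a_i+1,a_{i+1}-1]$ (and the two end intervals) into planar $2$-blocks; in particular each such gap has even length. The key observation is that since $q\geq 3$, there exist at least two transversal points, and since $q\leq n-2$, there is at least one non-transversal block of $\al$, i.e. at least one non-empty gap interval, somewhere. I would then split into cases according to where a gap of $\al$ lies relative to the $a_i$'s, or equivalently exploit the parity constraint to locate two transversal points $a_i<a_{i+1}$ of $\al$ such that $\{a_i, a_{i+1}\}$ can legitimately be made a non-transversal block of some planar Jones element $\be$ of rank $q$ — this requires $a_i$ and $a_{i+1}$ to have opposite parities, which holds automatically since $a_i\equiv i$ and $a_{i+1}\equiv i+1\pmod 2$. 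The remaining $n-2$ points other than $a_i,a_{i+1}$ must then be partitioned: the other transversal points $a_j$ ($j\neq i,i+1$) of $\be$ can be kept as transversal endpoints, and all leftover points must be filled in with non-transversal $2$-blocks in a planar way; the point $q\leq n-2$ guarantees there is room to do this (there are $n-q$ further non-transversal points to accommodate, and $n-q\geq 2$, even). One subtlety: to keep $\be$ planar while forcing $\{a_i,a_{i+1}\}$ to be a block, one must check the interval between $a_i$ and $a_{i+1}$ in $\al$ — but there are only non-transversal points of $\al$ strictly between $a_i$ and $a_{i+1}$, namely the gap interval $[a_i+1,a_{i+1}-1]$, which has even length, so it can be partitioned off and nested inside $\{a_i,a_{i+1}\}$, keeping everything planar; alternatively one chooses $i$ so that the gap $[a_i+1,a_{i+1}-1]$ is empty, which is possible unless \emph{every} adjacent pair of transversal points is separated by a non-empty gap — a situation I would handle by instead putting the block between a transversal endpoint and an endpoint of an adjacent non-transversal of $\al$.

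With $\be$ so constructed, the computation $\rank(\al\be)<q$ is then routine: in the product graph $\Pi(\al,\be)$, the vertex $a_i''$ (respectively $a_{i+1}''$) is joined to $a_i'$ of $\al^\vee$ and, via the block $\{a_i,a_{i+1}\}$ of $\be^\wedge$, also to $a_{i+1}''$, hence $a_i'$ and $a_{i+1}'$ lie in the same connected component, so the corresponding transversals of $\al$ merge; since $\be$ also acts identically enough on the other $a_j$ to keep them as transversals, the rank drops by exactly the number of such merges, which is at least $1$. Also $\rank(\be)=q$ by construction, and $\be$ is a projection by the form from \cite[Lemma 4]{EF2012} (symmetric top and bottom). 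I expect the main obstacle to be the careful case analysis needed to guarantee that a suitable planar $\be$ of rank exactly $q$ with the desired non-transversal block exists — i.e. verifying the parity and ``room'' conditions of Remark~\ref{rem:Jdefine} in all configurations of $\al$, particularly when $\al$ has very few or very ``spread out'' non-transversals; this is the one spot where planarity of $\J_n$ (as opposed to $\B_n$) genuinely complicates matters, but the constraints $3\leq q\leq n-2$ give exactly enough slack to push it through.
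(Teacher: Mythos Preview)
Your core idea is exactly the paper's: build a projection $\be$ containing $\{a_i,a_{i+1}\}$ as a non-transversal block for some adjacent pair of transversal points of $\al$, so that in $\al\be$ those two transversals merge and the rank drops. Your computation that $\rank(\al\be)<q$ is correct.

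There is, however, a genuine gap in your construction of $\be$. You say that the transversal points of $\be$ are the $a_j$ with $j\neq i,i+1$, and that all remaining points are filled in with planar $2$-blocks. But that gives $\be$ only $q-2$ transversal points, so $\rank(\be)=q-2\neq q$, and the conclusion $\rank(\be)=\rank(\al)$ fails. (Concretely: take $n=5$, $\al=\ve_3$ with $\dom(\al)=\{1,2,3\}$; your recipe with $i=1$ produces $\be$ with the single transversal $\{3,3'\}$ and non-transversals $\{1,2\},\{4,5\}$, of rank $1$.) To get $\rank(\be)=q$ you must nominate two \emph{new} transversal points for $\be$, and you must do this without violating planarity --- in particular, the new transversals cannot sit inside the interval $[a_i,a_{i+1}]$, since the non-transversal arc $\{a_i,a_{i+1}\}$ would then cross them. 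This is precisely the ``careful case analysis'' you flag but do not carry out, and it is not automatic.

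The paper handles this with a short three-case split on the position of $a_1,a_2$. If $a_1=1$ and $a_2=2$, take $\be$ to be the standard projection with domain $\{3,\ldots,q+2\}$ and block $\{1,2\}$; here $q\leq n-2$ is exactly what guarantees room. Otherwise there is a non-transversal block $A_1$ of $\al$ adjacent to $a_1$ (either $A_1=\{2,x\}$ if $a_1=1<2<a_2$, or $A_1=\{1,x\}$ if $a_1>1$), and the two endpoints of $A_1$ serve as the two replacement transversal points of $\be$, with $\{a_2,a_3\}$ (respectively $\{a_1,a_2\}$) the collapsing block; planarity is then immediate because the rest of $\be$'s non-transversals are just copied from $\al$. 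So the missing ingredient in your argument is exactly this: identify where the two compensating transversal points of $\be$ come from.
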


\pf Write $\al=\partI{a_1}{a_q}{A_1}{A_r}{a_1}{a_q}{A_1}{A_r}$, where $a_1<\cdots<a_q$.  There are three possibilities:
\begin{itemize}
\begin{multicols}{3}
\item[(a)] $a_1=1$ and $a_2=2$,
\item[(b)] $a_1=1$ and $a_2>2$, or
\item[(c)] $a_1>1$.
\end{multicols}
\eit
In case (a), we put $\be=\Big( 
{ \scriptsize \renewcommand*{\arraystretch}{1}
\begin{array} {\c|\c|\c|\c|\c|\c|\cend}
3 \:&\: \cdots \:&\: q+2 \:&\: 1,2 \:&\: q+3,q+4 \:&\: \cdots \:&\: n-1,n  \\ \cline{4-7}
3 \:&\: \cdots \:&\: q+2 \:&\: 1,2 \:&\: q+3,q+4 \:&\: \cdots \:&\: n-1,n
\rule[0mm]{0mm}{2.7mm}
\end{array} 
}
\hspace{-1.5 truemm} \Big)$.  In case (b), we assume without loss of generality that $A_1=\{2,x\}$, and define $\be=\Big( 
{ \scriptsize \renewcommand*{\arraystretch}{1}
\begin{array} {\c|\c|\c|\c|\c|\c|\c|\c|\c|\cend}
1 \:&\: 2 \:&\: x \:&\: a_4 \:&\: \cdots \:&\: a_q \:&\: a_2,a_3 \:&\: A_2 \:&\: \cdots \:&\: A_r  \\ \cline{7-10}
1 \:&\: 2 \:&\: x \:&\: a_4 \:&\: \cdots \:&\: a_q \:&\: a_2,a_3 \:&\: A_2 \:&\: \cdots \:&\: A_r
\rule[0mm]{0mm}{2.7mm}
\end{array} 
}
\hspace{-1.5 truemm} \Big)$.  In case (c), we assume that $A_1=\{1,x\}$, and define $\be=\Big( 
{ \scriptsize \renewcommand*{\arraystretch}{1}
\begin{array} {\c|\c|\c|\c|\c|\c|\c|\c|\cend}
1 \:&\: x \:&\: a_3 \:&\: \cdots \:&\: a_q \:&\: a_1,a_2 \:&\: A_2 \:&\: \cdots \:&\: A_r  \\ \cline{6-9}
1 \:&\: x \:&\: a_3 \:&\: \cdots \:&\: a_q \:&\: a_1,a_2 \:&\: A_2 \:&\: \cdots \:&\: A_r
\rule[0mm]{0mm}{2.7mm}
\end{array} 
}
\hspace{-1.5 truemm} \Big)$. \epf

\begin{rem}\label{rem:proj1_Jn}
Of course the previous result does not hold for $\rank(\al)\leq1$, since the set of Jones elements of rank at most $1$ is closed under multiplication.  It does not hold for $\rank(\al)=2$ either, although it \emph{almost} does.  We do not need to know this, but it can be proved (see Remark \ref{rem:proj2_Jn}) that the only exception is $\al=\partV1n{2,3}{n-2,n-1}1n{2,3}{n-2,n-1}$: for now, it is easy to check that $\rank(\al\be)=2$ for any projection $\be\in\Proj(\J_n)$ with $\rank(\be)=2$.
\end{rem}

Consider a projection $\al=\partI{a_1}{a_q}{A_1}{A_r}{a_1}{a_q}{A_1}{A_r}\in \Proj(\J_n)$, where $a_1<\cdots<a_q$.  The set $\al\J_n\al$, consisting of all Jones elements containing the blocks $A_i$ and $A_i'$ for each $1\leq i\leq r$, is a subsemigroup of $\J_n$ isomorphic to~$\J_q$; the identity element of $\al\J_n\al$ is $\al$.  We have already made use of this fact in the special case that~$\al=\ve_q$.

\begin{lemma}[cf.~Lemma \ref{lem:1q_Bn}]
\label{lem:1q_Jn}
Let $n\geq3$ be odd.  Suppose $\xi\in\Cong(\J_n)$ and that $(\1,\al)\in\xi$ for some $\al\in\J_n$ with $\rank(\al)<n$.  Then $\xi=R_n$.
\end{lemma}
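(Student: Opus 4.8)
\textbf{Proof plan for Lemma \ref{lem:1q_Jn}.}
The plan is to mimic the inductive strategy of Lemma \ref{lem:1q_Bn}, but replacing the conjugation trick (which used the symmetric group $\S_n \sub \B_n$, unavailable here) with multiplication by suitable \emph{projections} of $\J_n$, for which we have just developed the necessary machinery in Lemmas \ref{lem:proj1_Jn}, \ref{lem:qz_Jn}, and the structural remarks on $\al\J_n\al \cong \J_q$. We induct on $\rank(\al)$. The base case is $\rank(\al)=1$, which is exactly Lemma \ref{lem:1z_Jn} (note that for $n$ odd there is no rank-$0$ element, so $z=1$ is the minimum rank). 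For the inductive step, suppose $3\le\rank(\al)=q\le n-2$ (the case $\rank(\al)=q\le n-2$ with $q\equiv n\pmod 2$ and $q\ge 3$), and assume the result holds for all Jones elements of strictly smaller rank.

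First I would replace $\al$ by a projection. Write $\de=\al\al^*$, a projection with $\de\H\al$ (in the sense that $\al\R\de$), so $\rank(\de)=q$; since $(\1,\al)\in\xi$, also $(\al^*,\1)=(\1\al^*,\al^*\1)\in\xi$, and applying compatibility twice gives $(\1,\al\al^*)\in\xi$, i.e.\ $(\1,\de)\in\xi$. So without loss of generality we may assume $\al=\de\in\Proj(\J_n)$. Now apply Lemma \ref{lem:proj1_Jn}: since $3\le\rank(\al)\le n-2$, there exists $\be\in\Proj(\J_n)$ with $\rank(\al\be)<\rank(\al)=\rank(\be)=q$. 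From $(\1,\al)\in\xi$ we get $(\be,\al\be)=(\1\be,\al\be)\in\xi$. Writing $p=\rank(\al\be)<q$, we have $(\al\be,\be)\in\xi\cap(J_p\times J_q)$. If $p=1$, Lemma \ref{lem:qz_Jn} immediately gives $R_q\sub\xi$, and then since $(\1,\al)\in\xi$ with $\rank(\al)=q$, every element of rank $\le q$ is $\xi$-collapsed appropriately; combined with the fact that $\1\notin I_q$, this forces $\xi=R_n$ — wait, more carefully: once $R_q\sub\xi$ and $(\1,\al)\in\xi$ with $\rank(\al)<n$, I would reduce to a smaller-rank witness. Actually the cleanest route: having produced \emph{some} pair $(\ga,\de')\in\xi$ with $q=\rank(\de')$ strictly exceeding $\rank(\ga)$, it is enough by the inductive hypothesis to produce a pair $(\1,\al')\in\xi$ with $\rank(\al')<\rank(\al)$. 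To do this, work inside $\al\J_n\al\cong\J_q$: the element $\al$ is the identity of this submonoid, and $\al\be\al\in\al\J_n\al$ has rank $p<q$; since $(\al,\al\be\al)\in\xi$ (conjugating $(\1,\al)$ is not needed — just multiply $(\1,\al)$ on both sides by $\al$ to get $(\al,\al\al\al)=(\al,\al)$, so instead multiply $(\1,\be)$-type pairs), the restriction of $\xi$ to $\al\J_n\al$ identifies the identity $\al$ with a lower-rank element, and by the inductive hypothesis applied to $\J_q$ (via the isomorphism $\al\J_n\al\cong\J_q$, and using that the lemma is already known for all smaller degrees) that restricted congruence is universal on $\al\J_n\al$. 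In particular $\al$ is $\xi$-related to a rank-$z$ (rank-$1$, since $q$ is odd) element $\al'$ of $\al\J_n\al$; then $(\1,\al)\in\xi$ and $(\al,\al')\in\xi$ give $(\1,\al')\in\xi$ with $\rank(\al')=1<q=\rank(\al)$, so Lemma \ref{lem:1z_Jn} finishes the job.

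There remains the edge case $\rank(\al)=2$ not covered by the hypotheses of Lemma \ref{lem:proj1_Jn}; but $n$ is odd here, so $\rank(\al)\equiv n\equiv 1\pmod 2$ and rank $2$ simply cannot occur — only odd ranks $1,3,5,\dots,n$ appear in $\J_n$ for $n$ odd. Likewise the case $\rank(\al)=n-1$ is impossible for parity reasons, and $\rank(\al)$ can be at most $n-2$ whenever $\rank(\al)<n$. So the only genuine cases are $\rank(\al)=1$ (Lemma \ref{lem:1z_Jn}) and $\rank(\al)\in\{3,5,\dots,n-2\}$ (the inductive step above), and the induction is complete. The main obstacle I anticipate is bookkeeping around the embedding $\al\J_n\al\cong\J_q$: one must check that removing the fixed blocks $A_i,A_i'$ of the projection $\al$ really does give a degree-$q$ Jones element and that ranks transfer correctly, and that the inductive hypothesis for $\J_q$ ($q<n$, $q$ odd) is legitimately available — this is fine since the lemma is stated for all odd $n\ge 3$ and we induct on $n$ as well as on rank, or simply observe $q\le n-2<n$. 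A secondary point of care is the very first reduction: deducing $(\1,\de)\in\xi$ from $(\1,\al)\in\xi$ by applying $*$-compatibility and right-multiplication by $\al^*$ — this is routine but must be spelled out since $\J_n$ is a regular $*$-semigroup and $\al\al^*$ is precisely the projection $\R$-related to $\al$.
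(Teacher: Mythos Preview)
Your overall strategy is right and matches the paper's: induct, reduce to the case where $\al$ is a projection, invoke Lemma~\ref{lem:proj1_Jn} to find a second projection $\be$ of the same rank with $\rank(\al\be)<q$, then pass to a local copy of $\J_q$ and apply the inductive hypothesis. The problem is which local copy you pass to.

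You choose $\al\J_n\al$ and claim $(\al,\al\be\al)\in\xi$. But you yourself notice that multiplying $(\1,\al)$ on both sides by $\al$ gives only $(\al,\al)$, and your fallback (``multiply $(\1,\be)$-type pairs'') does not make sense: $(\1,\be)$ is not known to lie in $\xi$. In fact there is no way to derive $(\al,\al\be\al)\in\xi$ from $(\1,\al)\in\xi$ alone, because $\al$ is the identity of $\al\J_n\al$ and so multiplying the pair $(\1,\al)$ into $\al\J_n\al$ always yields a trivial pair. The fix is to swap roles and work in $\be\J_n\be$ instead: multiplying $(\1,\al)$ on both sides by $\be$ gives $(\be,\be\al\be)\in\xi$ directly, with $\be\al\be\in\be\J_n\be$ and $\rank(\be\al\be)\le\rank(\al\be)<q$. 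This is exactly what the paper does. After that, the inductive hypothesis collapses all of $\be\J_n\be$, so $(\be,\be\ve_1\be)\in\xi$ with $\rank(\be\ve_1\be)=1$; then Lemma~\ref{lem:qz_Jn} gives $R_q\sub\xi$, and Lemma~\ref{lem:pq_Jn} (with the pair $(\al,\1)$) gives $R_n\sub\xi$. Note that your intended conclusion---producing $(\1,\al')\in\xi$ with $\rank(\al')=1$---does not follow once you are working in $\be\J_n\be$, since $\1\notin\be\J_n\be$; you really do need the route through Lemmas~\ref{lem:qz_Jn} and~\ref{lem:pq_Jn}.

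Two smaller points. First, your justification for $(\1,\al\al^*)\in\xi$ is garbled: $(\1\al^*,\al^*\1)=(\al^*,\al^*)$, not $(\al^*,\1)$. The correct argument is $\1\mathrel\xi\al=\al\al^*\al\mathrel\xi\al\al^*\cdot\1=\al\al^*$, using $(\al,\1)\in\xi$ right-multiplied by nothing after the regularity identity. Second, do not invoke ``$*$-compatibility'': $\xi$ is only assumed to be a semigroup congruence, not a $*$-congruence, so $(\al,\be)\in\xi$ need not imply $(\al^*,\be^*)\in\xi$. Fortunately neither step actually requires it.
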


\pf We proceed by induction on $n$.  The result is true for $n=3$ by Lemma
    \ref{lem:1z_Jn}, so suppose $n\geq5$.  Write $q=\rank(\al)$.  If
    $q=1$, then we are also done by Lemma \ref{lem:1z_Jn}, so suppose
    $q\geq3$.  First note that $\1 \mathrel\xi \al = \al\al^*\al \mathrel\xi \al\al^*\;\!\1 = \al\al^*$,
so that $\1$ is $\xi$-related to a projection of rank $q$.  As such, from this point on, we may assume without loss of generality that $\al$ is itself a projection (of rank $q\geq3$).  

By Lemma \ref{lem:proj1_Jn}, we may fix a projection $\be\in \Proj(\J_n)$ with $\rank(\be)=q$ and $\rank(\al\be)<q$.  Write $\be=\partI{a_1}{a_q}{A_1}{A_r}{a_1}{a_q}{A_1}{A_r}$, where $a_1<\cdots<a_r$.  As noted above, $\be\J_n\be$ is a subsemigroup of $\J_n$ isomorphic to~$\J_q$, and has $\be$ as its identity element.  For $\si\in\be\J_n\be$, we write $\si^\vee$ for the element of~$\J_q$ obtained from~$\si$ by removing the blocks $A_i,A_i'$ ($1\leq i\leq r$), and renaming the elements of the remaining blocks via the bijection $\dom(\be)\to\bq:a_i\mt i$.  So $\be\J_n\be\to\J_q:\si\mt\si^\vee$ is an isomorphism.

Now, $\be=\be\be=\be\;\!\1\;\!\be \mathrel\xi \be\al\be$, and we note that $\be,\be\al\be\in\be\J_n\be$ and $\rank(\be\al\be)\leq\rank(\al\be)<q=\rank(\be)$.  It follows that $\be^\vee=\id_q$ and $(\be\al\be)^\vee\in\J_q$ with ${\rank((\be\al\be)^\vee)<q}$.  
By induction, it follows that the pair $(\be^\vee,(\be\al\be)^\vee)$ generates the universal congruence on $\J_q$.  Since $(\be,\be\al\be)\in\xi$, as noted above, it follows that all elements of $\be\J_n\be$ are $\xi$-related.  In particular, $(\be,\be\ve_1\be)\in\xi$.  Since $\rank(\be)=q\geq3$ and $\rank(\be\ve_1\be)=1$, Lemma \ref{lem:qz_Jn} gives $R_q\sub\xi$.  Together with the fact that $(\1,\al)\in\xi$ and $\rank(\al)=q$, Lemma \ref{lem:pq_Jn} then gives $R_n\sub\xi$, whence $\xi=R_n$. \epf

The proof of the next result is almost identical to that of Lemma \ref{lem:qz_Jn}, but relies on Lemma \ref{lem:1q_Jn} instead of Lemma \ref{lem:1z_Jn}.

\begin{lemma}[cf.~Lemma \ref{lem:qp_Bn}]
\label{lem:pq2_Jn}
Let $n\geq3$ be odd.  Suppose $\xi\in\Cong(\J_n)$ and that there exists $(\al,\be)\in\xi$ with $q=\rank(\al)\geq3$ and $\rank(\be)<q$.  Then $R_q\sub\xi$. \epfres
\end{lemma}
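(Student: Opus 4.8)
The plan is to mimic the proof of Lemma~\ref{lem:qz_Jn} almost verbatim, replacing the single appeal to Lemma~\ref{lem:1z_Jn} by one to Lemma~\ref{lem:1q_Jn}. First I would write $\al=\partI{a_1}{a_q}{A_1}{A_r}{b_1}{b_q}{B_1}{B_r}$ with $a_1<\cdots<a_q$, and define the ``clamping'' elements
\[
\ga_1=\partI1q{q+1,q+2}{n-1,n}{a_1}{a_q}{A_1}{A_r}
\AND
\ga_2=\partI{b_1}{b_q}{B_1}{B_r}1q{q+1,q+2}{n-1,n},
\]
checking (via planarity, as in Remark~\ref{rem:Jdefine}, using $a_i\equiv b_i\equiv i\pmod2$) that $\ga_1,\ga_2\in\J_n$. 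Then $\ve_q=\ga_1\al\ga_2\mathrel\xi\ga_1\be\ga_2=:\ga$, where $\ga\in\ve_q\J_n\ve_q$ and $\rank(\ga)=\rank(\be)<q$.

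Next, using the isomorphism $\ve_q\J_n\ve_q\to\J_q:\si\mt\si^\vee$ obtained by deleting the blocks $\{q+1,q+2\},\ldots,\{n-1,n\}$ and their primed counterparts, I would note that $(\ve_q^\vee,\ga^\vee)=(\id_q,\ga^\vee)$ with $\rank(\ga^\vee)<q$. By Lemma~\ref{lem:1q_Jn} (applied in $\J_q$, which is valid since $q\geq3$ and $q$ is odd), this pair generates the universal congruence on $\J_q$; transporting back through the isomorphism and using $(\ve_q,\ga)\in\xi$ shows that all elements of $\ve_q\J_n\ve_q$ are $\xi$-related. In particular, picking $\de_3,\de_4\in\ve_q\J_n\ve_q$ with $\rank(\de_3)=\rank(\de_4)=1$, $\ker(\de_3)\neq\ker(\de_4)$ and $\coker(\de_3)\neq\coker(\de_4)$ (such elements exist since $q\geq3$), Proposition~\ref{prop:R01_Jn} gives $R_1=\cg{\de_3}{\de_4}\sub\xi$. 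Finally, since $(\al,\be)\in\xi$ with $\rank(\al)=q>\rank(\be)$ and $R_1\sub\xi$, Lemma~\ref{lem:pq_Jn} yields $R_q\sub\xi$, completing the proof.

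There is essentially no obstacle here: every ingredient is already in place, and the only points requiring the tiniest care are the planarity/parity checks that $\ga_1,\ga_2$ and the auxiliary elements $\de_3,\de_4$ genuinely lie in $\J_n$ (respectively $\ve_q\J_n\ve_q$), and noting that the induction in Lemma~\ref{lem:1q_Jn} has already been discharged, so we are free to invoke it in degree $q<n$. The proof is short enough that I would simply end it with \texttt{\textbackslash epfres} as the statement in the excerpt does, or write it out as a two-sentence \texttt{\textbackslash pf $\ldots$ \textbackslash epf} block paralleling Lemma~\ref{lem:qz_Jn}.
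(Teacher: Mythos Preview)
Your approach is exactly the one the paper intends: mimic the proof of Lemma~\ref{lem:qz_Jn}, swapping the appeal to Lemma~\ref{lem:1z_Jn} for Lemma~\ref{lem:1q_Jn}. Two small slips, though.

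First, after clamping you only know $\rank(\ga)\leq\rank(\be)<q$, not $\rank(\ga)=\rank(\be)$; this is harmless since Lemma~\ref{lem:1q_Jn} only needs $\rank(\ga^\vee)<q$.

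Second, and more substantively, your final sentence misapplies Lemma~\ref{lem:pq_Jn}. That lemma requires $R_p\sub\xi$ where $p$ is the \emph{smaller} of the two ranks in the pair. You feed it the original pair $(\al,\be)$, so you would need $R_{\rank(\be)}\sub\xi$, but you have only established $R_1\sub\xi$, and $\rank(\be)$ may well exceed~$1$. The fix is immediate: once you know all elements of $\ve_q\J_n\ve_q$ are $\xi$-related, you have in particular $(\ve_q,\de_3)\in\xi$ with $\rank(\ve_q)=q$ and $\rank(\de_3)=1$; now Lemma~\ref{lem:pq_Jn} applied to \emph{this} pair (with $p=1$) gives $R_q\sub\xi$. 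With that one-line correction the proof goes through.
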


For the proof of the next lemma, if $a$ and $b$ are non-negative integers, and if $\al\in\J_a$ and $\be\in\J_b$, we write $\al\oplus\be$ for the element of $\J_{a+b}$ obtained by placing $\be$ to the right of $\al$.  Formally, we rename elements of each block of $\be\in\J_b$ via the bijection $[1,b]\to[a+1,a+b]:i\mt a+i$, and then $\al\oplus\be$ consists of all these modified blocks plus the (unmodified) blocks of $\al$.  Note that, by convention, we consider $\J_0$ to consist of a single element: namely, the \emph{empty partition} $\emptyset$.  If $a$ is any non-negative integer and $\al\in\J_a$, then $\al\oplus\emptyset=\emptyset\oplus\al=\al$.  An extension of this operation has been used to give \emph{diagram categories} such as the Brauer and Temperley-Lieb categories \emph{(strict) monoidal} structures; see for example \cite{LZ2015,Martin2008}.  The $\oplus$ operation was also used to classify and enumerate the idempotents in the partition and (partial) Brauer monoids in \cite{DEEFHHL1}.  

For the statement of the next result, we do not assume $n$ is odd, but the proof makes use of the fact that it holds for even $n$, and indicates why this is the case.

\begin{lemma}\label{lem:proj2_Jn}
Let $n\geq3$ be arbitrary.  Suppose $\al,\be\in\Proj(\J_n)$ are such that $\al\not=\be$ and $2\leq\rank(\al)=\rank(\be)\leq n-2$.  Then, renaming $\al,\be$ if necessary, there exists $\ga\in\Proj(\J_n)$ such that $\rank(\al\ga)<\rank(\be\ga)=\rank(\al)=\rank(\ga)$.
\end{lemma}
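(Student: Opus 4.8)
The plan is to exploit the explicit form of projections: by \cite[Lemma 4]{EF2012} we may write $\al=\partI{a_1}{a_q}{A_1}{A_r}{a_1}{a_q}{A_1}{A_r}$ and $\be=\partI{b_1}{b_q}{B_1}{B_s}{b_1}{b_q}{B_1}{B_s}$, with $a_1<\dots<a_q$ and $b_1<\dots<b_q$, so that a projection of $\J_n$ is completely determined by its kernel, i.e.\ by $\dom$ together with the planar $2$-partition induced on the complement. The element $\ga$ we seek is also a projection of rank $q$, and the whole argument rests on three elementary facts about products of projections, read off from the product graph: \emph{(a)} if some upper non-transversal of $\ga$ has the form $\{u,w\}$ with $u,w\in\dom(\al)$, then in $\Pi(\al,\ga)$ the through-strands of $\al$ at $u$ and $w$ are spliced into a single upper non-transversal, whence $\rank(\al\ga)\le q-2<q$; \emph{(b)} dually, if $x,y\in\dom(\ga)$ and $\{x,y\}$ is an upper non-transversal of $\al$, then two through-strands of $\ga$ are captured and turned into a lower non-transversal of $\al\ga$, so again $\rank(\al\ga)<q$; \emph{(c)} if $\dom(\ga)=\dom(\be)$ then every through-strand of $\be$ survives, so $\ker(\be\ga)=\ker(\be)$ and $\rank(\be\ga)=q$ (more generally $\rank(\be\ga)=q$ as soon as no alternating chain of $\ga$- and $\be$-non-transversals joins two points of $\dom(\be)$, which is the usual Jones/Temperley--Lieb multiplication rule).

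I would then split according to whether $\dom(\al)=\dom(\be)$. If $\dom(\al)\ne\dom(\be)$, I take $\ga$ with $\dom(\ga)=\dom(\be)$ (so \emph{(c)} gives $\rank(\be\ga)=q$ immediately) and try to place an upper non-transversal $\{u,w\}$ of $\ga$ inside $\dom(\al)\setminus\dom(\be)$; this is legal provided one can find $u<w$ in $\dom(\al)\setminus\dom(\be)$ lying in a common gap interval of $\dom(\be)$ and of the parities forced by the planar-$2$-partition constraints recorded before Remark~\ref{rem:Jdefine}, and then \emph{(a)} finishes the job (possibly after interchanging the names of $\al$ and $\be$, which the statement permits). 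If instead $\dom(\al)=\dom(\be)$ but the two planar $2$-partitions of the complement differ, I use Lemma~\ref{lem:odd} to select, after renaming $\al,\be$ if necessary, an upper non-transversal $\{x,y\}$ of $\al$ that is not one of $\be$, and then construct a rank-$q$ projection $\ga$ with $x,y\in\dom(\ga)$ whose remaining through-strands are routed so as to keep all through-strands of $\be$ intact (i.e.\ satisfying the chain condition in \emph{(c)}); then \emph{(b)} completes the argument.

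The main obstacle is precisely this last construction, together with the residual ``tight'' subcase of the first case, where $|\dom(\al)\setminus\dom(\be)|$ is too small or the available points have the wrong parities to form a planar non-transversal: in these configurations the obvious choices of $\ga$ make $\al\ga$ and $\be\ga$ behave the same way, and some extra input is needed. Here I would reduce to the even-degree version of the lemma. For $n=2m$ even the statement follows from the case analysis above (where the parity count leaves enough room), or alternatively via the isomorphism $\J_{2m}\cong\PP_m$ of Section~\ref{sect:PPnMn}, under which $\rank$ doubles; granting that, one passes from $\al,\be\in\J_n$ ($n$ odd) to $\al\oplus\ve_1,\be\oplus\ve_1\in\J_{n+1}$ using the $\oplus$ operation introduced just above, noting these are distinct rank-$(q+1)$ projections of the even-degree monoid $\J_{n+1}$ with $2\le q+1\le (n+1)-2$, and one arranges the even-case output to have the form $\ga'=\ga\oplus\ve_1$, so that $\ga\in\J_n$ is the required projection. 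The bulk of the write-up is then the careful diagram bookkeeping: every time a Jones element is written down one must check the parity conditions $a_i\equiv b_i\equiv i\pmod 2$ and ``each non-transversal consists of one even and one odd point'' (as in Remark~\ref{rem:Jdefine}), both to legalise the $\ga$ produced in each case and to verify \emph{(a)}--\emph{(c)}.
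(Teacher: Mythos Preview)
Your elementary facts (a)--(c) are correct, and the broad strategy---handle easy configurations directly, and reduce the hard residue to the even case via $\oplus$---is also the paper's. The gap is in the reduction step. You write that after forming $\al\oplus\id_1,\be\oplus\id_1\in\J_{n+1}$ and invoking the even case, ``one arranges the even-case output to have the form $\ga'=\ga\oplus\id_1$'', but you give no argument for this, and it is the whole difficulty. The even case (via $\J_{2m}\cong\PP_m$ and Lemma~\ref{lemma-aa1b:PPn}) hands you a specific projection $\ga'$ whose domain need not contain $n+1$; if it does not, there is no way to strip off the extra strand and return to $\J_n$. Your alternative route for even $n$ (``the parity count leaves enough room'') is also false as stated: for $n=6$, $q=2$, $\dom(\al)=\{1,2\}$, $\dom(\be)=\{1,6\}$, both $\dom(\al)\setminus\dom(\be)$ and $\dom(\be)\setminus\dom(\al)$ are singletons, so neither (a) nor (b) can be triggered with $\dom(\ga)\in\{\dom(\al),\dom(\be)\}$. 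The same-domain case is likewise only sketched: the requirement that $\ga$ ``keep all through-strands of $\be$ intact'' is exactly the nontrivial chain condition you would have to verify, and you do not indicate how.

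The paper also reduces to the even case via $\oplus$, but it splits at a transversal point \emph{already shared} by $\al$ and $\be$ (Cases~1--3, where $a_1=b_1$ or $a_2=b_2$). Because the split point is common to both, the disagreement between $\al$ and $\be$ lies entirely on one side, the even-case $\ga$-piece can be found there, and the other side is simply copied from $\be$; this is precisely what makes the reassembly legitimate, and it is what your $\oplus\id_1$ trick lacks. The residual situation $a_1\ne b_1$, $a_2\ne b_2$ (Case~4) does not reduce this way, and the paper handles it by a direct construction: after disposing of several sub-cases by explicit choices of $\dom(\ga)$, the hardest remaining configuration is resolved by tracing a path in the product graph $\Pi(\al,\be)$ from $a_1$ to $b_1'$, locating its leftmost middle vertex $w$, and using a planarity/orientation argument to pin down the parity of $w$ and the local edge pattern there, which then dictates $\dom(\ga)$. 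This Case~4 argument is the substantive content your proposal is missing.
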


\pf The result is true for even $n$, because of Lemma~\ref{lemma-aa1b:PPn} (cf.~Lemma \ref{lemma-aa1b}), where the constructed element $\gamma$ is indeed a projection, and the isomorphism ${\PP_m\to\J_{2m}:\al\mt\alt}$, keeping in mind the fact that $\rank(\alt)=2\rank(\al)$ for any $\al\in\PP_m$.  So we assume $n$ is odd for the rest of the proof.  This also forces $\rank(\al)=\rank(\be)\geq3$ to be odd. Write $\al=\partI{a_1}{a_q}{A_1}{A_r}{a_1}{a_q}{A_1}{A_r}$ and $\be=\partI{b_1}{b_q}{B_1}{B_r}{b_1}{b_q}{B_1}{B_r}$, where $a_1<\cdots<a_q$ and $b_1<\cdots<b_q$.

\bigskip\noindent {\bf Case 1.}  Suppose first that $a_1=b_1$ and $a_2=b_2$.  We may write $\al=\al_1\oplus\id_1\oplus\al_2\oplus\id_1\oplus\al_3$ and $\be=\be_1\oplus\id_1\oplus\be_2\oplus\id_1\oplus\be_3$, where $\al_1,\be_1\in\Proj(\J_{a_1-1})$, $\al_2,\be_2\in\Proj(\J_{a_2-a_1-1})$ and $\al_3,\be_3\in\Proj(\J_{n-a_2})$.
Note that $\rank(\al_1)=\rank(\be_1)=\rank(\al_2)=\rank(\be_2)=0$ and $\rank(\al_3)=\rank(\be_3)=q-2\geq1$.

\bigskip\noindent {\bf Subcase 1.1.}  Suppose $\al_1\not=\be_1$, and write $\al_4=\al_1\oplus\id_1\oplus\al_2\oplus\id_1$ and $\be_4=\be_1\oplus\id_1\oplus\be_2\oplus\id_1$.  Then $\al_4,\be_4\in\Proj(\J_{a_2})$, $\rank(\al_4)=\rank(\be_4)=2$ and $\al_4\not=\be_4$.  Since $a_2$ is even, and since the lemma holds for even $n$, there exists $\de\in\Proj(\J_{a_2})$ such that, renaming if necessary, $\rank(\al_4\de)<\rank(\be_4\de)=\rank(\de)=2$.  It is then easy to check that $\ga=\de\oplus\be_3$ has the desired properties.

\bigskip\noindent {\bf Subcase 1.2.}  Suppose $\al_1=\be_1$, and write $\al_5=\al_2\oplus\id_1\oplus\al_3$ and $\be_5=\be_2\oplus\id_1\oplus\be_3$.  This time we have $\al_5,\be_5\in\Proj(\J_{n-a_1})$, with $n-a_1$ even, and with all the hypotheses of the lemma satisfied by $\al_5,\be_5$.  The proof concludes in similar fashion to Subcase 1.1: we find a suitable $\de\in\Proj(\J_{n-a_1})$ and put $\ga=\be_1\oplus\id_1\oplus\de$.

\bigskip\noindent {\bf Case 2.}  Next suppose $a_1=b_1$ but $a_2\not=b_2$.  We may write $\al=\al_1\oplus\id_1\oplus\al_2$ and $\be=\be_1\oplus\id_1\oplus\be_2$, where $\al_1,\be_1\in\Proj(\J_{a_1-1})$ and $\al_2,\be_2\in\Proj(\J_{n-a_1})$.  Since $a_2\not=b_2$, we know that $\al_2\not=\be_2$.  So, by the even version of the lemma, and renaming if necessary, there exists $\de\in\Proj(\J_{n-a_1})$ such that $\rank(\al_2\de)<\rank(\be_2\de)=\rank(\de)=q-1$, and we put $\ga=\be_1\oplus\id_1\oplus\de$.

\bigskip\noindent {\bf Case 3.} Next suppose $a_1\not=b_1$ but $a_2=b_2$.  We may write $\al=\al_1\oplus\id_1\oplus\al_2$ and $\be=\be_1\oplus\id_1\oplus\be_2$, where $\al_1,\be_1\in\Proj(\J_{a_2-1})$, $\al_2,\be_2\in\Proj(\J_{n-a_2})$.  Since $a_1\not=b_1$, $\al_1\oplus\id_1$ and $\be_1\oplus\id_1$ are distinct projections of rank $2$ from $\J_{a_2}$, and the proof concludes in similar fashion to Case 2.

\bigskip\noindent {\bf Case 4.}  Finally, suppose $a_1\not=b_1$ and $a_2\not=b_2$.  Without loss of generality, we may assume that $a_1<b_1$.  
If $b_2<a_2$, then we let $\ga$ be any projection with $\dom(\ga)=\dom(\al)$ and containing the block $\{b_1,b_2\}$.  
If $a_2<b_1$, then we let $\ga$ be any projection with $\dom(\ga)=\dom(\be)$ and containing the block $\{a_1,a_2\}$.  
Since $a_1<b_1$, $a_2\not=b_2$ and $a_2\not=b_1$ (the latter because $b_1,a_2$ have opposite parities), the only remaining possibility is $a_1<b_1<a_2<b_2$; we assume this is the case for the remainder of the proof.
For each $i\in\{1,2\}$, let the $\coker(\be)$-class of $a_i$ be $\{a_i,x_i\}$ and the $\coker(\al)$-class of $b_i$ be $\{b_i,y_i\}$.  If $x_1>a_1$, then we let $\ga$ be any projection with $\dom(\ga)=\{a_1,x_1,a_3,\ldots,a_q\}$.  If $x_2<a_2$, then we let $\ga$ be any projection with $\dom(\ga)=\{x_2,a_2,a_3,\ldots,a_q\}$.  The cases in which $y_1>b_1$ or $y_2<b_2$ are treated in similar fashion.  So suppose instead that $x_1<a_1$, $x_2>a_2$,  $y_1<b_1$ and $y_2>b_2$.  Since also $y_1>a_1$ (by planarity of $\al$) and $x_2<b_2$ (by planarity of $\be$), it follows that
\[
x_1<a_1<y_1<b_1<a_2<x_2<b_2<y_2.
\]
Note that if $\rank(\al\be)<q$, then we may simply take $\ga=\be$.  So we assume that $\rank(\al\be)=q$.  In particular, $\dom(\al\be)=\dom(\al)$ and $\codom(\al\be)=\codom(\be)$.  Consider the product graph $\Pi(\al,\be)$.  Since $a_1=\min(\dom(\al\be))$ and $b_1=\min(\codom(\al\be))$, there is a path $P$ in $\Pi(\al,\be)$ from $a_1$ to $b_1'$.  By the above discussion, we know the first two and last two edges the path $P$ uses:
\[
a_1 \lar\al a_1'' \lar\be x_1'' \lar\al \cdots \lar\be y_1'' \lar\al b_1'' \lar\be b_1',
\]
with all edges other the first and last connecting vertices from $\bn''$.  (Here, for example, we have written~${u\lar\al v}$ to indicate that the edge $\{u,v\}$ from $\Pi(\al,\be)$ comes from $\al$.)  Note that for any edge $i''\lar\al j''$ or $k''\lar\be l''$ in $P$, $i,l$ are even, and $j,k$ odd.
Let $w$ be the minimal element of $\bn$ such that the path $P$ visits vertex $w''$.  So $w\leq x_1<a_1$.  We claim that $w$ is odd.  Indeed, since $w<a_1<b_1$, the path $P$ contains edges
\begin{itemize}\begin{multicols}{2}
\item[(i)] $i''\lar\al w''\lar\be j''$ for some $i,j$, \ or 
\item[(ii)] $i''\lar\be w''\lar\al j''$ for some $i,j$.  
\end{multicols}\eit
Keeping in mind the above note concerning the parity of endpoints of edges in $P$, to prove the claim that $w$ is odd, it suffices to show that (i) is the case.  
In order to do this, consider the product graph $\Pi(\al,\be)$ embedded in the plane $\Rtwo$ as follows.  For each $i\in\bn$, we identify the vertices $i,i'',i'$ with the points $(i,n),(i,0),(i,-n)$ respectively.  Define the rectangle $\Rect=\bigset{(x,y)\in\Rtwo}{1\leq x\leq n,\ -n\leq y\leq n}$ to be the convex hull of these~$3n$ points, and add edges from $\al$ and $\be$ such that:
\bit
\item each transverse edge of $\al$ or $\be$ is drawn as a vertical line segment connecting its endpoints, and 
\item each non-transverse edge is drawn as a semicircle within $\Rect$, above or below the line $y=0$ depending on whether the edge belongs to $\al$ or $\be$, respectively.
\eit
So $\Pi(\al,\be)$, embedded in $\Rtwo$ as above, contains a smooth planar curve $\C$, induced by the path $P$, connecting~$a_1$ (on the upper edge of the rectangle $\Rect$) to $b_1'$ (on the lower edge of $\Rect$).  Let $\C_1$ be the portion of this curve joining $a_1$ to $w''$, and $\C_2$ the portion joining $w''$ to $b_1'$.  Note that $\C$ is contained in the smaller rectangle~${\Rect_1=\bigset{(x,y)\in\Rtwo}{w\leq x\leq n,\ -n\leq y\leq n}}$.  A schematic diagram of all this is given in Figure~\ref{fig:curve}.  

Since~$\C_1$ is contained in $\Rect_1$, and joins $a_1$ (on the upper side of $\Rect_1$) to $w''$ (on the left side of $\Rect_1$), $\C_1$ splits~$\Rect_1$ into two regions: $\Rect_1^+$, containing the vertex $w$ (the upper left corner of $\Rect_1$), and $\Rect_1^-$, containing the vertex $n'$ (the lower right corner of $\Rect_1$).  (Note that $P$ does not visit the point $n''$, since $\Pi(a,b)$ also contains a path from $a_2$ to $b_2'$.)  Now, $\C_2$ is contained in $\Rect_1$ and, apart from its initial vertex~$w''$, never intersects~$\C_1$, so $\C_2\sm\{w''\}$ is contained in one of $\Rect_1^+$ or $\Rect_1^-$.  But $\C_2$ connects $w''$ to $b_1'$, with the latter point belonging to $\Rect_1^-$.  So it follows that $\C_2\sm\{w''\}$ is contained in $\Rect_1^-$.  
Recall that $\C_1$ and $\C_2$ are unions of edges from the product graph $\Pi(\al,\be)$.  Let $E_1$ be the last such edge in $\C_1$, and $E_2$ the first such edge in~$\C_2$.  So~$E_1$ is of the form $i''\to w''$, and $E_2$ is of the form $w''\to j''$, for some $i,j\in\bn$.  If $E_1$ belonged to $\be$, then~$E_2$ would belong to $\al$; but then at least a segment of $E_2$ (for $w<x<w+1/2$, say) would lie above the corresponding segment of $E_1$, so that $\C_2$ contained points in $\Rect_1^+$, a contradiction.  Hence, $E_1$ belongs to~$\al$.  This completes the proof that (i) holds and, hence, that $w$ is odd.

Since the first edge of the path $P$ is $a_1\lar\al a_1''$, the edge immediately preceding those listed above in~(i) must be of the form $k''\lar\be i''$ for some $k$.  To summarise, we know that $\Pi(\al,\be)$ contains the edges
\[
k''\lar\be i''\lar\al w''\lar\be j'' \qquad\text{for some $i,j,k>w$.}
\]
In particular, $\{k,i\}$ and $\{w,j\}$ are $\coker(\be)$-classes, and $\{i,w\}$ is a $\coker(\al)$-class.  By planarity, and keeping in mind that $w<i,j,k$, we must be in one of the following four cases:
\begin{itemize}\begin{multicols}{4}
\item[(a)] $w<k<i<j$,
\item[(b)] $w<i<k<j$,
\item[(c)] $w<j<k<i$, or
\item[(d)] $w<j<i<k$.
\end{multicols}\eit
Since $w<a_1<b_1$, planarity also implies that $i<a_1$, $j<b_1$ and $k<b_1$.  In cases (a) and (b), we define $\ga$ to be any projection with domain $\{w,i,b_3,\ldots,b_q\}$ and containing the block $\{j,b_1\}$.  In cases (c) and (d), we define $\ga$ to be any projection with domain $\{w,j,a_3,\ldots,a_q\}$ and containing the block $\{i,a_1\}$.  This completes the proof. \epf

\begin{figure}[ht]
\begin{center}
\scalebox{.91}{
\begin{tikzpicture}[scale=.5]
\fill[gray!20] (0,0)--(30,0)--(30,20)--(0,20)--(0,0);
\fill[green!15]
(14,20)--(14,10) 
\arcdl3 
\arcul1 
\arcdr6
\arcur{12}
\arcdl4
\arcur1 
\arcdr1 
\arcul{9}
\arcdl{9} 
\arcul4 
--(4,20)--(14,20)
;
\fill[orange!15]
(14,20)--(14,10) 
\arcdl3 
\arcul1 
\arcdr6
\arcur{12}
\arcdl4
\arcur1 
\arcdr1 
\arcul{9}
\arcdl{9} 
\arcul4 
--(4,0)--(30,0)--(30,20)--(14,20)
;
\draw (0,0)--(30,0)--(30,20)--(0,20)--(0,0) (0,10)--(30,10) (4,0)--(4,20);
\draw[red, ultra thick] (14,20)--(14,10) 
\arcdl3 
\arcul1 
\arcdr6
\arcur{12}
\arcdl4
\arcur1 
\arcdr1 
\arcul{9}
\arcdl{9} 
\arcul4 
;
\draw[blue, ultra thick](4,10)
\arcdr2
\arcur1 
\arcdr{14} 
\arcul1
\arcdl1
\arcur3
--(22,0)
;
\foreach \x in {0,4,7,10,14,17,20,22,25,28,30} {\mv{\x}}
\foreach \x in {6,8,11,16,19,21,24,26} {\mvw{\x}}
\foreach \x in {0,4,14,30} {\hmv{\x}}
\foreach \x in {0,4,22,30} {\lmv{\x}}
\node[red] () at (14,18) [right] {${\C_1}$};
\node[blue] () at (22,2) [right] {${\C_2}$};
\node[black] () at (5,19) {${\Rect_1^+}$};
\node[black] () at (29,1) {${\Rect_1^-}$};
\node[black] () at (2,1) {${\Rect\sm\Rect_1}$};
\node () at (0,20.2) [above] {\small $1$};
\node () at (4,20.2) [above] {\small $w$};
\node () at (14,20.2) [above] {\small ${}_{\phantom{1}}a_1$};
\node () at (30,20.2) [above] {\small $n$};
\node () at (0,-.2) [below] {\small $\phantom{'}1'$};
\node () at (4,-.2) [below] {\small $\phantom{'}w'$};
\node () at (22,-.2) [below] {\small ${}_{\phantom{1}}b_1'$};
\node () at (30,-.2) [below] {\small $\phantom{'}n'$};
\node () at (.2,10) [above left] {\small $\phantom{''}1''$};
\node () at (4.2,10) [above left] {\small $\phantom{''}w''$};
\node () at (6.3,9.9) [above left] {\small $\phantom{''}j''$};
\node () at (8-.4,10-.1) [above right] {\small $\phantom{''}i''$};
\node () at (11-.4,10-.1) [above right] {\small $\phantom{''}x_1''$};
\node () at (14-.4,10-.1) [above right] {\small $\phantom{''}a_1''$};
\node () at (17-.4,10-.1) [above right] {\small $\phantom{''}k''$};
\node () at (19+.2,10-.1) [above left] {\small $\phantom{''}y_1''$};
\node () at (22-.4,10-.1) [above right] {\small $\phantom{''}b_1''$};
\node () at (30-.4,10-.1) [above right] {\small $\phantom{''}n''$};
\draw[|-|] (-2,20)--(-2,10);
\draw[-|] (-2,10)--(-2,0);
\draw(-2,5)node[left]{$\be$};
\draw(-2,15)node[left]{$\al$};
\draw(32,5)node[right]{$\phantom{\be}$};
\draw(32,15)node[right]{$\phantom{\al}$};
\end{tikzpicture}
}
\end{center}
\vspace{-5mm}
\caption{The curve $\C=\C_1\cup\C_2$ from Case 4 of the proof of Lemma \ref{lem:proj2_Jn}.  The curves $\C_1$ and $\C_2$ are drawn red and blue, respectively.  The regions $\Rect_1^+$ and $\Rect_1^-$ are shaded green and orange, respectively, and the region $\Rect\sm\Rect_1$ is shaded gray.  Odd-labelled vertices are drawn black, and even-labelled vertices white. }
\label{fig:curve}
\end{figure}
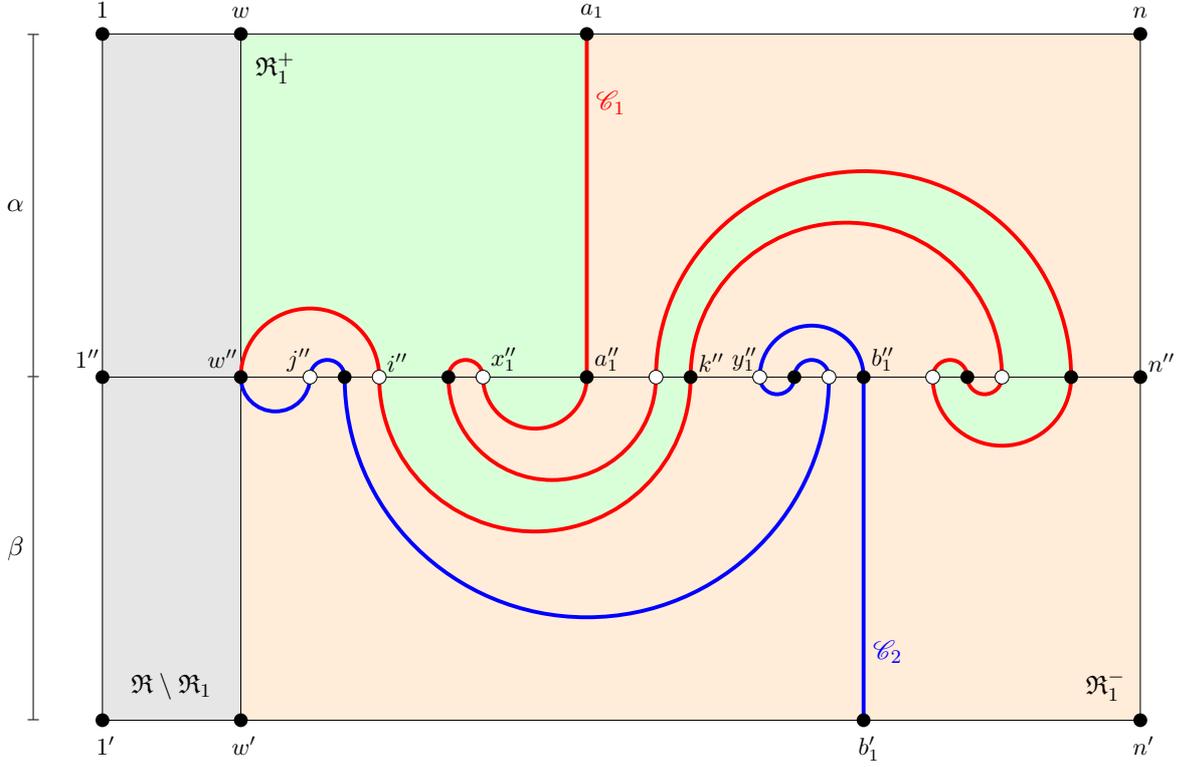

\begin{rem}
In Figure \ref{fig:curve}, we have $w<j<i<k$, so we are in case (d), as enumerated at the end of the previous proof.  The reader might like to construct the projections $\al,\be,\ga$ in this case.  
\end{rem}

\begin{rem}\label{rem:proj2_Jn}
We also note that Lemma \ref{lem:proj2_Jn} has another interpretation.
Fix some $2\leq q\leq n-2$, and fix an ordering on the set $\{\al_1,\ldots,\al_k\}$ of all projections from the $\gJ$-class $J_q$.  (By the proof of \cite[Theorem~9.5]{EG2017}, we have $k=\frac{q+1}{n+1}\binom{n+1}{(n-q)/2}$.)  Define a $k\times k$ matrix $M=(m_{ij})$ with entries in $\{0,1\}$, where $m_{ij}=1$ if and only if 
the unique element $\alpha\in J_q$ satisfying $\al_i\R\al\L\al_j$ is an idempotent.
(We note that $M$ is the \emph{sandwich matrix} for the representation of the principal factor of $\overline{J}_q$ as a Rees matrix semigroup; see \cite[Section 3.2]{Howie}.)
By \cite[Lemma 2.3(ii)]{emojoka}, we also have $m_{ij}=1$ if and only if $\rank(\al_i\al_j)=q$.  Thus, Lemma \ref{lem:proj2_Jn} says that no two rows of the matrix $M$ are equal.  Note that this is also vacuously true for $q=n$, but not true for~$q\leq1$, as all Jones (or even Brauer) elements of minimal rank are idempotents.  Similarly, Lemma \ref{lem:proj1_Jn} may be interpreted as saying that if $3\leq q\leq n-2$, then every row of the matrix $M$ has at least one entry equal to $0$.  Remark \ref{rem:proj1_Jn} asserts that when $q=2$, the matrix $M$ has exactly one row with no entries equal to $0$.  This can now be seen to be true.  Indeed, it is easy to check, when $q=2$, that one row of $M$ has no entries equal to $0$ (the exact row was specified in Remark \ref{rem:proj1_Jn}).  The fact that this row is unique follows from the reinterpreted version of Lemma \ref{lem:proj2_Jn}.
\end{rem}

\begin{lemma}\label{lem:qq_Jn}
Suppose $\xi\in\Cong(\J_n)$ and $(\al,\be)\in (\xi\sm\Delta)\cap (J_q\times J_q)$ with $q\geq3$.  Then $R_q\sub\xi$.
\end{lemma}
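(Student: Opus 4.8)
Since $\J_n$ is $\H$-trivial (Remark \ref{rem:green_sumbonoids}(iv)), two distinct elements of the $\gJ$-class $J_q$ are never $\H$-related; hence $(\al,\be)\notin{\H}$, so $(\al,\be)\notin{\R}$ or $(\al,\be)\notin{\L}$. The argument is structurally symmetric under interchanging the roles of the two rows (i.e.\ multiplying on the left instead of the right, and swapping $\ker$ with $\coker$), so it suffices to treat the case $(\al,\be)\notin{\R}$, i.e.\ $\ker(\al)\ne\ker(\be)$ (recall that for $\J_n$ the $\R$-relation is governed by the kernel alone). Also note that $q<n$: otherwise $\al=\be=\1$, contradicting $\al\ne\be$. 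Since $q\geq3$ and $q\equiv n\pmod 2$, this gives $3\leq q\leq n-2$. The plan is to produce a pair $(\ga,\de)\in\xi$ with $\rank(\ga)=q$ and $\rank(\de)<q$; once this is achieved, Lemma \ref{lem:pq2_Jn} immediately yields $R_q\sub\xi$.

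First I would reduce to the case where $(\al,\be)$ may be taken to be a pair of distinct projections of rank $q$. Note that $\al\al^*$ and $\al^*\al$ are projections lying in $J_q$ (from $\al=\al\al^*\al$ it follows that $\rank(\al\al^*)=\rank(\al^*\al)=q$). We have $(\al\al^*,\be\al^*)\in\xi$: if $\rank(\be\al^*)<q$ we are done via Lemma \ref{lem:pq2_Jn}, and otherwise $\be\al^*\in J_q$ with $\ker(\be\al^*)=\ker(\be)\ne\ker(\al)=\ker(\al\al^*)$, so $(\al\al^*,\be\al^*)$ is again a pair of distinct rank-$q$ elements in $\xi$, but now with the first coordinate a projection. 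Iterating this type of manipulation on both sides (at each stage either a rank drop occurs, in which case Lemma \ref{lem:pq2_Jn} finishes the proof, or one remains with a pair of distinct elements of $J_q$ in $\xi$ with strictly more of the relevant data forced to be ``projection-like''), and using $\H$-triviality to rule out elements of $J_q$ sharing both kernel and cokernel, one arrives at a pair of distinct projections $p,p'\in J_q$ with $(p,p')\in\xi$. (Alternatively, one can bypass this step entirely and instead mimic the proof of Lemma \ref{lem:notH_prelim_Bn} directly, constructing a \emph{planar} multiplier $\ga\in\J_n$ with $\rank(\ga\al)=q>\rank(\ga\be)$; the constructions are as there but with all domains, codomains and non-transversal pairs chosen to respect the parity and nesting/separation constraints on Jones elements, exactly as in the proofs of Lemmas \ref{lem:bzn} and \ref{lem:proj2_Jn}.)

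Finally, I would apply Lemma \ref{lem:proj2_Jn} to the distinct projections $p,p'\in J_q$, which is legitimate since $3\leq q\leq n-2$: renaming $p,p'$ if necessary, there is $\ga\in\Proj(\J_n)$ with $\rank(p\ga)<\rank(p'\ga)=q=\rank(\ga)$. Then $(p\ga,p'\ga)\in\xi$ is a pair with ranks $<q$ and $q$ respectively, so Lemma \ref{lem:pq2_Jn} gives $R_q\sub\xi$, as required. The main obstacle is the reduction in the middle paragraph: one must verify carefully that the successive multiplications by the adjoints $\al^*,\be^*,\dots$ really do terminate in a pair of distinct projections (without which Lemma \ref{lem:proj2_Jn} does not apply), or, if one takes the alternative route, that a planar multiplier witnessing the rank drop can always be constructed — this is where the geometry of planar partitions, already exploited heavily in Lemmas \ref{lem:bzn}--\ref{lem:proj2_Jn}, does the real work.
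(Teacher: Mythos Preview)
Your overall strategy is sound and you correctly identify both the target (a rank-drop pair so that Lemma~\ref{lem:pq2_Jn} applies) and the key tool (Lemma~\ref{lem:proj2_Jn}). The genuine gap is exactly where you flag it: the middle paragraph's reduction to a pair of \emph{$\xi$-related} distinct projections. Your iteration does not obviously terminate in such a pair. After one step you have $(\al\al^*,\be\al^*)\in\xi$ with $\al\al^*$ a projection but $\be\al^*$ typically not; further multiplications by adjoints tend either to equalise the pair (by $\H$-triviality, once kernels and cokernels agree) or to leave one coordinate non-idempotent. Since $\xi$ need not be a $*$-congruence, you cannot pass from $(\al,\be)\in\xi$ to $(\al^*,\be^*)\in\xi$, which blocks the most natural symmetric moves.

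The paper's fix is a one-line insight that removes the obstacle entirely: you do not need the projections to be $\xi$-related. Assuming $(\al,\be)\notin\L$, the projections $\al^*\al$ and $\be^*\be$ are distinct (they have cokernels $\coker(\al)\neq\coker(\be)$), so Lemma~\ref{lem:proj2_Jn} yields $\ga\in\Proj(\J_n)$ with, after renaming, $\rank(\al^*\al\ga)<q=\rank(\be^*\be\ga)$. Now use that $\al\ga=\al(\al^*\al\ga)$ and $\al^*\al\ga=\al^*(\al\ga)$, so $\al\ga\L\al^*\al\ga$ and hence $\rank(\al\ga)=\rank(\al^*\al\ga)<q$; similarly $\rank(\be\ga)=q$. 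Then $(\al\ga,\be\ga)\in\xi$ is the desired rank-drop pair. In your chosen case $(\al,\be)\notin\R$ the dual works: apply Lemma~\ref{lem:proj2_Jn} to $\al\al^*\neq\be\be^*$, obtain $\ga$, and use $\rank(\ga\al)=\rank(\ga\al\al^*)$ via $\ga\al\R\ga\al\al^*$. Either way, the projections are used only to locate the multiplier $\ga$, which is then applied to the original pair $(\al,\be)$.
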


\pf Since $\al\not=\be$, we have either $(\al,\be)\not\in{\R}$ or $(\al,\be)\not\in{\L}$.  By symmetry, we may assume the latter is the case.  Note that $\al^*\al$ and $\be^*\be$ are distinct projections of rank $q$, since $\al^*\al\L\al$ and $\be^*\be\L\be$.  So, by Lemma \ref{lem:proj2_Jn}, there exists $\ga\in\Proj(\J_n)$ such that, renaming $\al,\be$ if necessary, $\rank(\al^*\al\ga)<q$ and $\rank(\be^*\be\ga)=q$.  Now, $\al\ga=\al(\al^*\al\ga)$, so it follows that $\al\ga\L\al^*\al\ga$.  Consequently, $\al\ga\gJ\al^*\al\ga$, whence ${\rank(\al\ga)=\rank(\al^*\al\ga)<q}$.  Similarly, $\rank(\be\ga)=\rank(\be^*\be\ga)=q$.  Since $(\al\ga,\be\ga)\in\xi$, Lemma \ref{lem:pq2_Jn} then gives $R_q\sub\xi$. \epf

Armed with the previous results, we may now characterise the generating pairs for the Rees congruences~$R_q$, for $q\geq3$.

\begin{prop}\label{prop:Rq_Jn}
Let $n$ be odd, and suppose $3\leq q\leq n$ is odd.  Then $R_q$ is a principal congruence on $\J_n$.  Moreover, if $\al,\be\in\J_n$, then $R_q=\cg\al\be\iff(\al,\be)\in R_q\sm R_{q-2}$.
\end{prop}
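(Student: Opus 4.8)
The plan is to reduce the whole statement to Lemmas \ref{lem:pq2_Jn} and \ref{lem:qq_Jn}, which already contain all the substantive combinatorics; the proposition is essentially a bookkeeping corollary. First, the ``only if'' direction: suppose $R_q=\cg\al\be$. Then certainly $(\al,\be)\in R_q$. If we had $(\al,\be)\in R_{q-2}$, then since $R_{q-2}$ is a congruence we would get $\cg\al\be\sub R_{q-2}$, hence $R_q\sub R_{q-2}$; but $R_{q-2}\suq R_q$ (any $\al\in J_q$ paired with any $\be\in J_z$ lies in $R_q\sm R_{q-2}$), a contradiction. So $(\al,\be)\in R_q\sm R_{q-2}$.

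For the ``if'' direction, fix $(\al,\be)\in R_q\sm R_{q-2}$ and set $\zeta=\cg\al\be$. Since $(\al,\be)\in R_q$ and $R_q$ is a congruence, $\zeta\sub R_q$, so it remains to prove $R_q\sub\zeta$. Recalling that $R_q=\De\cup(I_q\times I_q)$ and $R_{q-2}=\De\cup(I_{q-2}\times I_{q-2})$, and that the ranks of Jones elements of degree $n$ are all $\equiv n\pmod 2$ (so no rank lies strictly between $q-2$ and $q$), the assumption on $(\al,\be)$ forces $\al\ne\be$, $\rank(\al),\rank(\be)\leq q$, and at least one of $\al,\be$ of rank exactly $q$. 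Renaming if necessary, take $\rank(\al)=q$. If $\rank(\be)<q$, then since $q\geq3$ we may apply Lemma \ref{lem:pq2_Jn} to $\zeta$ to obtain $R_q\sub\zeta$. If instead $\rank(\be)=q$, then $(\al,\be)\in(\zeta\sm\De)\cap(J_q\times J_q)$ with $q\geq3$, so Lemma \ref{lem:qq_Jn} gives $R_q\sub\zeta$. In either case $\zeta=R_q$, which also shows $R_q$ is principal (generated, e.g., by any pair consisting of an element of $J_q$ and an element of $J_z$, which lies in $R_q\sm R_{q-2}$).

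There is essentially no remaining obstacle: the weight is carried entirely by Lemma \ref{lem:pq2_Jn} (itself resting on Lemma \ref{lem:1q_Jn} and the projection combinatorics of Lemmas \ref{lem:proj1_Jn} and \ref{lem:proj2_Jn}) and by Lemma \ref{lem:qq_Jn}. The only point requiring care here is the rank/parity bookkeeping that cleanly partitions $R_q\sm R_{q-2}$ into the ``unequal rank'' case and the ``equal rank, non-diagonal'' case handled by those two lemmas respectively, and checking that each of those lemmas' hypotheses is met verbatim by $\zeta=\cg\al\be$.
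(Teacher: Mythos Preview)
Your proof is correct and takes essentially the same approach as the paper: reduce to Lemma~\ref{lem:pq2_Jn} when $\rank(\be)<q$ and to Lemma~\ref{lem:qq_Jn} when $\rank(\be)=q$, after renaming so that $\rank(\al)=q$. You spell out the ``only if'' direction and the rank/parity bookkeeping more explicitly than the paper does, but the argument is the same.
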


\pf Let $(\al,\be)\in R_q\sm R_{q-2}$.  The proof will be complete if we can show that $R_q\sub\cg\al\be$.  Renaming $\al,\be$ if necessary, we may assume that $\rank(\al)=q$.  But then $R_q\sub\cg\al\be$ follows from Lemma \ref{lem:pq2_Jn} if $\rank(\be)<q$, or from Lemma \ref{lem:qq_Jn} if $\rank(\be)=q$. \epf

We now have all we need to conclude the proof.

\begin{proof}[{\bf Proof of Theorem \ref{thm:cong-Jn} for \boldmath{$n$} odd.}]
As usual, the proof is completed by verifying that in describing the generating pairs of the congruences $R_q$
(Propositions \ref{prop:R01_Jn} and \ref{prop:Rq_Jn}) and $\lambda_1,\rho_1$
(Proposition \ref{prop:rlJodd}), we have covered all possible pairs of distinct elements of $\J_n$.
\end{proof}

\subsubsection*{Acknowledgements}

This work was initiated during a visit of the first author to the other three in 2016; he thanks the University of St Andrews for its hospitality during his stay.
We thank the referee for a number of helpful suggestions.

\footnotesize
\def\bibspacing{-1.1pt}
\bibliography{biblio}

\def\cprime{$'$}
\begin{thebibliography}{10}

\bibitem{OEIS}
The on-line encyclopedia of integer sequences.
\newblock Published electronically at {\tt http://oeis.org/}.

\bibitem{Aizenstat1962}
A.~J. A{\u\i}zen{\v{s}}tat.
\newblock On homomorphisms of semigroups of endomorphisms of ordered sets.
\newblock {\em Leningrad. Gos. Ped. Inst. U\v cen. Zap.}, 238:38--48, 1962.

\bibitem{ABG2016}
J.~Ara\'ujo, W.~Bentz, and G.~Gomes.
\newblock Congruences on direct products of transformation and matrix monoids.
\newblock {\em Preprint}, 2016, {\tt arXiv:1602.06339}.

\bibitem{BH2014}
G.~Benkart and T.~Halverson.
\newblock Motzkin algebras.
\newblock {\em European J. Combin.}, 36:473--502, 2014.

\bibitem{BDP2002}
M.~Borisavljevi{\'c}, K.~Do{\v{s}}en, and Z.~Petri{\'c}.
\newblock Kauffman monoids.
\newblock {\em J. Knot Theory Ramifications}, 11(2):127--143, 2002.

\bibitem{Brauer1937}
R.~Brauer.
\newblock On algebras which are connected with the semisimple continuous
  groups.
\newblock {\em Ann. of Math. (2)}, 38(4):857--872, 1937.

\bibitem{CPbook}
A.~H. Clifford and G.~B. Preston.
\newblock {\em The algebraic theory of semigroups. {V}ol. {I}}.
\newblock Mathematical Surveys, No. 7. American Mathematical Society,
  Providence, R.I., 1961.

\bibitem{DEEFHHL1}
I.~Dolinka, J.~East, A.~Evangelou, D.~FitzGerald, N.~Ham, J.~Hyde, and
  N.~Loughlin.
\newblock Enumeration of idempotents in diagram semigroups and algebras.
\newblock {\em J. Combin. Theory Ser. A}, 131:119--152, 2015.

\bibitem{emojoka}
I.~Dolinka, J.~East, A.~Evangelou, D.~FitzGerald, N.~Ham, J.~Hyde, N.~Loughlin,
  and J.~D. Mitchell.
\newblock Enumeration of idempotents in the {J}ones, {M}otzkin and {K}auffman
  monoids.
\newblock In preparation.

\bibitem{DEG2017}
I.~Dolinka, J.~East, and R.~D. Gray.
\newblock Motzkin monoids and partial {B}rauer monoids.
\newblock {\em J. Algebra}, 471:251--298, 2017.

\bibitem{JEgrpm}
J.~East.
\newblock Generators and relations for partition monoids and algebras.
\newblock {\em J. Algebra}, 339:1--26, 2011.

\bibitem{JEpnsn}
J.~East.
\newblock On the singular part of the partition monoid.
\newblock {\em Internat. J. Algebra Comput.}, 21(1-2):147--178, 2011.

\bibitem{EF2012}
J.~East and D.~G. FitzGerald.
\newblock The semigroup generated by the idempotents of a partition monoid.
\newblock {\em J. Algebra}, 372:108--133, 2012.

\bibitem{EG2017}
J.~East and R.~D. Gray.
\newblock Diagram monoids and {G}raham--{H}oughton graphs: {I}dempotents and
  generating sets of ideals.
\newblock {\em J. Combin. Theory Ser. A}, 146:63--128, 2017.

\bibitem{ER2018}
J.~East and N.~Ru\v{s}kuc.
\newblock Congruences of infinite partition and partial {B}rauer monoids.
\newblock In preparation.

\bibitem{Fernandes2001}
V.~H. Fernandes.
\newblock The monoid of all injective order preserving partial transformations
  on a finite chain.
\newblock {\em Semigroup Forum}, 62(2):178--204, 2001.

\bibitem{FL2011}
D.~G. FitzGerald and K.~W. Lau.
\newblock On the partition monoid and some related semigroups.
\newblock {\em Bull. Aust. Math. Soc.}, 83(2):273--288, 2011.

\bibitem{GMbook}
O.~Ganyushkin and V.~Mazorchuk.
\newblock {\em Classical finite transformation semigroups, an introduction},
  volume~9 of {\em Algebra and Applications}.
\newblock Springer-Verlag London, Ltd., London, 2009.

\bibitem{HR2005}
T.~Halverson and A.~Ram.
\newblock Partition algebras.
\newblock {\em European J. Combin.}, 26(6):869--921, 2005.

\bibitem{HLP2013}
K.~Hatch, M.~Ly, and E.~Posner.
\newblock Presentation of the {M}otzkin monoid.
\newblock {\em Preprint}, 2013, {\tt arXiv:1301.4518}.

\bibitem{Howie}
J.~M. Howie.
\newblock {\em Fundamentals of semigroup theory}, volume~12 of {\em London
  Mathematical Society Monographs. New Series}.
\newblock The Clarendon Press, Oxford University Press, New York, 1995.
\newblock Oxford Science Publications.

\bibitem{Jones1994_2}
V.~F.~R. Jones.
\newblock The {P}otts model and the symmetric group.
\newblock In {\em Subfactors ({K}yuzeso, 1993)}, pages 259--267. World Sci.
  Publ., River Edge, NJ, 1994.

\bibitem{LF2006}
K.~W. Lau and D.~G. FitzGerald.
\newblock Ideal structure of the {K}auffman and related monoids.
\newblock {\em Comm. Algebra}, 34(7):2617--2629, 2006.

\bibitem{LZ2015}
G.~Lehrer and R.~B. Zhang.
\newblock The {B}rauer category and invariant theory.
\newblock {\em J. Eur. Math. Soc. (JEMS)}, 17(9):2311--2351, 2015.

\bibitem{Liber1953}
A.~E. Liber.
\newblock On symmetric generalized groups.
\newblock {\em Mat. Sbornik N.S.}, 33(75):531--544, 1953.

\bibitem{Malcev1952}
A.~I. Mal{\cprime}cev.
\newblock Symmetric groupoids ({R}ussian).
\newblock {\em Mat. Sbornik N.S.}, 31(73):136--151, 1952.
\newblock English translation in Twelve papers in logic and algebra, Amer.
  Math. Soc. Translations Ser 2 \textbf{113}, AMS, 1979, pp. 235--250.

\bibitem{Malcev1953}
A.~I. Mal{\cprime}cev.
\newblock Multiplicative congruences of matrices.
\newblock {\em Doklady Akad. Nauk SSSR (N.S.)}, 90:333--335, 1953.

\bibitem{MM2007}
V.~Maltcev and V.~Mazorchuk.
\newblock Presentation of the singular part of the {B}rauer monoid.
\newblock {\em Math. Bohem.}, 132(3):297--323, 2007.

\bibitem{MSS2000}
M.~P.~O. Marques-Smith and R.~P. Sullivan.
\newblock The congruences on the semigroup of balanced transformations of an
  infinite set.
\newblock {\em J. Algebra}, 234(1):1--30, 2000.

\bibitem{Martin1994}
P.~Martin.
\newblock Temperley-{L}ieb algebras for nonplanar statistical mechanics---the
  partition algebra construction.
\newblock {\em J. Knot Theory Ramifications}, 3(1):51--82, 1994.

\bibitem{Martin2008}
P.~Martin.
\newblock Diagram categories, representation theory, statistical mechanics.
\newblock In {\em Noncommutative rings, group rings, diagram algebras and their
  applications}, volume 456 of {\em Contemp. Math.}, pages 99--136. Amer. Math.
  Soc., Providence, RI, 2008.

\bibitem{MarMaz2014}
P.~Martin and V.~Mazorchuk.
\newblock On the representation theory of partial {B}rauer algebras.
\newblock {\em Q. J. Math.}, 65(1):225--247, 2014.

\bibitem{Maz1998}
V.~Mazorchuk.
\newblock On the structure of {B}rauer semigroup and its partial analogue.
\newblock {\em Problems in Algebra}, 13:29--45, 1998.

\bibitem{GAP}
J.~D. Mitchell et~al.
\newblock {\em Semigroups - GAP package, Version 3.0.15}, Mar 2018,
  \url{http://dx.doi.org/10.5281/zenodo.592893}.

\bibitem{NS1978}
T.~E. Nordahl and H.~E. Scheiblich.
\newblock Regular {$\ast $}-semigroups.
\newblock {\em Semigroup Forum}, 16(3):369--377, 1978.

\bibitem{RSbook}
J.~Rhodes and B.~Steinberg.
\newblock {\em The {$q$}-theory of finite semigroups}.
\newblock Springer Monographs in Mathematics. Springer, New York, 2009.

\bibitem{Sutov1961}
{\`E}.~G. {\v{S}}utov.
\newblock Homomorphisms of the semigroup of all partial transformations.
\newblock {\em Izv. Vys\v s. U\v cebn. Zaved. Matematika}, 1961(3
  (22)):177--184, 1961.

\bibitem{Sutov1961_2}
{\`E}.~G. {\v{S}}utov.
\newblock Semigroups of one-to-one transformations.
\newblock {\em Dokl. Akad. Nauk SSSR}, 140:1026--1028, 1961.

\bibitem{TL1971}
H.~N.~V. Temperley and E.~H. Lieb.
\newblock Relations between the ``percolation'' and ``colouring'' problem and
  other graph-theoretical problems associated with regular planar lattices:
  some exact results for the ``percolation'' problem.
\newblock {\em Proc. Roy. Soc. London Ser. A}, 322(1549):251--280, 1971.

\bibitem{Wilcox2007}
S.~Wilcox.
\newblock Cellularity of diagram algebras as twisted semigroup algebras.
\newblock {\em J. Algebra}, 309(1):10--31, 2007.

\end{thebibliography}
\bibliographystyle{abbrv}
\end{document}